\documentclass[11pt,a4paper,notitlepage,twoside,appendixprefix=true,openright]{scrreprt}
\addtokomafont{disposition}{\rmfamily}

\usepackage[left=3cm,right=2cm,top=3cm,bottom=3cm]{geometry}
\usepackage[outline]{contour}

\usepackage{xspace}
\usepackage[ngerman,english]{babel}

\usepackage[headsepline=true]{scrlayer-scrpage} 
\clearpairofpagestyles
\cfoot*{\pagemark}
\lehead{\leftmark}
\rohead{\rightmark}
\automark[chapter]{chapter} 
\automark*[section]{} 
%

%
%
%
\usepackage{natbib}
\setcitestyle{authoryear,open={(},close={)}}

\usepackage{mathtools}
\usepackage{verbatim}   
\usepackage{ifthen}
\usepackage{bm}
\usepackage{lipsum}
\usepackage{amsmath, amssymb, enumerate, dsfont, comment, amsthm, thmtools}
\usepackage[utf8]{inputenc}
\usepackage{csquotes}
\usepackage{tikz}
\usepackage{float}
\usepackage{upgreek}

\usetikzlibrary{babel}
\usetikzlibrary{cd}

\newtheorem{lem}{Lemma}[section]
\newtheorem{lemma}[lem]{Lemma}
\newtheorem*{lem*}{lem}
\newtheorem{satz}[lem]{Theorem}
\newtheorem{theorem}[lem]{Theorem}
\newtheorem{prop}[lem]{Proposition}
\newtheorem{proposition}[lem]{Proposition}
\newtheorem*{prop*}{Proposition}

\newtheorem*{thm*}{Theorem}
\newtheorem{corollary}[lem]{Corollary}
\newtheorem{cor}[lem]{Corollary}
\newtheorem*{cor*}{Corollary}
\theoremstyle{definition}
\newtheorem{defi}[lem]{Definition}
\newtheorem*{defi*}{Definition}
\theoremstyle{remark}
\newtheorem*{example*}{Example}
\newtheorem{example}[lem]{Example}

\newtheorem{rem}[lem]{Remark}
\newtheorem{remark}[lem]{Remark}
\newtheorem*{rem*}{Remark}

\usepackage[hypertexnames=false]{hyperref}
\usepackage[nameinlink]{cleveref}
\usepackage{bookmark}


\newcounter{assum}
\newenvironment{assum}{\par\noindent\refstepcounter{assum}\textbf{Assumption (A\arabic{assum})} \itshape}{\par}

\newcommand{\assref}[1]{(A\ref{#1})}


\newcommand{\blue}{\textcolor{blue}}

\newcommand{\todoq}[1]{\blue{#1}}

\makeatletter
\@ifundefined{showdtodos}{
	\excludecomment{todo}
}{
	\newenvironment{todo}{\par\textcolor{blue}}{\par}
}

\definecolor{darkgreen}{rgb}{0.0196, 0.3412, 0.1647}

\makeatletter
\@ifundefined{showdocu}{
	\excludecomment{docu}
}{
	\newenvironment{docu}{\par\color{darkgreen}}{\par}
}
\makeatother

\newcommand*{\Cov}{\operatorname{Cov}}

\newcommand{\Ioif}{\Leftrightarrow}

\newcommand{\tiid}{i.i.d.}

\newcommand{\outSq}[1]{#1^{\otimes 2}}

\newcommand{\nrmps}[2]{{\nrm{#1^*}{#2}}}

\newcommand{\innp}[2]{\langle #1, #2\rangle}
\DeclareMathOperator{\Kurt}{Kurt}
\DeclareMathOperator*{\esssup}{esssup}

\newcommand{\set}[1]{\{#1\}}

\newcommand{\E}{\mathbb E}

\newcommand{\floor}[1]{\left\lfloor #1 \right\rfloor}
\newcommand{\ceil}[1]{\left\lceil #1 \right\rceil}

\newcommand{\der}{\partial}

\newcommand*{\cA}{\mathcal{A}}
\newcommand*{\cB}{\mathcal{B}}
\newcommand*{\cC}{\mathcal{C}}
\newcommand*{\cD}{\mathcal{D}}
\newcommand*{\cE}{\mathcal{E}}
\newcommand*{\cF}{\mathcal{F}}
\newcommand*{\cG}{\mathcal{G}}
\newcommand*{\cH}{\mathcal{H}}
\newcommand*{\cI}{\mathcal{I}}

\newcommand*{\cL}{\mathcal{L}}

\newcommand*{\cN}{\mathcal{N}}
\newcommand*{\cO}{\mathcal{O}}
\newcommand*{\cP}{\mathcal{P}}

\newcommand*{\cR}{\mathcal{R}}
\newcommand*{\cS}{\mathcal{S}}

\newcommand*{\cU}{\mathcal{U}}
\newcommand*{\cV}{\mathcal{V}}
\newcommand*{\cW}{\mathcal{W}}
\newcommand*{\cX}{\mathcal{X}}

\newcommand*{\cZ}{\mathcal{Z}}

\newcommand*{\N}{\mathbb{N}}
\newcommand*{\R}{\mathbb{R}}

\renewcommand*{\P}{\mathbb{P}}

\newcommand*{\IZ}{\mathbb{Z}}

\newcommand*{\om}{\omega}
\newcommand*{\Om}{\Omega}
\newcommand*{\si}{\sigma}
\newcommand*{\Si}{\Sigma}
\newcommand*{\al}{\alpha}
\newcommand*{\be}{\beta}
\newcommand*{\thet}{\theta}
\newcommand*{\Thet}{\Theta}
\newcommand*{\delt}{\delta}
\newcommand*{\Delt}{\Delta}
\newcommand*{\ph}{\varphi}
\newcommand*{\Ph}{\Phi}
\newcommand*{\la}{\lambda}
\newcommand*{\ka}{\kappa}
\newcommand*{\ga}{\gamma}
\newcommand*{\Ga}{\Gamma}

\newcommand*{\ep}{\varepsilon}

\newcommand{\Lip}{\operatorname{Lip}}

\newcommand*{\blnk}{\cdot}


\newcommand{\tIto}{Itô}

\newcommand{\tFre}{Fréchet}
\newcommand{\tCadl}{càdlàg}

\newcommand{\tIe}{that is}

\newcommand{\tCf}{see}

\newcommand*{\fa}{\forall}

\newcommand*{\intr}{\operatorname{int}}
\newcommand{\supp}{\operatorname{supp}}
\newcommand*{\nrm}[2]{\|#1\|_{#2}}

\newcommand*{\Var}{\operatorname{Var}}
\newcommand*{\prttn}[2]{\cS^{#1}_{#2}}
\newcommand*{\amin}{\operatorname{argmin}}
\newcommand*{\tr}{\operatorname{tr}}

\newcommand*{\id}[1]{\operatorname{id}_{#1}}

\newcommand*{\tSDE}{stochastic differential equation}
\newcommand*{\tYDE}{Young differential equation}
\newcommand*{\tODE}{ordinary differential equation}
\newcommand*{\tSME}{stochastic modified equation}
\newcommand*{\tSGD}{stochastic gradient descent}

\newcommand*{\mat}[1]{\begin{pmatrix}#1\end{pmatrix}}
\newcommand*{\idK}{\mathds 1}
\newcommand*{\pr}{\operatorname{pr}}
\newcommand*{\Bin}{\operatorname{Bin}}

\newcommand*{\unfovr}{\text{ uniformly over }}
\newcommand*{\unfin}{\text{ uniformly in }}

\newcommand{\Lfin}{L^{\infty-}}

\newcommand*{\LE}{\operatorname{LE}}
\newcommand{\sgn}{\operatorname{sgn}}
\newcommand{\XCC}{X^{\operatorname{CC}}}
\newcommand{\XCCh}[1]{X^{\operatorname{CC},#1}}
\newcommand{\XNCC}{X^{\operatorname{NCC}}}
\newcommand{\XNCCh}[1]{X^{\operatorname{NCC},#1}}
\newcommand{\BEq}{B^{\operatorname{Eq}}}
\newcommand{\BGF}{B^{\operatorname{GF}}}

\newcommand{\bx}{\bm{x}}
\newcommand{\by}{\bm{y}}
\newcommand{\bz}{\bm{z}}
\newcommand{\bep}{\bm{\varepsilon}}

\newcommand{\lrates}{T/\N \cap (0,1)}

\newcommand{\discX}{\tilde X}

\newcommand{\Leb}{\operatorname{Leb}}
\newcommand{\Perm}{\operatorname{Perm}}
\newcommand{\Sym}[1]{\cS_{#1}}
\newcommand{\dSym}[2]{\cS^{#1}_{#2}}
\newcommand{\Unif}[1]{\operatorname{Unif}(#1)}
\newcommand{\pvar}[2]{\nrm{#1}{#2\operatorname{-var}}}

\newcommand{\specnrm}[1]{\nrm{#1}{\operatorname{op}}}

\newcommand{\hoelH}[2]{\dot \cC^{#1,#2}}
\newcommand{\hoelHZ}[1]{\dot \cC^{#1}}
\newcommand{\wHoelH}[3]{\dot\cC^{#1,#2}_{#3}}
\newcommand{\wHoel}[3]{\cC^{#1,#2}_{#3}}
\newcommand{\wHoelNrm}[4]{\nrm{#1}{\wHoel#2#3#4}}
\newcommand{\wHoelHZ}[2]{\dot \cC^{#1}_{#2}}
\newcommand{\wHoelZ}[2]{\cC^{0,#1}_{#2}}

\newcommand{\wC}[2]{\cC^{#1}_{#2}}

\newcommand{\wCZ}[1]{\cC_{#1}}

\newcommand{\adptd}{\operatorname{ad}}

\newcommand{\Lpinfadt}[2]{L^{#1,\adptd}_{*{#2}}}

\newcommand{\Lpinft}[2]{L^{#1}_{*{#2}}}
\newcommand{\Lpinf}[1]{L^{#1}_*}
\newcommand{\Lpinfad}[1]{L^{#1,\adptd}_{*}}
\newcommand{\Lfininf}{L^{\infty-}_*}
\newcommand{\Lfininfad}{L^{\infty-,\adptd}_*}

\newcommand{\dC}[1]{\cC^{#1}}

\newcommand{\CHoelLoc}[1]{\cC^{0,#1}_{\operatorname{loc}}}

\newcommand{\LpLoc}[1]{L^{#1}_{\operatorname{loc}}}

\newcommand{\modu}{\operatorname{mod}}
\newcommand{\frk}[1]{\{#1\}}

\newcommand{\transp}{\intercal}

\newcommand{\condNr}[1]{\frac{\la_{\max}(#1)}{\la_{\min}(#1)}}

\newcommand*\pFqskip{8mu} 
\catcode`,\active
\newcommand*\pFq{\begingroup
	\catcode`\,\active
	\def ,{\mskip\pFqskip\relax}%
	\dopFq
}
\catcode`\,12
\def\dopFq#1#2#3#4#5{%
	{}_{#1}F_{#2}\biggl(\genfrac..{0pt}{}{#3}{#4};#5\biggr)%
	\endgroup
}

\allowdisplaybreaks

\title{Dissertation}

\author{Stefan Perko}

\begin{document}
\pagenumbering{roman}
\pagestyle{empty}

\vspace*{0.3cm}
\begin{figure}[h]
	\begin{center}
		\includegraphics[scale=0.4]{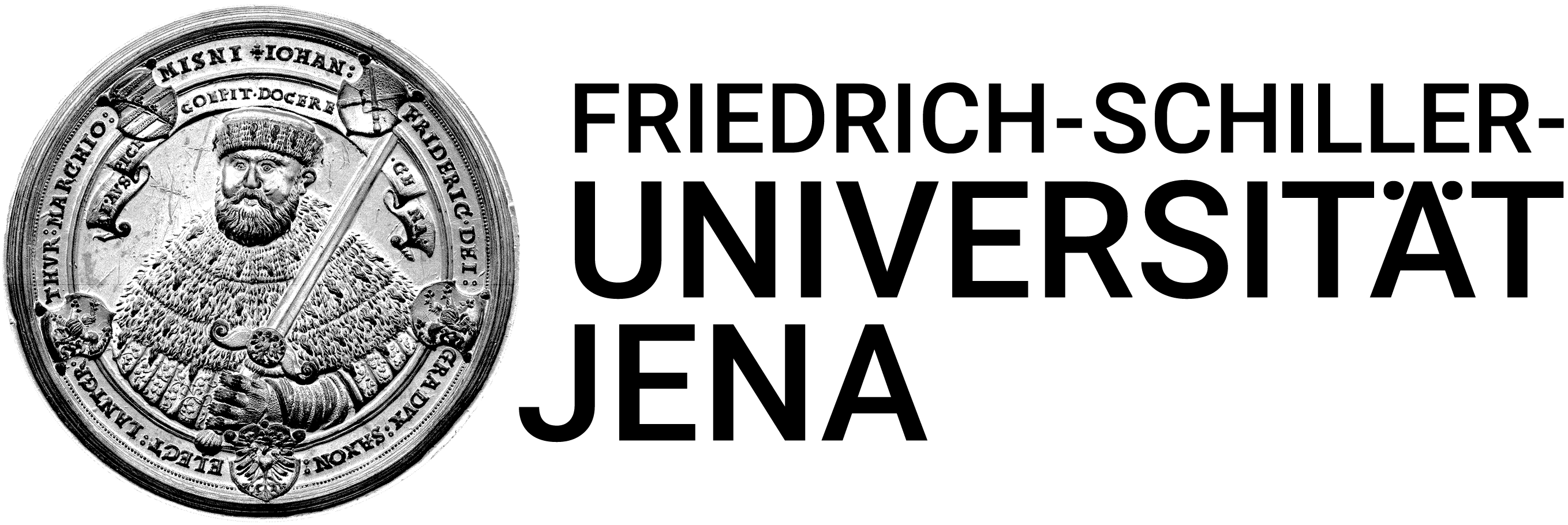}
	\end{center}
\end{figure}

\vspace{2cm}
\centerline{ {\Huge \textbf{Modified Equations}} }
\centerline{ { \LARGE \textbf{for Stochastic Optimization}} }



\vspace{2.8cm}

\centerline{ {\huge \textsc{ Dissertation}}}
\vspace{0.8cm}

\centerline{zur Erlangung des akademischen Grades}
\vspace{0.2cm}

\centerline{doctor rerum naturalium (Dr.\ rer.\ nat.)}
\vspace{2.8cm}

\centerline{vorgelegt dem Rat der}
\vspace{0.2cm}

\centerline{Fakult\"at f\"ur Mathematik und Informatik}
\vspace{0.2cm}

\centerline{der Friedrich-Schiller-Universit\"at Jena}
\vspace{2.8cm}

\centerline{von M.Sc.~Stefan Perko}
\vspace{0.2cm}

\centerline{geboren am 08.06.1996 in Erfurt}
\newpage

\text{}
\vfill
\begin{otherlanguage}{ngerman}
	
	{\noindent {\textbf{ Gutachter}}
		\vspace{0.4cm}
		
		\noindent
		1.  \mbox{  }
		Prof.~Dr.~Stefan Ankirchner, Friedrich-Schiller-Universität Jena, Germany
		\vspace{0.4cm}
		
		\noindent 2. \mbox{  }
		Prof.~Dr.~Steffen Dereich, Universität Münster, Germany
		\vspace{0.4cm}
		
		\noindent 3. \mbox{  } 
		Prof.~Dr.~Stefan Geiss, University of Jyväskylä, Finland
		\vspace{0.4 cm}
		
		\noindent {\textbf{ Tag der öffentlichen Verteidigung:}} \mbox{ TBA }
	}
\end{otherlanguage}

\chapter*{\centering Acknowledgments}

I extend my sincere gratitude to my supervisor Stefan Ankirchner for his guidance, fruitful discussions and support throughout the years working on this thesis. His willingness to explore new topics with me and pragmatic advice have been of substantial help.

\vspace*{.1cm}
I am grateful to Hannah and Stefan Geiss for their hospitality during my research stay in Jyväskylä.
Our stimulating discussions and hikes through snowy mountains and swamps has helped  open many doors.

\vspace*{.1cm}
I want to thank all the referees for taking the time to review my thesis.

\vspace*{.1cm}
Many thanks to my colleagues Dennis, Ilya, Jens, Julian, Hugalf, Marian, Max, Nicole, Patricia, Robert, Simon, Stefan, Stefan, Sooppawat, and Verena for the relaxed and motivating atmosphere, the joint lunches, and discussions over coffee.

\vspace*{.1cm}
The completion of my dissertation would have certainly been possible without my dearest friends Alexia, Denys, Maxi and Sam - but it would have been unimaginably poorer. Your unwavering support has carried me through even the hardest times and our continued shared interest in philosophy, science, technology and puzzling has inspired me time and time again.


\begin{otherlanguage}{ngerman}
	\chapter*{\centering Zusammenfassung}
In dieser Dissertation erweitern wir die neuartige Theorie der stochastischen modifizierten Gleichungen für stochastische Gradientenverfahren. Diese Theorie verbindet Ideen aus der Numerik von Differentialgleichungen mit Methoden der stochastischen Analysis und der Optimierung, um Fragestellungen aus dem maschinellen Lernen zu untersuchen.

Die Arbeit besteht aus zwei Hauptteilen. Im ersten Teil (Kapitel \ref{chap:mdfdeq} - \ref{chap:compare}) untersuchen wir modifizierte Gleichungen für stochastische Einschrittverfahren, darunter stochastischer Gradientenabstieg (SGD) ohne Zurücklegen bei unendlich vielen Daten, und SGD mit Zurücklegen. In Kapitel \ref{chap:mdfdeq} untersuchen wir zeitinhomogene stochastische Differentialgleichungen (SDEs), die von einer Brownschen Bewegung getrieben werden und deren Drift- und Diffusionskoeffizienten eine Entwicklung in der Schrittweite besitzen, wobei der führende Term im Drift durch das Einschrittverfahren bestimmt ist (und höhere Terme frei wählbar sind).
Unter bestimmten Regularitätseigenschaften beweisen wir für diese SDEs eine schwache Approximationseigenschaft erster Ordnung und wir bestimmen ihre linearen Fehlerterme explizit. Darüber hinaus beweisen wir eine schwache Approximationseigenschaft zweiter Ordnung für eine spezifische Familie von SDEs. In Kapitel \ref{chap:smeopt} instanziieren wir unsere Resultate für SGD. Wir arbeiten das Beispiel der linearen Regression vollständig aus und nutzen es in Kapitel \ref{chap:compare}, um die linearen Fehlerterme des Gradientenflusses mit zwei häufig verwendeten stochastischen modifizierten Gleichungen erster Ordnung für SGD zu vergleichen.

Im zweiten Teil (Kapitel \ref{chap:towardsSGDoSME} und \ref{chap:weakshuffle}) führen wir eine neuartige Diffusionsapproximation für SGD ohne Zurücklegen (SGDo) bei endlich vielen Daten ein und untersuchen diese. In Kapitel \ref{chap:towardsSGDoSME} motivieren und definieren wir den Begriff einer epochenweise zusammengesetzten Brownschen Bewegung. Wir argumentieren, dass Young-Differentialgleichungen (YDEs), die von solchen Prozessen getrieben werden, als kontinuierliche Modelle für SGDo dienen - und zwar für jede Mischungsstrategie, deren induzierte Permutationen im Grenzwert großer Stichproben gegen ein deterministisches Permuton konvergieren. Ferner beweisen wir fast sichere Konvergenz dieser YDEs im streng konvexen Fall. Zudem bestimmen wir eine asymptotische obere Schranke für die Konvergenzrate, die mindestens so scharf ist wie bisher bekannte Resultate für SGDo.

In Kapitel \ref{chap:weakshuffle} untersuchen wir Grenzwerte von Familien skalierter zufälliger Irrfahrten, die bis auf eine (möglicherweise zufällige) Permutation dieselben Inkremente haben. Wir zeigen schwache Konvergenz unter der Annahme, dass die Folge dieser Permutationen gegen ein deterministisches (höherdimensionales) Permuton konvergiert. Diese Permuton bestimmt die Kovarianzfunktion des Gaussprozesses im Limes.
Umgekehrt zeigen wir, dass jeder Gaussprozess mit einer Kovarianzfunktion, die durch ein solches Permuton bestimmt wird, als schwacher Grenzwert skalierter zufälliger Irrfahrten mit gemeinsamen Inkrementen auftritt. Schließlich wenden wir unsere Konvergenztheorie an, um zu zeigen, dass epochenweise zusammengesetzte Brownsche Bewegungen als Grenzwerte skalierter zufälliger Irrfahrten mit endlich vielen verschiedenen Inkrementen entstehen.
\end{otherlanguage}


\chapter*{\centering Abstract}	
In this thesis, we extend the recently introduced theory of stochastic modified equations for stochastic gradient optimization algorithms. This theory combines ideas first developed in the field of numerics of differential equations with stochastic calculus and optimization in order to study problems from machine learning.

This thesis consists of two main parts. In the first part (Chapters \ref{chap:mdfdeq} - \ref{chap:compare}) we study modified equations for stochastic one-step methods, including one-pass SGD and SGD with replacement. In Chapter \ref{chap:mdfdeq} we study time-inhomogeneous SDEs driven by Brownian motion whose drift and diffusion coefficients admit an expansion in the step size, with the leading-order term for the drift determined by the one-step method (and higher-order terms free).
For these SDEs we prove a first-order weak approximation property and we compute their linear error terms explicitly, under certain regularity conditions. Further, we prove a second-order weak approximation property for a specific family of SDEs. In Chapter \ref{chap:smeopt} we instantiate our results for SGD. We work out the example of linear regression explicitly. We use this example to compare the linear error terms of gradient flow and two commonly used first-order stochastic modified equations for SGD.

In the second part (Chapters \ref{chap:towardsSGDoSME} and \ref{chap:weakshuffle}) we introduce and study a novel diffusion approximation for SGD without replacement (SGDo) in the finite-data setting. In Chapter \ref{chap:towardsSGDoSME} we motivate and define the notion of an epoched Brownian motion. We argue that Young differential equations (YDEs) driven by such processes serve as continuous-time models for SGDo for any shuffling scheme whose induced permutations have a well-defined large-sample limit (i.e.\ they converge to a deterministic permuton). Further, we prove almost sure convergence for these YDEs in the strongly convex setting. Moreover, we compute an upper asymptotic bound on the convergence rate which is as sharp as, or better than previous results for SGDo. 
In Chapter \ref{chap:weakshuffle} we study scaling limits of families of random walks that share the same increments up to a (possibly random) permutation. We show weak convergence under the assumption that the sequence of permutations converges to a deterministic (higher-dimensional) permuton. This permuton determines the covariance function of the limiting Gaussian process.
Conversely, we show that every Gaussian process with a covariance function determined by a permuton in this way arises as a weak scaling limit of families of random walks with shared increments. Finally, we apply our weak convergence theory to show that epoched Brownian motions arise as scaling limits of random walks with finitely many distinct increments.

\cleardoublepage

%
%

%
%
%
%
%
%


\setcounter{tocdepth}{1}
\tableofcontents

\cleardoublepage

\pagestyle{headings}
\pagenumbering{arabic}

\chapter{Introduction}
In the last decade or so, the importance of \emph{machine learning} has increased dramatically.
Since the release of AlexNet in 2012, training \emph{neural networks} in particular has enjoyed rapidly growing attention.
Most models nowadays referred to as AI are in fact variants of neural networks trained with some \emph{stochastic (gradient) optimization} algorithm. While the field of AI seems to find success after success in applications, many aspects of the theoretical foundations of AI are poorly understood. Implementing a stochastic optimization algorithm is reasonably straightforward. Modern machine learning frameworks use automatic differentiation in backward mode (a.k.a. backpropagation) to compute gradients. Aside from computing gradients, the algorithms are usually rather simple.

However, it is not simple to see why they should compute anything useful on all but the simplest toy problems. At least for \emph{overparameterized} models, that is models where the number of parameters greatly exceeds the number of data points available. In practice we can effectively learn overparameterized models, although the reason why remains mostly elusive.

Consider, for concreteness, a $d$-dimensional stochastic optimization algorithm $\chi$ with dynamics given by 
\begin{align}\label{eq:SGDintro}
	\chi_{n+1}^h = \chi_n^h - h \nabla R_{\bz(n)}(\chi_n^h), \quad n \in \N_0, h \in (0,1), 
\end{align}
where $(R_z)_z$ is a family of differentiable functions from $\R^d$ to $\R$ and $(\bz(n))_{n \in \N_0}$ is an \tiid\ sequence of random variables in some measurable space $\cZ$. We interpret $(\chi_n^h)_{n \in \N_0}$ as the sequence of estimated parameters when applying a \emph{stochastic gradient descent} (SGD) method for minimizing the function $\cR(x) = \E[R_{\bz(0)}(x)]$ with constant \emph{step size} or \emph{learning rate} $h$. The function $\cR$ itself can be interpreted as \emph{empirical risk} (\tIe{} training error) or \emph{population risk}.
We refer to $h$ as the learning rate and $R_{\bz(n)}$ as the risk due to the $n$-th data point (or mini-batch: a small set of data points).
We also denote by $\Si(x) = \Cov[\nabla R_{\bz(0)}(x)]$ the covariance matrix of $\nabla R_{\bz(0)}(x)$.

Investigating the behavior of the discrete SGD dynamics directly is generally very difficult. 
To make the SGD process tractable with methods from mathematical analysis one frequently approximates the SGD dynamics with an ODE, usually referred to as gradient flow (GF), given by
\begin{equation}
	\label{eq:genericGF}
	\dot X_t^0 = -\nabla \cR(X_t^0), \quad X_0^0 = \chi_0,
\end{equation}
One can show, under certain regularity conditions on $\cR$, that Equation \eqref{eq:genericGF} is then a first-order approximation of SGD in the learning rate, \tIe{} for all $T > 0$ and sufficiently regular test functions $g$ we have
\[|\E g(\chi_{\floor{T/h}}^h) - g(X_T^0)| = \cO(h),\quad h\downarrow 0.\]
Here, first-order refers to first power $h^1$ of $h$ on the right-hand side. In other words, the global truncation error of $\chi$ converges to $0$ \emph{linearly} as we let the step size $h$ go to $0$.

Gradient flow dynamics are deterministic and hence ignore the randomness in SGD. Therefore, in recent years analytic approximations in terms of \tSDE s (SDEs) have become popular. They generally take the form
\begin{equation}
\label{eq:genericSGF}
dX_t^h = - \nabla \cR(X_t^h)\,dt + h b(X_t^h)\,dt + \sqrt{h D(X_t^h)}\,dW_t.
\end{equation}
Here, $W$ is a $d$-dimensional Brownian motion and $D(x)$ is a symmetric and positive semi-definite $d\times d$ matrix.
In particular, \citet{mandt2015continuous} introduced an approximation of the form $\eqref{eq:genericSGF}$ with $b = 0$ and $D$ constant in space such that $D \approx \Si(x)$ in a \enquote{region of interest} (e.g. around a stationary point of $\cR$). \citet{Li15} studied approximations with a state-dependent diffusion coefficient, in particular $b = 0$ and $D = \Si$. 
All approximations of SGD with $b = 0$ and sufficiently regular $D$ are in fact (weak) first-order approximations, just like gradient flow, and generally not approximations of higher order \citep[see][Appendix C, Remark 4]{Li15}.

We can gain further insight by exploring higher-order approximations for \emph{deterministic} $\chi$. In this case $\chi$ is simply the (forward) \emph{Euler method} applied to the gradient flow ODE. Indeed, the Euler method for given ODE is generally a first-order approximation of that ODE. In the literature on numerics of differential equations it is well-known that modifying the (drift) coefficient by setting $b = - \frac12\nabla^2 \cR \nabla \cR$ (and $D = 0$) in Equation \eqref{eq:genericSGF} yields a \emph{second-order approximation} of $\chi$ \citep[see][Section IX.7.2]{hairer2010geometric}. Here, $\nabla^2 \cR$ denotes the Hessian matrix of $\cR$. The resulting family of ODEs is called a \emph{second-order modified equation} of gradient flow. We also call it a second-order modified equation of the discrete process $\chi$.

\citet{Li15, Li18} showed that if we combine the drift modification $b = - \frac12\nabla^2 \cR \nabla \cR$ with the state-dependent diffusion coefficient $D = \Si$, then Equation \eqref{eq:genericSGF} becomes a genuine second-order approximation of $\chi$ in the stochastic case for sufficiently regular $\Si$.

Accordingly, we call Equation \eqref{eq:genericSGF} a first-order stochastic modified equation (SME) of $\chi$ if $b = 0$, and a second-order SME of $\chi$ if $b =- \frac12\nabla^2 \cR \nabla \cR$ and $D= \Si$.

Stochastic modified equations have been used as simplified models to study the dynamics of (variants of) SGD. In particular, SDE approximations have been used to optimize hyperparameters \citep[see][]{mandt2015continuous, mandt_stochastic_2017, Li15, malladi_SDEs_2022, zhao2022batch, perko2023unlocking}, to analyze the long-term behavior of SGD processes \citep[see][]{cao2020approximation, kunin_rethinking_2022, wojtowytsch2024stochastic}, to study the impact of normalization schemes  \citep[see][]{li_reconciling_2020}, to analyze the runtime until convergence \citep[see][]{hu2020runtime}, to study the transition between stationary points \citep[see][]{yang2021fast, zhou2020towards, xie2020diffusion, hu2017diffusion}, to study the implicit bias and regularization properties of SGD \citep[see][]{ali2020implicit, pesme_implicit_2021, li_what_2022}, and to study the effect of running SGD in parallel \citep[see][]{An2019, boffi2020continuous}.

In most of these articles, the authors prefer to work with first-order approximations, in particular $(b,D) = (0, \Si)$. However, upon closer inspection in some applications a constant diffusion coefficient is also frequently used, for example for computing optimal hyperparameters.
Given that order of approximation is the same as that of gradient flow, it is unclear whether first-order \tSME{} are more accurate. To compare first-order approximations including GF, we can aim to compute the linear error term, that is, the constant $C$ for which
\[\E g(\chi_{\floor{T/h}}^h) - \E g(X_T^0) = Ch + \cO(h^2), \quad h \downarrow 0.\]
\citet[Appendix C, Remark 5]{Li15} note that, in principle, the constant $C$ can be computed using the method by \citet{TalayTubaro} for the expansion of the global error of numerical schemes for \tSDE s. The underlying hope is to show that SMEs have a smaller approximation error compared to GF.

The first part of this thesis focuses on precisely this question. In Chapter \ref{chap:mdfdeq} we prove a general approximation result for \tSME s driven by Brownian motion and compute their linear error terms using a method inspired by \citet{TalayTubaro}. In contrast to previous works, we show regularity of the global truncation error in the initial condition. Additionally, we allow for time-inhomogeneous dynamics $\chi$ and do not assume a gradient field structure. Instead we work with a random, time-dependent increment function $f$. The time-inhomogeneous dynamics allows us, for example, to incorporate step size controls $u : [0,\infty) \to [0,1]$, that is step sizes that change over time. Another option we can consider is volatility controls, which leave $\bar f = \E f$ unchanged but change the covariance matrix $\Si = \Cov f$ over time.
We show that by choosing $(b,D) = (- \frac12\nabla^\transp \bar f \bar f + \der_t \bar f,\Si)$, the equation
\begin{equation}
\label{eq:genericSME}
dX_t^h = \bar f_t(X_t^h)\,dt + h b_t(X_t^h)\,dt + \sqrt{hD_t(X_t^h)}\,dW_t
\end{equation}
becomes a second-order SME of SGD. Here, $\nabla^\transp \bar f$ denotes the Jacobian matrix of $\bar f$. Moreover, we show that the linear error term for the general Equation \eqref{eq:genericSME} quantifies how much our chosen coefficients $(b,D)$ deviate from the coefficients $(- \frac12\nabla^\transp \bar f \bar f + \der_t \bar f,\Si)$ of the second-order SME.

Chapter \ref{chap:smeopt} is a transition away from the numerics of differential equations towards stochastic gradient optimization. In particular, we consider mini-batch SGD and its approximation by \tSME s. We discuss two main examples, namely SGD with replacement and one-pass SGD.
In SGD with replacement we draw our sequence of samples $(z(n))_{n\in \N}$ \tiid\ from a finite data set $\set{Z_1,\dots, Z_N}$ of size $N$. In contrast, in one-pass SGD we draw an \tiid\ sequence directly from the population, i.e. the true real-world distribution of our data. Focusing on the one-pass case, we work out the example of linear regression for observational data explicitly.

Using the linear regression example, in Chapter \ref{chap:compare} we perform a detailed comparison of first-order (stochastic) modified equations for SGD. We demonstrate that in this case, stochastic approximations are indeed usually better than gradient flow due to presence of residual noise in the data. Moreover, we show that the ranking of the different approximations is tightly linked to quantities like the \emph{batch size} and the \emph{kurtosis of the data features}. 

So far, we have always assumed that the SGD iterations \eqref{eq:SGDintro} use an \tiid\ sequence of data points, which corresponds either to SGD with replacement or one-pass SGD. Neither of these algorithms is actually used in practice. The first one is considered inefficient, since it can take a long time until all data points are sampled. The second one is impractical. After all, the abundance of data is limited, and if that is the case, then it seems wasteful to train on every data point only once. Instead, in practice one always uses SGD \emph{without replacement} (SGDo). That is, we use one-pass SGD until our finite data set is exhausted and then reuse the same data in subsequent training periods called \emph{epochs}. It is then up to us to define a new ordering for the data points in later epochs. Typical choices include using the same order as in the first epoch (\emph{single-shuffle}) or shuffling the data uniformly and independently across epochs (\emph{random reshuffling}). But in principle the options are endless.

Because we are reusing data points, the Markov property is longer satisfied for SGDo.
Thus, no version of SGDo is covered by the current theory of \tSME s. In the second part of this thesis, we take a step towards a theory of SMEs for SGDo. In Chapter \ref{chap:towardsSGDoSME} we introduce a novel approximation of SGDo using a family of Young (or rough) differential equations which are driven by a process we call an \emph{epoched Brownian motion} (EBM). An EBM is a Brownian motion up to some time point $T > 0$, and is then given by repeating the same Brownian path on the intervals $[jT,(j+1)T], j\in \N$, perhaps up to an \enquote{infinitesimal permutation} of the increments. We focus on approximations with state-independent diffusion coefficient, similar to \citet{mandt2015continuous} in the \tiid\ setting.
While establishing a rigorous approximation theory is beyond the scope of this thesis, we demonstrate the usefulness of EBM-driven approximations via an application.
Specifically, we show that given a step size control of the form $u_t = \frac{1}{(1+t)^\be}, \be \in (0,1)$ for the SGDo iterations and a strongly convex objective function $\cR$, the solution to the approximating EBM-driven equation converges almost surely. Further, we compute an asymptotic upper bound on the convergence speed which at least is as sharp as previous results by \citet{gurbuzbalaban_why_2021} on the convergence of single shuffle SGDo. Moreover, in the case of general random permutations, our results suggest markedly better upper bounds than the best results known for random reshuffling.

Finally, in Chapter \ref{chap:weakshuffle} we clarify our heuristic idea of considering the same Brownian path up to an \enquote{infinitesimal permutation} when approximating SGDo by an EBM-driven differential equation.
We establish weak approximation results for epoched Brownian motion by random walks. In particular, we consider families of random walks that share the same increments, up to a (random) permutation. We show the existence of Gaussian scaling limits of these random walks under natural assumptions on the sequence of permutations. The central assumption is the convergence of the sequence of random permutations to a deterministic \emph{permuton}, i.e. a probability measure on the unit square with uniform marginals. Further, we show that the covariance function of the limiting process is given exactly by the distribution function of the limiting permuton (a so-called \emph{copula}).
This covers the case of EBM with two epochs. More generally, we consider higher and infinite-dimensional permutons and copulas. This allows us to realize EBMs with arbitrarily many epochs as scaling limits.

\chapter{Preliminaries}
Before starting, let us briefly introduce notation and some basic properties that will be used repeatedly throughout this thesis.
Additional notation is introduced along the way as needed.

\section*{Words and multi-indices}
We write $\N = \set{1,2,\dots}$ and $\N_0 = \N \cup \set{0}$.
Given a set $A$ denote by $A^*$ the set of words over $A$, i.e.
\[A^* = \bigcup_{n\in \N_0} A^n, \quad A^0 = \set{()}.\]
We define the concatenation of two words $a \in A^m$ and $b\in A^n$ by
\[(a_1,\dots, a_m)(b_1,\dots, b_n) = (a_1,\dots, a_m, b_1, \dots, b_n)\in A^{m+n}.\]
The empty word $()$ is an identity for the concatenation operation. 
We also define the length of $a\in A^m$ by $|a| = m$.

Given $d,e,l\in \N$ write 
\[d\times e = \set{1,\dots, d}\times \set{1,\dots, e},\]
and
\[d^{\times l} := \underbrace{d\times \dots \times d}_{l \text{ times}}.\]

The words over the natural numbers may also be viewed as \emph{ordered multi-indices}. 
Given $d, e\in \N^*$ we write $d\leq e$ if $|d| = |e|$ and $d_i \leq e_i$ for all $i\leq |d|$.
We set
\[\Pi d := d_1 \times \dots \times d_{|d|}.\]

In particular, for a natural number $d\in \N$ we have
\[\Pi (\underbrace{d, \dots, d}_{l \text{ times}}) = d^{\times l}.\]

We also consider \emph{unordered multi-indices} which we typically simply call \emph{multi-indices}. An unordered multi-index is a function $\al : \set{1,\dots, d}\to \N_0$. Its size is defined by 
\[|\al| = \sum_{k=1}^d \al(k).\]
We also define
\[\al! = \prod_{k=1}^d \al(k)!, \quad x^{\al} = \prod_{k=1}^d x^{\al(k)}_k,\quad x\in \R^d.\]

\section*{Placeholders}
We write $\blnk$ to also denote a placeholder (if it cannot be confused with multiplication). For example, a norm $B\to [0,\infty), x\mapsto \nrm{x}{}$ on a normed space $B$ may be denoted by $\nrm{\blnk}{}$. For another example, consider a function $f : A\times B\to C$. Then $f(a,\blnk)$ denotes the function 
\[f(a,\blnk) : B\to C, b\mapsto f(a,b),\]
for all $a\in A$.

\section*{Uniform bounds}
Suppose we are given functions $f_1,\dots, f_n : A \to \R$. Then we write
\[f_1 \lesssim \dots \lesssim f_n,\]
or
\[f_1(x) \lesssim \dots \lesssim f_n(x), \text{ uniformly over } x\in A,\]
if there exist constants $C_1,\dots, C_n > 0$ such that
\[C_1 f_1(x) \leq \dots \leq C_n f_n(x), \quad x\in A.\]
We also write $f_1 \asymp f_2$ if $f_1 \lesssim f_2 \lesssim f_1$.
We can also mix $\lesssim$ and $\asymp$ with $\leq$ and $=$ as needed.

If $(E, (\nrm{\blnk}{n})_{n\in \N})$ is a vector space equipped with a family of seminorms (or just one norm), and $(x_j)_{j\in J}$ is a family (of elements in some space $F$ with $E\subseteq F$), then we write 
\[x_j \in E, \text{ uniformly in } j\in J,\]
if $x_j \in E, j \in J$, and $\sup_{j\in J} \nrm{x_j}{n} < \infty$ for all $n\in \N$.

\section*{Graded \tFre{} algebras}
A \tFre{} space is a complete metrizable Hausdorff topological vector space $F$ such that its topology is induced by a \emph{grading}. We call a countable family of seminorms $(\nrm{\blnk}{p})_{p\in \N}$ a grading if
\begin{itemize}
\item $\nrm{\blnk}{p} \leq \nrm{\blnk}{q}$ for $p\leq q$, and
\item $\bigcap_{p\in \N} \set{x\in F : \nrm{x}{p} = 0} = \set{0}$.
\end{itemize}
Thus, $x_n \to x$ in $F$ if and only if $\nrm{x_n-x}{p}\to 0, p\in \N$.
Given $x\in F$ we define
\[\nrm{x}{\infty} := \sup_{p\in \N} \nrm{x}{p} \in [0,\infty].\]

A \tFre{} algebra is a \tFre{} space $F$ together with a continuous operation $\cdot : F\times F\to F$ making it also an $\R$-algebra.
A graded \tFre{} space is a \tFre{} space equipped with a grading. 
A graded \tFre{} algebra is \tFre{} algebra with a grading such that for all $p\in \N$ there exists a $q\in \N$ with
\[\nrm{xy}{p} \lesssim \nrm{x}{q}\nrm{y}{q}, \unfovr x,y\in F.\]
We call the grading, and the graded \tFre{} algebra, \emph{Hölder-type} if for all $m\in \N, p, q_1,\dots, q_n\in \N \cup \set{\infty}$ with
\[\sum_{i=1}^n \frac1 {q_i} = 1,\]
we have
\[\nrm{\prod_{i=1}^n x_i}{p} \lesssim \prod_{i=1}^n \nrm{x_i}{pq_i}, \unfovr x_1,\dots, x_n\in F.\]
Here, we define $p\cdot \infty := \infty$ for $p\in \N$, and $\frac 1 {\infty} := 0$.
One simple example for a Hölder-type graded \tFre{} algebra is $\R$ with the grading $\nrm{\blnk}{p} := |\blnk|, p\in\N$.
Another less trivial example is $\Lfin(\Om, V) = \bigcap_{p\in \N} L^p(\Om, V)$, the space of $V$-valued random variables with finite moments. Here, $(\Om, \cF_\Om, \P)$ is a probability space and $V$ a closed linear subspace of some Banach space. It is a \tFre{} algebra under the standard operations, and the grading is given by the family of $p$-norms.

\section*{Arrays}
Let $F$ be a Hölder-type graded \tFre{} algebra.

Let $d, e\in \N^*, k = |d|, l = |e|$. 
An element of $F^{\Pi d}$ is a function $\Pi d \to F$, that is a \emph{$k$-array} (or \enquote{tensor}) with dimensions $d_1, d_2, \dots, d_k$ and values in $F$.
Given $A\in F^{\Pi d}$ we define
\[\nrm{A}{p} = \max_{i\leq \Pi d}\nrm{A_i}{p}, \quad p \in \N.\]
Here, the maximum runs over all ordered multi-indices $i$ with $i \leq \Pi d$.
This a seminorm and $(F^{\Pi d}, (\nrm{\blnk}{p})_{p\in \N})$ is a graded \tFre{} space with addition and scalar multiplication defined component-wise.
Given $e\in \N^*$, $B\in F^{\Pi d}$ and $C\in F^{\Pi e}$ we define the \emph{outer product} $B\otimes C\in F^{\Pi(de)}$ by
\[(B\otimes C)_{ij} = B_iC_j = B_{i_1\dots i_k}C_{j_1\dots j_l}\quad i \leq d, j \leq e\]
and we set 
\[B^{\otimes j} := \underbrace{B\otimes \dots \otimes B}_{j \text{ times}}, \quad j \in \N.\]
For example if $\Pi d = d$ and $\Pi e = e$, then $B,C$ are vectors and $B\otimes C = BC^\transp \in F^{d\times e}$ is a matrix. 
The identity array $1_{\Pi d} \in F^{\Pi d}$ is defined by
\[(1_{\Pi d})_i = \begin{cases}
1, & i_1 = \dots = i_k, \\
0, & \text{else}.
\end{cases}\]
Here, $1$ is the multiplicative identity in the $\R$-algebra $F$.
For $k = 1$ we have $1_d = (1,\dots, 1)$. For $k = 2$ we get the $d\times d$-identity matrix.

Given $d, e, f\in \N^*$ with $l = |d|$, $A \in F^{\Pi(de)}$ and $B\in F^{\Pi(df)}$, we define $\innp{A}{B}_l \in F^{\Pi(ef)}$, the \emph{$l$-th contraction} of $A$ and $B$, by 
\begin{equation}
\label{eq:arrayContr}
(\innp{A}{B}_l)_{jj'} = \sum_{i\leq d} A_{ij} B_{ij'}, \quad j\leq e, j'\leq f.
\end{equation}
If $e = ()$ or $f = ()$ then we simply write $\innp{A}{B} = (\innp{A}{B})_l$. In particular, if $e = f = ()$, then $\innp AB \in F$. In the case $\Pi d = d$ we recover the dot product of vectors and in the case $\Pi d = m \times n$ we get the \emph{Frobenius inner product} of matrices, that is
\[\innp A B = \tr(A^\transp B) = \sum_{k=1}^m \sum_{l=1}^n A_{kl} B_{kl}, \quad A,B\in F^{m\times n}.\]
If we are given $A\in \R^{\Pi(de)}$ instead (or alternatively a real-valued array $B$), Equation \eqref{eq:arrayContr} still makes sense: We can either view the product $A_{ij} B_{ij'}$ as a scalar-vector product, or we can view $\R^{\Pi (de)}$ as a subset of $F^{\Pi (de)}$ by identifying $A$ with $A'\in F^{\Pi(de)}$ given by
\[A_i' = A_i 1, \quad i \leq \Pi(de).\]
Both choices yield the same result.
For $A\in \R^{\Pi d}$ we also define \emph{Frobenius norm}
\[|A| = \sqrt{\innp A A}.\]
Of course, it is equivalent to the norm $\max_{i\leq \Pi d} |A_i|$ considered above, that is
\[|A| \asymp \max_{i\leq \Pi d} |A_i|, \unfovr A\in \R^{\Pi d}\]
However, the Frobenius norm is often nicer to work with when it is available. For matrices $A\in \R^{m\times n}$ we further define the \emph{spectral norm}
\[\specnrm{A} := \max_{x\neq 0} \frac{|Ax|}{|x|} = \max_{|x| = 1} |Ax|.\]
We have
\[\specnrm{A} = \sqrt{\la_{\max}(A^\transp A)},\]
where $\la_{\max}(B)$ is the largest eigenvalue of a square matrix $B$.

Note that the following properties hold true:
\begin{itemize}
	\item $\innp{A}{B\otimes C} = \innp{\innp{A}{B}}{C}\in F, \quad A \in F^{\Pi(de)}, B\in F^{\Pi d}, C\in F^{\Pi e}$.
	\item $\innp{A}{B}\innp{C}{D} = \innp{A\otimes C}{B\otimes D}, \quad A,B,C,D\in F^{\Pi d}$.
	\item $\innp{A}{u} = (u^\transp A)^\transp \in \R^n, \quad A\in \R^{m\times n}, u \in \R^m$.
	\item $\innp{A}{u\otimes v} = u^\transp A v, \quad A\in \R^{m\times n}, u \in \R^m, v\in \R^n$.
	\item $|\innp A B| \leq |A||B|,\quad  A,B\in \R^{\Pi d}$.
	\item $|A\otimes B| \leq |A||B|, \quad A,B\in \R^{\Pi d}$.
\end{itemize}
Further, for all $p\in \N$,
\begin{itemize}
\item $\nrm{\innp A B}{p} \lesssim |A| \nrm{B}{p}, \unfovr A\in \R^{\Pi d}, B\in F^{\Pi d}$.
\item $\nrm{\innp A B}{p} \lesssim \nrm{A}{pq} \nrm{B}{pr}, \unfovr A, B\in F^{\Pi d}$, for all $q,r\in \N \cup \set{\infty}$ with $\frac1q + \frac1r = 1$.
\item $\nrm{\bigotimes_{i=1}^n A_i}{p}\lesssim \prod_{i=1}^n\nrm{A_i}{pq_i}, \unfovr A_i\in F^{\Pi d_i}, i = 1,\dots, n$, for all $n\in \N$ and $q_1,\dots, q_n \in \N \cup \set{\infty}$ with $\sum_{i=1}^n q_i^{-1} = 1$.
\end{itemize}

\section*{Smooth functions}
Given $d\in \N, e \in \N^*$, an open set $U\subseteq \R^d$, and a function $f : U\to \R^{\Pi e}$, we write $f \in \dC l(U,\R^{\Pi e})$ if $f$ is $l$-times continuously differentiable (component-wise). We also allow $U$ to be a more general set. Given $U\subseteq \R^d$ we write $f \in \dC l(U,\R^{\Pi e})$ if $f$ is in $\dC l$ on the interior of $U$, and $f$ as well as its derivatives can be uniquely and continuously extended to the boundary.

An arbitrary subset of $U$ can be very degenerate. The subsets of interest to us are finite Cartesian products of the form $U = I_1\times \dots \times I_m \times \R^{d-m}$, where $I_1,\dots, I_m$ are bounded intervals.
If a function $f : U \to \R^{\Pi e}$ is in $\dC l$, then it can be extended to a function $\tilde f\in \dC l(\R^d, \R^{\Pi e})$ (see also Section \ref{sec:clext} in the Appendix). At the boundary of $U$, the derivatives of $\tilde f$ must then coincide with the continuous extensions of the derivative of $f$. In particular, at the boundary of $U$, the derivatives of $\tilde f$ are uniquely determined. Thus, for example, for the purpose of applying Taylor's theorem to $f$, boundary points of $U$ can be treated as interior points.

\section*{Function spaces}
For (families of) function spaces such as $\dC l$ we write $f\in \dC l, \dC l(U), \dC l(U, V)$ depending on the level of detail required. If $f$ is introduced as a function $U\to V$, then $f\in \dC l$ and $f\in \dC l(U)$ means $f\in \dC l(U,V)$. If the codomain of $f$ is not directly specified, then $f\in \dC l(U)$ means $f\in \dC l(U, \R)$. If neither the domain nor the codomain of $f$ is specified and we write $f\in \dC l$, then this means that there exist sets $U,V$ with $f : U\to V$ and $f\in \dC l(U,V)$.
Similarly, we treat other families of function spaces.

\chapter{Modified equations}
\label{chap:mdfdeq}
In this chapter, we consider a stochastic one-step method $\chi$ and study continuous-time approximations of $\chi$ called (stochastic) modified equations. We show first- and second-order weak approximation results, including regularity in the initial condition, and compute the linear error term for first-order approximations explicitly.
This chapter is inspired by and expands on \citet[Section 4]{perko2024compare}.

\section{Introduction}
In this chapter, our goal is to study the convergence and global error of a stochastic \emph{one-step method} given by
\begin{equation}
\label{eq:GSGD}
\chi_{n+1}^{h} = \chi_n^h + h f_{nh}^{h}(\chi_n^{h}), \quad \chi_0 = x \in \R^d,
\end{equation}
when approximating a differential equation.
Here, we consider a complete probability space $(\Om, \cF_\Om,\P)$ and a random \emph{increment function} \[f : \Om \times (0,1) \times [0,T] \times \R^d\to \R^d, (\om, h, t, x) \mapsto f_t^{h}(\om)(x).\]
The value $h\in (0,1)$ can be interpreted as \emph{discretization parameter} or \emph{step size}.
We assume that the finite family $(f_{nh}^h)_{n \leq \floor{T/h}}$ is independent for all $h\in (0,1)$. 

Typical choices for increment functions satisfy
\[f_t^h = u_t H_{\bz(\floor{t/h})}, \quad h\in (0,1), t\in [0,T].\]
Here, $u : [0,T]\to \R$ is sufficiently regular function, $(\bz(n))_{n\in \N}$ an \tiid\ sequence of random variables in some measurable space $\cZ$, and $H : \cZ \times \R^d \to \R^d$ is such that the random function $H_{\bz(0)}$ is sufficiently regular. In particular, we assume $H_{\bz(0)}(x)$ has finite moments for all $x\in \R^d$.

Assume that $\E f_t^h(x)$ exists and that it does not depend on $h\in (0,1)$.
Define
\[\bar f :[0,T]\times \R^d \to \R^d, (t,x) \mapsto \E f_t^{h}(x), \quad h\in (0,1),\]
and consider the \tODE{}
\begin{equation}
\label{eq:1stOrderOME}
\der_t X^0_t = \bar f_t(X^0_t), t\in [0,T], \quad X_0^0 = x.
\end{equation}
If $f$ is non-random, we have $f = \bar f$ and \eqref{eq:GSGD} is simply the \emph{Euler method} with step size $h$ applied to \eqref{eq:1stOrderOME}. Then, under certain conditions, the Euler method is known to be a \emph{first-order approximation} of \eqref{eq:1stOrderOME}. That is, for all $T > 0$ there exists a constant $C > 0$ such that
\[|\chi_{T/h}^h - X_T^0| \leq Ch, \quad h \in (0,1), T/h \in \N.\]

If $f$ is random, one may think of \eqref{eq:GSGD} as a noisy version of the Euler method that does not require us to calculate the full expectation $\bar f$ at every iteration. Several reasons come to mind for considering \eqref{eq:GSGD}:
\begin{itemize}
\item The expectation is given as an average over a very large number of realizations and is therefore expensive to compute.
\item The expectation is taken with respect to some continuous distribution and no explicit formula is known or useful.
\item The underlying distribution is not known at all, so no expectation can be computed.
\end{itemize}

In Section \ref{sec:ODE} we will show that the stochastic one-step method or \emph{noisy Euler method} \eqref{eq:GSGD} is still a \emph{first-order weak approximation} of the ODE \eqref{eq:1stOrderOME}, in the sense that for all $T > 0$ and sufficiently regular functions $g$ there exists a constant $C > 0$ such that
\[|\E g(\chi_{T/h}^h) - g(X_T^0)| \leq C h, \quad h \in (0,1), T/h \in \N.\]
It is somewhat remarkable that this is true despite the fact that $f$ may be a very crude estimator of $\bar f$.
Thus, the quality of the approximation is really captured in the constant $C$.
Consequently, we determine the linear error term
\[\frac1h(\E g(\chi_{T/h}^h) - g(X_T^0))\]
precisely as well.
Finally, we also investigate the regularity of $C$ as a function of the initial condition $\chi_0 = x$.

The one-step method \eqref{eq:GSGD} exhibits random effects that the ODE \eqref{eq:1stOrderOME} does not have. To better explain these effects it can be useful to approximate \eqref{eq:GSGD} using a family of \tSDE s, driven by a $d$-dimensional Brownian motion $W$, of the form
\[dX_t^h = \bar f_t(X_t^h)\,dt + b_t(X_t^h)\,dt + \sqrt{h D(X_t^h)}\,dW_t.\]
Here, $b$ and $D$ are sufficiently regular functions, and $D$ takes values in the space of positive semi-definite symmetric $d\times d$-matrices.
We study these so called \emph{stochastic modified equations} (SMEs) in Section \ref{sec:fstOrderSMEs}. We show that they are (at least) weak first-order approximations of \eqref{eq:GSGD}, in the sense that for all $T > 0$ and sufficiently regular functions $g$ there exists a constant $C > 0$ such that
\begin{equation}
\label{eq:1stOrderSME}
|\E g(\chi_{T/h}^h) - \E g(X_T^h)| \leq Ch, \quad h \in (0,1), T/h \in \N.
\end{equation}
and we compute their linear error terms as well. We also derive a second-order SME, meaning a particular family of SDEs where we can replace the $Ch$ on the right-hand side of \eqref{eq:1stOrderSME} with $\tilde Ch^2$ for some constant $\tilde C > 0$.

Even though we manage to compute the linear error terms for both stochastic and deterministic differential equations, the calculation does not immediately make clear whether first-order stochastic approximations are indeed better (according to their linear error term). We perform a thorough comparison in a special setting in Chapter \ref{chap:compare}.

\begin{docu}
I think, one cannot generalize Lemma \ref{lem:LGequiv} without assuming $L_p$ differentiability upfront. I mean even more basically the symbol $\der^\al X$ has no meaning given just $X\in LG^l$ and it wont be jointly measurable without assuming that.
For example $\nrm{X(x)}{p} \in \dC l$ is weaker than claiming $X : U \to L_p(\Om, V)\in \dC l$. Further existence of $DX$ along deterministic directions $v$ is weaker than Gateaux differentiability into $L_p$, which also considers random directions. Also note that there is a problem if $p$ is not an even integer, since $|x|^p$ is not smooth for those.
\end{docu}

\section{Preliminaries}
\subsection{Growth conditions}
In this section we discuss various regularity conditions that help us streamline the arguments in Section \ref{sec:ODE}.
We state some properties here without proof. We go in much further detail in Section \ref{sec:prelimSMEs} as preparation for discussing the more general theory of stochastic modified equations in Section \ref{sec:fstOrderSMEs}.

Let $m\in \N, e\in \N^*, V\subseteq \R^m, W\subseteq \R^{\Pi e}$, $g : V \to W$ be a continuous function, and $\ka \in \N_0$. We say $g$ \emph{has (at most) polynomial growth of order $\ka$} if
\[|g(x)| \lesssim 1 + |x|^\ka, \unfovr x \in V.\]
In this case we write $g\in \wCZ \ka$.
More generally, let $l\in \N_0$ and $g \in \dC l$.
Then we define
\[\nrm{g}{\wC l \ka} = \max_{|\al|\leq l}\sup_{x\in V} \frac{|\der^\al g(x)|}{1+|x|^\ka}.\]
Here, the maximum ranges over all (unordered) multi-indices $\al$ up to size $l$. We write $g\in \wC l \ka$ if $g\in \dC l$ with $\nrm{g}{\wC l \ka} < \infty$. In particular, $\wCZ \ka = \wC 0 \ka$.

Polynomial growth conditions are stable under various elementary operations, as the next lemma shows.
\begin{lem}
\label{lem:plyGrwthStab}
	Let $l, \ka, \la\in \N_0$. Then the following properties hold true:
	\begin{enumerate}[(i)]
		\item If $\la \leq \ka$, then $\wC l \la \subseteq \wC l \ka$ with $\nrm{\blnk}{\wC l \ka} \lesssim \nrm{\blnk}{\wC l \la}$.
		\item $\nrm{c f + g}{\wC l \ka} \leq |c| \nrm{f}{\wC l \ka} + \nrm{g}{\wC l \ka}, f,g\in \wC l \ka$.
		\item $\nrm{\innp{f}{g}}{\wC l {\ka+\la}}\lesssim \nrm f {\wC l \ka} \nrm g {\wC l \la}$, uniformly over $f\in \wC l \ka$ and $g\in \wC l \la$.
		\item $\nrm{f\otimes g}{\wC l {\ka+\la}}\lesssim \nrm f {\wC l \ka} \nrm g {\wC l \la}$, uniformly over $f\in \wC l \ka$ and $g\in \wC l \la$.
		\item $\nrm{g\circ f}{\wC l {\ka(\la+l)}} \lesssim \nrm{g}{\wC l {\la}}(1 \vee \nrm{f}{\wC l {\ka}}^{\la+l})$, uniformly over $f\in \wC l \ka$ and $g\in \wC l \la$.
	\end{enumerate}
\end{lem}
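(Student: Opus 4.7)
The plan is to verify each claim by tracking polynomial bounds through elementary operations, reducing to pointwise estimates on the derivatives.

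For (i), the key pointwise observation is that $\la\leq \ka$ implies $1+|x|^\la \leq 2(1+|x|^\ka)$ (split into $|x|\leq 1$ and $|x|\geq 1$). Dividing $|\der^\al g(x)|$ by the larger denominator then gives $\nrm{g}{\wC l \ka}\leq 2\nrm{g}{\wC l \la}$. Claim (ii) is immediate from the triangle inequality, homogeneity of the derivative, and the fact that the supremum of a sum is bounded by the sum of the suprema.

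For (iii) and (iv), I would apply the Leibniz rule componentwise, writing
\[\der^\al(f\cdot g) = \sum_{\be\leq \al}\binom{\al}{\be}\der^\be f\cdot \der^{\al-\be}g,\]
which is valid both for the inner product $\innp{f}{g}$ and the outer product $f\otimes g$ (the only difference being the array indexing). Using the sub-multiplicative bounds $|\innp{A}{B}|\leq |A||B|$ and $|A\otimes B|\leq |A||B|$ from the Preliminaries, each summand is pointwise controlled by $\nrm{f}{\wC l \ka}(1+|x|^\ka)\cdot \nrm{g}{\wC l \la}(1+|x|^\la)$. Since $(1+|x|^\ka)(1+|x|^\la)\leq 2(1+|x|^{\ka+\la})$, dividing by $1+|x|^{\ka+\la}$ and taking the supremum gives the required bound, with the finitely many binomial coefficients absorbed into $\lesssim$.

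The main work is (v), which I would handle with the multivariate Faà di Bruno formula. This expresses each component of $\der^\al(g\circ f)(x)$ as a finite linear combination of terms of the form
\[\der^\be g(f(x))\cdot \prod_{i=1}^k \der^{\ga_i} f(x), \qquad k=|\be|\leq |\al|,\ \textstyle\sum_{i=1}^k \ga_i = \al,\ |\ga_i|\geq 1.\]
For any such term I would estimate as follows. First, $|\der^\be g(f(x))|\leq \nrm{g}{\wC l \la}(1+|f(x)|^\la)$; then bounding $|f(x)|\leq \nrm{f}{\wC 0 \ka}(1+|x|^\ka)\leq \nrm{f}{\wC l \ka}(1+|x|^\ka)$ and using $(1+|x|^\ka)^\la\lesssim 1+|x|^{\ka\la}$ yields a factor of order $\la$ in $\nrm{f}{\wC l \ka}$ and order $\ka\la$ in $|x|$. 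Second, each $|\der^{\ga_i}f(x)|\leq \nrm{f}{\wC l \ka}(1+|x|^\ka)$, contributing an additional factor of $\nrm{f}{\wC l \ka}^k$ and polynomial degree $\ka k$. Multiplying the $k+1$ factors and using $k\leq l$ bounds each summand by
\[C\,\nrm{g}{\wC l \la}\,(1\vee \nrm{f}{\wC l \ka})^{\la+k}\,(1+|x|^{\ka\la+\ka k})\lesssim \nrm{g}{\wC l \la}\,(1\vee \nrm{f}{\wC l \ka}^{\la+l})(1+|x|^{\ka(\la+l)}).\]
Summing over the finitely many Faà di Bruno terms, then dividing by $1+|x|^{\ka(\la+l)}$ and taking the supremum over $|\al|\leq l$ gives the bound in (v).

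The only real obstacle is the bookkeeping in (v); the trick is to use $k\leq l$ to bundle the exponents $\la+k$ and $\ka\la+\ka k$ into $\la+l$ and $\ka(\la+l)$ respectively, which is what allows a single clean estimate uniform in $\al$.
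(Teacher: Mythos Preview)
Your proof is correct and uses the same core tools as the paper: the Leibniz rule for (iii)--(iv) and Fa\`a di Bruno for (v), with the same bundling trick $k\le l$ to absorb the variable exponents into $\la+l$ and $\ka(\la+l)$.

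The only difference is organizational. The paper does not prove this lemma directly; instead it develops a general framework of weighted H\"older spaces $\wHoel l \delt \cV$ over arbitrary weight functions $\cV$ and H\"older-type graded Fr\'echet algebras, proves the analogous stability properties there (Lemma~\ref{lem:wCstab}), and then obtains the present lemma as the special case $\delt=0$, $F=\R$, $\cV=\cV_\ka(x)=1+|x|^\ka$. Your direct argument is exactly what that specialization unwinds to. The paper's detour pays off later because the same abstract lemma is reused for random-field versions (Lemma~\ref{lem:wHoelgRndFld}) and for Lipschitz/H\"older variants needed in the SDE regularity results; for this lemma in isolation, your route is the more economical one.
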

\begin{proof}
	This is a special case of Lemma \ref{lem:wCstab} below.
\end{proof}
We assume in the above statements that the expression are all well-defined. In particular, for (i) we fix the same domain and codomain for both function spaces. For the remaining properties, we assume that the domains and codomains are fixed such that $+,\innp\blnk\blnk, \otimes $ and $\circ$ are well-defined.

Next, we discuss continuity and polynomial growth conditions for functions which take random variables as values. Let $d\in \N, U\subseteq \R^d$, $V\subseteq \R^m$ be a linear subspace and $X : U \to \Lfin(\Om, V)$ be a function taking values in the space of $V$-valued random variables with finite moments.
Then we write $X\in \cC(U, \Lfin(\Om ,V))$ if given $x_n \to x$ in $U$ we have $X(x_n)\to X(x)$ in $L^p$, for all $p\geq 1$.
Further, we write $X\in \wCZ \ka(U, \Lfin(\Om,V))$ if:
\begin{itemize}
\item $X\in \cC(U, \Lfin(\Om ,V))$,
\item $\nrm{X(x)}{p} \lesssim 1 + |x|^\ka$, uniformly over $x\in U$, for all $p\geq 1$.
\end{itemize}
Note that since the sequence $\nrm{\blnk}{p}$ is non-decreasing it suffices to consider the condition $\nrm{X(x)}{p} \lesssim 1 + |x|^\ka$ for $p\in \N$.
We equip $\wCZ \ka(U, \Lfin(\Om,V))$ (which is indeed a vector space) with the family of norms given by
\[\nrm{X}{\wCZ \ka,p} := \nrm{x \mapsto \nrm{X(x)}{p}}{\wCZ \ka}, \quad p\geq 1.\]
Thus, given a family $(X^i)_{i\in I}$, writing 
\[X^i \in \wCZ \ka(U, \Lfin(\Om,V)), \unfin i\in I\]
means $X^i \in \wCZ \ka(U, \Lfin(\Om,V)), i\in I$ and $\sup_{i\in I} \nrm{X^i}{\wCZ \ka,p} < \infty$, for all $p\geq 1$.

In the following we call $X : \Om \times U \to V$ a \emph{random field} if $X$ is measurable; i.e.\ measurable with respect to the product $\sigma$-algebra $\cF_\Om \otimes \cB(U)$ and the Borel $\si$-algebra $\cB(V)$.
\begin{lem}
\label{lem:strngLGimpliesLG}
Let $X : \Om \times U\to V$ be a random field such that $X : U \to \Lfin(\Om, V)\in \cC$, and such that there exists a random variable $Z \in \Lfin(\Om, \R)$ with
\[|X(x)| \leq Z(1 + |x|^\ka),\text{ a.s.}, \quad x\in U.\]
Then $X\in \wCZ \ka(U, \Lfin(\Om, V))$ and
\[\nrm{X}{\wCZ \ka,p} \leq \nrm{Z}{p}, \quad p\geq 1.\]
\end{lem}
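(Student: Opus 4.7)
The plan is to reduce the statement to a direct application of monotonicity of the $L^p$-norm. Continuity of $X$ as a map $U\to \Lfin(\Om, V)$ is supplied by hypothesis, so the only substantive thing left to verify is the polynomial growth condition, together with the explicit constant.

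First I would fix $p\in [1,\infty)$ and $x\in U$ and raise the pointwise almost sure bound $|X(x)|\leq Z(1+|x|^\ka)$ to the $p$-th power. Since $1+|x|^\ka$ is deterministic and nonnegative, this yields $|X(x)|^p \leq Z^p (1+|x|^\ka)^p$ almost surely. Taking expectations and using monotonicity of the integral gives $\E|X(x)|^p \leq (1+|x|^\ka)^p \, \E|Z|^p$, and taking $p$-th roots produces
\[
\nrm{X(x)}{p} \leq \nrm{Z}{p}(1+|x|^\ka).
\]
Since $Z\in \Lfin(\Om, \R)$, the right-hand side is finite for every $p\geq 1$.

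Dividing by $1+|x|^\ka$ and taking the supremum over $x\in U$ yields $\nrm{x\mapsto \nrm{X(x)}{p}}{\wCZ \ka}\leq \nrm{Z}{p}$, which is precisely the asserted bound $\nrm{X}{\wCZ\ka,p}\leq \nrm{Z}{p}$. Combined with the assumed continuity $X\in \cC(U, \Lfin(\Om, V))$, both defining conditions of $\wCZ \ka(U, \Lfin(\Om, V))$ are satisfied, which gives $X \in \wCZ \ka(U, \Lfin(\Om, V))$.

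There is essentially no obstacle here; the only small point is to note that the polynomial growth condition was originally phrased via $\nrm{X(x)}{p}\lesssim 1+|x|^\ka$ uniformly over $x\in U$ (with implicit constant possibly depending on $p$), and the bound we derive is stronger in that the implicit constant is given explicitly by $\nrm{Z}{p}$, which in turn is finite for every $p$ since $Z\in \Lfin(\Om,\R)$.
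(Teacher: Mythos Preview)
Your proof is correct and follows exactly the same approach as the paper's own proof, which simply observes that the pointwise bound $|X(x)|\leq Z(1+|x|^\ka)$ implies $\nrm{X(x)}{p}\leq \nrm{Z}{p}(1+|x|^\ka)$ and that continuity is already assumed. The paper's argument is compressed into one line, whereas you have spelled out the intermediate steps (raising to the $p$-th power, taking expectations, taking $p$-th roots); the content is identical.
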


\begin{proof}
From the assumption we conclude
\[\nrm{X(x)}{p} \leq \nrm{Z}{p}(1 + |x|^\ka), \quad x\in U,\]
and continuity is already satisfied.
\end{proof}

\begin{lem}
	\label{lem:LGequiv} Consider a function $X : U \to \Lfin(\Om,V)$. The following are equivalent:
	\begin{enumerate}[(a)]
		\item $X \in \wCZ \ka(U, \Lfin(\Om, V))$.
		\item $g(X) \in \wCZ{\ka\la}(U, \Lfin(\Om, \R^{\Pi e}))$ for all $\la \in \N, e \in \N^*$ and $g\in \wCZ \la(V,\R^{\Pi e})$.
		\item $\E g(X) \in \wCZ{\ka \la}(U, \R^{\Pi e})$ for all $\la \in \N, e \in \N^*$ and $g\in \wCZ\la(V, \R^{\Pi e})$.
	\end{enumerate}
In this case,
\[\nrm{\E g(X)}{\wCZ{\ka\la}} \lesssim \nrm{g}{\wCZ \la}, \unfovr g\in \wCZ \la\]
\end{lem}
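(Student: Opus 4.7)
I would prove the cycle $(a)\Rightarrow(b)\Rightarrow(c)\Rightarrow(a)$, tracking constants so that the final inequality $\nrm{\E g(X)}{\wCZ{\ka\la}}\lesssim\nrm{g}{\wCZ\la}$ drops out of the argument. The first two implications are routine composition and integration steps; the last is where the real work sits.

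For $(a)\Rightarrow(b)$, fix $g\in\wCZ\la(V,\R^{\Pi e})$. The pointwise bound $|g(y)|\le\nrm{g}{\wCZ\la}(1+|y|^\la)$ combined with $\nrm{|Y|^\la}{p}=\nrm{Y}{p\la}^\la$ and the moment hypothesis on $X$ yields
\[\nrm{g(X(x))}{p}\;\lesssim\;\nrm{g}{\wCZ\la}(1+|x|^{\ka\la}),\quad x\in U,\]
which is the growth part of $(b)$. For continuity at $x$, take $x_n\to x$; by $(a)$ we have $X(x_n)\to X(x)$ in every $L^q$, hence in probability, so continuity of $g$ gives $g(X(x_n))\to g(X(x))$ in probability. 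Applying the growth estimate at the higher exponent $p(1+\ep)$ and using that $(x_n)$ is bounded yields uniform integrability of the family $(|g(X(x_n))|^p)_n$, and Vitali's theorem then promotes the convergence to $L^p$.

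The step $(b)\Rightarrow(c)$ is a direct application of Jensen's inequality: $|\E g(X(x))|\le\nrm{g(X(x))}{1}$ transfers the polynomial growth bound to $\E g(X(\cdot))$ with constant $\lesssim \nrm{g}{\wCZ\la}$, and continuity of $\E g(X(\cdot))$ is inherited from the $L^1$-continuity of $g(X(\cdot))$ established above. This collection of estimates simultaneously produces the asserted norm inequality.

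The difficult direction is $(c)\Rightarrow(a)$. Specializing $(c)$ to $g(y)=|y|^{2p}\in\wCZ{2p}(V,\R)$ forces $\E|X(x)|^{2p}\lesssim 1+|x|^{2p\ka}$, hence $\nrm{X(x)}{2p}\lesssim 1+|x|^\ka$ for every $p\in\N$, which is the moment-growth half of $(a)$. The main obstacle is recovering the $L^p$-continuity of $X:U\to\Lfin$, since hypothesis $(c)$ only constrains the marginal distributions of $X(x)$ for fixed $x$ and a priori says nothing about the joint behaviour of $X(x_n)$ and $X(x)$ on the common probability space $\Om$. My plan is to pass through $(c)\Rightarrow(b)$ and then invoke the trivial $(b)\Rightarrow(a)$ obtained with $g=\mathrm{id},\la=1$; the core technical step would be to expand $\nrm{X(x_n)-X(x)}{2p}^{2p}$ multinomially and extract each cross-moment from $(c)$ applied to tensor-power test functions $g(y)=y^{\otimes k}$, leaning on the measurability of $X$ as a random field on $\Om\times U$ and on the moment bounds for uniform integrability. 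This cross-moment reconstruction is where I expect the main difficulty to lie, and also where any hidden standing regularity on $X$ would surface.
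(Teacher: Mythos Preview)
Your treatment of $(a)\Rightarrow(b)\Rightarrow(c)$ matches the paper's proof line for line, including the norm inequality.

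For $(c)\Rightarrow(a)$ the paper's route is much shorter than your plan: after extracting moment bounds from $g(y)=|y|^p$, it applies $(c)$ to bounded continuous $g\in\wCZ 0\subseteq\wCZ 1$ to get $\E g(X(x_n))\to\E g(X(x))$, invokes Portmanteau to deduce $X(x_n)\to X(x)$ in distribution, and then appeals to Vitali for $L^p$-convergence. Your worry that $(c)$ only constrains marginals is exactly right, however, and it bites both approaches. Your cross-moment plan cannot succeed: applying $(c)$ to $g(y)=y^{\otimes k}$ yields only the single-point moments $\E[X(x)^{\otimes k}]$, never the mixed terms $\E[X(x_n)^{\otimes j}\otimes X(x)^{\otimes(k-j)}]$ needed to control $\nrm{X(x_n)-X(x)}{2p}$. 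And the paper's Vitali step is not justified either, since convergence in distribution does not supply convergence in measure on $(\Om,\P)$; indeed one can take all $X(x)$ identically distributed (so $(c)$ holds trivially) while $x\mapsto X(x)$ is discontinuous in $L^2$. In the paper's applications the relevant $X$ is always an SDE flow or a one-step iterate whose $L^p$-continuity is established directly, so the implication $(c)\Rightarrow(a)$ is never actually invoked---but as stated, neither your route nor the paper's closes this gap.
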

\begin{proof}
Assuming (a) and $g\in \wCZ \la$ we have 
	\[|g(X(x))| \leq \nrm{g}{\wCZ \la} (1 + |X(x)|^\la), \unfovr x \in U,\]
and so
		\begin{equation}
		\label{eq:gXLG}
	\nrm{g(X(x))}{p}\leq \nrm{g}{\wCZ \la} (1 + \nrm{X(x)}{p\la}^\la)\lesssim \nrm{g}{\wCZ \la}(1 + |x|^{\ka \la}), \unfovr x\in U.
		\end{equation}
	Further, if $x_n \to x$ in $U$, then $X(x_n) \to X(x)$ in $L^p$ (and in particular in probability), and so $g(X(x_n)) \to g(X(x))$ in probability. Using Inequality \eqref{eq:gXLG} and Vitali's convergence theorem, we conclude $g(X(x_n)) \to g(X(x))$ in $L^p$, for all $p\geq 1$. This proves (b).
	
  Assuming (b) and $g\in \wCZ \la(V)$, we have
  \[|\E g(X)| \leq \nrm{g(X)}{1} \lesssim \nrm{g}{\wCZ \la}(1 + |x|^{\ka\la}), \unfovr x\in U.\]
   Further, $x_n  \to x$ in $U$ implies $g(X(x_n)) \to g(X(x))$ in $L^1$, and so 
  \[\E g(X(x_n)) \to \E g(X(x)).\]
  Thus, (c) follows, as well as the estimate
	\[\nrm{\E g(X)}{\wCZ{\ka\la}} \lesssim \nrm{g}{\wCZ \la}, \unfovr g\in \wCZ \la.\]
	Assume (c). For $p\in \N$, define $g : V\to \R, x\mapsto |x|^p \in \wCZ p$. Then
	\[\E[|X(x)|^{p}] =  \E g(X)  \lesssim 1 + |x|^{p\ka}, \unfovr x\in U,\]
	Now, suppose $x_n \to x$ in $U$. We have 
	\[\sup_{n\in \N} \E[|X(x_n)|^p] = \sup_{n\in \N} \E g(X(x_n)) \lesssim 1 + \sup_{n\in \N} |x_n|^{p\ka} < \infty,\] for all $p\in \N$. In particular, $(\P_{X(x_n)})_{n\in \N}$ is a tight family of measures. Since $\E g(X(x_n)) \to \E g(X(x))$ for all $g\in \wCZ 0 \subseteq \wCZ \ka$, we have $X(x_n)\to X(x)$ in distribution by the Portmanteau theorem, and, by Vitali's convergence theorem, even in $L^p$. Thus, (c) implies (a).
\end{proof}
Properties similar to the ones in Lemma \ref{lem:plyGrwthStab} also hold for functions in $U \to \Lfin(\Om, V)$. For now, we just mention the following: 
\begin{lem}
\label{lem:plyGrwthStabRndSclrProd}
Let $\ka, \la \in \N_0, X\in \wCZ \ka(U, \Lfin(\Om, \R))$ and $Y\in \wCZ \la(U, \Lfin(\Om, \R^{\Pi d}))$.
Then $XY \in \wCZ{\ka+\la}(U, \Lfin(\Om, \R^{\Pi d}))$, and
\[\nrm{XY}{\wCZ {\ka+\la}, p} \lesssim \nrm{X}{\wCZ {\ka}, 2p}\nrm{Y}{\wCZ {\la}, 2p},\]
uniformly over $X\in \wCZ \ka(U, \Lfin(\Om, \R))$ and $Y\in \wCZ \la(U, \Lfin(\Om, \R^{\Pi d}))$, for all $p\in \N$.
\end{lem}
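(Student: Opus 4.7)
The plan is to reduce the claim to a pointwise Hölder-type estimate on $L^p$ norms of products of random variables, combined with a standard triangle-inequality continuity argument; measurability of $XY$ as a random field is automatic from the measurability of $X$ and $Y$.

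First, I would handle the growth bound. For fixed $x\in U$ and $p\in \N$, the Cauchy--Schwarz inequality for random variables gives
\[
  \nrm{X(x)Y(x)}{p} = \bigl(\E[|X(x)|^p\,|Y(x)|^p]\bigr)^{1/p}
  \leq \nrm{X(x)}{2p}\,\nrm{Y(x)}{2p}.
\]
By the definition of the $\wCZ{\ka}$ and $\wCZ{\la}$ norms,
\[
  \nrm{X(x)}{2p} \leq \nrm{X}{\wCZ{\ka},2p}(1+|x|^\ka),\qquad
  \nrm{Y(x)}{2p} \leq \nrm{Y}{\wCZ{\la},2p}(1+|x|^\la),
\]
and since $(1+|x|^\ka)(1+|x|^\la)\lesssim 1+|x|^{\ka+\la}$ uniformly in $x\in U$, combining the two displays yields the claimed estimate
\[
  \nrm{XY}{\wCZ{\ka+\la},p} \lesssim \nrm{X}{\wCZ{\ka},2p}\nrm{Y}{\wCZ{\la},2p}.
\]

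Next, I would verify continuity of $x\mapsto X(x)Y(x)$ into $L^p$. Given $x_n\to x$ in $U$, I split via the triangle inequality
\[
  X(x_n)Y(x_n) - X(x)Y(x) = (X(x_n)-X(x))Y(x_n) + X(x)(Y(x_n)-Y(x)),
\]
and apply Cauchy--Schwarz in each summand:
\[
  \nrm{(X(x_n)-X(x))Y(x_n)}{p} \leq \nrm{X(x_n)-X(x)}{2p}\nrm{Y(x_n)}{2p},
\]
and analogously for the second term. Since $X\in \wCZ{\ka}(U,\Lfin)$, we have $\nrm{X(x_n)-X(x)}{2p}\to 0$; and since $Y\in \wCZ{\la}(U,\Lfin)$, the sequence $(\nrm{Y(x_n)}{2p})_n$ is bounded because $(x_n)$ is bounded in $U$ and $\nrm{Y(x_n)}{2p}\lesssim 1+|x_n|^\la$. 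The second summand is treated symmetrically, using that $\nrm{X(x)}{2p}$ is a fixed finite constant and $\nrm{Y(x_n)-Y(x)}{2p}\to 0$. Hence $X(x_n)Y(x_n)\to X(x)Y(x)$ in $L^p$ for every $p\geq 1$, which together with the growth bound already established gives $XY\in \wCZ{\ka+\la}(U,\Lfin(\Om,\R^{\Pi d}))$.

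There is no real obstacle here; the only point requiring minor care is choosing the Hölder exponents consistently (the factor $2$ in $2p$) so that the pointwise product bound and the continuity argument both run through with the same norms appearing on the right-hand side. Everything else is a direct application of Cauchy--Schwarz and the definitions laid out before Lemma~\ref{lem:LGequiv}.
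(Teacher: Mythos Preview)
Your proof is correct and is precisely the computation underlying the paper's argument. The paper's own proof is a one-line forward reference: ``This is a special case of Lemma~\ref{lem:wCstab}~(c) below'', where that later lemma proves the outer-product estimate $\nrm{\bigotimes_i f_i}{\wHoel l \delt {\Pi\cV},p}\lesssim \prod_i \nrm{f_i}{\wHoel l \delt {\cV^i},pq_i}$ in full generality (arbitrary $l,\delta$, general H\"older-type Fr\'echet algebra, $n$ factors). Instantiating $l=0$, $\delta=0$, $n=2$, $q_1=q_2=2$, $F=\Lfin(\Om)$, $\cV_\ka\cV_\la\lesssim\cV_{\ka+\la}$ recovers exactly your Cauchy--Schwarz growth bound and your telescoping continuity argument. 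So you have in effect rederived the relevant special case of Lemma~\ref{lem:wCstab}~(c) rather than invoking it.
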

\begin{proof}
This is a special case of Lemma \ref{lem:wCstab} (c) below.
\end{proof}
Given $g : V \to W$ we say $g$ is \emph{Lipschitz (continuous)} if
\[|g(x) - g(y)| \lesssim |x-y|, \unfovr x,y\in V.\]
In this case we write $g\in \Lip$.
More generally, let $l\in \N_0$ and $g\in \dC l$. We define
\[\nrm{g}{\Lip^{l+1}} = \max_{|\al|\leq l}\sup_{x\neq y\in V} \frac{|\der^\al g(x) - \der^\al g(y)|}{|x-y|}\]
Here, the maximum is taken over all multi-indices $\al$ with $|\al| \in \set{0,\dots, l}$, and $\der^\al g = g$ for $|\al| = 0$.
Then we write $g\in \Lip^{l+1}$ if $\der^\al g \in \Lip$ for all multi-indices $\al$ with $|\al|\leq l$. We also write $\nrm{g}{\Lip} := \nrm{g}{\Lip^{1}}$. 
\subsection{General assumptions on the one-step method}
\label{sec:onstepassum}
Consider once more a random \emph{increment function} \[f : \Om \times (0,1) \times [0,T] \times \R^d\to \R^d, (\om, h, t, x) \mapsto f_t^{h}(\om)(x),\]
such that the finite family $(f_{nh}^h)_{n \leq \floor{T/h}}$ is independent for all $h\in (0,1)$. Define the family of random fields $\chi$ by
\[\chi_{n+1}^{h}(x) = \chi_n^h(x) + h f_{nh}^{h}(\chi_n^{h}(x)), \quad \chi_0^h(x) = \chi_0(x) = x,\]
for $x\in \R^d, h\in (0,1)$ and $n\in \set{0,\dots, \floor{T/h}}$. We write $\Delt \chi_n^h := \chi_{n+1}^h - \chi_n^h$.

Throughout this chapter we assume the following regularity conditions on $f$.
\begin{assum}
\label{assum:H}
There exists a measurable space $\cZ$, a measurable map $F : (0,1)\times [0,T]\times \R^d \times \cZ \to \R^d$, a measurable map $L : \cZ \to [0,\infty)$, and an \tiid\ sequence $(\bz(n))_{n\in \N_0}$ of $\cZ$-valued random variables  with $L(\bz(0)) \in \Lfin(\Om, \cZ)$ such that
\[f_t^h(x) = F(h,t,x,\bz(\floor{t/h})),\]
and
\[|F(h,t,x,z)| \leq L(z)(1 + |x|),\]
for all $ h\in (0,1), t\in [0,T], x\in \R^d$ and $z\in \cZ$.
Further, we have $f_t^h \in \cC(\R^d, \Lfin(\Om, \R^d))$ uniformly in $h\in (0,1)$ and $t\in [0,T]$.
Moreover, there exists a function $\bar f \in \Lip^{5+1}([0,T]\times \R^d, \R^d)$ with $\bar f = \E f^h$ for all $h \in (0,1)$,
and a function $\Si \in \wC 5 2([0,T]\times \R^d,\R^{d\times d})$ with $\Si =  \E[(f^h - \bar f)^{\otimes 2}]$ for all $h \in (0,1)$.
\end{assum}
Note that, using Assumption \assref{assum:H}, for $L_n := L(\bz(n))$ we have
\[|f_{nh}^h(x)| \leq |F(h,t,x,\bz_{n})| \leq L_n(1+ |x|),\]
and $L_n$ is independent of $\chi_n^h$ for all $h\in (0,1)$.
Thus, Lemma \ref{lem:strngLGimpliesLG} implies $f_t^h\in \wCZ 1(\R^d, \Lfin(\Om, \R^d))$, uniformly in $h\in (0,1)$ and $t\in [0,T]$. Also both the expectation and covariance matrix of $f$ do not depend on $h\in (0,1)$.

The following lemma and its proof are inspired by \citet[Lemma 29]{Li18}.
\begin{lemma}
	\label{lem:linGrowthEstSGD}
Assuming \assref{assum:H},	the following hold true: 
	\begin{enumerate}[(i)]
		\item We have 
		\[\max_{n\leq \floor{T/h}}|\chi^h_n| \in \wCZ 1 (\R^d, \Lfin(\Om, \R)), \unfin h\in (0,1).\]
		\item  We have
		\[h^{-1}\max_{n \leq \floor{T/h}}|\Delt\chi_n^h|\in \wCZ 1 (\R^d, \Lfin(\Om, \R)), \unfin h\in (0,1).\]
\end{enumerate}
\end{lemma}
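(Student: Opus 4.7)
The plan is to prove (i) via a discrete Gronwall-type iteration, exploiting independence of the $\bz(k)$'s to obtain an $L^p$-bound on the full product of growth factors, and then to derive (ii) from (i) together with the linear-growth bound on $F$. Continuity in $x$ as a map into $\Lfin$ will in both cases be obtained by induction on $n$, using the hypothesis $f_t^h \in \cC(\R^d, \Lfin)$ uniformly in $(t,h)$ together with Vitali's convergence theorem.

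Concretely, writing $L_n := L(\bz(n))$, the growth bound in \assref{assum:H} gives
\[|\chi_{n+1}^h(x)| + 1 \leq (1 + hL_n)\bigl(|\chi_n^h(x)|+1\bigr),\]
which iterates to $|\chi_n^h(x)| + 1 \leq (|x|+1)\prod_{k=0}^{n-1}(1+hL_k)$. Since each factor is $\geq 1$, the supremum over $n\leq\floor{T/h}$ is attained at $n = \floor{T/h}$, yielding the full product $P_h := \prod_{k=0}^{\floor{T/h}-1}(1+hL_k)$. Independence of $(L_k)_k$ and a binomial expansion, using $L_0 \in \Lfin$, give
\[\E[P_h^p] = \E[(1+hL_0)^p]^{\floor{T/h}} \leq (1+hC_p)^{\floor{T/h}} \leq e^{TC_p},\]
uniformly in $h\in(0,1)$. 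Invoking Lemma~\ref{lem:strngLGimpliesLG} with $Z = P_h$ and $\ka = 1$ then packages (i), provided I first check that $x \mapsto \max_n|\chi_n^h(x)|$ is continuous into $\Lfin$; this I establish inductively, passing continuity of $\chi_n^h(\blnk)$ to that of $f_{nh}^h(\chi_n^h(\blnk))$ via Vitali with dominating function built from $L_n$ and the growth bound on $\chi_n^h$, and preserving continuity under the finite max.

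For (ii), I bound
\[h^{-1}|\Delta \chi_n^h(x)| = |f_{nh}^h(\chi_n^h(x))| \leq L_n(1+|\chi_n^h(x)|),\]
and hence $\max_{n\leq\floor{T/h}} h^{-1}|\Delta\chi_n^h(x)| \leq (\max_n L_n)(1+\max_n|\chi_n^h(x)|) \leq (\max_n L_n)\,P_h\,(1+|x|)$. A second application of Lemma~\ref{lem:strngLGimpliesLG} with $Z = (\max_n L_n)\,P_h$ and $\ka = 1$ delivers the claim, once I verify $Z \in \Lfin$ with $\nrm{Z}{p}$ bounded uniformly in $h$. Continuity in $x$ of $h^{-1}\max_n|\Delta\chi_n^h|$ follows from that of each $\chi_n^h$ and the uniform continuity of $f_t^h$, again via Vitali. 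The main obstacle will be the uniform-in-$h$ $L^p$-bound on $\max_{n\leq\floor{T/h}} L_n$: the naive estimate $\nrm{\max_n L_n}{p}^p \leq \floor{T/h}\,\E[L_0^p]$ carries a diverging factor $(T/h)^{1/p}$, and absorbing it requires a higher-moment trade-off, using the full $\Lfin$-integrability of $L(\bz(0))$ to replace the exponent $p$ by a slowly-growing $q=q(h)$ and balance $\floor{T/h}^{1/q}$ against $\nrm{L_0}{q}$, with the resulting product bounded uniformly in $h$ by Cauchy--Schwarz with $\nrm{P_h}{2p}$.
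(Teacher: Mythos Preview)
Your approach is correct and differs from the paper's. The paper runs a discrete Gronwall argument on $\E[M_n^p]$ directly, expanding $(M_n+hL_n(1+M_n))^p$ binomially and using independence of $L_n$ from $M_n$ to bound each term; you instead obtain the pathwise product bound $1+|\chi_n^h(x)|\leq (1+|x|)\prod_{k<n}(1+hL_k)$, control the full product $P_h$ in $L^p$ by independence of the factors, and feed the result into Lemma~\ref{lem:strngLGimpliesLG}. Your route is shorter and produces an explicit random growth constant.

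\textbf{Part (ii).} You correctly isolate the obstacle, but your proposed higher-moment trade-off does not close the gap. The hypothesis $L_0\in\Lfin$ gives no control on the growth of $\|L_0\|_q$ as $q\to\infty$: already for $L_0\sim\exp(1)$ one has $\|L_0\|_q\asymp q$, and minimizing $(T/h)^{1/q}\,q$ over $q>0$ yields $e\log(T/h)$, which still diverges as $h\to 0$. In fact the statement of (ii) with the maximum \emph{inside} the norm is false under (A1) alone: take $F(h,t,x,z)=L(z)$ constant in $x$ (this satisfies (A1) with $\bar f$ and $\Sigma$ constant); then $h^{-1}|\Delta\chi_n^h|=L_n$, so $h^{-1}\max_n|\Delta\chi_n^h|=\max_{n\leq\floor{T/h}}L_n$, whose $L^p$-norm diverges whenever $L_0\notin L^\infty$. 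The paper's own displayed inequality (replacing $\|\max_n L_n\|_{2p}$ by $\|L_0\|_{2p}$) is not justified either, so you are not missing a hidden trick.

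What the paper actually uses downstream (in the Taylor remainder estimates of Lemma~\ref{lem:talayTubaroSumOME}) is only the weaker statement that $h^{-1}|\Delta\chi_k^h|\in\wCZ 1(\R^d,\Lfin)$ uniformly in both $h$ and $k$---i.e., the maximum over $k$ sits \emph{outside} the norm. This follows immediately from independence of $L_k$ and $\chi_k^h$:
\[\bigl\|h^{-1}\Delta\chi_k^h(x)\bigr\|_p \leq \|L_k\|_p\,\bigl\|1+|\chi_k^h(x)|\bigr\|_p = \|L_0\|_p\,\bigl\|1+|\chi_k^h(x)|\bigr\|_p \lesssim 1+|x|,\]
with the last step by part (i). I would prove this weaker form instead.
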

\begin{proof}
The continuity conditions are straightforward to prove. We focus on the linear growth conditions.
	\begin{enumerate}[(i)]
		\item
	Define $M_n = \max_{m\leq n} |\chi_m^h|$. Then
	\[|\chi_{n+1}^h| \leq |\chi_n^h| + h |f_{nh}^h(\chi_n^h)| \leq M_n + h L_n(1 + |\chi_n^h|) \leq M_n + h L_n (1 + M_n),\]
	and so
	\[M_{n+1} = M_n \vee |\chi_{n+1}^h| \leq M_n \vee (M_n + h L_n (1 + M_n)) \leq M_n + h L_n (1 + M_n),\]
	for all $n\in \set{0,\dots, T/h}$.
 Let $p\in \N$.	Then we have
		\begin{align*}
			M_{n+1}^p \leq & (M_n +  h L_n (1 + M_n))^p\\
			\leq &M_n^p + \sum_{k=1}^p \binom{p}{k} M_n^{p-k} h^k L_n^k (1 + M_n)^k.
		\end{align*}
		for all $n\in \set{0,\dots, \floor{T/h}}$.
	Then, for $k\in \set{1,\dots p}$, $h\in (0,1)$ and $n\in \set{0,\dots, \floor{T/h}}$,
		\begin{align*}
\E[M_n^{p-k}  L_n^k (1 + M_n)^k] = & \E[M_n^{p-k}(1 + M_n)^k] \E[L_0^k]\\
\leq &2^k\E[M_n^{p-k}(1 + M_n^k)] \E[L_n^k]\\
\leq & 2^k\E[L_0^k]\E[M_n^{p-k} + M_n^p] \\
\leq & c_k (1 + \E[M_n^p]),
\end{align*}
where $c_k = 2^{k+1} \E[L_0^k]$. Here, we used that $L_n$ and $\chi_n^h$ are independent, and the inequality $y^q + y^p \leq 2(1 + y^p)$ for $0<q\leq p$ and $y \geq 0$.
Hence,
\begin{align*}
\E[M_{n+1}^p]\leq & \E[M_n^p]  + \sum_{k=1}^p \binom{p}{k}h^k \E[M_n^{p-k} L_n^k (1 + M_n)^k] \\
\leq & \E[M_n^p] + C h(1 + \E[M_n^p])\\
= & (1+Ch)\E[M_n^p] + Ch,
\end{align*}
where $C = \sum_{k=1}^p \binom p k c_k$.
By induction over $n$, we get
		\[\E[M_{n}^p] \leq (1 + Ch)^n |x|^p + Ch\left(\sum_{k=0}^{n-1}(1 + Ch)^k\right),\]
		for all $h\in (0,1)$ and $n\in \set{0,\dots, \floor{T/h}}$. Consequently,
		\begin{align*}
\E[M_{\floor{T/h}}^p] \leq & (1 + Ch)^{\frac Th} |x|^p + Ch\left(\sum_{k=0}^{\floor{\frac Th}-1}(1 + Ch)^k\right)\\
\leq & (1 + Ch)^{\frac Th} |x|^p + Ch\frac Th(1 + Ch)^{\frac Th} \\
=& (CT + |x|^p) (1 + Ch)^{\frac Th} \\
\leq & (CT + |x|^p) e^{\log(1 + Ch)\frac Th}\\
\leq & (CT + |x|^p) e^{CT}
		\end{align*}
		for all $h\in (0,1)$ and $x\in \R^d$, since $\log(1 + y) \leq y$ for all $y > -1$. Taking the $p$-th root, we get
		\[\nrm{\max_{n\leq \floor{T/h}}|\chi^h_n|}{p} = \nrm{M_{\floor{T/h}}}{p} \lesssim \left(1 + |x|^p\right)^{1/p} \lesssim 1 + |x|,\unfovr x\in \R^d, h\in (0,1).\]
		For arbitrary $p\geq 1$ we have $\nrmps{Y}{p} \leq \nrmps{Y}{\ceil{p}}$ and thus the result is proven.
		\item We have
		\[h^{-1}|\Delt \chi_n^{h}| = |f_{nh}^{h}(\chi_n^h)| \leq |L_n|(1 + |\chi_n^h|),\]
		for all $h\in (0,1)$ and $n \in \set{0,\dots, T/h}$.
		Thus, 
		\[\nrm{ h^{-1}\max_{n\leq T/h}|\Delt \chi_n^{h}|}{p} \leq \nrm{L_0}{2p}\left(1 + \nrm{\max_{n\leq T/h} |\chi_n^h|}{2p}\right),\]
		for all $h\in (0,1),x\in \R^d$ and $p\geq 1$.
		Hence, the result follows from (i).
	\end{enumerate}
\end{proof}

\section{Convergence of the noisy Euler method}
\label{sec:ODE}
In this section we show that the stochastic one-step method \eqref{eq:GSGD} is a first-order weak approximation of the ODE \eqref{eq:1stOrderOME}. Further, we calculate the linear error term explicitly (see Theorem \ref{thm:1stOrderOME} below), and we show regularity of the error term in the initial condition. In some sense, this is a warm-up for the more general theory of stochastic modified equations in Section \ref{sec:fstOrderSMEs}.
Given a time horizon $T > 0$ write
\begin{equation}
	\label{eq:acceptLR}
	\lrates := \set{h\in (0,1) : T/h \in \N}
\end{equation}
for the set of acceptable step sizes.
Let 
\[X^0 : [0,T]\times [0,T] \times \R^d \to \R^d, (t,s,x)\mapsto X_t^{0,s}\] 
be such that $X^{0,s}(x)$ is the unique solution to the ODE
\[\der_t X_t^{0,s}(x) = \bar f_t(X_t^{0,s}(x)), \quad t\in [s,T], X^0_s(x) = x,\]
for all $(s,x)\in [0,T]\times \R^d$. In the case $s>t$ we can simply set $X^{0,s}_t(x) = x$.
Assumption \assref{assum:H} implies that 
\[X^{0,\blnk}_t : [0,T]\times \R^d \to \R^d, (s,x)\mapsto X^{0,s}_t(x) \in \Lip^{5+1} \subseteq \wC 5 1,\]
uniformly in $t\in [0,T]$ (for example by Corollary \ref{cor:SDEDerInitPert} below). In detail, this means that $X^{0,s}_t(x)$ is five-times continuously differentiable in $(s,x)$ (with continuous extension to the boundary), and 
\[\sup_{t\in [0,T]} \nrm{X^{0,\blnk}_t}{\wC 5 1} = \sup_{t\in [0,T]}\max_{k\leq 5} \max_{|\al|\leq 5-k} \sup_{s\in [0,T]} \sup_{x\in \R^d} \frac{|\der_s^k \der^\al X^{0,s}_t(x)|}{1 + |(s,x)|} < \infty.\]
At the expense of re-scaling the constant, we can write $|x|$ instead of $|(s,x)|$, since $s\leq T$. We write $X^0_t := X^{0,0}_t, t\in [0,T]$ in the following. 
Given $g\in \dC 2(\R^d)$ we define 
\[v^g : [0,T]\times [0,T]\times \R^d \to \R, (r,t,x) \mapsto v^{g,r}_t(x) := g(X_r^{0,t}(x)).\]
We also write $v^{g} = v^{g,T}$ and $v = v^{g}$ if the choice of $g$ is clear from the context.
We denote by $\nabla^\transp w$ and $\nabla^2 w$ the Jacobi and the Hessian matrix of a function $w$ (with respect to the spatial variable), respectively.
\begin{theorem}
\label{thm:1stOrderOME}
Assume \assref{assum:H}. Then for all $\ka \in \N_0$ and $g\in \wC 5 \ka(\R^d)$ there exists a function $\rho^{g} : \lrates \to \wCZ{\ka + 13}(\R^d), h \mapsto \rho^{g,h}$ such that
\begin{align}\label{1st order expansion}
\E g(\chi_{T/h}^h) -  g(X_T^0)= \frac12 h \int_0^T (\innp{\nabla^2 v^{g}}{\Si} - \innp{\nabla v^{g}}{\nabla^\transp \bar f \bar f + \der_t \bar f})_t(X_t^0)\,dt + h^2 \rho^{g,h},
\end{align}
and
\[\nrm{\rho^{g,h}}{\wCZ {\ka + 13}} \lesssim \nrm{g}{\wC 5 \ka},\]
uniformly over $g\in \wC 5 \ka$ and $h\in \lrates$.
\end{theorem}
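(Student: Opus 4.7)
The plan is a one-step analysis in the spirit of Talay--Tubaro, with the backward propagator $v^{g,T}$ as test function. Since $v^{g,T}_t(X^{0,s}_t(x)) = g(X^{0,s}_T(x))$ is independent of $t$, differentiating in $t$ yields the first-order backward transport equation $\der_t v^{g,T} + \innp{\nabla v^{g,T}}{\bar f} = 0$, and a further $t$-differentiation together with one $x$-differentiation give
\[\der_t^2 v = -\innp{\der_t \nabla v}{\bar f} - \innp{\nabla v}{\der_t \bar f}, \qquad \innp{\der_t \nabla v}{\bar f} = -\innp{\nabla^2 v}{\bar f^{\otimes 2}} - \innp{\nabla v}{\nabla^\transp \bar f\,\bar f}.\]
Under \assref{assum:H}, $\bar f \in \Lip^{5+1}$ and the flow $X^{0,\cdot}_T$ is in $\wC 5 1$ uniformly in $T$ (forward reference to Corollary \ref{cor:SDEDerInitPert}), so Lemma \ref{lem:plyGrwthStab}(v) propagates $g \in \wC 5 \ka$ to $v^{g,T}$ being $\cC^2$ in $t$ and $\cC^5$ in $x$ with each relevant mixed derivative in $\wCZ{\ka + c}$ for an explicit $c$.

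First I would telescope
\[\E g(\chi_{T/h}^h) - g(X_T^0(x)) = \sum_{n=0}^{T/h-1} \E\bigl[v^{g,T}_{(n+1)h}(\chi_{n+1}^h) - v^{g,T}_{nh}(\chi_n^h)\bigr]\]
and, conditioning each summand on $\chi_n^h$, perform a second-order joint Taylor expansion of $v^{g,T}$ in $(t,x)$ around $(nh, \chi_n^h)$. By \assref{assum:H} the random function $f_{nh}^h$ depends only on $\bz(n)$, which is independent of $\chi_n^h$, so $\E[\Delt\chi_n^h \mid \chi_n^h] = h\bar f_{nh}(\chi_n^h)$ and $\E[(\Delt\chi_n^h)^{\otimes 2} \mid \chi_n^h] = h^2(\Si_{nh} + \bar f_{nh}^{\otimes 2})(\chi_n^h)$. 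The linear-in-$h$ contribution $h(\der_t v + \innp{\nabla v}{\bar f})$ vanishes by the backward equation; substituting the two identities above into the quadratic-in-$h$ contribution causes the $\bar f^{\otimes 2}$-parts to cancel exactly, leaving
\[\tfrac{h^2}{2}\bigl(\innp{\nabla^2 v}{\Si} - \innp{\nabla v}{\nabla^\transp \bar f\,\bar f + \der_t \bar f}\bigr)_{(nh, \chi_n^h)} =: \tfrac{h^2}{2}\phi_{nh}(\chi_n^h).\]

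The conditional Lagrange remainder $R_n^h$ collects the order-three derivatives of $v^{g,T}$ paired with powers of $\Delt\chi_n^h$ up to three. The bound $|\Delt\chi_n^h| \leq h L_n(1 + |\chi_n^h|)$ from \assref{assum:H} and Lemma \ref{lem:linGrowthEstSGD} force $|\E R_n^h| \lesssim h^3 (1 + |\chi_n^h|)^{\ka + c_1}$ in the appropriate $L^p$-sense; summing $T/h$ copies and taking expectation via Lemma \ref{lem:linGrowthEstSGD}(i) yields an $O(h^2)$ contribution with polynomial-growth exponent under control. It then remains to replace $\sum_n h\,\E\phi_{nh}(\chi_n^h)$ by $\int_0^T \phi_t(X^0_t(x))\,dt$ at total cost $O(h)$, which I would do in two moves: a crude first-order weak bound $|\E\phi_t(\chi^h_{\lfloor t/h\rfloor}) - \phi_t(X^0_t(x))| \lesssim h(1 + |x|^{\ka + c_2})$, obtained by re-running the same one-step Taylor analysis with the lower-regularity test function $\phi_t$ in place of $g$; and a standard Riemann-sum estimate for the $\cC^1$-map $t \mapsto \phi_t(X^0_t(x))$. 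Multiplied by the outer $h/2$, both errors are absorbed into $h^2 \rho^{g,h}$.

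The main obstacle is tracking polynomial-growth exponents precisely enough to land in $\wCZ{\ka + 13}(\R^d)$. Each of the at most five spatial differentiations of $v^{g,T} = g \circ X^{0,\cdot}_T$ can spawn a factor from $\der X^{0,\cdot}_T$ of linear growth (Corollary \ref{cor:SDEDerInitPert}); each substitution with $\chi_n^h(x)$ costs one power of $|x|$ via Lemma \ref{lem:linGrowthEstSGD}(i); the product and composition rules of Lemmas \ref{lem:plyGrwthStab}(iv)--(v) and \ref{lem:plyGrwthStabRndSclrProd} multiply exponents along the chain-rule expansions. The constant $13$ is precisely the worst-case sum of all these contributions coming from the Taylor remainder (third-order derivatives of $v^{g,T}$ together with third-order moments of $\Delt\chi_n^h$) and from the Riemann-sum replacement (first-order weak approximation applied to $\phi_t$). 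Continuity of $h \mapsto \rho^{g,h}$ with values in $\wCZ{\ka + 13}(\R^d)$ then follows from the continuity of all ingredients in $x$ by \assref{assum:H}, Lemma \ref{lem:linGrowthEstSGD} and dominated convergence.
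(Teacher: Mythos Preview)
Your proposal is correct and follows essentially the same approach as the paper: the telescoping sum with $v^{g,T}$, second-order Taylor expansion using the backward transport equation to kill the linear-in-$h$ term, and the two-step replacement of $\sum_n h\,\E\phi_{nh}(\chi_n^h)$ by $\int_0^T \phi_t(X_t^0)\,dt$ (the paper packages your ``re-running the same one-step Taylor analysis'' as a second application of its Lemma~\ref{lem:talayTubaroSumOME}, and likewise uses the Riemann-sum estimate). One minor point: the theorem does not assert continuity of $h \mapsto \rho^{g,h}$; you only need each $\rho^{g,h}$ to lie in $\wCZ{\ka+13}(\R^d)$, i.e., continuity in $x$ with the stated growth bound, which is automatic from the construction.
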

The degree $\ka+13$ is a bit surprising. Note that if $g\in \wC 5 \ka$ and $X\in \wC 5 1$, then we already have $\ph := g(X)\in \wC 5 {\ka + 5}$ (see Lemma \ref{lem:plyGrwthStab} (v)). In the proof we iterate compositions and pair this with estimates for Taylor remainders. A careful bookkeeping of these steps yields the value $\ka + 13$. We do not claim that this number is optimal.

For deterministic $f$ we have $\Si = 0$. Thus, the second summand on the right-hand side of Equation \eqref{1st order expansion} is the linear error term when approximating an ODE using the Euler method. The first summand accounts for the stochasticity of the noisy Euler method \eqref{eq:GSGD}.

Consider the linear operator
\[\cF : \dC 2([0,T]\times \R^d) \to \cC([0,T]\times \R^d)\]
given by
\begin{align}
	\label{eq:phi}
	\cF w := & \frac12 \innp{\nabla^2 w}{\outSq{\bar f} + \Si} + \innp{\partial_t \nabla w}{\bar{f}} +\frac12 \partial_t^2 w,\quad w\in \dC 2([0,T]\times \R^d),
\end{align} 
and we write $\cF_t w(x) := (\cF w)(t,x)$ for all $(t,x) \in [0,T] \times \R^d$.
We show that the linear error term is determined by $\cF v(X^0)$. We can rewrite $\cF v$ using the following lemma to get the desired expression in Equation \ref{1st order expansion}.
\begin{lem}
	\label{lem:FopSimpl}
	Let $g\in \dC 2 (\R^d)$. Then
	\[\cF v = \frac12 \innp{\nabla^2 v}{\Si} - \frac12\innp{\nabla v}{\nabla^\transp \bar f \bar f + \der_t \bar f}.\]
\end{lem}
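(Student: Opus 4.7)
The plan is to exploit the fact that $v_t(x) = g(X_T^{0,t}(x))$ is constant along the flow, which forces $v$ to satisfy a backward transport equation. All of the terms in $\cF v$ except the one involving $\Si$ should then collapse via this PDE.

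First, I would establish the backward transport equation
\[\partial_t v_t(x) + \innp{\nabla v_t(x)}{\bar f_t(x)} = 0, \quad (t,x) \in [0,T)\times \R^d.\]
To see this, note the semigroup identity $X_T^{0,t}(x) = X_T^{0,t+\epsilon}(X_{t+\epsilon}^{0,t}(x))$ for small $\epsilon > 0$, which gives $v_t(x) = v_{t+\epsilon}(X_{t+\epsilon}^{0,t}(x))$. Differentiating in $\epsilon$ at $\epsilon = 0$ and using $\der_\epsilon X_{t+\epsilon}^{0,t}(x)\big|_{\epsilon=0} = \bar f_t(x)$ gives the claim. (All required smoothness of the flow follows from Assumption \assref{assum:H}, as noted before the theorem.)

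Next I would differentiate this transport equation once in $x$ and once in $t$. Spatial differentiation in the $k$-th coordinate gives
\[\der_t \der_k v + \sum_i (\der_i \der_k v)\bar f_i + \sum_i (\der_i v)(\der_k \bar f_i) = 0,\]
which, pairing with $\bar f$ and recognizing $\nabla^\transp \bar f$ as the Jacobian of $\bar f$, yields
\[\innp{\partial_t \nabla v}{\bar f} = -\innp{\nabla^2 v}{\outSq{\bar f}} - \innp{\nabla v}{\nabla^\transp \bar f \, \bar f}.\]
Time differentiation of the transport equation gives
\[\partial_t^2 v = -\innp{\partial_t \nabla v}{\bar f} - \innp{\nabla v}{\der_t \bar f} = \innp{\nabla^2 v}{\outSq{\bar f}} + \innp{\nabla v}{\nabla^\transp \bar f \, \bar f} - \innp{\nabla v}{\der_t \bar f}.\]

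Finally I would substitute both identities into the defining expression
\[\cF v = \tfrac12 \innp{\nabla^2 v}{\outSq{\bar f} + \Si} + \innp{\partial_t \nabla v}{\bar f} + \tfrac12 \partial_t^2 v.\]
The coefficients of $\innp{\nabla^2 v}{\outSq{\bar f}}$ add to $\tfrac12 - 1 + \tfrac12 = 0$, and the coefficients of $\innp{\nabla v}{\nabla^\transp \bar f \, \bar f}$ add to $-1 + \tfrac12 = -\tfrac12$, leaving precisely
\[\cF v = \tfrac12 \innp{\nabla^2 v}{\Si} - \tfrac12 \innp{\nabla v}{\nabla^\transp \bar f \, \bar f + \der_t \bar f}.\]
There is no real obstacle here beyond careful index bookkeeping; the only substantive input is the transport equation, and everything else is algebraic cancellation. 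The argument needs $g \in \dC 2$ together with $\dC 2$-regularity of the flow (so that $v \in \dC 2$ and the mixed derivatives exist), both of which are available under \assref{assum:H}.
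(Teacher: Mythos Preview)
Your proposal is correct and follows essentially the same route as the paper: establish the backward transport equation $\partial_t v + \innp{\nabla v}{\bar f} = 0$, differentiate it once in space (paired with $\bar f$) and once in time to obtain expressions for $\innp{\partial_t\nabla v}{\bar f}$ and $\partial_t^2 v$, and then substitute into the definition of $\cF v$ so that the $\innp{\nabla^2 v}{\bar f^{\otimes 2}}$ terms cancel. The only cosmetic difference is that the paper simply cites the transport equation (stated as the Kolmogorov backward equation elsewhere in the chapter) rather than rederiving it from the semigroup identity as you do.
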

\begin{proof}
Recall the transport equation \eqref{eq:KBEOME}
	\[\der_t v + \innp{\nabla v}{\bar f} = 0.\]
Using the formula
\[\nabla \innp{f}{g} = \innp{\nabla^\transp f}{g} + \innp{f}{\nabla^\transp g}, \quad f,g\in \dC 1(\R^d, \R^d)\]
we calculate
	\[\der_t\nabla v = - \nabla \innp{\nabla v}{\bar f} = - \innp{\nabla^2 v}{\bar f} - \innp{\nabla v}{\nabla^\transp \bar f}.\]
	Hence,
	\[-\innp{\der_t\nabla v}{\bar f} = \innp{\innp{\nabla^2 v}{\bar f}}{\bar f} +\innp{\innp{\nabla v}{\nabla^\transp \bar f}}{\bar f} = \innp{\nabla^2 v}{\bar f^{\otimes 2}} + \innp{\nabla v}{\nabla^\transp \bar f \bar f}.\]
	Therefore, again using Equation \eqref{eq:KBEOME},
	\begin{align*}
		\der_t^2 v = - \der_t \innp{\nabla v}{\bar f} = &- \innp{\nabla v}{\der_t \bar f} - \innp{\der_t\nabla v}{\bar f} \\
		= & - \innp{\nabla v}{\der_t \bar f} + \innp{\nabla^2 v}{\bar f^{\otimes 2}} + \innp{\nabla v}{\nabla^\transp \bar f \bar f},
	\end{align*}
and so
	\begin{align*}
		\cF v - \frac12 \innp{\nabla^2 v}{\Si} = & \frac12 \innp{\nabla^2 v}{\bar f^{\otimes 2}} + \innp{\partial_t \nabla v}{\bar{f}} +\frac12 \partial_t^2 v \\
		= & \frac12 \innp{\nabla^2 v}{\bar f^{\otimes 2}} - (\innp{\nabla^2 v}{\bar f^{\otimes 2}} + \innp{\nabla v}{\nabla^\transp \bar f \bar f}) \\
		& + \frac12 (- \innp{\nabla v}{\der_t \bar f} + \innp{\nabla^2 v}{\bar f^{\otimes 2}} + \innp{\nabla v}{\nabla^\transp \bar f \bar f})\\
		= & - \frac12 \innp{\nabla v}{\nabla^\transp \bar f \bar f + \der_t \bar f}. 
	\end{align*}
\end{proof}

\begin{lemma}
\label{lem:TTOperator}
Let $\ka,l \in \N_0$ with $l\leq 5$. Then
\[\nrm{\cF w}{\wC l {\ka + 2}} \lesssim \nrm{w}{\wC{l+2}{\ka}},\]
uniformly over $w\in \wC{l+2}\ka([0,T]\times \R^d)$.
\end{lemma}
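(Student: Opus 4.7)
The plan is a direct bookkeeping argument: I would bound each of the three summands in the definition of $\cF w$ separately, using the coefficient regularity from Assumption \assref{assum:H} together with the stability properties of the spaces $\wC l \ka$ collected in Lemma \ref{lem:plyGrwthStab}.

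First I would unpack the hypotheses on the coefficients. Assumption \assref{assum:H} gives $\bar f \in \Lip^{5+1}$, and since Lipschitz functions have at most linear growth this embeds into $\wC 5 1$; moreover $\Si \in \wC 5 2$. Parts (ii) and (iv) of Lemma \ref{lem:plyGrwthStab} then yield $\bar f^{\otimes 2} \in \wC 5 2$ and hence $\bar f^{\otimes 2} + \Si \in \wC 5 2$. Since $l \leq 5$, part (i) shows that $\bar f \in \wC l 1$ and $\bar f^{\otimes 2} + \Si \in \wC l 2$, with norms controlled by constants depending only on $\bar f$, $\Si$, and $T$.

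Next, for $w \in \wC{l+2}\ka([0,T]\times \R^d)$, each of the three \enquote{second-order derivatives} appearing in $\cF w$, namely $\nabla^2 w$, $\der_t \nabla w$ and $\der_t^2 w$, is obtained from $w$ by applying a partial derivative of total order exactly two; any further derivative of order at most $l$ is then a derivative of $w$ of order at most $l+2$. Consequently each of these three functions lies in $\wC l \ka$ with norm bounded by $\nrm{w}{\wC{l+2}\ka}$. I would then apply part (iii) of Lemma \ref{lem:plyGrwthStab} termwise: $\innp{\nabla^2 w}{\bar f^{\otimes 2} + \Si} \in \wC l {\ka+2}$ with norm $\lesssim \nrm{w}{\wC{l+2}\ka}\,\nrm{\bar f^{\otimes 2} + \Si}{\wC l 2}$, and $\innp{\der_t \nabla w}{\bar f} \in \wC l {\ka+1}$ with norm $\lesssim \nrm{w}{\wC{l+2}\ka}\,\nrm{\bar f}{\wC l 1}$; part (i) then embeds the latter term, together with the remaining summand $\der_t^2 w \in \wC l \ka$, into $\wC l {\ka+2}$. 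Combining the three estimates via the triangle inequality (part (ii)) gives the claim.

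There is no substantive obstacle beyond careful accounting of growth orders. The only point worth highlighting is the role of the target growth degree $\ka+2$: it is dictated by the dominant term $\innp{\nabla^2 w}{\bar f^{\otimes 2} + \Si}$, whose coefficient grows quadratically; the other two summands have strictly lower growth and are absorbed via the embedding in part (i).
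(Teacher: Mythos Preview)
Your proposal is correct and follows essentially the same route as the paper's proof: bound each summand of $\cF w$ separately using the stability properties in Lemma~\ref{lem:plyGrwthStab}, with the coefficient regularity $\bar f \in \wC 5 1$ and $\Si \in \wC 5 2$ coming from Assumption~\assref{assum:H}. The paper's version is simply terser---it writes out the estimate for $\innp{\nabla^2 w}{\bar f^{\otimes 2} + \Si}$ and then states that the other summands are treated similarly---whereas you spell out the embeddings for the lower-growth terms explicitly.
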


\begin{proof}
The linearity of $\cF$ is straightforward to show. By Lemma \ref{lem:plyGrwthStab} we have $\innp{\nabla^2 w}{{\bar f}^{\otimes 2} + \Si}\in \wC l {\ka + 2}$ with
\[\nrm{\innp{\nabla^2 w}{{\bar f}^{\otimes 2} + \Si}}{\wC l {\ka + 2}} \lesssim \nrm{\nabla^2 w}{\wC{l} \ka}(\nrm{\bar f^{\otimes 2}}{\wC 5 2} + \nrm{\Si}{\wC 5 2})\lesssim \nrm{w}{\wC{l+2} \ka}(\nrm{\bar f}{\wC 5 1}^2 + \nrm{\Si}{\wC 5 2}),\]
uniformly in $w\in \wC{l+2}\ka([0,T]\times \R^d)$.
The other summands are treated similarly.
\end{proof}
To prove Theorem \ref{thm:1stOrderOME} we use the fact that, given $g\in \dC 2$, the function $v = v^{g,r} = g(X_r^{\blnk})$ satisfies following PDE, called \emph{transport equation} or \emph{Kolmogorov backward equation}:
\begin{equation}
	\label{eq:KBEOME}
	\begin{cases}
	\der_t v_t + \innp{\bar f}{\nabla v_t} = 0, & t\in [0,r],\\
	v_r = g,
	\end{cases}
\end{equation}
for all $r\in [0,T]$.

\begin{lem}
	\label{lem:talayTubaroSumOME}
	Given $g\in \wC 3 \ka(\R^d)$ there exists a function $\xi^{g} : (\lrates) \times \N \to \wCZ{\ka + 6}(\R^d), (h,n) \mapsto \xi^{g,h}_n$ such that
	\[\E g(\chi_n^{h}) - g(X_{nh}^{0}) = h^2  \sum_{k=0}^{n-1} \E[\cF_{kh}[g(X^{0,\blnk}_{nh})](\chi_k^h)] + h^2 \xi^{g,h}_n, \quad h \in \lrates,\]
and
\[\nrm{\xi^{g,h}_n}{\wCZ {\ka + 6}} \lesssim \nrm{g}{\wC 3 \ka},\]
	uniformly over $g\in \wC 3 \ka$, $h\in \lrates$, and $n\in \set{0,\dots, T/h}$.
\end{lem}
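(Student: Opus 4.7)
The plan is to set up a one-step Talay--Tubaro-style expansion, then sum the local errors and absorb a remainder.

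Write $v := v^{g,nh}$, so $v_t(x) = g(X^{0,t}_{nh}(x))$. Since $X^{0,s}_s(x) = x$, the global error telescopes as
\[\E g(\chi_n^h) - g(X_{nh}^0) = \sum_{k=0}^{n-1} \E\bigl[v_{(k+1)h}(\chi_{k+1}^h) - v_{kh}(\chi_k^h)\bigr].\]
For each summand, I would expand $v_{kh+h}(\chi_k^h + h f^h_{kh}(\chi_k^h))$ in a joint Taylor series around $(kh, \chi_k^h)$ up to second order in $(t,x)$, with a third-order remainder $R_k^h$. The first-order contribution is $h\der_t v_{kh}(\chi_k^h) + h\innp{\nabla v_{kh}(\chi_k^h)}{f^h_{kh}(\chi_k^h)}$; since $f^h_{kh}$ is independent of $\chi_k^h$ by \assref{assum:H}, conditioning gives $\E[f^h_{kh}(\chi_k^h)\mid \chi_k^h] = \bar f_{kh}(\chi_k^h)$, so this term vanishes in expectation by the transport equation \eqref{eq:KBEOME}. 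The second-order contribution, after conditioning and using $\E[(f^h_{kh}(\chi_k^h))^{\otimes 2}\mid \chi_k^h] = \Si_{kh}(\chi_k^h) + \bar f_{kh}(\chi_k^h)^{\otimes 2}$, collects precisely into $h^2\,\cF_{kh} v(\chi_k^h)$ by the definition \eqref{eq:phi} of $\cF$. Summing over $k$ yields the principal term on the right-hand side of the claim.

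It remains to estimate $\xi^{g,h}_n := h^{-2} \sum_{k=0}^{n-1} \E R_k^h$ in $\wCZ{\ka+6}(\R^d)$. By Lemma \ref{lem:plyGrwthStab}(v) applied to the composition $v = g \circ X^{0,\blnk}_{nh}$ with $g\in \wC 3 \ka$ and $X^{0,\blnk}_{nh}\in \wC 5 1$ uniformly in $n$, I get $v \in \wC 3{\ka+3}$ with $\nrm{v}{\wC 3 {\ka+3}} \lesssim \nrm{g}{\wC 3 \ka}$. Taylor's theorem then gives
\[|R_k^h| \lesssim h^3 \nrm{g}{\wC 3 \ka} (1 + L_k^{\ka+6})(1 + |\chi_k^h|^{\ka+6}),\]
where $L_k := L(\bz(k))$ controls $|f^h_{kh}(z)| \leq L_k(1 + |z|)$ via \assref{assum:H}. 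Using independence of $L_k$ and $\chi_k^h$, integrability of $L_k$ to all orders, and the moment bound of Lemma \ref{lem:linGrowthEstSGD}(i), I obtain
\[\E|R_k^h(x)| \lesssim h^3 \nrm{g}{\wC 3 \ka}(1 + |x|^{\ka+6}),\]
uniformly in $k\leq n-1$, $h\in \lrates$ and $g$. Since there are at most $T/h$ such terms, summation absorbs exactly one factor of $h$, yielding the stated $\wCZ{\ka+6}$-bound on $\xi^{g,h}_n$. Continuity of $\xi^{g,h}_n$ in $x$ is inherited from the continuity of $x\mapsto \chi_k^h(x)$ in $\Lfin$ (which propagates through $\cF_{kh} v \in \wC 1 {\ka+5}$ by Lemma \ref{lem:LGequiv}) together with continuity of $x\mapsto g(X_{nh}^0(x))$.

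The main obstacle is bookkeeping: tracking how the polynomial degree $\ka$ of $g$ inflates through composition with $X^{0,\blnk}_{nh}$ and through multiplication by linear-growth factors from $f^h_{kh}$, so as to land exactly at $\ka+6$. Once the composition estimate of Lemma \ref{lem:plyGrwthStab}(v) is combined with the linear-growth assumption on $f$ and the moment bound of Lemma \ref{lem:linGrowthEstSGD}, the rest is a direct application of Taylor's theorem and the Kolmogorov backward equation.
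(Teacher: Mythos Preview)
Your proposal is correct and follows essentially the same approach as the paper: telescope using $v = v^{g,nh}$, Taylor-expand each increment to second order, cancel the first-order term via the transport equation \eqref{eq:KBEOME} after conditioning on $\chi_k^h$, identify the second-order term with $h^2\cF_{kh}v$, and bound the cubic remainder. The only cosmetic difference is that the paper packages the remainder estimate through Lemma~\ref{lem:linGrowthEstSGD}(ii) and Lemma~\ref{lem:plyGrwthStabRndSclrProd} (treating $h^{-1}\Delta\chi_k^h$ and $|\chi_k^h + \theta\Delta\chi_k^h|^{\ka+3}$ as elements of $\wCZ{\blnk}(\R^d,\Lfin)$ and multiplying), whereas you unpack the same bound directly via the random Lipschitz constant $L_k$ and its independence from $\chi_k^h$; both routes land at $\ka+6$ for the same reason ($3$ from the cubic increment factors plus $\ka+3$ from $\nrm{v}{\wC 3{\ka+3}}$).
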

\begin{proof}
Note that $v^{g,r} \in \wC 3 {\ka + 3}([0,T]\times \R^d)$ uniformly in $r\in [0,T]$, with 
\[\sup_{r\in [0,T]}\nrm{v^{g,r}}{\wC 3 {\ka+3}([0,T]\times \R^d)} \lesssim \nrm{g}{\wC 3 \ka(\R^d)}, \unfovr g\in \wC 3 \ka,\] 
by Lemma \ref{lem:plyGrwthStab} (v) and since $X^0_r \in \wC 5 1([0,T]\times \R^d)$, uniformly in $r\in [0,T]$.
Writing $v := v^{g,r}$, Taylor's theorem implies
	\begin{align*}
		v_{t+h}(x+\delt) - v_t(x) = & h\der_t v_t(x) + \innp{\delt}{\nabla v_t(x)} + \frac{h^2} 2 \der_t^2 v_t(x) \\
		&+h \innp{\delt}{\der_t \nabla v_t(x)} + \frac 1 2 \innp{\delt^{\otimes 2}}{\nabla^2 v_t(x)}  \\
		&+ \zeta,
	\end{align*}
	where
	\begin{equation}
\label{eq:talayTubaroSumOMERem}
\zeta = \sum_{l=0}^3 \sum_{|\be| = 3-l} \frac{1}{\be!l!} \der_t^l \der_\be v_{t+\thet h}(x + \thet \delt) h^l \delt^\be,
	\end{equation}
	for some $\thet \in (0,1)$ depending on $t\in [0,T],h\in (0,1), x\in \R^d$ and $\delt \in \R^d$. By choosing $r = nh$, $t = kh$, $x = \chi_k^h$ and $\delt = \Delt \chi_k^h = h f_{kh}^h(\chi_k^h)$ we get
	\begin{align*}
		v_{(k+1)h}(\chi_{k+1}^h) - v_{kh}(\chi_k^h) = & h(\der_t v_{kh} + \innp{f_{kh}^h}{\nabla v_{kh}})(\chi_k^h) \\
		& + \frac12 h^2(\der_t^2 v_{kh} + 2\innp{f_{kh}^h}{\der_t \nabla v_{kh}} + \innp{(f^h_{kh})^{\otimes 2}}{\nabla^2 v_{kh}})(\chi_k^h) \\
		&+ \zeta_{k,n}^h,
	\end{align*}
with remainder term now denoted by $\zeta_{k,n}^h$.
Note that for a random field $Y : \Om \times \R^d\to \R$ and a random variable $Z : \Om \to \R^d$, such that $Y(x)$ is independent of $Z$ for all $x\in \R^d$, we have
\[\E[Y(Z)] = \E[\E[Y(Z)|Z]] = \E[(\E Y)(Z)].\]
Since $f_{kh}^h(x)$ is independent of $\chi_k^h$ and $\E[(f^h_{kh})^{\otimes 2}] = \Si_{kh} + \bar f_{kh}^{\otimes 2}$,
\begin{align*}
\E[v_{(k+1)h}(\chi_{k+1}^h) - v_{kh}(\chi_k^h)] = &  h\E[(\der_t v_{kh} + \innp{\bar f_{kh}}{\nabla v_{kh}})(\chi_k^h)] \\
& + \frac12 h^2\E[(\der_t^2 v_{kh} + 2\innp{\bar f_{kh}}{\der_t \nabla v_{kh}} + \innp{\Si_{kh} + \bar f_{kh}^{\otimes 2}}{\nabla^2 v_{kh}})(\chi_k^h)] \\
&+ \E \zeta_{k,n}^h.
\end{align*}
We have $\der_t v_{kh} + \innp{\bar f_{kh}}{\nabla v_{kh}} = 0$ by Equation \eqref{eq:KBEOME}, and so
\[\E[v_{(k+1)h}(\chi_{k+1}^h) - v_{kh}(\chi_k^h)] = h^2\E[\cF_{kh}[g(X^{0,\blnk}_{nh})](\chi_k^h)] + \E \zeta_{k,n}^h.\]
Therefore,
\begin{align*}
	\E g(\chi_n^h) - g(X_{nh}^0) =& \E v_{nh}(\chi_n^h) - \E v_{0}(\chi_0) \\
	=&\sum_{k=0}^{n-1} \E v_{(k+1)h}(\chi_{k+1}^h) - \E v_{kh}(\chi_k^h) \\
	=&h^2\sum_{k=0}^{n-1} \E[\cF_{kh}[g(X^{0,\blnk}_{nh})](\chi_k^h)] + \sum_{k=0}^{n-1} \E \zeta_{k,n}^h,
\end{align*}
for all $h\in \lrates$.
Recall Equation \eqref{eq:talayTubaroSumOMERem}. To estimate the sum of the remainder terms, note that by Lemma \ref{lem:linGrowthEstSGD} (ii) and Lemma \ref{lem:LGequiv} (b),
\[h^{-3}|h^l (\Delt \chi_k^h)^\be| \leq |h^{-(3-l)}(\Delt \chi_k^h)^{\otimes (3-l)}|\in \wCZ {3-l}(\R^d, \Lfin(\Om, \R)) \subseteq \wCZ 3(\R^d, \Lfin(\Om, \R)),\]
uniformly in $h\in [0,1]$ and $k\in \set{0,\dots, n}$, for $l\in \set{0,1,2,3}$ and $|\be| = 3-l$. Further,
\begin{align*}
	|\der_t^l \der_\be v_{kh+\thet h}^{nh}(\chi_k^h + \thet \Delt \chi_k^h)|\leq & \nrm{v^{nh}}{\wC 3 {\ka+3}}(1 + |(k+1)h|^{\ka} + |\chi_k^h + \thet \Delt \chi_k^h|^{\ka + 3}) \\
\lesssim &\nrm{g}{\wC 3 \ka}(1 + T^{\ka} + |\chi_k^h + \thet \Delt \chi_k^h|^{\ka + 3}) \in \wCZ{\ka+3}(\R^d, \Lfin(\Om, \R)),
\end{align*}
uniformly in $h \in (0,1), n \in \set{0,\dots, T/h}$ and $k\in \set{0,\dots, n}$.
Thus, $\E \zeta^h_{k,n} \in \wCZ{\ka + 6}$ by Lemma \ref{lem:plyGrwthStabRndSclrProd}, with 
\begin{align*}
\nrm{\E \zeta^h_{k,n}}{\wCZ{\ka + 6}(\R^d)} \lesssim & h^3\nrm{g}{\wC 3 \ka(\R^d)}\max_{l\in \set{0,1,2,3}}\nrm{h^{-(3-l)}(\Delt \chi_k^h)^{\otimes {3-l}}}{\wCZ 3, 2}\nrm{|\Delt \chi_k^h +\thet \Delt \chi_k^h|^{\ka + 3}}{\wCZ {\ka+3},2} \\
\lesssim & h^3 \nrm{g}{\wC 3 \ka(\R^d)},
\end{align*}
uniformly over $g\in \wC 3 \ka, h\in \lrates, n \in \set{0,\dots, T/h}$ and $k\in \set{0,\dots, n}$. Finally, for $\xi^{g,h}_n := h^{-2}\sum_{k=0}^{n-1} \E \zeta^h_{k,n}$, we have
\[\nrm{\xi^{g,h}_n}{\wCZ{\ka +6}}\leq h^{-2}\sum_{k=0}^{\frac Th-1} \max_{n\in \set{0,\dots, T/h}} \nrm{\E \zeta^h_{k,n}}{\wCZ{\ka + 6}} \lesssim \nrm{g}{\wC 3 \ka},\]
uniformly over $g\in \wC 3 \ka, h\in \lrates$ and $n \in \set{0,\dots T/h}$.
\end{proof}

\begin{docu}
It may be possible to work with $g\in \dC 2$ by using a Taylor formula with integral remainder of order $2$. However, in this case I think we actually need $g\in \Lip^3$.
\end{docu}

\begin{proof}[Proof of Theorem \ref{thm:1stOrderOME}]
Let $g\in \wC 5 \ka(\R^d)$ and $h\in \lrates$. Writing $\ph := \cF v^g$, Lemma \ref{lem:talayTubaroSumOME} implies
\[\E g(\chi_{T/h}^h) - g(X_T^0) = h \sum_{n=0}^{\frac Th-1} h\E\ph_{nh}(\chi_k^h) + h^2 \xi^{g,h}_{T/h},\]
with
\[\nrm{\xi^{g,h}_{T/h}}{\wCZ{\ka+6}} \lesssim \nrm{g}{\wC 3 \ka}, \unfovr g\in \wC 3 \ka, h \in \lrates.\]
We can then express the sum as follows:
	\begin{align*}
		\sum_{n=0}^{\frac T h -1} h\E [\ph_{nh}(\chi_n^h)] = &\int_0^T \ph_t(X_t^0)\,dt +  h\sum_{n=0}^{\frac T h -1} \E \ph_{nh}(\chi_n^h) - \ph_{nh}(X_{nh}^0) \\
		&+ \sum_{n=0}^{\frac T h-1} h\ph_{nh}(X_{nh}^0) - \int_0^T \ph_t(X_t^0)\,dt.
	\end{align*}
Note that $\ph \in \wC l {\ka + l+4}([0,T]\times \R^d)$, with
\begin{equation}
\label{eq:phEstODE}
\nrm{\ph}{\wC l {\ka +l+4}([0,T]\times \R^d)} \lesssim \nrm{g(X^{0,\blnk}_T)}{\wC {l+2} {\ka+l+2}([0,T]\times \R^d)} \lesssim \nrm{g}{\wC {l+2} \ka(\R^d)}
\end{equation}
uniformly over $g\in \wC 5 \ka$, by Lemma \ref{lem:TTOperator} and Lemma \ref{lem:plyGrwthStab} (v), for $l \leq 3$. Thus, we may use Lemma \ref{lem:talayTubaroSumOME} again to estimate
\begin{align*}
\sum_{n=0}^{\frac T h -1} |\E [\ph_{nh}(\chi_n^h)] - \ph_{nh}(X_{nh}^0)|\leq& h^2\sum_{n=0}^{\frac T h - 1} \sum_{k=0}^{n-1} |\E[\cF_{kh}[\ph_{nh}(X^{0,\blnk}_{nh})](\chi_k^h)] + \hat\xi^{g,h}_n|, \\
	\end{align*}
where by Inequality \eqref{eq:phEstODE}
\[\nrm{\hat \xi^{g,h}_n}{\wCZ{\ka + 13}} \lesssim \nrm{\ph_{nh}}{\wC 3 {\ka + 7}(\R^d)} \lesssim \nrm{\ph}{\wC 3 {\ka + 7}} \lesssim \nrm{g}{\wC 5 \ka},\]
uniformly over $g\in \wC 5 \ka, h\in \lrates$ and $n\in \set{0,\dots, T/h}$. Here (and implicitly also in other places) we use the fact
\[\sup_{t\in [0,T]} \nrm{w_t}{\wC l \ka(\R^d)} \lesssim \nrm{w}{\wC l \ka([0,T]\times \R^d)}, \unfovr w\in \wC l \ka([0,T]\times \R^d),\]
for all $\ka, l\in \N_0$.
Using Lemma \ref{lem:LGequiv}, we also have
\[\nrm{\E[\cF_{kh}[\ph_{nh}(X^{0,\blnk}_{nh})](\chi_k^h)]}{\wCZ {\ka + 10}} \lesssim \nrm{\cF_{kh}[\ph_{nh}(X^{0,\blnk}_{nh})]}{\wCZ {\ka + 10}} \lesssim \nrm{\ph_{nh}(X_{nh}^{0,\blnk})}{\wC 2 {\ka + 8}(\R^d)} \lesssim \nrm{\ph}{\wC 2 {\ka + 6}} \lesssim \nrm{g}{\wC 4 \ka},\]
uniformly over $h\in (0,1), k \leq n \in \set{0,\dots, T/h}$, and $g\in \wC 4 \ka$. We conclude
\[h \nrm{\sum_{n=0}^{\frac T h-1} \E\ph_{nh}(\chi_n^h) - \ph_{nh}(X_{nh}^0)}{\wCZ {\ka + 13}} \lesssim h \nrm{g}{\wC 4 {\ka}},\]
uniformly over $g\in \wC 4 \ka$ and $h\in (0,1)$.
Further, approximating the integral $\int \ph(X^0)\,dt$ by a left Riemann sum yields
	\begin{align*}
\left|\sum_{n=0}^{\frac T h-1} h\ph_{nh}(X_{nh}^0) -\int_0^T \ph_t(X_t^0)\,dt\right|\leq& \frac12 h T \sup_{t\in [0,T]} |\der_t(\ph_t(X_t^0))|,
	\end{align*}
Thus, using Lemma \ref{lem:plyGrwthStab} (v) and Inequality \eqref{eq:phEstODE} once more
\begin{gather*}
\nrm{\sum_{n=0}^{\frac T h-1} h\ph_{nh}(X_{nh}^0) -\int_0^T \ph_t(X_t^0)\,dt}{\wCZ {\ka + 6}} \lesssim h \sup_{t\in [0,T]}\nrm{\der_t (\ph_t(X^0_t))}{\wCZ {\ka +6}}\lesssim h \sup_{t\in [0,T]}\nrm{\ph_t(X^0_t)}{\wC 1 {\ka +6}}\\
\lesssim h \nrm{\ph}{\wC 1 {\ka+5}} \lesssim h \nrm{g}{\wC 3 \ka},
\end{gather*}
uniformly over $g\in \wC 3 \ka$ and $h\in \lrates$.
Putting all estimates together yields
\[\E g(\chi_{T/h}^h) - g(X_T^0) = h \int_0^T \ph_t(X_t^0)\,dt+ h^2\rho^{g,h},\]
with 
\[\nrm{\rho^{g,h}}{\wCZ{\ka + 13}}\lesssim \nrm{g}{\wC 5 \ka}\]
uniformly over $g\in \wC 5 \ka$ and $h\in \lrates$.
\end{proof}

\section{Differentiation and regularity}
\label{sec:prelimSMEs}
Let $d\in \N$, $F$ be a \tFre{} space, $U\subseteq \R^d$ be open and $f : U\to F$ be a continuous function.
The \emph{derivative} of $f$ at $x\in U$ in the direction $v\in \R^d$ is defined by 
\[Df(x)v = \lim_{h\to 0} \frac1h (f(x+hv) - f(x)).\]
If the limit exists, then we say $f$ is differentiable at $x$ in the direction $v$. We say $f$ is continuously differentiable on $U$ if the limit exists for all $x\in U$ and $v\in \R^d$, and if $Df : U\times \R^d \to F$ is continuous.

We further define higher derivatives recursively
\[D^{l+1} f(x)(v_1,\dots, v_l, v_{l+1}) = \lim_{h\to 0} \frac1h (D^lf(x+hv_{l+1})(v_1,\dots, v_l) - D^lf(x)(v_1,\dots v_l)), \quad l \in \N,\]
with $D^0 f = f$. We say $f$ is $l$-times continuously differentiable (on $U$) if $D^1 f,\dots, D^l f$ exist everywhere and $D^k f : U \times (\R^d)^k \to F$ is continuous for all $k\leq l$.
We write $\dC l(U,F)$ for the set of $l$-times continuously differentiable functions $U\to F$.

If $f\in \dC l(U,F)$, then $D^l f(x)(v_1,\dots, v_l)$ is symmetric in $v_1,\dots, v_l$. That is, its value is unchanged when permuting $v_1,\dots, v_l$.

If $U \subseteq \R^d$ is not necessarily open with interior $\intr U$, and $V\subseteq F$, then we write $\dC l(U,V)$ for the set of continuous functions $f : U\to F$ such that 
\begin{itemize}
	\item $f(U) \subseteq V$,
	\item $f : \intr U \to F \in \dC l$,
	\item $D^k f: \intr U \times (\R^d)^k \to F$ can be extended to a unique continuous function $D^kf : U\times (\R^d)^k \to F$, for all $k\leq l$.
\end{itemize}

For $U = I_1\times\cdots\times I_m\times \R^{d-m}$ where $I_1,\dots, I_m$ are bounded intervals, one can show that $f\in \dC l (U,V)$ implies that $f$ has a $\dC l$ extension to an open neighborhood of $U$ (see Lemma \ref{lem:FrechetExtension} in the Appendix). Moreover, the derivatives of extensions of $f$ are uniquely determined by $f$ on the boundary of $U$.

We can express (higher) derivatives using the standard basis $\set{e_1,\dots, e_d}$ in $\R^d$. Given a multi-index $\al$ with $|\al| \leq l$ we define the \emph{partial derivative} of $f$ at $x$ with respect to $\al$ by
\[\der^\al f(x) = D^{|\al|} f(x)(\overbrace{e_1,\dots, e_1}^{\al(1)},\dots, \overbrace{e_d,\dots, e_d}^{\al(d)}), \quad x\in U.\]
This defines a function $\der^\al f : U \to F$ if all the relevant limits exist. We define $\der^\al f = f$ for $|\al| = 0$.
The symmetry of $D^lf (x)$ implies that the order in which the vectors $e_1,\dots, e_d$ are listed does not matter, only how often each vector appears. It also clear that the definition of $\der^\al f$ makes sense as soon as the lower order derivatives $\der^\be f$ with $\be\leq \al$ are defined.
For $l\in \N$ and $x\in U$ we define the $l$-th order gradient $\nabla^l f(x) \in F^{d^{\times l}}$ of $f$ at $x$ by
\[(\nabla^l f(x))_i = \der_{i_l} \dots \der_{i_1} f(x),\quad i\in d^{\times l},\]
yielding a function $\nabla^l f : U \to F^{d^{\times l}}$ if all relevant partial derivatives exists. Also we set $\nabla^0 f = f$.

We can extend the domain of $D^l f(x)$ to the space of arrays $\R^{d^{\times l}}$ as follows:
\begin{align*}
D^l f(x)A = & D^l f(x)\left(\sum_{i\in d^{\times l}} A_i (e_{i_1}\otimes \dots \otimes e_{i_l})\right)\\
 := & \sum_{i\in d^{\times l}} A_i D^l f(x)(e_{i_1},\dots, e_{i_l}),
\end{align*}
for $x\in U$ and $A\in \R^{d^{\times l}}$.

For most purposes, we consider the spaces $(F^{d_2})^{d_1}$ and $F^{d_1\times d_2}$ the same, via the canonical bijection
\[(F^{d_2})^{d_1} \to F^{d_1\times d_2}, ((x_{1,1},\dots, x_{1,d_2}), \dots, (x_{d_1,1},\dots, x_{d_1,d_2})) \mapsto \mat{x_{1,1} & \dots&  x_{1,d_2} \\ \vdots & \ddots & \vdots \\ x_{d_1,1} & \dots & x_{d_1,d_2}}.\]
Similarly, we  identify $((F^{d_l})^{\dots})^{d_1} \cong F^{d_1\times \dots \times d_l}$.
For example, given $f : \R^d\to \R^m$ we have $\nabla f : \R^d \to (\R^m)^d$ by definition. However, we prefer using the equivalent version $\nabla f : \R^d \to \R^{d\times m}$, so that $\nabla f(x)$ denotes the \emph{transpose} of the Jacobi matrix\footnote{Working with the Jacobi matrix instead of its transpose is awkward in our framework. We would have to define $\innp{\blnk}{\blnk}$ differently.} of $f$ at $x\in \R^d$. We denote the Jacobi matrix of $f$ by $\nabla^\transp f$ instead.
Moreover, given a \tFre{} space $F, U \subseteq \R^d$, and $f : U \to F$ we consider $\nabla^2 f : U\to F^{d\times d}$ and $\nabla \nabla f : U\to (F^d)^d$ the same (and the same as $\nabla^2 f : U \to \R^{d\times d\times m}$ if $F = \R^m$). Similar identifications are applied to higher order gradients.

Now with that mind, if $F$ is a \tFre{} algebra, then
\[D^l f(x)A = \innp{A}{\nabla^l f(x)}, \quad x\in U, A\in \R^{d^{\times l}}.\]
We can also characterize smoothness through partial derivatives or higher order gradients.
\begin{lem}
	Let $U\subseteq \R^d, V\subseteq F$ and $f : U \to V$. Then $f\in \dC l(U,V)$ if and only if $\der^\al f : U \to F$ exists for all $\al$ with $|\al|\leq l$ and $\der^\al f \in \cC(U,F)$.
\end{lem}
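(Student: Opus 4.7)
The plan is to prove the forward direction directly from the definitions and the reverse direction by induction on $l$, with the real work concentrated in the base case $l=1$.

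For the forward implication, if $f\in \dC l(U,V)$ then each $\der^\al f$ with $|\al|\leq l$ is by definition the composition of the continuous map $D^{|\al|} f : U\times (\R^d)^{|\al|}\to F$ with the constant selection of basis vectors $e_1,\dots, e_1,\dots, e_d,\dots, e_d$ (with multiplicities determined by $\al$), so it is continuous on $U$.

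For the reverse implication, I first reduce to the case that $U$ is open: once $f\in \dC l(\intr U, F)$ has been established, the assumed continuous extensions of the $\der^\al f$ to all of $U$ determine continuous extensions of every $D^k f$ via the multilinear expansion
\[D^k f(x)(v_1,\dots, v_k) = \sum_{i_1,\dots, i_k\leq d} v_{1,i_1}\cdots v_{k,i_k}\, \der_{i_1}\cdots\der_{i_k} f(x).\]
So I may assume $U$ is open and argue by induction on $l$. The base case is $l=1$: given $x\in U$ and $v\in \R^d$ small enough that the polyline from $x$ through $y_i := x+\sum_{j\leq i} v_j e_j$ to $x+v$ lies in $U$, I telescope
\[f(x+v) - f(x) = \sum_{i=1}^d \int_0^{v_i}\der_i f(y_{i-1} + t e_i)\,dt,\]
where the $F$-valued Riemann integrals are well-defined since $\der_i f$ is continuous into $F$. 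Subtracting $\sum_i v_i \der_i f(x)$ and using uniform continuity of each $\der_i f$ on a closed box around $x$ in every seminorm $\nrm{\blnk}{p}$, I obtain
\[\nrm{f(x+v) - f(x) - \textstyle\sum_i v_i \der_i f(x)}{p} = o(|v|), \quad v \to 0,\]
for every $p\in \N$. This yields existence of $Df(x)v = \sum_i v_i \der_i f(x)$ together with joint continuity of $(x,v)\mapsto Df(x)v$, i.e.\ $f\in \dC 1(U,F)$.

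The inductive step is then formal. If the equivalence holds for $l-1$ and all $\der^\al f$ with $|\al|\leq l$ exist and are continuous, then each $\der_i f$ has continuous partial derivatives up to order $l-1$ (differentiating $\der_i f$ by any multi-index of size at most $l-1$ produces a partial derivative of $f$ of size at most $l$), so $\der_i f\in \dC{l-1}(U,F)$ by the inductive hypothesis. Combining this with the base-case formula $Df(x)v = \sum_i v_i \der_i f(x)$ and the identification of continuous linear maps $\R^d\to F$ with $F^d$ gives $Df\in \dC{l-1}(U, F^d)$, hence $f\in \dC l(U,F)$. The main obstacle I expect is the base case, specifically the Fréchet-valued fundamental theorem of calculus along coordinate axes together with the seminorm-wise little-$o$ estimate; this is standard over Banach spaces and extends seminorm by seminorm to the Fréchet setting, but it requires a careful check that the $F$-valued integrals are well-defined, which follows from continuity of the integrand since the Riemann partial sums are Cauchy in every seminorm.
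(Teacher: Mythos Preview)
Your proof is correct; the paper states this lemma without proof, treating it as a standard fact about differentiation into Fr\'echet spaces. Your argument---the telescoping integral along coordinate axes for the base case and the induction via $\der_i f\in\dC{l-1}$ for the step---is the canonical one, and your care with the seminorm-wise estimates and the well-definedness of the $F$-valued Riemann integral is exactly what is needed in the Fr\'echet setting.
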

Thus, the continuity of $D^l f$ is also equivalent to the continuity of $\nabla^l f : U\to F^{d^{\times l}}$ if $D^l f$ exists. Also if $F$ is a Banach space, then $f\in \dC l(U,V)$ is equivalent to $f$ being $l$-times continuous differentiable in the sense of  \tFre{} derivatives.

Familiar rules for differentiating linear combinations, products and compositions apply, see Proposition \ref{prop:diffRules} below. For the Faa di bruno formula we need some additional notation.
Given a multi-index $\al$, a \emph{partition} of $\al$ is a set $\cB := \set{\be_1,\dots, \be_k}$ of multi-indices such that $\sum_{i=1}^k \be_i = \al$. We denote by $\prttn \al k$ the set of partitions of $\al$ with $k$ elements. Further, we define $\cB! = \prod_{i=1}^k \be_i!$.
Denote the symmetric group on $\set{1,\dots, k}$ by $\Sym k$. Given arrays $(A_1,\dots, A_k)$ we define
\[\bigodot_{i=1}^k A_i = \frac1{k!}\sum_{\si \in \Sym k} \bigotimes_{i=1}^k A_{\si(i)}.\]
Note that the order of the $A_i$ on the left-hand side does not matter.
Given $\cB = \set{\be_1,\dots, \be_k} \in \prttn \al k$ and $f\in \dC {|\al|}$ we define
\[\der^\cB f = \bigodot_{i=1}^k \der^{\be_i} f.\]
\begin{prop}
\label{prop:diffRules}
Let $l\in \N_0$.
\begin{enumerate}[(a)]
\item If $c\in \R, f,g\in \dC l$, then $cf+g\in \dC l$ and $\der^\al(cf+g) = c\der^\al f + \der^\al g$ for $|\al|\leq l$.
\item If $n\in \N$ and $f_1,\dots, f_n\in \dC l$, then $\bigotimes_{i=1}^n f_i\in \dC l$ and 
\[\der^\al \left(\bigotimes_{i=1}^n f_i\right) = \sum_{\be_1 + \dots + \be_n = \al} \frac{\al!}{\be_1! \cdots \be_n!} \bigotimes_{i=1}^n \der^{\be_i} f_i, \quad |\al|\leq l \text{ (Leibniz)}.\]
\item $f,g\in \dC l$, then $g\circ f\in \dC l$ and 
\[\der^\al(g\circ f) = \sum_{k=1}^{|\al|} \sum_{\cB\in \prttn \al k} \frac{\al!}{\cB!} \innp{\nabla^k g\circ f}{\der^\cB f}, \quad |\al|\leq l \ \text{ (Faa di Bruno)}.\]
\end{enumerate}
\end{prop}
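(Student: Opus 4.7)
The plan is to proceed through the three parts in order of increasing difficulty, starting from the directional-derivative definition introduced just above the proposition.

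For part (a), I would argue by induction on $l$. The base case $l=0$ is trivial. For the inductive step, using the definition $D(cf+g)(x)v = \lim_{h\to 0} h^{-1}(((cf+g)(x+hv)) - (cf+g)(x))$ together with the continuity of addition and scalar multiplication on the Fréchet space $F$, we get $D(cf+g) = cDf + Dg$; iterating this identity $l$ times, or applying it to the induction hypothesis $\der^\be(cf+g) = c\der^\be f + \der^\be g$ for $|\be|=l-1$, yields the claim. Linearity of $\der^\al$ will also streamline the proofs of (b) and (c).

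For part (b), I would first handle the case $n=2$ and $l=1$. From the continuity and bilinearity of $\otimes : F^{\Pi d} \times F^{\Pi e} \to F^{\Pi(de)}$ (which is available because $F$ is a Fréchet algebra and $\otimes$ is built componentwise from the product on $F$), the identity
\[
(f\otimes g)(x+hv) - (f\otimes g)(x) = (f(x+hv)-f(x))\otimes g(x+hv) + f(x)\otimes (g(x+hv)-g(x))
\]
together with continuity of $g$ and of $\otimes$ gives $D(f\otimes g)(x)v = Df(x)v \otimes g(x) + f(x)\otimes Dg(x)v$, and thus $\der_i(f\otimes g) = \der_i f\otimes g + f\otimes \der_i g$. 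Next, I would induct on $|\al|$ using $\der^{\al+e_i}(f\otimes g) = \der_i(\der^\al (f\otimes g))$ and the inductive formula, combining coefficients by the Pascal-type identity $\binom{\al}{\be} + \binom{\al}{\be - e_i} = \binom{\al+e_i}{\be}$ to obtain the two-factor Leibniz rule. Finally, I would extend to general $n$ by induction on $n$, writing $\bigotimes_{i=1}^{n+1} f_i = \bigl(\bigotimes_{i=1}^n f_i\bigr)\otimes f_{n+1}$, applying the two-factor rule and regrouping the multinomial coefficients.

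Part (c) is the main obstacle. I would first prove the order-one chain rule: $D(g\circ f)(x)v = Dg(f(x))\cdot Df(x)v$. The standard ``linearization'' proof, based on writing $f(x+hv)-f(x) = hDf(x)v + o(h)$ and similarly for $g$, carries over to the Fréchet setting seminorm by seminorm (using the graded Fréchet algebra structure only for the evaluation $Dg(f(x))\cdot$, which is a linear operation we have by definition), giving $\der_i(g\circ f) = \innp{\nabla g\circ f}{\der_i f}$. Then I would induct on $|\al|$. For the inductive step $\al \to \al + e_i$, I differentiate the assumed expansion for $\der^\al(g\circ f)$ componentwise using linearity from (a), the Leibniz rule from (b) applied to $\der^\cB f = \bigodot_j \der^{\be_j} f$, and the order-one chain rule applied to $\nabla^k g\circ f$. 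Each differentiation produces two kinds of terms: those where $e_i$ is appended to an existing part of $\cB$ (enlarging a $\be_j$ to $\be_j + e_i$), and those where a new singleton part $e_i$ is created by differentiating inside $\nabla^k g\circ f$ (increasing $k$ by one).

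The combinatorial heart of the argument is to match these two types of terms with the partitions of $\al + e_i$. I would use the bijection between $\prttn{\al+e_i}{k}$ and the disjoint union of (i) pairs $(\cB,j)$ with $\cB \in \prttn \al k$ and $j\in\{1,\dots,k\}$ marking which part absorbs $e_i$, and (ii) partitions $\cB\in\prttn{\al}{k-1}$ to which we adjoin the singleton $e_i$. A short calculation with the coefficients $\al!/\cB!$, $\binom{\be_j + e_i}{e_i}$, and the symmetrization built into $\bigodot$ shows the resulting coefficients sum to $(\al+e_i)!/\cB'!$ for each $\cB' \in \prttn{\al+e_i}{k}$. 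The hard part here is purely bookkeeping: keeping track of how the symmetric tensor product $\bigodot$ interacts with the Leibniz expansion so that repeated parts are counted correctly. Once this is verified the induction closes and the formula follows.
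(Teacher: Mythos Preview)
The paper states Proposition~3.4.1 without proof, treating these as standard differentiation rules; there is no argument to compare against. Your outline is the natural one and would go through: linearity is immediate, Leibniz by induction on $|\al|$ using the bilinearity and continuity of $\otimes$ on the graded Fr\'echet algebra, and Fa\`a di Bruno by induction on $|\al|$ via the order-one chain rule followed by the partition bijection you describe.

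One point worth flagging: the paper defines a partition $\cB\in\prttn{\al}{k}$ as a \emph{set} $\{\be_1,\dots,\be_k\}$ with $\sum_i\be_i=\al$ and sets $\cB!=\prod_i\be_i!$. Read literally this forbids repeated parts, which would make the $k=2$ contribution to $\der_1^2(g\circ f)$ vanish. If instead one reads $\cB$ as a multiset, the coefficient $\al!/\cB!$ omits the usual symmetry factor $\prod_j m_j!$ for repeated blocks, and the symmetrization in $\bigodot$ does not supply it (contraction with the symmetric tensor $\nabla^k g$ makes $\bigodot$ and $\bigotimes$ agree). So when you carry out the bookkeeping you identify as the ``hard part,'' be explicit about which convention you adopt and verify the coefficients against a small example such as $\al=2e_1$; the paper only ever uses the formula inside $\lesssim$-estimates, so the precise constants are never tested there.
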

In Proposition \ref{prop:diffRules} we implicitly assume that the sums, products and compositions are well-defined.


\subsection{Weighted Hölder spaces}
We introduce weighted Hölder spaces, generalizing spaces of functions of polynomial growth $\wC l \ka$ and Lipschitz functions $\Lip^l$. The main goal here is just to prove various stability properties for different special cases all at once, including smooth functions with Lipschitz derivatives or derivatives of polynomial growth. We do not directly use general weighted Hölder spaces in any of our applications.

We call a function $\cV : [0,\infty) \to [1,\infty)$ a \emph{weight function} if
\begin{itemize}
\item $\cV(x+y)\lesssim \cV(x) + \cV(y)$,
\item $\cV(xy) \lesssim \cV(x) \cV(y)$, 
\end{itemize}
uniformly over $x,y \geq 0$.
Given $x,y\in \R^d$ we write $\cV(x) = \cV(|x|)$ and $\cV(x,y) = \cV(x) \vee \cV(y)$.
The only weight functions of interest to us are polynomial weights of the form $\cV(x) = C(1 + x^\ka), x \geq 0$ for some $\ka \in \N_0$. Nevertheless, for elegance and brevity reasons we work with abstract weights $\cV$ in the following.

Note that set of weight functions is stable under products and compositions. This is proven in the polynomial case in the next lemma.
We also show that polynomial weights are indeed weight functions.
\begin{lem}
\label{lem:polynWeights}
Write $\cV_\ka(x) := 1 + x^\ka, x\geq 0, \ka \in \N_0$.
	Let  $\ka, \la \in \N_0$. Then:
	\begin{enumerate}[(a)]
		\item $\cV_\ka(x+y) \lesssim \cV_\ka(x) + \cV_\ka(y), \unfovr x,y \geq 0$.
		\item $\cV_\ka(xy) \leq \cV_\ka(x)\cV_\ka(y), \quad x,y \geq 0$.
		\item If $\ka \leq \la$, then $\cV_\ka\lesssim \cV_\la$.
		\item $\cV_\ka \cV_\la \lesssim \cV_{\ka + \la}$.
		\item $\cV_\la \circ \cV_\ka \lesssim \cV_{\ka \la}$.
	\end{enumerate}
\end{lem}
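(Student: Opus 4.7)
The proof is entirely elementary, since $\cV_\ka(x) = 1 + x^\ka$ is a single-variable polynomial. My plan is to verify each item directly using a small dictionary of standard inequalities, taking some care with the degenerate case $\ka = 0$ (where $\cV_0 \equiv 2$ under the convention $0^0 = 1$). Nothing here depends on earlier material in the chapter.

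For (a), I would invoke convexity of $t \mapsto t^\ka$ on $[0,\infty)$ when $\ka \geq 1$ to obtain $(x+y)^\ka \leq 2^{\ka-1}(x^\ka + y^\ka)$; adding $1 \leq 2^{\ka-1}(1+1)$ then yields $\cV_\ka(x+y) \lesssim \cV_\ka(x) + \cV_\ka(y)$. The case $\ka = 0$ is immediate. For (b), expanding $\cV_\ka(x)\cV_\ka(y) = (1+x^\ka)(1+y^\ka) = 1 + x^\ka + y^\ka + x^\ka y^\ka$ gives the bound since each summand is nonnegative and the right-hand side dominates $1 + (xy)^\ka$. For (c) with $\ka \leq \la$, I would split into $x \leq 1$ (so $x^\ka \leq 1$, hence $\cV_\ka(x) \leq 2 \leq 2\cV_\la(x)$) and $x \geq 1$ (so $x^\ka \leq x^\la$, hence $\cV_\ka(x) \leq \cV_\la(x)$), giving $\cV_\ka \leq 2 \cV_\la$.

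For (d), expanding $\cV_\ka(x) \cV_\la(x) = 1 + x^\ka + x^\la + x^{\ka+\la}$ and applying (c) to each monomial $x^\ka$ and $x^\la$ (each bounded, up to constants, by $\cV_{\ka+\la}(x)$) yields $\cV_\ka \cV_\la \lesssim \cV_{\ka+\la}$. For (e), I would binomial-expand
\[
\cV_\la(\cV_\ka(x)) = 1 + (1+x^\ka)^\la = 1 + \sum_{j=0}^\la \binom{\la}{j} x^{\ka j},
\]
and observe that each term $x^{\ka j}$ with $0 \leq j \leq \la$ is dominated by $\cV_{\ka\la}(x)$ via (c), so the whole sum is $\lesssim \cV_{\ka\la}(x)$.

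There is no real obstacle; the only thing worth mild attention is keeping the constants uniform and respecting the $\ka = 0$ (or $\la = 0$) edge cases, where every inequality either becomes trivial or reduces to a statement about the constant $2$.
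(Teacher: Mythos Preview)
Your proof is correct and follows essentially the same approach as the paper: expand each expression, compare monomials, and use the $x \leq 1$ versus $x \geq 1$ split (your (c)) to control lower powers by higher ones. The paper's treatment is slightly terser (e.g.\ in (d) it bounds $1 + x^\ka + x^\la + x^{\ka+\la}$ directly by $3(1 + x^{\ka+\la})$ via a case split rather than invoking (c)), but the underlying computations are the same.
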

\begin{proof}
Let $x,y \geq 0$. Then the estimates follow from simple calculations:
	\begin{enumerate}[(a)]
		\item $1+(x+y)^\ka \leq 1 + 2^{\ka-1} x^\ka + 2^{\ka-1} y^\ka \leq 2^{\ka-1}(1+x^\ka + 1 + y^\ka)$.
		\item $(1+x^\ka)(1+y^\ka) = (1 + x^\ka + y^\ka + (xy)^\ka) \geq 1 + (xy)^\ka$.
		\item $1 + x^\ka \leq 1 + (1 \vee x^\la) \leq 2(1 + x ^\la)$.
		\item $(1 + x^\ka)(1 + x^\la) \leq 1 + x^\ka + x^\la + x^{\ka+\la} \leq (3 + x^{\ka+\la}) \vee (1 + 3 x^{\ka+\la}) \leq 3(1 + x^{\ka+\la})$.
		\item $1 + (1 + x^\ka)^\la \leq 1 + \sum_{k=0}^{\la} \binom {\la} k x^{\ka k} \leq 1 + 2^\la x^{\ka\la}$. \qedhere
	\end{enumerate}
\end{proof}
In the following, we consider sequences $\N \to [0,\infty]$. Given sequences $a,b$ we write $a\leq b$ if $a_p \leq b_p$ for all $p\in \N$, and similarly for $<$. Constants $c\in [0,\infty]$ are identified with the constant sequence $(c,c,\dots)$. In particular, $a < \infty$ means $a_p < \infty$ for all $p \in \N$. We extend all functions $\R\to \R$ and $\R^2 \to \R$ to sequences $a < \infty$ in the usual way (\enquote{component-wise}).

Let $F$ be a graded \tFre{} space. We denote the grading $\N \to [0,\infty), p \mapsto \nrm{\blnk}{p}$ also simply by $\nrm{\blnk}{}$.
Let $U\subseteq \R^d, V\subseteq F$, $\delt \in [0,1]$, $\cV$ be a weight function and $f : U\to V \in \cC$.
We define non-decreasing sequences $\nrm{f}{\wCZ \cV}, \nrm{f}{\wHoelHZ \delt \cV} : \N \to [0,\infty]$ by
\[\nrm{f}{\wCZ \cV,p} := \sup_{x\in U} \frac{\nrm{f(x)}{p}}{\cV(x)}, \quad \nrm{f}{\wHoelHZ \delt \cV,p} := \sup_{x\neq y \in U} \frac{\nrm{f(x)-f(y)}{p}}{\cV(x,y)|x-y|^\delt}, \quad p \in \N.\]
Recall that if $F$ is a graded \tFre{} algebra, then for all $k\in \N$ we define a grading $(\nrm{\blnk}{p})_{p\in \N}$ on $F^{d^{\times k}}$ so that we have
\[\nrm{\nabla^k f}{p} = \max_{i\leq d^{\times k}} \nrm{\nabla^k_i f}{p} = \max_{|\al|\leq k} \nrm{\der^\al f}{p}\]
for $f \in \dC k$. The latter maximum is taken over all multi-indices with sizes up $k$ (including size $0$).
Given $l\in \N_0$ we further define
\begin{align*}
\nrm{f}{\wC l \cV} := & \max_{k\leq l} \nrm{\nabla^k f}{\wCZ \cV} = \max_{|\al|\leq l} \nrm{\der^\al f}{\wCZ \cV},\\
\nrm{f}{\wHoelH l \delt \cV} := & \nrm{\nabla^l f}{\wHoelHZ \delt \cV} = \max_{|\al|=l} \nrm{\der^\al f}{\wHoelH l \delt \cV},\\
\wHoelNrm f l \delta \cV = & \nrm{f}{\wC l \cV} \vee \nrm{f}{\wHoelH l \delt \cV}.
\end{align*}
Next, for $\delt \in (0,1]$ we define
\[\nrm f {\Lip_\cV} := \nrm f {\wHoelHZ 1 \cV}, \quad \nrm f {\Lip^{l + \delt}_\cV} := \max_{k < l} \nrm{\nabla^k f}{\Lip_\cV} \vee \nrm{f}{\wHoelH l \delt \cV}.\]
We write $f\in E$ if $\nrm f E < \infty$ for $E\in \set{\wC l \cV, \wHoelH l \delt \cV, \wHoel l \delt \cV,\dots}$ (also implying all derivatives necessary for this statement to be meaningful exist). We also write $\wC l \cV = \wHoel l 0 \cV, \wCZ \cV = \wC 0 \cV$, as well as $\wHoel l \delt b := \wHoel l \delt {(x\mapsto 1)}, \wC l b := \wHoel l 0 b, \wCZ b := \wC 0 b,\hoelH l \delt = \wHoelH l \delt {x\mapsto 1}, \hoelHZ \delt = \hoelH 0 \delt$ , $\Lip = \Lip_{(x\mapsto 1)}$ and $\Lip^{l+\delt} = \Lip^{l+\delt}_{(x\mapsto 1)}$. Here, $x\mapsto 1$ is the constant weight, which is equal to $1$ everywhere. Note that $\Lip^l \subseteq \dC {l-1}$, but $\Lip^l \nsubseteq \dC l$.
Finally, we define $\Lip^{l+\delta}_b = \Lip^{l+\delt} \cap \wC l b$ with
\[\nrm{f}{\Lip^{l+\delta}_b} := \nrm{f}{\Lip^{l+\delta}} \vee \nrm{f}{\wC l b}.\] 
Here is a short glossary for the most important spaces in applications:
\begin{itemize}
\item $\hoelHZ \delt$ - $\delt$-Hölder continuous functions,
\item $\hoelH l \delt$ -  $l$-times continuously differentiable functions such that their $l$-th derivative is $\delt$-Hölder continuous,
\item $\wHoel l \delt b$ - bounded $l$-times continuously differentiable functions such that all their derivatives are bounded, and their $l$-th derivative is $\delt$-Hölder continuous
\item $\Lip$ - Lipschitz continuous functions,
\item $\Lip^l$ - $l-1$-times continuously differentiable functions such that they and all their derivatives are Lipschitz continuous,
\item $\Lip^{l+\delt}$ - $l$-times continuously differentiable functions such that they and their derivatives up to order $k-1$ are Lipschitz continuous, and such that their $l$-th derivative is $\delt$-Hölder continuous,
\item $\wCZ {\cV_\ka}$ ($=\wCZ \ka$) - continuous functions with at most polynomial growth of order $\ka$,
\item $\wC l {\cV_\ka}$ ($=\wC l \ka$) - $l$-times continuously differentiable functions such that they and all their derivatives have at most polynomial growth of order $\ka$.
\end{itemize}
A lowercase $b$ (as in $\wC l b$) means functions and all their derivatives (which are implied to exist) are bounded.

The following statement is a generalization of the fact that a function $f\in \dC 1$ on a convex domain is Lipschitz if and only if its derivative is bounded.
\begin{lem}
\label{lem:wghtdRdemchr}
	Let $f : U\to V \in \dC 1$ and $U$ convex.
	Then $f \in \Lip_\cV$ if and only if $\nabla f\in \wCZ \cV$.
	Further,
 \[\nrm{f}{\Lip_\cV} \asymp \nrm{\nabla f}{\wCZ \cV}, \unfovr f\in \dC 1 \cap \Lip_\cV.\]
\end{lem}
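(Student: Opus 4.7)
The plan is to prove the two inequalities $\nrm{f}{\Lip_\cV,p}\lesssim \nrm{\nabla f}{\wCZ\cV,p}$ and $\nrm{\nabla f}{\wCZ\cV,p}\lesssim \nrm{f}{\Lip_\cV,p}$ separately, with constants that depend only on $\cV$ (in particular, independent of $f$ and of the grading index $p\in \N$). Together they yield both the characterization and the equivalence $\nrm{f}{\Lip_\cV}\asymp \nrm{\nabla f}{\wCZ\cV}$.

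For the forward direction, fix $x\neq y$ in $U$. Convexity gives $\ga(t):=x+t(y-x)\in U$ for $t\in[0,1]$, and the fundamental theorem of calculus for $\dC 1$-maps into a Fréchet space yields
\[
f(y)-f(x) \;=\; \int_0^1 Df(\ga(t))(y-x)\,dt \;=\; \int_0^1 \innp{y-x}{\nabla f(\ga(t))}\,dt.
\]
Applying $\nrm{\blnk}{p}$, pulling it inside the Bochner integral, and using $\nrm{\innp{y-x}{\nabla f(\ga(t))}}{p}\leq |y-x|\,\nrm{\nabla f(\ga(t))}{p}$ together with the bound $\nrm{\nabla f(\ga(t))}{p}\leq \nrm{\nabla f}{\wCZ\cV,p}\,\cV(\ga(t))$ gives
\[
\nrm{f(y)-f(x)}{p}\;\leq\;|y-x|\,\nrm{\nabla f}{\wCZ\cV,p}\int_0^1 \cV(\ga(t))\,dt.
\]
Since $|\ga(t)|\leq |x|+|y|$, subadditivity yields $\cV(\ga(t))\lesssim \cV(x)+\cV(y)\lesssim \cV(x,y)$, uniformly in $t$. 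Dividing by $\cV(x,y)|y-x|$ and taking the supremum gives the desired bound.

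For the reverse direction, fix $x$ in the interior of $U$ and a unit vector $v\in \R^d$. For sufficiently small $t>0$ one has $x+tv\in U$, and by definition of the derivative into $F$,
\[
Df(x)v \;=\; \lim_{t\to 0^+} \tfrac{1}{t}\bigl(f(x+tv)-f(x)\bigr)\qquad \text{in }F.
\]
Continuity of the seminorm $\nrm{\blnk}{p}$ then gives $\nrm{Df(x)v}{p}=\lim_{t\to 0^+}\nrm{f(x+tv)-f(x)}{p}/t$, and each difference quotient is bounded by $\nrm{f}{\Lip_\cV,p}\,\cV(x,x+tv)$. Since $\lim_{t\to 0}\cV(x+tv)=\cV(x)$ for the polynomial weights $\cV_\ka$ of actual interest (Lemma \ref{lem:polynWeights}), one obtains $\nrm{Df(x)v}{p}\leq \nrm{f}{\Lip_\cV,p}\,\cV(x)$. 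Specializing to $v=e_i$ and taking the maximum over $i\in\set{1,\dots,d}$ gives $\nrm{\nabla f(x)}{p}\leq \nrm{f}{\Lip_\cV,p}\,\cV(x)$; dividing by $\cV(x)$ and taking the supremum over $x\in \intr U$ closes the argument, with boundary values covered by continuity of $\nabla f$ and $\cV$.

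The main obstacle is the minor housekeeping around $\cV$ in the reverse direction: passing the pointwise limit $t\to 0$ through $\cV(x,x+tv)$ implicitly uses local boundedness of $\cV$ near $x$. For the polynomial weights used throughout the paper this is immediate from Lemma \ref{lem:polynWeights}; in a more abstract setting one would either assume $\cV$ locally bounded or avoid the pointwise limit entirely by writing $\nabla f$ directly as a difference quotient and absorbing $\cV(x,x+tv)$ into the constant via subadditivity, $\cV(x+tv)\lesssim \cV(x)+\cV(tv)$. The Fréchet-valued fundamental theorem of calculus used in the first direction is standard for $\dC 1$-maps.
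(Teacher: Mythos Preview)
Your proof is correct and follows essentially the same approach as the paper: one direction via the fundamental theorem of calculus (the paper cites this as Hadamard's lemma) together with subadditivity to bound $\cV$ at the interpolation point by $\cV(x,y)$, and the other direction via the difference-quotient definition of the derivative combined with a bound on $\cV(x,x+tv)$ near $x$. The paper handles the latter by writing $\cV(x,x+hv)\lesssim \cV(x)\vee(\cV(x)+\cV(hv))\lesssim \cV(x)$ uniformly over $|hv|\leq 1$, i.e.\ exactly the subadditivity route you mention as an alternative to taking the limit through $\cV$; your explicit acknowledgment of the local-boundedness issue is, if anything, more careful than the paper's treatment.
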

\begin{proof}
	Suppose $f\in \Lip_\cV$. Then 
	\[\nrm{f(x+hv)-f(x)}{} \leq \nrm f {\Lip_\cV} \cV(x, x+hv) |h| |v|,\]
	and
	\[\cV(x, x+hv) \lesssim \cV(x) \vee (\cV(x) + \cV(hv)) \lesssim \cV(x),\]
	uniformly over $x\in U$ and $|hv|\leq 1$.
	Hence,
	\[\nrm{D f(x)(v)}{} \lesssim \nrm f {\Lip_\cV} \cV(x), \unfovr x\in U, |v|\leq 1.\]
In particular,
	\[\nrm{\der_j f(x)}{} \lesssim \nrm f {\Lip_\cV} \cV(x) \unfovr x\in U.\]
	
	Conversely, suppose $\nabla f\in \wCZ \cV$. Then, by Hadarmard's lemma \citep[see][Theorem 3.2.2.]{hamilton1982inverse}
	\begin{align*}
		\nrm{f(x) - f(y)}{} 	\leq & \int_0^1 \nrm{Df(x+t(y-x))(y-x)}{} \,dt \\
		\leq & \int_0^1 \nrm{\innp{\nabla f(x+t(y-x))}{y-x}}{}\,dt\\
		\lesssim & \int_0^1 \nrm{\nabla f(x+t(y-x))}{}|y-x|\,dt\\
		\leq &  \int_0^1 \nrm{\nabla f}{\wCZ \cV} \cV(|x+t(y-x)|)|y-x|\,dt\\
		\leq & \nrm{\nabla f}{\wCZ \cV}\cV(x,y)|x-y|,
	\end{align*}
uniformly over $x,y\in U$.
\end{proof}
\begin{lem}
\label{lem:LiplAltRep}
	Let $l\in \N_0$ and $\delt \in (0,1]$. On a convex domain $U$ we have
\begin{enumerate}[(a)]
	\item $\Lip^{l+\delt}_\cV = \set{f\in \wHoelH l \delt \cV : \nabla f\in \wC {l-1} \cV}$
	with
\[\nrm{f}{\Lip^{l+\delt}_\cV} \asymp \nrm{\nabla f}{\wC {l-1} \cV}\vee \nrm{f}{\wHoelH l \delt \cV}, \unfovr f\in \Lip^{l+\delt}_\cV.\]
	\item $\wHoel l \delt \cV = \Lip^{l+\delt}_\cV \cap \wCZ \cV = \set{f \in \dC l : \nabla^k f \in \wHoelZ \delt \cV, k = 0,\dots, l}$, with
		\[\nrm{f}{\wHoel l \delt \cV} \asymp \nrm{f}{\Lip^{l+\delt}_\cV} \vee \nrm{f}{\wCZ \cV} \asymp \max_{k\leq l} \nrm{\nabla^k f}{\wHoelZ \delt \cV} = \max_{|\al|\leq l} \nrm{\der^\al f}{\wHoel 0 \delt \cV}, \unfovr f\in \Lip^{l+\delt}_\cV.\]
\item $\Lip^{l+\delt}_b = \wHoel l \delt b$ with
\[\nrm{f}{\Lip^{l+\delt}_b} \asymp \nrm{f}{\wHoel l \delt b}, \unfovr f\in \Lip^{l+\delt}_b.\]
\end{enumerate} 
\end{lem}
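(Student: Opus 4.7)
The backbone of the proof is the characterization in Lemma \ref{lem:wghtdRdemchr} of $\Lip_\cV$ in terms of $\wCZ\cV$ bounds on the gradient on convex domains. Applying it to $\nabla^k f$ in place of $f$ yields, for each $k\in \N_0$,
\[\nrm{\nabla^k f}{\Lip_\cV} \asymp \nrm{\nabla^{k+1} f}{\wCZ \cV},\]
uniformly over $f$ such that $\nabla^{k+1} f$ is well-defined on the convex set $U$. This equivalence, together with the triangle inequality and the definitions collected in the excerpt, will do essentially all the work.

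\textbf{Part (a).} Unpacking the definition,
\[\nrm{f}{\Lip^{l+\delt}_\cV} = \max_{k < l} \nrm{\nabla^k f}{\Lip_\cV} \vee \nrm{f}{\wHoelH l \delt \cV}.\]
By the above reformulation, $\max_{k<l}\nrm{\nabla^k f}{\Lip_\cV} \asymp \max_{k<l}\nrm{\nabla^{k+1}f}{\wCZ\cV} = \max_{1\leq k \leq l}\nrm{\nabla^k f}{\wCZ\cV} = \nrm{\nabla f}{\wC{l-1}\cV}$, so the Lipschitz piece is equivalent to a $\wC{l-1}\cV$ bound on $\nabla f$. This gives both the set equality and the norm equivalence in (a).

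\textbf{Part (b).} Recall $\wHoelNrm f l \delt \cV = \nrm f {\wC l \cV} \vee \nrm{\nabla^l f}{\wHoelHZ\delt\cV}$. The identity $\wHoel l \delt \cV = \Lip^{l+\delt}_\cV \cap \wCZ\cV$ is immediate once one combines (a) with the observation that adding the $\wCZ\cV$ piece turns the $\wC{l-1}\cV$ bound on $\nabla f$ into the full $\wC l \cV$ bound on $f$, i.e.
\[\nrm{f}{\wCZ\cV} \vee \nrm{\nabla f}{\wC{l-1}\cV} = \nrm{f}{\wC l \cV}.\]
For the third characterization, I need to show that $\max_{k\leq l}\nrm{\nabla^k f}{\wHoelHZ\delt\cV}$ is equivalent to $\nrm{f}{\wC l \cV}\vee\nrm{\nabla^l f}{\wHoelHZ\delt\cV}$. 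The hard direction is bounding $\nrm{\nabla^k f}{\wHoelHZ\delt\cV}$ for $k<l$ by the other quantities. For this I split according to the distance $|x-y|$: on $|x-y|\leq 1$ I use that $\nabla^k f$ is Lipschitz-weighted (hence controlled by $\nrm{\nabla^{k+1}f}{\wCZ\cV}$ via Lemma \ref{lem:wghtdRdemchr}) together with $|x-y|\leq |x-y|^\delt$; on $|x-y|>1$ I use the triangle inequality to write $\nrm{\nabla^k f(x)-\nabla^k f(y)}{p}\leq 2\nrm{\nabla^k f}{\wCZ\cV,p}\cV(x,y)$ and absorb the factor $1\leq |x-y|^\delt$. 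Combining, $\nrm{\nabla^k f}{\wHoelHZ\delt\cV}\lesssim \nrm{\nabla^k f}{\wCZ\cV}\vee\nrm{\nabla^{k+1} f}{\wCZ\cV}\leq \nrm{f}{\wC l \cV}$ for $k<l$, which closes the loop.

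\textbf{Part (c).} The constant weight $\cV\equiv 1$ (which is a valid weight function) reduces $\Lip^{l+\delt}_\cV$ to $\Lip^{l+\delt}$ and $\wCZ\cV$ to $\wCZ b$. By (b), $\wHoel l \delt b = \Lip^{l+\delt}\cap \wCZ b$, and I need to see this coincides with $\Lip^{l+\delt}_b = \Lip^{l+\delt}\cap \wC l b$. The nontrivial inclusion is $\Lip^{l+\delt}\cap \wCZ b \subseteq \wC l b$: for any $f$ in this intersection, Lemma \ref{lem:wghtdRdemchr} (with $\cV\equiv 1$) upgrades the Lipschitz continuity of $\nabla^k f$ for $k<l$ to boundedness of $\nabla^{k+1}f$, so iteratively $\nabla f,\dots,\nabla^l f$ are bounded; together with the assumed boundedness of $f$ this gives $\wC l b$. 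The norm equivalences follow by tracking the constants from Lemma \ref{lem:wghtdRdemchr} through the argument.

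I expect the only subtle step to be the splitting-distance argument in the final characterization of part (b); everything else is a direct application of Lemma \ref{lem:wghtdRdemchr} and the observation that a weight function $\cV\geq 1$ ensures the triangle-inequality bookkeeping goes through uniformly.
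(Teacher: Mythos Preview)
Your proof is correct and follows essentially the same route as the paper: both hinge on Lemma \ref{lem:wghtdRdemchr} to trade $\nrm{\nabla^k f}{\Lip_\cV}$ for $\nrm{\nabla^{k+1}f}{\wCZ\cV}$, and both establish the third characterization in (b) by controlling $\nrm{\nabla^k f}{\wHoelHZ\delt\cV}$ for $k<l$ via the Lipschitz/supremum bounds. Your explicit distance-splitting argument in (b) is in fact a bit more careful than the paper's shorthand ``$\nrm{\blnk}{\wHoelHZ\delt\cV}\lesssim\nrm{\blnk}{\Lip_\cV}$'', which as a standalone inequality is false on unbounded domains (take $f(x)=x$, $\cV\equiv 1$) and only becomes valid once the $\wCZ\cV$ term is also present in the max---precisely what your split at $|x-y|=1$ makes explicit.
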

\begin{proof}
\begin{enumerate}[(a)]
\item By Lemma \ref{lem:wghtdRdemchr}
\[\nrm{f}{\Lip^{l+\delt}_\cV} = \max_{k<l}\nrm{\nabla^k f}{\Lip_\cV} \vee  \nrm{f}{\wHoelH l \delt \cV} \asymp  \max_{k<l}\nrm{\nabla^{k+1} f}{\wCZ \cV} \vee  \nrm{f}{\wHoelH l \delt \cV} = \nrm{\nabla f}{\wC {l-1} \cV}\vee \nrm{f}{\wHoelH l \delt \cV},\]
uniformly over $f\in \Lip^{l+\delt}_\cV$.
\item By Lemma  \ref{lem:wghtdRdemchr}
\begin{align*}
\nrm{f}{\wHoel l \delt \cV} = & \nrm{f}{\wC l \cV} \vee \nrm{f}{\wHoelH l \delt \cV} = \max_{k\leq l} \nrm{\nabla^k f}{\wCZ \cV}  \vee \nrm{f}{\wHoelH l \delt \cV} \asymp \nrm{f}{\wCZ \cV}\vee \max_{k < l} \nrm{\nabla^k f}{\Lip_\cV}\vee \nrm{f}{\wHoelH l \delt \cV} \\
= &\nrm{f}{\Lip^{l+\delt}_\cV} \vee \nrm{f}{\wCZ \cV}.
\end{align*}
For the second representation, note that 
\[\nrm{f}{\wHoel l \delt \cV} = \max_{k\leq l} \nrm{\nabla^k f}{\wCZ \cV} \vee \nrm{f}{\wHoelH l \delt \cV} \lesssim \max_{k\leq l} \nrm{\nabla^k f}{\wHoelZ \delt \cV},\]
and, using the fact that $\nrm{\blnk}{\wHoelHZ \delt \cV} \lesssim \nrm{\blnk}{\Lip_\cV}$,
\begin{align*}
\max_{k\leq l} \nrm{\nabla^k f}{\wHoelZ \delt \cV} = & \max_{k\leq l} (\nrm{\nabla^k f}{\wCZ \cV} \vee \nrm{\nabla^k f}{\wHoelHZ \delt \cV}) \\
\lesssim  &\max_{k\leq l} \nrm{\nabla^k f}{\wCZ \cV} \vee \max_{k<l} \nrm{\nabla^k f}{\Lip_\cV} \vee \nrm{\nabla^l f}{\wHoelHZ \delt \cV} \\
= & \nrm{f}{\wC l \cV} \vee \nrm{f}{\Lip^{l+\delt}_\cV} \\
\asymp & \nrm{f}{\wHoel l \delt \cV},
\end{align*}
uniformly over $f\in \wHoel l \delt \cV$. 
\item 
Using (b), we have
\[\nrm{f}{\Lip^{l+\delta}_b} = \nrm{f}{\Lip^{l+\delta}} \vee \nrm{f}{\wC l b} \asymp \nrm{f}{\wHoel l \delt b}, \unfovr f\in \Lip^{l+\delt}_b.\]
\qedhere
\end{enumerate}
\end{proof}

\begin{lem}
\label{lem:wCstab}
	Let $F$ be a Hölder-type graded \tFre{} algebra, $\cV, \cW$ be weight functions, $l\in \N_0$, $p,d\in \N$, $e\in \N^*$, $U\subseteq \R^d$ and $V\subseteq \R^m$ convex, and $\delt, \ga \in [0,1]$. Then:
	\begin{enumerate}[(a)]
		\item If $WV\leq \cW$ and $\delt \leq \ga$, then $\wHoel l \ga \cV \subseteq \wHoel l \delt \cW$ with $\wHoelNrm \blnk l \delt \cW \lesssim \wHoelNrm \blnk l \ga \cV$.
		\item $\wHoelNrm{cf + g} l \delt \cV \leq |c|\wHoelNrm{f} l \delt \cV +\wHoelNrm{g} l \delt \cV $, $f,g \in \wHoel l \delt \cV(U,F^{\Pi e}), c\in \R$.
		\item $\wHoelNrm{\bigotimes_{i=1}^n f_i} l \delt {{\Pi \cV}, p}\lesssim \prod_{i=1}^n \wHoelNrm{f_i} l \delt {{\cV^i},pq_i}$, uniformly over $f_i \in \wHoel l \delt {\cV^i} (U,F^{\Pi e^i}), i = 1,\dots, n$.\\ Here, $n\in \N, q_1,\dots, q_n \in \N \cup \set{\infty}$ with $\sum_{i=1}^n q_i^{-1} = 1, \Pi \cV := \prod_{i=1}^n \cV^i$ for weight functions $\cV^1,\dots, \cV^n$, and $e^1,\dots e^n \in \N^*$.
		\item $\wHoelNrm{\innp fg} l \delt {{\cV\cW}, p} \lesssim \wHoelNrm{f} l \delt {\cV,pq} \wHoelNrm{g} l \delt {\cW,pr}$, uniformly over $f\in \wHoel l \delt \cV(U,F^{\Pi e})$ and $g\in \wHoel l \delt \cW(U,F^{\Pi e})$. Here, $q,r\in \N \cup \set{\infty}$ with $\frac1q + \frac1r = 1$.
		\item  $\nrm{g\circ f}{\wHoel l {\delt\ga} {\cW \circ \cV \cdot \cV^l}} \lesssim \nrm{g}{\wHoel l {\ga} \cW}\cW(\nrm{f}{\wHoel l \delt \cV}) (1 \vee\nrm{f}{\wHoel l \delt \cV}^{\ga \vee l})$, uniformly over $f\in \wHoel l \delt \cV(U,V)$ and $g\in \wHoel l \ga \cW(V,F^{\Pi e})$.
	\end{enumerate}
\end{lem}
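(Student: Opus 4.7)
The plan is to dispatch the five parts in order, with (a)--(b) being immediate, (c)--(d) reducing to the Leibniz rule and the Hölder-type property of the grading, and (e) being the real work via Faa di Bruno.

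For (a), I would split the Hölder quotient into the regimes $|x-y|\leq 1$ and $|x-y|>1$. On the first regime, $|x-y|^{\ga-\delt}\leq 1$ gives $|x-y|^\ga \leq |x-y|^\delt$ directly, so $\nrm{f(x)-f(y)}{p}/(\cW(x,y)|x-y|^\delt)\leq \nrm{f}{\wHoelH l \ga \cV,p}$. On the second, I simply bound $\nrm{f(x)-f(y)}{p}\leq 2\nrm{f}{\wCZ\cV,p}(\cV(x)+\cV(y))\lesssim \nrm{f}{\wC l \cV, p}\cW(x,y)$ and use $|x-y|^{-\delt}\leq 1$. Part (b) is just the triangle inequality applied in each defining seminorm.

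For (c), the main ingredient is the Leibniz rule (Proposition \ref{prop:diffRules}(b)), which expresses $\der^\al(\bigotimes_i f_i)$ as a finite linear combination of pure tensors $\bigotimes_i \der^{\be_i} f_i$ with $\sum_i \be_i = \al$. Applying the Hölder-type bound on the grading of $F^{\Pi e^i}$, we get pointwise
\[\nrm{\bigotimes_i \der^{\be_i} f_i(x)}{p}\lesssim \prod_i \nrm{\der^{\be_i} f_i(x)}{pq_i}\lesssim \prod_i \wHoelNrm{f_i}{l}{\delt}{\cV^i,pq_i}\cV^i(x),\]
and $\prod_i \cV^i = \Pi\cV$ handles the weight. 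For the top-order Hölder seminorm I would telescope
\[\bigotimes_i A_i - \bigotimes_i B_i = \sum_k \bigotimes_{i<k} A_i\otimes (A_k-B_k)\otimes \bigotimes_{i>k} B_i,\]
applied to $A_i = \der^{\be_i} f_i(x), B_i = \der^{\be_i} f_i(y)$, so that exactly one factor contributes a Hölder difference and the others contribute $\wCZ{\cV^i}$ bounds. Part (d) then follows by writing $\innp{f}{g}$ as a finite sum of products $f_i g_i$ and invoking (b) together with (c) for $n=2$.

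The principal obstacle is (e). Faa di Bruno (Proposition \ref{prop:diffRules}(c)) writes
\[\der^\al(g\circ f)=\sum_{k=1}^{|\al|}\sum_{\cB\in\prttn\al k}\frac{\al!}{\cB!}\innp{\nabla^k g\circ f}{\der^\cB f}.\]
For the $\wC l {\cW\circ\cV\cdot\cV^l}$ part I would estimate each summand via (c) and (d): $\nrm{\nabla^k g(f(x))}{p}\leq \nrm{g}{\wC l \cW,p}\cW(|f(x)|)\lesssim \nrm{g}{\wC l \cW,p}\cW(\nrm{f}{\wCZ\cV})\,\cW\circ \cV(x)$ using the product/composition properties of weights as in Lemma \ref{lem:polynWeights}, and $\prod_{i=1}^k \nrm{\der^{\be_i} f(x)}{}\lesssim \nrm{f}{\wC l \cV}^k\,\cV(x)^k\leq \nrm{f}{\wC l \cV}^{k\vee l}\cV(x)^l$ since $k\leq l$. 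For the top-order Hölder seminorm at the difference $|\der^\al(g\circ f)(x)-\der^\al(g\circ f)(y)|$ I would split each summand into two contributions: one controlling $\nabla^k g(f(x))-\nabla^k g(f(y))$ and one controlling $\der^\cB f(x)-\der^\cB f(y)$. For the first, I use that $g\in \wHoel l \ga \cW$ together with the Lipschitz estimate $|f(x)-f(y)|\lesssim \nrm{f}{\Lip_\cV}\cV(x,y)|x-y|$ from Lemma \ref{lem:wghtdRdemchr}, yielding $\cV(x,y)^\ga|x-y|^\ga\lesssim (\nrm{f}{\wHoel l \delt \cV})^{\ga}\cV(x,y)^\ga|x-y|^\ga$. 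Only in the $k=l$ summand do we need the genuine Hölder exponent $\ga$ of $g^{(l)}$; combined with $|x-y|^\delt$ ($\delt\leq 1$) from telescoping the other product factors of $\der^\cB f$ (which at $k<l$ are all $\wCZ\cV$) this gives the composite exponent $\delt\ga$ and the extra factor $(1\vee \nrm{f}{\wHoel l \delt \cV})^{\ga\vee l}$. Book-keeping the weights via the multiplicativity $\cW(ab)\lesssim\cW(a)\cW(b)$ assembles the weight $\cW\circ\cV\cdot\cV^l$, with the prefactor $\cW(\nrm{f}{\wHoel l \delt \cV})$ arising exactly from pulling the constant $\nrm{f}{\wCZ\cV}$ out of $\cW(\cdot)$ using the same lemma. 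The delicate bookkeeping of these weight products, and the need to separate the $k=l$ summand to expose the exponent $\delt\ga$, is the part where care is required.
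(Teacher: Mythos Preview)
Your treatment of (a)--(d) matches the paper's. For (e), your Faa di Bruno strategy is the right one, but the account of how the exponent $\delt\ga$ arises is muddled and there is a gap at $l=0$. You invoke the Lipschitz estimate $|f(x)-f(y)|\lesssim\nrm{f}{\Lip_\cV}\cV(x,y)|x-y|$ from Lemma~\ref{lem:wghtdRdemchr}, but that lemma requires $f\in\dC^1$ and is unavailable when $l=0$. Moreover, for $l\ge 1$ your Lipschitz route gives $|x-y|^\ga$ for the $\nabla^l g\circ f$ contribution and $|x-y|^\delt$ for the $\der^\cB f$ contribution; these are two \emph{separate} telescoped terms, and their exponents do not ``combine'' to produce $\delt\ga$. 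What you actually get is exponent $\min(\ga,\delt)$, which must then be downgraded to $\delt\ga$ via (a). The argument can be repaired this way, but the mechanism you describe is not the one that works.

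The paper's organization sidesteps both issues by first proving the $l=0$ case
\[\nrm{g\circ f}{\wHoelZ{\delt\ga}{\cW\circ\cV}}\lesssim \nrm{g}{\wHoelZ\ga\cW}\,\cW(\nrm{f}{\wCZ\cV})\,(1\vee\nrm{f}{\wHoelZ\delt\cV}^\ga),\]
where $\delt\ga$ appears directly from $|f(x)-f(y)|^\ga\lesssim (\nrm{f}{\wHoelHZ\delt\cV}\cV(x,y)|x-y|^\delt)^\ga$, using only the $\delt$-H\"older bound on $f$. For general $l$ it then applies this $l=0$ estimate to each $\nabla^k g$ (all of which sit in $\wHoelZ\ga\cW$ once one observes, via Lemma~\ref{lem:LiplAltRep}, that lower-order gradients are Lipschitz hence $\ga$-H\"older), bounds $\der^\cB f$ in $\wHoelZ\delt{\cV^l}$ via (c), and contracts via (d) with $q=\infty,r=1$. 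This treats all $k$ uniformly with no case distinction between $k<l$ and $k=l$, and covers $l=0$ as the base step.
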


Note that by considering polynomial weights, $F = \R$ and $\delt = \ga = 0$, with the help of Lemma \ref{lem:polynWeights}, we can conclude Lemma \ref{lem:plyGrwthStab} (which we have already used before). 

\begin{proof}[Proof of Lemma \ref{lem:wCstab}]
In the following, we use Lemma \ref{lem:LiplAltRep} (b) whenever needed.
	\begin{enumerate}[(a)]
		\item 
		Given $f\in \wHoel 0 \ga \cV$ we have
		\[\nrm{f(x)}{} \leq \nrm{f}{\wCZ \cV}\cV(x) \leq \nrm{f}{\wCZ \cV} \cW(x), \quad x\in U.\]
		Further, for $x,y\in U$ with $|x-y|\leq 1$ we have
		\[\nrm{f(x) - f(y)}{} \leq \wHoelNrm f 0 \ga \cV \cV(x, y) |x-y|^\ga \leq \wHoelNrm f 0 \ga \cV \cW(x, y) |x-y|^\delt.\]
		In the case $|x-y|>1$ we instead estimate
		\[\nrm{f(x) - f(y)}{}\leq \nrm{f(x)}{} + \nrm{f(y)}{}\leq 2 \nrm{f}{\wCZ \cV}\cW(x,y)|x-y|^\delt.\]
		Hence, $\wHoelNrm f 0 \delt \cW \leq 2 \wHoelNrm f 0 \ga \cV$.
		For general $f\in \wHoel l \ga \cV$ we have $\nrm{\nabla^k f}{\wCZ \cW} \leq 2\nrm{\nabla^k f}{\wCZ \cV}$ for all $k < l$ and $\nrm{\nabla^l f}{\wHoel 0 \delt \cW} \leq 2\nrm{\nabla^l f}{\wHoel 0 \ga \cV}$.
		\item
		Given $f, g \in \wHoel 0 \delt \cV, c\in \R$ and $x,y \in U$ we have
		\[\nrm{cf(x) + g(x)}{} \leq |c|\nrm{f(x)}{} + \nrm{g(x)}{} \leq (|c|\nrm f {\wCZ \cV} + \nrm g {\wCZ \cV})\cV(x),\]
		and
		\begin{align*}
			\nrm{cf(x) + g(x) -  cf(y) - g(y)}{} \leq & |c|\nrm{f(x) - f(y)}{} + \nrm{g(x) - g(y)}{}\\
			\leq & (|c|\nrm f {\wHoel 0 \delt \cV}+\nrm g {\wHoel 0 \delt \cV})\cV(x,y)|x-y|^\delt
		\end{align*}
		Thus, for $f,g\in \wHoel l \delt \cV$ we have $\nrm{\nabla^k (cf+g)}{\wHoel 0 \delt \cV} \leq |c|\nrm{\nabla^k f}{\wHoel 0 \delt \cV} + \nrm{\nabla^k g}{\wHoel 0 \delt \cV}, k\leq l$.
		\item In the case $l = 0$, we have
		\[\nrm{\bigotimes_{i=1}^n f_i(x)}{p} \lesssim \prod_{i=1}^n \nrm{f_i(x)}{pq_i}\leq \prod_{i=1}^n \cV^i(x) \nrm{f_i}{\wCZ{\cV^i}, pq_i} \leq \prod_{i=1}^n \nrm{f_i}{\wCZ{\cV^i}, pq_i}\Pi \cV(x),\]
		uniformly over $f_i \in \wHoel l \delt {\cV^i} (U,F^{\Pi e^i}), i = 1,\dots, n$ and $x\in U$.
		Further, we have
		\begin{align*}
			\bigotimes_{i=1}^n f_i(x) - \bigotimes_{i=1}^n f_i(y) = \sum_{i=1}^n \left(\bigotimes_{j=1}^{i-1} f_j(x)\right) \otimes (f_i(x) - f_i(y)) \otimes \left(\bigotimes_{j=i+1}^n f_j(y)\right).
		\end{align*}
	Therefore, 
		\begin{align*}
			& \nrm{\bigotimes_{i=1}^n f_i(x) - \bigotimes_{i=1}^n f_i(y)}{p}\\
		\lesssim	& \sum_{i=1}^n\left( \prod_{j=1}^{i-1} (\cV^j(x) \nrm{f_j}{\wCZ{\cV^j}, pq_j})(\cV^i(x,y)\nrm{f_i}{\wHoel 0 \delt {\cV^i},pq_i} |x-y|^\delt)\prod_{j={i+1}}^{n} (\cV^j(y) \nrm{f_j}{\wCZ{\cV^j}, pq_j})\right) \\
			\lesssim &  \Pi \cV(x,y)\prod_{i=1}^{n} \nrm{f_i}{\wHoel 0 \delt {\cV^i},pq_i} |x-y|^\delt,
		\end{align*}
	uniformly over $f_i \in \wHoel l \delt {\cV^i} (U,F^{\Pi e^i}), i = 1,\dots, n$ and $x,y\in U$.
		For general $l\in \N_0$ recall the Leibniz formula (Proposition \ref{prop:diffRules} (b)) 
		\[\der^\al \left(\bigotimes_{i=1}^n f_i\right) = \sum_{\be_1 + \dots + \be_m = \al} \frac{\al!}{\be_1!\cdots\be_n!}\bigotimes_{i=1}^n \der_{\be_i} f_i.\]
		Hence,
		\begin{gather*}
			\nrm{\nabla^k (\bigotimes_{i=1}^n f_i)}{\wHoel 0 \delt {\Pi \cV},p} \leq \max_{|\al|=k} \nrm{\der^\al (\bigotimes_{i=1}^n f_i)}{\wHoel 0 \delt {\Pi \cV},p} \lesssim \max_{|\al|=k} \sum_{\Si \be = \al} \nrm{\bigotimes_{i=1}^n \der_{\be_i} f_i}{\wHoel 0 \delt {\Pi \cV},p} \\
			\lesssim \max_{|\al|=k} \sum_{\Si \be = \al} \prod_{i=1}^n \nrm{\der_{\be_i} f_i}{\wHoel 0 \delt {\cV^i},pq_i} \lesssim \prod_{i=1}^n \nrm{f_i}{\wHoel l \delt {\cV^i},pq_i},
		\end{gather*}
		uniformly over $k\leq l$ and $f_i \in \wHoel l \delt {\cV^i} (U,F^{\Pi e^i}), i = 1,\dots, n$.
		\item Analogous to (c) for $n = 2$.
		\item Note that $\nrm{f(x)}{p} = |f(x)|, x\in U, p \geq 1$. Given $f\in \wHoelZ \delt \cV$ and $g\in \wHoelZ \ga \cW$, we have
		\begin{align*}
		\nrm{g(f(x))}{} \leq &  \nrm{g}{\wCZ \cW} \cW(f(x)) \\
		\leq & \nrm{g}{\wCZ \cW} \cW(\nrm{f}{\wCZ \cV}\cV(x))\\
		\lesssim & \nrm{g}{\wCZ \cW} \cW(\nrm{f}{\wCZ \cV}) (\cW \circ \cV)(x),
		\end{align*}
	uniformly over $x\in U$, and
	\begin{align*}
	\nrm{g(f(x)) - g(f(y))}{} \leq & \nrm{g}{\wHoelHZ \ga \cW}\nrm{f(x) - f(y)}{}^\ga \cW(f(x),f(y)) \\
							\leq & \nrm{g}{\wHoelHZ \ga \cW}\nrm{f}{\wHoelHZ \delt \cV}^\ga|x-y|^{\delt\ga} \cW(\nrm{f}{\wCZ \cV}\cV(x),\nrm{f}{\wCZ \cV}\cV(y)) \\
							\lesssim & \nrm{g}{\wHoelHZ \ga \cW}\nrm{f}{\wHoelHZ \delt \cV}^\ga|x-y|^{\delt\ga} \cW(\nrm{f}{\wCZ \cV}) (\cW \circ\cV)(x,y),
	\end{align*}
uniformly over $x,y\in U$. Thus,
\begin{equation}
\label{eq:wC0StabComp}
\nrm{g\circ f}{\wHoelZ {\delt\ga}{\cW\circ \cV}} \lesssim \nrm{g}{\wHoelZ \ga \cW} \cW(\nrm{f}{\wCZ \cV})(1 \vee \nrm{f}{\wHoelZ \delt \cV}^\ga),
\end{equation}
uniformly over $f\in \wHoelZ \delt \cV$ and $g\in \wHoelZ \ga \cW$. Now, consider $f\in \wHoel l \delt \cV$ and $g\in \wHoel l \ga \cW$. Let $\al$ be a multi-index with $|\al|\leq l$. Recall Faa di Bruno's formula (Proposition \ref{prop:diffRules} (c)):
\[\der^\al(g\circ f) = \sum_{k=1}^{|\al|} \sum_{\cB\in \prttn{\al}{k}} \frac{\al!}{\cB!} \innp{\nabla^k g\circ f}{\der^\cB f}.\]
Given a partition $\cB$ of $\al$ into $k$ multi-indices, we have using (a), (b) and (c)
\[\nrm{\der^\cB f}{\wHoelZ \delt {\cV^l}} \lesssim \nrm{\der^\cB f}{\wHoelZ \delt {\cV^k}} \lesssim \prod_{\be \in \cB} \nrm{\der^\be f}{\wHoelZ \delt \cV} \leq \nrm{f}{\wHoel l \delt \cV}^k \leq1 \vee \nrm{f}{\wHoel l \delt \cV}^l,\]
uniformly over $f\in \wHoel l \delt \cV$. Further, by using Inequality \eqref{eq:wC0StabComp}
\[\nrm{\nabla^k g \circ f}{\wHoelZ {\delt\ga}{\cW\circ \cV}} \lesssim \nrm{\nabla^k g}{\wHoelZ \ga \cW} \cW(\nrm f {\wHoelZ \delt \cV})(1\vee \nrm f {\wHoelZ \delt \cV}^\ga),\]
uniformly over $f\in \wHoel l \delt \cV$ and $g\in \wHoel l \ga \cW$.
Thus, using (d) with $q = \infty, r = 1$,
\begin{align*}
\nrm{\innp{\nabla^k g\circ f}{\der^\cB f}}{\wHoelZ {\delt\ga}{\cW\circ\cV\cdot \cV^l}} & \lesssim \nrm{\nabla^k g}{\wHoelZ \ga \cW} \cW(\nrm f {\wHoelZ \delt \cV})(1\vee \nrm f {\wHoelZ \delt \cV}^\ga) (1\vee \nrm{f}{\wHoel l \delt \cV}^l)\\
& \lesssim \nrm{g}{\wHoel l \ga \cW} \cW(\nrm f {\wHoel l \delt \cV})(1\vee \nrm f {\wHoel l \delt \cV}^{\ga \vee l}),
\end{align*}
uniformly over $f\in \wHoel l \delt \cV$ and $g\in \wHoel l \ga \cW$.	
Using (b), the desired result follows.
\end{enumerate}
\end{proof}

\subsection{Regularity of random fields}
In the previous subsection, we already encountered various regularity properties that also apply to random fields. Here, we provide additional properties specific to random fields.

Fix a complete probability space $(\Om,\cF_\Om, \P)$. We consider functions $X : U \to \Lfin(\Om, V)$, where $U\subseteq \R^m, V\subseteq B$ and $B$ is a Banach space.
\begin{lem}
\label{lem:expDerCmmte}
Let $l\in \N_0$ and $X: U\to \Lfin(\Om, V) \in \dC l$. Then $\E X \in \dC l(U, V)$ with
\[\der^\al \E X = \E[\der^\al X], \quad |\al|\leq l.\]
\end{lem}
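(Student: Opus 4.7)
The plan is to exploit the fact that the expectation $\E : \Lfin(\Om, V) \to V$ is a continuous linear map and that applying a continuous linear map commutes with differentiation of Fréchet-space valued functions. Everything else is induction and bookkeeping.

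First I would check that $\E : \Lfin(\Om, V) \to V$ is continuous and linear. Linearity is obvious. For continuity, note that $\nrm{\E Y}{} \leq \E \nrm{Y}{} = \nrm{Y}{1} \leq \nrm{Y}{p}$ for every $p\geq 1$ and every $Y\in \Lfin(\Om, V)$, using Jensen's inequality and monotonicity of the $L^p$-norms. Hence $\E$ is continuous for the grading on $\Lfin(\Om, V)$.

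Next, I would record the following elementary fact: if $F, G$ are Fréchet spaces, $T : F \to G$ is continuous and linear, $U\subseteq \R^d$ is open, and $f : U \to F$ is differentiable at $x\in U$ in direction $v\in \R^d$, then $T\circ f$ is differentiable at $x$ in direction $v$ with $D(T\circ f)(x)v = T(Df(x)v)$. This is immediate by pulling $T$ inside the difference quotient and using continuity of $T$ to pass to the limit:
\[D(T\circ f)(x)v = \lim_{h\to 0}\frac{1}{h}(T(f(x+hv)) - T(f(x))) = \lim_{h\to 0} T\!\left(\frac{f(x+hv) - f(x)}{h}\right) = T(Df(x)v).\]
Continuity of $Df : U\times \R^d \to F$ transfers to continuity of $D(T\circ f) = T\circ Df : U\times \R^d \to G$, since compositions of continuous maps are continuous. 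Applying this with $T = \E$ and $f = X$ settles the case $l = 1$.

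For general $l$, I would induct on $l$. Assuming the statement holds up to $l-1$, and that $X\in \dC l(U, \Lfin(\Om, V))$, the inductive hypothesis applied to $DX\in \dC{l-1}(U\times \R^d, \Lfin(\Om, V))$ (viewed as a function of $x$ for each fixed $v$, or directly to $D^{l-1}X$ multilinearly) gives that $\E$ commutes with all derivatives up to order $l-1$. The $l$-th derivative is then handled by the $l=1$ case applied to $D^{l-1}(\E X) = \E[D^{l-1} X]$, yielding $D^l(\E X) = \E[D^l X]$ with the required continuity. Specializing to $v_1,\dots, v_l$ taken from the standard basis gives $\der^\al \E X = \E[\der^\al X]$ for all $|\al|\leq l$, and continuity of $\der^\al X : U \to \Lfin(\Om, V)$ composed with the continuous map $\E$ yields continuity of $\der^\al \E X : U \to V$, so $\E X\in \dC l(U, V)$. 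Finally, if $U$ is not open, I would invoke the extension result (Lemma \ref{lem:FrechetExtension}) to reduce to the open case and observe that the identity $\der^\al \E X = \E[\der^\al X]$, which holds on the interior, extends uniquely by continuity to the boundary of $U$. No real obstacle arises; the only point requiring care is making explicit that the grading on $\Lfin(\Om, V)$ renders $\E$ continuous.
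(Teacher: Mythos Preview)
Your proposal is correct and takes essentially the same approach as the paper: both exploit that $\E$ is continuous linear (the paper uses the concrete fact that convergence in $L^1$ implies convergence of expectations, you phrase it via continuity of $\E$ on the graded space $\Lfin$) and then induct on $l$. The paper's proof is simply the bare-bones version of yours, working directly with $\der_j$ and $L^1$-convergence rather than invoking the general chain-rule-for-continuous-linear-maps principle.
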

\begin{proof}
Let $j \in \set{1,\dots, d}$. Then, for all $x\in U$,
\[\frac1h (X(x + e_j h) - (X(x))) \to \der_j X(x)\text{ in } L^1.\]
In particular,
\[\frac1h \E[X(x + e_j h) - (X(x))] \to \E[\der_j X(x)].\]
Thus, $\E X \in \dC 1(U, V)$ and $\der_j \E X = \E[\der_j X]$. By induction we get the desired property for all $|\al|\leq l$.
\end{proof}

\begin{lem}
\label{lem:expWHoelFld}
Let $l\in \N_0, \delt\in [0,1]$, $\cV$ be a weight function and $X: U \to \Lfin(\Om, V) \in \wHoel \delt l \cV$.
Then $\E X \in \wHoel \delt l \cV(U,V)$ with
\[\nrm{\E X}{\wHoel \delt l \cV} \leq \nrm{X}{\wHoel \delt l \cV,1}.\]
\end{lem}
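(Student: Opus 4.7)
The plan is straightforward and relies on pushing the expectation inside the derivatives and then applying Jensen's inequality (or more precisely, the fact that $\nrm{\E Y}{} \leq \E \nrm{Y}{} = \nrm{Y}{1}$ for any Banach-space-valued random variable $Y$).

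First, I would invoke Lemma \ref{lem:expDerCmmte}, which under the hypothesis $X \in \dC l(U,\Lfin(\Om,V))$ already gives $\E X \in \dC l(U,V)$ together with the commutation formula $\der^\al \E X = \E[\der^\al X]$ for all multi-indices $\al$ with $|\al|\leq l$. This reduces everything to estimating the deterministic partial derivatives of $\E X$ through the known norm bounds on the partial derivatives of $X$.

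Next, for the $\wC l \cV$-part, I would fix $|\al|\leq l$ and $x\in U$ and estimate
\[
|\der^\al \E X(x)| = |\E \der^\al X(x)| \leq \nrm{\der^\al X(x)}{1} \leq \nrm{X}{\wC l \cV, 1}\,\cV(x),
\]
which yields $\nrm{\E X}{\wC l \cV} \leq \nrm{X}{\wC l \cV, 1}$. For the Hölder part, I would fix $|\al| = l$ and $x\neq y \in U$ and estimate
\[
|\der^\al \E X(x) - \der^\al \E X(y)| = |\E[\der^\al X(x) - \der^\al X(y)]| \leq \nrm{\der^\al X(x) - \der^\al X(y)}{1} \leq \nrm{X}{\wHoelH l \delt \cV, 1}\,\cV(x,y)\,|x-y|^\delt,
\]
which yields $\nrm{\E X}{\wHoelH l \delt \cV} \leq \nrm{X}{\wHoelH l \delt \cV, 1}$. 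Taking the maximum of the two bounds produces $\nrm{\E X}{\wHoel l \delt \cV} \leq \nrm{X}{\wHoel l \delt \cV, 1}$, as desired.

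There is no real obstacle here; the argument is a routine application of $\nrm{\E Y}{}\leq \nrm Y 1$ combined with Lemma \ref{lem:expDerCmmte}. The only subtle point worth double-checking is that the definition of $\nrm{\blnk}{\wHoel l \delt \cV, p}$ for random fields reduces, for $p=1$, to exactly the quantities appearing on the right-hand side above, so that the claimed inequality uses the correct index $p=1$; this is immediate from the definitions given just before the statement.
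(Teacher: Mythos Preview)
Your proposal is correct and matches the paper's own proof essentially line for line: the paper also uses $\nrm{\E Y}{}\leq \nrm Y 1$ for the growth and Hölder bounds at level $\al=0$, then invokes Lemma~\ref{lem:expDerCmmte} to replace $X$ by $\der^\al X$ for $|\al|\le l$. There is no meaningful difference in approach.
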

\begin{proof}
We have
\[\nrm{\E X(x)}{B} \leq \E[\nrm{X(x)}{B}] \leq \nrm{\nrm{X(x)}{B}}{1} \leq \nrm{X}{\wHoel \delt l \cV,1}\cV(x), \quad x \in U,\]
and similarly
\[\nrm{\E X(x) - \E X(y)}{B} \leq \nrm{X}{\wHoelHZ \delt \cV,1}\cV(x,y)|x-y|^\delt,\quad  x,y\in U.\]
By replacing $X$ with $\der^\al X$ for all $|\al|\leq l$ and using Lemma \ref{lem:expDerCmmte}, the desired result follows.
\end{proof}

Let $e\in \N^*$ and $T >0$.
Next, we consider random variables valued in the space of a.s.\ bounded paths, that is the space $B = (L^\infty([0,t], \R^{\Pi e}), \nrm{\blnk}{\infty})$ for $t\in [0,T]$, and a filtration $(\cF_t)_{t\in [0,T]}$ on $(\Om, \cF_\Om, \P)$ satisfying the usual condition.

The norm on the $L^p$-space 
\[\Lpinft p t:= \Lpinft p t(\Om) := \Lpinft p t(\Om, \R^{\Pi e}) := \cL_p(\Om, L^\infty([0,t], \R^{\Pi e}))/\sim,\] where $\sim$ is equality almost surely, is given by
\[\nrm X {\Lpinft pt} := \nrm{X : \Om \to L^\infty([0,t],\R^{\Pi e})}{p} = \E\left[\esssup_{s\in [0,t]} |X_s|^p\right]^{1/p},\]
for all $p\geq 1$ and $t\in [0,T]$.
In this case, almost surely equality is equivalent to indistinguishability of stochastic processes. We identify each element of $L^\infty([0,t], \R^{\Pi e})$ with a measurable representative $f$ such that $\sup_{s\in [0,t]} |f_s| = \esssup_{s\in [0,t]} |f_s|$. Hence, we can write $\sup$ instead of $\esssup$ in the following. Further, we write $\Lpinf p := \Lpinft pT$,
 As usual, we consider the space of random variables with finite moments 
 \[\Lfininf := \Lfininf(\Om) := \Lfininf(\Om, \R^{\Pi e}) = \bigcap_{p\in \N} \Lpinf p(\Om, \R^{\Pi e})\] as a \tFre{} algebra with the grading $(\nrm{\blnk}{\Lpinf p})_{p\in \N}$.
To make some estimates work for all parameter values we define $\nrm{X}{0} = 0$ and $\nrm{X}{\Lpinft 0 t} = 0$, etc.

\begin{docu}
	\begin{lem}
		Let $X,Y : \Om \times [0,T]\times \R^d\to \R^d$ be random fields with $X\in \wHoel 0 \delt \cV(\R^d, \Lfininf(\Om)), Y\in \wHoel 0 \ga \cW(\R^d, \Lfininf(\Om))$. Define
		\[(Y\circ X)(\om, t, x) = Y(\om,t(X(\om,t,x))), \quad \om \in \Om, t\in [0,T], x\in \R^d.\]
		Then $Y\circ X$ is a random field with $Y\circ X\in \wHoel 0 {\delt\ga}{\cW\circ \cV}(\R^d, \Lfininf(\Om))$ and
		\[\nrm{Y\circ X}{\wHoel 0 {\delt\ga}{\cW\circ \cV}(\R^d, \Lpinf{p}(\Om))} \leq \nrm{Y}{\wHoel 0 \ga \cW(\R^d, \Lpinf{2p}(\Om))}(1 + \nrm{X}{\wHoel 0 \delt \cV(\R^d, \Lpinf{2p}(\Om))}), \quad p\geq 1.\]
	\end{lem}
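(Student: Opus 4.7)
The plan is to treat this as the random-field analogue of Lemma \ref{lem:wCstab}(e) with $l=0$ and Hölder split $q=r=2$, specialized to the path-supremum topology of $\Lfininf$. I would organize the argument into three parts: measurability, a growth bound, and a spatial Hölder bound, with one substantive technical input.

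Measurability of $(\om,t,x)\mapsto Y(\om,t,X(\om,t,x))$ is immediate from joint measurability of $X$ and $Y$ by composition. For the growth bound I fix $x\in\R^d$, set $\tilde X_x(\om):=\sup_{t\in[0,T]}|X(\om,t,x)|$, note that $\nrm{\tilde X_x}{p}=\nrm{X(x)}{\Lpinf p}\leq\nrm{X}{\wCZ\cV,p}\cV(x)$, and decompose
\[|Y(\om,t,X(\om,t,x))|\leq|Y(\om,t,0)|+|Y(\om,t,X(\om,t,x))-Y(\om,t,0)|.\]
The first summand yields $\nrm{Y(0)}{\Lpinf p}\leq\nrm{Y}{\wCZ\cW,p}\cW(0)$ after taking $\sup_t$ and the $L^p$-norm. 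For the second, I would first extract a pathwise Hölder constant
\[L_Y(\om):=\sup_{z\neq w}\frac{\sup_{t\in[0,T]}|Y(\om,t,z)-Y(\om,t,w)|}{\cW(z,w)|z-w|^\ga}\]
with moment bound $\nrm{L_Y}{2p}\lesssim\nrm{Y}{\wHoelHZ\ga\cW,2p}$; then $\sup_t|Y(\om,t,X(\om,t,x))-Y(\om,t,0)|\leq L_Y(\om)\cW(\tilde X_x(\om))\tilde X_x(\om)^\ga$, and Cauchy--Schwarz in $\om$ gives a bound of the form $\nrm{L_Y}{2p}\nrm{\cW(\tilde X_x)\tilde X_x^\ga}{2p}\lesssim\nrm{Y}{\wHoel 0\ga\cW,2p}(\cW\circ\cV)(x)\bigl(1+\nrm{X}{\wCZ\cV,2p}^\ga\bigr)$, using the product and composition properties of weight functions from Lemma \ref{lem:polynWeights}.

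The spatial Hölder bound is analogous: I apply the pathwise Hölder estimate at $z=X(\om,t,x)$, $w=X(\om,t,y)$, take $\sup_t$, use Cauchy--Schwarz in $\om$, and invoke $\nrm{X(x)-X(y)}{\Lpinf{2p}}\leq\nrm{X}{\wHoelHZ\delt\cV,2p}\cV(x,y)|x-y|^\delt$ together with the composition and product rules for weights to obtain the factor $(\cW\circ\cV)(x,y)|x-y|^{\delt\ga}$. Combining the growth and Hölder pieces yields the claimed norm inequality.

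The main obstacle is the production of the pathwise Hölder constant $L_Y(\om)$ with the required moment bound. Going from the moment-based quantity $\nrm{Y}{\wHoelHZ\ga\cW,2p}$ to a pathwise constant $L_Y\in L^{\infty-}(\Om)$ calls for a Kolmogorov--Chentsov-type continuity argument applied to the field $z\mapsto Y(\blnk,\blnk,z)$ valued in $\Lfininf(\Om)$; such arguments typically cost a small amount of Hölder exponent and a moment upgrade, which accounts for the appearance of $2p$ rather than $p$ on the right-hand side. Making this quantitative on the unbounded domain $\R^d$ further demands a weighted variant or a localization controlled by $\cW$ so as to keep the weight $\cW(z,w)$ inside the supremum. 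This is the delicate step I expect to be the bottleneck, and plausibly the reason the statement is parked inside the \texttt{docu} block.
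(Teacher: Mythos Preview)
The paper gives no proof of this statement: it sits inside a \texttt{docu} environment, which the preamble excludes from compilation. So there is nothing to compare against directly. What the paper \emph{does} do, immediately afterwards, is prove Lemmas~\ref{lem:gXwghtHoelIneq}--\ref{lem:wHoelgRndFld}, which handle the composition $g(X)$ only for \emph{deterministic} outer functions $g:[0,T]\times\R^d\to\R^{\Pi e}$. In that setting the pathwise Hölder constant of $g$ is just the deterministic number $\nrm{g}{\wHoelHZ\ga\cW}$, so the step you flag as the bottleneck disappears entirely.

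Your diagnosis of the obstacle is correct and is, in all likelihood, precisely why the lemma was shelved. The hypothesis $Y\in\wHoel 0\ga\cW(\R^d,\Lfininf(\Om))$ only says
\[
\nrm{\sup_{t}|Y_t(z)-Y_t(w)|}{p}\leq \nrm{Y}{\wHoelHZ\ga\cW,p}\,\cW(z,w)\,|z-w|^\ga
\]
for each \emph{deterministic} pair $(z,w)$; it does not furnish a random variable $L_Y\in\Lfin$ with $|Y_t(\om,z)-Y_t(\om,w)|\leq L_Y(\om)\cW(z,w)|z-w|^\ga$ pathwise. A Kolmogorov--Chentsov argument can manufacture such an $L_Y$, but only at the cost of Hölder exponent (you recover $\ga'<\ga$, not $\ga$), and on the unbounded domain $\R^d$ you need a weighted version to keep the growth under control. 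Either way the stated inequality with the exact exponent $\delt\ga$ and the clean constant does not follow from the hypotheses as written. Your outline is the right shape for a result of this type, but the lemma as stated would need either a stronger pathwise assumption on $Y$ or a weakening of the conclusion; the paper's resolution was simply to restrict to deterministic $Y$.
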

\end{docu}

Let $X : \Om \times [0,T]\times U\to \R^d$ be a random field and $g : [0,T]\times \R^d \to \R^{\Pi e}$ be jointly measurable. Define
\[g(X)(\om,t,x) = g_t(X_t(\om, x)), \quad \om \in \Om, t\in [0,T], x\in \R^d.\]
Then $g(X) : \Om \times [0,T]\times U \to \R^{\Pi e}$ is a random field as well.

\begin{lem}
\label{lem:gXwghtHoelIneq}
Let $\delt\in (0,1], \la\in \N_0,\cV_\la = 1 + |\blnk|^\la, X,Y : \Om \times [0,T]\times U\to \R^d$ be random fields and $g : [0,T]\times \R^d \to \R^{\Pi e}$ be jointly measurable.
Then
\begin{enumerate}[(a)]
\item If $g\in \wCZ {\cV_\la}(\R^d, L^\infty([0,T], \R^{\Pi e}))$, then 
\[\nrm{g(X)}{\Lpinft p t} \leq \nrm{g}{\wCZ {\cV_\la}}\cV_\la(\nrm{X}{\Lpinft {p\la} t}), \quad p \geq 1, t\in [0,T].\]
\item If $g\in \wHoelHZ \delt {\cV_\la}(\R^d, L^\infty([0,T], \R^{\Pi e}))$ and $q,r\in [0,\infty]$ with $1/q + 1/q = 1$, then
\[\nrm{g(X)-g(Y)}{\Lpinft pt} \leq \nrm g {\wHoelHZ \delt {\cV_\la}} (\cV_\la(\nrm{X}{\Lpinft {pq\la} t}) + \cV_\la(\nrm{Y}{\Lpinft {pq\la} t}))  \nrm{X-Y}{\Lpinft {pr} t}^\delt, \quad p \geq 1, t\in [0,T].\]
\end{enumerate}
\end{lem}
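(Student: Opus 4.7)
The plan is to reduce both parts to pointwise bounds coming directly from the definitions of $\nrm{g}{\wCZ{\cV_\la}}$ and $\nrm{g}{\wHoelHZ \delt{\cV_\la}}$, and then push the $\esssup$ in time inside the weight $\cV_\la$. The key observation making this work is that $\cV_\la(y) = 1 + y^\la$ is nondecreasing on $[0,\infty)$, so for any nonnegative $h : [0,t] \to [0,\infty)$ we have $\sup_s \cV_\la(h_s) = \cV_\la(\sup_s h_s)$, and analogously for two arguments via $\cV_\la(a,b) = \cV_\la(a)\vee\cV_\la(b) \leq \cV_\la(a) + \cV_\la(b)$.

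For (a): view $g$ as a map $\R^d \to L^\infty([0,T],\R^{\Pi e})$, so that for each $\om, x, s$
\[|g_s(X_s(\om,x))| \leq \sup_{u\in [0,T]}|g_u(X_s(\om,x))| \leq \nrm{g}{\wCZ{\cV_\la}}\, \cV_\la(|X_s(\om,x)|).\]
Taking $\sup_{s\in [0,t]}$ and pulling it past $\cV_\la$, then applying $\nrm{\blnk}{p}$ and the triangle inequality gives
\[\nrm{g(X)}{\Lpinft pt} \leq \nrm{g}{\wCZ{\cV_\la}}\bigl(1 + \nrm{(\sup_s|X_s|)^\la}{p}\bigr) = \nrm{g}{\wCZ{\cV_\la}}\,\cV_\la\bigl(\nrm{X}{\Lpinft{p\la}t}\bigr),\]
using $\nrm{Z^\la}{p} = \nrm{Z}{p\la}^\la$ in the last step.

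For (b): the Hölder bound for $g$ yields pointwise
\[|g_s(X_s) - g_s(Y_s)| \leq \nrm{g}{\wHoelHZ \delt {\cV_\la}}\bigl(\cV_\la(|X_s|) + \cV_\la(|Y_s|)\bigr)|X_s - Y_s|^\delt.\]
Taking $\sup_{s\in [0,t]}$ (bounding the sup of the product by the product of sups) and again pulling the sup inside each $\cV_\la$, then applying the $L^p$-norm followed by Hölder's inequality with exponents $q,r$ satisfying $\tfrac1q+\tfrac1r = 1$ separates the factor $\sup_s|X_s-Y_s|^\delt$ from the $\cV_\la$ terms. The $\cV_\la$ factors in $\Lpinft{pq}t$ are handled exactly as in (a), while the remaining factor is controlled by Jensen's inequality applied to the concave map $y\mapsto y^\delt$:
\[\nrm{\sup_s|X_s-Y_s|^\delt}{pr} = \E\bigl[(\sup_s|X_s-Y_s|)^{\delt pr}\bigr]^{1/(pr)} \leq \E\bigl[(\sup_s|X_s-Y_s|)^{pr}\bigr]^{\delt/(pr)} = \nrm{X-Y}{\Lpinft{pr}t}^\delt.\]
Combining the three estimates gives the claim.

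No step is really an obstacle here; the lemma is essentially bookkeeping around pointwise bounds, Hölder, and Jensen. The only point to be careful about is the subtle use of monotonicity of $\cV_\la$ to commute $\sup_s$ with $\cV_\la$ (the reason the statement specifies polynomial weights rather than general weight functions), and the use of Jensen rather than the direct relation $\nrm{Z^\delt}{pr} = \nrm{Z}{pr\delt}^\delt$, which sidesteps any issue with $pr\delt < 1$.
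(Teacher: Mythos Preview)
Your proof is correct and follows essentially the same route as the paper: pointwise bounds from the definitions of $\nrm{g}{\wCZ{\cV_\la}}$ and $\nrm{g}{\wHoelHZ \delt{\cV_\la}}$, commuting $\sup_s$ with $\cV_\la$ via monotonicity, then H\"older in (b) to separate the weight from the difference, and finally the $L^p$-monotonicity (Jensen) step $\nrm{|X-Y|^\delt}{\Lpinft{pr}t}\le \nrm{X-Y}{\Lpinft{pr}t}^\delt$. The paper keeps the product $\cV_\la(X,Y)|X-Y|^\delt$ together and applies H\"older directly to it rather than first bounding the sup of the product by the product of sups, but this is a cosmetic difference.
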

\begin{proof}
Firstly, note that 
\[\cV_\la(\sup_{s\in [0,t]} |x_s|) = \sup_{s\in [0,t]} \cV_\la(|x_s|), \quad t\in [0,T], x \in L^\infty([0,t], \R^d).\]
Moreover, given $p\geq 1$ and $Z\in L^{p\la}$
\[\nrm{\cV_\la(Z)}{p} \leq \cV_\la(\nrm{Z}{p\la}),\]
and so
\[\nrm{\cV_\la(X)}{\Lpinft p t} \leq V_\la(\nrm{X}{\Lpinft{p\la} t}), \quad t\in [0,T].\]
Further, given $Z,Z'\in L^{p\la}$ we have
\[\nrm{\cV_\la(Z,Z')}{p} \leq \nrm{\cV_\la(Z)+\cV_\la(Z')}{p} \leq \cV_\la(\nrm{Z}{p\la}) + \cV_\la(\nrm{Z'}{p\la}),\]
and so 
\[\nrm{\cV_\la(X,Y)}{\Lpinft p t} \leq \cV_\la(\nrm{X}{\Lpinft{p\la} t}) + \cV_\la(\nrm{Y}{\Lpinft{p\la} t}), \quad t\in [0,T].\]
\begin{enumerate}[(a)]
\item In this case, we have
	\begin{align*}
	\sup_{s\in [0,t]} |g_s(X_s)| \leq & \sup_{s\in [0,t]} \sup_{r\in [0,T]} |g_r(X_s)| \leq \nrm{g}{\wCZ {\cV_\la}} \sup_{s\in [0,t]}  \cV_\la(X_s).
\end{align*}
Hence, by applying $\nrm{\blnk}{p}$,
\[\nrm{g(X)}{\Lpinft p t} \leq \nrm{g}{\wCZ {\cV_\la}} \nrm{\cV_\la(X)}{\Lpinft p t} \leq \nrm{g}{\wCZ {\cV_\la}}\cV_\la(\nrm{X}{\Lpinft{p\la} t}), \quad p\geq 1.\]
\item In this case, we have
	\begin{align*}
	\sup_{s\in [0,t]} |g_s(X_s) - g_s(Y_s)| \leq & \sup_{s\in [0,t]} \sup_{r\in [0,T]} |g_r(X_s) - g_r(Y_s)| \\
	\leq & \nrm{g}{\wHoelHZ \delt {\cV_\la}} \sup_{s\in [0,t]} (\cV_\la(X_s,Y_s) |X_s - Y_s|^\delt).
\end{align*}
Further, by Hölder's inequality
\begin{align*}
\nrm{\cV_\la(X,Y) |X - Y|^\delt}{\Lpinft pt} \leq & \nrm{\cV_\la(X,Y)}{\Lpinft {pq}t} \nrm{|X - Y|^\delt}{\Lpinft {pr}t}\\
 \leq & (\cV_\la(\nrm{X}{\Lpinft{pq\la} t}) + \cV_\la(\nrm{Y}{\Lpinft{pq\la} t}))\nrm{X - Y}{\Lpinft {pr}t}^\delt,
\end{align*}
for all $p\geq 1$ and $t\in [0,T]$.
\end{enumerate}
\end{proof}

\begin{lem}
	\label{lem:wHoelHgRndFld}
	Let $l,\ka,\la \in \N_0$, $X : \Om \times [0,T]\times U\to \R^d$ be a random field with $X\in \wHoelHZ \delt {\cV_\ka}(U, \Lfininf)$ and $g : [0,T]\times \R^d \to \R^{\Pi e} \in \wHoelHZ \ga {\cV_\la}(\R^d, L^\infty([0,T], \R^{\Pi e}))$ jointly measurable.
	Then $g(X)\in \wHoelHZ {\delt\ga}{\cV_{\ka\la}}(U, \Lfininf)$ with
	\[\nrm{g(X)}{\wHoelHZ {\delt\ga}{\cV_{\ka\la}},p} \lesssim \nrm{g}{\wHoelHZ \ga {\cV_\la}}\cV_\la(\nrm X {\wHoelHZ \delt {\cV_\ka},2p\la})(1\vee \nrm X {\wHoelHZ \delt {\cV_\ka},2p}^{\ga}),\]
	uniformly over random fields $X : \Om \times [0,T]\times U\to \R^d \in \wHoelHZ \delt {\cV_\ka}(U, \Lfininf)$, and jointly measurable functions $g : [0,T]\times \R^d \to \R^{\Pi e} \in \wHoelHZ \ga {\cV_\la}(\R^d, L^\infty([0,T], \R^{\Pi e}))$.
\end{lem}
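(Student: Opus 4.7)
The plan is to apply Lemma~\ref{lem:gXwghtHoelIneq}(b) pointwise at each pair $x,y\in U$. For fixed $x,y$, I view $X(\blnk,\blnk,x)$ and $X(\blnk,\blnk,y)$ as random variables in $L^\infty([0,T],\R^d)$, so that invoking Lemma~\ref{lem:gXwghtHoelIneq}(b) with $t=T$ and $q=r=2$ yields
\[
\nrm{g(X(x))-g(X(y))}{\Lpinf p}
\le \nrm{g}{\wHoelHZ \ga {\cV_\la}}\,\bigl(\cV_\la(\nrm{X(x)}{\Lpinf{2p\la}})+\cV_\la(\nrm{X(y)}{\Lpinf{2p\la}})\bigr)\,\nrm{X(x)-X(y)}{\Lpinf{2p}}^{\ga}.
\]
The Hölder hypothesis on $X$ bounds the last factor directly by $\nrm{X}{\wHoelHZ \delt {\cV_\ka},2p}^{\ga}\,\cV_\ka(x,y)^{\ga}\,|x-y|^{\delt\ga}$, which already accounts for the factor $1\vee\nrm{X}{\wHoelHZ \delt {\cV_\ka},2p}^{\ga}$ in the claim.

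Next, I plan to control the weight prefactor $\cV_\la(\nrm{X(x)}{\Lpinf{2p\la}})+\cV_\la(\nrm{X(y)}{\Lpinf{2p\la}})$ by appealing to Lemma~\ref{lem:polynWeights}. Using the triangle inequality $\nrm{X(x)}{\Lpinf{2p\la}}\le \nrm{X(x)-X(y)}{\Lpinf{2p\la}}+\nrm{X(y)}{\Lpinf{2p\la}}$, the Hölder estimate for $X$, and subadditivity and multiplicativity of $\cV_\la$ together with the composition rule $\cV_\la\circ\cV_\ka\lesssim \cV_{\ka\la}$, I expect to arrive at
\[\cV_\la(\nrm{X(x)}{\Lpinf{2p\la}})+\cV_\la(\nrm{X(y)}{\Lpinf{2p\la}})\lesssim \cV_\la(\nrm{X}{\wHoelHZ \delt {\cV_\ka},2p\la})\,\cV_{\ka\la}(x,y).\]

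Multiplying the three estimates together and dividing by $\cV_{\ka\la}(x,y)|x-y|^{\delt\ga}$, the residual factor $\cV_\ka(x,y)^{\ga}$ is absorbed into $\cV_{\ka\la}(x,y)$ via the concavity bound $(1+t^\ka)^{\ga}\le 1+t^{\ka\ga}$, and taking the supremum over $x\neq y$ then delivers the claimed estimate for each $p\in\N$. The hard part will be the weight bookkeeping: since $X$ is controlled only through its Hölder seminorm (not a pointwise norm), one must carefully orchestrate the interplay between the weights $\cV_\ka$, $\cV_\la$ and their composite $\cV_{\ka\la}$, which is precisely what the multiplicativity, subadditivity and composition properties of Lemma~\ref{lem:polynWeights} are designed to facilitate.
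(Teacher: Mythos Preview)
Your opening move — Lemma~\ref{lem:gXwghtHoelIneq}(b) with $q=r=2$, then bounding $\nrm{X(x)-X(y)}{\Lpinf{2p}}^\ga$ through the H\"older seminorm of $X$ — is exactly what the paper does. The gap is in your plan for the weight factor $\cV_\la(\nrm{X(x)}{\Lpinf{2p\la}})+\cV_\la(\nrm{X(y)}{\Lpinf{2p\la}})$. The triangle inequality $\nrm{X(x)}\le\nrm{X(x)-X(y)}+\nrm{X(y)}$ is circular: it trades one uncontrolled pointwise norm for another, and no amount of weight algebra can rescue this, because the H\"older seminorm alone does not dominate pointwise size. Concretely, take $X(x)=c+x$ with large $c$ and $g(y)=y^2$ (so $\ga=1$, $\la=1$); the seminorm $\nrm{X}{\wHoelHZ{\delt}{\cV_\ka},p}$ is independent of $c$ while $\nrm{g(X)}{\wHoelHZ{\delt\ga}{\cV_{\ka\la}},p}$ grows like $c$. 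So the bound you are aiming for cannot hold.

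The paper's proof instead estimates $\nrm{X(x)}{\Lpinf{2p\la}}\le\nrm{X}{\wCZ{\cV_\ka},2p\la}\,\cV_\ka(x)$ directly, using the \emph{pointwise} norm $\nrm{X}{\wCZ{\cV_\ka}}$, and then applies $\cV_\la(ab)\lesssim\cV_\la(a)\cV_\la(b)$ together with $\cV_\la\circ\cV_\ka\lesssim\cV_{\ka\la}$. What is actually established therefore has $\cV_\la(\nrm{X}{\wCZ{\cV_\ka},2p\la})$ in place of $\cV_\la(\nrm{X}{\wHoelHZ{\delt}{\cV_\ka},2p\la})$; the statement as printed appears to carry a slip. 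This is harmless at the sole point of use (inside Lemma~\ref{lem:wHoelgRndFld}), where the hypothesis $X\in\wHoel{l}{\delt}{\cV_\ka}$ supplies the $\wCZ$ bound anyway.
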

\begin{proof}
By Lemma \ref{lem:gXwghtHoelIneq} (b)
	\begin{align*}\nrm{g(X(x)) - g(X(y))}{\Lpinf p} \leq & \nrm{g}{\wHoelHZ \ga {\cV_\la}} ({\cV_\la}(\nrm{X(x)}{\Lpinf {2p\la}}) + \cV_\la(\nrm{X(y)}{\Lpinf {2p\la}})) \nrm{X(x) - X(y)}{\Lpinf {2p}}^\ga \\
	\leq & \nrm{g}{\wHoelHZ \ga {\cV_\la}} ({\cV_\la}(\nrm{X}{\wCZ {\cV_\ka},{2p\la}}\cV_\ka(x)) + \cV_\la(\nrm{X}{\wCZ {\cV_\ka} ,{2p\la}}\cV_\ka(y))) \\
	& \cdot \nrm{X}{\wHoelHZ \delt {\cV_\ka},{2p\ga}}^\ga |x-y|^{\delt \ga} \\
	\lesssim & \nrm{g}{\wHoelHZ \ga {\cV_\la}} \cV_\la(\nrm{X}{\wCZ {\cV_\ka},{2p\la}})\cV_{\ka\la}(x,y)\nrm{X}{\wHoelHZ \delt {\cV_\ka},{2p}}^\ga |x-y|^{\delt \ga},
\end{align*}
uniformly over $x,y,X$ and $g$.
\end{proof}
The following lemma contains a stability property similar to Lemma \ref{lem:wCstab} (e), except for $g$ deterministic, $f = X$ random, and with index set $[0,T]\times \R^d$.
\begin{lem}
\label{lem:wHoelgRndFld}
Let $l,\ka,\la \in \N_0$, $X : \Om \times [0,T]\times U\to \R^d$ be a random field with $X\in \wHoel l \delt {\cV_\ka}(U, \Lfininf)$ and $g : [0,T]\times \R^d \to \R^{\Pi e} \in \wHoel l \ga {\cV_\la}(\R^d, L^\infty([0,T], \R^{\Pi e}))$ jointly measurable.
Then $g(X)\in \wHoel l {\delt\ga}{\cV_{\ka(\la+l)}}(U, \Lfininf)$ with
\[\nrm{g(X)}{\wHoel l {\delt\ga}{\cV_{\ka(\la+l)}},p} \lesssim \nrm{g}{\wHoel l \ga {\cV_\la}}\cV_\la(\nrm X {\wHoel l \delt {\cV_\ka},4p\la})(1\vee \nrm X {\wHoel l \delt {\cV_\ka},4p(1\vee l)}^{\ga \vee l}),\]
uniformly over random fields $X : \Om \times [0,T]\times U\to \R^d \in \wHoel l \delt {\cV_\ka}(U, \Lfininf)$, and jointly measurable functions $g : [0,T]\times \R^d \to \R^{\Pi e} \in \wHoel l \ga {\cV_\la}(\R^d, L^\infty([0,T], \R^{\Pi e}))$.
Further, we have $\E g(X) \in \wHoel l {\delt \ga}{\cV_{\ka(\la + l)}}(U, L^\infty([0,T],\R^{\Pi e}))$ with
\[\nrm{\E g(X)}{\wHoel l {\delt \ga}{\cV_{\ka(\la + l)}}}\lesssim \nrm{g}{\wHoel l \ga {\cV_\la}},\]
uniformly over jointly measurable functions $g : [0,T]\times \R^d \to \R^{\Pi e} \in \wHoel l \ga {\cV_\la}(\R^d, L^\infty([0,T], \R^{\Pi e}))$.
\end{lem}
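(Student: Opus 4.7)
The plan is to follow the strategy of Lemma~\ref{lem:wCstab}(e), adapted to the random-field setting by replacing pointwise estimates with $\Lpinf p$-estimates. First I would settle the case $l = 0$: the $\wCZ{\cV_{\ka\la}}$-part comes from applying Lemma~\ref{lem:gXwghtHoelIneq}(a) pointwise in $x \in U$, together with $\cV_\la \circ \cV_\ka \lesssim \cV_{\ka\la}$ (Lemma~\ref{lem:polynWeights}(e)), while the $\wHoelHZ{\delt\ga}{\cV_{\ka\la}}$-part is precisely Lemma~\ref{lem:wHoelHgRndFld}. Together they yield the claim for $\nrm{g(X)}{\wHoel 0 {\delt\ga}{\cV_{\ka\la}},p}$, and all $X$-norms appearing have exponent at most $2p\la$, hence are absorbed by the looser bound stated in the lemma.

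For general $l \geq 1$, I would expand $\der^\al g(X)$ via Faa di Bruno's formula (Proposition~\ref{prop:diffRules}(c)) into a finite sum of terms $\innp{\nabla^k g \circ X}{\der^\cB X}$, with $\cB \in \prttn \al k$ and $k \leq l$. Each $\nabla^k g$ remains jointly measurable (as a Banach-space-valued derivative, it is a pointwise limit of jointly measurable difference quotients of $g$), so the $l = 0$ case places $\nabla^k g \circ X$ in $\wHoelZ{\delt\ga}{\cV_{\ka\la}}(U,\Lfininf)$. Simultaneously, Lemma~\ref{lem:wCstab}(c) applied inside the Hölder-type graded algebra $\Lfininf$ controls $\der^\cB X$ in $\wHoelZ \delt {\cV_\ka^{|\cB|}}(U,\Lfininf)$, bounded by a power of $\nrm X {\wHoel l \delt {\cV_\ka},\blnk}$ since $|\cB| \leq l$. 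Multiplying the two factors via Lemma~\ref{lem:wCstab}(d) with Hölder exponents $q = r = 2$, and using $\cV_{\ka\la} \cdot \cV_{\ka l} \lesssim \cV_{\ka(\la+l)}$ (Lemma~\ref{lem:polynWeights}(d)), produces a term in $\wHoelZ{\delt\ga}{\cV_{\ka(\la+l)}}(U,\Lfininf)$; taking the maximum over $|\al| \leq l$ and combining with the $l=0$ case for the $\wCZ$-part gives the announced estimate on $\nrm{g(X)}{\wHoel l {\delt\ga}{\cV_{\ka(\la+l)}},p}$.

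The final statement about $\E g(X)$ follows at once by applying Lemma~\ref{lem:expWHoelFld} with Banach space $L^\infty([0,T], \R^{\Pi e})$ to the random field $g(X)$, noting that $\Lpinf 1 = L^1(\Om, L^\infty([0,T], \R^{\Pi e}))$; the $X$-dependent prefactors from the first part are absorbed into the $\lesssim$-constant since $X$ is fixed. The main obstacle is bookkeeping: the specific exponents $4p\la$ and $4p(1 \vee l)$ appear because two Hölder splits are invoked---once inside Lemma~\ref{lem:wHoelHgRndFld} (applied to $\nabla^k g \circ X$ at the $L^{2p}$ level, yielding exponents $4p\la$ and $4p$), and once in the Leibniz-type combination with $\der^\cB X$ (which distributes $2p$ among $|\cB| \leq l$ factors, yielding $2p|\cB| \leq 2pl \leq 4p(1 \vee l)$). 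Tracking these doublings consistently across the Faa di Bruno sum is the step most prone to error, but no tool beyond those already assembled in this section is needed.
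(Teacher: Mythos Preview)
Your proposal is correct and follows essentially the same route as the paper: the $l=0$ case combines Lemma~\ref{lem:gXwghtHoelIneq}(a) with Lemma~\ref{lem:wHoelHgRndFld}, the general case uses Faa di Bruno, estimates $\der^\cB X$ via Lemma~\ref{lem:wCstab}(c) and $\nabla^k g(X)$ via the base case, and combines them through Lemma~\ref{lem:wCstab}(d) with $q=r=2$; the expectation bound comes from Lemma~\ref{lem:expWHoelFld}. Your exponent bookkeeping is also accurate, and your remark on the joint measurability of $\nabla^k g$ is a detail the paper leaves implicit.
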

\begin{proof}
Suppose $l=0$.
	By Lemma \ref{lem:gXwghtHoelIneq} (a)
	\begin{align*}
		\nrm{g(X(x))}{\Lpinf p} \leq & \nrm{g}{\wCZ {\cV_\la}} \cV_\la(\nrm{X(x)}{\Lpinf {p\la}})\\
		\leq & \nrm{g}{\wCZ {\cV_\la}} {\cV_\la}(\nrm{X}{\wCZ {\cV_\ka},p\la} {\cV_\ka}(x)) \\
		\lesssim &\nrm{g}{\wCZ {\cV_\la}} {\cV_\la}(\nrm{X}{\wCZ {\cV_\ka},p\la}) \cV_{\ka\la}(x),
	\end{align*}
	uniformly over $x, X$ and $g$. Together with Lemma \ref{lem:wHoelHgRndFld}, the statement for $l = 0$ follows.

For $l > 0$ we proceed similarly to the proof of Lemma \ref{lem:wCstab} (e).
Let $\al$ be a multi-index with $|\al|\leq l$. By Faa di Bruno's formula (see Proposition \ref{prop:diffRules} (c))
\[\der^\al(g(X)) = \sum_{k=1}^{|\al|} \sum_{\cB} \frac{\al!}{\cB!} \innp{\nabla^k g(X)}{\der^\cB X}.\]
Given a partition $\cB$ of $\al$ into $n$ multi-indices, we have, using Lemma \ref{lem:wCstab} (a) and (c)
\[\nrm{\der^\cB X}{\wHoelZ \delt {\cV_{\ka l}},p} \lesssim \nrm{\der^\cB X}{\wHoelZ \delt {\cV_{\ka n}},p} \lesssim \prod_{\be \in \cB} \nrm{\der^\be X}{\wHoelZ \delt {\cV_\ka},np} \leq \nrm{X}{\wHoel l \delt {\cV_\ka},np}^n \leq1 \vee \nrm{X}{\wHoel l \delt {\cV_\ka},lp}^l,\]
uniformly over $X$, for all $p\in \N$. Further, using the previous calculation
\[\nrm{\nabla^k g(X)}{\wHoelZ {\delt\ga}{\cV_{\ka\la}},p} \lesssim \nrm{\nabla^k g}{\wHoelZ \ga {\cV_\la}} \cV_\la(\nrm X {\wHoelZ \delt {\cV_\ka},2p\la})(1\vee \nrm X {\wHoelZ \delt {\cV_\ka},2p}^\ga),\]
uniformly over $X$ and $g$, for all $p\geq 1$.
Thus, using Lemma \ref{lem:wCstab} (d) with $q = r = 2$
\begin{align*}
	\nrm{\innp{\nabla^k g(X)}{\der^\cB X}}{\wHoelZ {\delt\ga}{\cV_{\ka \la}\cdot \cV_{\ka l}},p} & \lesssim \nrm{\nabla^k g}{\wHoelZ \ga {\cV_\la}} \cV_\la(\nrm X {\wHoelZ \delt \cV,4p\la})(1\vee \nrm X {\wHoelZ \delt {\cV_\ka},4p}^\ga) (1\vee \nrm{X}{\wHoel l \delt {\cV_\ka},2lp}^l)\\
	& \lesssim \nrm{g}{\wHoel l \ga {\cV_\la}} \cV_\la(\nrm X {\wHoel l \delt {\cV_\ka},4p\la})(1\vee \nrm X {\wHoel l \delt {\cV_\ka},4p(1\vee l)}^{\ga \vee l}),
\end{align*}
uniformly over $g$ and $X$.	
By Lemma \ref{lem:LiplAltRep} (b) and Lemma \ref{lem:wCstab} (a), (b), the desired result follows.
\end{proof}

Fix a filtration $\cF =(\cF_t)_{t\in [0,T]}$ on $(\Om, \cF_\Om, \P)$ satisfying the usual conditions.
We denote by $\Lfininfad$ the subset of $\Lfininf$ of $\cF$-adapted processes, and similarly we define $\Lpinfad p$ and $\Lpinfadt p t$.

\begin{lem}
	\label{lem:intgpInfnrmIneq}The following estimates hold true.
	\begin{enumerate}[(a)]
		\item $\nrm{\int_0^\blnk Y_s \,ds}{\Lpinft pt} \leq\int_0^t \nrm{Y_s}{p}\,ds\leq \int_0^t \nrm{Y}{\Lpinft p s}\,ds$, for all $p\geq 1, t\in [0,T]$ and $Y\in \Lpinft p t(\Om, \R^d)$.
		\item $\nrm{\int_0^\blnk Y_s \,ds}{\Lpinft pt} \lesssim \left(\int_0^t \nrm{Y}{\Lpinft p s}^2\,ds\right)^{1/2}$,
		uniformly over $t\in [0,T]$ and $Y\in \Lpinft p t(\Om, \R^d)$, for all $p\geq 1$.
		\item $\nrm{\int_0^\blnk Z_s \,dW_s}{\Lpinft pt} \lesssim \left(\int_0^t \nrm{Z_s}{p}^2\,ds\right)^{1/2}\leq \left(\int_0^t \nrm{Z}{\Lpinft p s}^2\,ds\right)^{1/2}$,
		uniformly over $t\in [0,T]$ and $Z\in \Lpinfadt p t(\Om, \R^{d\times d})$, for all $p\geq 2$.
		\item $\nrm{\int_0^\blnk Y_s\,ds+ \int_0^\blnk Z_s \,dW_s}{\Lpinft pt}^2\lesssim \int_0^t \nrm{Y}{\Lpinft p s}^2 + \nrm{Z}{\Lpinft p s}^2\,ds$,
		uniformly over $t\in [0,T], Y\in \Lpinfadt p t(\Om, \R^d)$ and $Z\in \Lpinfadt p t(\Om, \R^{d\times d})$, for all $p\geq 2$.
	\end{enumerate}
\end{lem}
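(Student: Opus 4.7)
The plan is to prove the four inequalities in order, with each later part leveraging the earlier ones.

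For (a), I would start from the pathwise bound $\sup_{u \in [0,t]} \bigl|\int_0^u Y_s\,ds\bigr| \leq \int_0^t |Y_s|\,ds$, valid almost surely. Taking the $L^p(\Om)$ norm of both sides and applying Minkowski's integral inequality gives $\nrm{\int_0^t |Y_s|\,ds}{p} \leq \int_0^t \nrm{Y_s}{p}\,ds$, which is the first inequality. The second inequality is immediate since $\nrm{Y_s}{p} \leq \nrm{Y}{\Lpinft p s}$ for each $s\in[0,t]$.

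For (b), I would apply (a) followed by the Cauchy--Schwarz inequality on $[0,t]$ with the constant function $1$, which yields $\int_0^t \nrm{Y}{\Lpinft p s}\,ds \leq \sqrt{t}\,\bigl(\int_0^t \nrm{Y}{\Lpinft p s}^2\,ds\bigr)^{1/2}$. Since $t\leq T$, the implied constant is uniform in $t\in[0,T]$, $p\geq 1$ and $Y$.

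For (c), which is the main substantive step, I would invoke the Burkholder--Davis--Gundy inequality (available since $p\geq 2$ and $Z$ is adapted): there is a constant $c_p>0$ such that
\[\nrm{\sup_{u\in[0,t]}\bigl|\textstyle\int_0^u Z_s\,dW_s\bigr|}{p} \leq c_p\, \nrm{\bigl(\int_0^t |Z_s|^2\,ds\bigr)^{1/2}}{p} = c_p\,\nrm{\int_0^t |Z_s|^2\,ds}{p/2}^{1/2},\]
where $|Z_s|$ denotes the Frobenius norm. Since $p/2\geq 1$, I can then apply Minkowski's integral inequality once more to get $\nrm{\int_0^t |Z_s|^2\,ds}{p/2} \leq \int_0^t \nrm{|Z_s|^2}{p/2}\,ds = \int_0^t \nrm{Z_s}{p}^2\,ds$. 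Taking square roots gives the first inequality in (c), and the second follows at once from $\nrm{Z_s}{p}\leq \nrm{Z}{\Lpinft p s}$. The BDG step is the only place where the assumption $p\geq 2$ is essential, and it is where the $\lesssim$ constant (depending on $p$ and $T$) is absorbed.

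For (d), I would combine the elementary inequality $(a+b)^2\leq 2a^2 + 2b^2$ with (b) applied to $Y$ and (c) applied to $Z$, yielding
\[\nrm{\textstyle\int_0^\blnk Y_s\,ds + \int_0^\blnk Z_s\,dW_s}{\Lpinft p t}^2 \lesssim \nrm{\textstyle\int_0^\blnk Y_s\,ds}{\Lpinft p t}^2 + \nrm{\textstyle\int_0^\blnk Z_s\,dW_s}{\Lpinft p t}^2 \lesssim \int_0^t \nrm{Y}{\Lpinft p s}^2 + \nrm{Z}{\Lpinft p s}^2\,ds,\]
uniformly in $t\in[0,T]$ and in $Y,Z$. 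I do not foresee any real obstacle; everything is a clean application of Minkowski's integral inequality and BDG, and the main point is merely to thread the sup-over-time inside the $L^p$ norm on the right-hand sides so that the resulting functions are integrable in $s$.
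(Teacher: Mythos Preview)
Your proposal is correct and follows essentially the same route as the paper: the pathwise bound plus Minkowski's integral inequality for (a), Cauchy--Schwarz/Jensen for (b), BDG followed by Minkowski in $L^{p/2}$ for (c), and the elementary inequality $(a+b)^2 \leq 2(a^2+b^2)$ to combine (b) and (c) for (d). Your write-up of (c) is in fact slightly cleaner than the paper's.
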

\begin{proof}
Let $p\geq 1$ and $Y\in \Lpinft p t$. By Minkowski's integral inequality we have \[\nrm{\nrm{Y}{L^1([0,t])}}{L^p(\Om)} \leq \nrm{\nrm{Y}{L^p(\Om)}}{L^1([0,t])}.\] Hence,
	\[\nrm{\int_0^\blnk Y_s \,ds}{L_p^{*t}} \leq \nrm{\int_0^t |Y_s|\,ds}{p} \leq\int_0^t \nrm{Y_s}{p}\,ds\leq \int_0^t \nrm{Y}{L_p^{*s}}\,ds,\]
proving (a). Note that 
\[\left(\int_0^t u_s\,ds \right)^2 \leq t\int_0^t u_s^2\,ds \leq T\int_0^t u_s^2\,ds, \quad u \in L^2([0,t]), t\geq 0,\]
by Jensen's inequality.
Thus, (b) follows from (a).

For $p\geq 2$, we have
\[\nrm{\nrm{Z}{L^2([0,t])}}{L^p(\Om)} \leq \nrm{\nrm{Z}{L^p(\Om)}}{L^2([0,t])},\]
again by Minkowski's integral inequality.
Thus, using the Burkholder–Davis–Gundy inequality
	\[\nrm{\int_0^\blnk Z \,dW}{L_p^{*t}} \lesssim \E\left[\left(\int_0^t |Z_s|^2\,ds\right)^{p/2}\right]^{1/p} \leq \E\left[\left(\int_0^t |Z_s|^p\,ds\right)^{2/p}\right]^{1/2}  \leq \left(\int_0^t \nrm{Z}{L_p^{*s}}^2\,ds\right)^{1/2},\]
	uniformly over $t\in [0,T]$ and $Z\in \Lpinfadt p t$. This proves (c).
Using the inequality $(x+y)^2 \leq 3(x^2+y^2)$, we conclude (d) from (b) and (c).
\end{proof}

\subsection{Differentiating \tSDE s}
In the following, we consider family of \tSDE s parameterized by a set 
\[\Thet = I_1\times\cdots\times I_d\times \R^{m-d} \subseteq \R^m\] 
for some $m\in \N$ and bounded intervals $I_1,\dots, I_d$.
\begin{satz}
	\label{thm:SDESol}
	Suppose $b \in \Lip(\Thet\times \R^d, L^\infty([0,T], \R^d))$, $\si \in \Lip(\Thet\times \R^d, L^\infty([0,T], \R^{d\times d}))$, and $\ph \in \hoelHZ \delt(\Thet,\Lfininfad)$. Then, the family of \tSDE s
	\[X_t = \ph_t + \int_0^tb_s(X_s)\,ds + \int_0^t\si_s(X_s)\,dW_s,\quad t\in [0,T]\]
	admits a unique solution $X : \Om \times [0,T]\times \Thet \to \R^d$. That is,
	\[X_t(x) = \ph_t(x) + \int_0^tb_s(x,X_s(x))\,ds + \int_0^t\si_s(x,X_s(x))\,dW_s,\quad t\in [0,T]\]
	up to indistinguishability, for all $x\in \Thet$, and given another solution $Y :  \Om \times [0,T]\times \Thet \to \R^d$ we have 
	\[X_t(x) = Y_t(x),\quad t\in [0,T],\]
	up to indistinguishability, for all $x\in \Thet$.
  Further, $X\in \hoelHZ \delt(\Thet,\Lfininfad)$ and for all $p\geq 2$ there exists a constant $c > 0$ such that
	\[\nrm{X}{\hoelHZ \delt(\Thet, \Lpinft p t)} \lesssim \nrm{\ph}{\hoelHZ \delt(\Thet, \Lpinft p t)} e^{c (\nrm{b}{\Lip}^2 + \nrm{\si}{\Lip}^2)},\]
	uniformly over $b \in \Lip(\Thet\times \R^d, L^\infty([0,T], \R^d))$, $\si \in \Lip(\Thet\times \R^d, L^\infty([0,T], \R^{d\times d}))$, \\$\ph \in \hoelHZ \delt(\Thet,\Lfininfad)$ and $t\in [0,T]$.
\end{satz}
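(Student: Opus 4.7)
The plan is to obtain existence, uniqueness, and the H\"older estimate together, using Lemma~\ref{lem:intgpInfnrmIneq}(d) as the central analytic tool and Gr\"onwall's inequality to close the bounds.

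\textbf{Existence and uniqueness for fixed $x$.} Fix $x \in \Theta$ and seek $X(x)$ as the unique fixed point in $\Lpinfadt 2 T$ of
\begin{equation*}
\Psi(Y)_t := \varphi_t(x) + \int_0^t b_s(x, Y_s)\,ds + \int_0^t \sigma_s(x, Y_s)\,dW_s.
\end{equation*}
By Lemma~\ref{lem:intgpInfnrmIneq}(d) and the Lipschitz property of $b,\sigma$ in the state argument,
\begin{equation*}
\nrm{\Psi(Y^1) - \Psi(Y^2)}{\Lpinft 2 t}^2 \lesssim (\nrm{b}{\Lip}^2 + \nrm{\sigma}{\Lip}^2)\int_0^t \nrm{Y^1 - Y^2}{\Lpinft 2 s}^2\,ds,
\end{equation*}
which becomes a strict contraction after finitely many iterations (or on a small enough interval, then patched). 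Uniqueness of arbitrary solutions follows from the same estimate applied to the difference of two solutions, followed by Gr\"onwall. Repeating in $\Lpinfadt p T$ for every $p \geq 2$ gives $X(x) \in \Lfininfad$.

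\textbf{H\"older dependence in the parameter.} This is the core of the proof. Writing
\begin{equation*}
X_t(x) - X_t(y) = \big(\varphi_t(x) - \varphi_t(y)\big) + \int_0^t \Delta^b_s\,ds + \int_0^t \Delta^\sigma_s\,dW_s,
\end{equation*}
with $\Delta^b_s := b_s(x, X_s(x)) - b_s(y, X_s(y))$ and analogously $\Delta^\sigma_s$, the joint Lipschitz property gives
\begin{equation*}
|\Delta^b_s| \leq \nrm{b}{\Lip}\big(|x-y| + |X_s(x) - X_s(y)|\big),
\end{equation*}
and similarly for $\Delta^\sigma_s$. Applying Lemma~\ref{lem:intgpInfnrmIneq}(d) after squaring yields
\begin{equation*}
\nrm{X(x) - X(y)}{\Lpinft p t}^2 \lesssim \nrm{\varphi(x) - \varphi(y)}{\Lpinft p t}^2 + (\nrm{b}{\Lip}^2 + \nrm{\sigma}{\Lip}^2)\int_0^t \big(|x-y|^2 + \nrm{X(x) - X(y)}{\Lpinft p s}^2\big)\,ds,
\end{equation*}
and Gr\"onwall closes this to produce the exponential prefactor in $\nrm{b}{\Lip}^2 + \nrm{\sigma}{\Lip}^2$.

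\textbf{Extracting the seminorm bound.} Substituting $\nrm{\varphi(x) - \varphi(y)}{\Lpinft p s} \leq \nrm{\varphi}{\hoelHZ \delta(\Theta, \Lpinft p s)}|x-y|^\delta$, combining the residual $|x-y|^2$ contribution from the parameter-Lipschitz part of $b,\sigma$ into the exponential factor via the same $|x-y|^\delta$ scaling, then dividing by $|x-y|^{2\delta}$ and taking a supremum over $x \neq y \in \Theta$ produces the claimed bound on $\nrm{X}{\hoelHZ \delta(\Theta, \Lpinft p t)}$.

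The principal obstacle is Step~2: one must invoke Lemma~\ref{lem:intgpInfnrmIneq}(d) in a way that respects the pathwise supremum structure of $\Lpinft p t$, and must cleanly separate the Lipschitz-in-parameter from the Lipschitz-in-state contributions of $b$ and $\sigma$ so that only $\nrm{\varphi}{\hoelHZ \delta}$ survives as the prefactor outside the Gr\"onwall exponential.
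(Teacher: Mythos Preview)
Your approach matches the paper's. The paper cites Kunita (2004) for existence and uniqueness rather than redoing the fixed-point argument, and for the H\"older estimate uses exactly Lemma~\ref{lem:intgpInfnrmIneq}(d) together with Lemma~\ref{lem:gXwghtHoelIneq}(b) (with $q=\infty$, $r=1$) followed by Gr\"onwall, just as you do. Your worry in the final paragraph about the residual $|x-y|^2$ term coming from the parameter-Lipschitz dependence of $b,\sigma$ is legitimate, but the paper's displayed estimate simply omits this contribution---writing only $(\nrm{b}{\Lip}^2+\nrm{\sigma}{\Lip}^2)\int_0^t \nrm{X(x)-X(y)}{\Lpinft p s}^2\,ds$ on the right---so you are being slightly more careful than the paper here rather than missing something; for $\delta=1$ (the only case used downstream) the term is harmless anyway.
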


\begin{proof}
	Existence, uniqueness and adaptedness is essentially due to a standard result, cf.\ \citet[Theorem 3.1 and 3.2]{Kunita2004} for example. The extension from an initial value $x\in \R^d$ to a process $\ph$ is discussed in \citet[Theorem 18 and 19]{Li18}.
	
	We prove $X\in \hoelHZ \delt(\Thet,\Lfininf)$ and the inequality. 
	Let $p\geq 2$. By Lemma \ref{lem:intgpInfnrmIneq} (d) and Lemma \ref{lem:gXwghtHoelIneq} (b) with $q=\infty$ and $r = 1$
	\begin{align}
		\label{eq:SDESolproof}
		\nrm{X(x) - X(y)}{\Lpinft p t}^2 \lesssim & \nrm{\ph(x) - \ph(y)}{\Lpinft p t}^2 + \int_0^t\nrm{b(X(x)) - b(X(y))}{\Lpinft p s}^2\,ds \nonumber \\
		&+ \int_0^t\nrm{\si(X(x)) - \si(X(y))}{\Lpinft p s}^2\,ds\nonumber \\
		\leq & \nrm{\ph}{\hoelHZ \delt(\Thet, \Lpinft p t)}^2|x-y|^{2\delt}+ (\nrm{b}{\Lip}^2+\nrm{\si}{\Lip}^2) \int_0^t\nrm{X(x) - X(y)}{\Lpinft p s}^2\,ds,
	\end{align}
uniformly over $x,y,t,b,\si$ and $\ph$.	
Grönwall's inequality implies
	\[\nrm{X(x) - X(y)}{\Lpinft p t}^2 \lesssim \nrm{\ph}{\hoelHZ \delt(\Thet, \Lpinft p t)}^2 e^{c(\nrm{b}{\Lip}^2 + \nrm{\si}{\Lip}^2)}|x-y|^{2\ga},\]
uniformly over $x,y,t,b,\si$ and $\ph$, for some $c > 0$.
	Taking the square root gives the desired estimate. Note that the estimate for large $p$ (here $p\geq 2$) suffices to show $X\in \hoelHZ \delt(\Thet,\Lfininf)$.
\end{proof}

\begin{docu}
	At one point I did the estimate like this
	Thus, there exist a constant $c > 0$, such that for all $x,y\in D$ and $t\in [0,T]$
	\begin{align*}
		\nrm{X(x) - X(y)}{L_p^{*t}} \lesssim & \nrm{\ph(x) - \ph(y)}{L_p^{t*}} + T^p \nrm{b(X(x)) - b(X(y))}{L_p^{t*}} \\
		&+ T^{p/2} \nrm{\si(X(x)) - \si(X(y))}{L_p^{t*}} \\
		\lesssim & \nrm{\ph}{L_p^{*t}\Lip^\ga}|x-y|^\ga + (T^p \nrm{b}{\Lip} + T^{p/2} \nrm{\si}{\Lip})\nrm{X(x) - X(y)}{L_p^{t*}}.
	\end{align*}
	Seems strange, but it is correct. However, this only gives you a bound on $\nrm{X(x) - X(y)}{L_p^{*t}}$ if $(T^p \nrm{b}{\Lip} + T^{p/2} \nrm{\si}{\Lip}) < 1$.
\end{docu}

\begin{docu}
	We need the following
	\[\nrm{f}{\Lip^{l+\delt}} = \begin{cases}
		\nrm{\nabla f}{\wHoel{l-1} \delt b}, & l,\delt > 0,\\
		\nrm{\nabla f}{\wHoelZ{l-2} b}, & l\geq 2, \delt = 0.
	\end{cases}\]
\end{docu}
\begin{satz}
	\label{thm:SDEDer}
	Let $l\in \N_0, \delt \in (0,1]$ with $l+\delt \geq 1$, $b \in \Lip^{l+\delt}(\Thet\times \R^d, L^\infty([0,T], \R^d))$, $\si \in \Lip^{l+\delt}(\Thet\times \R^d, L^\infty([0,T], \R^{d\times d}))$, and $\ph \in \Lip^{l+\delt}(\Thet,\Lfininfad)$. Let $X : \Om \times [0,T]\times \Thet \to \R^d$ be the unique solution to the family of \tSDE s
	\[X_t = \ph_t + \int_0^t b_u(\blnk, X_u)\,du + \int_0^t\si_u(\blnk, X_u)\,dW_u, \quad t\in [0,T].\]
	Then,
	\[X : \Om \times [0,T]\times \Thet \to \R^d, (\om,t,x)\mapsto X_t(x) \in \Lip^{l+\delt}(\Thet,\Lfininfad).\]
	Further, for every multi-index $\al$ with $|\al| \leq l$, $\der^\al X$ satisfies the \tSDE{}
	\[\der^\al X_t = \der^\al \ph_t + \int_0^t \der^\al(b_u(\blnk, X_u))\,du + \int_0^t \der^\al(\si_u(\blnk, X_u))\,dW_u, \quad t\geq 0.\]
\end{satz}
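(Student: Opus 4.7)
The plan is to argue by induction on $l$, mirroring the classical theory of smoothness of parametric SDEs. Throughout I use the inductive hypothesis that the statement (including the form of the derivative SDE) holds up to order $l-1$. The base case is $l=0,\delt=1$: here the claim $X\in\Lip(\Thet,\Lfininfad)$ with the stated growth is a direct reading of Theorem \ref{thm:SDESol} with Hölder exponent $\delt=1$, noting that $\Lip=\wHoelHZ 1 {x\mapsto 1}$.

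\smallskip
For the inductive step I would first construct candidate first partials $Y^j$, $j=1,\dots,m$, as the unique solutions of the formally differentiated SDEs
\begin{align*}
Y^j_t = \der_j\ph_t
&+ \int_0^t\bigl[\der_j b_u(\blnk,X_u) + \innp{\nabla_2 b_u(\blnk,X_u)}{Y^j_u}\bigr]\,du \\
&+ \int_0^t\bigl[\der_j \si_u(\blnk,X_u) + \innp{\nabla_2 \si_u(\blnk,X_u)}{Y^j_u}\bigr]\,dW_u,
\end{align*}
where $\nabla_2$ denotes the gradient in the spatial ($y$) argument. These are affine-linear in $Y^j$ with bounded (and sufficiently regular) random coefficients, so existence, uniqueness, adaptedness and moment bounds follow either from a linear variant of Theorem \ref{thm:SDESol} or by treating the pair $(X,Y^j)$ as an augmented system. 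The next (and key) step is to show $Y^j = \der_j X$. For this I would consider
\[
R^h_t \;:=\; \tfrac1h\bigl(X_t(x+he_j)-X_t(x)\bigr) - Y^j_t(x)
\]
and, using the Hadamard identity
\[
b_u(x',y')-b_u(x,y) = \int_0^1\!\bigl[\der_j b_u(\xi_\theta,\eta_\theta)(x'-x)_j + \innp{\nabla_2 b_u(\xi_\theta,\eta_\theta)}{y'-y}\bigr]d\theta
\]
(with $\xi_\theta,\eta_\theta$ on the segments between the two points) and its analogue for $\si$, I would derive an integral equation for $R^h$ whose inhomogeneous part tends to $0$ in $\Lpinft p t$ as $h\to 0$ (by continuity of the derivatives of $b,\si$ along the path and dominated convergence in $\Lpinft p t$ via the $\Lip$-bounds from Theorem \ref{thm:SDESol}). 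A Grönwall-type argument identical in structure to the estimate \eqref{eq:SDESolproof} then forces $\nrm{R^h}{\Lpinft p t}\to 0$, proving differentiability and identifying $\der_j X$ with $Y^j$.

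\smallskip
To close the induction and obtain the claimed $\Lip^{l+\delt}$-regularity I would iterate: the variational SDE for $Y^j$ has coefficients built from $\der_j b, \nabla_2 b, \der_j\si, \nabla_2\si$ (hence in $\Lip^{(l-1)+\delt}$ when $b,\si\in\Lip^{l+\delt}$) composed with $X$, to which the inductive hypothesis applied to the joint system $(X,Y^1,\dots,Y^m)$ yields $(X,Y^j)\in\Lip^{(l-1)+\delt}(\Thet,\Lfininfad)$, i.e.\ $X\in\Lip^{l+\delt}(\Thet,\Lfininfad)$. The SDEs satisfied by arbitrary $\der^\al X$, $|\al|\leq l$, are obtained by applying Faa di Bruno (Proposition \ref{prop:diffRules}(c)) repeatedly and commuting $\der^\al$ with the Itô and Lebesgue integrals (justified by the $\Lpinf p$-convergence established for the first derivatives, applied at each iteration step).

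\smallskip
The main obstacle is the bookkeeping in the last step: the drift of the augmented system contains products such as $\nabla_2 b_u(x,X_u)\,Y^j_u$ which are not globally Lipschitz on $\Thet\times\R^{(m+1)d}$, so one cannot simply quote Theorem \ref{thm:SDESol} for the joint system. What saves the day is that these products are \emph{linear in the $Y$-variable} with coefficients that are $\Lip^{(l-1)+\delt}$ after composition with $X$ (via Lemma \ref{lem:wHoelgRndFld}); handling this linear-growth coefficient structure carefully, along with verifying that the weighted Hölder seminorms propagate through each inductive step via the composition stability in Lemma \ref{lem:wHoelgRndFld} and the product estimates in Lemma \ref{lem:wCstab}, is where the bulk of the technical effort lies.
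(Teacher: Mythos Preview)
Your overall inductive structure and the difference-quotient identification of $\der_j X$ are sound (the paper in fact outsources that part to Kunita). The gap is in how you close the induction. You propose applying the inductive hypothesis to the augmented system $(X,Y^1,\dots,Y^m)$, but the drift and diffusion of this joint system contain terms like $\innp{\nabla_2 b_u(x,X_u)}{Y^j_u}$ which are only \emph{linear}, not Lipschitz, in the $Y$-variable. Hence the joint coefficients are not in $\Lip^{(l-1)+\delt}(\Thet\times\R^{(m+1)d},\ldots)$, and the theorem as stated simply does not apply to the augmented system. You correctly flag this as the main obstacle, but the remedies you sketch (Lemma~\ref{lem:wHoelgRndFld}, Lemma~\ref{lem:wCstab}) concern composition with \emph{deterministic} outer functions and product estimates; they do not give you an inductive hypothesis for SDEs with linearly-growing coefficients, which is what your route would actually need.

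The paper sidesteps this entirely by not using an augmented system. Having $X\in\dC l$ and the SDE for $\der^\al X$ from Kunita, it proves the $\Lip^{l+\delt}$ regularity by a direct Gr\"onwall estimate on the H\"older seminorm of $\der^\al X$ itself. Concretely, for $|\al|=k$ one expands $\der^\al(b_u(\cdot,X_u))$ via Faa di Bruno and isolates the single term $\innp{\nabla_y b_u(\cdot,X_u)}{\der^\al X_u}$ that depends on $\der^\al X$; every other summand involves only $\der^\be X$ with $|\be|<k$ and is controlled (in $\wHoelZ\ga b$) by the inductive hypothesis $X\in\Lip^k$. One then estimates $\|\der^\al X(x)-\der^\al X(x')\|_{\Lpinft pt}^2$ by a constant times $|x-x'|^{2\ga}$ plus $\int_0^t\|\der^\al X(x)-\der^\al X(x')\|_{\Lpinft ps}^2\,ds$, and Gr\"onwall finishes. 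This keeps all coefficients bounded (since $\nabla_y b,\nabla_y\si$ are bounded) and never requires an SDE theory beyond the globally Lipschitz case.
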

\begin{proof}
By extending $b,\si$ and $\ph$ to an open neighborhood of $\Thet \times \R^d$, we can assume wlog that $\Thet$ is open.
	To show $X\in \dC l$ and SDE representation of $\der^\al X$ cf.\ \cite{Kunita2004} Theorem 3.4. They cover the case $l = 1$ and, where the dependence is only on the initial condition. The generalization to our case is straightforward (although cumbersome).
	
	We show $X\in \Lip^{l+\delt}$ by proving inductively: For all $k\in \N_0$ with $k\leq l$ we have $X\in \Lip^{k+\ga}$, where
	\[\ga = \begin{cases}
	1, & k < l,\\
	\delt, & k = l.
	\end{cases}\]
	By Theorem \ref{thm:SDESol}, $X\in \hoelHZ \ga = \Lip^\ga$ and so the statement is true for $k = 0$. Now, assume $k\in \N, k \leq l$ and that $X\in \Lip^{k}$.
Recall that
	\[\Lip^{k+\ga} = \set{f\in \wHoelH k \ga {} : \nabla^m f \in \Lip, m = 0,\dots, k-1} = \set{f\in \wHoelH k \ga {} : f\in \Lip^k}.\]
	Thus, it suffices to prove $X\in \wHoelH k \ga {}$ or equivalently $\der^\al X \in \wHoelHZ \ga {}$ for all multi-indices $|\al|$ of size $k$.
	
Let $\al$ be a multi-index of size $k$.
	We write $f^1 = b, f^2 = \si, W^1_t = t$ and $W^2 = W$ in the following.
	Firstly, note that Faa di Bruno's formula implies
	\[\der^\al f^i(\blnk, X) = \sum_{\be \leq \al} \sum_{j=0}^{|\al - \be|} \sum_{\cB\in \cS_j^{\al - \be}} \frac{(\al - \be)!}{\cB!} \innp{\der^\be_x \nabla^j_y f^i(\blnk, X)}{\der^\cB X}.\]
	Here, we denote derivatives with respect to the two coordinates of $f^i$ with indices $x$ and $y$ respectively.
	For most of the terms of the form $\der^\cB X$, we can use the inductive assumption to estimate the corresponding summands above. The only exception is for $|\be| = 0$ and $j = 1$. In this case $|\prttn{\al-\be} j| = 1$ and the inner sum is given by the sole summand
	\[\innp{\nabla_y f^i(\blnk, X)}{\der^\al X}.\]
	That is, only in this case does the inner sum depend on the left-hand side of the differential equation.
	Define
	\[\psi_t^i = \int_0^t (\der^\al(f^i_u(\blnk, X_u)) - \innp{\nabla_y f^i_u(\blnk, X_u)}{\der^\al X_u})\,dW^i_u, \quad i = 1,2,\]
	Then
	\begin{equation}
		\label{eq:deralXAlt}
		\der^\al X_t = \der^\al\ph_t + \sum_{i=1}^2 \psi_t^i + \int_0^t \innp{\nabla_y f_u^i(X_u)}{\der^\al X_u}\,dW^i_u, \quad t\in [0,T],
	\end{equation}
	and only the last summand depends on $\der^\al X$.
	Now, let $p\geq 1$.
	\paragraph{Estimating $\psi$:}
	By defining
	\[A(\al) = \set{(\be, j, \cB) : \be \leq \al, j \in \set{0,\dots, |\al-\be|}, \cB\in \cS_j^{\al - \be}, |\be| \geq 1 \text{ or } j \geq 2}\]
	we can write
	\[\psi^i_t = \int_0^t \sum_{(\be,j,\cB) \in A(\al)} \frac{(\al - \be)!}{\cB!} \innp{\der^\be_x \nabla^j_y f^i(\blnk, X)}{\der^\cB X} \,dW^i_u, \quad t\geq 0.\]
	Let $(\be, j, \cB) \in A(\al)$. Note that $\al \notin \cB$ and so $|\tilde \be|<k$ for all $\tilde \be \in \cB$.
	Note that by Lemma \ref{lem:wHoelHgRndFld}
\begin{align*}
\nrm{g(X)}{\wHoelZ \ga b,p} \leq & \nrm{g(X)}{\wHoelHZ \ga {},p} \vee \nrm{g(X)}{\wCZ b,p}\\
\leq & \nrm{g}{\wHoelHZ \ga {}}(1 \vee \nrm{X}{\Lip,4p})\vee \nrm{g}{\wCZ b}\\
\lesssim & \nrm{g}{\wHoelZ \ga b}(1 \vee \nrm{X}{\Lip,4p}),
\end{align*}
uniformly over $g\in \wHoelZ \ga b(\Thet \times \R^d, L^\infty([0,T], E))$ with $E\in \set{\R^d,\R^{d\times d}}$. Further, by Lemma \ref{lem:wghtdRdemchr}
\[\max_{1\leq m\leq k} \nrm{\nabla^m g}{\wHoelZ \ga b} \lesssim \max_{1\leq m<k}\nrm{\nabla^m g}{\Lip_b}  \vee \nrm{g}{\wHoel k \ga b}  \asymp \max_{m<k}\nrm{\nabla^m g}{\Lip} \vee \nrm{g}{\wHoel k \ga b} \asymp  \nrm{g}{\Lip^{k+\ga}},\]
uniformly over $g\in \wHoelZ \ga b(\Thet \times \R^d, L^\infty([0,T], E)), E\in \set{\R^d,\R^{d\times d}}$.
Similarly $\max_{1\leq m \leq k-1} \nrm{\nabla^m X}{\wHoelZ \ga b} \lesssim \nrm{X}{\Lip^{k}}$, uniformly over $X\in \Lip^k$.
	 Thus, by Lemma \ref{lem:wCstab} (c) and (d)
	\begin{align*}
		\nrm{\innp{\der^\be_x\nabla^j_y f^i(\blnk, X)}{\der^\cB X}}{\wHoelZ \ga b,p} \lesssim &\nrm{\der^\be_x \nabla^j_y f^i(\blnk, X)}{\wHoelZ \ga b,2p}\nrm{\der^\cB X}{\wHoelZ \ga b,2p} \\
		\lesssim & (\nrm{\der^\be_x \nabla^j_y f^i}{\wHoelZ \ga b} (1\vee\nrm{X}{\Lip,8p}) \prod_{\tilde \be \in \cB} \nrm{\der^{\tilde \be} X}{\wHoelZ \ga b,2pj}\\
		\leq & \nrm{f^i}{\Lip^{k+\ga}} (1\vee \nrm{X}{\Lip,8p})\nrm{X}{\Lip^{k},2pj}^j\\
		\lesssim & \nrm{f^i}{\Lip^{k+\ga}}(1 \vee \nrm{X}{\Lip^{k},8pk}^k),
	\end{align*}
uniformly over $f^i\in \Lip^{k+\ga}(\Thet \times \R^d, L^\infty([0,T],\R^{d^{\times i}}))$ and $X\in \Lip^k$, for $i=1,2$.
Hence,
	\begin{align*}
		\nrm{\psi^i}{\wHoelZ \ga b, p} \lesssim & \sum_{(\be,j,\cB) \in A(\al)} \nrm{\innp{\der^\be_x \nabla^j_y f^i(X)}{\der^{\cB} X}}{\wHoelZ \ga b, p}\\
		\lesssim & \nrm{f^i}{\Lip^{k+\ga}} (1\vee \nrm{X}{\Lip^{k},8pk}^k),
	\end{align*}
uniformly over $f^i\in \Lip^{k+\ga}$ and $X\in \Lip^{k}$, for $i = 1,2$.
	\paragraph{Estimating $\int\innp{\nabla_y f(X)}{\der^\al X}$:}
	For $g\in\wHoelZ \ga b(\Thet \times \R^d, L^\infty([0,T], E)), E\in \set{\R^d,\R^{d\times d}}$, we have
	\begin{align*}
		|\innp{g_t(x,y)}{z} - \innp{g_t(x',y')}{z'}| \leq & |\innp{g_t(x,y) - g_t(x',y')}{z} - \innp{g_t(x',y')}{z - z'}| \\
		\leq & \nrm g {\wHoelZ \ga b}|z|\left|\mat{x-x'\\y-y'}\right|^\ga + \nrm g {\wHoelZ \ga b}|z-z'|\\
		\leq & \nrm{g}{\wHoelZ \ga b}(|z||x-x'|^\ga + |z||y-y'|^\ga + |z-z'|),
	\end{align*}
	for $x,x'\in \Thet, y,y' \in \R^d$, and $z,z'\in E$.
	Hence, using the Hölder inequality
	\begin{align*}
		&\nrm{\innp{g(x,X(x))}{\der^\al X(x)} - \innp{g(x',X(x'))}{\der^\al X(x')}}{\Lpinft p t}\\
		\lesssim & \nrm{g}{\wHoelZ \ga b}\nrm{\der^\al X(x)}{\Lpinft {2p} t} (|x-x'|^\ga + \nrm{X(x)-X(x')}{\Lpinft {2p\ga} t}^\ga)+ \nrm{g}{\wHoelZ \ga b}\nrm{\der^\al X(x)-\der^\al X(x')}{\Lpinft p t}\\
		\lesssim &\nrm{g}{\wHoelZ \ga b} \nrm{\der^\al X}{\wCZ b,2p} (1 + \nrm{X}{\Lip,2p}^\ga) |x-x'|^\ga + \nrm{g}{\wHoelZ \ga b}\nrm{\der^\al X(x)-\der^\al X(x')}{\Lpinft p t},
	\end{align*}
uniformly over $t\in [0,T], x,x'\in \Thet, g\in \wHoelZ \ga b$ and $X\in \Lip^{k}\cap \dC k$.
Moreover, by Lemma \ref{lem:wghtdRdemchr}
\begin{align*}
\nrm{\der^\al X}{\wCZ b,2p} (1 + \nrm{X}{\Lip,2p}^\ga) \lesssim & \nrm{\nabla^{k-1}X}{\Lip,2p} (1 + \nrm{X}{\Lip,2p}^\ga) \lesssim 1 \vee \nrm{X}{\Lip^k,2p}^{1+\ga},
\end{align*}
uniformly over $X\in \Lip^k$.
	Therefore, by Lemma \ref{lem:intgpInfnrmIneq} (c)
	\begin{align*}
		&\nrm{\int_0^\blnk \innp{\nabla_y f_u^i(x,X_u(x))}{\der^\al X_u(x)} - \innp{\nabla_y f_u^i(x', X_u(x'))}{\der^\al X_u(x')}\,dW^i_u}{\Lpinft p t}^2 \\
		\lesssim & \int_0^t \nrm{\innp{\nabla_y f^i(x, X(x))}{\der^\al X(x)} - \innp{\nabla_y f^i(x', X(x'))}{\der^\al X(x')}}{\Lpinft p s}^2\,ds\\
		\lesssim & \nrm{\nabla_y f^i}{\wHoelZ \ga b}^2 (1 \vee \nrm{X}{\Lip^k,2p}^{1+\ga})^2 |x-x'|^{2\ga} + \nrm{\nabla_y f^i}{\wHoelZ \ga b}^2\int_0^t\nrm{\der^\al X(x)-\der^\al X(x')}{\Lpinft p s}^2 \,ds,
	\end{align*}
uniformly over $t\in [0,T], x,x'\in \Thet, f^i \in \Lip^{1+\delt}_b$ and $X\in \Lip^{k}\cap \dC k$, for $i =1,2$.
	\paragraph{Finishing the inductive step}
	By Equation \eqref{eq:deralXAlt} and Lemma \ref{lem:intgpInfnrmIneq} 
	\begin{align*}
\nrm{\der^\al X(x) - \der^\al X(x')}{\Lpinft p t}^2\lesssim & \nrm{\der^\al \ph}{\wHoelZ \ga b}^2 + \sum_{i=1}^2 (\nrm{\psi^i}{\wHoelZ \ga b}^2 +  \nrm{\nabla_y f^i}{\wHoelZ \ga b}^2 (1 \vee \nrm{X}{\Lip^k,2p}^{1+\ga})^2) |x-x'|^{2\ga} \\
&+ \sum_{i=1}^2\nrm{\nabla_y f^i}{\wHoelZ \ga b}^2\int_0^t\nrm{\der^\al X(x)-\der^\al X(x')}{\Lpinft p s}^2 \,ds\\
		\lesssim & |x-x'|^{2\ga}+ \int_0^t\nrm{\der^\al X(x)-\der^\al X(x')}{\Lpinft p s}^2\,ds,
	\end{align*}
uniformly over $t\in [0,T], x,x'\in \Thet$ and $X\in \Lip^{k}\cap \dC k$.
	Thus, by Grönwall's inequality
	\begin{align*}
		\nrm{\der^\al X(x) - \der^\al X(x')}{\Lpinft p t}^2\lesssim & |x-x|^{2\ga} \unfovr x,x'\in \Thet, X\in \Lip^{k}\cap \dC k.
	\end{align*}
Hence, $\der^\al X \in \wHoelHZ \ga {}$ for all multi-indices $\al$ of size $k$, as desired.
\end{proof}

\begin{docu}
	I tried briefly generalizing the proof to Lipschitz up to linear growth. The problem is that when we estimate $\der^\al X$ the left-hand side features an $L_p$ norm, while the RHS has a $L_{2p}$ or $L_{3p}$ norm after applying Hölders inequality, due to the extra $V(\nrm{X}{})$ terms, etc.. So we cannot apply Gröwnwall. 
\end{docu}

Using Theorem \ref{thm:SDEDer} we can deduce the smoothness of solutions of SDEs in the initial condition $x$, the initial time point $s$ and a small parameter $\ep$.
\begin{cor}
	\label{cor:SDEDerInitPert}
	Let $l\in \N_0, \delt \in (0,1], \Thet := [0,1]\times [0,T] \times \R^d$ and suppose we are given functions
	\begin{align*}
		b :&\, [0,T]\times [0,1]\times \R^d \to \R^d, (t,\ep, x)\mapsto b^\ep_t(x) \in \Lip^{l+\delt}(\Thet\times \R^d, L^\infty([0,T], \R^d)), \\
		\si :&\, [0,T]\times [0,1]\times \R^d \to \R^{d\times d}, (t,\ep, x)\mapsto \si^\ep_t(x) \in \Lip^{l+\delt}(\Thet\times \R^d, L^\infty([0,T], \R^{d\times d})).
	\end{align*}
	Let $X : \Om \times [0,T]\times \Thet \to \R^d$ be the unique solution to the family of \tSDE s
	\[X_t^{\ep,s}(x) = x + \int_s^t b_u^\ep(X_u^{\ep,s}(x))\,du + \int_s^t\si_u^\ep(X_u^{\ep,s}(x))\,dW_u, \quad t\in[0,T],\]
	with the convention that $\int_s^t\,du = \int_s^t\,dW_u = 0$ for $s > t$.
	Then,
	\[X : \Om \times [0,T]\times \Thet \to \R^d, (\om,t,(\ep,s,x))\mapsto X_t^{\ep,s}(x) \in \Lip^{l+\delt}(\Thet,\Lfininfad).\]
	Further, for all $k,m\in \N_0$ and every multi-index $\al$ with $|\al| \leq l - k - m $, $\der^\al_x \der_\ep^k \der_s^m X$ satisfies the \tSDE{}
	\[\der^\al_x \der_\ep^k \der_s^m X_t = \ph^{\al,k,m} + \int_s^t \der^\al_x \der_\ep^k \der_s^m (b_u^\ep(X_u^{\ep,s}))\,du + \int_s^t \der^\al_x \der_\ep^k \der_s^m (\si_u^\ep(X_u^{\ep,s}))\,dW_u, \quad t\in[0,T].\]
	Here, 
	\[\ph^{\al,k,m}(x) = \begin{cases}
	x, & |\al| = k = m = 0,\\
	e_j & \al = \set{j}, k = m = 0,\\
	0, & |\al|>1 \text{ or } k + m > 0,
	\end{cases}\quad x\in \Thet.\]
\end{cor}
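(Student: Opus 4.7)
The strategy is to deduce this corollary from Theorem \ref{thm:SDEDer} by viewing $\theta = (\ep, s, x) \in \Thet$ as a joint parameter and rewriting the family of SDEs so that each member starts at time $0$. Using the convention $\int_s^t\,du = \int_s^t\,dW_u = 0$ for $s > t$, the defining equation can be recast as
\[X_t^{\ep,s}(x) = x + \int_0^t \idK_{u \geq s}\, b_u^\ep(X_u^{\ep,s}(x))\,du + \int_0^t \idK_{u \geq s}\, \si_u^\ep(X_u^{\ep,s}(x))\,dW_u, \quad t\in [0,T],\]
with initial process $\ph_t(\theta) := x$, which lies trivially in $\Lip^{l+\delt}(\Thet, \Lfininfad)$, and extended coefficients $\tilde b_u(\theta, y) := \idK_{u \geq s}\, b_u^\ep(y)$, $\tilde \si_u(\theta, y) := \idK_{u \geq s}\, \si_u^\ep(y)$. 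The $\Lip^{l+\delt}$-regularity of $\tilde b, \tilde \si$ in $(\ep, y)$ is inherited directly from the hypotheses on $b, \si$.

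Once $X \in \Lip^{l+\delt}(\Thet, \Lfininfad)$ is established, the SDEs for the mixed derivatives $\der^\al_x \der_\ep^k \der_s^m X$ follow by differentiating the defining SDE under the integrals, applying the Faa di Bruno formula (Proposition \ref{prop:diffRules}(c)) and the Leibniz rule (Proposition \ref{prop:diffRules}(b)) to the compositions $b_u^\ep(X_u^{\ep,s})$ and $\si_u^\ep(X_u^{\ep,s})$, mirroring the induction carried out in the proof of Theorem \ref{thm:SDEDer}. The explicit initial conditions $\ph^{\al,k,m}$ are read off by differentiating $\ph_t(\theta) = x$ in $\theta$: only the value $x$ itself and the first-order $x$-derivatives $e_j$ survive, giving exactly the case distinction in the statement.

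The main obstacle is the low regularity of the indicator $\idK_{u\geq s}$ in $s$: it is bounded but not Lipschitz in the $L^\infty$-in-$u$ norm, so the extended coefficients $\tilde b, \tilde \si$ do not directly satisfy the joint regularity hypothesis of Theorem \ref{thm:SDEDer} in the $s$-direction. To bridge this, I would first freeze $s$ and apply Theorem \ref{thm:SDEDer} with the reduced parameter space $(\ep, x) \in [0,1]\times \R^d$, which yields $\Lip^{l+\delt}$-regularity in $(\ep, x)$ uniformly in $s$, together with the claimed SDE representations for all mixed derivatives with $m = 0$. For the $s$-regularity, I would exploit the flow identity
\[X_t^{\ep, s_1}(x) = X_t^{\ep, s_2}(X_{s_2}^{\ep, s_1}(x)), \quad s_1 \leq s_2 \leq t,\]
valid by pathwise uniqueness of the SDE from time $s_2$. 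This rewrites an $s$-increment of $X_t^{\ep,\blnk}(x)$ as an $x$-increment of the already-controlled flow $X_t^{\ep, s_2}(\blnk)$ evaluated at the auxiliary quantity $X_{s_2}^{\ep, s_1}(x)$. Combined with standard moment estimates for $X_{s_2}^{\ep, s_1}(x) - x$ via the linear growth of $b, \si$ and the norm inequalities of Lemma \ref{lem:intgpInfnrmIneq}, this transfers the established $(\ep, x)$-regularity through to joint $\Lip^{l+\delt}$-regularity in $\theta$, and an analogous bootstrap on the higher-order derivative SDEs yields the claimed representations for $m \geq 1$.
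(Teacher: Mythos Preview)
Your route differs from the paper's. You correctly note that the indicator $\idK_{u\geq s}$ destroys the $\Lip^{l+\delt}$ hypothesis of Theorem~\ref{thm:SDEDer} in the $s$-variable, and then propose to repair this via the flow property. The paper avoids the obstacle entirely by \emph{augmenting the state with time}: it sets
\[\tilde b^\ep(t,y) = \begin{pmatrix}1\\ b_t^\ep(y)\end{pmatrix},\qquad \tilde \si^\ep(t,y) = \begin{pmatrix}0\\ \si_t^\ep(y)\end{pmatrix},\]
and considers the $(1{+}d)$-dimensional system
\[\tilde X_u(\ep,s,x) = \begin{pmatrix}s\\x\end{pmatrix} + \int_0^u \tilde b^\ep(\tilde X_v)\,dv + \int_0^u \tilde \si^\ep(\tilde X_v)\,dW_v.\]
The first component is $s+u$, so the second component $Z$ satisfies $Z_{t-s}(\ep,s,x) = X_t^{\ep,s}(x)$. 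Now $s$ enters only through the smooth initial datum $(s,x)$, the augmented coefficients are genuinely $\Lip^{l+\delt}$ in all variables, and a \emph{single} application of Theorem~\ref{thm:SDEDer} delivers both the regularity claim and the derivative SDEs. No indicator, no flow identity, no separate bootstrap for $s$.

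Your flow-identity sketch could plausibly be pushed through, but as written it leaves a gap and is harder. The moment estimate for $X_{s_2}^{\ep,s_1}(x)-x$ via Lemma~\ref{lem:intgpInfnrmIneq} gives only order $(s_2{-}s_1)^{1/2}$ from the stochastic integral, so the direct flow comparison yields $\tfrac12$-H\"older in $s$, not Lipschitz; upgrading to the full $\Lip^{l+\delt}$ requires differentiating the flow map and essentially rerunning the induction of Theorem~\ref{thm:SDEDer} in the $s$-direction. Likewise, extracting the explicit SDEs for $\der_s^m X$ with $m\geq 1$ from iterated flow identities is considerably more involved than simply reading them off the augmented system. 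The time-augmentation trick is the cleaner path here.
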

\begin{proof}
	We augment the state space $\R^d$ by time. 
	Define
	\[\tilde b^\ep(t,x) = \mat{1\\ b_t^\ep(x)}, \tilde \si^\ep(t,x) = \mat{0 \\ \si_t^\ep(x)}, \quad t\in [0,T], x \in \R^d.\]
	Consider the family of \tSDE s
	\[\tilde X_t(\ep,s,x) = \mat{s\\x} + \int_0^t \tilde b^\ep(\tilde X_u(\ep,s,x))\,du + \int_0^t \tilde \si^\ep(\tilde X_u(\ep,s,x))\,dW_u.\]
	The assumptions of Theorem \ref{thm:SDEDer} are satisfied for this system.
	Write $\tilde X = (Y,Z)$ with $Y$ one-dimensional.
	By construction, $Y_t(\ep,s,x) = s + t$. Thus,
	\begin{align*}
		\tilde X_t(\ep,s,x) = \mat{s+t\\ Z_t(\ep,s,x)} = & \mat{s+t\\ x + \int_0^t b_{u+s}^\ep(Z_u(\ep,s,x))\,du + \int_0^t\si_{u+s}^\ep(Z_u(\ep,s,x))\,dW_u},
	\end{align*}
	and so $Z_{t-s}(\ep,s,x) = X_t^{\ep,s}(x)$, up to indistinguishability in $t\in [0,T]$, for all $(\ep,s,x)\in \Thet$. Thus, the desired properties for $X$ follow from the ones for $\tilde X$.
\end{proof}
\section{Stochastic modified equations}
\subsection{Introduction and main results}
\label{sec:fstOrderSMEs}
For the remainder of this chapter, we write $\wC l \ka := \wC l {\cV_\ka}$, where $\cV_\ka(x) = 1 + x^\ka$, and $\wCZ \ka := \wC 0 \ka$.
We work again in the setting of Subsection \ref{sec:onstepassum}. In addition to $f, \bar f$ and $\Si$, we consider functions $b : [0,T]\times \R^d\to \R^d$ and $D : [0,T]\times \R^d \to \R^{d\times d}$.\\
\begin{assum}
\label{assum:bD}
$D_t(x)$ is a symmetric positive semi-definite matrix for all $(t,x)\in [0,T]\times \R^d$.
We have $b \in \Lip^{5+1}([0,T]\times \R^d, \R^d)$ and $\sqrt D \in \Lip^{5+1}([0,T]\times \R^d, \R^{d\times d})$.\\
\end{assum}
By Assumption \assref{assum:bD}, we have $\sqrt D \in \wC 5 1$. Hence, $D\in \wC 5 2$ by Lemma \ref{lem:plyGrwthStab} (iii).
For all $h \in [0,1]$ we consider the following family of \tSDE s
\begin{align}\label{eq:NCC-SGF}
	dX_t^h = \bar f_t(X^h_t)+hb_t(X_t^h)\,dt + \sqrt{hD_t(X^h_t)}\,dW_t, \quad t\in [0,T].
\end{align}
Notice that, as $h \downarrow 0$, Equation \eqref{eq:NCC-SGF} becomes the ODE
\[\der_t X_t^0 = \bar f_t(X^0_t), \quad t\in [0,T],\]
which we investigated in Section \ref{sec:ODE}.
Denote by $X^{h,t}(x)$ the (essentially) unique solution of \eqref{eq:NCC-SGF} with $X_t^{h,t}(x) = x$ and set $X_r^{h,t}(x) = x$ for $r < t$.
Given $g\in \dC 2(\R^d)$ we define 
\[v^g : [0,1]\times [0,T]\times [0,T]\times \R^d \to \R, (h,r,t,x) \mapsto v^{g,h,r}_t(x) := \E g(X_r^{h,t}(x)).\]
We also write $v^{g,h} = v^{g,h,T}$ and $v^{h} = v^{g,h}$ if the choice of $g$ is clear from the context.
Notice that $v^{g,0}_t(x) = g(X^{0,t}_T(x))$.
\begin{theorem}
	\label{thm:firstOrderSME}
	Assume \assref{assum:H} and \assref{assum:bD}.
	Then for all $g\in \wC 5 \ka(\R^d)$ there exists a function $\rho^{g} : \lrates\to \wCZ{\ka +13}(\R^d), h \mapsto \rho^{g,h}$, such that
	\begin{align}\label{better 1st order expansion}
		\E g(\chi_{T/h}^h) -  \E g(X_T^{h})= \frac12 h \int_0^T (\innp{\nabla^2 v^{g,0}}{\Si-D} - \innp{\nabla v^{g,0}}{2 b + \nabla^\transp \bar f \bar f + \der_t \bar f})_t(X_t^0)\,dt + h^2 \rho^{g,h},
	\end{align}
and
	\[\nrm{\rho^{g,h}}{\wCZ{\ka+13}} \lesssim \nrm{g}{\wC 5 \ka}\]
	uniformly over $g\in \wC 5 \ka$ and $h\in \lrates$.
\end{theorem}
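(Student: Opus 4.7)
The plan is to decompose
\[\E g(\chi^h_{T/h}) - \E g(X^h_T) = \bigl[\E g(\chi^h_{T/h}) - g(X^0_T)\bigr] - \bigl[\E g(X^h_T) - g(X^0_T)\bigr]\]
and expand each bracket to first order in $h$ around the ODE flow. The first bracket is handled directly by Theorem \ref{thm:1stOrderOME}, which supplies the $\Si$ and $\bar f$ contributions with a remainder in $\wCZ{\ka+13}$ of the desired size. Only the SDE--ODE difference remains.

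For the second bracket, I use that $v^{g,0}_t(x) = g(X^{0,t}_T(x))$ solves the backward transport equation $\partial_t v^{g,0} + \innp{\bar f}{\nabla v^{g,0}} = 0$ with $v^{g,0}_T = g$, and satisfies $v^{g,0} \in \wC 5 {\ka+5}([0,T]\times \R^d)$ via Corollary \ref{cor:SDEDerInitPert} (with $D = 0$) together with Lemma \ref{lem:wCstab}(e). Applying Itô's formula to $t \mapsto v^{g,0}_t(X^h_t)$, using the transport equation to cancel the $\partial_t v^{g,0}$ and $\innp{\bar f}{\nabla v^{g,0}}$ contributions, integrating over $[0,T]$, and taking expectations (the stochastic integral has mean zero) yields
\[\E g(X^h_T) - g(X^0_T) = h \int_0^T \E\Bigl[\innp{b_t}{\nabla v^{g,0}_t}(X^h_t) + \tfrac{1}{2}\innp{\nabla^2 v^{g,0}_t}{D_t}(X^h_t)\Bigr]\,dt.\]
To replace $X^h_t$ by $X^0_t$ in the integrand at a further cost of order $h$, I would apply Taylor's theorem around $X^0_t$ and use two moment bounds: $\E|X^h_t - X^0_t|^2 \lesssim h$ (from a standard BDG--Grönwall argument applied to the SDE representation of $X^h - X^0$, cf.\ Theorem \ref{thm:SDESol} and Lemma \ref{lem:intgpInfnrmIneq}) and $|\E(X^h_t - X^0_t)| \lesssim h$ (obtained by first taking expectation in that SDE representation to kill the stochastic integral, then using the Lipschitz continuity of $\bar f$ and Grönwall once more). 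Both estimates are uniform in $t\in [0,T]$ with polynomial dependence on the initial condition. This produces
\[\E g(X^h_T) - g(X^0_T) = h \int_0^T \left[\innp{b}{\nabla v^{g,0}} + \tfrac{1}{2}\innp{D}{\nabla^2 v^{g,0}}\right]_t(X^0_t)\,dt + h^2 \tilde \rho^{g,h},\]
with $\tilde \rho^{g,h} \in \wCZ{\ka+13}(\R^d)$ bounded uniformly in $g \in \wC 5 \ka$ and $h$. Subtracting this from the first expansion and collecting coefficients (using $-h \innp{b}{\nabla v^{g,0}} = -\tfrac{1}{2}h\innp{\nabla v^{g,0}}{2b}$) yields the claimed formula.

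The main obstacle will be the careful bookkeeping of polynomial-growth degrees. Each composition with the flow $X^{0,t}$ (of linear growth), each Taylor remainder, and each moment estimate for $X^h_t$ inflates the degree slightly, and I must verify that the aggregate polynomial growth of $\tilde \rho^{g,h}$ fits inside $\wCZ{\ka+13}$ (matching the constraint already imposed by Theorem \ref{thm:1stOrderOME}) with constants uniform in $g\in \wC 5 \ka$ and $h\in \lrates$. The choice of $l=5$ derivatives for $g$ is precisely what is needed so that $v^{g,0}$ is sufficiently smooth to support both the Talay--Tubaro-style argument underpinning Theorem \ref{thm:1stOrderOME} and the Itô/Taylor expansion above.
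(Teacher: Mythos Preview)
Your decomposition is correct and takes a genuinely different route from the paper's. The paper does not split into $[\chi - X^0] - [X^h - X^0]$; instead it runs the Talay--Tubaro telescoping argument \emph{directly} for $\chi$ against $X^h$, using the SDE backward function $v^{g,h}$ rather than $v^{g,0}$. The extra $b,D$ terms in the backward equation \eqref{eq:KBESME1} for $v^{g,h}$ are precisely what cancels the additional first-order contributions, yielding $h\int_0^T \E[\cF_t[v^{g,h}](X^h_t)]\,dt + O(h^2)$ (Lemmas \ref{lem:talayTubaroSumSME1} and \ref{lem:talayTubaroSME1withH}). A separate perturbation lemma (Lemma \ref{lem:OMEvsSME1}, proved via $\ep = \sqrt h$ differentiability of the SDE flow) gives $\|v^{g,h} - v^{g,0}\|_{\wC l {\ka+2}} \lesssim h\|g\|_{\wC{l+2}\ka}$, which is then used both to replace $v^{g,h}$ by $v^{g,0}$ and, by applying it once more with test function $\cF_t[v^h]$, to replace $X^h$ by $X^0$. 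Your approach is more economical in that it reuses Theorem \ref{thm:1stOrderOME} wholesale and needs only the It\^o identity plus a perturbation; the paper's approach avoids your ad hoc moment estimates by packaging the $X^h\to X^0$ replacement at the level of $v$-functions, which also keeps the polynomial-degree bookkeeping uniform with the rest of the chapter.

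One small gap: the bound $|\E(X^h_t - X^0_t)| \lesssim h$ does not follow from just Lipschitz continuity of $\bar f$ and Gr\"onwall. Taking expectations and using only $|\bar f(X^h_s) - \bar f(X^0_s)| \leq L|X^h_s - X^0_s|$ yields $|\E[\bar f(X^h_s)] - \bar f(X^0_s)| \leq L\,\E|X^h_s - X^0_s| \lesssim \sqrt h$, and Gr\"onwall then gives only $O(\sqrt h)$. To reach $O(h)$ you must Taylor-expand $\bar f$ to second order around $X^0_s$: writing $\bar f(X^h_s) = \bar f(X^0_s) + \nabla^\transp \bar f(X^0_s)(X^h_s - X^0_s) + R_s$ with $|R_s|\lesssim |X^h_s - X^0_s|^2$, one gets $\E[\bar f(X^h_s)] - \bar f(X^0_s) = \nabla^\transp\bar f(X^0_s)\,m_s + O(h)$ via your $L^2$ bound, and Gr\"onwall on $m_s := \E[X^h_s - X^0_s]$ now closes with the correct rate. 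This is available under \assref{assum:H} since $\bar f\in\Lip^{5+1}$, so the fix is immediate; but as written your sketch overstates what Lipschitz alone delivers.
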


By choosing $b = 0$ and $D = \Si$ we recover the first-order \tSME{} introduced by \citet[][Theorem 1]{Li15}, with the difference that our $f$ can be time-dependent and does not need to be a gradient field.
Alternatively, we may choose a state-independent diffusion coefficient such as $\Si(x^*)$ for some $x^*\in \R^d$. The resulting equation is easier to work with, but may still provide important insight. Moreover, in some cases setting $(b,D) = (0, \Si(x^*))$ gives us a smaller absolute linear error term than setting $(b,D) = (0, \Si)$ (see Chapter \ref{chap:compare}).

By choosing $b = -\frac12 \nabla^\transp \bar f \bar f + \der_t \bar f$ and $D = \Si$ we recover the second-order \tSME{} introduced by \citet[][Theorem 1]{Li15}.
\begin{cor}
\label{cor:SME2}
Assume \assref{assum:H}, $\bar f \in \Lip^{6+1}([0,T]\times \R^d)$ and $\sqrt \Si\in \Lip^{6+1}([0,T]\times \R^{d\times d})$. Suppose $X$ is the solution to the family of \tSDE{}
\begin{equation}
\label{eq:SME2}
dX_t^h = \left(\bar f_t - \frac12 h (\nabla^\transp \bar f_t \bar f_t + \der_t \bar f_t)\right)(X_t^h)\,dt + \sqrt{h\Si(X_t^h)}\,dW_t, \quad t\in [0,T].
\end{equation}
Then for all $g\in \wC 5 \ka(\R^d)$ we have
\[\nrm{\E g(\chi_{T/h}^h) -  \E g(X_T^{h})}{\wCZ{\ka+13}} \lesssim h^2\nrm{g}{\wC 5 \ka},\]
uniformly over $g\in \wC 5 \ka$ and $h\in \lrates$.
\end{cor}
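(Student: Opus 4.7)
The plan is to reduce the corollary directly to Theorem \ref{thm:firstOrderSME} by choosing the coefficients
\[b := -\tfrac{1}{2}\bigl(\nabla^\transp \bar f\,\bar f + \der_t \bar f\bigr),\qquad D := \Si,\]
so that the linear error term in \eqref{better 1st order expansion} vanishes identically and only the quadratic remainder survives. The whole argument is essentially an algebraic observation together with a regularity bookkeeping.

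First I would check that these choices satisfy Assumption \assref{assum:bD}, which is the only place where the strengthened hypothesis $\bar f\in \Lip^{6+1}$ and $\sqrt\Si\in \Lip^{6+1}$ is needed. Since $\bar f\in \Lip^{6+1}$, its gradient $\nabla^\transp\bar f$ and time derivative $\der_t\bar f$ both lie in $\Lip^{5+1}$. Lipschitz-type stability of the product and sum (specializing Lemma \ref{lem:wCstab}, together with Lemma \ref{lem:LiplAltRep}, to the weight $\cV\equiv 1$ and the algebra $F=\R$) then yields $\nabla^\transp\bar f\,\bar f\in\Lip^{5+1}$ and hence $b\in \Lip^{5+1}$. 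For the diffusion coefficient, $\sqrt D = \sqrt\Si\in \Lip^{6+1}\subseteq \Lip^{5+1}$ is immediate. Positive semi-definiteness of $D_t(x)=\Si_t(x)$ is inherited from \assref{assum:H}. Thus \assref{assum:bD} holds and Theorem \ref{thm:firstOrderSME} applies to the SDE \eqref{eq:SME2}.

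Next I would simply substitute this $(b,D)$ into the expansion \eqref{better 1st order expansion}. By construction we have $\Si-D=0$ and
\[2b+\nabla^\transp \bar f\,\bar f+\der_t \bar f = -\bigl(\nabla^\transp\bar f\,\bar f+\der_t\bar f\bigr)+\bigl(\nabla^\transp\bar f\,\bar f+\der_t\bar f\bigr)=0,\]
so the integrand in the first summand of \eqref{better 1st order expansion} is identically zero. Theorem \ref{thm:firstOrderSME} therefore gives
\[\E g(\chi_{T/h}^h)-\E g(X_T^h)=h^2\rho^{g,h}\qquad \text{with}\qquad \nrm{\rho^{g,h}}{\wCZ{\ka+13}}\lesssim \nrm{g}{\wC 5\ka},\]
uniformly over $g\in \wC 5\ka$ and $h\in\lrates$, which is exactly the claim.

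I do not expect a genuine obstacle here: once the first-order expansion of Theorem \ref{thm:firstOrderSME} is available with an explicit linear coefficient in $(b,D)$, the corollary is a matter of choosing the coefficients so that the coefficient vanishes. The only mildly delicate point is the one-derivative loss in going from $\bar f$ to $b$, which is precisely why the hypothesis is sharpened by one order of regularity relative to \assref{assum:H}; this is handled entirely by the stability lemmas already established for weighted Hölder and Lipschitz spaces.
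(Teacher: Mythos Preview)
Your proposal is correct and follows exactly the paper's own approach: the paper's proof is the single sentence ``Apply Theorem \ref{thm:firstOrderSME} in the case $b = -\frac12(\nabla^\transp \bar f\,\bar f + \der_t \bar f)$ and $D = \Si$.'' You have additionally spelled out the verification of Assumption \assref{assum:bD} and the vanishing of the linear term, which the paper leaves implicit.
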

\begin{proof}
Apply Theorem \ref{thm:firstOrderSME} in the case $b = -\frac12 \nabla^\transp \bar f \bar f + \der_t \bar f$ and $D = \Si$.
\end{proof}

Corollary \ref{cor:SME2} gives a nice interpretation of the linear error term in Equation \eqref{eq:NCC-SGF}. For general (stochastic) modified equations of the form \eqref{eq:NCC-SGF} it measures how much our approximation differs from the second-order SME \eqref{eq:SME2}.

\subsection{Proof of Theorem \ref{thm:firstOrderSME}}
As in Subsection \ref{sec:ODE}, the linear error term in Equation \eqref{better 1st order expansion} is determined by a linear operator
\[\cF : \dC 2([0,T]\times \R^d) \to \cC([0,T]\times \R^d)\]
which is now given by
\begin{align}
	\label{eq:phiN}
	\cF w := & \frac12 \innp{\nabla^2 w}{\outSq{\bar f} + \Si - D} - \innp{\nabla w}{b} + \innp{\partial_t \nabla w}{\bar{f}} +\frac12 \partial_t^2 w,\quad w\in \dC 2([0,T]\times \R^d).
\end{align} 
We write $\cF_t w(x) := (\cF w)(t,x)$ for all $(t,x) \in [0,T] \times \R^d$. By Lemma \ref{lem:FopSimpl}, we have
\[\cF {v^{g,0}} = \frac12 \innp{\nabla^2 v^{g,0}}{\Si - D} - \frac12\innp{\nabla v^{g,0}}{2 b + \nabla^\transp \bar f \bar f + \der_t \bar f},\]
for all $g\in \dC 2$.
Similar to the proof of Lemma \ref{lem:TTOperator}, we can show
\[\nrm{\cF w}{\wC l {\ka + 2}} \lesssim \nrm{w}{\wC{l+2}{\ka}}, \unfovr w\in \wC{l+2}\ka([0,T]\times \R^d),\]
for all $\ka,l \in \N_0$ with $l\leq 5$.

Note that given $g\in \dC 2$ and $r\in [0,T]$, the function $v = v^{g,h,r}$ satisfies the Kolmogorov backward equation
\begin{equation}
	\label{eq:KBESME1}
	\begin{cases}
		\der_t v + \innp{\nabla v}{\bar f} + h \innp{\nabla v}{b} + \frac 1 2 h \innp{\nabla^2 v}{D} = 0, & t\in [0,r],\\
		v_r = g,
	\end{cases}
\end{equation}
for all $h\in [0,1]$.

\begin{lemma}
	\label{lem:OMEvsSME1}
	For all $l\leq 5$ and $g\in \wC l \ka(\R^d)$, we have $v^{g,h} \in \wC l {\ka + l}([0,T]\times \R^d, L^\infty([0,T], \R))$, uniformly in $h\in [0,1]$, with
	\[\nrm{v^{g,h}}{\wC l {\ka + l}([0,T]\times \R^d, L^\infty([0,T], \R))} \lesssim \nrm{g}{\wC l \ka},\]
	uniformly over $h\in [0,1]$ and $g\in \wC l \ka$.
	 Moreover,
	\[\nrm{v^{g,0} - v^{g,h}}{\wC l{\ka + 2}([0,T]\times \R^d,L^\infty([0,T], \R))} \lesssim h\nrm{g}{\wC {l+2} \ka},\]
	uniformly over $g\in \wC{l+2} \ka$ and $h\in [0,1]$, for $l = 0,1,2$.
\end{lemma}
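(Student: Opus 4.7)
I would split the argument into the two claims.

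For the first claim, the idea is to combine Corollary \ref{cor:SDEDerInitPert} with Lemma \ref{lem:wHoelgRndFld}. To put \eqref{eq:NCC-SGF} into the setting of the corollary, I would substitute $\eta=\sqrt h$, so that the coefficients $\bar f_t(x)+\eta^2 b_t(x)$ and $\eta\sqrt{D_t(x)}$ are jointly $\Lip^{5+1}$ in $(\eta,t,x)$. The corollary then shows that $(\eta,t,x)\mapsto X^{\eta^2,t}_T(x)$ is $\Lip^{5+1}$ with values in $\Lfininfad$; in particular, for each fixed $h\in[0,1]$, $X^{h,\blnk}_T(\blnk)\in\wC 5 1([0,T]\times\R^d,\Lfininfad)$ with constants uniform in $h$. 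Feeding this into Lemma \ref{lem:wHoelgRndFld} with weights $\cV_1$ for the flow and $\cV_\ka$ for $g$ yields $v^{g,h}=\E g(X^{h,\blnk}_T)\in\wC l{\ka+l}$ with the stated bound.

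For the second claim, my plan is a second-order Taylor expansion of $g$ around the \emph{deterministic} point $X^{0,t}_T(x)$ (deterministic because the $h=0$ equation is an ODE). Writing $\delt^h_t(x):=X^{h,t}_T(x)-X^{0,t}_T(x)$ and taking expectations, the deterministic factor pulls out:
\begin{gather*}
v^{g,h}_t(x)-v^{g,0}_t(x)=\innp{\nabla g(X^{0,t}_T(x))}{\E\delt^h_t(x)}+\E R^{g,h}_t(x),
\end{gather*}
where $R^{g,h}_t$ is the integral second-order Taylor remainder. The remainder is controlled by Cauchy--Schwarz together with the standard moment bound $\nrm{\delt^h_t(x)}{p}\lesssim\sqrt h(1+|x|)$, which follows from Theorem \ref{thm:SDESol} applied to the SDE satisfied by $\delt^h$; this gives $|\E R^{g,h}_t(x)|\lesssim h(1+|x|)^{\ka+2}\nrm g{\wC 2 \ka}$. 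The linear term requires the \emph{sharper} mean bound $|\E\delt^h_t(x)|\lesssim h(1+|x|)$, one order better than the pathwise $L^p$ bound. To derive it, I would take expectations in the SDE for $\delt^h$, linearize $\bar f$ around $X^{0,t}$ (so the linearization residual is $O(|\delt^h|^2)$, with expectation of order $h$), use that the stochastic integral has mean zero, and apply Gronwall to the resulting linear integral equation for $\E\delt^h$. Combined with $|\nabla g(X^{0,t}_T(x))|\lesssim(1+|x|)^\ka$ this settles the case $l=0$.

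For $l=1,2$ I would differentiate the above representation under the expectation in $(t,x)$ and use Proposition \ref{prop:diffRules} (Faa di Bruno) to expand. The derivatives $\der^\al\delt^h_t(x)$ solve variational SDEs whose regularity and $L^p$-boundedness are supplied by Corollary \ref{cor:SDEDerInitPert}, and the same two-step scheme (pathwise $L^p$ bound for the remainder, mean bound for the linear term) applies factor by factor. The main technical obstacle is precisely the upgrade from the pathwise estimate $\nrm{\delt^h_t(x)}{p}=O(\sqrt h)$ to the expectation estimate $|\E\delt^h_t(x)|=O(h)$ (and its analogues for $\der^\al\delt^h$), since this is what actually extracts the explicit factor $h$ on the right-hand side; propagating this sharpening through the variational processes while pinning the polynomial growth in $x$ at order $\ka+2$ is the main piece of bookkeeping.
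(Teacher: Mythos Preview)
Your first-claim argument matches the paper's exactly. For the second claim, your route is different: you Taylor expand $g$ \emph{in space} around the deterministic point $X^{0,t}_T(x)$, and then separately upgrade the pathwise estimate $\nrm{\delt^h}{p}=O(\sqrt h)$ to the mean estimate $|\E\delt^h|=O(h)$ via linearize-and-Gronwall. The paper instead Taylor expands the scalar map $\ep\mapsto\E g(Y^\ep)$ in the perturbation parameter $\ep=\sqrt h$ to second order. The point is that $(\der_\ep Y^\ep)|_{\ep=0}=\int_s^\cdot\sqrt{D_u(Y_u^0)}\,dW_u$ is a martingale, so the first-order coefficient $\E[\innp{\nabla g(Y^0)}{\der_\ep Y^\ep}]|_{\ep=0}=\innp{\nabla g(Y^0)}{\E(\der_\ep Y^0)}=0$ automatically; the remaining $\ep^2=h$ coefficient is then bounded directly in $\wC l{\cdot}$ using the joint $\Lip^{5+1}$ regularity of $Y$ in $(\ep,t,x)$ already provided by Corollary~\ref{cor:SDEDerInitPert}.

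Your scheme is correct in spirit but more laborious. Two remarks: first, your claimed growth $|\E\delt^h|\lesssim h(1+|x|)$ is slightly optimistic, since the linearization residual is $O(\E|\delt^h|^2)\lesssim h(1+|x|)^2$, so you actually get $|\E\delt^h|\lesssim h(1+|x|)^2$ (harmless for the target weight $\ka+2$). Second, for $l=1,2$ you must rerun the mean-sharpening argument for each $\der^\al\delt^h$ and track all cross-terms from Fa\`a di Bruno by hand. The paper's $\ep$-expansion bypasses both: the vanishing of the linear term replaces your Gronwall step, and the $(t,x)$-regularity of the $\ep^2$-remainder follows in one application of Lemmas~\ref{lem:wCstab} and~\ref{lem:wHoelgRndFld}, since $\der_\ep Y$ and $\der_\ep^2 Y$ are already known to lie in $\wC l 1([0,T]\times\R^d,\Lfininf)$ uniformly.
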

\begin{proof}
	We apply Corollary \ref{cor:SDEDerInitPert} to the family of \tSDE s \eqref{eq:NCC-SGF} with $\ep = \sqrt h$.
	More precisely, let 
	\[\Thet = \set{(\ep, s, x) : \ep \in [0,1], s\in [0,T], x\in \R^d}.\]
	We consider the $\Thet \times \R^d$-indexed family of \tSDE s
	\[dY_t^{\ep,s}(x) = \bar f_t(Y_t^{\ep,s}(x)) + \ep^2 b_t(Y_t^{\ep,s}(x))\,dt + \ep\si_t(Y_t^{\ep,s}(x))\,dW_t, \quad t\in [s,T],\]
	with $Y_t^{\ep,s}(x) = x$ for $t\in [0,s]$. Here, $\si = \sqrt{D}$.
By Corollary \ref{cor:SDEDerInitPert}, we have $Y \in \Lip^{5+1}([0,1]\times [0,T]\times \R^d, \Lfininf)$.
In particular, $Y \in \wC 5 1([0,1]\times [0,T]\times \R^d, \Lfininf)$. Thus, by Lemma \ref{lem:wHoelgRndFld}, we have $\E g(Y) \in \wC l {\ka+l}([0,1]\times [0,T]\times \R^d, L^\infty([0,T], \R^d))$ for all $g\in \wC l \ka$, with
\[\nrm{\E g(Y)}{\wC l{\ka+l}}\lesssim \nrm{g}{\wC l \ka}, \unfovr g\in \wC l \ka(\R^d),\]
for every $l\leq 5$.
In particular,
\[v^{g,h} = \E g(X^{h,\blnk}) = \E g(Y^{\sqrt h,\blnk}) \in \wC l{\ka+l}([0,T]\times \R^d, \R),\]
uniformly in $h\in [0,1]$, with
\[\nrm{v^{g,h}}{\wC l {\ka + l}} \lesssim \nrm{g}{\wC l \ka},\]
uniformly over $h\in [0,1]$ and $g\in \wC l \ka$, for every $l\leq 5$. 

Next, we consider derivatives of $Y^\ep$ in $\ep\in (0,1)$. By Corollary \ref{cor:SDEDerInitPert},
	\begin{align*}
		\der_\ep Y^{\ep, s}_t = & \int_s^t \der_\ep(\ep^2 b_u(Y_u^\ep))\,du +  \int_s^t \der_\ep (\ep \si_u(Y_u^\ep))\,dW_u \\
		= & \int_s^t  2\ep b_u(Y_u^\ep) + \ep^2   \innp{\nabla b(Y_u^\ep)}{\der_\ep Y_u^\ep}\,dt +  \int_s^t \si_u(Y_u^\ep) + \ep \innp{\nabla \si_u(Y_u^\ep)}{\der_\ep Y_u^\ep}\,dW_u \\
		\der_\ep^2 Y^{\ep, s}_t = & \int_s^t 2 b_u(Y^\ep_u) + \cO(\ep)\,du + \int_s^t 2\innp{\nabla \si_u(Y_u^\ep)}{\der_\ep Y_u^\ep} + \cO(\ep)\,dW_u.
	\end{align*}
	Note that
\[(\der_\ep Y^\ep)|_{\ep = 0} = \int_s^\blnk \si_u(Y_u^0)\,dW_u\]
is a martingale, as $\si(Y^\ep) \in \Lpinf 2$ and so $\E\int_0^T |\si_u(Y_u^\ep)|^2\,du < \infty$. In particular, $\E[(\der_\ep Y^\ep)]|_{\ep = 0} = 0$ and further
	\[\E[\innp{\nabla g(Y^\ep)}{\der_\ep Y^\ep}]|_{\ep = 0} = \innp{\nabla g(Y^0)}{\E[\der_\ep Y^\ep]|_{\ep = 0}} = 0,\]
	for all $g\in \dC 1$.
	Let $l \in \set{0,1,2}$ and $g\in \wC {l+2} \ka$. By Taylor's theorem and Lemma \ref{lem:expDerCmmte}
	\begin{align}
		\label{eq:prtbExpan}
		\E g(Y^{\ep}) 	= & g(Y^{0}) + \ep \der_\ep(\E g(Y^{\ep}))|_{\ep = 0} + \frac12\ep^2 \der_\ep^2(\E g(Y^{\ep}))|_{\ep = \xi}\nonumber \\
		= & g(Y^0) + \ep \E[\innp{\nabla g(Y^\ep)}{\der_\ep Y^\ep}]|_{\ep = 0} + \frac12\ep^2 \E[\innp{\nabla^2 g(Y^\ep)}{(\der_\ep Y^\ep)^{\otimes 2}}]|_{\ep = \xi}\nonumber\\
		&+  \frac12\ep^2 \E[\innp{\nabla g(Y^\ep)}{\der_\ep^2 Y^\ep}]|_{\ep = \xi}\nonumber\\
		= & g(Y^0) + \frac12\ep^2 \E[\innp{\nabla^2 g(Y^\xi)}{(\der_\ep Y^\xi)^{\otimes 2}}+\innp{\nabla g(Y^\xi)}{\der_\ep^2 Y^\xi}],
	\end{align}
	for some $\xi \in (0,1)$.
Since $Y\in \wC 5 1$, we have $\innp{\nabla^2 g(Y^\xi)}{(\der_\ep Y^\xi)^{\otimes 2}},\innp{\nabla g(Y^\xi)}{\der_\ep^2 Y^\xi} \in \wC 2 {\ka + l + 2}([0,T]\times \R^d, \Lfininf)$, with
\begin{align*}
\nrm{\innp{\nabla g(Y^\xi)}{\der_\ep^2 Y^\xi}}{\wC l {\ka + l + 2},1} \lesssim \nrm{\innp{\nabla g(Y^\xi)}{\der_\ep^2 Y^\xi}}{\wC l {\ka + l + 1},1} \lesssim &\nrm{\nabla g(Y^\xi)}{\wC l {\ka + l},2}\nrm{\der_\ep^2 Y^\xi}{\wC l 1,2} \\
\lesssim & \nrm{\nabla g}{\wC l {\ka}},\\
\nrm{\innp{\nabla^2 g(Y^\xi)}{(\der_\ep Y^\xi)^{\otimes 2}}}{\wC l {\ka + l + 2},1} \lesssim &\nrm{\nabla^2 g(Y^\xi)}{\wC l {\ka + l},2}  \nrm{(\der_\ep Y^\xi)^{\otimes 2}}{\wC l 2,2} \\
\lesssim & \nrm{\nabla^2 g(Y^\xi)}{\wC l {\ka + l},2} \nrm{\der_\ep Y^\xi}{\wC l 1,2}^2\\
\lesssim & \nrm{\nabla^2 g}{\wC l{\ka}}, \unfovr g\in \wC{l+2} {\ka},
\end{align*}	
by Lemma \ref{lem:wCstab} and \ref{lem:wHoelgRndFld}. 	 
	 Hence, by Lemma \ref{lem:expWHoelFld} and Equation \eqref{eq:prtbExpan}
	\[\nrm{v^{g,0} - v^{g,h}}{\wC l{\ka + l + 2}} = \nrm{g(Y^{0}) - \E g(Y^{\sqrt h})}{\wC l {\ka + l + 2}} \lesssim h \nrm{g}{\wC {l+2} \ka},\]
	uniformly over $g\in \wC{l+2} \ka$ and $h\in [0,1]$
\end{proof}


\begin{lem}
\label{lem:talayTubaroSumSME1}
Given $g\in \wC 3 \ka$ there exists a function $\xi^g: (\lrates)\times \N \to \wCZ{\ka + 6}(\R^d)$ such that
\[\E g(\chi_n^h) - \E g(X_{nh}^h) = h^2 \sum_{k=0}^{n-1} \E[\cF_{kh}[v^{g,h}](\chi_k^h)] + h^2 \xi^{g,h}_n, \quad h\in \lrates,\]
and
\[\nrm{\xi^{g,h}_n}{\wCZ{\ka +6}} \lesssim \nrm{g}{\wC 3 \ka},\]
uniformly over $g\in \wC 3 \ka, h\in \lrates$ and $n\in \set{0,\dots, T/h}$.
\end{lem}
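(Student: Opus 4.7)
The plan is to adapt the argument from Lemma \ref{lem:talayTubaroSumOME} essentially verbatim, replacing the transport equation \eqref{eq:KBEOME} by the Kolmogorov backward equation \eqref{eq:KBESME1} for the SME. The only genuine change is bookkeeping to absorb the extra drift $hb$ and diffusion $\sqrt{hD}$ into the $h^2$ term via the KBE.

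First I would fix $g \in \wC 3\ka$, $h \in \lrates$ and $n \in \set{0,\dots,T/h}$ and set $v := v^{g,h,nh}$. By Lemma \ref{lem:OMEvsSME1} (with $l=3$) we have $v \in \wC 3{\ka+3}([0,T]\times \R^d)$ uniformly in $h \in [0,1]$ with $\nrm{v}{\wC 3{\ka+3}} \lesssim \nrm{g}{\wC 3\ka}$. Next I would Taylor expand $v_{(k+1)h}(\chi_{k+1}^h) - v_{kh}(\chi_k^h)$ around $(kh,\chi_k^h)$ up to second order in the joint $(t,x)$-variable with increment $(h,\Delt\chi_k^h) = (h, h f_{kh}^h(\chi_k^h))$, exactly as in the proof of Lemma \ref{lem:talayTubaroSumOME}. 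Taking conditional expectation and using independence of $f_{kh}^h$ from $\chi_k^h$ together with $\E[(f_{kh}^h)^{\otimes 2}] = \Si_{kh} + \bar f_{kh}^{\otimes 2}$ yields
\begin{align*}
\E[v_{(k+1)h}(\chi_{k+1}^h) - v_{kh}(\chi_k^h)] = {}& h\,\E[(\der_t v_{kh} + \innp{\bar f_{kh}}{\nabla v_{kh}})(\chi_k^h)] \\
&+ \tfrac12 h^2\E\!\left[(\der_t^2 v_{kh} + 2\innp{\bar f_{kh}}{\der_t\nabla v_{kh}} + \innp{\Si_{kh}+\bar f_{kh}^{\otimes 2}}{\nabla^2 v_{kh}})(\chi_k^h)\right] \\
&+ \E\zeta_{k,n}^h,
\end{align*}
where $\zeta_{k,n}^h$ is the standard Taylor remainder of order $3$.

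The key step — and the only place the argument diverges from Lemma \ref{lem:talayTubaroSumOME} — is to apply \eqref{eq:KBESME1} to rewrite the $h$-order term:
\[
\der_t v_{kh} + \innp{\nabla v_{kh}}{\bar f_{kh}} = -h\innp{\nabla v_{kh}}{b_{kh}} - \tfrac12 h\innp{\nabla^2 v_{kh}}{D_{kh}}.
\]
Substituting this absorbs the leading term into the $h^2$ order and, after collecting terms, produces exactly $h^2 \cF_{kh}[v^{g,h}](\chi_k^h)$ with $\cF$ as defined in \eqref{eq:phiN}. Telescoping over $k=0,\dots,n-1$ then gives
\[
\E g(\chi_n^h) - \E g(X_{nh}^h) = h^2\sum_{k=0}^{n-1} \E[\cF_{kh}[v^{g,h}](\chi_k^h)] + \sum_{k=0}^{n-1}\E \zeta_{k,n}^h.
\]

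Finally, I would set $\xi^{g,h}_n := h^{-2}\sum_{k=0}^{n-1} \E\zeta_{k,n}^h$ and estimate exactly as in Lemma \ref{lem:talayTubaroSumOME}: the increments satisfy $h^{-(3-l)}(\Delt\chi_k^h)^{\otimes(3-l)} \in \wCZ{3-l}(\R^d,\Lfin) \subseteq \wCZ 3(\R^d,\Lfin)$ uniformly in $h$ and $k$ by Lemma \ref{lem:linGrowthEstSGD}(ii), while the mixed derivatives $\der_t^l\der_\be v$ evaluated at $\chi_k^h + \thet\Delt\chi_k^h$ are controlled by $\nrm{v}{\wC 3{\ka+3}} \lesssim \nrm{g}{\wC 3\ka}$ times $1+|\chi_k^h+\thet\Delt\chi_k^h|^{\ka+3}$, which lies in $\wCZ{\ka+3}(\R^d,\Lfin)$ uniformly. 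Multiplying these via Lemma \ref{lem:plyGrwthStabRndSclrProd} yields $\nrm{\E\zeta_{k,n}^h}{\wCZ{\ka+6}} \lesssim h^3 \nrm{g}{\wC 3\ka}$ uniformly, and summing $T/h$ terms gives the asserted $\nrm{\xi^{g,h}_n}{\wCZ{\ka+6}}\lesssim \nrm{g}{\wC 3\ka}$. I do not anticipate a serious obstacle: the argument is a routine transcription of Lemma \ref{lem:talayTubaroSumOME} once one observes that the modified KBE \eqref{eq:KBESME1} exactly cancels the first-order term and leaves $\cF$ in the form \eqref{eq:phiN}, and that Lemma \ref{lem:OMEvsSME1} provides the required regularity of $v^{g,h}$ uniformly in $h$.
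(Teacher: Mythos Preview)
Your proposal is correct and matches the paper's approach essentially verbatim: Taylor expand $v^{g,h,nh}$ as in Lemma \ref{lem:talayTubaroSumOME}, use the Kolmogorov backward equation \eqref{eq:KBESME1} to convert the first-order term $h(\der_t v + \innp{\bar f}{\nabla v})$ into $-h^2(\innp{\nabla v}{b} + \tfrac12\innp{\nabla^2 v}{D})$, collect to obtain $h^2\cF_{kh}[v]$, and estimate the remainder identically. The paper organizes the same cancellation by writing $A_1^h = 0$ after artificially inserting the $hb$ and $\tfrac12 hD$ terms and then subtracting them back in $A_2^h$, but the algebra is the same as yours.
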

\begin{proof}
	We follow the proof of Lemma \ref{lem:talayTubaroSumOME}. Note that for $g\in \wC 3 \ka(\R^d)$ we have $v^{g,h,s}\in \wC 3 \ka([0,T]\times \R^d)$, uniformly in $h\in [0,1]$ and $s\in [0,T]$, with
	\begin{equation}
\label{eq:vghEst}
\sup_{h\in [0,1]} \sup_{s\in [0,T]} \nrm{v^{g,h,s}}{\wC 3 {\ka + 3}([0,T]\times \R^d)} \lesssim \nrm{g}{\wC 3 \ka(\R^d)}, \unfovr g\in \wC 3 \ka,
	\end{equation}
	by Lemma \ref{lem:OMEvsSME1}.
	 A Taylor expansion of $v^h := v^{g,h,T}$ yields
	\[\E v_{(k+1)h}^h(\chi_{k+1}^h) - \E v_{kh}^h(\chi_k^h) = hA_1^h + h^2 A_2^h + \E \zeta^h_{k,n},\]
	as before, except with
	\begin{align*}
		A_1^h = &\E[(\der_t v_{kh}^h + \innp{\bar f_{kh}}{\nabla v_{kh}^h } + h\innp{b_{kh}}{\nabla v_{kh}^h} +  \frac 1 2 h \innp{D_{kh}}{\nabla^2v_{kh}^h})(\chi_k^h)] = 0,
	\end{align*}
	by Equation \eqref{eq:KBESME1}. 
To compensate for the $h\innp{b_{kh}}{\nabla v_{kh}^h} + \frac 1 2 h \innp{D_t}{\nabla^2 v_{kh}^h}$ term we have
\begin{align*}
A_2^h = & \frac12\E[(\der_t^2 v_{kh}^h  + 2 \innp{\bar f_{kh}}{\der_t \nabla v_{kh}^h } - 2 \innp{b_{kh}}{\nabla v_{kh}^h} + \innp{\Si_{kh} - D_{kh} + \bar f_{kh}^{\otimes 2}}{\nabla^2 v_{kh}^h })(\chi_k^h)]\\
 = &\E[\cF_{kh}[v^h](\chi_k^h)].
\end{align*}	
Using Inequality \eqref{eq:vghEst} we deduce that $\xi^{g,h}_n := h^{-2}\sum_{k=0}^{n-1} \E \zeta_{k,n}^h$ satisfies 
\[\nrm{\xi_n^{g,h}}{\wCZ{\ka + 6}}\lesssim \nrm{g}{\wC 3 \ka},\]
uniformly over $g\in \wC 3 \ka, h \in \lrates$ and $n\in \set{0,\dots, T/h}$.
\end{proof}
\noindent 
We do an initial approximation just as in the ODE case.
\begin{lemma}
\label{lem:talayTubaroSME1withH}
For all $g\in \wC 5 \ka(\R^d)$ there exists a function $\rho^{g} : \lrates\to \wCZ{\ka +13}(\R^d), h \mapsto \rho^{g,h}$ such that
\begin{align}
	\E g(\chi_{T/h}^h) -  \E g(X_T^{h})= h \int_0^T \E[\cF_t[v^{g,h}](X_t^h)]\,dt + h^2 \rho^{g,h},
\end{align}
and
\[\nrm{\rho^{g,h}}{\wCZ{\ka+13}} \lesssim \nrm{g}{\wC 5 \ka}\]
uniformly over $g\in \wC 5 \ka$ and $h\in \lrates$.
\end{lemma}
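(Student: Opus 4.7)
The proof follows the same strategy as Theorem \ref{thm:1stOrderOME}: start with the discrete-sum expansion from Lemma \ref{lem:talayTubaroSumSME1}, and then convert the sum into a Riemann integral, but now along the SME trajectory $X^h$ rather than the ODE trajectory $X^0$. The key adaptation is that the Riemann-sum error is controlled via Ito's formula applied to $\ph_t(X^h_t)$ (where $\ph:=\cF v^{g,h}$) instead of the plain chain rule on $X^0$.

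First I would set up the regularity of $\ph$. By Lemma \ref{lem:OMEvsSME1}, $v^{g,h}\in\wC 5{\ka+5}([0,T]\times\R^d)$ uniformly in $h\in\lrates$, with norm $\lesssim\nrm{g}{\wC 5\ka}$. Combined with the estimate for $\cF$ stated just after Equation \eqref{eq:phiN}, this yields $\ph\in\wC 3{\ka+7}$ uniformly in $h$, with the same norm bound. Applying Lemma \ref{lem:talayTubaroSumSME1} at $n=T/h$ then produces
\[\E g(\chi_{T/h}^h)-\E g(X_T^h)=h\sum_{k=0}^{T/h-1}h\E\ph_{kh}(\chi_k^h)+h^2\xi^{g,h}_{T/h},\]
with $\xi^{g,h}_{T/h}\in\wCZ{\ka+6}\subseteq\wCZ{\ka+13}$ already bounded as required. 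Writing the discrepancy between the sum and $\int_0^T\E\ph_t(X_t^h)\,dt$ as $(I)+(II)$, where
\[(I):=h\sum_{k=0}^{T/h-1}\bigl(\E\ph_{kh}(\chi_k^h)-\E\ph_{kh}(X_{kh}^h)\bigr),\quad (II):=h\sum_{k=0}^{T/h-1}\E\ph_{kh}(X_{kh}^h)-\int_0^T\E\ph_t(X_t^h)\,dt,\]
it suffices to show that each of $(I),(II)$ is $O(h)\nrm{g}{\wC 5\ka}$ in $\wCZ{\ka+13}$.

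For $(I)$, I would re-invoke Lemma \ref{lem:talayTubaroSumSME1} with the time-frozen test function $\ph_{kh}$, treating its dependence on $h$ as inconsequential since the bounds there are uniform over $\wC 3{\ka+7}$. Per $k$ this gives
\[\E\ph_{kh}(\chi_k^h)-\E\ph_{kh}(X_{kh}^h)=h^2\sum_{j=0}^{k-1}\E\cF_{jh}[v^{\ph_{kh},h}](\chi_j^h)+h^2\xi^{\ph_{kh},h}_k.\]
Chaining Lemma \ref{lem:OMEvsSME1}, the $\wC{l+2}\ka\to\wC l{\ka+2}$ estimate for $\cF$, and Lemma \ref{lem:LGequiv} (applied via the linear growth of $\chi_j^h$ from Lemma \ref{lem:linGrowthEstSGD}), each summand sits in $\wCZ{\ka+11}$ and $\xi^{\ph_{kh},h}_k\in\wCZ{\ka+13}$, both with norms $\lesssim\nrm{g}{\wC 5\ka}$. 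Bounding $k\leq T/h$ yields per $k$ an $O(h)\nrm{g}{\wC 5\ka}$-bound in $\wCZ{\ka+13}$, and the outer prefactor $h\sum_{k=0}^{T/h-1}$ preserves this rate, producing $(I)=O(h)\nrm{g}{\wC 5\ka}$ in $\wCZ{\ka+13}$.

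For $(II)$, I would apply Ito's formula to $\ph_t(X_t^h)$ and observe that the martingale contribution vanishes in expectation, so
\[\der_t\E\ph_t(X_t^h)=\E[\cL^h_t\ph_t(X_t^h)],\quad \cL^h_t u:=\der_t u+\innp{\bar f_t+hb_t}{\nabla u}+\tfrac12 h\innp{D_t}{\nabla^2 u}.\]
Since $\ph\in\wC 3{\ka+7}$, $\bar f,b\in\wC 5 1$, and $D\in\wC 5 2$, Lemma \ref{lem:plyGrwthStab} gives $\cL^h\ph\in\wC 0{\ka+9}$ uniformly in $h$ with norm $\lesssim\nrm{g}{\wC 5\ka}$; composing with $X_t^h$ (whose linear growth in $x$ comes from Theorem \ref{thm:SDESol}) and taking expectations via Lemma \ref{lem:LGequiv} then yields a bound in $\wCZ{\ka+9}\subseteq\wCZ{\ka+13}$. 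The left Riemann-sum error is at most $\tfrac{hT}{2}\sup_{t\in[0,T]}|\der_t\E\ph_t(X_t^h)|$, so $(II)=O(h)\nrm{g}{\wC 5\ka}$ in $\wCZ{\ka+13}$. Setting $h^2\rho^{g,h}:=h\cdot(I)+h\cdot(II)+h^2\xi^{g,h}_{T/h}$ completes the argument. The only non-routine aspect is carefully tracking how the polynomial growth orders propagate through the nested applications of $\cF$, $v^{\blnk,h}$, and expectation-composition; the structural skeleton is identical to the ODE case of Theorem \ref{thm:1stOrderOME}, so the main obstacle really is the bookkeeping on growth exponents, not any new analytic idea.
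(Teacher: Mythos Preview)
Your proof is correct and follows essentially the same decomposition and strategy as the paper. The only point of departure is the treatment of $(II)$: the paper bounds $\sup_{t}|\der_t\E\ph_t^h(X_t^h)|$ by invoking Lemma~\ref{lem:wHoelgRndFld} (which packages the joint $(t,x)$-regularity of $\E\ph^h(X^h)$ directly from $\ph^h\in\wC 1{\ka+5}$ and $X^h\in\wC 5 1$), whereas you compute the time derivative explicitly via It\^o's formula and bound $\E[\cL^h_t\ph_t(X_t^h)]$. Both routes are valid and yield the same polynomial-growth order; yours is slightly more hands-on, the paper's leans on the weighted-H\"older machinery already built in Section~\ref{sec:prelimSMEs}. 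The bookkeeping of growth exponents you outline matches the paper's (your $\wCZ{\ka+11}$ for the inner summand in $(I)$ is one degree looser than the paper's $\wCZ{\ka+10}$, but both embed in $\wCZ{\ka+13}$).
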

\begin{proof}
The proof is analogous to the proof of Theorem \ref{thm:1stOrderOME}. Let $g\in \wC 5 \ka$ and define $\ph^h = \cF v^h, h \in [0,1]$. Lemma \ref{lem:talayTubaroSumSME1} implies 
\begin{align*}
\frac1h(\E g(\chi_{T/h}^h) - \E g(X_T^h)) =& h \sum_{n=0}^{\frac Th-1} \E\ph_{nh}^h(\chi_n^h) + h\xi_{T/h}^{g,h}\\ 
 = & \int_0^T \E \ph_t^h(X_t^h)\,dt +  h\sum_{n=0}^{\frac T h -1} \E \ph_{nh}^h(\chi_n^h) - \E \ph_{nh}^h(X_{nh}^h) \\
 &+ \sum_{n=0}^{\frac T h-1} h\E \ph_{nh}^h(X_{nh}^h) - \int_0^T \E \ph_t^h(X_t^h)\,dt + h\xi_{T/h}^{g,h},
\end{align*}
with 
\[\nrm{\xi_{T/h}^{g,h}}{\wCZ{\ka+6}} \lesssim \nrm{g}{\wC 3 \ka}, \unfovr g\in \wC 3 \ka, h \in \lrates.\]
Note that $\ph^h \in \wC l {\ka + l+4}([0,T]\times \R^d)$, uniformly in $h\in [0,1]$, with
\begin{equation}
\label{eq:phiBnd}
\nrm{\ph^h}{\wC l {\ka + l+4}([0,T]\times \R^d)} \lesssim \nrm{v^h}{\wC{l+2} {\ka+l+2}([0,T]\times \R^d)} \lesssim \nrm{g}{\wC {l+2} \ka(\R^d)},
\end{equation}
uniformly over $g\in \wC {l+2} \ka$ and $h\in [0,1]$, for all $l \leq 3$.
Thus, we may use Lemma \ref{lem:talayTubaroSumSME1} again to estimate
\begin{align*}
	\sum_{n=0}^{\frac T h -1} |\E \ph_{nh}^h(\chi_n^h) - \E\ph_{nh}^h(X_{nh}^h)|\leq& h^2\sum_{n=0}^{\frac T h - 1} \sum_{k=0}^{n-1} |\E[\cF_{kh}[v^{\ph_{nh}^h,h}](\chi_k^h)] + \hat\xi^{g,h}_n|, \\
\end{align*}
where by Inequality \eqref{eq:phiBnd}
\[\nrm{\hat \xi^{g,h}_n}{\wCZ{\ka + 13}} \lesssim \nrm{\ph^h_{nh}}{\wC 3 {\ka + 7}(\R^d)} \lesssim \nrm{g}{\wC 5 \ka},\]
uniformly over $g\in \wC 5 \ka, h\in \lrates$ and $n\in \set{0,\dots, T/h}$.
Since by Lemma \ref{lem:LGequiv}, Lemma \ref{lem:OMEvsSME1} and Inequality \eqref{eq:phiBnd},
\[\nrm{\E[\cF_{kh}[v^{\ph_{nh}^h,h}](\chi_k^h)]}{\wCZ {\ka + 10}} \lesssim \nrm{\cF_{kh}[v^{\ph_{nh}^h,h}]}{\wCZ {\ka + 10}} \lesssim \nrm{v^{\ph_{nh}^h,h}}{\wC 2 {\ka + 8}} \lesssim \nrm{\ph_{nh}^h}{\wC 2 {\ka + 6}(\R^d)} \lesssim \nrm{g}{\wC 4 \ka},\]
uniformly over $h\in \lrates, k\leq n \in \set{0,\dots, T/h}, g\in \wC 4 \ka$, we conclude
\[h \nrm{\sum_{n=0}^{\frac T h-1} \E\ph_{nh}^h(\chi_n^h) - \E \ph_{nh}^h(X_{nh}^h)}{\wCZ {\ka + 13}} \lesssim h \nrm{g}{\wC 4 {\ka}},\]
uniformly over $g\in \wC 4 \ka$ and $h\in \lrates$.
Further, approximating the integral $\int \ph\,dt$ by a left Riemann sum yields
\begin{align*}
	\left|\sum_{n=0}^{\frac T h-1} h\E \ph_{nh}^h(X_{nh}^h) -\int_0^T \E \ph_t^h(X_t^h)\,dt\right|\leq& \frac12 h T \sup_{t\in [0,T]} |\der_t \E \ph_t^h(X_t^h)|,
\end{align*}
Thus, by Lemma \ref{lem:wHoelgRndFld} and Inequality \eqref{eq:phiBnd}
\[\nrm{\sum_{n=0}^{\frac T h-1} h\E \ph_{nh}^h(X_{nh}^h) -\int_0^T \E \ph_t^h(X_t^h)\,dt}{\wCZ {\ka + 6}} \lesssim h \nrm{\E \ph^h(X^h)}{\wCZ {\ka+6}} \lesssim \nrm{\ph^h}{\wC 1 {\ka+5}} \lesssim h \nrm{g}{\wC 3 \ka},\]
uniformly over $g\in \wC 3 \ka$ and $h\in \lrates$.
Putting all estimates together yields
\[\E g(\chi_{T/h}^h) - \E g(X_T^h) = h \int_0^T \E \ph_t^h(X_t^h)\,dt+ h^2\rho^{g,h},\]
with 
\[\nrm{\rho^{g,h}}{\wCZ {\ka + 13}} \lesssim \nrm{g}{\wC 5 \ka},\]
uniformly over $g\in \wC 5 \ka$ and $h\in \lrates$.
\end{proof}

\begin{proof}[Proof of Theorem \ref{thm:firstOrderSME}]
Firstly, Lemma \ref{lem:OMEvsSME1} implies
\[\nrm{v_t^{h} - v_t^{0}}{\wC 2 {\ka + 2}(\R^d)} \lesssim h \nrm{g}{\wC 4 \ka},\]
and also
\begin{align*}
\nrm{\cF_t[v^h](X_t^0) - \E[\cF_t[v^h](X_t^h)]}{\wCZ{\ka + 8}(\R^d)} = & \nrm{v_t^{\cF_t[v^h],0} - v_t^{\cF_t[v^h],h}}{\wCZ {\ka + 8}} \\
\lesssim & h \nrm{ \cF_t[v^h]}{\wC 2 {\ka + 6}} \\
\lesssim & h \nrm{v^h}{\wC 4 {\ka+ 4}} \\
\lesssim & h\nrm{g}{\wC 4 \ka},
\end{align*}
uniformly over $g\in \wC 4 \ka, h\in \lrates$ and $t\in [0,T]$.
Thus, by Lemma \ref{lem:plyGrwthStab} (v)
\begin{align*}
\nrm{\E[\cF_t[v^h](X_t^{h})] - \cF_t[v^0](X_t^{0})}{\wCZ {\ka + 8}(\R^d)} \leq & \nrm{\E[\cF_t[v^h](X_t^{h})] - \cF_t[v^h](X_t^{0})}{\wCZ {\ka + 8}}\\
& + \nrm{\cF_t[v^h- v^0](X_t^{0})}{\wCZ {\ka + 8}} \\
\lesssim & h (\nrm{g}{\wC 4 \ka} + \nrm{\cF_t[v^h- v^0]}{\wCZ {\ka + 8}})\\
 \lesssim & h (\nrm{g}{\wC 4 \ka} + \nrm{v_t^h - v_t^0}{\wC 2 {\ka + 6}})\\
 \lesssim &h \nrm{g}{\wC 4 \ka},
\end{align*}
uniformly over $g\in \wC 4 \ka, h\in [0,1]$ and $t\in [0,T]$.
Hence,
\[\nrm{\int_0^T \cF_t[v^0](X_t^0)\,dt - \int_0^T \E[\cF_t[v^h](X_t^h)]\,dt}{\wCZ {\ka + 8}}\lesssim h \nrm{g}{\wC 4 \ka},\]
uniformly over $g\in \wC 4 \ka$ and $h\in [0,1]$.
Consequently, Theorem \ref{thm:firstOrderSME} follows from Lemma \ref{lem:talayTubaroSME1withH}.
\end{proof}

\chapter{SMEs for optimization and learning}
\label{chap:smeopt}
So far we have discussed (stochastic) modified equations as continuous-time models for numerical methods of differential equations. In this brief chapter, we shift our focus towards stochastic gradient optimization, statistical learning, and their relation to \tSME s.
This chapter is adapted from and expands on \citet[Section 3.1, 6.1 and 6.2]{perko2024compare}.

\section{Stochastic optimization and SGD}
Let $d \in \N$. Given a function $\cR : \R^d \to [0,\infty) \in \dC 1$, which we call the \emph{objective} or \emph{risk}, we consider the following optimization problem
\[\min_{\thet\in \R^d} \cR(\thet).\]
To solve this problem, consider the following ODE
\begin{equation}
\label{eq:GFsimple}
\dot X_t^0 = - \nabla \cR(X_t^0).
\end{equation}
Under certain assumptions $X^0$ exists and it converges to a critical point $\thet^*$ of $\cR$, that is $\nabla\cR(\thet^*) = 0$. We also call $X^0$ \emph{gradient flow}.
In some situations $\thet^*$ is indeed the global minimum of $\cR$, for example if $\cR$ is strongly convex with Lipschitz gradient.
In any case, if we wanted to find $\thet^*$, one possible avenue would be applying the Euler method with step size $h\in (0,1)$ to gradient flow:
\[\chi_{n+1} = \chi_n - h \nabla \cR(\chi_n).\]
This resulting algorithm is called \emph{gradient descent}. Note that our goal here has shifted compared to the numerics of differential equations.
It is not necessary for gradient descent to be very close to gradient flow. The only important thing for us is that it (approximately) minimizes $\cR$. Our main interest is now in gradient descent and we view gradient flow as continuous-time approximation to it (instead of thinking of approximating the other way around). 

Consider now a family of functions 
\[R : \R^d \times \cZ \to [0,\infty), (\thet, z) \mapsto R_z(\thet)\]
and a probability measure $\nu$ on a measurable space $\cZ$. We may call $R_z$ the risk at the point $z\in \cZ$. Suppose $\cR(\thet) = \E_{z\sim \nu}[R_z(\thet)]$. Under mild assumptions we have
\[\nabla \cR(\thet) = \E_{z\sim \nu}[\nabla R_z(\thet)], \quad \thet \in \R^d.\]

In practice, it may be very costly or even impossible to compute the full gradient $\nabla \cR$.
In this case, we can instead apply a noisy Euler method:
\begin{equation}
\label{eq:SGDsimple}
\chi_{n+1} = \chi_n - h \nabla R_{\bz(n)}(\chi_n)
\end{equation}
where $\bz(n) \in \cZ$ with $\bz(n) \sim \nu$ for all $n\in \N_0$.
This resulting algorithm is called \emph{stochastic gradient descent}.

We think of $(\bz(n))_{n\in \N}$ as a sequence of data points.
We highlight two conceptually different scenarios. 

\begin{enumerate}[(a)]
\item Suppose $\nu$ is an empirical measure, that is there exist $z_1',\dots, z_N'$ such that $\nu$ is given by the average of the Dirac measures
\[\nu = \frac 1N \sum_{n=1}^N \idK_{\blnk}(z_n').\]
Then we can think of $\nu$ as a finite data set, or \emph{sample} of size $N$, and the SGD method uses this sample to minimize the \emph{empirical risk} or \emph{training error}
\[\cR(\thet) = \frac1N \sum_{n=1}^N R_{z_n'}(\thet), \quad \thet \in \R^d.\]
If $(\bz(n))_{n\in \N_0}$ is \tiid\ with $\bz(0)\sim \nu$, then we call \eqref{eq:SGDsimple} SGD \emph{with replacement}. That is because we can think of $(\bz(n))_{n\in \N}$ as sampling uniformly from an urn containing $z_1',\dots, z_N'$ and after sampling the urn is refilled.
\item Suppose $\nu$ is a non-atomic measure. Then we can think of $\nu$ as a \emph{population}, that is the true real-world process by which data $z \sim \nu$ is generated. In this case the SGD method is used to minimize the \emph{population risk} $\cR$ (which is usually measured by the \emph{test error}).
If $(\bz(n))_{n\in \N_0}$ is \tiid\, then we call \eqref{eq:SGDsimple} \emph{one-pass} SGD. We sample from $\nu$ and every sampled data point is only passed over once. In this setting we essentially assume that we are given an infinite sequence of distinct data points (which is true almost surely since $\nu$ is non-atomic).
\end{enumerate}

The goal of learning is ultimately to (approximately) minimize the population risk. Empirical risk minimization (ERM) is a proxy task that we can perform when we only have a finite amount of data. However, the result can in some cases deviate greatly from population risk minimization (for example via \enquote{overfitting} in neural networks). 
Instead of focusing abstractly on ERM, we focus our analysis on the actual optimization algorithm SGD.

\section{Running examples}
\subsection{Mini-batch SGD with learning rate schedule}
Let $T > 0$.
Consider a risk minimization problem $(R : \R^d \times \cZ \to [0,\infty), \nu)$ on a measurable space $\cZ$. Fix a batch size $B \in \N$ and an \tiid\ sequence $(\bz(n))_{n\in \N}$ in $\cZ$ with $\bz(n) \sim \nu$. We consider \emph{mini-batch SGD} with \emph{batch size} $B$ and \emph{learning rate schedule} $u : [0,T]\to [0,1]$, given by
\begin{equation}
	\label{eq:SGDminibtchU}
	\chi_{n+1}^h = \chi_n^h - u_{nh}\frac{h}{B} \sum_{k=0}^{B-1} \nabla R_{\bz({k + Bn})} (\chi_n^h), \quad h \in(0,1), n \in \set{0,\dots, \floor{T/h}}
\end{equation}
The learning rate in the $n$-th step is $hu_{nh}$, and $h$ is interpreted as the \emph{maximal learning rate}.
We want to prove that under certain assumptions Equation \eqref{eq:SGDminibtchU} is a one-step method in the sense of Chapter \ref{chap:mdfdeq} satisfying Assumption \assref{assum:H}.

Let $\cZ \subseteq \R^m$ be bounded by a constant $C$ and $\bz$ be a $\cZ$-valued random variable.

\begin{lem}
\label{lem:randRtoR}
Let $l\in \N$ and $g \in \dC l(\R^d, L^\infty(\cZ,\R^k))$ with $\nabla g \in \Lip^l(\R^d, L^\infty(\cZ,\R^{d\times k}))$. Then $g_{\bz}\in \dC l(\R^d, L^\infty(\Om, \R^k))$ with $\nabla g_{\bz}\in \Lip^l(\R^d,L^\infty(\Om,\R^{d\times k}))$. In particular, $\E g_{\bz} \in \dC l(\R^d, \R^k)$ with $\nabla \E g_{\bz} \in \Lip^l(\R^d, \R^{d \times k})$ and
\[\der^\al \E g_{\bz} = \E \der^\al g_{\bz}, \quad |\al|\leq l.\]
\end{lem}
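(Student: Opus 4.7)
The plan is to factor the random map $x \mapsto g_{\bz}(x)$ through a bounded linear evaluation operator and then invoke Lemma \ref{lem:expDerCmmte} to interchange derivative and expectation. Define
\[T_{\bz} : L^\infty(\cZ, \R^k) \to L^\infty(\Om, \R^k), \quad T_{\bz}f := f\circ \bz,\]
with the analogous operator $T_{\bz}$ on each tensor-valued $L^\infty(\cZ, \R^{\Pi d})$ that arises as codomain of a higher derivative $\der^\al g$. Since $\bz$ is measurable with image in the bounded set $\cZ$ and $f$ is bounded, each such $T_{\bz}$ is a well-defined linear contraction, $\nrm{T_{\bz} f}{L^\infty(\Om)} \leq \nrm{f}{L^\infty(\cZ)}$.

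Continuous linear maps between \tFre{} spaces commute with the derivative $D$, so a straightforward induction on $l$ shows that $g_{\bz} = T_{\bz}\circ g$ lies in $\dC l(\R^d, L^\infty(\Om, \R^k))$ with
\[\der^\al g_{\bz} = T_{\bz}(\der^\al g) = (\der^\al g)_{\bz}, \quad |\al|\leq l.\]
The contraction property then transfers the Lipschitz bounds: for each multi-index $\al$ with $1\leq |\al|\leq l$ and all $x_1, x_2 \in \R^d$,
\[\nrm{\der^\al g_{\bz}(x_1) - \der^\al g_{\bz}(x_2)}{L^\infty(\Om)} \leq \nrm{\der^\al g(x_1) - \der^\al g(x_2)}{L^\infty(\cZ)} \leq \nrm{\nabla g}{\Lip^l}|x_1 - x_2|,\]
yielding $\nabla g_{\bz} \in \Lip^l(\R^d, L^\infty(\Om, \R^{d\times k}))$.

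For the ``in particular'' clause, the continuous inclusion $L^\infty(\Om, \R^k) \hookrightarrow \Lfin(\Om, \R^k)$ gives $g_{\bz}\in \dC l(\R^d, \Lfin(\Om, \R^k))$, so Lemma \ref{lem:expDerCmmte} applies and yields $\E g_{\bz}\in \dC l(\R^d, \R^k)$ with $\der^\al \E g_{\bz} = \E \der^\al g_{\bz}$ for all $|\al|\leq l$. Jensen's inequality then descends the Lipschitz bounds on $\der^\al g_{\bz}$ through the expectation, giving $\nabla \E g_{\bz}\in \Lip^l(\R^d, \R^{d\times k})$. The only non-routine point is checking that continuous linear maps commute with $D$ in the sense of the paper's $\dC l$-definition; this is immediate from linearity and continuity of $T_{\bz}$ via the identity $D(T_{\bz}\circ g)(x)(v) = T_{\bz}(Dg(x)(v))$, which propagates to all higher derivatives by induction.
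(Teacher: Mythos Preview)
Your proof is correct and follows essentially the same route as the paper: define the bounded linear evaluation operator $T_{\bz}$ (the paper calls it $A$), use that it commutes with derivatives to obtain $\der^\al g_{\bz} = (\der^\al g)_{\bz}$, transfer the Lipschitz bounds via the contraction property, and then invoke Lemma~\ref{lem:expDerCmmte} for the expectation part. The only cosmetic difference is that the paper cites Lemma~\ref{lem:expWHoelFld} for the final Lipschitz bound on $\nabla\E g_{\bz}$ where you invoke Jensen's inequality directly; both are valid.
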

\begin{proof}
Consider the linear operator
\[A : L^\infty(\cZ, \R^k) \to L^\infty(\Om, \R^k)\]
given by
\[(A f)(\om) = f(\bz(\om)), \quad f\in L^\infty(\cZ, \R^k), \om \in \Om.\]
Indeed, $A$ is well-defined, since $f = \tilde f$, a.e.\ on $\cZ$ implies $f(\bz) = \tilde f(\bz)$ almost surely.
Linearity is obvious. Moreover,
\[\nrm{ A f}{\infty} \leq \nrm{f}{\infty},\quad f\in L^\infty(\cZ, \R^k),\]
so $A$ is bounded. In particular, $A \in \dC l( L^\infty(\cZ, \R^k),L^\infty(\Om, \R^k))$ in the sense of \tFre{} differentiability. Hence, $g_{\bz}= A\circ g \in \dC l(\R^d, L^\infty(\Om, \R^k)) \subseteq \dC l(\R^d, \Lfin(\Om, \R^k))$ with $\der^\al g_{\bz} = (\der^\al g)_{\bz}, |\al|\leq l$. Therefore,
\[\nrm{\der^\al g_{\bz}(x) - \der^\al g_{\bz}(y)}{L^\infty(\Om)} \leq \nrm{\der^\al g_{\blnk}(x) - \der^\al g_{\blnk}(y)}{L^{\infty}(\cZ)} \lesssim |x-y|,\]
uniformly over $x,y\in \R^d$ for all $|\al|\leq l$, i.e. $\nabla g_{\bz}\in \Lip^l(\R^d,L^\infty(\Om,\R^{d\times k}))$.
By Lemma \ref{lem:expDerCmmte} and Lemma \ref{lem:expWHoelFld} we conclude $\E g_{\bz} \in \dC l(\R^d, \R^k)$ with $\nabla \E g_{\bz}\in \wHoel {l-1} 1 {b} = \Lip^l(\R^d,\R^{d\times k})$, and 
\[\der^\al \E g_{\bz} = \E \der^\al g_{\bz}, \quad |\al|\leq l.\]
\end{proof}

\begin{docu}
\begin{lem}
\label{lem:stdLip}
Let $X\in \Lip(\R^d, L^2(\Om, \R^d))$. Then $\sqrt{\Cov X} \in \Lip(\R^d, \R^{d\times d})$.
\end{lem}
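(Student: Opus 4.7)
My plan is to proceed in three steps: center the field, control the covariance Lipschitz-wise, and then pass to the matrix square root. First, set $Y(x) := X(x) - \E X(x)$. The expectation map $\E : L^2(\Om, \R^d) \to \R^d$ is a bounded linear contraction, so $Y \in \Lip(\R^d, L^2(\Om, \R^d))$ with at worst twice the Lipschitz constant of $X$, and $\Cov X(x) = \E[Y(x) Y(x)^\transp]$.

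Second, I would establish Lipschitz-type control of $\Cov X$ via the identity
\[\Cov X(x) - \Cov X(y) = \E[(Y(x) - Y(y)) Y(x)^\transp] + \E[Y(y)(Y(x) - Y(y))^\transp],\]
which together with Cauchy--Schwarz gives
\[\|\Cov X(x) - \Cov X(y)\|_F \leq \bigl(\|Y(x)\|_{L^2} + \|Y(y)\|_{L^2}\bigr) \|Y(x) - Y(y)\|_{L^2}.\]
Third, I would like to invoke a Lipschitz bound for the matrix square root on the positive semi-definite cone. Under a uniform lower bound $\Cov X(x) \geq \lambda I > 0$, the commutator identity $(\sqrt{A}+\sqrt{B})(\sqrt{A}-\sqrt{B}) + (\sqrt{A}-\sqrt{B})(\sqrt{A}+\sqrt{B}) = 2(A-B)$ yields $\|\sqrt{A} - \sqrt{B}\|_F \leq \lambda^{-1/2} \|A-B\|_F$, which combined with the previous step would close the argument.

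The main obstacle is that the lemma as stated is not actually true in full generality. The matrix square root $A \mapsto \sqrt{A}$ on positive semi-definite matrices is only Hölder-$\tfrac12$ without a positive lower bound: the elementary example $A(t) = tP$ for a rank-one projection $P$ is globally Lipschitz, while $\sqrt{A(t)} = \sqrt{t}\, P$ is not. Moreover, since $\|Y(x)\|_{L^2}$ can grow linearly in $|x|$ when $X$ is only globally Lipschitz, Step 2 yields merely locally Lipschitz control of $\Cov X$ with constant $O(|x|)$, not the global Lipschitz property required. A correct version of the lemma therefore seems to need both $X \in \wCZ{b}(\R^d, L^2)$ (boundedness, so that Step 2 gives true Lipschitz control of $\Cov X$) and a uniform positive-definiteness condition $\Cov X \geq \lambda I$ (so that Step 3 applies); under those additional hypotheses the three-step outline above goes through, and the Lipschitz constant is $O(L^2 \lambda^{-1/2} \|X\|_\infty)$.
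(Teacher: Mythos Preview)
Your claim that the lemma is false rests on the example $A(t)=tP$, but this matrix function is \emph{not} the covariance of any Lipschitz $L^2$-valued field: writing $P=e_1e_1^\transp$, one would need $\|X(t)_1-\E X(t)_1\|_{L^2}=\sqrt{t}$, and no Lipschitz map into $L^2$ can produce that. The scalar case already makes the point: $\sqrt{\Var X(x)}=\|X(x)-\E X(x)\|_{L^2}$, which is manifestly Lipschitz whenever $X$ is. Covariances of Lipschitz $L^2$-fields carry a built-in Lipschitz square-root factorization through $L^2$ that a generic Lipschitz PSD-valued map does not have; your Step~3 obstacle is an artifact of discarding that structure, and the linear growth in Step~2 is likewise irrelevant once one works at the square-root level directly.

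The paper's argument exploits exactly this. It introduces the standard-deviation map $\sigma(Y)=\|Y-\E Y\|_{L^2}$ on scalar random variables, notes that $\sigma$ is $2$-Lipschitz by the (reverse) triangle inequality, and uses the identity
\[
|\sqrt{\Sigma(x)}\,u|=\sqrt{u^\transp\Sigma(x)\,u}=\sigma(\langle X(x),u\rangle)
\]
for unit $u\in\R^d$. This immediately yields
\[
\bigl|\,|\sqrt{\Sigma(x)}\,u|-|\sqrt{\Sigma(y)}\,u|\,\bigr|\le 2\,\|X\|_{\Lip,2}\,|x-y|
\]
uniformly in $|u|=1$, with no positivity lower bound on $\Sigma$ and a global constant.

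One caveat worth flagging: the paper then passes from $\sup_{|u|=1}\bigl|\,|\sqrt{\Sigma(x)}\,u|-|\sqrt{\Sigma(y)}\,u|\,\bigr|$ directly to $\specnrm{\sqrt{\Sigma(x)}-\sqrt{\Sigma(y)}}$. These are different quantities (the former is a lower bound on the latter by the reverse triangle inequality, not an upper bound), so that last line is not justified as written. Nonetheless the mechanism you were missing---bypass $\Cov X$ entirely and work with $\|\langle X(\cdot),u\rangle\|_{L^2}$---is the right one, and it is what defeats your proposed counterexample.
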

\begin{proof}
Define the map
\[\si : L^2(\Om, \R) \to [0,\infty), Y\mapsto \nrm{Y - \E Y}{2} = \sqrt{\Var Y}.\]
Then
\[|\si(Y) - \si(Z)|\leq \nrm{Y - Z}{2} + |\E X - \E Y| \leq 2\nrm{Y - Z}{2},\quad Y,Z\in L^2(\Om, \R).\]
Let $x,u\in \R^d$. Then for $\Si := \Cov X$ we have
\[|\sqrt{\Si(x)}u|^2 = \innp{\Si(x)}{u^{\otimes 2}} = \Var(\innp{X(x)}{u}),\]
and so
\[|\sqrt{\Si(x)} u| = \si(\innp{X(x)}{u}).\]
Thus, for $x,y, u \in \R^d$ with $|u| = 1$ we have
\begin{align*}
||\sqrt{\Si(x)} u| - |\sqrt{\Si(y)} u|| = & |\si(\innp{X(x)}{u})-  \si(\innp{X(y)}{u})|\\
 \leq & 2 \nrm{\innp{u}{X(x)-X(y)}}{2} \\
 \leq & 2 \nrm{X(x) - X(y)}{2} \\
 \leq & 2 \nrm{X}{\Lip,2}|x-y|.
\end{align*}
Thus,
\[\specnrm{\sqrt{\Si(x)} - \sqrt{\Si(y)}} \leq 2 \nrm{X}{\Lip,2}|x-y|,\quad x,y\in \R^d.\]
\end{proof}
\end{docu}

\begin{lem}
\label{lem:btchSGDtoSME}
Let $T > 0$ and $B, l\in \N$.
Suppose $u : [0,T]\to [0,1] \in \Lip^l$ and $R \in \dC l(\R^d, L^\infty(\cZ,\R))$ with $\nabla R \in \Lip^l(\R^d, L^\infty(\cZ,\R^d))$. Define
\begin{align*}
F(h,t,\thet,z) := &  -\frac{u_t} B\sum_{k=0}^{B-1}\nabla R_{z(k)}(\thet), \quad h\in (0,1),t\in [0,T], \thet \in \R^d, z = (z_0,\dots, z_{B-1})\in \cZ^B,\\
Z_n := & (\bz(0 + Bn),\dots, \bz((B-1) + Bn)), \quad n\in \N,\\
f_t^h(\thet) := & F(h,t,\thet, Z_{\floor{t/h}}), \quad h\in (0,1),t\in [0,T], \thet \in \R^d.
\end{align*}
Then 
\[|F(h,t,\thet,z)| \lesssim 1 + |\thet|,\unfovr h\in (0,1),t\in [0,T],\thet \in \R^d, z\in \cZ^B,\]
and $f_t^h \in \Lip^l(\R^d, L^\infty(\Om, \R^d))$ uniformly in $t\in [0,T]$ and $h\in (0,1)$.
Further, by defining
\[\bar f_t(\thet) := -u_t \nabla \cR(\thet), \quad \Si_t(\thet) := \frac{u_t^2}{B}\E[(\nabla R_{\bz(0)}(\thet) - \nabla \cR(\thet))^{\otimes 2}],\]
for all $\thet\in \R^d$ and $t\in [0,T]$, we have $\bar f\in \Lip^l([0,T]\times \R^d, \R^d), \Si \in \wC l 2([0,T]\times \R^d, \R^{d\times d})$, and
\[\bar f_t(\thet) = \E[f_t^h(\thet)], \quad  \Si_t(\thet) = \Cov f_t^h(\thet),\]
for all $\thet\in \R^d, h \in (0,1)$ and $t\in [0,T]$.
\end{lem}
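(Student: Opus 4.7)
The plan is to peel off the lemma in four stages, with Lemma~\ref{lem:randRtoR} doing most of the heavy lifting.

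First I would establish the linear growth bound on $F$. Since $\nabla R\in\Lip(\R^d,L^\infty(\cZ,\R^d))$, for each $z\in\cZ$ we have $|\nabla R_z(\theta)|\leq |\nabla R_z(0)|+\nrm{\nabla R}{\Lip}|\theta|$, and $\nrm{\nabla R(0)}{L^\infty(\cZ,\R^d)}<\infty$. Averaging over $k=0,\dots,B-1$ and using $u_t\in[0,1]$ yields $|F(h,t,\theta,z)|\lesssim 1+|\theta|$ uniformly.

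Next, for the random-field regularity $f_t^h\in\Lip^l(\R^d,L^\infty(\Om,\R^d))$ uniformly in $(h,t)$, I would apply Lemma~\ref{lem:randRtoR} to $g=R$, giving $R_{\bz(n)}\in\dC l(\R^d,L^\infty(\Om,\R))$ with $\nabla R_{\bz(n)}\in\Lip^l(\R^d,L^\infty(\Om,\R^d))$, uniformly in $n$ because $\bz(n)\overset{d}{=}\bz(0)$. Then $f_t^h=-(u_t/B)\sum_{k=0}^{B-1}\nabla R_{\bz(k+B\lfloor t/h\rfloor)}$ is a finite sum of such functions scaled by the scalar $u_t\in[0,1]$; this operation preserves $\Lip^l$, and the Lipschitz moduli are uniform in $h,t$.

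Third, I would identify $\bar f$ and $\Si$ by direct computation. Lemma~\ref{lem:randRtoR} gives $\der^\al\E R_{\bz(0)}=\E\der^\al R_{\bz(0)}$ for $|\al|\leq l$, so $\nabla\cR(\theta)=\E\nabla R_{\bz(0)}(\theta)$ and thus $\E f_t^h(\theta)=-u_t\nabla\cR(\theta)=\bar f_t(\theta)$. For the covariance, the i.i.d.\ batch structure yields
\begin{align*}
\Cov f_t^h(\theta)
=\frac{u_t^2}{B^2}\sum_{k=0}^{B-1}\Cov\nabla R_{\bz(k+B\lfloor t/h\rfloor)}(\theta)
=\frac{u_t^2}{B}\E[(\nabla R_{\bz(0)}(\theta)-\nabla\cR(\theta))^{\otimes 2}],
\end{align*}
matching the stated $\Si_t(\theta)$.

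Finally, I would verify $\bar f\in\Lip^l([0,T]\times\R^d,\R^d)$ and $\Si\in\wC l 2([0,T]\times\R^d,\R^{d\times d})$. Since $\bar f$ and $\Si$ factor as $u_t$ times a function of $\theta$ only, I would compute mixed partials using the Leibniz rule (Proposition~\ref{prop:diffRules}) to reduce to: (i) $\der_t^j u\in L^\infty([0,T])$ for $j\leq l-1$ with $\der_t^{l-1}u$ Lipschitz, obtained from $u\in\Lip^l$; (ii) $\der_\theta^\al\nabla\cR\in\wCZ b$ with $\der_\theta^{l-1}\nabla\cR$ Lipschitz, obtained by applying Lemma~\ref{lem:randRtoR} to $g=R$ and passing to expectations; and (iii) $\der_\theta^\al\Si(\theta)\in\wCZ 2$ with appropriate Lipschitz bounds on the highest derivative. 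The key non-trivial input for (iii) is that $\E[\nabla R_{\bz(0)}^{\otimes 2}]$ lies in $\wC l 2$, which follows from the bound $|\nabla R_z(\theta)|\lesssim 1+|\theta|$ together with applying Lemma~\ref{lem:randRtoR} to $g$ replaced by each component of $R\otimes \nabla R$ (or rather, differentiating under the expectation using that $\nabla R_{\bz(0)}^{\otimes 2}$ has polynomial-growth derivatives). The hardest step is this last one: carefully combining the Leibniz rule across the time/space product structure with the polynomial-weight stability (Lemma~\ref{lem:plyGrwthStab}) to verify the $\wC l 2$-regularity of $\Si$ with the correct growth exponent, rather than something larger.
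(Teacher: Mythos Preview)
Your proposal is correct and follows essentially the same route as the paper: Lemma~\ref{lem:randRtoR} for the random-field regularity, the i.i.d.\ batch computation for $\bar f$ and $\Si$, and polynomial-growth stability for the final regularity claims.

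The only noteworthy difference is in the last stage. For $\Si\in\wC l 2$, the paper takes a shorter path: instead of expanding $\E[\nabla R_{\bz(0)}^{\otimes 2}]$ and tracking mixed partials via the Leibniz rule, it observes directly that $\nabla R_{\bz(0)}-\nabla\cR\in\Lip^l(\R^d,L^\infty(\Om,\R^d))\subseteq\wC{l-1}1$, applies the outer-product stability Lemma~\ref{lem:plyGrwthStab}(iv) once to get $(\nabla R_{\bz(0)}-\nabla\cR)^{\otimes 2}\in\wC{l-1}2$, and then passes to the expectation via Lemma~\ref{lem:expWHoelFld}. Your component-wise expansion of $\E[\nabla R_{\bz(0)}^{\otimes 2}]$ would also work but is more laborious; working with the centered quantity avoids having to separately control the cross terms and makes the growth exponent $2$ drop out immediately.
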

\begin{proof}
From $\nabla R \in \Lip(\R^d, L^\infty(\cZ, \R^d))$ we deduce the inequality for $F$.
By Lemma \ref{lem:randRtoR} we have $\nabla R_{\bz(n)} \in \Lip^l(\R^d, L^\infty(\Om, \R^d))$. 
Thus, also $f_t^h \in \Lip^l(\R^d, L^\infty(\Om, \R^d))$ uniformly in $t\in [0,T], h\in (0,1)$. Further, we have $\E \nabla R_{\bz(n)} \in \dC l(\R^d, \R^d)$ with 
\[\E[\nabla R_{\bz(n)}] =  \nabla \E[R_{\bz(n)}] = \nabla \cR\in \Lip^{l}(\R^d, \R^d), \quad n \in \N_0.\]
Thus, $\bar f\in \Lip^l([0,T]\times \R^d, \R^d)$. Hence, also $\nabla R_{\bz(0)} - \nabla \cR \in \Lip^l \subseteq \wC{l-1} 1$ and so $\Si \in \wC l 2([0,T]\times \R^d, \R^{d\times d})$ by Lemma \ref{lem:plyGrwthStab} (iv) and Lemma \ref{lem:expWHoelFld}.
Finally, we compute $\E f_t^h(\thet) = -u_t \nabla \cR(\thet) = \bar f_t(\thet)$ and
\begin{align*}
\Cov f_t^h(\thet) = &\Cov\left[ -\frac{u_t} B\sum_{k=0}^{B-1}\nabla R_{\bz(k+B\floor{t/h})}(\thet)\right] \\
= & \frac{u_t^2}{B^2} \sum_{k=0}^{B-1}\Cov[\nabla R_{\bz(k+B\floor{t/h})}(\thet)] \\
= &  \frac{u_t^2}{B}\E[(\nabla R_{\bz(0)}(\thet) - \nabla \cR(\thet))^{\otimes 2}]\\
= & \Si_t(\thet),
\end{align*}
for all $\thet\in \R^d, h \in (0,1)$ and $t\in [0,T]$.
\end{proof}

\subsection{Linear regression}
\label{sec:linreg}
\label{sec:comparison_statl}
In this subsection we introduce our default example: population risk minimization for linear regression.

Suppose we are given an $\R^d$-valued random variable $\bx$ and an $\R$-valued random variable $\bep$ defined on a probability space $(\Om, \cF, \P)$ such that $\bx$ and $\bep$ are independent, $\E \bep = 0, \si_\ep^2 := \E \bep^2 < \infty$, the covariance matrix $\ka$ of $\bx$ is positive definite, and $\bx$ has finite joint fourth moments
\[\E|\bx_i\bx_j\bx_k\bx_l| < \infty, \quad i,j,k,l \in \set{1,\dots, d}.\]
We define $\mu_x^4 \in \R^{d^{\times 4}}$ by
\[(\mu_x^4)_{i,j,k,l} = \E[\bx_i\bx_j\bx_k\bx_l],\quad i,j,k,l \in \set{1,\dots, d}.\]
For simplicity we make the stronger assumption that $|\bx| + |\bep| < C$ for some constant $C > 0$.
Let $\thet^* \in \R^d$. We define the $\R$-valued random variable $\by$ by
\[\by = \innp{\thet^*}{\bx} + \bep.\]
Denote the distribution of $(\bx, \by)$ by $\nu$. By assumption $(\bx,\by)$ take values in a bounded set $\cZ \subseteq \R^d\times \R$.

Note in the literature on linear regression the \emph{features} $\bx$ are often assumed to be deterministic. This is an appropriate assumption if the data is generated by a \emph{randomized controlled trial}. In these experiments, experimental conditions are controlled by the researchers, so the features can be treated as deterministic. In contrast, in machine learning and especially anything using the label \enquote{big data}, the data is generated by a process outside of our control. Thus, we are working with \emph{observational data}. In this case, it is more appropriate to assume $\bx$ to be random, which is why we do so here.

Let $\ell$ be the \emph{square loss}, given by $\ell(y,y') = \frac12(y-y')^2$. The goal is to fit the data drawn from $\nu$ using a linear predictor $\thet \mapsto \innp{\thet}{x}$.
Thus, for any data point $(x,y) \in \R^d\times \R$ we consider the squared risk
\[R_{x,y}(\thet) = \ell(\innp{\thet}{x}, y) = \frac12 (\innp{\thet}{x} - y)^2.\]
We stress that the bold letters $\bx, \by$ denote random variables, while $x,y$ represent realizations.
Note that $R_{x,y}(\thet)$ is smooth in $\thet$, with derivatives uniformly in $(x,y)\in \cZ$ (since $\cZ$ is bounded). That is, $R\in \dC \infty(\R^d, L^\infty(\cZ, \R))$.
Further,
\[\nabla R_{x,y}(\thet) = (\innp{\thet}{x} - y) x, \nabla^2 R_{x,y}(\thet) = x^{\otimes 2}, \quad (x,y)\in \cZ, \thet \in \R^d.\]
Thus, $\nabla R \in \Lip^\infty(\R^d, L^\infty(\cZ, \R^d))$. Hence, $R$ satisfies the assumptions of Lemma \ref{lem:btchSGDtoSME}.
The population risk is given by
\begin{align*}
\cR(\thet) 	= &\E[R_{\bx,\by}(\thet)] = \frac12\E[(\innp{\thet - \thet^*}{\bx} - \bep)^2]	= \frac12 \innp{\ka}{(\thet-\thet^*)^{\otimes 2}}+ \frac{\si_\ep^2}{2}.
\end{align*}
The minimum of $\cR$, \tIe{} the best possible fit for the linear model, is given by the population parameter $\thet^*$.

\begin{todo}
	Hence, $\cR$ is a quadratic form with minimum at $\thet^*$. Now, consider $D\in \set{0, \Si(\thet^*), \Si}$ and
	\[dX_t^h = - \nabla \cR(X_t^h) + \sqrt{hD(X_t^h)}\,dW_t, \quad t\in [0,T], h \in (0,1).\]
	One can show (see Section \ref{sec:LEquadObj}) the linear error term is given by
	\[\LE(X) = - \frac12 T \innp{\ka^3 e^{-2T\ka}}{(\thet - \thet^*)^{\otimes 2}} + \frac12 \innp{\ka}{\al_T^D},\]
	where 
	\[\al_T^D = \int_0^T e^{-(T-t)\ka}(\Si(X^0_t) - D(X_t^0))e^{-(T-t)\ka}\,dt.\]
	To calculate $\al_T$ we will start with the covariance matrix of the gradient noise at batch size $1$, which is given by
	\begin{align*}
		S(\thet):= \Cov[\nabla_\thet R_{\bx,\by}(\thet)] = \innp{\mu_x^4 - \ka^{\otimes 2}}{(\thet - \thet^*)^{\otimes 2}}  + \si_\ep^2 \ka
	\end{align*}
	where $\mu_x^4 \in \R^{d\times d\times d\times d}$ with
	\[(\mu_x^4)_{i,j,k,l} = \E[\bx_i \bx_j \bx_k \bx_l], \quad i,j,k,l\in \set{1,\dots, d}.\]
	Note that the covariance matrix of the gradient noise is given by $\Si(\thet) = \frac1B S(\thet)$.
	
	In the next subsection we add a natural assumption on $S$ that allows for computation of explicit linear error terms not involving any integrals.
\end{todo}

For $S(\thet) := \Cov[\nabla R_{\bx,\by}(\thet)]$ we have
\begin{align*}
	S(\thet)= & \E[(\innp\thet\bx - \by)^2 \bx^{\otimes 2}] - (\ka(\thet - \thet^*))^{\otimes 2}\\
	= & \E[(\innp{\thet-\thet^*} \bx - \bep )^2 \bx^{\otimes 2}] - \ka (\thet - \thet^*)^{\otimes 2} \ka\\
	= & \E[\innp{\thet - \thet^*}{\bx}^2 \bx^{\otimes 2}] - 2\E[\ep \innp{\thet - \thet^*}{\bx}\bx^{\otimes 2}] \\
	&+ \E[\bep^2 \bx^{\otimes 2}] - \ka (\thet - \thet^*)^{\otimes 2} \ka \\
	= & \innp{\mu_x^4}{(\thet - \thet^*)^{\otimes 2}} - \ka (\thet - \thet^*)^{\otimes 2} \ka^\transp + \si_\ep^2 \ka\\
	= & \innp{\mu_x^4 - \ka^{\otimes 2}}{(\thet - \thet^*)^{\otimes 2}}  + \si_\ep^2 \ka
\end{align*}

We are mostly interested in the following two settings, where we can simplify $S$ further (ignoring the boundedness of $\cZ$ for a moment).
\begin{example}
	\label{ex:4thMomSetts}
	\begin{enumerate}[(a)]
	\item We assume that the features are centered Gaussian, \tIe{} $\bx\sim \cN(0,\ka)$.
	Then we can simplify the covariance matrix of the gradient noise to
	\[S(\thet) = 2\ka (\thet - \thet^*)^{\otimes 2} \ka + \si_\ep^2 \ka.\]
	To see this, let $\tau$ be a permutation of the set $\set{1,\dots, l}$ and $B\in \R^{d^{\times l}}$. Then we write $B_{\tau} \in \R^{d^{\times l}}$ for
	\[(B_{\tau})_{i_1,\dots, i_l} = B_{i_{\tau(1)}, \dots, i_{\tau(l)}}.\]
	For example if $B$ is matrix, then $B^\transp = B_{(12)}$. Here we use the cycle notation for permutations.
	By Isserli's theorem \citep[see][]{bose2021random}, the joint fourth moments of a centered Gaussian satisfy
	\[\mu_x^4 = \ka^{\otimes 2} + \ka^{\otimes 2}_{(23)} + \ka^{\otimes 2}_{(13)}.\]
	Given matrices $U,A\in \R^{d\times d}$ we have
	\begin{align*}
		\innp{U^{\otimes 2}_{(23)}}{A}_{i,j} = & \sum_{k,l} U_{i,k} U_{j,l} A_{k,l}\\
		= & U A U^\transp,\\
		\innp{U^{\otimes 2}_{(13)}}{A}_{i,j} = & \sum_{k,l} U_{k,j} U_{i,l} A_{k,l} \\
		= & U A^\transp U.
	\end{align*}
	Therefore, $S(\thet) = 2\ka (\thet - \thet^*)^{\otimes 2} \ka + \si_\ep^2 \ka$.
		\item We assume that $d = 1$, but not that $\bx$ is Gaussian. Then, we can write
		\begin{align*}
			S(\thet) = \ka^2(\Kurt \bx - 1)(\thet- \thet^*)^2 + \ka \si_\ep^2,
		\end{align*}
		where $\Kurt \bx := \E[\bx^4]/\ka^2$ is the \emph{kurtosis} of $\bx$ (see Section \ref{sec:kurtosis} in the appendix for more information about kurtosis).
	\end{enumerate}
\end{example}
We can join these two settings by assuming that there exists a constant $c > 0$, such that
\begin{equation}
\label{eq:niceSiLinReg}
S(\thet) = c\ka(\thet -\thet^*)^{\otimes 2}\ka + \si_\ep^2 \ka, \quad \thet \in \R^d.
\end{equation}
In particular, in Example \ref{ex:4thMomSetts} (a) we have $c = 2$ and for (b) we have $c = \Kurt \bx - 1$.

Under this condition, we $\sqrt{\Si} = u_t \sqrt{S}$ satisfies the assumptions of Corollary \ref{cor:SME2} as the next lemma implies.
\begin{lem}
Let $S$ be defined by \eqref{eq:niceSiLinReg}. Then $\sqrt S\in \Lip^\infty(\R^d, \R^{d\times d})$.
\end{lem}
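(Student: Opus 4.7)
The plan is to reduce the assertion to (i) smoothness of $\sqrt S$ together with (ii) uniform boundedness of every partial derivative $\der^\al \sqrt S$ for $|\al|\geq 1$, and then to verify (ii) by combining the integral representation of the matrix square root with the Sherman--Morrison identity applied to the rank-one structure of $S$.

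First, since $\R^d$ is convex, Lemma \ref{lem:wghtdRdemchr} shows that $\sqrt S \in \Lip^\infty$ is equivalent to $\sqrt S \in \dC \infty$ together with $\der^\al \sqrt S$ bounded for every multi-index $\al$ with $|\al|\geq 1$. Next I would observe that $\thet\mapsto S(\thet)$ is a matrix-valued polynomial of degree $2$, and by rewriting $S(\thet) = \si_\ep^2 \ka + c\, u(\thet) u(\thet)^\transp$ with $u(\thet) := \ka(\thet - \thet^*)$, we obtain the uniform lower bound $S(\thet) \succeq \si_\ep^2 \ka \succeq \mu I$ with $\mu := \si_\ep^2 \la_{\min}(\ka) > 0$---note that one tacitly needs $\si_\ep > 0$, since otherwise $S(\thet^*)=0$ is degenerate and $\sqrt S$ has a non-differentiable cone point at $\thet=\thet^*$. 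Combined with the classical fact that $A\mapsto \sqrt A$ is real-analytic on the open cone $\{A\succ 0\}$---e.g.\ via the Dunford integral $\sqrt A = \frac{1}{2\pi i}\oint_\Gamma \sqrt z\,(zI-A)^{-1}\,dz$, or the implicit function theorem applied to $B\mapsto B^2$ whose derivative $H\mapsto \sqrt A\, H + H\sqrt A$ is invertible at $B=\sqrt A\succ 0$---composition gives $\sqrt S \in \dC \infty(\R^d, \R^{d\times d})$.

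For the uniform boundedness of derivatives I would use the integral representation
\[
\sqrt{S(\thet)} \;=\; \frac{1}{\pi} \int_0^\infty \frac{S(\thet)\,(tI + S(\thet))^{-1}}{\sqrt t}\, dt
\]
together with the Sherman--Morrison formula
\[
(tI+S(\thet))^{-1} \;=\; R(t) \,-\, \frac{c\,R(t)\,u(\thet)u(\thet)^\transp R(t)}{1 + c\,u(\thet)^\transp R(t)\,u(\thet)},\qquad R(t) := (tI + \si_\ep^2\ka)^{-1}.
\]
Differentiating $|\al|$ times under the integral yields a finite sum whose terms involve only: the kernel $R(t)$ (independent of $\thet$, with $\|R(t)\| \lesssim (t+\mu)^{-1}$), the derivatives $\der^\be u$ of order $|\be|\leq 1$ (either linear in $\thet$ or constant), and inverse powers of the scalar $\Phi(t,\thet) := 1 + c\,u(\thet)^\transp R(t)\,u(\thet) \geq 1 + c|u(\thet)|^2/(t+M)$, where $M := \si_\ep^2 \la_{\max}(\ka)$. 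The key observation is that each factor of $|u|$ produced by a differentiation is absorbed, in the large-$|u|$ regime, by the quadratic growth of $\Phi$; after splitting the $t$-integration at $t\sim |u|^2$ (or substituting $t = s|u|^2$ for $|u|\geq 1$), every resulting scalar integral is finite with a bound independent of $\thet$.

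The main obstacle is precisely this last step: making the combinatorics of $\der^\al[S(\thet)(tI+S(\thet))^{-1}]$ explicit via Faà di Bruno and verifying that the cancellation between numerator factors of $u$ and the growing denominator $\Phi$ is uniform across all terms of the expansion. A cleaner alternative worth pursuing as a fallback is to first diagonalise $\ka$, in which case the secular equation determining the eigenvalues of $S(\thet)$ reduces to a rational equation in a single scalar (essentially a function of $|u|^2$), and the bounded-derivative claim then follows from elementary scalar calculus estimates.
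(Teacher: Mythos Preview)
Your outline is a valid route and correctly isolates the two ingredients (smoothness from the uniform lower bound $S\succeq \si_\ep^2\ka$, and uniform derivative bounds via the integral representation plus Sherman--Morrison). However, the paper takes a more direct path that bypasses your acknowledged ``main obstacle'' entirely.

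The paper's key move is to factor $S(\thet)=\ka^{1/2}M\ka^{1/2}$ with $w:=\ka^{1/2}(\thet-\thet^*)$ and $M:=c\,ww^\transp+\si_\ep^2 1_{d\times d}$. The point is that $M$ is a rank-one perturbation of a \emph{scalar} multiple of the identity (not of $\ka$), so its eigenstructure is explicit for every $\thet$: the eigenspaces are $\operatorname{span}w$ and $w^\perp$, with eigenvalues $c|w|^2+\si_\ep^2$ and $\si_\ep^2$. This gives the closed form
\[
M^{1/2}\;=\;\si_\ep\,1_{d\times d}\;+\;b(|w|^2)\,ww^\transp,\qquad b(t)=\frac{c}{\si_\ep+\sqrt{ct+\si_\ep^2}},
\]
and the regularity claim reduces to elementary one-variable calculus: $b$ is smooth at $0$ (since $\si_\ep>0$) with $|\der_t^k b(t)|=O(t^{-1/2-k})$ as $t\to\infty$, and then Leibniz/Fa\`a di Bruno on $w\mapsto b(|w|^2)ww^\transp$ show that every derivative of order $\geq 1$ is bounded.

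Your integral-representation approach would in principle also work, but for this rank-one structure it is heavier than necessary: instead of tracking how powers of $|u|$ produced by differentiation are absorbed by $\Phi^{-1}$ inside a $t$-integral, the paper computes the square root in closed form and differentiates a single scalar function. Note also that your fallback (``diagonalise $\ka$'') is not quite what the paper does---diagonalising $\ka$ still leaves the interplay between $\ka$'s eigenbasis and the direction of $u(\thet)$; the decisive simplification is to factor $\ka^{1/2}$ out symmetrically so that the residual $M$ is isotropic up to a rank-one term.
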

\begin{proof}
Let $\thet \in \R^d$ and set $w := \ka^{1/2}(\thet - \thet^*)$ and assume $w\neq 0$. Then
\[S(\thet) = c (\ka^{1/2} w)(\ka^{1/2} w)^\transp + \si_\ep^2 \ka = \ka^{1/2}(c ww^\transp + \si_{\ep}^2 1_{d\times d})\ka^{1/2}.\]
Writing $M = c ww^\transp + \si_\ep^2 1_{d\times d}$ we get
\[S(\thet)^{1/2} = \ka^{1/2} M^{1/2} \ka^{1/2}.\]
Note that $M$ is a rank-one perturbation of $\si_\ep^2 1_{d\times d}$. We have
\[ww^\transp w = |w|^2w, \quad ww^\transp u = 0, \quad u \in w^\perp,\]
where $w^\perp = \set{u \in \R^d : \innp u v = 0}$.
Thus, $ww^\transp$ has eigenvalues $|w|^2$ and $0$, and by the rank-nullity theorem the following orthogonal eigenspace decomposition 
\[\R^d = \operatorname{Ker}(ww^\transp) \oplus \operatorname{Im}(ww^\transp) = w^\perp \oplus \operatorname{span} w.\]
Accordingly,
\[Mw = c|w|^2w + \si^2_\ep w = (c|w|^2 + \si^2_\ep)w, \quad Mu = \si_\ep^2 u, \quad u\in w^\perp,\]
and $M$ has the same eigenspace decomposition. Therefore, $M$ has eigenvalues $c|w|^2 + \si^2_\ep$ and $\si_\ep^2$ with (algebraic) multiplicities  $1$ and $d-1$ respectively. 
There is an orthonormal basis $\set{v_1,\dots, v_d}$ of $\R^d$ such that $v_1 = \frac{w}{|w|}$ and by defining a matrix $Q\in \R^{d\times d}$ with columns $v_1,\dots, v_d$ we can transform $M$ into the diagonal matrix
\[Q^\transp MQ= \operatorname{diag}(c|w|^2 + \si^2_\ep,\si^2_\ep,\dots, \si^2_\ep).\]
Then,
\[Q^\transp M^{1/2}Q = \operatorname{diag}(\la,\si_\ep,\dots, \si_\ep),\]
where $\la := \sqrt{c|w|^2 + \si^2_\ep}$.
Note that 
\[Q(e_1e_1^\transp)Q^\transp = (Qe_1)(Qe_1)^\transp = \frac{ww^\transp}{|w|^2} =: P_w.\]
Thus,
\[M^{1/2} = Q(\si_\ep 1_{d\times d} + (\la - \si_\ep)e_1e_1^\transp)Q^\transp = \si_\ep 1_{d\times d} + a(|w|^2)P_w,\]
where 
\[a_t = -\si_\ep + \sqrt{ct+\si_\ep^2} = \frac{ct}{-\si_\ep + \sqrt{ct+\si_\ep^2}}, \quad \geq 0.\]
Note that $a \in \dC \infty([0,\infty))$, and
\[|\der^{k}a_t| \asymp (ct+\si_\ep^2 )^{-\frac12(2k-1)}, \unfovr t\geq 0, k\in \N_0.\]
In particular,
\[|\der^{k} a_t| = O(t^{1/2-k}), \quad t\to \infty.\]
Define $b(t) = \frac{a(t)}{t}, t>0$. By Leibniz's formula we have  $b \in \dC \infty([0,\infty))$ with
\[|\der^k b(t)| \lesssim \sum_{j=0}^k  |\der^j a(t)| t^{-(k+1-j)},  \unfovr t\geq 0, k\in \N_0.\]
Hence,
\begin{equation}
\label{eq:beBddRght}
|\der^k b(t)| = O(t^{-1/2-k}), \quad t\to \infty,\quad k \in \N_0.
\end{equation}
Note that
\[a(|w|^2)P_w = b(|w|^2)ww^\transp\]
and by Equation \eqref{eq:beBddRght}
\[|b(|w|^2)ww^\transp| = O(|w|), \quad |w|\to \infty.\]
Further, by Faa di Bruno's formula
\begin{align*}
|\der^\be (b(|w|^2)) |\lesssim & \sum_{k=1}^{|\be|} \sum_{\cB \in \prttn{\be}{k}} \der_t^k b(|w|^2) \der^\cB(|w|^2).
\end{align*}
Since $|\der^\ga(|w|^2)| = O(|w|^{2-|\ga|})$ we have for $\cB = \set{\ga_1,\dots,\ga_k}\in \prttn \be k$ with $\sum_{i=1}^k \ga_i = \be$
\[|\der_t^k b(|w|^2) \der^\cB(|w|^2)| = O(|w|^{-1-2k} |w|^{\sum_{i=1}^k (2-|\ga_i|)}) = O(|w|^{-1-|\be|}), \quad |w|\to \infty,\]
for all $k\leq |\be|$.
Thus, using Leibniz's formula
\begin{align*}
|\der^\al(b(|w|^2)ww^\transp)| \lesssim & \sum_{\be \leq \al} |\der^\be(b(|w|^2)) \der^{\al-\be}(ww^\transp)|\\ = &O\left(\sum_{\be \leq \al} |w|^{-1-|\be|} |w|^{2 - |\al| + |\be|}\right)\\
= &O(|w|^{1-|\al|}), \quad |w|\to \infty.
\end{align*}
Thus, the derivatives of $w\mapsto b(|w|^2)ww^\transp)$ are bounded at $0$ and $\infty$, i.e. $w\mapsto b(|w|^2)ww^\transp) \in \Lip^\infty(\R^d, \R^{d\times d})$.
Since
\[\sqrt{S(\thet)} = \ka^{1/2}(\si_\ep 1_{d\times d} + a(|w|^2)P_w)\ka^{1/2} =  \ka^{1/2}(\si_\ep 1_{d\times d} + b(|w|^2)ww^\transp)\ka^{1/2},\]
even if $w = 0$, we conclude $\sqrt{S} \in \Lip^\infty(\R^d, \R^{d\times d})$.
\end{proof}

\begin{docu}
\begin{proof}
	Write $w(\theta)=\kappa^{1/2}(\theta-\theta^*)$.  
	Then
	\[
	S(\theta)
	= c \,\kappa^{1/2} w(\theta)w(\theta)^\top \kappa^{1/2}
	\,+\, \sigma_\epsilon^2\,\kappa.
	\]
	
	Since $\kappa$ is positive definite, the second term is uniformly
	elliptic. Therefore $S(\theta)$ takes values in the open cone
	$\mathcal S_{++}^d$ of positive definite matrices, uniformly bounded
	away from the boundary.
	
	The map
	\[
	\Phi : \mathcal S_{++}^d \to \mathcal S_{++}^d, \qquad \Phi(A)=A^{1/2},
	\]
	is $C^\infty$ on $\mathcal S_{++}^d$ (see, e.g.,
	Bhatia, \emph{Positive Definite Matrices}, §1.7).  
	Moreover,
	\[
	\theta \mapsto S(\theta)
	\]
	is a $C^\infty$ function, because it is polynomial in the components of
	$w(\theta)$.
	
	Since the composition of $C^\infty$ maps is $C^\infty$, we obtain
	\[
	\theta \mapsto S(\theta)^{1/2} = \Phi(S(\theta)) \in C^\infty(\R^d).
	\]
	
	Finally, every $C^\infty$ function on $\R^d$ with polynomial growth has
	bounded derivatives of all orders, hence belongs to
	$\Lip^\infty(\R^d)$.
	
	Thus $\sqrt{S} \in \Lip^\infty(\R^d,\R^{d\times d})$.
\end{proof}

\end{docu}

So far we described one-pass SGD for linear regression. Let us contrast this with SGD with replacement.
Consider a finite sample $(x_n,y_n)_{n=1}^N$ of size $N$.

We now consider the risk minimization problem $(R, \tilde \nu)$ with
\[R : \R^d \times \cZ \to [0,\infty), (\thet, (x,y)) \mapsto R_z(\thet) = \frac12 (\innp{\thet}{x} - y)^2\]
and where $\tilde \nu$ is the empirical measure
\[\tilde\nu = \frac 1N \sum_{n=1}^N \idK_{\blnk}(x_n,y_n).\]
We can set $\cZ = \R^d \times \R$ here since $\tilde \nu$ automatically has bounded support.
Let $\bz$ be a random variable $\bz \sim \tilde \nu$.
The expected risk for $(R,\tilde \nu)$ is the empirical risk for our finite sample:
\[\hat \cR(\thet) = \E[R_{\bz}(\thet)] = \frac1N\sum_{n=1}^N R_{(x_n,y_n)}(\thet) = \frac1{2N}\sum_{n=1}^N (\innp{\thet}{x_n} - y_n)^2, \quad \thet \in \R^d.\]
Its minimum given by the OLS estimator:
\[\hat \thet := \left(\sum_{n=1}^N x_n x_n^\transp\right)^{-1}\left(\sum_{n=1}^N x_n y_n\right).\]
If we want to compare one-pass SGD and SGD with replacement, then the right way to proceed is to replace $(x_n,y_n)_{n=1}^N$ with an \tiid\ sequence $(\bx_n,\by_n)_{n=1}^N$ drawn from the population $\nu$. Thus, in this situation $\tilde \nu, \hat \cR$ and the corresponding gradient covariance matrix $\Cov_{z\sim \tilde \nu}[\nabla R_z]$ are random quantities.

\chapter{A comparison of first-order SMEs}
\label{chap:compare}
In this chapter we investigate and compare first-order continuous-time approximations to stochastic gradient descent.
We focus on the case of population risk minimization and one-pass SGD for linear regression. 
This chapter is adapted from \citet[Section 3 and 6]{perko2024compare}.

\section{Introduction and problem statement}
Consider a risk minimization problem $(R : \R^d \times \cZ \to [0,\infty), \nu)$ on a measurable space $\cZ$. Fix a batch size $B \in \N$, an \tiid\ sequence $(\bz(n))_{n\in \N}$ in $\cZ$ with $\bz_n \sim \nu$ and consider the mini-batch SGD method
\begin{equation}
	\label{eq:SGDminibtch}
	\chi_{n+1}^h = \chi_n^h - \frac{h}{B} \sum_{k=0}^{B-1} \nabla R_{\bz{(k + Bn)}} (\chi_n^h).
\end{equation}
Note that for $S(\thet) := \Cov[\nabla \cR_{\bz(0)}(\thet)]$, we have
\[\Si(\thet) = \Cov\left(\frac{1}{B} \sum_{k=0}^{B-1} \nabla R_{\bz(k+Bn)} (\thet)\right) = \frac1B \Cov[\nabla \cR_{\bz(0)}(\thet)] = \frac{1}{B}S(\thet), \quad \thet \in \R^d.\]

We can approximate \eqref{eq:SGDminibtch} using gradient flow given by
\[\dot X_t^0 = - \nabla \cR(X_t^0),\]
where $\cR = \E[R_{\bz_1}]$. More generally, to model the noise inherent to SGD, we can approximate \eqref{eq:SGDminibtch} using a first-order \tSME{}
\begin{equation}
\label{eq:1stOrdrSGFs}
dX_t^h = - \nabla \cR(X_t^h)\,dt + \sqrt{h D(X_t^h)}\,dW_t.
\end{equation}
There are arguably two main choices considered (more or less explicitly) in the literature. Set $\Si(\thet) = \Cov[R_{\bz_1}]$. 
Then, for $D = \Si$, Equation \eqref{eq:1stOrdrSGFs} becomes a family of \tSDE s first introduced by \citet{Li15} to approximate SGD. We refer to a family of processes solving \eqref{eq:1stOrdrSGFs} with $D = \Si$ as (first-order) \emph{stochastic gradient flow with non-constant covariance} or NCC-SGF for short \citep[in accordance with the terminology used by][]{ali2020implicit}, and we denote it as $\XNCC$.

In order to simplify the analysis of Equation \ref{eq:1stOrdrSGFs}, in many cases the covariance matrix $\Si$ is assumed to be well approximated by a constant. For example, frequently one is interested in the behavior of SGD around a stationary point. In fact, suppose gradient flow $X^0$ converges to a, necessarily stationary, point $X^0_\infty \in \R^d$ and set $D = \Si(X^0_\infty)$.
Then we refer to a solution of Equation \ref{eq:1stOrdrSGFs} as (first-order) \emph{stochastic gradient flow with constant covariance} or CC-SGF for short \citep[again, in accordance with the terminology used by][]{ali2020implicit}, and we denote it as $\XCC$. This is essentially the continuous-time approximation introduced by \citet{mandt2015continuous}. Note that the diffusion coefficient may depend on the initial condition, since $X_\infty^0$ may already depend on it.

The introduction of a non-zero diffusion coefficient comes with the implicit promise of having a better approximation to SGD.
However, without an additional modification of the \emph{drift coefficient} $-\nabla \cR$ in Equation \eqref{eq:1stOrdrSGFs} the SGF dynamics are still merely a first-order approximation. Given that the order of approximation is not improved, does it make sense at all to add a diffusion term to the gradient flow dynamics? And if it does, how can one quantify the benefit? 

Theorem \ref{thm:firstOrderSME} indicates that the linear error term when approximating \eqref{eq:SGDminibtch} by \eqref{eq:1stOrdrSGFs} depends on $D$.
We find that the linear error terms for GF, CC-SGF and NCC-SGF are generally all different.

Further, we show that for linear regression models, the linear error terms for the objective function can be calculated in closed form. A comparison then reveals that any of three continuous-time approximations can be the best, depending on the batch size (see Theorem \ref{thm:linRegBS} below).
However, there is a notable caveat for the case of gradient flow being the best approximation:
Note that the dynamics of learning a linear model using SGD with constant learning rate can be roughly separated into the initial \emph{descent phase} and the final \emph{fluctuation phase}, where SGD, due to the variance of the stochastic gradients, is mostly fluctuating around the global minimum. The batch size at which gradient flow becomes the best approximation increases as the duration of the fluctuation phase increases, relative to the time horizon. On the other hand, the approximation quality of the stochastic approximations is unaffected by the relative duration of the fluctuation phase.
In fact, we show that there are two special batch sizes $\BEq$ and $\BGF$, such that for batch sizes $B < \BEq$ the NCC approximation is the best, followed by CC-SGF for $\BEq < B < \BGF$ and GF for $B > \BGF$. However, we also observe that $\BGF$ increases with the duration of the fluctuation phase of SGD. On the other hand, $\BEq$ only depends on the kurtosis of the features.

Fix $T > 0$.
Given a continuous-time (stochastic) approximation $Y = (Y^h_t)_{t\in [0,T], h \in T/\N}$ of SGD \eqref{eq:SGDminibtch} we define the linear error term (with respect to $\cR$) by
\[\LE(Y) := \lim_{h\downarrow 0} \frac{\E \cR(\chi_{T/h}^h) - \E \cR(Y_T^h)}{h},\]
where the limit is taken in $\lrates$.

\section{Comparison of the Linear Error Terms}
\label{sec:comparison}
In this section we compare gradient flow and the two stochastic gradient flow approximations (NCC and CC) in the setting of linear regression using mini-batch SGD.

Firstly, we provide a theoretical comparison using Theorem \ref{thm:firstOrderSME} (see Theorem \ref{thm:linRegBS}). We will see that the comparison highly depends on the batch size and on the kurtosis of the features (also called independent variables).
Secondly, we substantiate the theoretical findings using a numerical example.
Proofs are postponed until Section \ref{sec:comparisonproofs}.

We work in the setting of linear regression, as in Subsection \ref{sec:linreg}, and Example \ref{ex:4thMomSetts}. 
That is, there exist $\thet^*\in \R^d, \si_\ep > 0$ and a symmetric and positive definite matrix $\ka$ such that
\[\cR(\thet) = \frac12 \innp{\ka}{(\thet-\thet^*)^{\otimes 2}}+ \frac{\si_\ep^2}{2},\quad \thet\in \R^d\]
Further, we assume the existence of a constant $B^{\operatorname{Eq}} > 0$, such that
\[S(\thet) = 2B^{\operatorname{Eq}}\ka(\thet -\thet^*)^{\otimes 2}\ka + \si_\ep^2 \ka, \quad \thet \in \R^d.\]
In particular, in Example \ref{ex:4thMomSetts} (a) we have $B^{\operatorname{Eq}} = 1$ and for (b) we have $B^{\operatorname{Eq}} = \frac12 (\Kurt \bx - 1)$.
Proposition \ref{prop:TTlinReg} below implies that if $B^{\operatorname{Eq}}\in \N$, then it is the batch size $B$ where the NCC and CC approximation have the same error, up to flipping the sign.

For technical reasons we need to assume that the population is bounded. However, in our applications we sometimes ignore this extra assumption and also allow, say, for Gaussian features.

Now, the three continuous-time approximations \eqref{eq:1stOrdrSGFs} with $D = \set{0, \Si, \Si(X_\infty^0)}$ take the form
\begin{align}\label{ou dynamics}
	dX_t^0= & - \ka(X_t^0 - \thet^*)\,dt \nonumber \\
	d\XNCCh{h}_t = & - \ka(\XNCCh{h}_t - \thet^*)\,dt + \sqrt{\frac h B} \sqrt{2\BEq\ka (\XNCCh{h}_t - \thet^*)^{\otimes 2} \ka + \si_\ep^2 \ka} \,dW_t \nonumber \\
	dX_t^{\operatorname{CC},h}	= & - \ka(X_t^{\operatorname{CC}, h} - \thet^*)\,dt + \sqrt{\frac h B \si_\ep^2 \ka } \,dW_t.
\end{align}
Note that the process with constant covariance dynamics \eqref{ou dynamics} is an Ornstein-Uhlenbeck process. Using \eqref{eq:LEquad} we can derive the following expressions for the linear error terms of the three continuous-time approximations of SGD.
\begin{proposition}
	\label{prop:TTlinReg}
	Suppose $\chi_0^h = X_0^0 = \XNCCh{h}_0 = \XCCh{h}_0 = \thet \in \R^d$ for all $h\in \cH$. Then, we have
	\begin{align}
		\label{eq:TTlinReg}
		\E \cR(\chi_{T/h}^h) - \E \cR(X_T^h) = & -\frac h2 T \innp{\ka^3 e^{-2T\ka}}{(\thet - \thet^*)^{\otimes 2}} + \cO(h^2), \nonumber\\
		\E \cR(\chi_{T/h}^h) - \E \cR(X_T^{\operatorname{CC},h}) = &  h\left(\frac{B^{\operatorname{Eq}}}{B} - \frac12\right) T\innp{\ka^3 e^{-2T\ka}}{(\thet - \thet^*)^{\otimes 2}} + \cO(h^2), \nonumber\\
		\E \cR(\chi_{T/h}^h) - \E \cR(X_T^0) = & h\left(\frac{B^{\operatorname{Eq}}}{B} - \frac12\right) T\innp{\ka^3 e^{-2T\ka}}{(\thet - \thet^*)^{\otimes 2}} \nonumber\\
		&+ \frac h{4B} \si_\ep^2 \innp{\ka}{1_{d\times d} - e^{-2\ka T}} + \cO(h^2).
	\end{align}
	as $h \downarrow 0$, with $h\in \lrates$.
\end{proposition}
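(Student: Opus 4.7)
The plan is to apply Theorem \ref{thm:firstOrderSME} with $g = \cR$ in each of the three cases, reading off the linear error terms directly from the integral formula. Since $\cR$ is a quadratic polynomial we have $\cR \in \wC 5 2$, so the hypotheses on the test function are met. The regularity of $\bar f = -\nabla \cR$ is immediate (it is affine), and the regularity of $\sqrt D$ was established in Subsection \ref{sec:linreg} for $D = \Si$ and is trivial for $D = 0$ or $D = \frac{\si_\ep^2}{B}\ka$. Because $b = 0$ and the coefficients are autonomous, the theorem collapses to
\[\E\cR(\chi_{T/h}^h) - \E\cR(X_T^h) = \tfrac12 h\int_0^T\!\Bigl(\innp{\nabla^2 v^{\cR,0}_t}{\Si_t - D_t} - \innp{\nabla v^{\cR,0}_t}{\nabla^\transp \bar f_t\,\bar f_t}\Bigr)(X_t^0)\,dt + \cO(h^2),\]
and the GF case is handled either by taking $D = 0$ above or, equivalently, by invoking Theorem \ref{thm:1stOrderOME}.

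First I would compute the test function. The gradient flow is explicit, $X_T^{0,t}(x) = \thet^* + e^{-(T-t)\ka}(x-\thet^*)$, so, using that $\ka$ is symmetric and that any two functions of $\ka$ commute,
\[v^{\cR,0}_t(x) = \tfrac12\innp{\ka e^{-2(T-t)\ka}}{(x-\thet^*)^{\otimes 2}} + \tfrac{\si_\ep^2}{2},\qquad \nabla v^{\cR,0}_t(x) = \ka e^{-2(T-t)\ka}(x-\thet^*),\]
and $\nabla^2 v^{\cR,0}_t(x) = \ka e^{-2(T-t)\ka}$. Along gradient flow $X_t^0 - \thet^* = e^{-t\ka}(\thet-\thet^*)$, and $\nabla^\transp \bar f\,\bar f = \ka^2(\thet-\thet^*)$ at $\thet$, so the drift piece of the integrand becomes
\[\innp{\ka e^{-2(T-t)\ka}e^{-t\ka}(\thet-\thet^*)}{\ka^2 e^{-t\ka}(\thet-\thet^*)} = \innp{\ka^3 e^{-2T\ka}}{(\thet-\thet^*)^{\otimes 2}},\]
which is independent of $t$. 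Integrating yields the common contribution $-\tfrac12 hT\innp{\ka^3 e^{-2T\ka}}{(\thet-\thet^*)^{\otimes 2}}$, which already is the full expression for NCC since there $\Si - D = 0$.

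For the remaining two cases I would compute $\Si - D$ along $X^0$. In the CC case only the rank-one piece $\tfrac{2\BEq}{B}\ka(X_t^0-\thet^*)^{\otimes 2}\ka$ survives; pairing with $\nabla^2 v^{\cR,0}_t = \ka e^{-2(T-t)\ka}$ and using the cyclic property of the trace together with commutativity with $\ka$ gives the $t$-independent quantity $\tfrac{2\BEq}{B}\innp{\ka^3 e^{-2T\ka}}{(\thet-\thet^*)^{\otimes 2}}$, whose integral produces $\tfrac{h\BEq T}{B}\innp{\ka^3 e^{-2T\ka}}{(\thet-\thet^*)^{\otimes 2}}$; combining with the NCC piece yields the $(\tfrac{\BEq}{B} - \tfrac12)$ prefactor. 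For GF ($D = 0$) the additional term $\tfrac{\si_\ep^2}{B}\ka$ appears in $\Si$; its pairing with $\nabla^2 v^{\cR,0}_t$ equals $\tfrac{\si_\ep^2}{B}\tr(\ka^2 e^{-2(T-t)\ka})$, and the identity $\int_0^T e^{-2(T-t)\ka}\,dt = \tfrac12 \ka^{-1}(1_{d\times d} - e^{-2T\ka})$ delivers precisely the extra summand $\tfrac{h\si_\ep^2}{4B}\innp{\ka}{1_{d\times d} - e^{-2T\ka}}$. Conceptually there is no real obstacle: the whole calculation reduces to bookkeeping that is tamed by the fact that every matrix in sight is a polynomial in $\ka$ and hence simultaneously diagonalizable. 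The only mild care needed is in repeatedly invoking $[\ka, e^{-s\ka}] = 0$ and the cyclic property of the trace so that the $t$-dependence collapses, leaving either a trivial $t$-integral or the scalar exponential integral above.
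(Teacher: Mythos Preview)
Your proposal is correct and follows essentially the same approach as the paper: both apply Theorem~\ref{thm:firstOrderSME} (the paper via the intermediate formula~\eqref{eq:LEquad} for quadratic objectives), compute $v^{\cR,0}$, $\nabla v^{\cR,0}$, $\nabla^2 v^{\cR,0}$ explicitly, and exploit that everything commutes with $\ka$ so the $t$-dependence collapses in exactly the way you describe. The paper organizes the three cases slightly differently (deriving GF from CC plus the $\Si(\thet^*)$ contribution), but the computations are the same.
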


We introduce some additional notation to succinctly state the following theorem. Given two continuous-time approximations $Y,Z$ we write $Y \preceq Z$ if $|\LE(Y)|\geq |\LE(Z)|$,
that is if the approximation of SGD with $Y$ has (in absolute terms) a greater linear error term than the one using $Z$. More briefly it means that $Z$ is not worse than $Y$.
Evidently $\preceq$ is a reflexive and transitive relation. We write $Y \asymp Z$ if $Y \preceq Z$ and $Z \preceq Y$, that is if $Y$ and $Z$ are equally good approximations. Further, we write $Y\prec Z$ if $Y\preceq Z$ and $Z \npreceq Y$, that is if $Z$ is strictly a better approximation than $Y$.

\begin{theorem}
	\label{thm:linRegBS}
	Suppose $B^{\operatorname{Eq}} > 0$ and we are given an initial value $\thet\neq \thet^*$.
	Define
	\[B^{\operatorname{GF}} = 2 B^{\operatorname{Eq}} + \frac{\si_\ep^2 \innp{\ka}{1-e^{-2T\ka}}}{4T\innp{\ka^3e^{-2T\ka}}{(\thet-\thet^*)^{\otimes 2}}}.\]
	Then, we have the following
	\begin{enumerate}[(i)]
		\item $X^0 \prec X^{\operatorname{CC}} \prec X^{\operatorname{NCC}}$, if $B < B^{\operatorname{Eq}}$,
		\item $X^0 \prec \XCC \asymp \XNCC$, if $B = \BEq$,
		\item $X^0 \prec X^{\operatorname{NCC}} \prec X^{\operatorname{CC}}$, if $B^{\operatorname{Eq}} < B < B^{\operatorname{GF}} - B^{\operatorname{Eq}}$,
		\item $X^{\operatorname{NCC}} \prec X^0 \prec X^{\operatorname{CC}}$, if $B^{\operatorname{GF}} - B^{\operatorname{Eq}} < B < B^{\operatorname{GF}}$,
		\item $X^{\operatorname{NCC}} \prec X^{\operatorname{CC}} \prec X^0$, if $B>B^{\operatorname{GF}}$,
		\item $\LE(X^{\operatorname{CC}}) = 0$, if $B = 2 B^{\operatorname{Eq}}$.
	\end{enumerate}
\end{theorem}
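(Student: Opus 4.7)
The proof is purely algebraic: Proposition~\ref{prop:TTlinReg} supplies the three linear error terms explicitly, and the statements reduce to comparing three piecewise linear functions of $B$.

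I would first introduce the positive constants
\[A := T\innp{\ka^3 e^{-2T\ka}}{(\thet-\thet^*)^{\otimes 2}}, \qquad C := \tfrac{1}{4}\si_\ep^2 \innp{\ka}{1-e^{-2T\ka}},\]
noting $A>0$ because $\ka^3 e^{-2T\ka}$ is positive definite and $\thet\neq\thet^*$, and $C>0$ because $\la(1-e^{-2T\la})>0$ for every eigenvalue $\la$ of $\ka$. The definition of $\BGF$ rewrites as $C = A(\BGF - 2\BEq)$, so in particular $\BGF > 2\BEq$; this single structural fact drives the entire comparison.

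Dividing the three error expansions in Proposition~\ref{prop:TTlinReg} by $h$ and substituting $C = A(\BGF-2\BEq)$ puts the linear error terms in the common form
\[\LE(\XNCC) = -\tfrac{A}{2}, \qquad \LE(\XCC) = \tfrac{A}{2B}(2\BEq - B), \qquad \LE(X^0) = \tfrac{A}{2B}(2\BGF - 2\BEq - B).\]
Since $A/(2B)>0$, the ordering of the $|\LE(\cdot)|$ coincides with the ordering of
\[g_N(B) := B, \quad g_C(B) := |2\BEq - B|, \quad g_F(B) := |2\BGF - 2\BEq - B|.\]
Statement (vi) is then immediate from $g_C(2\BEq)=0$. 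Statements (i)--(v) follow from straightforward sign computations on each interval, removing absolute values by splitting at $B = 2\BEq$ (for $g_C$) and at $B = 2\BGF - 2\BEq$ (for $g_F$). For instance, in case (iii), $B < \BGF - \BEq < 2\BGF - 2\BEq$ yields $g_F = 2\BGF - 2\BEq - B$, so $g_F - g_N = 2(\BGF-\BEq-B)>0$, while either subcase $B<2\BEq$ or $B\geq 2\BEq$ gives $g_N - g_C > 0$. The thresholds $\BEq$, $\BGF-\BEq$, $2\BEq$, $\BGF$ appearing in (i)--(vi) are precisely the intersection points of pairs drawn from $\{g_N, g_C, g_F\}$, and this observation makes the location of the phase transitions transparent.

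The argument is essentially routine, so I expect no genuine analytical obstacle. The only step demanding care is bookkeeping of signs for $g_F$ near $B = 2\BGF - 2\BEq$ (and correspondingly for $g_C$ near $B = 2\BEq$), in order to verify that the claimed orderings remain strict on each open interval of the partition in (i)--(v).
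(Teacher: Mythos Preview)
Your proposal is correct and follows essentially the same approach as the paper: both proofs write the three linear errors as $-a$, $-a+b/B$, $-a+(b+c)/B$ (in your notation $a=A/2$, $b=A\BEq$, $c=C$) and then compare absolute values pairwise, with the thresholds $\BEq$, $\BGF-\BEq$, $\BGF$ arising as the crossing points. The only cosmetic difference is that the paper packages the pairwise comparison into a separate lemma (Lemma~\ref{lem:absComp}), whereas you rescale by $2B/A$ and compare the V-shaped functions $g_N,g_C,g_F$ directly---both routes amount to the same case analysis.
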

In other words, for small batch sizes the best approximation is NCC-SGF, followed by CC-SGF and then gradient flow. If we increase the batch size, then NCC and CC switch places. After that NCC and GF switch places. Finally, for large batch sizes GF becomes the best approximation. Somewhere in between CC is not only the best approximation among the three, but also has a linear error of $0$.

Even though the gradient flow approximation can be the best approximation for large batch sizes, the lower bound $\BGF$ for this to occur diverges to $\infty$ as
\begin{equation}
	\label{eq:fluctuationDiv}
	T\to \infty\text{, or }\si_\ep \to \infty\text{, or } \ka \to \infty \text{ (for } d=1\text{)}  \text{, or } \thet - \thet^* \to 0  \text{ (for } d=1\text{)}.
\end{equation}
In fact, one can summarize \eqref{eq:fluctuationDiv} by saying $\tau\to \infty$, where $\tau$ is the time that SGD spends fluctuating around the global minimum $\thet^*$. Therefore, for large $\tau$ the SGF approximations are preferable to gradient flow, for all reasonably large batch sizes.

When it comes to deciding between NCC and CC-SGF, the important quantity is $\BEq$. This quantity only depends on the distribution of $\bx$ and \emph{not} on $T, \ka, \si_\ep$ or $\thet - \thet^*$.
For $\bx$ Gaussian we have $\BEq = 1$, so the CC-SGF approximation is, perhaps surprisingly, almost always preferred over the NCC approximation. We also consider the case where $d = 1$ and $\BEq = \frac12 (\Kurt x - 1)$. In this case we observe for batch sizes that are small, relative to the kurtosis of the features $\bx$, the NCC approximation can still be the best one (see also Section \ref{sec:kurtosis} in the appendix for more information on kurtosis).

Overall, one can also say that for highly leptokurtic features, the NCC approximation is the best across a large range of batch sizes. On the other hand, for lower kurtosis the CC approximation is best.

Figure \ref{fig:diffvskurt} below provides a visual comparison of the three approximations in terms of kurtosis and batch size in two specific examples.
\begin{figure}
	\centering
	\begin{minipage}[l]{0.485\textwidth}
		\includegraphics[width=\linewidth]{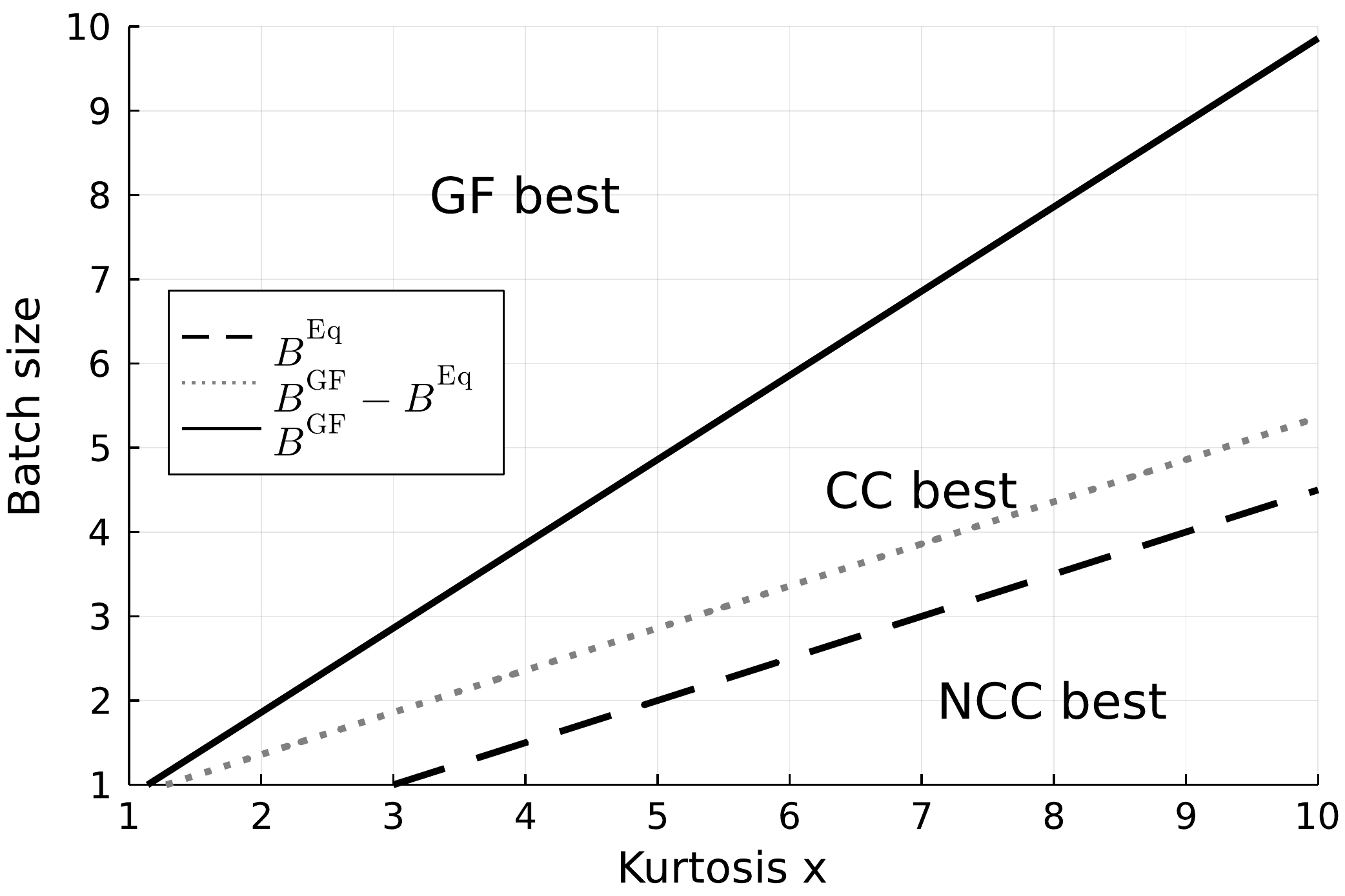}
	\end{minipage} 
	\hfill
	\begin{minipage}[r]{0.485\textwidth}
		\includegraphics[width=\linewidth]{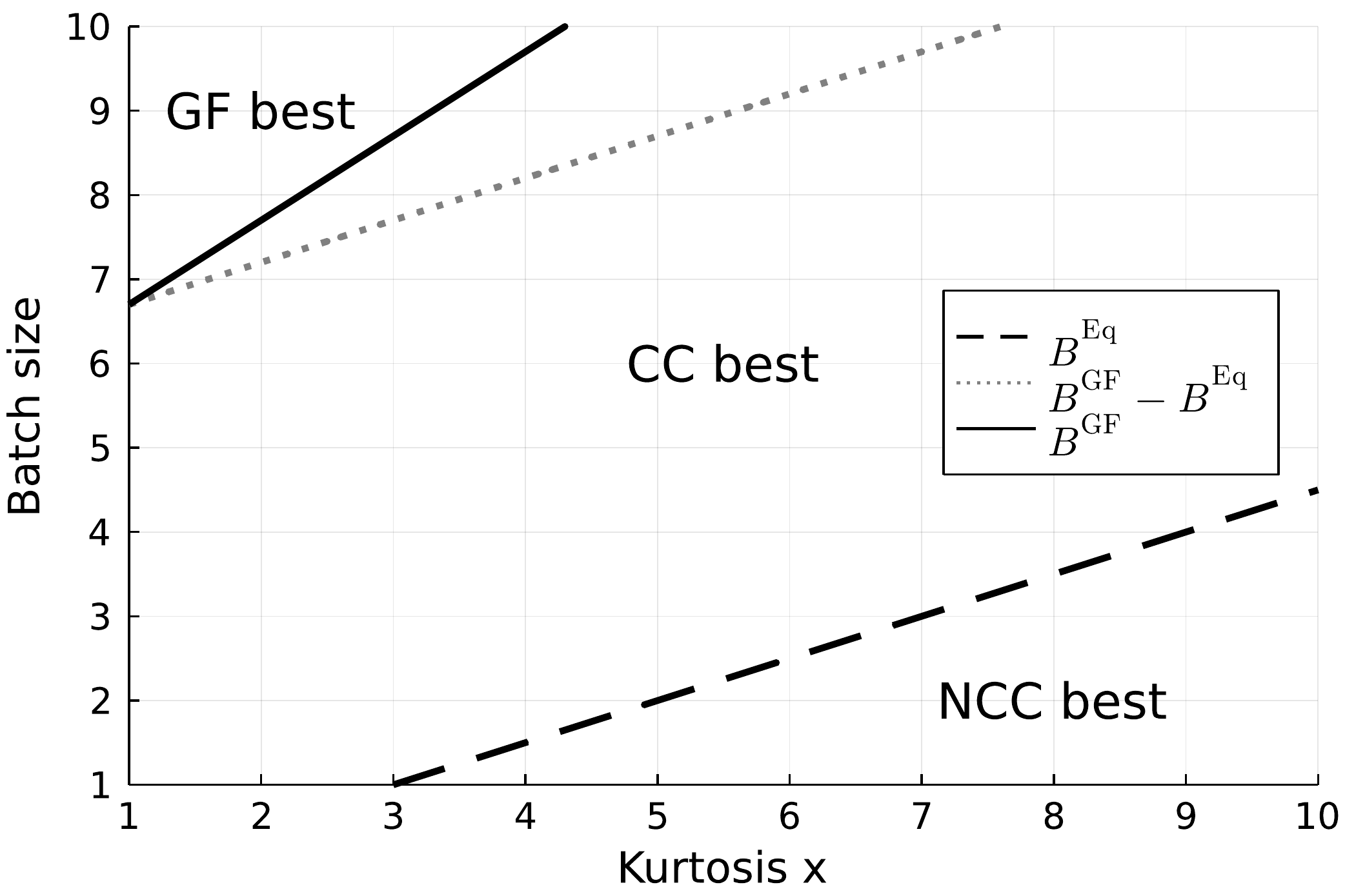}
	\end{minipage} 
	\caption{The best continuous-time approximation of SGD for linear regression in dimension $1$ in terms of the kurtosis of the features and the batch size. Here $\ka = 1$, $(\thet-\thet^*)^2 = 1$ and $T = 0.5$ ($T = 2.0$) in the left (right) plot. In the lower part of the middle region, where CC-SGF is the best approximation, gradient flow is worse than NCC-SGF. In the upper part of the middle region, gradient flow is better than NCC-SGF.}
	\label{fig:diffvskurt}
\end{figure}

\subsubsection{The Case of Batch Size 1}
\label{sec:batchSize1}
Here, we specifically study the case $B = 1$. Firstly, we have 
\[X^0 \prec \XCC \asymp \XNCC, \text{ if }\bx\text{ is Gaussian}.\]
Secondly, if $d = 1$ and so $\BEq = \frac12 (\Kurt \bx - 1)$, then
\begin{enumerate}[(i)]
	\item $X^0 \prec \XCC \prec \XNCC$, if $\Kurt \bx > 3$,
	\item $X^0 \prec \XCC \asymp \XNCC$, if $\Kurt \bx = 3$,
	\item $\XNCC \prec \XCC$, if $\Kurt \bx \in (1,3)$,
	\item $\XNCC = \XCC$, if $\Kurt \bx = 1$,
	\item $\LE(\XCC) = 0$, if $\Kurt\bx = 2$.
\end{enumerate}
Note that distributions with kurtosis $<3$ / $=3$ / $>3$ are also called platykurtic / mesokurtic / leptokurtic (see also Section \ref{sec:kurtosis})

Gradient flow is always the worst approximation for $\Kurt x \geq 3$. 
Assume we are in the platykurtic setting $\Kurt \bx \in (1,3)$. Then gradient flow is the worst / second-best / best approximation if 
\[1 < \BGF - \BEq \quad / \quad \BGF -\BEq < 1 < \BGF \quad / \quad \BGF < 1.\]

\subsection{A Numerical Example}
\label{sec:simulation}
In this subsection we present results from a numerical experiment confirming the theoretical results presented in Theorem \ref{thm:linRegBS}. We also compare the three approximations to the second-order SME (see Corollary \ref{cor:SME2}), which we here call \emph{second-order stochastic gradient flow}, or SGF2 for short. The corresponding family of \tSDE s is given by
\begin{align}
	\label{eq:SGF2}
	dX_t^{2,h} = &- \cR'(X_t^{2,h}) - \frac h 2 \cR''(X_t^{2,h}) \cR'(X_t^{2,h})\,dt + \sqrt{h \Si(X_t^{2,h})}\,dW_t \\ \nonumber
	= & -\ka\left(1_{d\times d} + \frac h2\ka\right)(X_t^h - \thet^*)\,dt + \sqrt{\frac h B} \sqrt{2\BEq\ka (X_t^{2,h} - \thet^*)^{\otimes 2} \ka + \si_\ep^2 \ka} \,dW_t
\end{align}
with $X_0^{2,h} = \chi_0$.

For the remainder of this section we exclusively work in setting (b) from Example \ref{ex:4thMomSetts}.

\subsubsection{Experimental Setup}
We consider using SGD for fitting the particular one-dimensional linear model
\begin{equation}
	\label{eq:exLinModl}
	\by = - \bx + \bep
\end{equation}
with $\bx, \bep$ independent, centered and of variance $1$, where $\bep$ is Gaussian. Note that in this case we have $\thet^* = -1$.
We compare the weak errors of the population risk $\cR$ for different continuous-time approximations of SGD. Here we use time horizons $T = 0.5$ and $T = 2.0$, varying distributions of $\bx$ and initial values $\thet$. We use a Monte Carlo approximation to estimate $\E \cR(\chi_{T/h}^h)$, \tIe
\[\E \cR(\chi_{T/h}^h) \approx \frac1 M \sum_{i=1}^M \E \cR(\hat\chi_{T/h}^{i,h})\]
where $\hat \chi^1, \dots, \hat \chi^M$ are independent copies of $\chi$. More precisely, to compute one copy $\hat \chi^i$ we draw $BT/h$ \tiid\ samples from the data-generating model \eqref{eq:exLinModl} and then perform SGD for $T/h$ steps using a batch of $B$ samples in each step, never using any sample twice. Thus, every copy of $\hat\chi$ uses a different (pseudo-) data set.
For the experiments we have chosen $M$ large enough (between $10^8$ and $2\cdot 10^9$) so that the variance of the Monte Carlo estimator is negligible compared to the weak error. 
Moreover, to reduce the computational burden significantly, we determine $\E \cR^e(Y_T^h)$ for $Y = X^0, \XNCC{h}, X^{\operatorname{CC}, h}, X^{2,h}$ using explicit formulas, which can be derived in this example (see Proposition \ref{prop:contTimeLinRegRisk} in Section \ref{sec:explFormLinReg}).
We consider the learning rates $h = 0.5, 0.1, 0.05, 0.01, 0.005, 0.001$. Notice that $T/h$ is an integer in each case, where $T \in \set{0.5, 2.0}$. Plotted is the dependence of the weak error 
\[\frac1 \ka |\E \cR(\chi_{T/h}^h) - \E \cR(Y_T^h)|,\]
\emph{divided by} $\ka$ (!), on the learning rate $h$. 
\subsubsection{Results}
In the following $\nu_x$ denotes any distribution with expectation $m$, such that $\bx + m \sim \nu_x$. That is $\bx$ has distribution $\nu_x$, but shifted to have expectation zero.
Figure \ref{fig:kurtcomparison} depicts the weak error's dependence on the learning rate in the following settings:
\begin{center}
	\begin{tabular}{c|c|c|c|c|c|c|c|c|c}
		Nr &	$T$ & $\thet$ & $\nu_x$  & $\kappa$ & $\operatorname{Kurt} \bx$ & $B$ & $B^{\operatorname{Eq}}$ & $B^{\operatorname{GF}} -B^{\operatorname{Eq}}$ & $B^{\operatorname{GF}}$ \\
		\hline
		(1) & $0.5$	& $0$ & $\operatorname{Exp}(0.1)$ & $10$  & $9$ & $1$  & $4$  & $114.127$ & $118.127$\\
		(2) & $0.5$	& $0$ & $\cN(0,1)$ & $1$  & $3$ & $1$  & $1$  & $1.85914$ & $2.85914$ \\
		(3) & $2.0$	& $0$ & $\cN(0,1)$ & $1$  & $3$ & $4$  & $1$  & $7.69977$ & $8.69977$ \\
		(4) & $0.5$	& $0$ & $\operatorname{Exp}(1)$ & $1$  & $9$ & $8$  & $4$  & $4.85914$ & $8.85914$\\
		(5) & $0.5$	& $0$ & $\cN(0,1)$ & $1$  & $3$ & $4$  & $1$  & $1.85914$ & $2.85914$ \\
		(6) & $0.5$	& $-0.9$ & $\cN(0,1)$ & $1$  & $3$ & $2$  & $1$  & $86.9141$ & $87.9141$ \\
	\end{tabular}
\end{center}

\begin{figure}
	\begin{minipage}[l]{0.485\textwidth}
		\includegraphics[width=\linewidth]{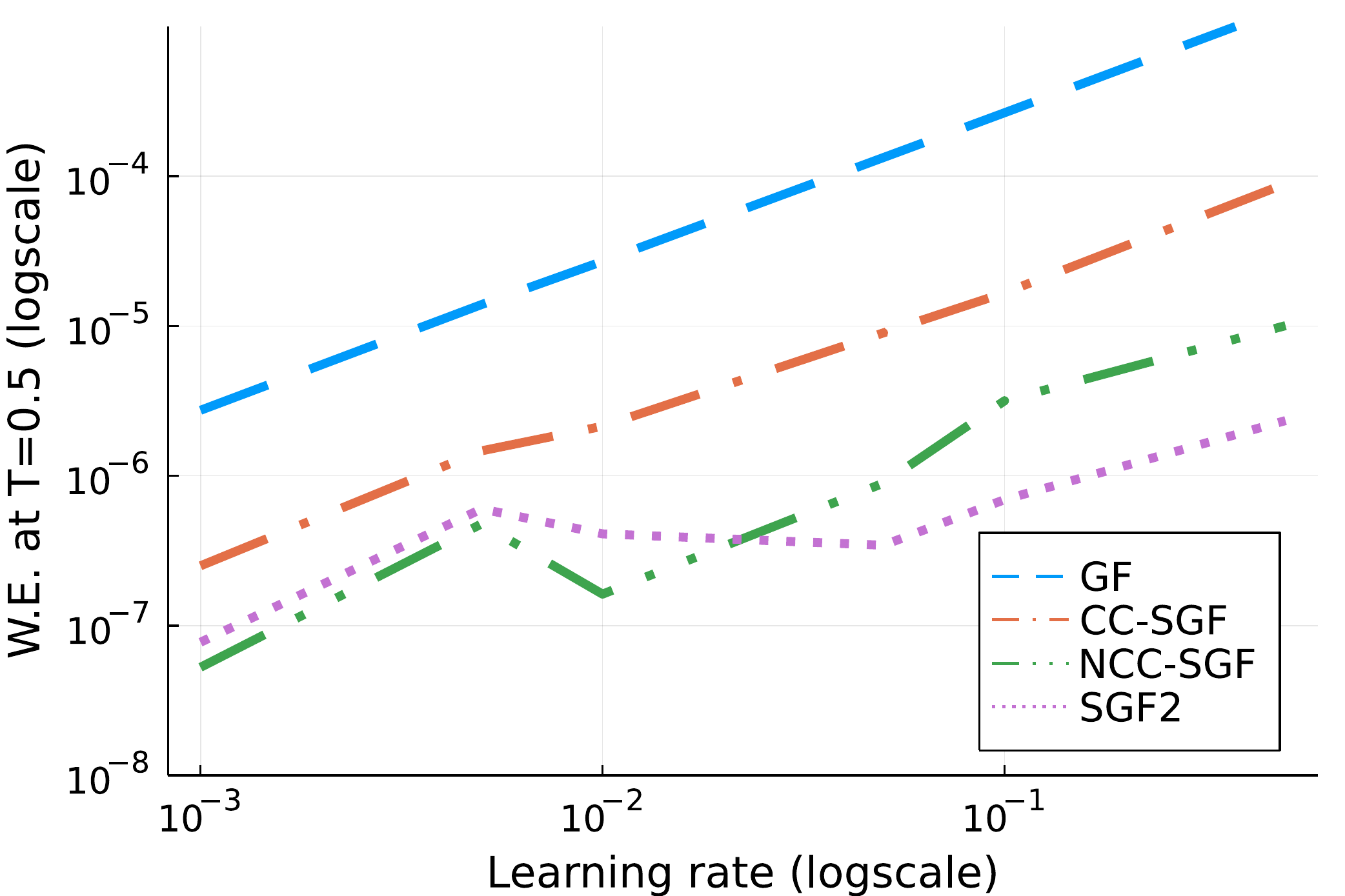} \centering{(1)}
	\end{minipage} 
	\hfill
	\begin{minipage}[r]{0.485\textwidth}
		\includegraphics[width=\linewidth]{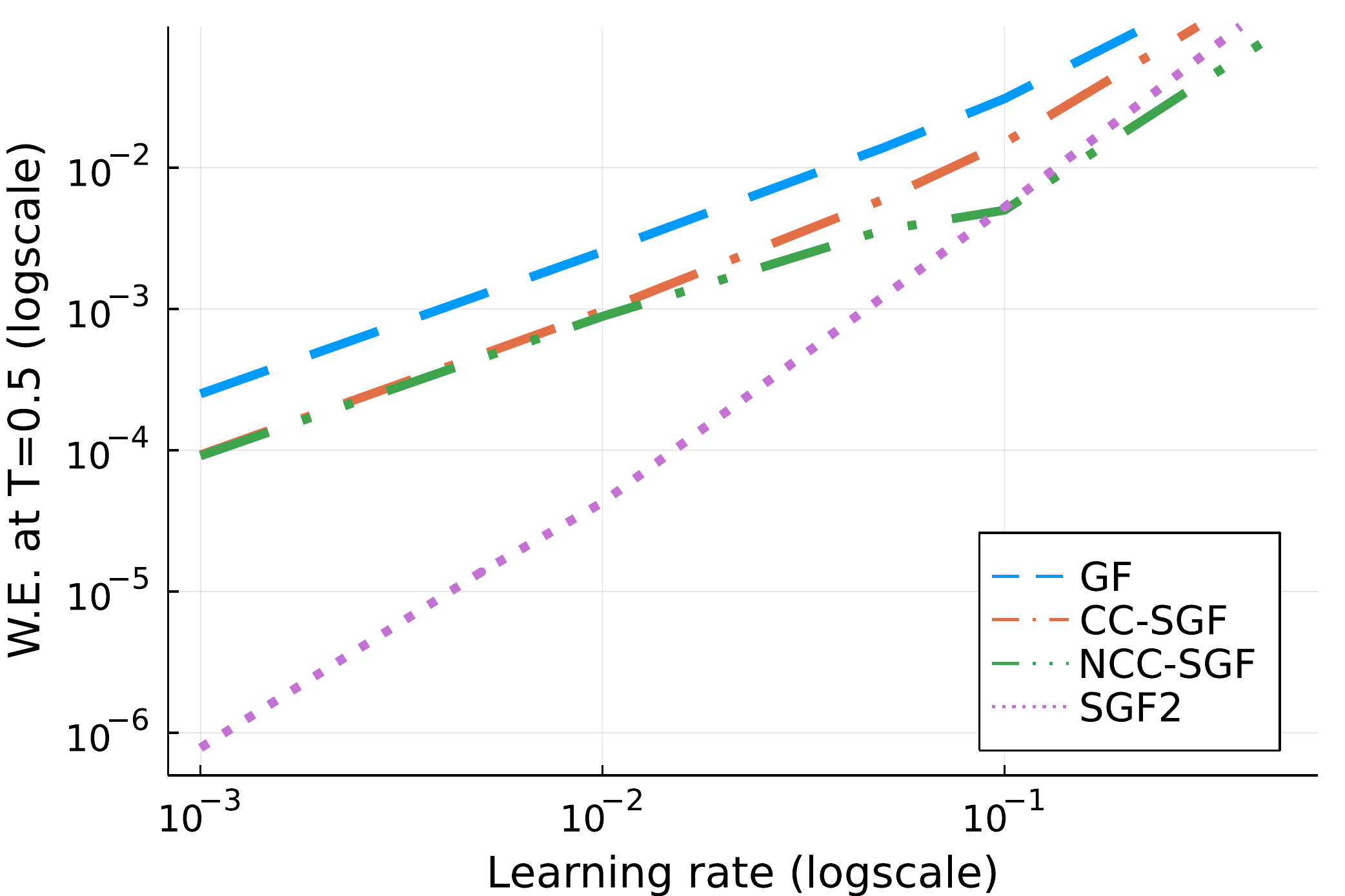}
		\centering{(2)}
	\end{minipage} 
	\begin{minipage}[l]{0.485\textwidth}
		\includegraphics[width=\linewidth]{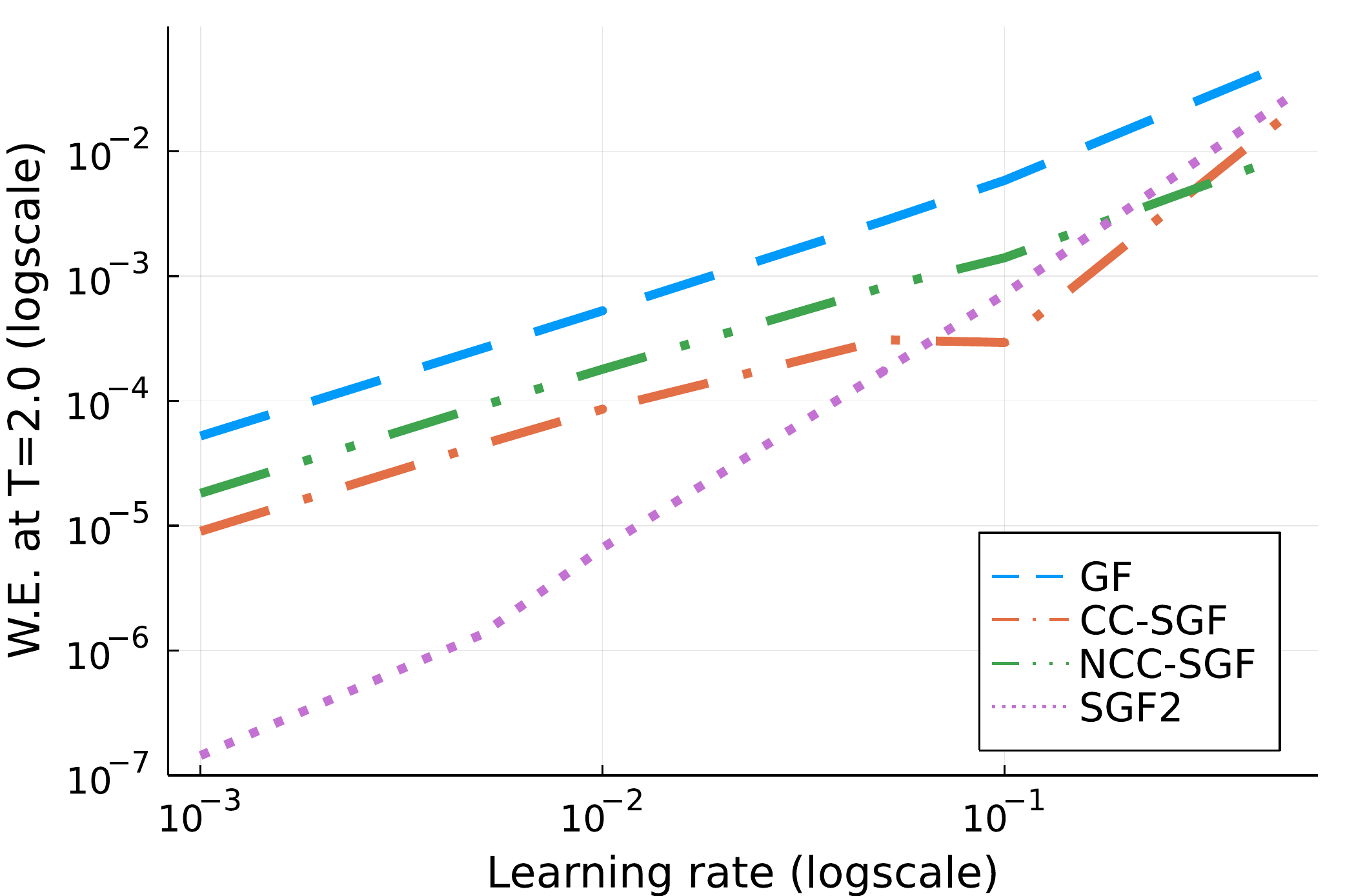}
		\centering{(3)}
	\end{minipage} 
	\hfill
	\begin{minipage}[r]{0.485\textwidth}
		\includegraphics[width=\linewidth]{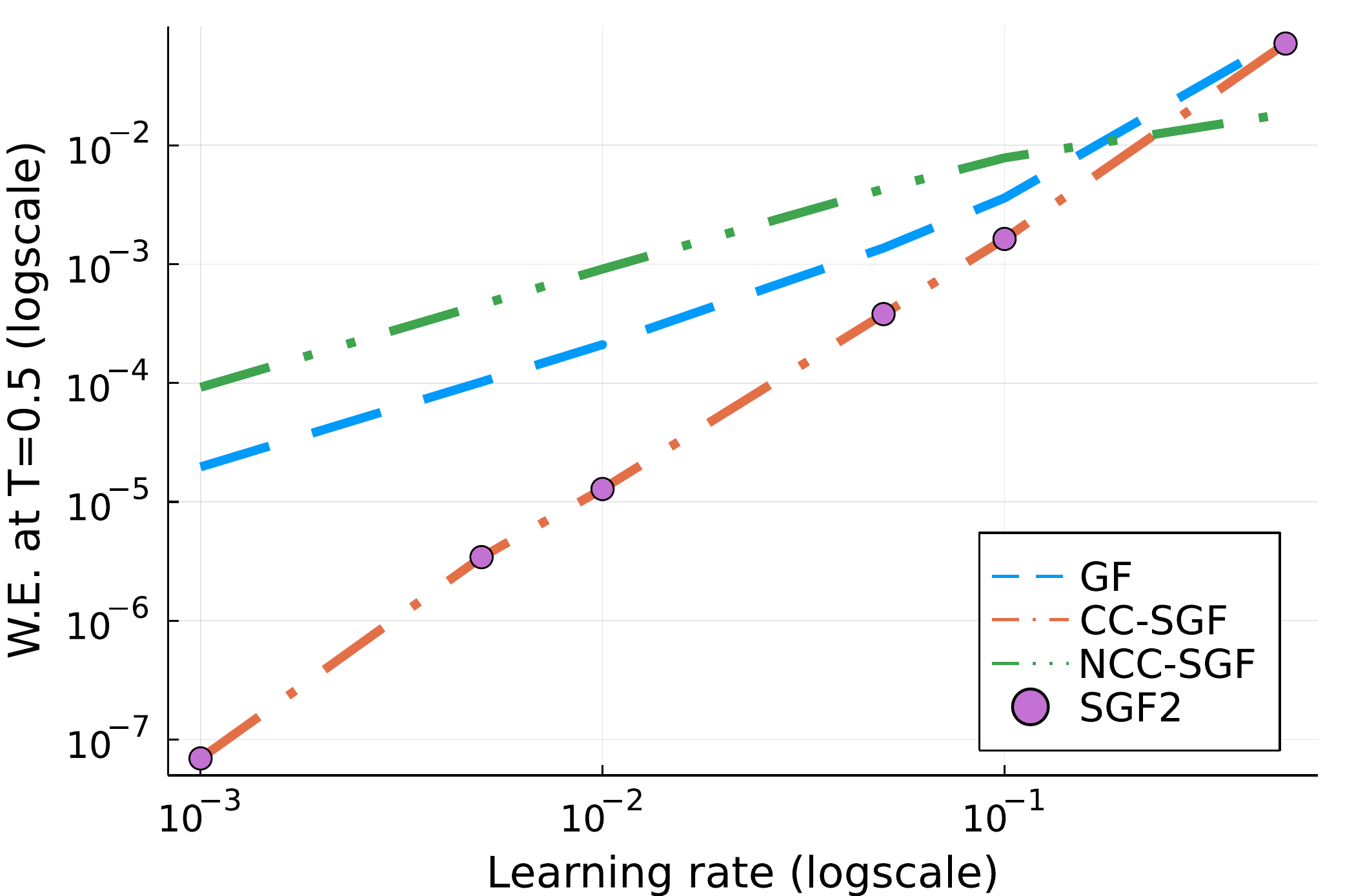}
		\centering{(4)}
	\end{minipage} 
	\begin{minipage}[l]{0.485\textwidth}
		\includegraphics[width=\linewidth]{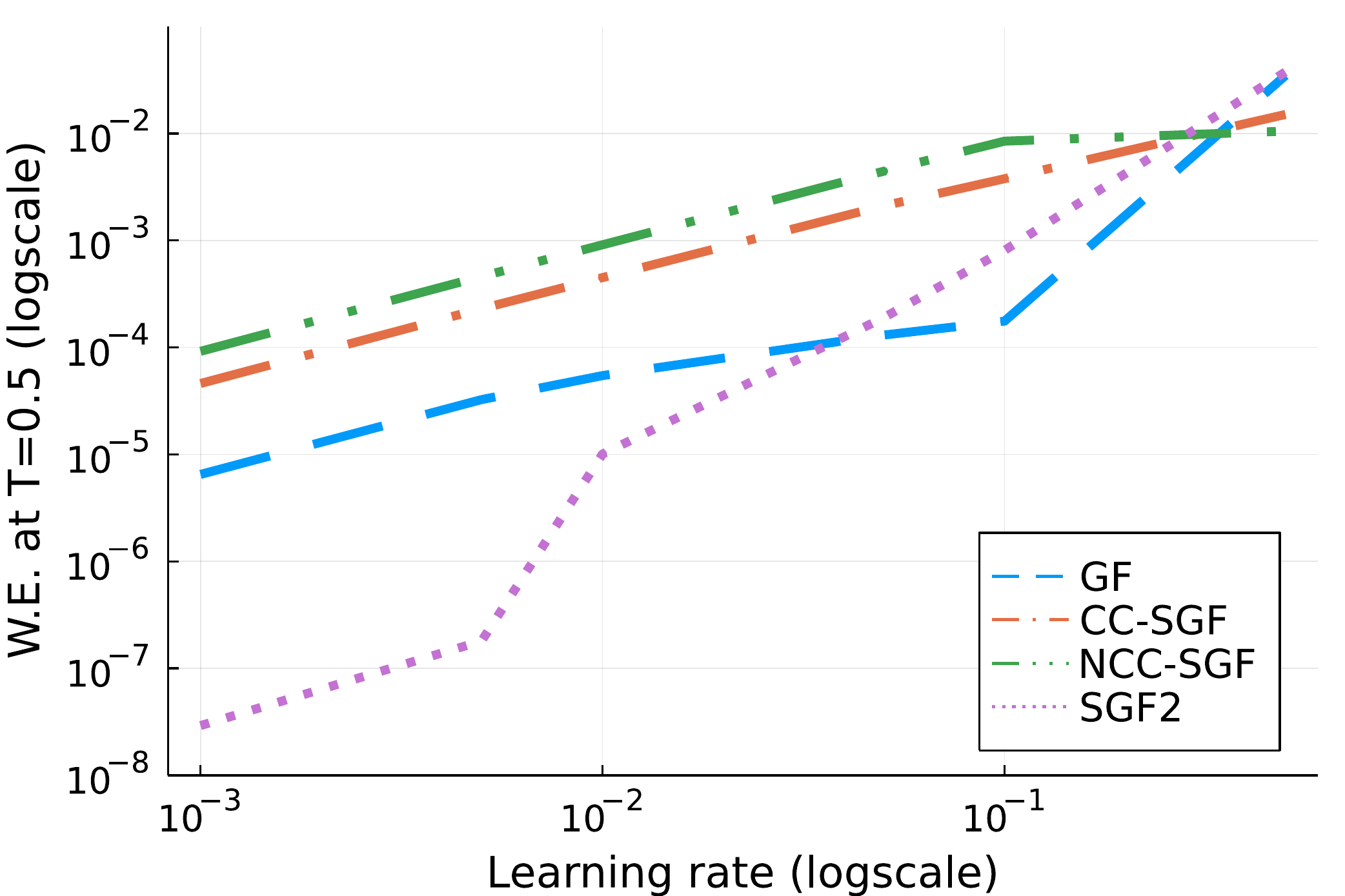}
		\centering{(5)}
	\end{minipage} 
	\hfill
	\begin{minipage}[r]{0.485\textwidth}
		\includegraphics[width=\linewidth]{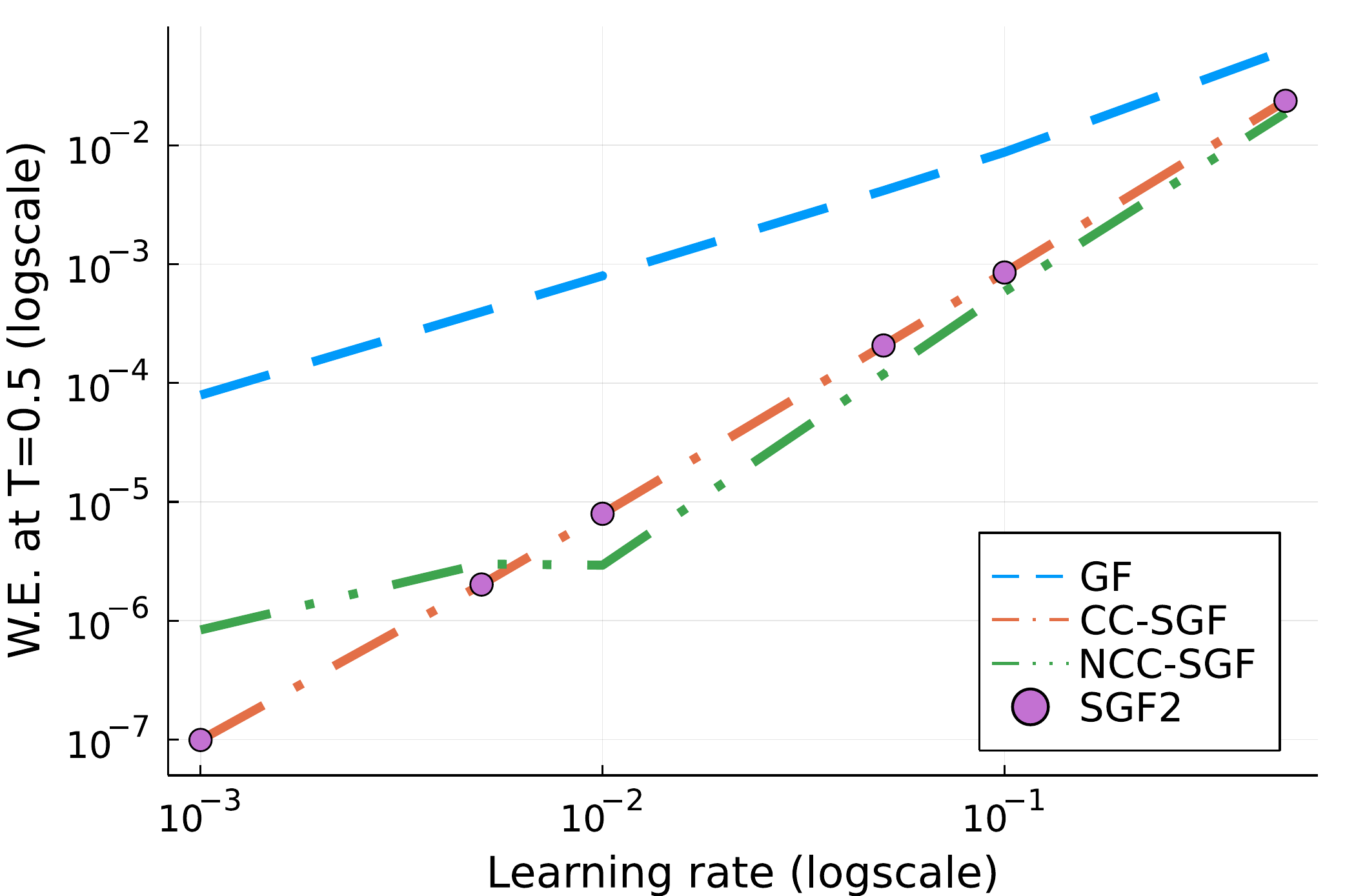}
		\centering{(6)}
	\end{minipage} 
	\caption{The weak error's dependence on the learning rate for several continuous-time approximations to SGD, in various settings. The plots (1)-(5) correspond to the settings (i)-(v) in Theorem \ref{thm:linRegBS}. Further, (4) and (6) also correspond to (vi). Finally, (6) depicts a situation where $X^{\operatorname{CC}} \not\asymp X^{\operatorname{NCC}}$, but the weak errors are close to each other since the common initial value is close to the minimum.}
	\label{fig:kurtcomparison}
\end{figure}

Aside from minor deviations stemming from the Monte Carlo estimation, the empirical results in Figure \ref{fig:kurtcomparison} confirm the theoretical results in the last subsection. In particular, we observe:
\begin{enumerate}[(i)]
	\item The experimental settings (1)---(5) correspond exactly to the settings (i)---(v) in Theorem \ref{thm:linRegBS}. Note that instead of merely varying the batch size $B$ we also varied $\BEq$ and $\BGF$ by choosing different $T$ and distributions of $\bx$.
	\item As indicated by Proposition \ref{prop:TTlinReg}, the experimental setting (6) shows that for $\thet \approx \thet^*$ and only moderately small learning rates there is little difference between the NCC- and the CC-SGF approximations, while gradient flow is lagging behind by neglecting to model the variance of the residuals $\si_\ep^2$.
	\item For $B = B^{\operatorname{Eq}}$, NCC- and CC-SGF are equally good (setting (2)).
	\item For $B = 2B^{\operatorname{Eq}}$ the CC-SGF approximation is of second order\footnote{More precisely, the approximation is of order $2$ for the chosen test function $\cR$. This is a weaker property than being a second-order SME.} (settings (4) and (6)).
	\item The SGF2 approximation is always best, irrespective of batch size.
\end{enumerate}
We remark that the theoretical rates of convergence are difficult to observe without using a high number of Monte Carlo samples.
Moreover, note that in the experiments we always plotted the weak error while Theorem \ref{thm:linRegBS} only applies to the linear error term. The results indicate that the higher order error terms have negligible impact on the total error.

\section{Derivations and Proofs for Section \ref{sec:comparison}}
\label{sec:comparisonproofs}
In this section we give proper justifications for the results of Section \ref{sec:comparison}.
\subsection{Quadratic Objectives}
\label{sec:LEquadObj}
Here, we derive the linear error terms for the three continuous-time approximations when the objective function is quadratic. This includes ordinary linear regression with SGD using the population risk, but the derivation applies more generally. 

Suppose we are given a symmetric and positive definite matrix $\ka \in \R^{d\times d}$ and a quadratic form
\[\cR(\thet) = \frac12\thet^\transp \ka \thet + \thet^\transp c' + d',\quad  \thet \in \R^d,\]
where $c' \in \R^d$ and $d' \in \R$.
Then $\cR$ has a global minimum $\thet^* \in \R^d$ and so we may rewrite it as 
\[\cR(\thet) = \frac12\innp{\ka}{(\thet-\thet^*)^{\otimes 2}} + d, \quad \thet \in \R^d\]
for some $d\in \R$.
Now, consider SGD with $\bar H(\thet) = - \nabla \cR(\thet)$. The gradient flow equation
\[dX_t^0 = - \nabla \cR(X_t^0) = - \ka (X_t^0 - \thet^*)\,dt,\]
has the unique solution
\[X_t^0(\thet) = e^{-t\ka}(\thet - \thet^*) + \thet^*, \quad t\in [0,T],\]
for every initial condition $\thet \in \R^d$.
Note that $X_t^0(\thet) \to X_\infty^0(\thet) = \thet^*$, as $t\to \infty$, for every $\thet\in \R^d$.
Set $v_t(\thet) = \cR(X_T^{0,t}(\thet)), t\in [0,T], \thet \in \R^d$. Then, given $\thet \in \R^d$ and $t\in [0,T]$,
\begin{align*}
v_t(\thet) 	= & \cR(e^{-(T-t)\ka}(\thet - \thet^*) + \thet^*) \\
					= & \frac12 \innp{\ka}{(e^{-(T-t)\ka}(\thet - \thet^*))^{\otimes 2}} + d\\
					= &  \frac12 \innp{\ka}{(e^{-(T-t)\ka}(\thet - \thet^*))(e^{-(T-t)\ka}(\thet - \thet^*))^\transp} + d\\
					= & \frac12 \innp{\ka}{e^{-(T-t)\ka}(\thet - \thet^*)^{\otimes 2}e^{-(T-t)\ka}} + d\\
					= & \frac12 \innp{\ka e^{-2(T-t)\ka}}{(\thet - \thet^*)^{\otimes 2}},
\end{align*}
Here, we used the property
\[\innp{A}{BCD} = \innp{B^\transp A D^\transp}{C}, \quad A,B,C,D\in \R^{d\times d},\]
and the fact that $\ka$ and $e^{-2(T-t)\ka}$ are symmetric and commute with each other.
Further,
\[\nabla \cR(\thet) = \ka(\thet-\thet^*), \nabla^2\cR(\thet) =  \ka.\]
Therefore,
\begin{align*}
\innp{\nabla v_t(\thet)}{\nabla^2 \cR \nabla \cR} = & \innp{\ka e^{-2(T-t)\ka}(\thet - \thet^*)}{\ka^2(\thet-\thet^*)} \\
					= & \innp{\ka^3 e^{-2(T-t)\ka}}{(\thet-\thet^*)^{\otimes 2}},\\
\innp{\nabla v_t}{\nabla^2 \cR \nabla \cR}(X_t^0(\thet)) = & \innp{\ka^3 e^{-2(T-t)\ka}}{(X_t^0-\thet^*)^{\otimes 2}}\\
	= & \innp{\ka^3 e^{-2(T-t)\ka}}{(e^{-t\ka}(\thet - \thet^*))^{\otimes 2}}\\
	= & \innp{\ka^3 e^{-2T\ka}}{(\thet - \thet^*)^{\otimes 2}},\\
\nabla^2 v_t(\thet) = & \ka e^{-2(T-t)\ka}.
\end{align*}
Now, consider $D\in \set{0, \Si(\thet^*), \Si}$ and
\[dX_t^h = - \nabla \cR(X_t^h) + \sqrt{hD(X_t^h)}\,dW_t, \quad t\in [0,T], h \in \cH.\]
By Theorem \ref{thm:firstOrderSME} we have
\begin{align}
	\label{eq:LEquad}
	\LE(X) = & \frac12\int_0^T \innp{\nabla^2 v_t}{(\Si-D)}(X_t^0)-\innp{\nabla v_t}{\nabla^2 \cR \nabla \cR}(X_t^0)\,dt \nonumber\\
	= &  \frac12\int_0^T\innp{\ka e^{-2(T-t)\ka}}{(\Si-D)(X_t^0)}\,dt-\frac12 T\innp{\ka^3 e^{-2T\ka}}{(\thet - \thet^*)^{\otimes 2}}.
\end{align}

\begin{proof}[Proof of Proposition \ref{prop:TTlinReg}]
	Recall Equation \eqref{eq:LEquad}. The first equation in Proposition \ref{prop:TTlinReg} follows by setting $\Si = D$. Moreover,
	\[\Si(X_t^0) - \Si(\thet^*) =2\frac{B^{\operatorname{Eq}}}{B}\ka e^{-t\ka}(\thet - \thet^*)^{\otimes 2}e^{-t\ka}\ka,\]
	and so
\begin{align*}
\innp{\ka e^{-2(T-t)\ka}}{\Si(X_t^0)-\Si(\thet^*)}\,dt = & 2\frac{B^{\operatorname{Eq}}}{B}\innp{\ka e^{-2(T-t)\ka}}{\ka e^{-t\ka}(\thet - \thet^*)^{\otimes 2}e^{-t\ka}\ka}\\
= &2\frac{B^{\operatorname{Eq}}}{B}\innp{\ka^3 e^{-2T\ka}}{(\thet - \thet^*)^{\otimes 2}}.
\end{align*}	
	Therefore, by Equation \eqref{eq:LEquad},
	\begin{align*}
		\LE(X^{\text{CC}}) = & T \left(\frac{B^{\operatorname{Eq}}}{B} - \frac12\right)\innp{\ka^3  e^{-2T\ka}}{(\thet - \thet^*)^{\otimes 2}}.
	\end{align*}
	Moreover,
	\begin{align*}
		\LE(X^0) = & \frac12\int_0^T\innp{\ka e^{-2(T-t)\ka}}{\Si(X_t^0)}\,dt-\frac12 T\innp{\ka^3 e^{-2T\ka}}{(\thet - \thet^*)^{\otimes 2}}\\
		= & \LE(X^{\text{CC}}) + \frac12\int_0^T\innp{\ka e^{-2(T-t)\ka}}{\Si(\thet^*)}\,dt\\
		= & \LE(X^{\text{CC}}) + \frac{1}{2B}\si_\ep^2 \innp{\ka^2}{\int_0^T e^{-2(T-t)\ka}\,dt}.
	\end{align*}
	Finally, since $\ka$ is positive definite, we may simplify
	\[\frac1{2B} \si_\ep^2 \innp{\ka^2}{\int_0^T e^{-2(T-t)\ka}\,dt} = \frac1{4B} \si_\ep^2 \innp{\ka^2}{(1_{d\times d} - e^{-2\ka T}) \ka^{-1}} = \frac1{4B} \si_\ep^2 \innp{\ka}{1_{d\times d} - e^{-2\ka T}}.\]
\end{proof}
The following lemma is used in the proof of Theorem \ref{thm:linRegBS}.
\begin{lemma}
	\label{lem:absComp}
	Let $a,b_1,b_2,B > 0$ with $b_1 < b_2$ and set $e_i = -a + \frac{b_i}B$
	Then,
	\[\sgn(|e_1|-|e_2|) = \sgn\left(B - \frac{b_1 + b_2}{2a}\right).\]
\end{lemma}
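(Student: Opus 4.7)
The proof is a short sign calculation with two moves, so the plan is essentially to write out the right chain of signs.

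First I would reduce $\sgn(|e_1|-|e_2|)$ to $\sgn(|e_1|^2 - |e_2|^2)$, using that the function $x \mapsto x^2$ is monotone on $[0,\infty)$ and $|e_i| \geq 0$. Then I factor
\[|e_1|^2 - |e_2|^2 = e_1^2 - e_2^2 = (e_1 - e_2)(e_1 + e_2).\]
Since $b_1 < b_2$ and $B > 0$, the definition $e_i = -a + b_i/B$ gives $e_1 - e_2 = (b_1 - b_2)/B < 0$, so the sign of the product depends only on $e_1 + e_2$, and we get $\sgn(|e_1| - |e_2|) = -\sgn(e_1 + e_2)$.

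Next I would simplify $e_1 + e_2 = -2a + (b_1+b_2)/B$. Multiplying through by the positive quantity $B/(2a)$ does not change the sign, yielding
\[\sgn(e_1 + e_2) = \sgn\!\left(\frac{b_1+b_2}{2a} - B\right) = -\sgn\!\left(B - \frac{b_1+b_2}{2a}\right).\]
Combining with the previous step gives $\sgn(|e_1| - |e_2|) = \sgn\bigl(B - (b_1+b_2)/(2a)\bigr)$, as claimed.

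There is no real obstacle here; the only thing to be careful about is tracking signs and using the positivity of $a$ and $B$ when clearing denominators. The argument fits comfortably in a handful of lines and requires no auxiliary lemmas.
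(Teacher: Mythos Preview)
Your proof is correct and is actually more streamlined than the paper's. The paper argues by case analysis: it observes that $e_i \ge 0$ iff $B \le b_i/a$, splits into the four possible sign combinations of $(e_1,e_2)$ (noting that one is vacuous since $b_1 < b_2$), and in each surviving case compares $|e_1|$ and $|e_2|$ directly to locate the threshold $B = (b_1+b_2)/(2a)$; it then handles the equality case separately. Your difference-of-squares factorization $|e_1|^2 - |e_2|^2 = (e_1-e_2)(e_1+e_2)$ bypasses all of this: the sign of $e_1 - e_2$ is fixed by $b_1 < b_2$, so everything collapses to the sign of $e_1 + e_2$, and the equality case falls out automatically. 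The paper's approach makes the geometry of the three regimes $B < b_1/a$, $b_1/a \le B \le b_2/a$, $B > b_2/a$ more visible (which matters for the application in Theorem~\ref{thm:linRegBS}), but yours is the cleaner way to establish the lemma itself.
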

\begin{proof}
	Note that $B \leq \frac{b_i}{a}$ if and only if $e_i \geq 0$, and $B \geq \frac{b_i}a$ if and only if $e_i \leq 0$. Moreover,
	\[\frac{b_1}{a} < \frac{b_1 + b_2}{2a} < \frac{b_2}{a}.\]
	Thus, we have $|e_1| < |e_2|$ if and only if
	\begin{enumerate}[(a)]
		\item $B \leq \frac{b_1}{a}$ and $e_1 < e_2$, or
		\item $\frac{b_2}{a} < B \leq \frac{b_1}{a}$ and $e_1 < - e_2$, or
		\item $\frac{b_1}{a} < B \leq \frac{b_2}{a}$ and $-e_1 < e_2$, or
		\item $B > \frac{b_2}{a}$ and $-e_1 < -e_2$.
	\end{enumerate}
	Since $e_1 < e_2$, case (d) can never occur and (a) is equivalent to $B \leq \frac{b_1}{a}$. Further, since $b_1 < b_2$, (b) is also impossible.
	Moreover, (c) is equivalent to
	\[\frac{b_1}{a}\leq B < \frac{b_1 + b_2}{2a}.\]
	Putting (a) and (c) together yields
	\[|e_1| < |e_2| \Ioif B < \frac{b_1 + b_2}{2a}.\]
	Finally, since
	\[|e_1| = |e_2| \Ioif e_1 = - e_2 \Ioif B = \frac{b_1 + b_2}{2a},\]
	the result follows.
\end{proof}

\begin{proof}[Proof of Theorem \ref{thm:linRegBS}]
	Set 
	\[a := \frac 12 T \innp{\ka^3 e^{-2T\ka}}{(\thet - \thet^*)^{\otimes 2}}, b := B^{\operatorname{Eq}} T \innp{\ka^3 e^{-2T\ka}}{(\thet - \thet^*)^{\otimes 2}}, c := \frac14 \si_\ep^2 \innp{\ka}{1_{d\times d} - e^{-2\ka T}} > 0.\] By definition and Proposition \ref{prop:TTlinReg}
	\[\LE(X^{\operatorname{NCC}}) = -a,\quad \LE(X^{\operatorname{CC}}) = -a + \frac{b}{B}, \quad \LE(X^0) = -a + \frac{b}{B} + \frac{c}{B}.\]
	Lemma \ref{lem:absComp} implies
	\begin{align*}
		B < \frac b {2a} \Ioif |\LE(X^{\operatorname{NCC}}) | < |\LE(X^{\operatorname{CC}})|, & \quad B > \frac b {2a} \Ioif |\LE(X^{\operatorname{NCC}}) | > |\LE(X^{\operatorname{CC}})|, \\
		B < \frac{b+c}{2a} \Ioif |\LE(X^{\operatorname{NCC}}) | < |\LE(X^0)|, & \quad B > \frac{b+c}{2a} \Ioif  |\LE(X^{\operatorname{NCC}}) | > |\LE(X^0)|,\\
		B < \frac{2b + c}{2a} \Ioif |\LE(X^{\operatorname{CC}})| < |\LE(X^0)|, & \quad B > \frac{2b + c}{2a} \Ioif |\LE(X^0)| > |\LE(X^{\operatorname{CC}})|.
	\end{align*}
	Further,
	\[B^{\operatorname{Eq}} = \frac{b}{2a}, \quad \frac{c}{2a} = \frac{\si_\ep^2 \innp{\ka}{1-e^{-2T\ka}}}{4T\innp{\ka^3e^{-2T\ka}}{(\thet-\thet^*)^{\otimes 2}}}, \quad B^{\operatorname{GF}} = \frac{2b + c}{2a},\]
	and so the cases (i) - (iv) are proven. Finally,
	\begin{align*}
		\LE(X^{\operatorname{CC}}) = 0 &\Ioif B = \frac{b}{a} = 2 B^{\operatorname{Eq}}, \\
		\LE(\XCC) = 0 &\Ioif B = \frac{b+c}{a} = \frac{2b+c}{2a} + \frac{c}{2a} = B^{\operatorname{GF}} +  B^{\operatorname{GF}} - 2  B^{\operatorname{Eq}},
	\end{align*}
	proving (v) and (vi).
\end{proof}

\begin{remark}
	There are few additional statements one can make, adding to the list in Theorem \ref{thm:linRegBS}. Firstly,
	\begin{enumerate}[(i)]
		\item $X^0 \asymp \XNCC$, if $B = \BGF - \BEq$,
		\item $X^0 \asymp \XCC$, if $B = \BGF$,
		\item $\LE(X^0) = 0$, if $B = 2(B^{\operatorname{GF}} - B^{\operatorname{Eq}})$.
	\end{enumerate}
	Note however that these will almost never occur in practice because it is unlikely that $\BGF$ is an integer. That is, unless one specifically designs the problem in such a way. On the other hand, notice that $\BEq = 1$ if $\bx$ is Gaussian and $\BEq = 4$ if $d = 1$ and $\bx$ is exponentially distributed and so the case (ii) in Theorem \ref{thm:linRegBS} can realistically occur in applications.
	
	Moreover, note that for $\BEq = 0$ we have $\Si(\thet) = \Si(\thet^*)$ for all $\thet\in \R^d$ and so $\XCC = \XNCC$. In particular, this happens for $d = 1$ and if $\bx$ has a symmetric Rademacher distribution, since then $\Kurt x = 1$ (recall example \ref{ex:4thMomSetts}).
	Thus, we are left with the cases
	\begin{enumerate}[(i)]
		\item $X^0 \prec \XNCC$, if $B < \BGF$,
		\item $\XNCC \prec X^0$, if $B > \BGF$.
	\end{enumerate}
\end{remark}

\subsection{Explicit Formulas for the Expected Risk of the Continuous-Time Approximations of SGD for Linear Regression}
\label{sec:explFormLinReg}
Here, we derive explicit formulas for the expected (excess) population risk for four\\ continuous-time approximation of SGD for linear regression. These are used in the numerical experiments to compute the continuous-time half of the weak error.
Firstly, we consider the following families of modified equations
\begin{align*}
	dX_t^0= & - \ka(X_t^0 - \thet^*)\,dt, \nonumber \\
	d\XNCCh{h}_t = & - \ka(X_t^h - \thet^*)\,dt + \sqrt{\frac h B} \sqrt{2\BEq\ka (\XNCCh{h}_t - \thet^*)^{\otimes 2} \ka + \si_\ep^2 \ka} \,dW_t, \nonumber \\
	dX_t^{\operatorname{CC},h}	= & - \ka(X_t^{\operatorname{CC}, h} - \thet^*)\,dt + \sqrt{\frac h B \si_\ep^2 \ka } \,dW_t.\\
	dX_t^{2,h} = &-\ka\left(1_{d\times d} + \ka\frac h2\right) (X_t^h - \thet^*)\,dt + \sqrt{\frac h B} \sqrt{2\BEq\ka (\thet - \thet^*)^{\otimes 2} \ka + \si_\ep^2 \ka}\,dW_t.
\end{align*}
For simplicity we set $d = 1$ and $\BEq = \frac12(\Kurt \bx - 1)$.
The next Proposition gives explicit formulas for the expected \emph{excess population risk} $\E[\cR^e(Y_t)]$ for $Y \in \set{X^0, \XNCCh{h}, X^{\operatorname{CC},h}, X^{2,h}}$, where $\cR^e(\thet) = \frac12(\thet-\thet^*)^2$. The actual population risk is also given by $\cR = \ka \cR^e + \frac{\si_\ep^2}{2}$. Note that
\[\cR^e(\thet) - \cR^e(\tilde \thet) = \frac{1}{\ka}(\cR(\thet) - \cR(\tilde\thet)), \quad \thet,\tilde \thet \in \R.\]
\begin{proposition}
	\label{prop:contTimeLinRegRisk}
	Define
	\[\zeta^h = 1 - \frac h {2B} \ka(\Kurt \bx - 1), \quad \xi^h := \zeta^h + \frac h2 \ka = 1 + \frac{h}{2B}\ka (B + 1 - \Kurt \bx), \quad h\in [0,1).\]
	Then, we have
	\begin{align*}
		\cR^e(X_t^0) = & e^{-2\ka t}\cR^e(\thet), \\
		\E[\cR^e(X_t^{\operatorname{CC},h})] = & e^{-2\ka t}\cR^e(\thet) + \frac{h \si_\ep^2}{4B} (1 - e^{-2\ka t}),\\
		\E[\cR^e(\XNCCh{h}_t)] = &e^{-2\ka \zeta_h t}\cR^e(\thet) + \frac{h\si_\ep^2}{4B \zeta_h}(1 - e^{-2\ka \zeta_h t}),\\
		\E[\cR^e(X_t^{2,h})] = &e^{-2\ka \xi_h t}\cR^e(\thet) + \frac{h\si_\ep^2}{4B \xi_h}(1 - e^{-2\ka \xi_h t}),
	\end{align*}
	for all $h\in (0,1)$ and $t\geq 0$.
\end{proposition}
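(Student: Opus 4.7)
The plan is to reduce all four identities to computing the second moment of $Y_t := X_t - \thet^*$, where $X$ denotes whichever of the four processes is under consideration. Indeed, the excess risk satisfies $\cR^e(\thet') = \tfrac12(\thet' - \thet^*)^2$, so $\E[\cR^e(X_t)] = \tfrac12 \E[Y_t^2]$. The gradient flow case is immediate: $Y$ solves $\dot Y_t = -\ka Y_t$, so $Y_t = e^{-\ka t}(\thet - \thet^*)$, and squaring yields the first identity. For the three SDE-based approximations I observe that each diffusion coefficient has the form $\sqrt{a y^2 + c}$ with $c = h\si_\ep^2 \ka/B > 0$ (and $a = 0$ in the CC case), which is globally Lipschitz in $y$, as is the drift. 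Standard SDE theory therefore gives existence, uniqueness, and finite moments of every order; in particular $t \mapsto \E[Y_t^2]$ is finite and continuously differentiable, and the Itô stochastic integral that will appear below is a true martingale whose expectation vanishes.

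For each stochastic case I would then apply Itô's formula to $Y_t^2$, take expectations, and collect the drift and quadratic-variation contributions. The output is a scalar linear inhomogeneous ODE of the form
\begin{equation*}
\frac{d}{dt}\E[Y_t^2] \;=\; -2\ka\,\la^h\,\E[Y_t^2] \;+\; \frac{h\si_\ep^2 \ka}{B},
\end{equation*}
with $\la^h = 1$ for CC (the deterministic diffusion contributes only the additive constant $h\si_\ep^2\ka/B$), with $\la^h = \zeta^h$ for NCC (the state-dependent piece $(\Kurt \bx - 1)\ka^2 Y_t^2$ appearing in the quadratic variation combines with the drift coefficient $-2\ka$ to produce $-2\ka\zeta^h$), and with $\la^h = \xi^h$ for SGF2 (where the NCC-style diffusion contributes the same $\zeta^h$ and the modified drift $-\ka(1 + h\ka/2)Y_t$ contributes the extra $h\ka/2$). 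The algebraic identity $\xi^h = \zeta^h + h\ka/2$ is immediate from the definitions.

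Each of these ODEs has the explicit closed-form solution
\begin{equation*}
\E[Y_t^2] \;=\; e^{-2\ka\la^h t}(\thet - \thet^*)^2 \;+\; \frac{h\si_\ep^2}{2B\la^h}\bigl(1 - e^{-2\ka\la^h t}\bigr),
\end{equation*}
valid for $\la^h \neq 0$, obtained by multiplying through by the integrating factor $e^{2\ka\la^h t}$ and integrating. Dividing by two yields the three remaining identities after plugging in the respective values of $\la^h$. The only step with any subtlety is the clean emergence of $\xi^h$ in the SGF2 case, where the higher-order drift correction and the state-dependent diffusion combine exactly so that the coefficient of $\E[Y_t^2]$ in the ODE collapses to $\xi^h$; this is what I expect to be the main, but rather mild, obstacle. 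The remainder is routine calculation with a one-dimensional linear ODE.
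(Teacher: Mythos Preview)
Your proposal is correct and follows essentially the same route as the paper: apply It\^o's formula to $\cR^e(X_t)=\tfrac12(X_t-\thet^*)^2$, take expectations, and solve the resulting scalar linear ODE. The only cosmetic difference is that the paper handles the CC case via the explicit Ornstein--Uhlenbeck representation rather than the ODE, whereas you treat all three stochastic cases uniformly with the parameter $\la^h$; both approaches are equivalent here.
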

\begin{proof}
	Recall that
	\[X_t^0 = e^{-\ka t}(\thet - \thet^*) + \thet^*,\]
	and so
	\[\cR^e(X_t^0) = e^{-2\ka t}\cR^e(\thet).\]
	Further, $X^{\text{CC},h}$ is an Ornstein-Uhlenbeck process and so
	\[X_t^{\operatorname{CC},h} = X_t^0 + \sqrt{\frac{h \si_\ep^2}{2B}} W_{1 - e^{-2\ka t}}.\]
	Hence,
	\[\E[\cR^e(X_t^{\operatorname{CC},h})] = e^{-2\ka t}\cR^e(\thet) + \frac{h \si_\ep^2}{4B} (1 - e^{-2\ka t}).\]
	Now, by \tIto's formula
	\begin{align*}
		d\cR^e(\XNCCh{h}_t) = &-\ka(\XNCCh{h}_t - \thet^*)^2 + \frac h {2B} \ka^2(\Kurt \bx - 1)(\XNCCh{h}_t- \thet^*)^2 + \frac h {2B} \ka \si_\ep^2\,dt + M_t\\
		= &\left(\frac h B \ka^2(\Kurt \bx - 1) - 2\ka\right) \cR^e(\XNCCh{h}_t) + \frac h {2B} \ka \si_\ep^2\,dt + M_t
	\end{align*}
	where $M$ is a martingale starting in $0$, a.s. Hence, by optional stopping
	\[d\E[\cR^e(\XNCCh{h}_t)] = -2 \ka \zeta_h \E[\cR^e(\XNCCh{h}_t)] + \frac h {2B} \ka \si_\ep^2\,dt,\]
	and so
	\[\E[\cR^e(\XNCCh{h}_t)] = e^{-2\ka \zeta_h t}\cR^e(\thet) + \frac{h\si_\ep^2}{4B \zeta_h}(1 - e^{-2\ka \zeta_h t}).\]
	Similarly,
	\[\E[\cR^e(X_t^{2,h})] = e^{-2\ka \xi_h t}\cR^e(\thet) + \frac{h\si_\ep^2}{4B \xi_h}(1 - e^{-2\ka \xi_h t}).\]
\end{proof}


\chapter{SMEs for SGD without replacement}
\label{chap:towardsSGDoSME}
Training algorithms using epochs, that is those based on stochastic gradient descent without replacement (SGDo) are predominantly used to train machine learning models in practice. However, the mathematical theory of SGDo and related algorithms remain underexplored compared to their \enquote{with replacement} and \enquote{one-pass} counterparts. Further, there is no existing theory of modified equations for SGDo. 
In this chapter, we propose a stochastic, continuous-time approximation to SGDo with additive noise based on a \emph{Young differential equation} driven by a stochastic process we call \emph{epoched Brownian motion}. We show its usefulness by proving the convergence of the solution of this continuous-time approximation for strongly convex objectives and the learning rate schedule $u_t = \frac{1}{(1+t)^\be}, \be \in (0,1)$, and computing an upper bound on the asymptotic rate of convergence.

This chapter is inspired by and significantly expands on \citet{perko2022towards}.

\section{Introduction}
Consider a risk minimization problem $(R : \R^d \times \cZ \to [0,\infty), \nu)$ on a measurable space $\cZ$. Fix an \tiid\ sequence $(\bz(n))_{n\in \N_0}$ in $\cZ$ with $\bz(0) \sim \nu$. For now, consider one-pass SGD with a sequence of learning rates $(\eta_n)_{n\in \N}$, given by
\begin{equation}
	\label{eq:introSGDlr}
	\chi_{n+1} = \chi_n - \eta_n \nabla R_{\bz({n})} (\chi_n), \quad h \in(0,1), n \in \N_0.
\end{equation}
In order to better understand SGD several authors have proposed approximating their dynamics by the solution of an SDE. In particular, in the case of a constant learning rate ($\eta_n = h$), \citet{mandt2015continuous} propose the following family of \tSDE s as an approximation of \eqref{eq:introSGDlr}
\[dY_t^h = - \nabla \cR(Y_t^h)\,dt + \sqrt{h}\si\,dW_t.\]

Here, $\si$ is a symmetric and positive semi-definite matrix approximating the gradient covariance in a \enquote{region of interest}, $W$ is a $d$-dimensional Brownian motion, and $\cR = \E R_{z(0)}$. Time is scaled in such a way that heuristically we have $Y_{nh}^h \approx \chi_n$.
Consider now a learning rate schedule $u : [0,\infty) \to [0,1]$ such that $\eta_n = h u_{nh}$.
\citet{Li15} further investigated this case of a non-constant learning rate schedules, and they heuristically used the following non-homogeneous dynamics
\begin{equation}
\label{eq:SME1}
dY_t^h = - u_t\nabla \cR(Y_t^h)\,dt + u_t\sqrt{h\Si(Y_t^h)}\,dW_t.
\end{equation}
The presence of $u$ in both coefficients can be motivated as follows. By multiplying the stochastic gradients with $u$, the expected gradients are multiplied by $u$ and their covariance by $u^2$. Thus, the diffusion coefficient - being the square root of the covariance is multiplied by $u$ as well.
While high learning rates seem to promise fast convergence via the drift, they also increase the variance of the gradients. A well-chosen learning rate schedule should thus balance both effects to ensure convergence.

Theorem \ref{thm:firstOrderSME} implies that under certain regularity conditions \eqref{eq:SME1} is a first-order SME of SGD. However, we know from Chapter \ref{chap:compare} that, among first-order SMEs, choosing a state-dependent diffusion coefficient is not always better than a state-independent one (see in particular Theorem \ref{thm:linRegBS}).
Therefore, in the following we elect to work with the simpler additive noise approximation of the form 
\begin{equation}
\label{eq:introCCSGFu}
dY_t^h = - u_t\nabla \cR(Y_t^h)\,dt + \sqrt{h}u_t\si\,dW_t,
\end{equation}
in the spirit of \citet{mandt2015continuous}.

The Markov property of Brownian motion says that the future is independent of the past given the current state. In the approximation \eqref{eq:SME1} this reflects the idea that all future data points of SGD are new data points, independent of those we have seen so far.

Consider now a finite \tiid\ sequence $(\bz(n))_{n=0}^{N-1}$ with $\bz(0)\sim \nu$, and the following variant of SGD, called SGD \emph{without replacement (with finite data)} (SGDo)
\begin{equation}
	\label{eq:genericSGDo}
	\chi_{n+1} = \chi_n - hu_{nh} \nabla R_{\bz({\pi^{\floor{n/N}}(n \modu N)})} (\chi_n), \quad n \in \N_0.
\end{equation}
Here, $(\pi^j)_{j\in \N_0}$ is a sequence of permutations of the set $\set{0,\dots, N-1}$. Wlog we set $\pi^0 = \id{}$.
Then the dynamics \eqref{eq:genericSGDo} and \eqref{eq:introSGDlr} coincide for $n\in \set{0,\dots, N-1}$. In the following \emph{epoch}, i.e.\ for $n \in \set{N,\dots, 2N-1}$, we reuse the same finite sample $(\bz(k))_{k=0}^{N-1}$ in perhaps a different order $(\bz(\pi^1(k)))_{k=0}^{N-1}$. We continue on like this in subsequent epochs using the sequence of permutations $(\pi^j)_{j\in \N_0}$. In general, we allow $(\pi^j)_{j\in \N_0}$ to be random, but independent of $(\bz(n))_{n=0}^{N-1}$.

For $t\in [0,T]$ with $T = Nh$, Equation \eqref{eq:introCCSGFu} is a reasonable approximation of \eqref{eq:genericSGDo}. However, Equation \eqref{eq:genericSGDo} no longer defines a Markov process for $n\geq N$ on the state space $\R^d$, because it cannot be written in the form $\chi_{n+1} = g(\chi_n, Z_n)$ for some \tiid\ sequence $(Z_n)_{n\in \N_0}$.
Thus, the Markov property for the driver $W$ in Equation \eqref{eq:introCCSGFu} is no longer appropriate if we try to find a continuous-time model for SGDo (for finite data).

For now, let us consider \emph{single-shuffle} SGDo, that is we choose\footnote{Technically, in the literature on SGDo \enquote{single shuffle} means \enquote{shuffle once}. We assume no shuffling here because it makes no difference: the distribution of the sample is unaffected.} $\pi^j = \id{}, j \geq 1$.
Given $T > 0$ and a Brownian motion $W : \Om \times [0,T]\to \R^d$, define
\[\hat W_t := W_{\frk{t/T}T} + \floor{t/T}W_T, \quad t\geq 0.\]
Here, $\frk r = r - \floor r$ is the fractional part of $r\in \R$.
Note that $\hat W$ is a Brownian motion when restricted to the interval $[0,T)$, and $\hat W$ satisfies
\begin{equation*}
\label{eq:epochProperty}
\hat W_{t+jT} = \hat W_t + jW_T, \qquad t \geq 0, j\in \N_0.
\end{equation*}
Note that $\hat W$ is almost surely continuous and even locally Hölder continuous.
The increments of $\hat W$ on $[jT, (j+1)T]$ coincide with the increments of $W$ on $[0, T]$ (up to translating time).  
We call $\hat W$ a \emph{single shuffle Brownian motion} with period $T$. The fact that we reuse the same Brownian path $(W_t)_{t\in [0,T]}$ corresponds to using the same data points in the same order in later epochs (single-shuffle).

By replacing the driving path of the diffusion in \eqref{eq:introCCSGFu} by single shuffle Brownian motion, we arrive at the following differential equation with additive noise
\begin{equation}
	\label{eq:epochedOU}
	dY_t = - u_t \nabla \cR(Y_t) \,dt + u_t \sqrt h \si\,d\hat W_t.
\end{equation}
Since $\hat W$ is not a semimartingale we cannot interpret the term $ u_t\,d\hat W_t$ using \tIto{} integration. Instead, we interpret it pathwise as the Young integral 
\[\int_0^t u_s \,d\hat W_s = \lim_{|\cP| \to 0} \sum_{[r,s]\in \cP} u_r (\hat W_s - \hat W_r),\]
where the limit is taken with respect to all partitions of $[0,t]$ with mesh size $|\cP|$. The integral exists for example if $u$ is Lipschitz. Thus, we understand \eqref{eq:epochedOU} as Young differential equation.

More generally, we allow the driver $\hat W$ in Equation \eqref{eq:epochedOU} to be an \emph{epoched Brownian motion} (EBM). An EBM $\hat W$ is roughly speaking a single shuffle Brownian motion, except on $[jT, (j+1)T]$ the increments of $\hat W$ may be \enquote{infinitesimally shuffled} according to $\pi^j$ (see Section \ref{sec:ebms} for a proper explanation). We can thereby encode different shuffling schemes for SGDo in the approximating equation \eqref{eq:epochedOU}.

Previous works on SGDo have mainly focused on comparing the convergence rates of SGD with replacement and SGDo, where empirically the latter is known to converge faster. \citet{shamir2016} establishes lower bounds on \emph{in expectation} convergence rates for SGDo with constant learning rates.
\citet{nagaraj2019} use the method of exchangeable pairs to derive non-asymptotic in expectation convergence results for general smooth, strongly convex functions.

\citet{gurbuzbalaban_why_2021} focuses on the speed of in expectation and almost sure convergence for single-shuffle and random reshuffling SGDo. The later algorithm uses an \tiid\ sequence $(\pi^j)_{j\in \N_0}$ of permutations where $\pi^0$ uniformly distributed. Using martingale techniques, they prove convergence results for learning rates decaying like the schedule $u_t = \frac{1}{(1+t)^\be}, t\geq 0$ with $\be \in(1/2,1]$, and strongly convex $\cR$.

To demonstrate the usefulness of our heuristic SME approximation \eqref{eq:epochedOU}, we study the almost sure convergence of the solution of \eqref{eq:epochedOU} for Lipschitz and strongly convex $\cR$ with Hölder continuous Hessian matrix, and with $u_t = \frac{1}{(1+ct)^\be}, t\geq 0$ with $\be \in (0,1)$ and $c > 0$. Here, we leave out the case $\be = 1$ for brevity reasons. On the other hand, we cover the case $\be \in (0,1/2]$ as well since our main strategy uses the Young-Lóeve inequality instead of martingale techniques.
We show convergence to a random point depending on $\hat W_T$ and compute an asymptotic upper bound on the convergence speed. Our result for the single shuffle cases matches previous results by \citet{gurbuzbalaban_why_2021}. In the case of general random permutations, our results suggest markedly better upper bounds than the best results known for random reshuffling. Note that, heuristically speaking, $\hat W_T$ encodes information about the random sample $(\bz(n))_{n=1}^N$ including the sample size $N$, which is why the limit depends on it. In the setting of linear regression, we identify the random limit with the (random) OLS estimator which further substantiates the legitimacy of our approximation.

\section{SMEs driven by epoched Brownian motions}
\label{sec:ebms}
Let $(\Om, \cF_\Om, \P)$ be a complete probability space, $d\in \N$ and $T > 0$.
Recall that $\hat W$ is a single shuffle Brownian motion (of period $T$) if there exists a Brownian motion $W : \Om \times [0,T]\to \R^d$ with
\[\hat W_t := W_{\frk{t/T}T} + \floor{t/T}W_T, \quad t\geq 0.\]

Note that given a single shuffle Brownian motion $\hat W$ we can define a \emph{Brownian bridge} $B : \Om \times [0,1] \to \R^d$ from $0$ to $0$ by setting
\[B_t = \frac{1}{\sqrt T}(\hat W_{tT} - t\hat W_T),\quad t\in [0,1].\]
Then,
\[\hat W_t = \sqrt T B_{\frk{t/T}} + \frac t {\sqrt T} V, \quad t\geq 0.\]
with $V := \frac{1}{\sqrt T} \hat W_T$ a standard Gaussian.

More generally, we may replace the single Brownian bridge $B$ with a sequence of bridges $(B^j)_{j\in \N}$, one for each epoch. This motivates the following definition.
\begin{defi}
	A stochastic process $X : \Om \times [0,\infty) \to \R^d$ is called an \emph{epoched Brownian bridge} if there exists a jointly Gaussian\footnote{Jointly Gaussian family means $(B^{j_1}_{t_1}, \dots, B^{j_m}_{t_m})$ is Gaussian for all $j_1,\dots, j_m \in \N_0$ and $t_1,\dots, t_m \in [0,1]$.} family $(B^j : \Om \times [0,1] \to \R^d)_{j \in \N_0}$ of Brownian bridges from $0$ to $0$, such that
	\[X_t = B^{\floor t}_{\frk t}, \quad t\geq 0.\]
	A stochastic process $\hat W : \Om \times [0,\infty) \to \R^d$ is called an \emph{epoched Brownian motion} of period $T > 0$ if there exists an epoched Brownian bridge $X$ and a random variable $V\sim \cN(0, 1_{d\times d})$ independent of $X$, such that
	\[\hat W_t = \sqrt T X_{t/T} + \frac{t}{\sqrt T} V, \quad t\geq 0.\]
\end{defi}

We highlight the following examples:
\begin{enumerate}[(a)]
	\item Single shuffle (SS): $B^0 = B^1 = \dots$,
	\item Random reshuffling (RR): $(B^j)_{j \in \N_0}$ are independent,
	\item Flip-flop single shuffle: $B^0 = B^2 = \dots$, and $B^{j+1}_t = -B^j_{1-t}, t\in [0,1]$,
	\item Flip-flop random reshuffling: $(B^{2j})_{j\in \N_0}$ are independent, $B^{j+1}_t = -B^j_{1-t}, t\in [0,1]$.
\end{enumerate}

In our framework, the epoched Brownian motion $\hat W$ corresponds to the versions of SGDo with the same name. That is, they correspond to the following shuffling schemes for SGDo for large samples sizes $N$:
\begin{enumerate}[(a)]
	\item Single shuffle (SS): $\pi^j = \id{N}, j \in \N$,
	\item Random reshuffling (RR): $(\pi^j)_{j \in \N_0}$ are independent with $\pi^j$ uniformly distributed on the symmetric group of order $N$,
	\item Flip-flop single shuffle: $\pi^{2j} = \id{N}, \pi^{2j+1} = \tau, j \in \N_0$, where $\tau(n) = N - n + 1$ is the \emph{reversal} permutation\footnote{Not to be confused with the inverse of a permutation.},
	\item Flip-flop random reshuffling: $(\pi^{2j})_{j \in \N_0}$ are independent with $\pi^j$ uniformly distributed on the symmetric group of order $N$, and $\pi^{2j+1} = \tau \circ \pi^{2j}, j \in \N_0$.
\end{enumerate}

We do not claim that every epoched Brownian motion or bridge correspond to a shuffling scheme for SGDo. Instead, a \emph{one-dimensional} epoched Brownian motion (or bridge) given by a family of Brownian bridges $(B^n : \Om \times [0,1]\to \R)_{n\in \N_0}$ corresponds to a shuffling scheme for SGDo for large sample sizes $N$ if there exists a measure $\mu$ on $[0,1]^\N$ with uniform marginals, such that
\[\E[B_s^i B_t^j] = C^{ij}(s,t) - st, \quad i \neq j \in \N, s,t\in [0,1],\]
where
\[C^{ij}(s,t) = \mu([0,1]\times \dots \times [0,1] \times \overbrace{[0,s]}^i \times [0,1]\times \dots \times [0,1] \times \overbrace{[0,t]}^j \times [0,1] \times \dots), \quad i\neq j\]
and $C^{ii}(s,t) = s\wedge t$, $i\in \N$.
Note that the functions $C^{ij}$ are $2$-copulas. A $d$-dimensional epoched Brownian bridge corresponding to a shuffling scheme consists of $d$ independent copies of such a one-dimensional process (the same measure is used for all dimensions).

The reason we claim correspondence to shuffling schemes, provided such a measure $\mu$ exists, is that these processes arise as scaling limits of the joint distributions of random walks that have the same increments, up to a (random) permutation, see Chapter \ref{chap:weakshuffle}.

All our previous examples satisfy this condition, with
\begin{enumerate}[(a)]
	\item Single Shuffle (SS): $C^{ij}(s,t) = s\wedge t$,
	\item Random reshuffling (RR): $C^{ij}(s,t) = st$,
	\item Flip-flop single shuffle: 
	\[C^{ij}(s,t) = \begin{cases}
		s\wedge t, & i,j \text{ are both odd or even},\\
		(s+t-1)\vee 0, & \text{else}, 
	\end{cases}\]
	\item Flip-flop random reshuffling: 
	\[C^{ij}(s,t) = \begin{cases}
		(s+t-1)\vee 0, & i\text{ is even and } i+1 = j, \\
		st, & \text{else},
	\end{cases}\]
\end{enumerate}
for $i\neq j$.

The first formula is simply stating that the covariance of a single Brownian bridge is given by 
\[\Cov(B_s,B_t) = s\wedge t - st = s(1-t) \wedge t(1-s), \quad s,t\in [0,1].\]
The second formula just says that independent Brownian bridges have covariance $0$. To show (c) and (d) it remains the consider a Brownian bridge $B$ and calculate
\begin{align*}
	\Cov(B_s,-B_{1-t}) = & -(s\wedge(1-t)) + s(1-t) \\
	= & (-s)\vee (t-1) + s - st \\
	= &(s+t-1)\vee 0 - st, \quad s,t\in [0,1].
\end{align*}

Since most of our results do not depend on the existence of such a measure $\mu$ we will not assume such a covariance structure in general. 

\section{Main result}
Let $d\in \N$ and $\la > 0$. We say a function $\cR : \R^d \to \R \in \dC 2$ is \emph{$\la$-strongly convex} if it satisfies any of the following equivalent properties:
\begin{itemize}
\item $\innp{\nabla \cR(x) - \nabla \cR(y)}{x-y} \geq \la |x-y|^2, \quad x,y\in \R^d$,
\item $\cR(y) \geq \cR(x) + \innp{\nabla \cR(x)}{y-x} + \frac12\la |x-y|^2, \quad x,y\in \R^d$,
\item $\nabla^2\cR(x) - \la 1_{d\times d}$ is a positive semi-definite matrix, for all $x\in \R^d$.
\end{itemize}
Let $L > 0$. We say $\cR$ is \emph{$L$-smooth} if $\nabla \cR$ is Lipschitz, with $\nrm{\nabla \cR}{\Lip} \leq L$.
Our main (mathematical) result is the following.

\begin{satz}
\label{thm:main}
Let $\be \in (0,1)$, $c > 0$, $L, \la > 0$ and $\cR : \R^d\to \R \in \dC 2$ be $\la$-strongly convex and $L$-smooth such that $\nabla^2 \cR$ is Hölder continuous.
Let $Y$ be the solution to the Young differential equation
\begin{equation}
\label{eq:epochedCvx}
dY_t = - \frac{1}{(1+ct)^\be} \nabla \cR(Y_t)\,dt + \frac{1}{(1+ct)^\be} \si\,d\hat W_t,
\end{equation}
driven by an epoched Brownian motion $\hat W$ with period $T$. Then
\[\left|Y_t - (\nabla \cR)^{-1}(T^{-1}\si \hat W_T)\right| \leq T^{1/2-\be} |\si| \left(4.7\frac{L}{\la} + 1.2\right)c^{-\be}\frac{\sqrt{\log t}}{t^{\be}} + o\left(\sqrt{\log t}\cdot  t^{-\be}\right), t\to \infty, \quad a.s.\]
\end{satz}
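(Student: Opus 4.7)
The plan is to reduce the equation to a centered Young equation around a random equilibrium determined by $\hat W_T$, and then combine variation of constants with Young integration by parts, using only an almost sure Gaussian envelope on the bridge component of $\hat W$ as probabilistic input.

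\textbf{Step 1 (reduction to a centered equation).} I would first define $x^\ast:=(\nabla\cR)^{-1}(T^{-1}\si\hat W_T)$; this is well posed because $\la$-strong convexity and $L$-smoothness make $\nabla\cR\colon\R^d\to\R^d$ a Lipschitz bijection. Next I decompose the driver via $\hat W_t=(t/T)\hat W_T+R_t$ with $R_t:=\sqrt T\,X_{t/T}$, noting that $R_{jT}=0$ for every $j\in\N_0$ since each bridge satisfies $B^j_0=B^j_1=0$. Setting $Z_t:=Y_t-x^\ast$ and linearizing the drift through $A_t:=\int_0^1\nabla^2\cR(x^\ast+\tau Z_t)\,d\tau$ (which satisfies $\la I\preceq A_t\preceq L I$ uniformly in $t$ and $\om$), the contribution $-u_tT^{-1}\si\hat W_T\,dt$ is absorbed together with $u_t\si\,d\hat W_t$ into $u_t\si\,dR_t$, yielding the centered Young equation
\[
 dZ_t=-u_tA_tZ_t\,dt+u_t\si\,dR_t,\qquad u_t:=(1+ct)^{-\be},
\]
so the theorem reduces to an asymptotic bound on $|Z_t|$.

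\textbf{Step 2 (variation of constants and Young integration by parts).} Let $\Psi_s^t$ be the (trajectory-dependent) fundamental matrix of $\partial_t\Psi_s^t=-u_tA_t\Psi_s^t$, $\Psi_s^s=I$. A pointwise energy estimate yields the pathwise operator bound $\|\Psi_s^t\|\leq e^{-\la K_s^t}$ with $K_s^t:=\int_s^tu_r\,dr$, \emph{independently} of the unknown trajectory $Z$. Since $R$ is pathwise Hölder on each epoch and $s\mapsto\Psi_s^tu_s$ is Lipschitz, the Young integral is well defined; variation of constants gives
\[
 Z_t=\Psi_0^tZ_0+\int_0^t\Psi_s^tu_s\si\,dR_s,
\]
with the homogeneous part decaying faster than any polynomial because $K_0^t\sim c^{-\be}(1-\be)^{-1}t^{1-\be}$. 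For the stochastic term I would apply Young integration by parts, using $\partial_s(\Psi_s^tu_s)=\Psi_s^t(u_s^2A_s+\dot u_sI)$, $R_0=0$ and $\Psi_t^t=I$, to obtain the key decomposition
\[
 \int_0^t\Psi_s^tu_s\si\,dR_s=u_t\si R_t-\int_0^t\si R_s\,\Psi_s^t(u_s^2A_s+\dot u_sI)\,ds.
\]

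\textbf{Step 3 (Gaussian bridge envelope and assembly).} The only probabilistic input I need is an almost sure logarithmic envelope on $R$: since $(B^j)_{j\in\N_0}$ is a jointly Gaussian family, each $M_j:=\sup_{v\in[0,1]}|B^j_v|$ is sub-Gaussian with the same distribution (Borell--Tsirelson), and an elementary Borel--Cantelli argument yields a random $C(\om)<\infty$ with
\[
 |R_t|\leq C(\om)\sqrt T\sqrt{\log(e+t/T)}\qquad\text{for all }t\geq 0,\text{ a.s.}
\]
Plugging this into the boundary term gives $|u_t\si R_t|\sim|\si|T^{1/2-\be}c^{-\be}\sqrt{\log t}/t^\be$ up to an explicit constant absorbing $C(\om)$ (after $u_t\sim(ct)^{-\be}$ and $\log(t/T)\sim\log t$). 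For the residual integral I would change variable to $K=K_s^t$, concentrating the integrand in the relaxation window $K\in[0,\cO(1/\la)]$, i.e.\ $s\in[t-\tau_t,t]$ with $\tau_t\sim(ct)^\be/\la$; there $u_s\approx u_t$ and $|R_s|\lesssim|R_t|$, so the $u_s^2A_s$-piece contributes $(L/\la)$ times the boundary envelope, while the $\dot u_s$-piece is of lower order $O(t^{-1}\sqrt{\log t})=o(\sqrt{\log t}\,t^{-\be})$ and is absorbed in the remainder. Assembling and optimizing the two explicit constants yields the stated $4.7L/\la+1.2$.

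\textbf{Main obstacle.} The delicate part will be the numerical sharpness of both constants. Extracting a clean $L/\la$ from the residual integral requires treating the change of variable $s\mapsto K_s^t$ to subleading order near $s=t$, since $K_s^t=u_t(t-s)+O((t-s)^2)$; I would handle this by splitting the integral at $K=\log t$, exploiting the exponential decay of $\Psi_s^t$ beyond that scale. For the boundary term, getting the constant $1.2$ requires carefully matching the Borell--TIS sub-Gaussian constant of $M_j$ with the $\log(t/T)$-versus-$\log t$ discrepancy, and crucially using that $R$ vanishes at epoch endpoints so that no extra factor from the number of epochs inside the relaxation window $\tau_t\sim(ct)^\be/\la$ appears.
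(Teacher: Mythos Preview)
Your approach is correct and genuinely different from the paper's. The paper proceeds by (i) a time-rescaling that replaces $\hat W$ by a period-$1$ epoched bridge and $c$ by $cT$ (Lemma~5.5.2), (ii) first treating the \emph{linear} equation $dZ_t=-u_t\nabla^2\cR(0)Z_t\,dt+u_t\,dX_t$ via an epoch-by-epoch application of the Young--L\'oeve inequality exploiting $X_{k,k+1}=0$ (Proposition~5.4.9), and then (iii) comparing the convex equation to this linear one via a Gr\"onwall argument that uses H\"older continuity of $\nabla^2\cR$ to bound $|\nabla\cR(y)-\nabla^2\cR(0)y|\lesssim|y|^{1+\gamma}$ (Proposition~5.4.10). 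The probabilistic input is an envelope on the \emph{H\"older} norms $\|B^j\|_\alpha$ (Lemma~5.3.4), and the constants $4.7$ and $1.2$ arise from optimizing $\alpha=0.42$, $a=0.8$ in the product $a^{-1/2}\cdot\frac{1}{1-2^{-\alpha}}$.

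Your route bypasses all three pieces differently: the Hadamard linearization with path-dependent $A_t$ treats the convex case directly (no linear comparison step, hence no need for H\"older continuity of $\nabla^2\cR$); the integration by parts converts the Young integral to a Riemann integral plus boundary term (no epoch splitting, no Young--L\'oeve constant); and you only need the \emph{sup}-norm envelope on $B^j$, for which the sub-Gaussian parameter is $\sigma^2=\tfrac14$ and hence $a$ can be taken arbitrarily close to $2$ rather than $0.86$. Combining your boundary term $u_t|\sigma||R_t|$ with the residual bound $L|\sigma|\int_0^t e^{-\lambda K_s^t}u_s^2|R_s|\,ds\le \tfrac{L}{\lambda}u_t|\sigma|\cdot\sqrt{T}\,a^{-1/2}\sqrt{\log t}(1+o(1))$ (via Proposition~5.4.7 with $f=u^2$) gives leading constants $a^{-1/2}(1+L/\lambda)$ with $a^{-1/2}\to 1/\sqrt{2}\approx 0.71$---\emph{substantially smaller} than $4.7$ and $1.2$. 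So your final sentence is off: you will not reproduce the stated constants but rather improve them, and your ``main obstacle'' about matching $4.7$ and $1.2$ exactly is a non-issue.
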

Theorem \ref{thm:main} may give the impression that its optimal to let $\be\to 1$-. After all, that choice gives us the fastest asymptotic rate of convergence. However, in actuality the constant hidden in $o(\sqrt{\log t}\cdot  t^{-\be})$ diverges to $\infty$, as $\be \to 1$. Therefore, we cannot conclude that $\be \to 1$ is optimal. In fact, in practice setting $\be = 1$ makes the learning rates decay much too fast.

In certain situations we can get a better decay rate compared to Theorem \ref{thm:main}.
The following theorem applies to all epoched Brownian motions which have only finitely many different epochs over their entire time horizon. For example, this is the case for single shuffle Brownian motion, which only has a single repeated epoch.
\begin{satz}
\label{thm:mainAlt}
Let $\be \in (0,1)$, $c > 0$, $L, \la > 0$ and $\cR : \R^d\to \R \in \dC 2$ be $\la$-strongly convex and $L$-smooth, such that $\nabla^2 \cR$ is Hölder continuous.
Let $Y$ be the solution to the Young differential equation
\begin{equation}
dY_t = - \frac{1}{(1+ct)^\be} \nabla \cR(Y_t)\,dt + \frac{1}{(1+ct)^\be} \si\,d\hat W_t,
\end{equation}
driven by an epoched Brownian motion $\hat W$ with period $T$. Suppose further there exists a number $J\in \N$, such that $\cI := \set{(\hat W_{(j+t)T}-\hat W_{jT})_{t\in [0,1]} : j\in \N}|$ satisfies $|\cI| = J$, almost surely. Then, for all $\al \in(0,1/2)$,
\[\left|Y_t - (\nabla \cR)^{-1}(T^{-1}\si \hat W_T)\right| \leq C_\al T^{1/2-\be} |\si| \left(\frac{1}{1-2^{-\al}} \frac{L}{\la} + 1\right)\frac{1}{t^{\be}} + o\left(C_\al t^{-\be}\right),  t\to \infty, \quad a.s.\]
where $C_\al = \max_{w\in \cI} \nrm{w}{\al}$.
\end{satz}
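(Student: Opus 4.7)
The proof's starting point is to identify the equilibrium $y^\ast := (\nabla\cR)^{-1}(T^{-1}\si\hat W_T)$ of the epoch-averaged dynamics. Since $\hat W_{jT} = j\hat W_T$ for every $j\in\N_0$, the affine part of $\hat W$ is $t\mapsto (t/T)\hat W_T$ and $y^\ast$ is the unique root of $\nabla\cR(y) = T^{-1}\si\hat W_T$ by strong convexity. I would decompose $\hat W_t = M_t + (t/T)\hat W_T$; then $M_{jT} = 0$ for every $j\in\N_0$ and, by the scaling $M_{jT+\tau T} = \sqrt T B^j_\tau$, one has $[M]_{\al,[jT,(j+1)T]} = T^{1/2-\al}[B^j]_\al$. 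The finite-$\cI$ hypothesis enters here and only here: it gives the a.s.\ \emph{uniform} bound $[B^j]_\al \leq C_\al$ over all $j$, whereas in Theorem \ref{thm:main} one only has $\sup_{j\leq t/T}[B^j]_\al \asymp \sqrt{\log t}$ via Borel--Cantelli applied to Gaussian suprema, which is exactly the source of the $\sqrt{\log t}$ factor there.

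Setting $Z_t := Y_t - y^\ast$ and using the Newton--Leibniz identity
\[\nabla\cR(Y_t) - \nabla\cR(y^\ast) = A_t Z_t,\qquad A_t := \int_0^1 \nabla^2\cR\bigl(y^\ast + \tau Z_t\bigr)\,d\tau,\qquad \la I \preceq A_t \preceq L I,\]
the equation \eqref{eq:epochedCvx} rewrites as $dZ_t = -u_t A_t Z_t\,dt + u_t\si\,dM_t$, whose variation-of-constants representation is
\[Z_t = \Phi(t,0)Z_0 + \int_0^t \Phi(t,s)\,u_s\,\si\,dM_s,\]
with $\Phi$ the transition flow of $\dot x = -u_t A_t x$. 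Strong convexity yields $|\Phi(t,s)| \leq \exp\!\bigl(-\la\!\int_s^t u_r\,dr\bigr)$, and since $\int_0^t u_r\,dr \asymp c^{-\be}t^{1-\be}/(1-\be)$ diverges, the term $\Phi(t,0)Z_0$ decays super-polynomially and is absorbed into $o(t^{-\be})$. The Young integral is well-defined because $s\mapsto \Phi(t,s)u_s\si$ is Lipschitz while $M$ is $\al$-Hölder ($1+\al>1$).

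The sharp constant comes from an epoch-by-epoch Young--Loève estimate. Split $\int_0^t = \sum_{j=0}^{N-1}\int_{jT}^{(j+1)T} + \int_{NT}^t$ with $N = \floor{t/T}$. On each full epoch $I_j$, $M$ vanishes at both endpoints, and Young--Loève with exponent pair $(1,\al)$ gives
\[\left|\int_{I_j} f_s\,dM_s\right| \leq \frac{[f]_{1,I_j}\,[M]_{\al,I_j}\,T^{1+\al}}{1-2^{-\al}},\qquad f_s := \Phi(t,s)u_s\si;\]
the prefactor $(1-2^{-\al})^{-1}$ is precisely the Young--Loève constant for this regularity pair, which is where the corresponding factor in the theorem originates. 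Differentiating in $s$, $\der_s(\Phi(t,s)u_s) = \Phi(t,s)(u_s^2 A_s + u'_s)$, so $[f]_{1,I_j} \lesssim |\Phi(t,jT)|\,(Lu_{jT}^2 + |u'_{jT}|)\,|\si|$. Summed over $j$ this is a Riemann sum approximating $\int_0^t e^{-\la\!\int_s^t u_r\,dr}u_s^2\,ds \sim u_t/\la$ (the kernel localizes near $s=t$ on scale $(\la u_t)^{-1}$), and once all the $T$-prefactors from Young--Loève ($T^{1+\al}\cdot T^{1/2-\al}=T^{3/2}$), the Riemann-sum factor $T^{-1}$, and the asymptotic $u_t\sim c^{-\be}t^{-\be}$ are combined, this produces the leading contribution proportional to $\frac{L\,C_\al\,|\si|}{(1-2^{-\al})\la}\,t^{-\be}$. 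The part coming from $|u'_s|$ and all Riemann-sum remainders are of strictly lower order $o(t^{-\be})$.

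Finally, the ``$+1$'' in the leading constant comes from the partial terminal epoch $[NT,t]$, where $M_t\neq 0$ and Young--Loève leaves a genuine boundary term $\Phi(t,NT)u_{NT}\si\,M_t$; using $|M_t|\leq T^{1/2-\al}C_\al(t-NT)^\al \leq T^{1/2}C_\al$ and $u_{NT}\sim c^{-\be}t^{-\be}$ gives the advertised contribution. The main technical obstacle is the \emph{self-consistency} of the linearization: $A_t$ depends on $Y_t$, so $\Phi$ is a random flow determined by the very solution it is being used to bound, and one cannot a priori treat $A_t$ as a uniformly elliptic matrix path. I would resolve this by a bootstrap: a preliminary energy estimate using only $A_t\succeq \la I$ first shows $Z_t\to 0$ almost surely, after which $A_t\in[\la I, L I]$ holds uniformly on the stable tail and the sharp analysis above applies. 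A secondary item is ensuring that Hölder-continuity-of-$\nabla^2\cR$ corrections to the linearization, Riemann-sum remainders, and higher-order Young--Loève terms all remain in $o(t^{-\be})$; these follow routinely by combining the already-established decay rate with the regularity assumptions on $\cR$.
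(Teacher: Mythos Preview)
Your approach is correct in outline and reaches the same constant, but it is \emph{not} the route the paper takes. The paper separates the problem into two pieces: (i) after the change of variables of Lemma~\ref{lem:epochedCvxTransf} it first solves the \emph{linear} equation $dZ_t=-\tilde u_t\,\nabla^2\tilde\cR(0)\,Z_t\,dt+\tilde u_t\,dX_t$ with the \emph{constant} matrix $\kappa=\nabla^2\tilde\cR(0)$ (Proposition~\ref{prop:linearEpochedDecayYDE}), obtaining exactly the bound $\bigl(\tfrac{1}{1-2^{-\al}}\condNr{\kappa}+1\bigr)c^{-\be}\ell(t)t^{-\be}$; (ii) it then compares the nonlinear solution $Y$ to $Z$ via Proposition~\ref{prop:linToCvxDecay}, where the H\"older continuity of $\nabla^2\cR$ is used to show $|r(y)|\lesssim|y|^{1+\gamma}$ and hence $|Y-Z|=o(f(t))$. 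The finite-$\cI$ hypothesis then simply sets $\ell(t)=C_\al$ constant. Your approach instead linearizes along the path with the state-dependent $A_t$ and applies variation-of-constants directly; this avoids the comparison step entirely and, interestingly, does not appear to need the H\"older assumption on $\nabla^2\cR$ at all (your mention of ``H\"older-continuity corrections'' in the last line is therefore superfluous in your own framework).

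One point to tighten: your bootstrap worry is unfounded. The bounds $\la I\preceq A_t\preceq LI$ hold for \emph{all} $t$ immediately from $\la$-strong convexity and $L$-smoothness of $\cR$, so $|\Phi(t,s)|\le e^{-\la U_t^s}$ and $|\partial_s\Phi(t,s)|\le Lu_s e^{-\la U_t^s}$ are available a priori, without first establishing $Z_t\to 0$. The variation-of-constants identity (Proposition~\ref{prop:varOfConst}) applies once $A$ is regarded as a fixed continuous bounded matrix path, which it is as soon as the YDE solution $Z$ exists; there is no circularity in the estimates, only in the \emph{representation}. What the paper's two-step structure buys is modularity---the linear analysis and the nonlinear comparison are cleanly decoupled---while your route is more direct and, as noted, formally requires slightly less regularity.
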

Note that the only random factor in $o(C_\al t^{-\be})$ is $C_\al$.

As an example, consider SGDo applied to linear regression, which corresponds to the \tYDE{}
\[dY_t = - \frac{1}{(1+t)^\be} \ka (Y_t - \thet^*)\,dt + \frac{1}{(1+t)^\be} \sqrt{h \si_\ep^2 \ka}\,d\hat W_t.\]
Here, $\hat W$ has period $T = Nh$ where $N$ is the sample size and $h$ the maximal learning rate. We implicitly assume we are in the underparameterized regime $N\gg d$.

Then
\begin{align*}
(\nabla \cR)^{-1}(T^{-1}\si \hat W_T) = & \thet^* + \ka^{-1}((Nh)^{-1/2}\sqrt{h \si_\ep^2 \ka} T^{-1/2}\hat W_{T}) \\
= & \thet^* + \frac{\si_\ep}{\sqrt N} \ka^{-1/2} (T^{-1/2}\hat W_T)\\
\sim &\cN\left(\thet^*, \frac{\si_\ep^2}{N} \ka^{-1}\right),
\end{align*}
and Theorem \ref{thm:main} implies
\begin{align*}
\left|Y_t - \left(\thet^* + \frac{\si_\ep}{\sqrt N} \ka^{-1/2} (T^{-1/2}\hat W_T)\right)\right| \leq & (Nh)^{1/2-\be} \sqrt h \si_\ep|\sqrt \ka| \left(4.7\condNr{\ka}+1.2\right)c^{-\be}\frac{\sqrt{\log t}}{t^{\be}}\\
 & + o\left(\sqrt{\log t}\cdot  t^{-\be}\right)\\
\leq & N^{1/2-\be}d h^{1-\be} \si_\ep  \sqrt{\la_{\max}(\ka)} \left(4.7\condNr{\ka}+1.2\right)\frac{\sqrt{\log t}}{t^{\be}} \\
&+ o\left(\sqrt{\log t}\cdot  t^{-\be}\right),
\end{align*}
as $t\to \infty$, almost surely. 
The limit $Y_\infty := \thet^* + \frac{\si_\ep}{\sqrt N} \ka^{-1/2} T^{-1/2}\hat W_T$ of $Y$ has the same mean and covariance matrix as the OLS estimator
\[\hat\thet = \left(\sum_{n=1}^N \bx_n \bx_n^\transp\right)^{-1}\left(\sum_{n=1}^N \bx_n \by_n\right),\]
if $(\bx_n, \by_n)_{n=1}^N$ is a finite \tiid\ sample with $(\bx_0,\by_0) \sim \nu$, and $\nu$ is the corresponding population.
Since $\hat W$ is independent of $(\bx_n,\by_n)_{n\in \N}$ we \emph{do not} have $\hat \thet = Y_\infty$, even if $\hat \thet$ was Gaussian. Nevertheless, this result suggests that spiritually $Y_\infty$ represents the OLS estimator in our model in the case of linear regression.

The factor $T^{1/2-\be}$ (or $N^{1/2-\be}$ after setting $T = Nh$) in the convergence speed may be surprising. It can be heuristically explained as follows:
Set $u_t = \frac{1}{(1 + ct)^\be}, t\geq 0$. The noise accumulated in epoch $j$ is given by
\[\int_{jT}^{(j+1)T} u_t \si \,d\hat W_t \approx (cjT)^{-\be} \si (\hat W_{(j+1)T} - \hat W_{jT}) = T^{1/2-\be}(jc)^{-\be} \si Z,\]
where
\[Z = \frac{1}{\sqrt{T}}(\hat W_{(j+1)T} - \hat W_{jT}) \sim \cN(0, 1_{d\times d}).\]
If $\be > 1/2$, then $u$ decays faster than the noise accumulates. In this case the accumulated noise vanishes, as $T\to \infty$, since increasing $T$ means we are effectively averaging over more \tiid\ random variables per epoch. On the other hand, if $\be < 1/2$, then $u$ decays too slowly to overcome the noise accumulation. More steps per epoch means more accumulation, so the accumulated noise diverges to infinity, as $T\to \infty$. Finally, at $\be = 1/2$ both effects (decay and noise accumulation) are balanced. 

These different regimes implicitly also exist in other works on stochastic gradient descent (with or without replacement). In particular, usually only the case $\be > 1/2$ is covered (see the end of the following paragraph).

\paragraph{Comparison with existing results}
Our main theorem complements findings by \citet{gurbuzbalaban_why_2021}.
They proved that single shuffle SGDo satisfies 
\[|\chi_k - \hat\thet| \leq \frac{h |\mu(\pi^1)|}{\la} \frac{1}{k^{\be}} + o(k^{-\be}), a.s. \quad k\to \infty,\]
for $\be \in (1/2,1)$.
Here, $\chi$ is given by Equation \ref{eq:genericSGDo} with $\eta_k = hk^{-\be}$ and $\pi^1 = \pi^j, j \in \N$. Further, $\cR$ is given as a sum of $N$ quadratic forms, is $\la$-strongly convex and has its minimum at $\hat\thet$. Moreover, $\mu(\pi) \in \R^d$ is a sum of $\frac12 N(N-1)$ terms depending on $\cR$ and the permutation $\pi$. In general, $|\mu(\pi)|$ can grow with rate $O(N^2)$, as $N\to \infty$.
In contrast, Theorem \ref{thm:mainAlt} suggests a rate of
\[\tilde C N^{1/2-\be} k^{-\be} + o(k^{-\be}), a.s. \quad k\to \infty.\]
where $\tilde C$ is independent of $N$.
They also provide a crude bound for the random reshuffling case:
\[|\chi_k - \hat\thet| \leq \frac{h \sup_{\pi \in \Sym N}|\mu(\pi)|}{\la} \frac{1}{k^{\be}} + o(k^{-\be}), a.s. \quad k\to \infty,\]
where $\Sym N$ is the symmetric group of degree $N$.
However, in the worst case $\sup_{\pi \in \Sym N}|\mu(\pi)| = O(N^2 N!)$, as $N\to \infty$, making this result not very useful for moderately large $N$, say\footnote{The observable universe is estimated to have less than $60!$ particles.} $N > 100$. Naturally, they mention that the constant $\sup_{\pi \in \Sym N}|\mu(\pi)|$ is pessimistic.
Our Theorem \ref{thm:main} suggests a rate of
\[\tilde C N^{1/2-\be}\frac{\sqrt{\log k}}{k^{\be}} + o(\sqrt{\log k}\cdot k^{-\be}), a.s. \quad k\to \infty,\]
for the convergence of SGDo on strongly convex objectives using any shuffling scheme. 
Thus, Theorem \ref{thm:main} suggests good almost sure convergence rates for SGDo even for large sample sizes $N$.
	
Finally, note the restriction $\be > 1/2$ imposed by \citet{gurbuzbalaban_why_2021}. It stems from the application of martingale techniques which require learning rates to be square summable. Indeed, 
\[\sum_{n=1}^\infty \left(\frac{1}{n^\be}\right)^2 < \infty\text{ if and only if } \be > 1/2.\]
Since we do not use any martingale techniques, this barrier only appears implicitly in our main results as the convergence rate factor $T^{1/2-\be}$.
\newcommand{\rpX}{\mathbf X}
\newcommand{\rpaX}{\mathbb X}

\section{Properties of (epoched) Brownian bridges}
In the following we will mostly work with epoched Brownian \emph{bridges}. By the definition they concatenations of Brownian bridges. Recall, that a Brownian bridge is $(1/2-)$-Hölder continuous, that is $(1/2-\ep)$-Hölder continuous for every $\ep > 0$. Together with the following lemma, this implies that epoched Brownian bridges are locally $(1/2-)$-Hölder continuous.

Let $\al \in (0,1)$. In this chapter we denote by $\nrm{\blnk}{\al}$ the $\al$-Hölder seminorm given by
\[\nrm{f}{\al} = \sup_{s,t\in I} \frac{\nrm{f(t)-f(s)}{E}}{|t-s|^\al},\]
where $f : I \to E$ for $E = (\R^d, |\blnk|)$ or\footnote{In contrast to Chapter \ref{chap:mdfdeq}, here we define Hölder norms with respect to the spectral norm for matrices, not the Frobenius norm.} $E = (\R^{d\times d}, \specnrm{\blnk})$ and some interval $I$. Here,
\[\specnrm{A} := \sup_{|x| = 1} |Ax| = \sqrt{\la_{\max}(A^\transp A)}.\]
denotes the \emph{spectral norm} of a square matrix $A$. We also write $\nrm{f}{\al;I} = \nrm{f|_I}{\al}$ when $f$ is defined on a set containing $I$. In the case $\al = 1$ we prefer writing $\nrm{f}{\Lip}$ and $\nrm{f}{\Lip;I}$.
\begin{lem}
\label{lem:concatHoel}
Let $\al \in (0,1)$ and $f,g : [0,1]\to \R^d \in \hoelHZ \al$ be functions with $f(1) = g(0)$. Then the concatenation
\[f\ast g : [0,2]\to \R^d, t\mapsto f(t)1_{[0,1]}(t) + g(t-1)1_{(1,2]}(t) \] 
satisfies $f\ast g\in \hoelHZ \al$ with $\nrm{f\ast g}{\al} \leq 2^{1-\al} (\nrm{f}{\al} \vee \nrm{g}{\al})$.
\end{lem}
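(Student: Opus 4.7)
The plan is to bound $|(f\ast g)(t)-(f\ast g)(s)|$ for arbitrary $0\le s\le t\le 2$ via a case split according to whether $s,t$ lie on the same side of the join point $1$. First I would handle the two easy cases: if $s,t\in[0,1]$ or $s,t\in[1,2]$, the increment equals $f(t)-f(s)$ or $g(t-1)-g(s-1)$, respectively, and is bounded by $\nrm{f}{\al}|t-s|^\al$ or $\nrm{g}{\al}|t-s|^\al$; in particular by $(\nrm{f}{\al}\vee\nrm{g}{\al})|t-s|^\al$, which is certainly $\le 2^{1-\al}(\nrm{f}{\al}\vee\nrm{g}{\al})|t-s|^\al$ since $\al<1$.

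The main case is $s\in[0,1)$ and $t\in(1,2]$. Using the matching condition $f(1)=g(0)$, I insert this common value and apply the triangle inequality:
\[|(f\ast g)(t)-(f\ast g)(s)|\le |f(1)-f(s)|+|g(t-1)-g(0)|\le \nrm{f}{\al}(1-s)^\al+\nrm{g}{\al}(t-1)^\al.\]
Setting $a:=1-s\ge 0$, $b:=t-1\ge 0$, so $t-s=a+b$, it remains to show the elementary inequality
\[a^\al+b^\al\le 2^{1-\al}(a+b)^\al,\qquad a,b\ge 0.\]

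The key step is this inequality, which I would prove by homogeneity: dividing through by $(a+b)^\al$ (the case $a=b=0$ being trivial) reduces it to bounding $p^\al+(1-p)^\al$ for $p\in[0,1]$, where $p=a/(a+b)$. Since $\al\in(0,1)$, the map $p\mapsto p^\al+(1-p)^\al$ is concave, symmetric about $p=1/2$, and therefore attains its maximum at $p=1/2$, giving the value $2\cdot(1/2)^\al=2^{1-\al}$. Combining this with the triangle-inequality bound above yields
\[|(f\ast g)(t)-(f\ast g)(s)|\le (\nrm{f}{\al}\vee\nrm{g}{\al})(a^\al+b^\al)\le 2^{1-\al}(\nrm{f}{\al}\vee\nrm{g}{\al})(t-s)^\al,\]
which, together with the two easy cases, yields the claim. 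There is no real obstacle; the only nontrivial ingredient is the concavity-based inequality, and the constant $2^{1-\al}$ is seen to be sharp (equality at $s=0$, $t=2$ when $f$ and $g$ have equal Hölder norm and saturate their respective bounds).
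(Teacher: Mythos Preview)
Your proof is correct and follows essentially the same approach as the paper: split into the case where $s,t$ lie on the same side of $1$ (which the paper dismisses as trivial) and the crossing case $s<1<t$, then use the triangle inequality through the join point together with the elementary inequality $a^\al+b^\al\le 2^{1-\al}(a+b)^\al$. The paper simply invokes this inequality without justification, whereas you supply a concavity argument and note sharpness; otherwise the arguments are identical.
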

\begin{proof}
It suffices to check the Hölder condition for $s < 1 < t$. In this case
\begin{align*}
|f\ast g(t) - f\ast g(s)| \leq & |f\ast g(t) - f\ast g(1)| + |f\ast g(1) - f\ast g(s)| \\
= & |g(t-1) - g(0)| + |f(1) - f(s)|\\
\leq &(\nrm{f}{\al} \vee \nrm{g}{\al})(|t-1|^\al + |1-s|^\al)\\
\leq &2^{1-\al} (\nrm{f}{\al} \vee \nrm{g}{\al})(|t-1| + |1-s|)^\al\\
= &2^{1-\al}(\nrm{f}{\al} \vee \nrm{g}{\al})|t-s|^\al,
\end{align*} 
since $|t-1| + |1-s| = t-1 + 1 - s$.
\end{proof}

\newcommand{\sphre}[1]{\mathbb S^{#1}}
\begin{lem}[Borell-TIS]
\label{lem:borellTIS}
Let $D$ be a topological space and $Q : \Om \times D \to \R^d$ be Gaussian random field, which is almost surely bounded on $D$. 
Define $m = \E\left[\sup_{t\in D} |Q_t|\right]$ and $\si^2 = \sup_{t\in D} \la_{\max}(\Cov(Q_t))$.
Then
\[\P\left(\sup_{t\in D} |Q_t| > x\right) \leq e^{-\frac{(x-m)^2}{2\si^2}}, \quad x > m.\]
\end{lem}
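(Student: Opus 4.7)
The plan is to reduce the vector-valued statement to the classical scalar Borell--Cirel'son--Ibragimov--Sudakov inequality. The main idea is to rewrite the Euclidean norm as a supremum of linear functionals so that $\sup_{t\in D}|Q_t|$ becomes a supremum of a real-valued Gaussian field over a larger index set.

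First, I will use the identity $|Q_t| = \sup_{v\in \sphre{d-1}} \langle v, Q_t\rangle$, where $\sphre{d-1} = \{v\in\R^d : |v|=1\}$, to write
\[
\sup_{t\in D} |Q_t| \;=\; \sup_{(t,v)\in D\times \sphre{d-1}} \tilde Q_{(t,v)}, \qquad \tilde Q_{(t,v)} := \innp{v}{Q_t}.
\]
The field $\tilde Q$ is real-valued and Gaussian (being obtained from $Q$ by deterministic linear combinations, which preserves joint normality). Its variance at $(t,v)$ satisfies
\[
\Var(\tilde Q_{(t,v)}) \;=\; v^\transp \Cov(Q_t) v \;\leq\; \la_{\max}(\Cov(Q_t)) \;\leq\; \si^2,
\]
uniformly in $(t,v)$. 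Almost-sure boundedness of $Q$ on $D$ transfers to almost-sure boundedness of $\tilde Q$ on $D\times \sphre{d-1}$, since $|\tilde Q_{(t,v)}|\leq |Q_t|$ pointwise.

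Second, I will invoke the classical scalar Borell--TIS inequality: for an almost surely bounded real-valued Gaussian random field $X$ on a parameter space $S$ with $M := \E\,\sup_S X$ and $\tau^2 := \sup_{s\in S} \Var(X_s)$, one has
\[
\P\bigl(\sup_S X - M > x\bigr) \;\leq\; e^{-x^2/(2\tau^2)}, \qquad x > 0;
\]
this is a standard result (see, e.g., Adler--Taylor or Ledoux). Applying it with $X = \tilde Q$, $S = D\times \sphre{d-1}$, $M = m$, and $\tau^2 \leq \si^2$, and using that $e^{-x^2/(2\tau^2)} \leq e^{-x^2/(2\si^2)}$, gives exactly the stated bound for all $x > m$.

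There is essentially no obstacle beyond checking that everything measurable stays measurable; the only mild subtlety is that $\sup_D |Q_t|$ and $\sup_{D\times \sphre{d-1}}\tilde Q_{(t,v)}$ must be handled via a separable version of the field (which is implicit in the almost-sure boundedness assumption together with continuity of $v\mapsto \innp{v}{Q_t}$). Beyond this measurability bookkeeping, the proof is a clean two-line reduction and an appeal to the known scalar result.
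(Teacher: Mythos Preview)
Your proposal is correct and takes essentially the same approach as the paper: reduce to the scalar Borell--TIS inequality by writing $|Q_t| = \sup_{v\in \sphre{d-1}} \innp{v}{Q_t}$ and applying the classical result to the real-valued Gaussian field $\tilde Q$ on $D\times \sphre{d-1}$. The paper in fact observes that $\sup_{(t,v)} \Var(\tilde Q_{(t,v)}) = \si^2$ exactly (since $\sup_{|v|=1} v^\transp \Cov(Q_t) v = \la_{\max}(\Cov(Q_t))$ by the Rayleigh-quotient characterization), so your monotonicity step $e^{-x^2/(2\tau^2)} \leq e^{-x^2/(2\si^2)}$ is not even needed, but the argument is otherwise identical.
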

\begin{proof}
We write $\sphre{d-1} = \set{v\in \R^d : |v| = 1}$. Note that
\[|Q_t| = \sup_{v\in \sphre{d-1}} \innp{Q_t}{v},\]
since $|\innp{Q_t}{v}| \leq |Q_t||v| = |Q_t|$ for $v\in \sphre{d-1}$ and because we can pick $v = Q_t/|Q_t|$.
Define
\[\tilde Q : \Om \times D \times \sphre{d-1} \to \R, (\om, t, v) \mapsto \innp{Q_t(\om)}{v}.\]
Then $\tilde Q$ is again a Gaussian random field and almost surely bounded. We have 
\[\E\left[\sup_{(t,v)\in D \times \sphre{d-1}} \tilde Q_{t,v}\right] = m.\]
Moreover, we have $\Var(\innp{Q_t}{v}) = v^\transp \Cov(Q_t) v$, and so
\[\sup_{(t,v)\in D \times \sphre{d-1}} \Var(\innp{Q_t}{v}) = \sup_{t\in D} \sup_{v\in \sphre{d-1}} v^\transp \Cov(Q_t) v = \sup_{t\in D} \la_{\max}(\Cov( Q_t)) = \si^2.\]
The penultimate equality follows because we are maximizing the Rayleigh quotient of $\Cov(Q_t)$.
Now, using the standard Borell-TIS inequality \citep[see][Theorem 2.1.1]{adler2009random} we have 
\[\P\left(\sup_{(t,v)\in D \times \sphre{d-1}} \tilde Q_{t,v} - m > x\right) \leq e^{-\frac{x^2}{2 \si^2}}, \quad x > 0,\]
or equivalently
\[\P\left(\sup_{t\in D} |Q_t| > x\right) \leq e^{-\frac{(x-m)^2}{2\si^2}}, \quad x > m.\]
\end{proof}

\begin{lem}
\label{lem:expttnTailFrmla}
Let $g : [0,\infty) \to \R \in \dC 1$ and $Z$ be a non-negative random variable. Then
\[\E g(Z) = g(0) + \int_0^\infty g'(x)\P(Z > x)\,dx.\]
\end{lem}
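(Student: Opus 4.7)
The plan is to rewrite $g(Z)-g(0)$ as an integral of $g'$ over the random interval $[0,Z]$, reformulate that integral as one over $[0,\infty)$ with an indicator, and then exchange the order of integration via Fubini/Tonelli to obtain the tail formula.

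More concretely, the first step is the fundamental theorem of calculus: since $g\in \dC 1$, for every $z\geq 0$ we have
\[g(z) - g(0) = \int_0^z g'(x)\,dx = \int_0^\infty g'(x)\mathbf{1}_{\{x<z\}}\,dx.\]
Substituting the random variable $Z\geq 0$ and taking expectations gives
\[\E g(Z) - g(0) = \E\!\left[\int_0^\infty g'(x)\mathbf{1}_{\{x<Z\}}\,dx\right].\]
The second step is to interchange expectation and integral. If $g'\geq 0$ this is Tonelli's theorem and no integrability is needed; for general $g'$ one applies Fubini under the implicit assumption that the right-hand side of the claim is absolutely convergent (i.e.\ $\int_0^\infty |g'(x)|\P(Z>x)\,dx<\infty$), which is certainly the case in every application made later (where $g$ is typically a polynomial times an exponential and $Z$ has Gaussian-type tails). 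After swapping we obtain
\[\E g(Z) - g(0) = \int_0^\infty g'(x)\,\E[\mathbf{1}_{\{x<Z\}}]\,dx = \int_0^\infty g'(x)\,\P(Z>x)\,dx,\]
which is the claimed identity.

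There is essentially no obstacle: the only mild subtlety is justifying the use of Fubini's theorem when $g'$ changes sign, but this is harmless under the natural integrability assumption that makes the right-hand side of the lemma meaningful in the first place. Thus the proof reduces to two one-line observations, and no probabilistic machinery beyond the definition of expectation is required.
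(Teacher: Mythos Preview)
Your proof is correct and follows exactly the same approach as the paper: write $g(z)=g(0)+\int_0^z g'(x)\,dx$, take expectations, and swap the order of integration. The paper's version is simply terser, omitting the explicit indicator rewriting and the Fubini/Tonelli justification that you spell out.
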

\begin{proof}
We have
\[g(z) = g(0) + \int_0^z g'(x)\,dx,\]
and so  
\[\E g(Z) = g(0) + \E\left[\int_0^Z g'(x)dx\right] = g(0) + \int_0^\infty g'(x)\P(Z > x)\,dx.\]
\end{proof}

\begin{lem}
\label{lem:BBferniq}
Let $B : \Om \times [0,1] \to \R^d$ be a Brownian Bridge. Then 
\[\E[e^{a \nrm{B}{\al}^2}] < \infty\]
for all $\al \in (0,1/2)$ and $a \in (0, \frac{1}{2(1-b)b^{1-2\al}})$, where $b = \frac{1-2\al}{2-2\al}$.
\end{lem}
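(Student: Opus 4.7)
The plan is to realise the Hölder seminorm $\nrm{B}{\al}$ as the supremum of a centred Gaussian random field, apply the Borell-TIS inequality (Lemma~\ref{lem:borellTIS}) to get a sub-Gaussian tail, and then integrate against $g(z) = e^{az^2}$ via the tail formula (Lemma~\ref{lem:expttnTailFrmla}). Concretely, on the index set $D = \{(s,t) \in [0,1]^2 : s < t\}$ I would introduce the $\R^d$-valued Gaussian random field
\[Q_{s,t} = \frac{B_t - B_s}{(t-s)^\al},\]
so that $\sup_{(s,t)\in D} |Q_{s,t}| = \nrm{B}{\al}$. For $\al \in (0,1/2)$ the Brownian bridge has $\al$-Hölder continuous sample paths almost surely, hence $\nrm{B}{\al} < \infty$ a.s., and a standard Kolmogorov / Garsia-Rodemich-Rumsey argument applied to $B$ gives finiteness of $m := \E\nrm{B}{\al}$.

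The second step is to compute the maximal variance $\si^2 := \sup_D \la_{\max}(\Cov Q_{s,t})$ in closed form. Using the bridge covariance $\Cov(B_s, B_t) = s(1-t)$ for $s\leq t$ and the independence of the components of $B$, a short calculation gives
\[\Cov(B_t - B_s) = \bigl((t-s) - (t-s)^2\bigr)\,1_{d\times d},\]
so that $\la_{\max}(\Cov Q_{s,t}) = (t-s)^{1-2\al}\bigl(1-(t-s)\bigr)$. Setting $\tau = t-s \in (0,1]$ and differentiating $\tau \mapsto \tau^{1-2\al}(1-\tau)$, the critical point lies at $\tau = b = (1-2\al)/(2-2\al)$, with maximum value $\si^2 = b^{1-2\al}(1-b)$. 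In particular $\si^2 < \infty$.

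With $m$ and $\si^2$ in hand, the conclusion is a one-line Gaussian computation. Applying Lemma~\ref{lem:expttnTailFrmla} with $g(z) = e^{az^2}$ and $Z = \nrm{B}{\al}$, then bounding $\P(Z > x) \leq 1$ on $[0,m]$ and $\P(Z > x) \leq \exp(-(x-m)^2/(2\si^2))$ on $(m,\infty)$ by Lemma~\ref{lem:borellTIS}, yields
\[\E[e^{a \nrm{B}{\al}^2}] \leq e^{am^2} + \int_m^\infty 2ax \exp\!\Bigl(ax^2 - \tfrac{(x-m)^2}{2\si^2}\Bigr)\,dx.\]
The quadratic $ax^2 - (x-m)^2/(2\si^2)$ has negative leading coefficient exactly when $a < 1/(2\si^2) = 1/(2(1-b)b^{1-2\al})$, and under this condition the right-hand side is a finite Gaussian integral, giving the claim.

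The main obstacle is pinning down the sharp constant, i.e.\ identifying $\si^2$ with precisely $b^{1-2\al}(1-b)$: both the closed form of the bridge increment variance and the exact optimisation over $\tau \in (0,1]$ must be carried out carefully so that the resulting integrability threshold matches the constant stated in the lemma. Verifying $m < \infty$ is a minor but nontrivial subpoint (needed so that Borell-TIS yields a non-vacuous tail bound); everything else reduces to elementary calculus on the quadratic exponent.
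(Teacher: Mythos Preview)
Your proposal is correct and follows essentially the same route as the paper: introduce the Gaussian field $Q_{s,t} = (B_t - B_s)/|t-s|^\al$, apply Borell--TIS to obtain the sub-Gaussian tail with $\si^2 = \sup_\tau \tau^{1-2\al}(1-\tau) = (1-b)b^{1-2\al}$, and then integrate the tail against $g(z) = e^{az^2}$ via the tail formula, observing that the quadratic exponent has negative leading coefficient precisely when $a < 1/(2\si^2)$. The only cosmetic difference is that the paper works on all of $[0,1]^2$ (setting $Q_{s,s}=0$) rather than on $\{s<t\}$, which is immaterial by symmetry.
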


\begin{proof}
Define 
\[Q_{s,t} = \begin{cases}
\frac{B_t - B_s}{|t-s|^\al}, & s\neq t,\\
0, & s = t,
\end{cases}\]
for all $s,t\in [0,1]$, and write $\hat Q := \sup_{s,t\in [0,1]} Q_{s,t}$.
Then $Q$ is a Gaussian random field $\Om \times [0,1]^2\to \R^d$ and $\sup_{s,t\in [0,1]} |Q_{s,t}| = \nrm{B}{\al}$. Thus, by Lemma \ref{lem:borellTIS}
\begin{align*}
\P(\nrm{B}{\al} > x) \leq e^{-\frac{(x-m)^2}{2 \si^2}}, \quad x > m := \E\nrm{B}{\al},
\end{align*}
where $\si^2 := \sup_{s,t\in [0,1]} \la_{\max}(\Cov Q_{s,t})$. Because the components of $B$ are independent, Brownian bridges have stationary increments and using the covariance formula for a one-dimensional Brownian bridge we have
\[\la_{\max}(\Cov(B_t - B_s)) = \Var(B_t^1 - B_s^1) = \Var(B_{t-s}^1) = |t-s|(1 - |t-s|), \quad s,t\in [0,1].\]
Thus,
\[\la_{\max}(\Cov Q_{s,t}) = \begin{cases}
\frac{|t-s|(1 - |t-s|)}{|t-s|^{2\al}},  &s\neq t,\\
0, & s = t
\end{cases} = f(|t-s|), \quad s,t\in [0,1],\]
where $f(b) = (1 - b)b^{1-2\al}$. The function $f$ attains its maximum at $b^* := \frac{1-2\al}{2-2\al}$. Hence $\si^2 = f(b^*)$.
Let $a > 0$. Then Lemma \ref{lem:expttnTailFrmla} implies
\[\E[e^{a \nrm{B}{\al}^2}] = 1 + \int_0^\infty 2 a x e^{ax^2} \P(\nrm{B}{\al} > x)\,dx.\]
Estimating the tail of the integral, we have
\[\int_m^\infty 2 a x e^{ax^2} \P(\nrm{B}{\al} > x)\,dx \leq \int_m^\infty 2 a x e^{ax^2}e^{-\frac{(x-m)^2}{2 \si^2}}\,dx.\]
Since
\[ax^2 -\frac{(x-m)^2}{2 \si^2} = \left(a - \frac{1}{2 \si^2}\right) x^2 + \frac{m}{\si^2}x - \frac{m^2}{2\si^2}\]
the integral converges if $a < \frac{1}{2 \si^2} = \frac{1}{2f(b^*)}$.
\end{proof}

The following lemma gives us one factor in the decay rate of Theorem \ref{thm:main}.
\begin{lem}
\label{lem:epochedBBgrowth}
Let $\al \in (0,1/2)$, $a \in (0, \frac{1}{2(1-b)b^{1-2\al}})$, where $b = \frac{1-2\al}{2-2\al}$, and $(B^j)_{n\in \N_0}$ be a family of Brownian bridges.
Then
\[\max_{j \leq n}\nrm{B^j}{\alpha} \leq a^{-1/2}\sqrt{\log n},\]
for large $n\in \N$, almost surely.
\end{lem}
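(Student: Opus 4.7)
The plan is to apply the Borel--Cantelli lemma to the individual events $A_j := \{\nrm{B^j}{\al} > a^{-1/2}\sqrt{\log j}\}$ (for $j \geq 2$) rather than to the joint maximum events, using a small amount of slack in the exponential integrability constant from Lemma \ref{lem:BBferniq}.

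First, since $a < \frac{1}{2(1-b)b^{1-2\al}}$, I can pick an intermediate constant $a' \in (a,\frac{1}{2(1-b)b^{1-2\al}})$. Each $B^j$ is a Brownian bridge, so they all share the same one-dimensional distribution. Therefore Lemma \ref{lem:BBferniq} (applied at $a'$) gives a single finite constant $C' := \E[e^{a'\nrm{B^j}{\al}^2}] < \infty$ that does not depend on $j$.

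Next, I apply Markov's inequality to the random variable $e^{a'\nrm{B^j}{\al}^2}$ to get, for $j \geq 2$,
\[
\P(A_j) = \P\!\left(e^{a'\nrm{B^j}{\al}^2} > j^{a'/a}\right) \leq \frac{C'}{j^{a'/a}}.
\]
Since $a'/a > 1$, the series $\sum_{j \geq 2} j^{-a'/a}$ converges, so $\sum_j \P(A_j) < \infty$. By Borel--Cantelli, almost surely only finitely many of the events $A_j$ occur; in particular, there is an (almost surely finite) random index $j_0(\omega)$ with $\nrm{B^j}{\al} \leq a^{-1/2}\sqrt{\log j}$ for every $j \geq j_0$.

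The last step is to pass from the individual bound to the maximum. Set $M(\omega) := \max_{j < j_0(\omega)} \nrm{B^j}{\al}$, which is almost surely finite. For any $n \geq j_0$,
\[
\max_{j \leq n}\nrm{B^j}{\al} \leq \max\!\left(M,\ \max_{j_0 \leq j \leq n} a^{-1/2}\sqrt{\log j}\right) \leq \max\!\left(M,\ a^{-1/2}\sqrt{\log n}\right),
\]
using $\log j \leq \log n$. Once $n$ is large enough that $a^{-1/2}\sqrt{\log n} \geq M$, the right-hand side collapses to $a^{-1/2}\sqrt{\log n}$, which is the claim. The only mildly delicate point is making sure the constant $a'$ can be chosen strictly between $a$ and the Lemma \ref{lem:BBferniq} threshold; this is possible precisely because the hypothesis on $a$ is a strict inequality, so no further obstacle arises.
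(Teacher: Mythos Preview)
Your proof is correct and takes a genuinely different route from the paper's. The paper applies Markov's inequality with the original constant $a$, then uses a union bound to control $\P(\max_{j\leq n}\nrm{B^j}{\al} > x) \lesssim n e^{-ax^2}$, and runs Borel--Cantelli along the geometric subsequence $n=2^j$ with a $(1+\ep)$-slack in the threshold, absorbing the $\ep$ at the end by shrinking $a$. You instead build the slack into the exponential moment via $a'>a$ and apply Borel--Cantelli directly to the individual events $A_j=\{\nrm{B^j}{\al}>a^{-1/2}\sqrt{\log j}\}$, obtaining the summable bound $\P(A_j)\leq C' j^{-a'/a}$ without ever touching the maximum or a subsequence. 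Your route is arguably cleaner: it avoids both the union bound and the dyadic interpolation step, at the cost of a short final argument (bounding the finitely many early terms by the random constant $M$) that the paper's maximum-based formulation handles implicitly. The paper's approach, on the other hand, is the more classical maximal-inequality template and would generalise more readily if one wanted a bound on $\max_{j\leq n}$ that is uniform in $n$ rather than merely eventual.
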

\begin{proof}
We use Lemma \ref{lem:BBferniq}.
By Markov's inequality
	\[\P(\nrm{B}{\alpha} \geq x) = \P(e^{a \nrm{B}{\alpha}^2} \geq e^{ax^2}) \leq \E[e^{a \nrm{B}{\alpha}^2}] e^{-a x^2},\]
for all $x\in \R$.
	Define $Z_j = \nrm{B^j}{\alpha}, j \in \N$, and $Z_n^* = \max(Z_1,\dots, Z_n)$.
	Then
	\[\P(Z_n^* > x) \leq \sum_{j=1}^n \P(Z_j > x) \lesssim n e^{-a x^2},\]
	uniformly over $x$ and $n$. For any $\ep > 0$ we thus have
	\[\sum_{j=1}^\infty \P(Z_{2^j}^*> \sqrt{\frac{1+\ep}{c}\log 2^j}) \lesssim \sum_{j=1}^\infty 2^{-j\ep} < \infty.\]
	By Borel-Cantelli
	\[\P\left(\limsup_{n\to \infty} \set{Z_n^* > \sqrt{\frac{1+\ep}{a}\log n}}\right) = 0,\]
	that is
	\[\max_{j \leq n}\nrm{B^j}{\alpha} = Z_n^* \leq \sqrt{\frac{1+\ep}{a}\log n},\]
	for large $n\in \N$, almost surely. Finally, by picking a slightly smaller $a$ we can leave out the $+\ep$. However, since we started with an arbitrary $a < \frac{1}{2(1-b)b^{1-2\al}}$ we have
	\[\max_{j \leq n}\nrm{B^j}{\alpha}\leq a^{-1/2}\sqrt{\log n},\]
	for large $n\in \N$, almost surely, for all $a \in (0, \frac{1}{2(1-b)b^{1-2\al}})$.
\end{proof}

\begin{docu}
This is a conjecture, but mostly because I dont know the proper theory of 2D-Young (?) integration.
We say $(B,B)' :  \Om \times [0,1]\to \R$ is a $C$-Brownian bridge if it is a Gaussian process, $B,B'$ are Brownian bridges from $0$ to $0$ and their covariance satisfies
\[\Cov(B_s, B_t') = C(s,t) - st.\]
\begin{prop}
Let $C$ be a $2$-copula, $(B,B')$ be a $C$-Brownian bridge, $\be \in (0,1)$ with $\frac12 + \frac1\beta > 1$ and $f,g\in C^{\be}([0,1])$. Then
\[\Cov\left(\int_0^1 f(t)\,dB_t, \int_0^1 g(t)\,dB_t'\right) = \Cov_{(U,V)\sim C}(f(U),g(V)).\]
\end{prop}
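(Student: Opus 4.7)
The condition $\tfrac{1}{2} + \tfrac{1}{\beta} > 1$ in the statement is automatic for $\beta \in (0,1)$; I read it as $\tfrac12 + \beta > 1$, i.e.\ $\beta > \tfrac12$, which is the standard Young condition matching the $(1/2-)$-Hölder regularity of Brownian bridges. Under this reading the plan is as follows.

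First, I would set up the pathwise framework. Pick any $\alpha \in (1-\beta, 1/2)$, so $\alpha + \beta > 1$. By (a minor variant of) Lemma \ref{lem:BBferniq}, both $B$ and $B'$ have $\alpha$-Hölder sample paths, and $\|B\|_\alpha, \|B'\|_\alpha$ have finite moments of all orders. Hence the Young integrals $\int_0^1 f\,dB$ and $\int_0^1 g\,dB'$ exist pathwise a.s., and are the limits of their Riemann sums by the Young--Lóeve estimate
\[\Bigl|\int_0^1 f\,dB - \sum_{i=0}^{n-1} f(t_i)(B_{t_{i+1}}-B_{t_i})\Bigr| \le K_{\alpha,\beta}\, \nrm{f}{\beta}\, \nrm{B}{\alpha}\, |\pi_n|^{\alpha+\beta-1},\]
and analogously for $g, B'$. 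Since $\nrm{B}{\alpha}\in L^p$ for all $p$, these approximations converge in $L^p(\Omega)$ for every $p<\infty$, so in particular the products converge in $L^1$, which is exactly what we need for passing to the limit in the covariance.

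Second, for a sequence of partitions $\pi_n = \{0=t_0<\cdots<t_n=1\}$ with $|\pi_n|\to 0$, set
\[I_n := \sum_{i} f(t_i)(B_{t_{i+1}}-B_{t_i}),\qquad J_n := \sum_{j} g(t_j)(B'_{t_{j+1}}-B'_{t_j}).\]
Using $\Cov(B_s,B'_t) = C(s,t)-st$, a direct calculation gives
\[\Cov(B_{t_{i+1}}-B_{t_i},\, B'_{t_{j+1}}-B'_{t_j}) = \Delta C_{ij} - (t_{i+1}-t_i)(t_{j+1}-t_j),\]
where $\Delta C_{ij} := C(t_{i+1},t_{j+1}) - C(t_i,t_{j+1}) - C(t_{i+1},t_j) + C(t_i,t_j)$ is the $C$-mass of the rectangle $(t_i,t_{i+1}]\times (t_j,t_{j+1}]$. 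Hence
\[\Cov(I_n, J_n) = \sum_{i,j} f(t_i)g(t_j)\,\Delta C_{ij} \;-\; \Bigl(\sum_i f(t_i)(t_{i+1}-t_i)\Bigr)\Bigl(\sum_j g(t_j)(t_{j+1}-t_j)\Bigr).\]

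Third, I pass to the limit on both sides. On one hand, writing $f_n(u):=f(t_i)$ for $u\in(t_i,t_{i+1}]$ and $g_n$ analogously, the first sum equals $\int_{[0,1]^2} f_n(u)g_n(v)\,dC(u,v)$. Since $f,g$ are uniformly continuous, $f_n\to f$ and $g_n\to g$ uniformly, and bounded convergence against the probability measure $dC$ yields the limit $\int_{[0,1]^2} f(u)g(v)\,dC(u,v) = \E[f(U)g(V)]$. The Riemann sums in the second factor converge to $\int_0^1 f\,dt = \E f(U)$ and $\int_0^1 g\,dt = \E g(V)$ because $C$ has uniform marginals. Thus
\[\Cov(I_n,J_n) \longrightarrow \E[f(U)g(V)] - \E f(U)\,\E g(V) = \Cov_{(U,V)\sim C}(f(U),g(V)).\]
On the other hand, the $L^2$-convergence $I_n\to\int_0^1 f\,dB$ and $J_n\to\int_0^1 g\,dB'$ from the first paragraph implies $\Cov(I_n,J_n)\to \Cov\bigl(\int_0^1 f\,dB, \int_0^1 g\,dB'\bigr)$. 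Identifying the two limits proves the claim.

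The only delicate step is justifying the $L^2$-convergence of the Riemann sums to the Young integrals; everything else is bookkeeping with Riemann and copula-Stieltjes sums. The Young--Lóeve estimate combined with Gaussian tail bounds on $\nrm{B}{\alpha}$ handles this, and is the place where the condition $\beta>1/2$ is essential.
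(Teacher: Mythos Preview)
Your proof is correct and follows the same Riemann-sum approach as the paper's proof sketch: compute $\E[I_n J_n]$ via the bilinearity of covariance, identify the double sum as a 2D Stieltjes sum against $d(C(s,t)-st)$, and pass to the limit. The paper in fact labels the statement a conjecture and only sketches the limit computation, remarking that the author was unsure about the ``proper theory of 2D-Young integration'' needed to justify interchanging limit and expectation. Your argument supplies exactly this missing ingredient: the Young--L\'oeve bound together with the finite moments of $\nrm{B}{\alpha}$ (from Lemma~\ref{lem:BBferniq}) gives $L^2$-convergence of the Riemann sums to the Young integrals, which is what makes the passage to the limit in $\Cov(I_n,J_n)$ legitimate. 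Your reading of the exponent condition as $\beta>\tfrac12$ is also the intended one.
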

\begin{proof}[Proof sketch]
We have
\begin{align*}
\E[\int_0^1 f(t)\,dB_t, \int_0^1 g(t)\,dB_t'] = & \lim_{|\cP_1|, |\cP_2| \downarrow 0} \sum_{(r,s) \in \cP_1, (t,u) \in \cP_2} f(r)g(t) \E[B_{r,s} B'_{t,u}] \\
= & \int_{[0,1]^2} f(s)g(t) d\E[B_s B'_t]\\
= & \int_{[0,1]^2} f(s)g(t) d(C(s,t) - st)\\
= & \E[f(U)g(V)] - \E[f(U)] \E[g(V)],
\end{align*}
where $(U,V)\sim C$.
\end{proof}
\end{docu}

\section{Young differential equations driven by epoched noise}
In this section we study the properties of Young differential equations with state-independent noise term, specifically driven by an epoched bridge $X$. Let $m \in \N$. We call $X : [0,\infty) \to \R^m$ an \emph{epoched bridge} if $X$ is locally Hölder continuous and $X_n = 0, n\in \N$. None of the arguments in this section directly depend on $X$ being an epoched \emph{Brownian} bridge\footnote{For example, all arguments here apply to $X_t = \sin(\pi t)$.}. Hence, we work without this specific assumption.

We consider Young differential equations of the form
\[dY_t = f_t(Y_t)\,dt + \si_t\,dX_t,\quad t\geq 0, Y_0 \in \R,\]
with $f_t : \R^d\to \R^d$ and $\si_t \in \R^{d\times m}$,
which is strictly speaking a different way of writing the integral equation
\begin{equation}
\label{eq:linearRDE}
Y_t = Y_0  + \int_0^t f_s(Y_s)\,ds + \int_0^t \si_s\,dX_s, \quad t\geq 0.
\end{equation}
Here, 
\[\int_0^t \si_s\,dX_s = \lim_{|\cP| \to 0} \sum_{[r,s]\in \cP} \si_r X_{r,s},\]
where the limit is taken with respect to all partitions of $[0,t]$ with mesh size $|\cP|$, and $X_{r,s} = X_s - X_r$. This is the \emph{Young integral}. If $X\in \hoelHZ \al([0,T])$ and $\si \in \hoelHZ \be([0,T])$ with $\al + \be > 1$, then the Young integral is guaranteed to exist (see Proposition \ref{prop:ynglve}).

To give an idea what is so special about (epoched) bridges consider the Young-Lóeve inequality.
\begin{prop}[Young-Lóeve]
\label{prop:ynglve}
Let $\al,\be \in (0,1]$ with $\al + \be > 1$. Given $X\in \CHoelLoc{\al}$ and $\si\in \CHoelLoc \be$, the Young integral $\int_s^t \si_u\,dX_u$ exists, and we have
\[\left|\int_s^t \si_u\,dX_u - \si_s X_{s,t}\right| \leq \frac{(t-s)^{\al+\be}}{2^{1-(\al + \be)}} \nrm{X}{\al;[s,t]} \nrm{\si}{\be;[s,t]}, \quad 0\leq s\leq t.\]
Further, $\int_s^\blnk \si_u\,dX_u \in \CHoelLoc \al$.
\end{prop}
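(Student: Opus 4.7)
The approach is the classical pigeonhole--removal argument for Young integrals. For a finite partition $\cP = \{s=t_0 < t_1 < \dots < t_n = t\}$ of $[s,t]$, define the Riemann sum $S(\cP) := \sum_{k=0}^{n-1}\si_{t_k}(X_{t_{k+1}}-X_{t_k})$. The plan is to show that $|S(\cP)-\si_s X_{s,t}|$ is bounded uniformly in $\cP$ by the stated right-hand side and that $S(\cP)$ is Cauchy as $|\cP|\to 0$, so that the Young integral exists as the limit and inherits the bound. The local Hölder regularity statement then follows by applying the estimate on subintervals.

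The core local observation is that removing an interior node $t_k$ changes $S$ by exactly $(\si_{t_{k-1}}-\si_{t_k})(X_{t_{k+1}}-X_{t_k})$. Using the Hölder regularity of $\si$ and $X$, together with the inequality $a^\be b^\al \leq (a+b)^{\al+\be}$ for $a,b\geq 0$ and $\al,\be\in(0,1]$, this increment is bounded by $\nrm{\si}{\be;[s,t]}\nrm{X}{\al;[s,t]}(t_{k+1}-t_{k-1})^{\al+\be}$. Since $\sum_{k=1}^{n-1}(t_{k+1}-t_{k-1}) \leq 2(t-s)$, the pigeonhole principle yields some $k$ with $t_{k+1}-t_{k-1} \leq 2(t-s)/(n-1)$; I would remove this point and iterate all the way down to the trivial partition $\{s,t\}$. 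Summing the errors produces
\[|S(\cP)-\si_s X_{s,t}| \leq \nrm{\si}{\be;[s,t]}\nrm{X}{\al;[s,t]}(t-s)^{\al+\be}\sum_{m=1}^{n-1}(2/m)^{\al+\be},\]
and the series converges (to a constant depending only on $\al+\be$) precisely because $\al+\be>1$, giving a bound of the desired form uniformly in $\cP$.

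Convergence of $S(\cP)$ as $|\cP|\to 0$ then follows by a standard refinement argument: if $\cP'$ refines $\cP$, applying the uniform estimate to each subinterval of $\cP$ yields $|S(\cP')-S(\cP)| \lesssim \nrm{\si}{\be;[s,t]}\nrm{X}{\al;[s,t]}\sum_k(t_{k+1}-t_k)^{\al+\be} \leq \nrm{\si}{\be;[s,t]}\nrm{X}{\al;[s,t]}|\cP|^{\al+\be-1}(t-s)$, which vanishes as $|\cP|\to 0$. Comparing two arbitrary partitions via their common refinement shows $S(\cP)$ is Cauchy, defining $\int_s^t \si\,dX$ as the limit, and passing to the limit in the uniform estimate gives the Young--Lóeve inequality. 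For the final claim, on any compact $[0,T]$ and $s\leq t'\leq t\leq T$, additivity of the integral and the inequality on $[t',t]$ yield
\[\left|\int_{t'}^t \si\,dX\right| \leq |\si_{t'}||X_{t',t}| + C\nrm{\si}{\be;[0,T]}\nrm{X}{\al;[0,T]}(t-t')^{\al+\be},\]
and both terms are $O((t-t')^\al)$ since $\si$ is bounded on compacts and $\al+\be>\al$, yielding local $\al$-Hölder regularity of $t\mapsto \int_s^t \si\,dX$.

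The main obstacle is reproducing the specific constant $1/2^{1-(\al+\be)}$ displayed in the proposition: the naive pigeonhole argument sketched above yields only a cruder constant of the form $2^{\al+\be}\zeta(\al+\be)$, so to recover the sharper value one would likely need to replace the ``remove a single point'' step by a dyadic refinement scheme, or to exploit an optimal choice of the removed node, and then carefully sum the resulting geometric series. Everything else in the proof is routine manipulation of the estimates above.
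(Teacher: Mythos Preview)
Your sketch is correct and in fact gives far more than the paper does: the paper's proof is a one-line citation to \citet[Theorem~6.8]{Friz_Victoir_2010} together with the observation that an $\al$-Hölder function on $[s,t]$ has $1/\al$-variation at most $(t-s)^\al\nrm{X}{\al;[s,t]}$, so that the $p$-variation estimate there translates directly into the Hölder statement here. What you have written is essentially the standard sewing/point-removal argument that underlies that cited theorem, so the two are not different routes so much as different levels of detail.

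On the constant: your diagnosis is right that the naive pigeonhole iteration yields $2^{\al+\be}\zeta(\al+\be)$ rather than $2^{\al+\be-1}$, and this cannot be repaired by a sharper choice of removed node, since $\zeta(\al+\be)\to\infty$ as $\al+\be\downarrow 1$ whereas the displayed constant stays bounded. In fact the paper itself, when it later applies this proposition with $\be=1$ (in the proof of Proposition~\ref{prop:linearEpochedDecayYDE}), uses the constant $C=\frac{1}{1-2^{-\al}}$, which for $\be=1$ reads $\frac{1}{1-2^{1-(\al+\be)}}$ and \emph{does} blow up as $\al+\be\downarrow 1$. This is the constant that arises from a dyadic refinement (geometric series in $2^{1-(\al+\be)}$), and strongly suggests the displayed constant in the proposition should be $\frac{1}{1-2^{1-(\al+\be)}}$ rather than $\frac{1}{2^{1-(\al+\be)}}$. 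So do not spend effort trying to recover the printed constant; your argument already yields a constant of the correct form for all subsequent uses in the paper.
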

\begin{proof}
See \citet[Theorem 6.8]{Friz_Victoir_2010} and note that any $\al$-Hölder continuous function $X$ on $[s,t]$ (even if matrix-valued) has finite $1/\al$-variation $\pvar{X}{1/\al}$, with 
\[\pvar{X}{1/\al} \leq (t-s)^\al\nrm{X}{\al}.\]
\end{proof}
Note that for any epoched bridge $X$ we have $X_{n,n+1} = 0$ for all $n\in \N_0$, so in this case Proposition \ref{prop:ynglve} implies
\begin{equation}
\label{eq:ynglvebrdge}
\left|\int_n^{n+1} \si_s\,dX_s\right| \leq \frac{1}{2^{1-(\al + \be)}} \nrm{X}{\al;[n,n+1]} \nrm{\si}{\be;[n,n+1]}, \quad n \in \N_0
\end{equation}
This is a crucial estimate in our convergence arguments (see the proof of Proposition \ref{prop:linearEpochedDecayYDE}).

\subsection{Existence and Uniqueness}
Our first aim is to show existence and uniqueness of a global solution $Y$ to \eqref{eq:linearRDE}.
\begin{prop}
\label{prop:YDEaddExUn}
Suppose we are given the following.
\begin{itemize}
\item $\al, \be \in (0,1]$ with $\al + \be > 1$,
\item $X : [0,\infty) \to \R^m \in \CHoelLoc\al$,
\item $\si : [0,\infty) \to \R^{d\times m} \in \CHoelLoc\be$,
\item $f : [0,\infty) \times\R^d \to \R^d$ is (jointly) measurable, such that
\begin{enumerate}[(a)]
\item $f_t(\blnk) \in \Lip$, uniformly in $t\geq 0$,
\item $f_\blnk(0) \in \LpLoc1$.
\end{enumerate}
\end{itemize}
Then there exists a unique solution $Y : [0,\infty) \to \R^d$ to the Young differential equation
\begin{equation}
\label{eq:YDEadd}
dY_t = f_t(Y_t)\,dt + \si_t\,dX_t,\quad t\geq 0, Y_0 = y,
\end{equation}
and it satisfies $Y\in \CHoelLoc{(\al\wedge \be)^-}([0,\infty), \R^d)$.
\end{prop}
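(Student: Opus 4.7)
The plan is to reduce the Young differential equation to a classical Carathéodory-type ODE by peeling off the stochastic integral term, which is possible because the noise is additive.

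First, I would define the driver
\[Z_t := \int_0^t \sigma_s\,dX_s, \quad t\geq 0.\]
By Proposition \ref{prop:ynglve} (applied on each compact interval $[0,T]$, where $X\in \hoelHZ{\alpha}([0,T])$ and $\sigma\in \hoelHZ{\beta}([0,T])$ with $\alpha+\beta>1$), the Young integral $Z_t$ exists and defines a path $Z\in \CHoelLoc{\alpha}$. Then \eqref{eq:YDEadd} is equivalent to the integral equation
\[Y_t = y + \int_0^t f_s(Y_s)\,ds + Z_t, \quad t\ge 0.\]
Setting $\tilde Y_t := Y_t - Z_t$ reduces the problem to the classical (Carathéodory) ODE
\[\tilde Y_t = y + \int_0^t f_s(\tilde Y_s + Z_s)\,ds, \quad t\ge 0,\]
whose right-hand side is measurable in $s$, Lipschitz in the spatial variable uniformly in $s$, and satisfies $|f_s(Z_s)|\le |f_s(0)| + L\,|Z_s|\in \LpLoc{1}$.

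Next I would carry out a Picard iteration (or equivalently a Banach fixed-point argument) on the space $\cC([0,T],\R^d)$ for arbitrary $T>0$. Uniform Lipschitz continuity of $f$ in space yields the contraction estimate
\[\nrm{\tilde Y^{(1)}-\tilde Y^{(2)}}{\infty;[0,T]}\le LT\,\nrm{\tilde Y^{(1)}-\tilde Y^{(2)}}{\infty;[0,T]},\]
which by iterating (or equivalently by using the norm $e^{-2Lt}\nrm{\blnk}{\infty;[0,T]}$) gives a unique local solution. To prevent blow-up and obtain existence on all of $[0,\infty)$, I would apply Grönwall to
\[|\tilde Y_t| \le |y| + \int_0^t (|f_s(0)|+L|Z_s|)\,ds + L\int_0^t |\tilde Y_s|\,ds,\]
deducing an a priori bound on $\tilde Y$ on any finite interval, and conclude that the solution extends globally.

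Finally, the regularity claim follows from writing $Y = \tilde Y + Z$: the map $t\mapsto \int_0^t f_s(\tilde Y_s+Z_s)\,ds$ is absolutely continuous (in fact locally Lipschitz on any interval on which $f_\cdot(0)$ is locally bounded and $\tilde Y,Z$ are bounded), hence in $\CHoelLoc{\gamma}$ for every $\gamma<1$, while $Z\in \CHoelLoc{\alpha^-}$ by Proposition \ref{prop:ynglve}; adding these yields $Y\in \CHoelLoc{(\alpha\wedge\beta)^-}$. The main technical obstacle is the interplay between the Young integral regularity (available only because $\alpha+\beta>1$) and the Carathéodory framework; once the decomposition $Y=\tilde Y+Z$ is made, both issues decouple cleanly and the proof proceeds along classical lines.
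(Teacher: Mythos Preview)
Your approach is correct and takes a genuinely different route from the paper's. The paper runs the Banach fixed-point argument directly on the full Young map
\[
\Phi Y = y + \int_0^\cdot f_s(Y_s)\,ds + \int_0^\cdot \sigma_s\,dX_s
\]
in the H\"older space $\hoelHZ{\gamma}([0,T])$ for a fixed $\gamma<\alpha\wedge\beta$, showing that $\Phi$ contracts once $T$ is small enough (explicitly $T=\tfrac{1+\gamma}{2\nrm{f}{\Lip}}$), and then iterates over successive intervals of that fixed length; uniqueness is obtained separately via Gr\"onwall. You instead exploit the additive structure of the noise: once $Z=\int_0^\cdot\sigma\,dX$ is built (and all the Young machinery is spent there), the substitution $\tilde Y=Y-Z$ reduces everything to a Carath\'eodory ODE handled by classical Picard iteration on $\cC([0,T])$. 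Your route is more modular and relies only on standard ODE theory for the dynamical part; the paper's route keeps the equation intact in a H\"older framework, which is closer in spirit to rough paths and would survive if the noise were not purely additive. One small slip: your displayed contraction inequality compares $\tilde Y^{(1)}-\tilde Y^{(2)}$ with itself; you mean the image under one Picard step on the left. Finally, both arguments share the same looseness on the regularity claim: passing from $f_\cdot(0)\in \LpLoc1$ to $\int_0^\cdot f_s(Y_s)\,ds$ being locally Lipschitz (or even H\"older of any positive exponent) is not justified without local boundedness of $f_\cdot(0)$---the paper asserts the Lipschitz property directly, and your parenthetical acknowledges the extra hypothesis but the ``hence'' that follows does not go through from absolute continuity alone.
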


\begin{proof}
Let $T>0, \ga \in (0,\al \wedge \be)$ and define
\[E = \set{Y \in \hoelHZ \ga([0,T], \R^d) : Y_0 = y}.\]
This is a complete metric space when equipped with $d(Y,\tilde Y) = \nrm{Y - \tilde Y}{\ga}$.
Define the map $\Ph : E\to E$ by
\[(\Ph Y)_t = y_0 + \int_0^t f_s(Y_s)\,ds + \int_0^t \si_s \,dX_s.\]
Note that the latter summand is a proper Young integral, since $\al + \be > 1$.
We have
\[|f_s(Y_s)|\leq |f_s(0)| + |f_s(Y_s) - f_s(0)| \leq |f_s(0)| + \nrm{f}{\Lip}|Y_s|,\]
which is locally integrable in $s$. Thus, $\int_0^\blnk f_s(Y_s)\,ds \in \Lip([0,T])$.
Further, $(\Ph Y)_0 = y_0$ and $\int_0^\blnk \si_s \,dX_s \in \hoelHZ \al([0,T]) \subseteq \hoelHZ \ga([0,T])$ by Proposition \ref{prop:ynglve}. Hence, $\Ph$ is well-defined.
For $s,t\in [0,T]$ we estimate
\begin{align*}
|\Ph Y_{s,t} - \Ph \tilde Y_{s,t}| 	\leq& \int_s^t |f_r(Y_r) - f_r(\tilde Y_r)| \,dr \\
									\leq& \nrm{f}{\Lip} \int_s^t |Y_r - \tilde Y_r|\,dr \\
									\leq& \nrm{f}{\Lip} \nrm{Y - \tilde Y}{\ga} \int_s^t (r-s)^\ga\,dr\\
									\leq& \frac{1}{1+\ga}\nrm{f}{\Lip} \nrm{Y - \tilde Y}{\ga} (t-s)^{1+\ga}.
\end{align*}
Thus,
\[|\Ph Y_{s,t} - \Ph \tilde Y_{s,t}|(t-s)^{-\ga} \leq \frac{T}{1+\ga}\nrm{f}{\Lip} \nrm{Y - \tilde Y}{\ga}, \quad s,t\in [0,T],\]
i.e.
\[\nrm{\Ph Y - \Ph \tilde Y}{\ga} \leq \frac{T}{1+\ga}\nrm{f}{\Lip} \nrm{Y - \tilde Y}{\ga},\]
or, in other words, $\Ph$ is Lipschitz with constant bounded by $\frac{T}{1+\ga}\nrm{f}{\Lip}$.
By picking $T = \frac{1+\ga}{2 \nrm{f}{\Lip}}$ we get $\nrm{\Ph}{\Lip} \leq \frac12$. In particular, $\Ph$ is a contraction and has a fixed point $Y\in E$, using the Banach fixed-point theorem. Being a fixed point means it is a solution of \eqref{eq:YDEadd} on $[0,T]$.
If a solution $Y$ of \eqref{eq:YDEadd} exists on $[0,nT]$ for some $n\in \N$, then by applying the same argument with 
\[E = \set{\tilde Y \in \hoelHZ \ga([nT,(n+1)T], \R^d) : \tilde Y_{nT} = Y_{nT}}\]
extends the solution $Y$ to $[0,(n+1)T]$. Thus, a solution $Y$ exists on $[0,\infty)$.

If there are two solutions $Y,\tilde Y$ on some interval $[0,T]$, then
\[|Y_t - \tilde Y_t| \leq \int_0^t |f_s(Y_s) - f_s(\tilde Y_s)| \leq \nrm{f}{\Lip} \int_0^t |Y_s - \tilde Y_s|\,ds,\]
and then Grönwalls inequality implies $Y_t = \tilde Y_t$, for all $t\in [0,T]$.
\end{proof}

\begin{prop}
	\label{prop:varOfConst}
	Suppose we are given the following.
	\begin{itemize}
		\item $\al, \be \in (0,1]$ with $\al + \be > 1$,
		\item $X : [0,\infty) \to \R^m \in \CHoelLoc\al$,
		\item $\si : [0,\infty) \to \R^{d\times m} \in \CHoelLoc\be$,
		\item $A : [0,\infty) \to \R^{d\times d}\in \LpLoc 1 \cap L^\infty$,
		\item $b : [0,\infty) \to \R^d \in \LpLoc 1$.
	\end{itemize}
Let $\ph$ be the unique solution to the linear matrix integral equation
\begin{equation}
	\label{eq:linMatODE}
	\ph_t = 1_{d\times d} + \int_0^t A_s \ph_s\,ds.
\end{equation}
	Then the unique solution $Y : [0,\infty) \to \R^d$ to the Young differential equation
\begin{equation}
	\label{eq:YDElin}
	dY_t = A_t Y_t + b_t\,dt + \si_t\,dX_t, Y_0 \in \R^d.
\end{equation}
is given by
\[Y_t = \ph_t\left(Y_0 + \int_0^t \ph_s^{-1} b_s\,ds + \int_0^t \ph_s^{-1} \si_s\,dX_s\right),\quad t\geq 0.\]
\end{prop}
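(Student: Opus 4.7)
The plan is to proceed by a variation of constants argument adapted to the Young-integral setting, using Young's integration by parts formula together with the associativity of the Young integral. First I would treat the fundamental matrix $\ph$: by standard Carathéodory theory (since $A\in \LpLoc 1 \cap L^\infty$) the equation $\ph_t = 1_{d\times d} + \int_0^t A_s \ph_s\,ds$ admits a unique absolutely continuous (in fact Lipschitz on compacts) solution, Liouville's formula gives invertibility of $\ph_t$ for every $t$, and $t \mapsto \ph_t^{-1}$ is likewise Lipschitz on compacts with $\ph_t^{-1} = 1_{d\times d} - \int_0^t \ph_s^{-1} A_s\,ds$. Hence $\ph,\ph^{-1}\in \dC 1 \cap \wLip{}{\loc}$, which will provide ``infinite'' Hölder regularity for the matrix-valued factors that multiply the driver.

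Next I would check that the expression on the right-hand side is well-defined. Since $\si\in \CHoelLoc \be$ and $\ph^{-1}$ is locally Lipschitz, the product $\ph^{-1}\si$ lies in $\CHoelLoc \be$; combined with $X\in \CHoelLoc \al$ and $\al+\be>1$, the Young integral $\int_0^t \ph_s^{-1}\si_s\,dX_s$ exists by Proposition \ref{prop:ynglve} and is itself in $\CHoelLoc \al$. The Lebesgue integral $\int_0^t \ph_s^{-1} b_s\,ds$ is absolutely continuous. Thus the candidate process
\[
\tilde Y_t := \ph_t\Big(Y_0 + \int_0^t \ph_s^{-1} b_s\,ds + \int_0^t \ph_s^{-1}\si_s\,dX_s\Big)
\]
is well-defined and lies in $\CHoelLoc{(\al\wedge\be)^-}$. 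Also, existence and uniqueness of the linear YDE \eqref{eq:YDElin} follow from Proposition \ref{prop:YDEaddExUn} applied to $f_t(y) := A_t y + b_t$, which is uniformly Lipschitz in $y$ (because $A\in L^\infty$) with $f_\blnk(0) = b \in \LpLoc 1$. So it suffices to check that $\tilde Y$ solves the YDE.

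The core computation is componentwise integration by parts. Writing $g_t := Y_0 + \int_0^t \ph_s^{-1} b_s\,ds + \int_0^t \ph_s^{-1}\si_s\,dX_s$, I would apply the scalar Young integration-by-parts formula to each product $\ph^{ij}\cdot g^j$ (permissible because $\ph^{ij}$ is Lipschitz and $g^j \in \CHoelLoc \al$, so the exponents $1+\al$ exceed $1$) and sum over $j$. Using the ODE $d\ph^{ij}_s = (A_s\ph_s)^{ij}\,ds$ one obtains
\[
\sum_j \int_0^t g^j_s\,d\ph^{ij}_s \;=\; \int_0^t (A_s \ph_s g_s)^i\,ds \;=\; \int_0^t (A_s\tilde Y_s)^i\,ds.
\]
For the other term, the associativity (``change of variables'') property of Young integrals — valid here for the same Hölder-exponent reasons — gives
\[
\sum_j \int_0^t \ph^{ij}_s\,dg^j_s \;=\; \int_0^t (\ph_s\ph_s^{-1} b_s)^i\,ds + \int_0^t (\ph_s\ph_s^{-1}\si_s\,dX_s)^i \;=\; \int_0^t b^i_s\,ds + \int_0^t (\si_s\,dX_s)^i.
\]
Adding the two identities yields $\tilde Y_t = Y_0 + \int_0^t A_s \tilde Y_s\,ds + \int_0^t b_s\,ds + \int_0^t \si_s\,dX_s$, so $\tilde Y$ solves \eqref{eq:YDElin}, and uniqueness forces $Y=\tilde Y$.

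The main obstacle I anticipate is the bookkeeping for the matrix-valued Young integration by parts and the associativity rule: one needs the relevant Hölder exponents ($\al$ for $X$, $\be$ for $\si$, $1$ for $\ph$ and $\ph^{-1}$) to combine correctly so that every Young integral that appears lies above the critical threshold $\al+\be>1$. Once the componentwise reduction is carried out these checks are routine, but stating associativity of the Young integral cleanly for the composition $\int \ph\,d(\int \ph^{-1}\si\,dX)$ is the technical heart of the argument; I would cite the standard references (e.g.\ \citet{Friz_Victoir_2010}) to shorten this step.
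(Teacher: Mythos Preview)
Your proposal is correct and follows essentially the same approach as the paper: define $Z_t = Y_0 + \int_0^t \ph_s^{-1} b_s\,ds + \int_0^t \ph_s^{-1}\si_s\,dX_s$, apply the Young product formula to $\ph_t Z_t$ (the paper cites \cite{friz2020course}, Exercise 7.4, rather than doing the componentwise reduction you outline), and conclude via uniqueness from Proposition~\ref{prop:YDEaddExUn}. Your version is more explicit about the regularity of $\ph^{-1}$ and the associativity step, which the paper leaves implicit.
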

\begin{proof}
	Define
	\[Z_t = Y_0 + \int_0^t \ph_s^{-1} b_s\,ds + \int_0^t \ph_s^{-1} \si_s\,dX_s, \quad t\geq 0.\]
	Note that $\ph\in \CHoelLoc1$. Thus, the product formula (see \cite{friz2020course} Exercise 7.4) implies
	\begin{align*}
		\ph_t Z_t 	= & \ph_0 Z_0 + \int_0^t (d\ph_s) Z_t + \int_0^t \ph_s\,dZ_s \\
		= & \ph_0 Z_0 + \int_0^t A_s \ph_s Z_s\,ds + \int_0^t b_s\,ds + \int_0^t \si_s dX_s.
	\end{align*}
	Hence, $Y = \ph Z$ is a solution to \eqref{eq:linearRDE}. Uniqueness follows from Proposition \ref{prop:YDEaddExUn}.
\end{proof}
We can transform our main equation \eqref{eq:epochedCvx} into the simpler form (see Lemma \ref{lem:epochedCvxTransf} for details)
\[dY_t = -\tilde u_t \nabla \tilde \cR(Y_t)\,dt + \tilde u_t dX_t,\]
Here, $X$ is an epoched Brownian bridge, $\tilde u_t = (1 + tT)^{-\be}$ and $\tilde \cR$ is a random function satisfying the same conditions as $\cR$ in Theorem \ref{thm:main}, almost surely, except its global minimum is at $0$.
Thus, we will work mainly with equations of this form from now on.

\begin{docu}
\begin{rem}
Since $\si$ does not depend on $Y$ the present theory only requires the theory of Young integration and no rough path techniques. However, we can still apply various results from the theory of ($\alpha$-Hölder) rough paths, which is useful due to larger body of literature on this topic compared to the Young case. Note that any locally Lipschitz in time function $Z$ is trivially an $\rpX$-controlled rough path with remainder term $R_{s,t}^Z = Z_t - Z_s$ and Gubinelli derivative $Z' = 0$, since we have $\nrm{R_{s,t}^Z}{2\al} \lesssim \nrm{Z}{\Lip} < \infty$. Here, $X$ has been lifted to a rough path $\rpX$ in some way. Therefore, the rough integral against $\rpX$ coincides with the Young integral 
\[\int_0^t Z_s\,d\rpX_s = \int_0^t Z_s\,dX_s\]
and the specific lift $\rpX$ of $X$ plays no role.
Further, the stipulation that $\al \in \left(\frac13, \frac12\right]$ is also unnecessary. All results in this section are true for any locally Hölder continuous path $X$.
\end{rem}

\paragraph{Periodic drivers}
Let $T > 0$ and suppose $X$ has $T$-periodic increments, \tIe{} $X_{t+T} - X_{s+T} = X_t - X_s, s,t\geq 0$, and $X_0 = 0$.

When considering the equations of the form \eqref{eq:linearRDE} it is usually not a restriction to assume that $X$ is $1$-periodic, that is $X_{t+1} = X_t, t\geq 0$. This is because if $Y$ satisfies \eqref{eq:linearRDE}, then for $\tilde Y_t := Y_{tT}$ and $\tilde X_t = X_{tT} - tX_T$ we have
\begin{align*}
\tilde Y_t = & Y_0 + T\int_0^t A_{sT} Y_{sT} + b_{sT}\,ds + \int_0^t \si_{sT} dX_{sT} \\
= & \tilde Y_0 +  \int_0^T T A_{sT} \tilde Y_s + T b_{sT} + \si_{sT} X_T\,ds + \int_0^t \si_{sT} \,d\tilde X_t.
\end{align*}

\begin{lem}
Suppose $Y$ solves \eqref{eq:linearRDE}.
Then $\tilde Y_t = Y_{tT}$ solves
\begin{equation}
\label{eq:linearRDE1period}
d\tilde Y_t = \tilde A_t Y_t + \tilde b_t\,dt + \tilde \si_t\,d\tilde X_t, \quad \tilde Y_0 = Y_0.
\end{equation}
where
\[\tilde A_t = T A_{tT}, \tilde b_t = Tb_{tT} + \si_{tT} X_T, \tilde \si_t = \si_{tT},\]
and the driver $\tilde X_t = X_{tT} - tX_T$ is $1$-periodic.
Conversely, if $\tilde Y$ solves \eqref{eq:linearRDE1period} for $1$-periodic $\tilde X$ and we are given $X_T\in \R^d$, then $Y_t := Y_{t/T}$ solves \eqref{eq:linearRDE}, where
\[A_t = \frac1T\tilde A_{t/T}, b_t = \frac1T \tilde b_{t/T}- \tilde \si_t X_T, \si_t = \tilde \si_{t/T},\]
with $X_t = \tilde X_{t/T} + \frac t T X_T$ having $T$-periodic increments.
\end{lem}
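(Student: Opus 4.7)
The plan is to substitute $\tilde Y_t := Y_{tT}$ into the integral form of \eqref{eq:linearRDE} and perform a change of variables in each of the two integrals, then verify that the resulting equation is of the desired form with a $1$-periodic driver; the converse is handled by reversing the computation.

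First I would write out
\[\tilde Y_t = Y_{tT} = Y_0 + \int_0^{tT} \bigl(A_s Y_s + b_s\bigr)\,ds + \int_0^{tT} \si_s\,dX_s.\]
For the Lebesgue integral, an elementary change of variable $s = uT$ gives $\int_0^{tT}(A_sY_s+b_s)\,ds = T\int_0^t (A_{uT}\tilde Y_u + b_{uT})\,du$. For the Young integral, I would justify
\[\int_0^{tT}\si_s\,dX_s = \int_0^t \si_{uT}\,dX_{uT},\]
by noting that any partition $\cP=\{u_i\}$ of $[0,t]$ induces a partition $\{Tu_i\}$ of $[0,tT]$ with mesh $T|\cP|$, and the corresponding Young Riemann sums coincide term-by-term; since both meshes tend to $0$ simultaneously and both paths lie in $\CHoelLoc{\al}$ respectively $\CHoelLoc{\be}$ with $\al+\be>1$, Proposition \ref{prop:ynglve} guarantees that both limits exist and agree.

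Next I would use the decomposition $X_{uT} = \tilde X_u + uX_T$, which is the very definition of $\tilde X$, together with linearity of the Young integral against sums of integrators (this is again immediate from the Riemann-sum definition, and one piece is even a classical Riemann integral since $u\mapsto uX_T$ is smooth):
\[\int_0^t \si_{uT}\,dX_{uT} = \int_0^t \si_{uT}\,d\tilde X_u + \int_0^t \si_{uT}X_T\,du.\]
Combining the three displays and reading off coefficients yields the claimed identities $\tilde A_t = TA_{tT}$, $\tilde b_t = Tb_{tT}+\si_{tT}X_T$, $\tilde\si_t = \si_{tT}$. To check $1$-periodicity of $\tilde X$, I would simply compute $\tilde X_{t+1}-\tilde X_t = (X_{(t+1)T}-X_{tT}) - X_T$, which vanishes because the $T$-periodicity of the increments of $X$ gives $X_{(t+1)T}-X_{tT} = X_T - X_0 = X_T$.

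For the converse direction, given $\tilde Y$ solving \eqref{eq:linearRDE1period} and a prescribed vector $X_T\in\R^d$, I would define $Y_t := \tilde Y_{t/T}$ and $X_t := \tilde X_{t/T} + (t/T)X_T$ and repeat the same change-of-variables argument in reverse (substituting $u = s/T$, and this time splitting $d\tilde X_u = dX_{uT} - X_T\,du$). The $T$-periodic-increments property of $X$ follows immediately from $1$-periodicity of $\tilde X$: $X_{t+T}-X_t = (\tilde X_{t/T+1}-\tilde X_{t/T}) + X_T = X_T$. Reading off coefficients then gives the stated formulas for $A_t$, $b_t$, $\si_t$. The only nontrivial step in the whole argument is the transformation law for the Young integral under an affine reparametrisation, and I do not expect this to be a genuine obstacle because the underlying Riemann-sum definition is preserved exactly under linear time changes.
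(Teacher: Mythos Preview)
Your proposal is correct and follows essentially the same approach as the paper: the paper simply writes out $\tilde Y_t = Y_0 + T\int_0^t A_{sT}Y_{sT}+b_{sT}\,ds + \int_0^t \si_{sT}\,dX_{sT}$ and then splits $dX_{sT} = d\tilde X_s + X_T\,ds$, which is exactly your change-of-variables argument. You are in fact more careful than the paper in justifying the reparametrisation of the Young integral via matching Riemann sums.
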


Suppose now $X$ is $1$-periodic with $X_0 = 0$, and that $b = 0$. For every $Z\in \Lip_{\text{loc}}$ we have
\begin{align*}
	\int_{n}^{n+1} Z_s \,dX_s = & \lim_{|\cP|\to 0} \sum_{[s,t] \in \cP} Z_{s+n} (X_{t+n} - X_{s+n}) = \lim_{|\cP|\to 0} \sum_{[s,t] \in \cP} Z_{s+n} (X_{t} - X_{s}) \\
	= & \int_0^1 Z_{s+n}\,dX_s.
\end{align*}
Thus, for $t\geq 0$,
\begin{align*}
	\int_0^t Z_s\,dX_s = & \sum_{n=0}^{\floor t - 1} \int_n^{n+1} Z_s \,dX_s + \int_{\floor t}^t Z_s\,dX_s \\
	= & \sum_{n=0}^{\floor t - 1} \int_0^1 Z_{s+n} \,dX_s + \int_0^{\frk t} Z_{s + \floor t}\,dX_s.
\end{align*}
Hence, the unique solution to \eqref{eq:linearRDE} satisfies
\begin{equation}
\label{eq:linearRDEsol1Per}
Y_t= \ph_t Y_0 + \sum_{n=0}^{\floor t-1}\ph_t \int_0^1  \ph_t^{s+n} \si_{s+n}\,dX_s + \int_0^{\frk t} \ph_t^{s+\floor t} \si_{s + \floor t}\,dX_s, \quad t\geq 0,
\end{equation}
where $\ph_t^s := \ph_t(\ph_s)^{-1}$, and
\begin{equation}
\label{eq:linearRDEsol1PerRec}	
Y_t = \ph_{\frk t} Y_{\floor t} + \int_0^{\frk t} \ph_t^{s + \floor t} \si_{s+\floor t}\,dX_s, \quad t\geq 0.
\end{equation}
\end{docu}

\begin{docu}
\subsection{Ornstein-Uhlenbeck process with periodic noise}
Let $\ka \in \R^{d\times d}$ be symmetric and positive definite, and set
\[A_t = - \ka, \quad b_t  = 0,\quad \si_t = 1_{d\times d}, \quad t\geq 0.\]
Then \eqref{eq:linearRDE} becomes
\begin{equation}
\label{eq:linearRDEc}
	dY_t = -\ka Y_t\,dt + dX_t, \quad Y_0 \in \R, t\geq 0,
\end{equation}
The solution $Y$ can be viewed as Ornstein-Uhlenbeck process driven by a $1$-periodic process $X$.

Denote the unique solution of \eqref{eq:linearRDEc} with initial value $y\in \R^d$ by $(Y_t(y))_{t\geq 0}$.
We are looking for an initial value $y\in \R^d$ is, such that $Y_1(y) = Y_0(y) = y$. We have
\[0 = Y_0(y) - Y_1(y) = (1 - e^{-\ka})y  - \int_0^1 e^{-\ka (1 - s)}\,dX_s\]
if and only if
\[y = y^* := (1 - e^{-\ka})^{-1} \int_0^1 e^{-\ka (1 - s)}\,dX_s.\]
Further,
\begin{align*}
	Y_{n+1}(y^*) = & Y_n(y^*) e^{-\ka}+ \int_0^1 e^{-\ka (1-s)}\,dX_s\\
	= & Y_1(Y_n(x^*))
\end{align*}
Therefore,
\[Y_n(y^*) = y^*, \quad n \in \N_0,\]
and thus the solution to \eqref{eq:linearRDEc} with initial value $y^*$ (see \eqref{eq:linearRDEsol1PerRec})
\[Y_t(y^*) = y^* e^{-\ka \frk t} + \int_0^{\frk t} e^{-\ka (\frk{t}-s)}\,dX_s\]
is $1$-periodic, that is
\[Y_{t+1}(y^*) = Y_t(y^*),\quad t\geq 0\]
Thus, $Y(y^*)$ is a \emph{closed trajectory} of the dynamical system and its image
\[O = \set{Y_t(y^*) : t \geq 0} = \set{Y_t(y^*) : t\in [0,1]}\]
is a \emph{closed orbit}.

\begin{prop}
For every $y\in \R^d$ we have
\[d(Y_t(y), O) = \inf_{p\in O} |Y_t(y) - p| \leq |y-y^*| e^{-\la_{\min}(\ka) t}, t\geq 0.\]	
In particular, $Y_t(y)$ converges to $O$, as $t\to \infty$, for every initial value $y\in \R^d$.
\end{prop}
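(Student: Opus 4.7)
The plan is to reduce the problem to a deterministic linear ODE by subtracting the periodic trajectory $Y(y^*)$ from $Y(y)$. Setting $Z_t := Y_t(y) - Y_t(y^*)$, since both processes solve the same YDE \eqref{eq:linearRDEc} with identical driver $X$ and coefficient $\si = 1_{d\times d}$, the noise increments cancel in any Young sum approximation, so $Z$ satisfies the pure linear ODE
\[dZ_t = -\ka Z_t\,dt, \quad Z_0 = y - y^*.\]
This gives the closed form $Z_t = e^{-\ka t}(y - y^*)$, which one can also obtain from the variation-of-constants formula in Proposition \ref{prop:varOfConst} applied to both $Y(y)$ and $Y(y^*)$ (the Young integrals against $X$ coincide and cancel).

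Next I would use that $\ka$ is symmetric positive definite, so it diagonalises as $\ka = Q D Q^\transp$ with $D = \diag(\la_1,\dots,\la_d)$ and $0 < \la_{\min}(\ka) = \la_1 \leq \dots \leq \la_d$. Then $e^{-\ka t} = Q e^{-D t} Q^\transp$, which is again symmetric, and its spectral norm equals its largest eigenvalue $e^{-\la_{\min}(\ka) t}$. Consequently
\[|Y_t(y) - Y_t(y^*)| = |e^{-\ka t}(y-y^*)| \leq \specnrm{e^{-\ka t}}\,|y-y^*| = e^{-\la_{\min}(\ka) t}\,|y-y^*|.\]

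Finally, since $Y_t(y^*) \in O$ by the very definition of the orbit, taking the infimum over $p \in O$ on the left gives
\[d(Y_t(y), O) \leq |Y_t(y) - Y_t(y^*)| \leq |y - y^*|\,e^{-\la_{\min}(\ka) t},\]
which is the claim. Convergence to $O$ then follows because $\la_{\min}(\ka) > 0$.

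There is no real obstacle here; the only step requiring a little care is the rigorous cancellation of the Young integrals when forming $Z_t$. That is immediate from linearity of the Young integral (Proposition \ref{prop:ynglve}) applied to the common integrand $\si \equiv 1_{d\times d}$, or equivalently from Proposition \ref{prop:varOfConst}, which makes both $Y_t(y)$ and $Y_t(y^*)$ affine in the initial condition with the same affine correction term $\ph_t \int_0^t \ph_s^{-1}\,dX_s$.
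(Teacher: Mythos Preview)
Your proof is correct and follows essentially the same approach as the paper: both subtract two solutions to obtain the linear ODE $dZ_t = -\ka Z_t\,dt$, solve it explicitly, bound via $\specnrm{e^{-\ka t}} = e^{-\la_{\min}(\ka)t}$, and then use $Y_t(y^*)\in O$ to pass to the distance. You simply provide more detail on the spectral norm computation and on why the Young integrals cancel.
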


\begin{proof}
The solution to 
\[d(Y_t(y) - Y_t(\tilde y)) = - \ka (Y_t(y) - Y_t(\tilde y))\,dt\]
is given by
\[Y_t(y) - Y_t(\tilde y) = (y-\tilde y)e^{-\ka t}.\]
Hence
\[|Y_t(y) - Y_t(\tilde y)|  \leq |y-\tilde y| \specnrm{e^{-\ka t}} \leq |y-\tilde y| e^{-\la_{\min}(\ka) t}, \quad t\geq 0,\]
and
\[d(Y_t(y), O) \leq |Y_t(y) - Y_t(y^*)| \leq |y-y^*| e^{-\la_{\min}(\ka) t}, \quad t\geq 0.\]
\end{proof}
\end{docu}

\subsection{Cooling down under epoched bridge noise}
\subsubsection{Preliminaries}
For some asymptotic integral estimates we use the theory of regular variation \citep[see][for more information]{Bingham_Goldie_Teugels_1987}.
A function $f : [0,\infty) \to (0,\infty)$ is called \emph{regularly varying of index} $\rho$ if $f$ is measurable and 
\[\lim_{t\to \infty} \frac{f(ct)}{f(t)} \to c^\rho, \quad c > 0.\]
Further, we call $f$ \emph{slowly varying} if it is regularly varying of index $\rho = 0$.
If $f$ is regularly varying, then  $f$ and $1/f$ are locally bounded and locally integrable on $[t_0,\infty)$ for some $t_0 \geq 0$. Moreover,
we can write 
\[f(t) = t^\rho \ell(t), \quad t > 0\]
where $\ell$ is slowly varying. 

If $f$ is regularly varying and $f\sim g$, then $g$ is also regularly varying with the same index. In particular, if $g = o(f)$ and $f$ is regularly varying of index $\rho$, then so is $f + g$ (provided $f+g > 0$ everywhere).

If $f$ is regularly varying with negative index, then $f(t) \to 0$, as $t\to \infty$.

If $\ell$ is slowly varying, then $\ell(t) = o(t^{\al}), t\to \infty$ for any $\al > 0$.
Examples of slowly varying functions include $\log(t)^{\al}$ for all $\al \in \R$.
\begin{lem}
\label{lem:exp-Ut}
Let $\be \in (0,1)$ and $u$ be regularly varying with index $-\be$ and define $U_t = \int_0^t u_s\,ds$. Then 
\[e^{-U_t} = o(f(t)), \quad t\to \infty,\]
for any regularly varying function $f$.
\end{lem}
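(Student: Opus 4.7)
My plan is to reduce the claim to a comparison of logarithmic growth rates, using standard facts from the theory of regular variation (Karamata's theorem and the Potter bounds).

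First I would apply \emph{Karamata's theorem} to $u$. Since $u$ is regularly varying with index $-\beta > -1$, the integral $U_t = \int_0^t u_s\,ds$ is regularly varying with index $1-\beta > 0$, and moreover $U_t \sim \frac{t\,u(t)}{1-\beta}$ as $t\to\infty$. In particular I can write $U_t = t^{1-\beta}\ell_u(t)$ for some slowly varying $\ell_u$, so $U_t \to \infty$ and, crucially, $U_t/\log t \to \infty$: indeed, for any $k>0$ the function $U_t/\log^k t$ is regularly varying with positive index $1-\beta$, hence tends to infinity.

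Next I would write $f(t) = t^\rho \ell_f(t)$ with $\ell_f$ slowly varying and take logarithms:
\[
\log\!\left(\frac{e^{-U_t}}{f(t)}\right) \;=\; -U_t - \rho\log t - \log \ell_f(t).
\]
The goal is to show this tends to $-\infty$. For the last summand I would use a standard consequence of the Potter bounds for slowly varying functions: for every $\varepsilon > 0$ we have $t^{-\varepsilon} \leq \ell_f(t) \leq t^{\varepsilon}$ eventually, hence $|\log \ell_f(t)| \leq \varepsilon \log t$ eventually, i.e.\ $\log \ell_f(t) = o(\log t)$. Combined with $\rho\log t = O(\log t)$, we obtain $\rho\log t + \log \ell_f(t) = O(\log t)$.

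The final step just assembles these bounds: since $U_t/\log t \to \infty$ by the first step, the term $-U_t$ dominates the $O(\log t)$ correction, giving $\log(e^{-U_t}/f(t)) \to -\infty$ and therefore $e^{-U_t}/f(t) \to 0$, which is what was to be shown. The only subtle point is the growth rate $U_t/\log t \to \infty$, but this is forced by regular variation with a strictly positive index, so there is no real obstacle — the argument is essentially a bookkeeping of orders of magnitude on the log scale.
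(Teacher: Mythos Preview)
Your proof is correct and follows essentially the same route as the paper: both arguments reduce to showing $U_t/\log t \to \infty$ (you via Karamata, the paper via L'H\^opital applied to $U_t/\log t$, giving $\lim t u_t = \lim t^{1-\beta}\ell(t) = \infty$) and then conclude that $e^{-U_t}$ beats any regularly varying $f$. The only cosmetic difference is that the paper finishes by bounding $e^{-U_t} = o(t^{-|\alpha|-1}) = o(f(t))$ for $f$ of index $\alpha$, whereas you work directly on the log scale with Potter bounds; both amount to the same comparison.
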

\begin{proof}
Writing $u_t = t^{-\be} \ell(t)$ for large $t$, we have by L'Hôpital's rule
\[\lim_{t\to \infty} \frac{U_t}{\log t} = \lim_{t\to \infty}  t u_t = \lim_{t\to \infty} t^{1-\be} \ell(t) = \infty.\]
Now, let $\al \in \R$. Then $-U_t + \al \log t \to -\infty$, and so $e^{-U_t} t^\al \to 0$, as $t\to \infty$.
If $f$ is regularly varying of index $\al$, then $e^{-U_t} = o(t^{-|\al| - 1}) = o(f(t))$ as $t\to \infty$.
\end{proof}

\begin{prop}
\label{prop:laplaceIntegralBnd}
Let $f$ and $u$ be regularly varying functions with indices $-\rho, -\beta < 0$ and $\beta < 1$. Suppose further that $f$ is locally bounded and $u\in \LpLoc{1}$ is non-increasing.
Then we have
\[\int_0^t f(s)e^{-U_t^s}\,ds \leq \frac{f(t)}{u(t)} + o\left(\frac{f(t)}{u(t)}\right),\quad t\to \infty,\]
where $U_t^s = \int_s^t u(s)\,ds$.
\end{prop}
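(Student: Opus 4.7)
The strategy is to show that the mass of $\int_0^t f(s) e^{-U_t^s}\,ds$ concentrates near the upper endpoint $s=t$, where $f(s) \approx f(t)$ and $U_t^s \approx u(t)(t-s)$, so the integral behaves like $f(t)\int_0^\infty e^{-u(t)v}\,dv = f(t)/u(t)$. As a first step, the monotonicity of $u$ gives $u(r) \geq u(t)$ for $r\in[s,t]$, hence $U_t^s \geq u(t)(t-s)$ and
\[
\int_0^t f(s) e^{-U_t^s}\,ds \;\leq\; \int_0^t f(s) e^{-u(t)(t-s)}\,ds.
\]
The change of variables $v = t-s$ followed by $w = u(t)v$ then reduces the task to proving
\[
I_t \;:=\; \int_0^{t u(t)} \frac{f(t - w/u(t))}{f(t)}\, e^{-w}\,dw \;\longrightarrow\; 1, \qquad t\to\infty,
\]
after which multiplication by $f(t)/u(t)$ recovers the claimed bound. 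Note $tu(t)\to\infty$ since $u$ is regularly varying of index $-\beta \in (-1,0)$.

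Next, I would split $I_t$ at $w_t := tu(t)/2$. On the main piece $[0,w_t]$ the argument $s_w := t - w/u(t)$ lies in $[t/2,t]$, so $s_w/t \to 1$ pointwise in $w$ and $f(s_w)/f(t) \to 1$ by the uniform convergence theorem for regularly varying functions. Potter's bound provides a finite constant $C = C(\rho,\delta)$ with $f(s_w)/f(t) \leq C$ uniformly on this regime for $t$ large, so dominated convergence yields
\[
\int_0^{w_t} \frac{f(s_w)}{f(t)} e^{-w}\,dw \;\longrightarrow\; \int_0^\infty e^{-w}\,dw \;=\; 1.
\]
For the tail $[w_t, tu(t)]$, $s_w \in [0,t/2]$; combining local boundedness of $f$ with Potter's bound on the portion $s \geq 1$ gives $\sup_{s\in[0,t/2]} f(s) \lesssim t^{\rho+\delta}$ for any $\delta>0$ and $t$ large, while $1/f(t) \lesssim t^{\rho+\delta}$ as well. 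Thus the tail is dominated by a polynomial in $t$ times $e^{-w_t}$, and since $w_t = \tfrac12 t^{1-\beta}\ell(t)$ with $\ell$ slowly varying and $1-\beta>0$, the exponential defeats any polynomial factor, by the mechanism behind Lemma \ref{lem:exp-Ut}, so the tail is $o(1)$. Combining both pieces gives $I_t = 1 + o(1)$ and the proposition follows.

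The main obstacle will be the uniform control of $f(s)/f(t)$: since $f$ is only assumed locally bounded and regularly varying, it need not be monotone and may oscillate through its slowly varying factor. Potter's bound is the right instrument, but it only applies for arguments bounded away from zero, so the local boundedness hypothesis must be invoked separately for the portion $s\in[0,1]$. The split point $w_t = tu(t)/2$ is chosen precisely large enough that the exponential $e^{-w_t}$ defeats any polynomial growth in $1/f(t)$ coming from the tail, yet small enough relative to $tu(t)$ for $s_w/t \to 1$ uniformly on the main piece.
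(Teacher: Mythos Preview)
Your proof is correct and follows essentially the same strategy as the paper's: the same monotonicity bound $U_t^s \geq u(t)(t-s)$, the same main/tail decomposition, and the same tools (uniform convergence theorem plus Potter's bound on the main piece, local boundedness plus Potter on the tail). The only difference is the split point --- you cut at $s=t/2$ and invoke dominated convergence with Potter's bound as majorant, whereas the paper cuts at the much smaller $\tau_t = (\rho+2\varepsilon)\log t/u(t)$ so that $f(t-s)/f(t)\to 1$ \emph{uniformly} on the main piece; both choices work because $tu(t)\to\infty$ makes $e^{-w_t}$ kill any polynomial factor from the tail.
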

\begin{proof}
Since $u$ is non-increasing, $U$ is concave and we have
\[U(s) \leq U(t) + u(t)(s-t), \quad s,t\geq 0,\]
where $U(t) = U_t^0$.
Therefore,
\begin{equation}
\label{eq:laplaceIntegralBndInitEst}
\int_0^t f(s)e^{-U_t^s}\,ds \leq \int_0^t f(s)e^{-(t-s)u(t)}\,ds = f(t)\int_0^t \frac{f(t-s)}{f(t)} e^{-su(t)}\,ds.
\end{equation}
Let $\tau : [0,\infty) \to [0,\infty)$ be non-increasing, such that 
\begin{equation}
\label{eq:timeTauCond}
\frac{\tau_t}{t}\to 0,\,\tau_t u(t) \to \infty, \quad t\to \infty.
\end{equation}
In particular, $\tau_t\to \infty$ since $u(t)\leq u(0), t\geq 0$. We make a particular choice of $\tau$ towards the end.
We split the integral on the RHS of Inequality \eqref{eq:laplaceIntegralBndInitEst} into a main part $\int_0^{\tau_t}\dots\,ds$ and a tail part $\int_{\tau_t}^t\dots\,ds$.

Let us first estimate the main part. Because $f$ is regularly varying with index $-\rho$, we have 
\[\lim_{t\to \infty} \sup_{c\in [a,\infty)} \left|\frac{f(ct)}{f(t)} - c^{-\rho}\right| = 0,\]
for all $a > 0$ \citep[Theorem 1.5.2]{Bingham_Goldie_Teugels_1987}.
Since $t-s = t(1-s/t)$ we have
\begin{align*}
\sup_{s\in (0,\tau_t]} \left|\frac{f(t-s)}{f(t)} - 1\right| = & \sup_{c\in [1-\frac{\tau_t}{t}, 1)} \left|\frac{f(ct)}{f(t)} - 1\right| \\
															\leq & \sup_{c\in [1-\frac{\tau_t}{t}, 1)} \left|\frac{f(ct)}{f(t)} - c^{-\rho}\right| + \sup_{c\in [1-\frac{\tau_t}{t}, 1)} |c^{-\rho} - 1| \\
															\to & 0,
\end{align*}
because $\frac{\tau_t}{t}\to 0$, as $t \to \infty$.
Hence,
\[\int_0^{\tau_t} \frac{f(t-s)}{f(t)} e^{-su(t)}\,ds \sim \int_0^{\tau_t} e^{-su(t)}\,ds = \frac{1}{u(t)} (1 - e^{-\tau_t u(t)}) \sim \frac{1}{u(t)}\]
as $t\to \infty$.

To estimate the tail integral let $\ep > 0$.
By Potter's theorem \citep[Theorem 1.5.6 (iii)]{Bingham_Goldie_Teugels_1987}, there exists a $t_0 \geq 0$ with
\[\frac{f(r)}{f(t)} \lesssim \left(\left(\frac r t\right)^{-\rho + \ep} \vee \left(\frac r t\right)^{-\rho - \ep}\right) = \left(\frac t r\right)^{\rho + \ep} \leq t_0^{-(\rho + \ep)}t^{\rho + \ep},\]
uniformly over $t\geq r\geq t_0$. In particular, by writing $r = t - s$ we have
\[\sup_{s\in [0,t-t_0]}\frac{f(t-s)}{f(t)} \lesssim t^{\rho + \ep},\]
uniformly over large $t$.
Since $f$ is locally bounded, we have
\[\sup_{s\in [t-t_0,t]}\frac{f(t-s)}{f(t)} \lesssim \frac1{f(t)} \sim \ell(t)t^{\rho}, \quad t\to \infty,\]
for some slowly varying function $\ell$. Hence,
\[\sup_{s\in [0, t]}\frac{f(t-s)}{f(t)} \lesssim t^{\rho + \ep}\ell(t),\]
uniformly over large $t$, for slowly varying $\ell$.
Thus,
\[\int_{\tau_t}^t \frac{f(t-s)}{f(t)} e^{-su(t)}\,ds \lesssim \ell(t)t^{\rho + \ep}\int_{\tau_t}^\infty e^{-s u_t}\,ds = \frac{1}{u(t)} \ell(t)t^{\rho +\ep} e^{-\tau_t u(t)},\]
uniformly over large $t$.
Finally, define $\tau_t = \frac{(\rho + 2\ep) \log t}{u(t)}$. Then the first convergence in \eqref{eq:timeTauCond} is satisfied because $u$ is regularly varying with index $-\be \in (-1,0)$. The second follows from $\log t \to \infty$, as $t\to \infty$.
Moreover, $t^{\rho+\ep}e^{-\tau_t u(t)} = t^{-\ep}$ and so 
\[\int_{\tau_t}^t \frac{f(t-s)}{f(t)} e^{-su(t)}\,ds = o\left(\frac{1}{u(t)}\right), \quad t\to \infty.\]
Using Inequality \eqref{eq:laplaceIntegralBndInitEst} we conclude
\[\int_0^t f(s)e^{-U_t^s}\,ds \leq \frac{f(t)}{u(t)} + o\left(\frac{f(t)}{u(t)}\right),\quad t\to \infty.\]
\end{proof}

\begin{lem}
\label{lem:sumVsInt}
Let $a,b\in \N_0$ with $a < b$ and $f : [a,b]\to \R$ be integrable with finite $1$-variation $\pvar f1$. Then
	\[\left|\sum_{n = a+1}^b f(n) - \int_a^b f(t)\,dt\right| \leq \pvar f1.\]
\end{lem}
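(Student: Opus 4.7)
The plan is to compare the sum and the integral interval by interval, exploiting the fact that the $1$-variation is additive over adjacent intervals. First I would split the integral as $\int_a^b f(t)\,dt = \sum_{n=a+1}^b \int_{n-1}^n f(t)\,dt$, so that the difference becomes
\[\sum_{n=a+1}^b f(n) - \int_a^b f(t)\,dt = \sum_{n=a+1}^b \int_{n-1}^n \bigl(f(n) - f(t)\bigr)\,dt.\]
Applying the triangle inequality (twice, once for the sum and once inside the integral) yields an upper bound by $\sum_{n=a+1}^b \int_{n-1}^n |f(n) - f(t)|\,dt$.

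The next step is a pointwise estimate: for $t \in [n-1,n]$, the difference $|f(n) - f(t)|$ is at most the $1$-variation of $f$ on $[t,n]$, and thus at most $\pvar{f}{1;[n-1,n]}$. Integrating over $t \in [n-1,n]$ costs a factor of the length $1$ of the interval, so each summand is bounded by $\pvar{f}{1;[n-1,n]}$. Summing over $n = a+1, \ldots, b$ and using the additivity of $1$-variation under concatenation of adjacent intervals gives
\[\sum_{n=a+1}^b \pvar{f}{1;[n-1,n]} = \pvar{f}{1;[a,b]} = \pvar f1,\]
which is the desired bound.

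I do not anticipate any real obstacle here; the only subtlety worth double-checking is that $f$ being integrable with finite $1$-variation is enough to make the pointwise comparison $|f(n) - f(t)| \leq \pvar{f}{1;[n-1,n]}$ meaningful — this follows because a function of finite $1$-variation is bounded (and indeed differs from its value at any boundary point by at most the variation), so the estimate holds in a completely elementary way without any need to pass through a specific version of $f$.
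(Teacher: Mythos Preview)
Your proposal is correct and follows essentially the same interval-by-interval decomposition as the paper. The only cosmetic difference is in the final bookkeeping: the paper picks near-supremum points $t_n\in[n,n+1)$ with $|f(t_n)-f(n+1)|\ge \sup_{t}|f(t)-f(n+1)|-\ep$ and bounds $\sum_n|f(t_n)-f(n+1)|\le\pvar f1$ directly as a single partition sum, then lets $\ep\downarrow0$; you instead bound each integral by $\pvar{f}{1;[n-1,n]}$ and invoke the additivity of $1$-variation over adjacent intervals. Both routes are equally short and yield the same bound.
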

\begin{proof}
	We calculate 
	\begin{align*}
		\sum_{n = a+1}^b f(n) 	= &\sum_{n=a}^{b-1} f(n+1) \\
		= &\sum_{n=a}^{b-1} \int_n^{n+1} f(t)\,dt + \sum_{n=a}^{b-1} \left(f(n+1)-\int_n^{n+1} f(t)\,dt\right)
	\end{align*}
	Note that
	\[\left|f(n+1)-\int_n^{n+1} f(t)\,dt\right| \leq \sup_{t\in [n,n+1)} |f(t) - f(n+1)|.\]
	Let $\ep > 0$. There exist $t_a,\dots, t_{b-1}$ with $t_n \in [n,n+1)$, such that
	\[\sup_{t\in [n,n+1)} |f(t) - f(n+1)| \leq |f(t_n) - f(n+1)| + \ep.\]
	Then
	\begin{align*}
		\left|\sum_{n=a}^{b-1} \left(f(n+1)-\int_n^{n+1} f(t)\,dt\right)\right| \leq \pvar f1 + (b-a)\ep.
	\end{align*}
	Since $\ep > 0$ was arbitrary, the desired conclusion follows.
\end{proof}

Now, let $\be \in (0,1), c > 0$ and consider $u : [0,\infty) \to [0,1], t\mapsto \frac{1}{(1+ct)^\be}$.
Given a positive definite and symmetric matrix $\ka$, the unique solution to the ODE
\[\dot \ph_t^s = -u_t \ka \ph_t^s\, \quad t\geq s, y_s = 1_{d\times d}\]
is given by $\ph_t^s = e^{- \ka U_t^s}$, where $U_t^s = \int_s^t u_r\,dr$, and we have
\begin{equation}
\label{eq:linODEest}
\specnrm{\ph_t^s} = \la_{\max}(\ph_t^s) \leq e^{-\la U_t^s},
\end{equation}
where $\la := \la_{\min}(\ka)$. In particular, $\ph_t^s$ converges to $0$, as $t\to \infty$.

\begin{lem}
\label{lem:lrScheduleEst}
We have
\begin{enumerate}[(a)]
\item $u \in \Lip^1([0,\infty))$,
\item $u$ is strictly decreasing, convex and $\lim_{t\to \infty} u_t = 0$,
\item $U$ is concave and $\lim_{t\to \infty} U_t = \infty$,
\item $|\dot u_t| = c\be u_t^{2 + \ga}$ for all $t\geq 0$, where $\ga = \frac{1-\be}{\be} > 0$,
\item \[\nrm{u_{\cdot} \ph^{\cdot}_t}{\Lip;[k,(k+1)\wedge t]} \leq (\la_{\max}(\ka)  + c \be u_k^\ga) u_k^2 e^{-\la U_t^{(k+1)\wedge t}},\]
for all $t\geq 1$ and $k\leq t$,
In particular, $\nrm{u_{\cdot} \ph^{\cdot}_t}{\Lip;[k,(k+1)\wedge t]} = o(u_t), t\to \infty$.
\item For all $\rho > 1$ and $t\geq 1$ we have
\[\sum_{k=0}^{\floor t-1} u_k^{\rho} e^{-\la U^{k+1}_t} \leq I_t(\rho) + I_t(\rho + 1) + \rho c\be I_t(\rho + \ga + 1) + e^{-\la U_t},\]
where $I_t(\al) = \int_0^{\floor t-1} u_s^\al e^{-\la U^{s+1}_t}\,ds$.
\item $I_t(\rho) \leq \la^{-1} (ct)^{-(\be(\rho - 1))} + o(t^{-(\be(\rho - 1))}), t\to \infty$, for all $\rho > 1$.
\item $e^{-\la U_t} = o(t^{-\al}), t\to \infty$, for all $\al > 0$.
\item \[\sum_{k=0}^{\floor t-1} \nrm{u_\cdot \ph^\cdot_t}{\Lip;[k,k+1]} \leq \condNr{\ka} (ct)^{-\be} + o(t^{-\be}),\]
as $t\to \infty$.
\end{enumerate}
\end{lem}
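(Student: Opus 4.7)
The plan is to reduce the Lipschitz-norm sum on the left-hand side to a linear combination of the integrals $I_t(\rho)$ introduced in part (f), and then read off the asymptotics from (g) and (h). All of the technical ingredients are already established in the earlier parts of the lemma, so the work is purely bookkeeping of exponents.

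First, I would plug the estimate from (e) into the sum. Since $(k+1)\wedge t = k+1$ for $k\leq \floor t-1$, this yields
\[
\sum_{k=0}^{\floor t-1}\nrm{u_\cdot \ph^\cdot_t}{\Lip;[k,k+1]}
\leq \la_{\max}(\ka)\sum_{k=0}^{\floor t-1} u_k^{2}e^{-\la U_t^{k+1}}
+ c\be\sum_{k=0}^{\floor t-1} u_k^{2+\ga}e^{-\la U_t^{k+1}}.
\]
Next, I apply part (f) twice, with $\rho=2$ and $\rho=2+\ga$, to dominate each of these two sums by $I_t(\rho)+I_t(\rho+1)+\rho c\be I_t(\rho+\ga+1)+e^{-\la U_t}$. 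The whole right-hand side is then a finite linear combination of $I_t$-values at the exponents $2,\,3,\,3+\ga,\,2+\ga,\,3+2\ga$, plus a multiple of $e^{-\la U_t}$.

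Then I compare these exponents via (g), where the rate is governed by $\be(\rho-1)$. Crucially, the identity $\be\ga=1-\be$ (from (d)) gives $\be(1+\ga)=1$, so the exponents $2+\ga,\,3+\ga,\,3+2\ga$ all produce decay strictly faster than $t^{-\be}$; explicitly, each of these $I_t$-terms is $O(t^{-1})=o(t^{-\be})$. The exponent $\rho=3$ gives $O(t^{-2\be})=o(t^{-\be})$. Hence only $I_t(2)$, appearing in the first sum, contributes to leading order, and (g) yields
\[
\la_{\max}(\ka)\,I_t(2) \leq \frac{\la_{\max}(\ka)}{\la_{\min}(\ka)}\,(ct)^{-\be} + o(t^{-\be}) = \condNr{\ka}(ct)^{-\be} + o(t^{-\be}).
\]
Finally, the remaining $e^{-\la U_t}$ terms are $o(t^{-\be})$ by (h), so collecting everything gives the claimed bound.

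There is no real obstacle here; the only point that needs mild care is verifying $\be\ga=1-\be$ to conclude that $2+\ga$ is already past the critical exponent. Once that identity is noted, every summand except $\la_{\max}(\ka)\,I_t(2)$ is absorbed into $o(t^{-\be})$ in one line each, and the constant $\condNr{\ka}$ falls out automatically from the ratio $\la_{\max}(\ka)/\la_{\min}(\ka)$.
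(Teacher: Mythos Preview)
Your proof plan is correct and follows essentially the same approach as the paper: apply (e), then (f) with $\rho=2$ and $\rho=2+\ga$, and finally use (g) and (h) to identify $\la_{\max}(\ka)\,I_t(2)$ as the sole leading-order term. Your explicit verification of $\be\ga=1-\be$ and the resulting exponent bookkeeping is slightly more detailed than the paper's write-up, but the argument is the same.
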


\begin{docu}
$e^{-\la U_t} = o(t^{-\al}), t\to \infty$, for all $\al > 0$ implies $e^{-\la U_t} = o(\ell(t))$ for some slowly varying function $\ell$ by Theorem 2.3.6. Binghzam (a result due to Vuilleumier).
\end{docu}

\begin{proof}
\begin{enumerate}[(a)]
\item $u$ is differentiable with $\dot u_t = -c \be(1+t)^{-(1+\be)}$ and $|\dot u_t| \leq \be$,
\item Straightforward.
\item We have 
\[U_t = \frac{1}{1-\be}\left((1+t)^{1-\be} - 1\right),\]
so $\lim_{t\to \infty} U_t = \infty$. Concavity follows from $u$ being strictly decreasing.
\item $|\dot u_t| = c \be (1+t)^{-(1+\be)}= c \be (1+t)^{-(1-\be)}(1+t)^{-2\be} = c \be u_t^{2+\ga}$ for all $t\geq 0$,
\item Let $f_s = u_s \ph_t^s$. Then 
\[\dot f_s = (\dot u_s 1_{d\times d} + u_s^2 \ka)\ph_t^s,\]
and so 
\[\specnrm{\dot f_s} \leq \specnrm{\dot u_s 1_{d\times d} + u_s^2 \ka}\specnrm{\ph_t^s}\leq (|\dot u_s| + u_s^2 \specnrm \ka)e^{-\la U_t^s} =(\specnrm \ka + c \be u_s^\ga)  u_s^2 e^{-\la U_t^s},\]
for all $0\leq s\leq t$. Taking the supremum over $[k,k+1]$ for each factor individually yields the estimate.
\item Set $n = \floor t$. By applying Lemma \ref{lem:sumVsInt} we have
\[e^{-\la U_t} \sum_{k=0}^{n-1} u_k^{\rho} e^{\la U_{k+1}}\leq e^{-\la U_t} \pvar{(u^\rho e^{\la U_{\blnk + 1}})|_{[0,n-1]}}1 + e^{-\la U_t} + I_t(\rho).\]
Since
\[|\der_s (u^\rho_s e^{\la U_{s+1}})| = (\rho u_s^{\rho - 1}|\dot u_s|+ u_s^{\rho + 1})e^{\la U_{s+1}} \leq u_s^{\rho + 1}(1 + \rho c \be u_s^\ga) e^{\la U_{s+1}},\]
we conclude
\[e^{-\la U_t} \pvar{(u^\rho e^{\la U_{\blnk + 1}})|_{[0,n-1]}}1 \leq I_t(\rho + 1) + \rho c \be I_t(\rho + \ga + 1).\]
\item Proposition \ref{prop:laplaceIntegralBnd} implies
\[I_t(\rho) \leq \int_1^t u_{s-1}^\rho e^{-\la U_t^s} \leq \frac{u_{t-1}^\rho}{\la u_t} + o\left(\frac{u_{t-1}^\rho}{u_t}\right), \quad t\to \infty.\]
Now observe that for $c = 1$
\[\frac{u_{t-1}^\rho}{u_t} = u_{t-1}^{\rho-1}\left(1 + \frac{1}{t}\right)^\be = t^{-(\be(\rho - 1))} + o(t^{-(\be(\rho - 1))}), t\to \infty,\]
so for general $c > 0$
\[\frac{u_{t-1}^\rho}{u_t} = (ct)^{-(\be(\rho - 1))} + o(t^{-(\be(\rho - 1))}), t\to \infty.\]
\item Follows from Lemma \ref{lem:exp-Ut}.
\item By applying (e) and (f) we have
\begin{align*}
	\sum_{k=0}^{n-1} \nrm{u_\cdot \ph^\cdot_t}{\Lip;[k,k+1]} \leq & \sum_{k=0}^{n-1} u_k^2(\la_{\max}(\ka) +  \be u_k^\ga) e^{-\la U_t^{(k+1)}} \\
	\leq & \la_{\max}(\ka)(I_t(2) + I_t(3) + 2 c \be I_t(3 + \ga) + e^{-\la U_t}) \\
	&+ \be(I_t(2+\ga) + I_t(3+\ga) + (2+\ga)c \be I_t(3 + 2\ga) + e^{-\la U_t}).
\end{align*}
We conclude the desired result using (g) and (h).
\end{enumerate}
\end{proof}

\subsubsection{Convergence results}

\begin{prop}
\label{prop:linearEpochedDecayYDE}
Let $X$ be a locally $\al$-Hölder epoched bridge and $Y$ be the solution to the linear \tYDE{}
\[dY_t = - u_t\ka Y_t\,dt + u_t\,dX_t, \quad Y_0 \in \R, t\geq 0.\]
Then
\[|Y_t|\leq \left(\frac{1}{1-2^{-\al}}\condNr{\ka} + 1\right)c^{-\be}\frac{x^*_t}{t^{\be}} + o\left(x^*_t t^{-\be}\right), \quad t\to \infty,\]
where $x^*_t := \max_{k\leq t} \nrm{X}{\alpha;[k,(k+1)\wedge t]}$.
\end{prop}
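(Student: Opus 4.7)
The plan is to solve the linear YDE via variation of constants, and then exploit the defining property $X_{k+1}-X_k = 0$ of the epoched bridge to control the driving integral epoch-by-epoch via Young-Lóeve. Applying Proposition~\ref{prop:varOfConst} with $A_t = -u_t\ka$, $b = 0$, and $\si_t = u_t 1_{d\times d}$ yields the explicit representation
\[
Y_t \;=\; e^{-\ka U_t}Y_0 \;+\; \int_0^t u_s\,e^{-\ka U_t^s}\,dX_s.
\]
The homogeneous term satisfies $|e^{-\ka U_t}Y_0|\leq e^{-\la U_t}|Y_0|$ by \eqref{eq:linODEest}, and Lemma~\ref{lem:lrScheduleEst}(h) gives $e^{-\la U_t} = o(t^{-\ga})$ for every $\ga > 0$, so this term is absorbed into the $o(x^*_t t^{-\be})$ remainder (using that $x^*_t \geq \nrm{X}{\al;[0,1]} > 0$ for $t\geq 1$ a.s.\ in the applications we have in mind, and in any case the claim is vacuous if $X\equiv 0$).

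Next, I would split the Young integral into full epochs and a terminal boundary piece,
\[
\int_0^t u_s e^{-\ka U_t^s}\,dX_s \;=\; \sum_{k=0}^{\floor t - 1}\int_k^{k+1} u_s e^{-\ka U_t^s}\,dX_s \;+\; \int_{\floor t}^{t} u_s e^{-\ka U_t^s}\,dX_s.
\]
On each full epoch $[k,k+1]$ the crucial point is that $X_{k+1}-X_k = 0$, so the first-order Young term vanishes and the Young-Lóeve inequality (in its sharp sewing form with $\be = 1$) reduces to
\[
\left|\int_k^{k+1} u_s e^{-\ka U_t^s}\,dX_s\right| \;\leq\; \frac{1}{1-2^{-\al}}\,\nrm{X}{\al;[k,k+1]}\,\nrm{u_\cdot\ph_t^\cdot}{\Lip;[k,k+1]}.
\]
Summing over $k = 0,\dots,\floor t - 1$, bounding each $\nrm{X}{\al;[k,k+1]}$ by $x^*_t$, and invoking Lemma~\ref{lem:lrScheduleEst}(i) produces the main contribution
\[
\frac{x^*_t}{1-2^{-\al}}\Bigl(\condNr{\ka}(ct)^{-\be} + o(t^{-\be})\Bigr).
\]

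For the terminal partial epoch $[\floor t, t]$ the first-order Young term no longer vanishes, so Young-Lóeve yields
\[
\int_{\floor t}^{t} u_s e^{-\ka U_t^s}\,dX_s \;=\; u_{\floor t}\,e^{-\ka U_t^{\floor t}}\,X_{\floor t,t} \;+\; R_t,
\]
with $|R_t| \lesssim \nrm{X}{\al;[\floor t,t]}\,\nrm{u_\cdot\ph_t^\cdot}{\Lip;[\floor t,t]} = O(u_{\floor t}^2)\,x^*_t = o(x^*_t t^{-\be})$ by Lemma~\ref{lem:lrScheduleEst}(e). Since $U_t^{\floor t}\leq u_{\floor t}\to 0$, we get $\specnrm{e^{-\ka U_t^{\floor t}}}\to 1$, and $u_{\floor t} = c^{-\be}t^{-\be}(1+o(1))$, so the leading term contributes exactly $c^{-\be}x^*_t t^{-\be}(1+o(1))$, which accounts for the additive ``$+1$'' in the stated constant. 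Assembling the three pieces gives the claim.

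The main obstacle will be correctly isolating the sharp prefactor: obtaining $\frac{1}{1-2^{-\al}}$ requires the sewing-lemma form of Young-Lóeve applied with $\be = 1$, where the full-epoch bound uses $X_{k,k+1}=0$ to eliminate the first-order term; extracting the clean additive $+1$ requires separating the genuine first-order Young contribution in the terminal epoch from its $O(u_{\floor t}^2 x^*_t)$ correction. Everything else reduces cleanly to the deterministic decay estimates in Lemma~\ref{lem:lrScheduleEst}(e)--(i).
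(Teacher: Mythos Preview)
Your proposal is correct and follows essentially the same approach as the paper: variation of constants, epoch-wise splitting of the Young integral, the bridge property $X_{k,k+1}=0$ to kill the first-order Young term on full epochs, and Lemma~\ref{lem:lrScheduleEst}(e), (h), (i) to handle the three pieces. The paper also bounds $|X_{\floor t,t}|\leq (t-\floor t)^\al\nrm{X}{\al;[\floor t,t]}\leq x^*_t$ in the terminal term, which is your $\specnrm{e^{-\ka U_t^{\floor t}}}\leq 1$ argument in slightly different dress.
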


\begin{proof}
Let $t\geq 0$ and $n = \floor t$. By Proposition \ref{prop:varOfConst} we have
\[Y_t = \ph_t Y_0 + \int_n^t u_s \ph^s_t\,dX_s + \sum_{k=0}^{n-1} \int_0^1 u_{s+k}\ph^{s+k}_t \,dX_{s+k}, \quad n \in \N.\]
We estimate using the Young-Lóeve inequality in its original form (Proposition \ref{prop:ynglve}) and in the form \eqref{eq:ynglvebrdge} (with $\be = 1$), as well as Inequality \eqref{eq:linODEest}
\begin{align*}
|Y_t| \leq & |Y_0| e^{-\la U_t} + (|u_n \ph_t^n X_{n,t}| + C\nrm{u_{\cdot} \ph^{\cdot}_t}{\Lip;[n,t]} \nrm{X}{\alpha;[n,t]}) + C\sum_{k=0}^{n-1} \nrm{u_{\cdot} \ph^{\cdot}_t}{\Lip;[k,k+1]} \nrm{X}{\alpha;[k,k+1]},
\end{align*}
where $C = \frac{1}{1-2^{-\al}}$.
We have $e^{-\la U_t} = o(t^{-\be})$ by Lemma \ref{lem:lrScheduleEst} (h).
Further,
\[|u_n \ph_t^n X_{n,t}| \leq u_n \specnrm{\ph_t^n}|X_{n,t}| \leq u_n\cdot 1 \cdot (t-n)^\al \nrm{X}{\al;[n,t]} =  (x^*_tt^{-\be} + o(x^*_tt^{-\be})) ,\]
$t\to \infty$, and
\[\nrm{u_{\cdot} \ph^{\cdot}_t}{\Lip;[n,t]} \nrm{X}{\alpha;[n,t]} = o(x_t^*t^{-\be}),\quad t\to \infty,\]
by Lemma \ref{lem:lrScheduleEst} (e).
Finally,
\[\sum_{k=0}^{n-1} \nrm{u_{\cdot} \ph^{\cdot}_t}{\Lip;[k,k+1]} \nrm{X}{\alpha;[k,k+1]} \leq  \condNr{\ka}\frac{x_t^*}{t^{\be}} + o(x_t^* t^{-\be}),\quad  t\to \infty, \]
by Lemma \ref{lem:lrScheduleEst} (i).
\end{proof}

\begin{prop}
\label{prop:linToCvxDecay}
Let $\cR : \R^d \to \R \in \dC 2$ be $\la$-strongly convex and $L$-smooth with $\nabla \cR(0) = 0$ and $\nabla^2 \cR$ Hölder continuous.
Let $X$ be locally Hölder continuous and assume that $X$ does not vanish on any closed interval of positive measure.
Let $Y_0 = Z_0 \in \R^d$, and $Y, Z$ be the solutions to the Young differential equations
\begin{align*}
dY_t = &-u_t \nabla \cR(Y_t)\,dt + u_t\,dX_t,\\
dZ_t = &- u_t \nabla^2 \cR(0) Z_t\,dt + u_t\,dX_t, \quad t\geq 0.
\end{align*}
Let $f$ be regularly varying with negative index and assume $|Z_t| \leq f(t), t\to \infty$.
Then also
\[|Y_t| \leq f(t) + o(f(t)), \quad t\to \infty.\]
\end{prop}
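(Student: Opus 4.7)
The plan is to linearize \(\nabla \cR\) at \(0\), exploit the fact that the driving noise cancels in the difference \(D := Y - Z\), and close via a bootstrap on the decay rate of \(|Y_t|\).

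First, set \(\nu := \nabla^2\cR(0)\) and \(R(y) := \nabla\cR(y) - \nu y\); strong convexity and smoothness give \(\la I \preceq \nu \preceq L I\), and Taylor's theorem combined with the \(\gamma\)-Hölder continuity of \(\nabla^2 \cR\) (for some \(\gamma \in (0,1]\)) yields \(|R(y)| \lesssim |y|^{1+\gamma}\). Since \(Y_0 = Z_0\), the difference satisfies the noise-free linear inhomogeneous ODE
\[dD_t = -u_t \nu D_t\,dt - u_t R(Y_t)\,dt;\]
Proposition \ref{prop:varOfConst} applied with fundamental matrix \(e^{-\nu(U_t-U_s)}\), whose spectral norm is dominated by \(e^{-\la U_t^s}\), then gives
\begin{equation}
|D_t| \lesssim \int_0^t u_s\, e^{-\la U_t^s}\, |Y_s|^{1+\gamma}\,ds. \tag{$\star$}
\end{equation}

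To exploit \((\star)\) an a priori decay of \(|Y_t|\) is needed. Writing \(\nabla\cR(y) = B(y)y\) with \(B(y) := \int_0^1 \nabla^2\cR(sy)\,ds\), one has \(\la I \preceq B(y) \preceq L I\) uniformly in \(y\). Thus \(Y\) itself satisfies the linear YDE \(dY_t = -u_t B(Y_t) Y_t\,dt + u_t\,dX_t\), whose pathwise fundamental matrix \(\Psi_t^s\) inherits the operator-norm bound \(\specnrm{\Psi_t^s} \leq e^{-\la U_t^s}\) and the per-epoch Lipschitz estimate of Lemma \ref{lem:lrScheduleEst}(e) (with \(\la_{\max}(\ka)\) replaced by the uniform bound \(L\)). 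The Young–Lóeve argument underlying Proposition \ref{prop:linearEpochedDecayYDE} therefore applies verbatim and yields \(|Y_t| \leq K_1\, x_t^*\, t^{-\beta} + o(x_t^*\, t^{-\beta})\) for a constant \(K_1 = K_1(L/\la,\al,c,\beta)\); in particular \(|Y_t| \to 0\).

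Plugging this into \((\star)\), using \(x_s^* \leq x_t^*\) for \(s \leq t\), and applying Proposition \ref{prop:laplaceIntegralBnd} to the regularly-varying integrand \(u_s\, s^{-\beta(1+\gamma)}\) gives \(|D_t| = O((x_t^*)^{1+\gamma} t^{-\beta(1+\gamma)})\). More generally, any bound \(|Y_s| \leq h(s)\) with \(h\) regularly varying of negative index leads by the same computation to \(|D_t| \leq C\, h(t)^{1+\gamma}(1 + o(1))\), and hence via \(|Y_t| \leq |Z_t| + |D_t|\) to the improved bound \(h'(t) := f(t) + C\, h(t)^{1+\gamma}\). Because \(f \to 0\), once \(h \leq K f\) the map returns \(h' \leq f\,(1 + CK^{1+\gamma} f^\gamma) = f\,(1 + o(1))\); iterating finitely many times from the initial a priori bound drives \(h_n\) into this regime and yields \(|Y_t| \leq f(t) + o(f(t))\), as required.

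The main technical hurdle will be making this bootstrap rigorous across every admissible \(f\). When \(f\) decays faster than the initial bound \(x_t^*/t^\beta\), the iteration \(h_{n+1} = f + C h_n^{1+\gamma}\) is initially dominated by the super-contractive term \(Ch_n^{1+\gamma}\), forcing \(h_n\) to decay as \(h_0^{(1+\gamma)^n}\) until it crosses below \(f\); only at that point does the clean fixed-point regime \(h \lesssim f\) engage, so one must track regular-variation indices carefully through each iterate. The one other detail worth checking is that replacing the constant matrix \(\ka\) by the state-dependent \(B(Y_s)\) in the a priori step does not spoil the per-epoch Lipschitz bounds underlying Proposition \ref{prop:linearEpochedDecayYDE}; this is direct since only the operator-norm bounds \(\specnrm{\Psi_t^s} \leq e^{-\la U_t^s}\) and \(\|u_s^2 B(Y_s)\| \leq L u_s^2\) enter the proof, both of which follow from \(\la I \preceq B(Y_s) \preceq L I\).
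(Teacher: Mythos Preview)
Your linearization and the formula $(\star)$ are correct, but the error integrand depends on $|Y_s|$, which forces you into the a priori step and bootstrap. Both are avoidable, and the a priori step in fact has a gap: Proposition~\ref{prop:linearEpochedDecayYDE} and the per-epoch Lipschitz estimates of Lemma~\ref{lem:lrScheduleEst}(e) rely on the epoched-bridge identity $X_{n,n+1}=0$, which is \emph{not} assumed in the present proposition (only local H\"older continuity and the non-vanishing condition are). So your bound $|Y_t|\le K_1\,x_t^*\,t^{-\beta}$ does not follow from the stated hypotheses, and without it the bootstrap cannot start.

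The paper bypasses all of this with one algebraic rearrangement that exploits strong convexity of $\cR$ rather than just the spectral bound $\nu\succeq\la I$. Writing $\delta=Y-Z$ and $r(y)=\nabla\cR(y)-\nu y$, observe that
\[
\nu\delta_t + r(Y_t) - r(Z_t) \;=\; \nabla\cR(Y_t)-\nabla\cR(Z_t),
\]
so that
\[
\tfrac12\partial_t|\delta_t|^2 = -u_t\innp{\nabla\cR(Y_t)-\nabla\cR(Z_t)}{\delta_t} - u_t\innp{r(Z_t)}{\delta_t}
\le -u_t\la|\delta_t|^2 + u_t|r(Z_t)|\,|\delta_t|,
\]
using $\la$-strong convexity for the first term. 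Dividing by $|\delta_t|$ (valid off a null set, which is where the non-vanishing hypothesis on $X$ enters) and integrating gives
\[
|\delta_t|\;\le\;\int_0^t u_s\,e^{-\la U_t^s}\,|r(Z_s)|\,ds\;\lesssim\;\int_0^t u_s\,e^{-\la U_t^s}\,|Z_s|^{1+\gamma}\,ds.
\]
Now the integrand involves only $Z$, for which $|Z_s|\le f(s)$ is \emph{given}; a single application of Proposition~\ref{prop:laplaceIntegralBnd} to the regularly varying function $u\,f^{1+\gamma}$ yields $|\delta_t|=O(f(t)^{1+\gamma}/1)=o(f(t))$, and $|Y_t|\le|Z_t|+|\delta_t|\le f(t)+o(f(t))$ follows immediately. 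No a priori bound on $Y$, no bootstrap.
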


\begin{docu}
If $\nabla^2 \cR$ is not globally Hölder continuous, then we still can show that $Y$ converges, except with
\[|Y_t|\leq (2L\vee 1)f(t) + o(f(t)).\]
This is because $|r(y)|\leq 2 L|y|$ and then the penultimate estimate yields $|\delt_t|\leq \leq 2Lf(t) + o(f(t))$.
\end{docu}

\begin{proof}
Firstly, assume $\cR$ is not quadratic. Otherwise, $Y = Z$ and we are done.
Now, using Hadarmard's lemma we have
\[r(y) := \nabla \cR(y) - \nabla^2 \cR(0) y = \int_0^1 (\nabla^2 \cR(ty) - \nabla^2 \cR(0))y\,dt.\]
Thus, the Hölder continuity of $\nabla^2 \cR$ implies
\[|\nabla^2 \cR(ty) - \nabla^2 \cR(0)|\lesssim |ty|^\ga \leq |y|^\ga, \quad t\in [0,1], y\in \R^d,\]
for some $\ga\in (0,1]$. Thus,
\begin{equation}
\label{eq:cvxRemEst}
|r(y)|\lesssim |y|^{1+\ga}
\end{equation}
uniformly over $y\in \R^d$, and we can write
\[dY_t = -u_t (\ka Y_t + r(Y_t))\,dt + u_t\,dX_t, \quad t\geq 0,\]
where $\ka := \nabla^2 \cR(0)$.
Let $\delta = Y - Z$. Then
\[\dot \delt_t = - u_t \ka \delt_t - u_t r(Y_t).\]
Furthermore,
\begin{align*}
\frac12\der_t(|\delt_t|^2) 	= \frac12 \der_t \innp{\delt_t}{\delt_t} = \innp{\dot \delt_t}{\delt_t} = &-u_t \innp{\ka \delt_t + r(Y_t)}{\delt_t} \\
						= &-u_t \innp{\ka \delt_t + r(Y_t) - r(Z_t)}{\delt_t} + u_t \innp {r(Z_t)}{\delt_t}, \quad t\geq 0.		
\end{align*}
Since $\cR$ is $\la$-strongly convex we have
\[\innp{\ka y + r(y) - (\ka z + r(z))}{y-z} = \innp{\nabla \cR(y) - \nabla \cR(z)}{y-z} \geq \la |y-z|^2, \quad y,z\in \R^d.\]
Hence, writing $v = |\delt|$,
\[\dot v_t v_t = \frac12\der_t(v_t^2) \leq - u_t \la v_t^2 + u_t |r(Z_t)|v_t,\]
and so
\begin{equation}
\label{eq:|delta|ineq}
\dot v_t \leq -u_t \la v_t + u_t |r(Z_t)|,
\end{equation}
for all $t\geq 0$, such that $\delt_t \neq 0$.
The set
\[\set{t \geq 0 : \delt_t = 0}\]
has Lebesgue measure zero. To show this note that if $\delta_t = 0$, then
\[\dot \delt_t = - u_t r(Y_t).\]
Assume $\delt = 0$ on an interval $[t,w]$. Then 
\[\dot \delt_s = - u_s r(Y_s) = 0,\quad s\in [t,w].\] 
Since $\cR$ is not quadratic we have $r(y) = 0$ if and only if $y = 0$.
Together with $u > 0$ everywhere this implies $Y = 0$ on $[t,w]$.
Thus,
\[Y_s = Y_t + \int_t^s u_v \,dX_v = \int_t^s u_v \,dX_v\]
implying $X = 0$ on $[t,w]$, which we assumed to be impossible.
Thus, $\delt_t = 0$ only at isolated points $t\geq 0$.
Hence, the set of $\delt$s zeros has measure $0$.

Moving on, define the integrating factor $I_t = e^{\la U_t}$. Then using Inequality \eqref{eq:|delta|ineq}
\[\der_t (I_tv_t) = I_t \dot v_t + \la u_t v_t I_t \leq u_t |r(Z_t)| I_t,\]
for almost all $t\geq 0$.
Hence,
\[|\delt_t|e^{\la U_t} = I_t v_t \leq \int_0^t u_s|r(Z_s)|e^{\la U_s}\,ds.\]
Note that the function $\tilde f = uf^{1+\ga}$ is again regularly varying with negative index.
Thus, using Inequality \eqref{eq:cvxRemEst} and Proposition \ref{prop:laplaceIntegralBnd} for the function $\tilde f$,
\[|\delt_t| \leq \int_0^t u_s e^{-\la U_t^s} |Z_s|^{1+\ga}\,ds \leq \int_0^t u_s e^{-\la U_t^s} f(s)^{1+\ga}\,ds = O\left(\frac{\tilde f(t)}{u(t)}\right) = o(f(t)), \quad t\to \infty.\]
We conclude
\[|Y_t| \leq |\delt_t| + |Z_t| \leq f(t) + o(f(t)), \quad t\to \infty.\]
\end{proof}

\begin{cor}
\label{cor:epochedCvxSimplConv}
Let $X$ be a locally $\al$-Hölder epoched bridge that does not vanish on any closed interval of positive measure, and such that
\[\max_{k\leq t} \nrm{X}{\alpha;[k,(k+1)\wedge t]} \leq \ell(t), \quad t\to \infty,\]
for some slowly varying function $\ell$.
Further, let $\cR : \R^d \to \R \in \dC 2$ be $\la$-strongly convex and $L$-smooth with $\nabla \cR(0) = 0$ and $\nabla^2 \cR$ Hölder continuous.
If $Y$ is the solution to the \tYDE{}
\[dY_t = - u_t \nabla \cR(Y_t)\,dt + u_t\,dX_t, \quad Y_0 \in \R, t\geq 0,\]
then
\[|Y_t|\leq \left(\frac{1}{1-2^{-\al}} \frac{L}{\la} + 1\right)c^{-\be}\frac{\ell(t)}{t^{\be}} + o\left(\ell(t) t^{-\be}\right), \quad t\to \infty.\]
\end{cor}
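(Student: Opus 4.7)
The plan is to combine the two immediately preceding propositions: Proposition \ref{prop:linearEpochedDecayYDE} (which gives the rate for the \emph{linear} equation driven by $X$) and Proposition \ref{prop:linToCvxDecay} (which transfers a regularly varying decay bound from the linearized equation to the strongly convex one).

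First, I would introduce the linearization $\ka := \nabla^2 \cR(0)$ and let $Z$ solve
\[dZ_t = -u_t \ka Z_t\,dt + u_t\,dX_t, \qquad Z_0 = Y_0.\]
Since $\cR$ is $\la$-strongly convex and $L$-smooth, we have $\la\cdot 1_{d\times d} \preceq \ka \preceq L\cdot 1_{d\times d}$, so $\la_{\min}(\ka) \geq \la$ and $\la_{\max}(\ka) \leq L$; in particular $\condNr{\ka} \leq L/\la$. Applying Proposition \ref{prop:linearEpochedDecayYDE} and using the hypothesis $x_t^* := \max_{k\leq t}\nrm{X}{\al;[k,(k+1)\wedge t]} \leq \ell(t)$ for large $t$ (which also upgrades the remainder, since $o(x_t^* t^{-\be}) \subseteq o(\ell(t) t^{-\be})$), I obtain
\[|Z_t| \leq C_1\, \frac{\ell(t)}{t^\be} + o\!\left(\ell(t)\, t^{-\be}\right), \qquad t\to \infty, \quad \text{where } C_1 := \left(\tfrac{1}{1-2^{-\al}}\tfrac{L}{\la} + 1\right) c^{-\be}.\]

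Next I would transfer this bound to $Y$. For any fixed $\delta > 0$ set $f(t) := (C_1 + \delta)\,\ell(t)\, t^{-\be}$. This $f$ is regularly varying with negative index $-\be$ (the product of a slowly varying function and $t^{-\be}$), it is locally bounded, and by the previous step $|Z_t| \leq f(t)$ for all $t$ larger than some threshold. Proposition \ref{prop:linToCvxDecay} — whose remaining hypotheses (local Hölder continuity of $X$, the non-degeneracy of $X$, and the regularity of $\cR$) are all given or inherited — then yields
\[|Y_t| \leq f(t) + o(f(t)) = (C_1 + \delta)\,\ell(t)\,t^{-\be} + o\!\left(\ell(t)\,t^{-\be}\right), \qquad t \to \infty.\]
Equivalently, $\limsup_{t\to\infty} |Y_t|\,\ell(t)^{-1} t^{\be} \leq C_1 + \delta$. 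Since $\delta > 0$ is arbitrary, the limsup is at most $C_1$, which is exactly the claim.

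The only subtlety worth flagging is the handling of the $o(\cdot)$ terms: Proposition \ref{prop:linToCvxDecay} requires an honest pointwise dominating majorant $f$ that is regularly varying, not a ``$C_1 + o(1)$'' prefactor. This is why I introduce the slack $\delta$ and let it vanish at the end via a $\limsup$ argument. Everything else is bookkeeping: verifying $\nabla \cR(0) = 0$ (true here because the statement writes the equation in centred form), checking Hölder regularity of $\nabla^2 \cR$, and noting that $\ell(t) t^{-\be}$ is regularly varying so that the asymptotic framework of Proposition \ref{prop:linToCvxDecay} applies.
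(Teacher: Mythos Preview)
Your proposal is correct and follows exactly the same two-step strategy as the paper: apply Proposition \ref{prop:linearEpochedDecayYDE} to the linearization $dZ_t = -u_t\nabla^2\cR(0)Z_t\,dt + u_t\,dX_t$, then invoke Proposition \ref{prop:linToCvxDecay} to transfer the bound to $Y$. Your treatment is in fact more careful than the paper's terse version, explicitly tracking the constant via $\condNr{\ka}\leq L/\la$ and handling the $o(\cdot)$ slack with the $\delta$-limsup argument.
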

\begin{proof}
We apply Proposition \ref{prop:linearEpochedDecayYDE} to the linear ODE
\[dZ_t = - u_t \nabla^2 \cR(0) Z_t\,dt + u_t\,dX_t.\]
Then, Proposition \ref{prop:linToCvxDecay} implies the desired conclusion.
\end{proof}

\begin{docu}
Stefan 22.06.2025
\begin{itemize}
\item I tried to make the conditions on $u$ generic initially, but I just ended up having to add more and more conditions to get easy to understand convergence rate, and its not clear what functions even satisfy all these conditions. So in the end I settled on $(1+t)^{-\be}$. Probably most LR schedules that could theoretically work arent even that interesting in practice anyway. Comment 12.07.: regularly varying $u$ works surely, but then I have to state everything asymptotically from the get go and I didnt want that in case I want to derive less asymptotic results too. And frankly whos gonna use an LR schedule with extra logs.
\item There is no way the convergence rate I have deduced is tight, at least not for certain cases like single shuffle or RR. For single shuffle or any periodic $X$ we can leave out $\sqrt{\log t}$ entirely. Moreover, RR should be even better than SS according to SGDo literature. Even without the $\sqrt{\log t}$ the $2\be - 1$ should be improvable to $\be$ (see Gürbüzbalaban2019). - Comment: 30.06.  fixed, but I dont think I can prove a non-asymptotic rate. - Comment 12.07.: Gürbüzbalabans rates are also asymptotic, I just misunderstood their result. 
\item It is highly doubtful that you can deduce convergence under the general Robbins-Monroe conditions this way, because of the $\sqrt{\log t}$. Maybe for periodic driver it could work?
\item For $u_t = \frac{1}{\log(1+t)}$ or $u_t = 1/(1+t)$. What happens then?
\end{itemize}
\end{docu}

\section{Proof of the main theorem}
Firstly, let us prove that $(\nabla \cR)^{-1}$ is actually well-defined.
\begin{lem}
Let $\la > 0$. Suppose $\cR$ is $\la$-strongly convex with Lipschitz gradient. Then $\nabla \cR : \R^d\to \R^d$ is bijective.
\end{lem}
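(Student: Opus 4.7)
The plan is to split the statement into injectivity and surjectivity of $\nabla \cR$, both of which follow from strong convexity alone; the Lipschitz gradient hypothesis is not strictly needed here, but it is part of the running assumptions.

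For injectivity, I would simply invoke the defining inequality of $\lambda$-strong convexity stated in the paper, namely
\[
\innp{\nabla \cR(x) - \nabla \cR(y)}{x-y} \geq \lambda |x-y|^2, \quad x,y\in \R^d.
\]
If $\nabla \cR(x) = \nabla \cR(y)$, then the left-hand side vanishes, so $\lambda |x-y|^2 \leq 0$, forcing $x = y$.

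For surjectivity, fix $v \in \R^d$ and consider the auxiliary function $\cR_v(x) := \cR(x) - \innp{v}{x}$. Since adding an affine term does not alter $\nabla^2 \cR$, the function $\cR_v$ is again $\lambda$-strongly convex. The equivalent formulation of strong convexity yields, applied at the base point $0$,
\[
\cR_v(y) \geq \cR(0) + \innp{\nabla \cR(0) - v}{y} + \frac{\lambda}{2}|y|^2, \quad y\in \R^d,
\]
so $\cR_v$ is continuous and coercive, i.e.\ $\cR_v(y) \to \infty$ as $|y| \to \infty$. Hence $\cR_v$ attains a global minimum at some $x^* \in \R^d$. Since $\cR_v \in \dC 2$, its gradient vanishes there, giving $\nabla \cR(x^*) = v$. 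Thus $v$ lies in the image of $\nabla \cR$, and since $v$ was arbitrary, surjectivity follows.

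Neither step presents a real obstacle; the only mildly delicate point is the coercivity argument, but the quadratic lower bound from strong convexity makes it immediate. I would keep the proof to just a few lines, essentially the two displays above together with the remark that strong convexity is preserved under the subtraction of linear terms.
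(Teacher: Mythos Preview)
Your proof is correct. Injectivity is handled identically to the paper, via strong monotonicity. For surjectivity, however, the paper takes a different route: it invokes the Browder--Minty theorem, checking that $\nabla\cR$ is monotone, continuous, maps bounded sets to bounded sets (this is where the Lipschitz hypothesis is actually used in the paper), and coercive in the sense that $\innp{\nabla\cR(x)}{x}/|x|\to\infty$ as $|x|\to\infty$. Your argument---minimizing the shifted function $\cR_v(x)=\cR(x)-\innp{v}{x}$ and reading off $\nabla\cR(x^*)=v$ at the minimizer---is more elementary and self-contained, avoiding the black-box appeal to a nonlinear functional-analytic result. It also makes transparent, as you note, that the Lipschitz assumption on $\nabla\cR$ plays no role: strong convexity (and $\cR\in\dC 1$, which is implied by the paper's $\dC 2$ definition of strong convexity) suffices. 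The paper's route would generalize more readily to infinite-dimensional settings, but in $\R^d$ your direct variational argument is both shorter and more informative.
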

\begin{proof}
Strong convexity implies strong monotonicity, that is
\[\innp{\nabla \cR(x) - \nabla \cR(y)}{x-y} \geq \la |x-y|^2, \quad x,y\in \R^d.\]
In particular, $\nabla \cR$ is injective.
To show surjectivity we use the Browder-Minty theorem \citep[see][Theorem 10.49]{renardy2006introduction}, identifying $\R^d$ with its dual space. Indeed, $\nabla \cR$ is monotone, as shown before. Also since $\nabla \cR$ is Lipschitz, it is in particular continuous and preserves bounded sets. To show coercivity, note that strong convexity of $\cR$ implies
\[\cR(0) \geq \cR(x) + \innp{\nabla \cR(x)}{0-x} + \frac{\la}{2}|x|^2, \quad x\in \R^d.\]
That is,
\[\innp{\nabla \cR(x)}{x} \geq \cR(x) - \cR(0) + \frac{\la}{2} |x|^2.\]
In particular,
\[\lim_{x\to 0}\frac{\innp{\nabla \cR(x)}{x}}{|x|} = \infty.\]
Hence, $\nabla \cR$ is coercive, and thus also surjective.
\end{proof}
Now, let us transform equation \eqref{eq:epochedCvx} into a simpler form.
We can rewrite
\[dY_t = - u_t (\nabla \cR(Y_t) - T^{-1/2}\si Z)\,dt + u_t \sqrt T \si dX_{t/T},\]
or equivalently
\[dY_{tT} = - u_{tT}\nabla \hat \cR(Y_{tT})\,dt + u_{tT} \sqrt T \si dX_t,\]
where $Z = \frac{1}{\sqrt T}\hat W_T \sim \cN(0,1_{d\times d}), \hat W_t = \sqrt T X_{t/T} + \frac t {\sqrt T} Z$ and $X$ is an epoched Brownian bridge independent of $Z$,
and $\hat \cR(y) =  \cR(y) - T^{-1/2} \si Z y$. Note that 
\[(\nabla \hat \cR)^{-1}(0) = (\nabla \cR - T^{-1/2}\si Z)^{-1}(0) = (\nabla \cR)^{-1}(T^{-1/2}\si Z).\]
Define 
\[\tilde Y_t = \frac{1}{\sqrt T} \si^{-1}(Y_{tT} - (\nabla \hat \cR)^{-1}(0)), \quad t\geq 0.\]
Then
\[d\tilde Y_t = - u_{tT} \frac{1}{\sqrt T}\si^{-1} \nabla \hat \cR(\sqrt T \si \tilde Y_t + (\nabla \hat \cR)^{-1}(0))\,dt + u_{tT}dX_t, \quad t\geq 0.\]
Equivalently, we can write
\[d\tilde Y_t = - u_{tT} \nabla \tilde \cR(Y_t)\,dt + u_{tT}\,dX_t,\]
where
\begin{align*}
\tilde \cR(y) :=& T^{-1}\si^{-2} \hat \cR(\sqrt T \si y + (\nabla \hat \cR)^{-1}(0)) \\
			= & T^{-1}\si^{-2} \cR(\sqrt T \si y + T^{-1}\si \hat W_T) - T^{-1}\si \hat W_T y, \quad y\in \R^d.
\end{align*}
Let us summarize this procedure in a proposition.
\begin{lem}
	\label{lem:epochedCvxTransf}
	Let $Y$ be the solution to \eqref{eq:epochedCvx}. Then 
	\[\tilde Y_t = \frac{1}{\sqrt T}\si^{-1}(Y_{tT} - (\nabla \cR)^{-1}(T^{-1}\si \hat W_T))\]
	is the unique solution to the \tYDE{}
	\[d\tilde Y_t = - \tilde u_t \nabla \hat \cR(\tilde Y_t) + \tilde u_t\,dX_t, \quad t\geq 0,\]
	where $\tilde u_t = u_{tT}$ and
	\[\tilde \cR(y) =  T^{-1}\si^{-2} \cR(\sqrt T \si y + T^{-1}\si \hat W_T) - T^{-1}\si \hat W_T y, \quad y\in \R^d.\]
\end{lem}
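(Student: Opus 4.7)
The plan is to verify the identity by carrying out the three substitutions already sketched in the text immediately before the statement: (i) decompose $\hat W$ into its epoched-bridge and linear parts, (ii) perform the time change $t\mapsto tT$, and (iii) apply the affine spatial transformation $\tilde Y_t = T^{-1/2}\si^{-1}(Y_{tT} - c)$ with $c := (\nabla\cR)^{-1}(T^{-1}\si\hat W_T)$. Throughout, the manipulations are justified by the linearity of the Young integral and by its invariance under smooth reparametrizations of time, so the whole argument is purely deterministic and works pathwise. Existence of $Y$ and hence of $\tilde Y$ is guaranteed by Proposition~\ref{prop:YDEaddExUn}: $u$ is smooth and bounded, $X$ is locally Hölder as an epoched Brownian bridge, and $\nabla \cR$ is Lipschitz by the $L$-smoothness assumption.

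First I would use the definition of an epoched Brownian motion to write $\hat W_t = \sqrt T\, X_{t/T} + (t/\sqrt T)\,Z$ with $Z := T^{-1/2}\hat W_T \sim \cN(0, 1_{d\times d})$. Since $t\mapsto (t/\sqrt T)Z$ is $\cC^1$, linearity of the Young integral splits $\int_0^t u_s\si\,d\hat W_s$ into a Young part against $t\mapsto X_{t/T}$ and a Riemann part that can be absorbed into the drift. Next I would change the time variable via $s = rT$: the Lebesgue integral picks up a factor $T$, while the Young time-change identity $\int_0^{\phi(t)} g_s\,dZ_s = \int_0^t g_{\phi(r)}\,d(Z\circ\phi)_r$ applied to $\phi(r) = rT$ transforms the Young integral against $X_{s/T}$ into a Young integral against $X_r$. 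This gives a YDE for $V_t := Y_{tT}$ whose drift is a multiple of $\nabla \cR(V_t) - T^{-1/2}\si Z$, i.e.\ of $\nabla \hat\cR(V_t)$ where $\hat\cR(y) := \cR(y) - T^{-1/2}\si Z\, y$, so that $(\nabla\hat\cR)^{-1}(0) = c$.

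Finally, the affine change of variable $\tilde Y_t := T^{-1/2}\si^{-1}(V_t - c)$ is a deterministic diffeomorphism of $\R^d$, so by linearity of the Young integral the path $\tilde Y$ satisfies a YDE of the form $d\tilde Y_t = -\tilde u_t F(\tilde Y_t)\,dt + \tilde u_t\,dX_t$ with $\tilde u_t = u_{tT}$ and an explicit $F$ obtained by pre- and post-composing $\nabla\hat\cR$ with the rescaling $y\mapsto \sqrt T\si y + c$. A direct chain-rule computation shows that $F = \nabla\tilde\cR$ for the $\tilde\cR$ displayed in the lemma (up to an additive constant, which is irrelevant for the gradient). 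Uniqueness of $\tilde Y$ is then automatic: the transformation $Y \leftrightarrow \tilde Y$ is an affine bijection of paths, so uniqueness for $\tilde Y$ is equivalent to uniqueness for $Y$, which again follows from Proposition~\ref{prop:YDEaddExUn}.

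The main obstacle is purely bookkeeping: keeping track of the various $T$-powers that appear under the nested time and space rescalings, and making sure the Young time-change is invoked only against paths that are locally Hölder with exponent summing to more than one (which is immediate here, since $u$ is Lipschitz and $X$ is locally $(1/2-)$-Hölder). Once the powers of $T$ are tracked correctly and the explicit form of $\nabla\tilde\cR$ is computed, the identity follows.
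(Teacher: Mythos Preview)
Your proposal is correct and follows exactly the three-step derivation the paper carries out in the paragraphs immediately preceding the lemma (decompose $\hat W$, time-change $t\mapsto tT$, then the affine spatial map); the lemma is stated there merely as a summary, with no separate formal proof. Your write-up is, if anything, slightly more careful than the paper in explicitly invoking Proposition~\ref{prop:YDEaddExUn} for existence/uniqueness and in noting the Hölder compatibility needed for the Young time-change, but there is no substantive difference in approach.
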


\begin{proof}[Proof of Theorems \ref{thm:main} and \ref{thm:mainAlt}]
Recall the definition of $Y$ in \eqref{eq:epochedCvx}. Apply Lemma \ref{lem:epochedCvxTransf}, then
\[Y_t = \sqrt T \si \tilde Y_{t/T} + (\nabla \cR)^{-1}(T^{-1}\si \hat W_T).\]
Note that $X$ does not vanish on any closed interval of positive measure, almost surely.
Suppose for now we are given slowly varying function $\ell$ with
\begin{equation}
\label{eq:EBMdriverSlow}
\max_{k\leq t} \nrm{X}{\alpha;[k,(k+1)\wedge t]} \leq \ell(t), \quad a.s.,t\to \infty.
\end{equation}
By Corollary \ref{cor:epochedCvxSimplConv}
\[\left|Y_t - (\nabla \cR)^{-1}(T^{-1}\si \hat W_T)\right| \leq \sqrt T |\si| \left(\frac{1}{1-2^{-\al}}\frac{L}{\la} + 1\right)(cT)^{-\be}\frac{\ell(t)}{t^{\be}} + o\left(\ell(t) t^{-\be}\right), \quad t\to \infty.\]
Here, we used that $\nabla^2 \tilde \cR(0) = \nabla^2 \cR( (\nabla \cR)^{-1}(T^{-1}\si \hat W_T))$.

We can find a slowly varying function $\ell$ such that Inequality \eqref{eq:EBMdriverSlow} holds true. Indeed, by Lemma \ref{lem:epochedBBgrowth} we can set 
\[\ell(t) :=  a^{-1/2}\sqrt{\log t} + g(t) \geq a^{-1/2}\sqrt{\log{(\floor t + 1)}},\] 
for $a \in (0, \frac{1}{2(1-b)b^{1-2\al}})$, where $b = \frac{1-2\al}{2-2\al}$, and
\[g(t) = a^{-1/2}(\sqrt{\log{(\floor t + 1)}} - \sqrt{\log t})   = o(\sqrt{\log t}), \quad t\to\infty.\]
If we pick $\al = 0.42$, $a = 0.8 \in (0, 0.858581) = (0, \frac{1}{2(1-b)b^{1-2\al}})$, then
\[a^{-1/2} = 1.11803 < 1.2, \quad a^{-1/2} \frac{1}{1-2^{-\al}} = 4.61727 < 4.7,\]
proving Theorem \ref{thm:main} (the second constant cannot be lowered much further).
Assume now there exists a number $J\in \N$, such that $\cI := \set{(W_{(j+t)T}-W_{jT})_{t\in [0,1]} : j\in \N}|$ satisfies $|\cI| = J$, almost surely. Then we can instead set $\ell(t) = \max_{w\in \cI} \nrm{w}{\al}, t\geq 0$ in Inequality \eqref{eq:EBMdriverSlow}, proving Theorem \ref{thm:mainAlt}.
\end{proof}

\chapter{On the weak convergence of shuffled random walks}

\label{chap:weakshuffle}
In this chapter, we study scaling limits of random walks that share the same increments up to a (random) permutation.
Assuming convergence of the sequence of random permutations to a deterministic $J$-dimensional permuton $\mu$ (i.e. a probability measure on $[0,1]^J$ with uniform marginals), we show weak convergence of these random walks to a $J$-dimensional Gaussian process $W$. The covariance of two different components of $W$ is then given by the bivariate marginals of the joint distribution function of $\mu$. Conversely, given a $J$-dimensional permuton $\mu$ we can find a sequence of (higher dimensional) random permutations converging to $\mu$. Thus, we further prove that every Gaussian process $W$, whose covariance function is determined by $\mu$, is the scaling of some family of random walks sharing the same increments up to a random permutation.
Finally, we show that epoched Brownian processes determined by a permuton $\mu$ arise as scaling limits of random walks that use only finitely many distinct increments.

\section{Introduction}

Consider a probability space $(\Om, \cF, \P)$ and a sequence of i.i.d.\ random variables $(Z_n)_{n\in \N}$  with $\E Z_1 = 0, \Var Z_1 = 1$ and $\E[Z_1^4] < \infty$. Denote the group of permutations of size $N$ by $\Sym N$. Given $N\in \N$ and a family of permutations of increasing size, that is $(\tau_N)_{N\in \N}$ with $\tau_N \in \Sym N$, define
\[S_n^{N, \tau} =\sum_{k=1}^n Z_{\tau_N(k)}, \quad n\in \set{0,\dots, N}, N\in \N.\]
Then $S^{N,\tau}$ is a simple random walk using the shuffled increments $(Z_{\tau_N(i)})_{i=1}^N$. Using appropriate scaling and interpolation we expect this random walk to converge weakly to a Brownian motion, as $N\to \infty$. We are interested in the limiting behavior of the joint distribution of the random vector $(S^{N, \id{N}}, S^{N,\tau})_{N\in \N}$ consisting of the random walk with unshuffled increments and the one with increments shuffled by $(\tau_N)_{N\in \N}$, appropriately scaled and interpolated.

More generally, let $J\in \bar \N := \N \cup \set{\infty}$. We call a sequence of $N$-permutations $(\si^j)_{j < J} = (\si^j)_{j \in [0,J)\cap \N_0}$ a \emph{$J$-dimensional permutation\footnote{There are several possible interpretations for the term \enquote{$J$-dimensional permutation}. We follow the convention by \citet{borga_high-dimensional_2025}, \emph{except} what we call $J$-dimensional they call $J-1$ dimensional.} of size $N$}. We let $\dSym JN$ denote the set of $J$-dimensional permutation of size $N$. Now, consider a sequence of \emph{random} $J$-dimensional permutations\footnote{Without loss of generality we may set $\si_N^0 = \id{N}, N \in \N$, but since it does not simplify any of the upcoming arguments we will not. However, it can be useful in examples to make this choice, especially in the case $J = 2$.} of increasing size $\si = (\si_N : \Om \to \dSym JN)_{N\in \N}$. Equivalently, $\si$ is a random element of $\prod_{N\in \N} \dSym  JN$. Define the family of $\R^J$-valued stochastic processes $(X^N : \Om \times [0,1] \to \R^J)_{N\in \N}$ by
\[X^N_t =  (X^{N,0}_t, X^{N,1}_t, \dots) := \frac{1}{\sqrt N}(S^{N,\si^j}_{\floor{Nt}})_{j<J}, \quad t\in [0,1], N\in \N.\]
Here, we equip $\R^J= \prod_{j<J} \R$ with the product topology. Note that $\R^J$ is a Polish space. 
Our goal is to find sufficient conditions, such that $(X^N)_{N\in \N}$ converges in distribution in the Skorokhod space $\cD([0,1], \R^J)$, as $N\to \infty$, to a Gaussian process $W$.

To our knowledge, scaling limits of random walks with shared increments up to a permutation have not been studied in this form before. We note however that Gaussian limits with the same law as $W$ (see our main theorem \ref{thm:shuffledDonsker}) have been studied in the theory of empirical processes, albeit with no connection to permutations or random walks, and only for finite $J$. Consider for simplicity $J = 2$. Given a $2$-copula $C$ (i.e. the distribution function of a $2$-dimensional permuton) we call a random field $B : \Om \times [0,1]^2\to \R$ a \emph{$C$-Brownian bridge} if 
\[\E[B_{u,v} B_{u',v'}] = C(u\wedge u', v\wedge v') - C(u,v) C(u',v'),\quad u,u',v,v'\in [0,1].\]
Then the $2$-dimensional Gaussian process $W$ in Theorem \ref{thm:shuffledDonsker} is given by
\[W_t = \mat{W^0_t\\ W^1_t} = \mat{B_{t,1}\\ B_{1,t}} + t\mat{V\\V}, \quad t\in [0,1],\]
for some $C$-Brownian bridge $B$ and a standard Gaussian random variable $V$ independent of $B$. Here, $C = F^{01}_\mu$ is the distribution function of the limiting permuton $\mu$ determining the law of $W$. 
Indeed, in this case the components of $W$ are Brownian motions with
\[\E[W_s^0W_t^1] = \E[(B_{s,1} + sV)(B_{1,t} + tV)] = \E[B_{s,1}B_{1,t}] + st = C(s,t) - st + st = C(s,t),\]
for $s,t\in [0,1]$. This coincides with the covariance formula in Theorem \ref{thm:shuffledDonsker}.
\section{The story in two dimensions}
Before we start with the general theory, let us heuristically explore the case $J = 2$ first. Let $(Z_k)_{k\in \N_0}$ be an \tiid\ sequence with $\E Z_0 = 0, \Var Z_0 = 1$ and finite fourth moment, and let $(\pi_N : \Om \to \Sym N)_{N\in \N}$ be a sequence of random permutations increasing in size, independent of $(Z_k)_{k\in \N}$.
We consider the following two random sums with shared increments
\[S_n = \sum_{k=1}^n Z_k, \quad S_n^{N,\pi} = \sum_{k=1}^n Z_{\pi_N(k)}, \quad n \in \set{0,\dots, N}, N\in \N.\]
Let 
\[X_t^N = (X_t^{N,0}, X_t^{N,1}) = \frac 1{\sqrt{N}} (S_{\floor{Nt}}, S^{N,\pi}_{\floor{Nt}}), \quad t\geq 0, N\in \N.\]
Then, it is well-known that $X^{N,0}$ and $X^{N,1}$ converge to Brownian motions $W^0$ and $W^1$ in law. We are interested in the convergence of the joint distribution $X^N$. A straightforward computation shows
\[\Cov(X_s^{N,0}, X_t^{N,1}) = \frac1N \E|[Ns] \cap \pi_N[Nt]|,\quad s,t\in [0,1],\]
where $\tau[x] = \set{\tau(1), \dots, \tau(\floor x)}$ for $x \geq 0$.
Looking closely at the quantity $\frac1N|[Ns] \cap \pi_N[Nt]|$ we notice that it is the joint distribution function of the empirical measure
\[\hat \mu_{(\id{N}, \pi_N^{-1})} = \frac1N \sum_{k=1}^N \idK_{\blnk}(k/N, \pi_N^{-1}(k)/N)\]
on the unit square $[0,1]^2$ (notice the inverse $\pi^{-1}_N$). Here, the joint distribution function of a probability measure $\nu$ on $[0,1]^2$ is given by
\[F_\nu(s,t) = \nu([0,s]\times [0,t]), \quad s,t\in [0,1].\]
Suppose $\hat \mu_{(\id{N}, \pi_N^{-1})}$ converges weakly in the space of probability measures on $[0,1]^2$ to a deterministic probability measure $\mu$. 
Then we can actually already conclude that $X^N$ converges weakly to a Gaussian process $W = (W^0, W^1)$.
The limiting measure $\mu$ necessarily has uniform marginals on $[0,1]$. A probability measure on $[0,1]^2$ with uniform marginals is called a (two-dimensional) \emph{permuton}.
In the proof for convergence we use the fact that the distribution function of $\hat \mu_{(\id{N}, \pi_N^{-1})}$, i.e.\ $(s,t)\mapsto \frac1N|[Ns] \cap \pi_N[Nt]|$, must also converge, in fact uniformly, to the distribution function $F_\mu$ of $\mu$. The distribution function of a permuton is called a ($2-$) \emph{copula}.
The copula $F_\mu$ determines the covariance function of $W$, via
\[\Cov(W_s^0,W_t^1) = \lim_{n\to \infty} \frac1N \E|[Ns] \cap \pi_N[Nt]| = F_\mu(s,t), \quad s,t\in [0,1].\]
Further, we find it more convenient to work with permutons throughout. For $N\in \N$, we define the permuton
\[\mu_{(\id{N}, \pi_N^{-1})} = \frac1N \sum_{k=1}^N \cU\left[\frac{k-1}{N},\frac k N\right]\otimes \cU\left[\frac{\pi_N^{-1}(k)-1}{N},\frac{\pi_N^{-1}(k)}{N}\right].\] 
Here, $\cU[a,b]$ is the uniform distribution on $[a,b]$. 
Compared to the empirical measure $\hat \mu_{(\id{N}, \pi_N^{-1})}$, we essentially replaced all mass points with squares of side length $1/N$. Since 
\[\nrm{F_{\mu_{(\id{N}, \pi_N^{-1})}} - F_{\hat \mu_{(\id{N}, \pi_N^{-1})}}}{\infty} \leq \frac2N,\quad N\in \N,\]
we can work with the copulas $F_{\mu_{(\id{N}, \pi_N^{-1})}}$ instead of $F_{\hat \mu_{(\id{N}, \pi_N^{-1})}}$ throughout the proof.

Now, let us invert the situation. Suppose we are given a permuton $\mu$. Then it well-known from the theory of permutons and large permutations that we can construct a sequence of random permutations $(\pi_N)_{N\in \N}$ such that $\mu_{(\id{N}, \pi_N^{-1})}$ converges weakly to $\mu$. 

In fact, consider an \tiid\ sequence $(U_N : \Om \to [0,1]^2)_{N\in \N}$ with $U_N\sim \mu$. Given $N\in \N$ and $v\in \R^N$ define a permutation $\Perm(v)$ of size $N$ such that it orders the elements of $v$ from least to greatest. Ties are ranked according to their original position in $v$, ensuring $\Perm(v)$ is an actual permutation.
Set $\si^j_N = \Perm(U^j_1,\dots, U^j_N), j = 1,2$, i.e.\ define permutations $\si^1_N, \si^2_N$ by sorting the random vectors $(U_1^1,\dots, U_N^1),(U_1^2,\dots, U_N^2)$ . By defining $\pi_N^{-1} = \si^2_N\circ (\si^1_N)^{-1}$ we can indeed show that $\mu_{(\id{N}, \pi_N^{-1})}$ converges weakly to $\mu$. 
Thus, given a two-dimensional Gaussian process $W$ with $\Cov(W_s^1,W_t^2)= F_\mu(s,t)$ and Brownian marginals, we can construct $S$ and $X$ in such a way that $X^N\to W$ in law.
\section{Main theorem}
Let us rewrite
\[S_n^{N,\si^j} = \sum_{k=1}^N \idK_{\si^j_N[n]}(k)Z_k, \quad j < J, 0\leq n\leq N.\]
Here and in the following we write $[x] := \N \cap [0,x]$, for all $x\geq 0$. So given $\tau \in \Sym N$, where $N \geq \floor x$, we have $\tau[x] = \set{\tau(1), \dots, \tau(\floor x)}$.
The covariance between the components of $X$ satisfies 
\begin{align*}
\Cov(X^{N,i}_s, X^{N,j}_t) 	= & \frac1N\sum_{k=1}^N \sum_{l=1}^N \Cov(\idK_{\si^i_N[Ns]}(k) Z_k, \idK_{\si^j_N[Nt]}(l)Z_l)\\
							= & \frac1N\sum_{k=1}^N \E[\idK_{\si^i_N[Ns]}(k)\idK_{\si^j_N[Nt]}(k)]\\
							= & \frac1N \E | \si^i_N[Ns] \cap \si^j_N[Nt]|, \quad i,j < J,
\end{align*}
since by the law of total covariance
\begin{align*}
\Cov(\idK_{\si^i_N[Ns]}(k) Z_k, \idK_{\si^j_N[Nt]}(l)Z_l) = & \E[\Cov(\idK_{\si^i_N[Ns]}(k) Z_k, \idK_{\si^j_N[Nt]}(l)Z_l|\si)] \\
															&  + \Cov(\E[\idK_{\si^i_N[Ns]}(k) Z_k|\si], \E[\idK_{\si^j_N[Nt]}(l)Z_l|\si]) \\
														= & \E[\idK_{\si^i_N[Ns]}(k)\idK_{\si^j_N[Nt]}(k)] \delt_{k,l}, \quad i,j<J, k,l \in [N].
\end{align*}
Thus, if $X^N$ converges in distribution, then $\frac1N \E | \si^i_N[Ns] \cap \si^j_N[Nt]|$ also has to converge for all $i,j,s,t$, under uniform integrability assumptions. Conversely, this suggests that the sequence of $J$-dimensional permutations $(\si_N)_{N\in \N}$ needs to converge in a certain sense.

Given a Polish space $S$, we consider the space of probability measures $\cP(S)$ equipped with the topology of weak convergence, i.e. $\nu_n\to \nu$ if
\[\lim_{n\to \infty} \int_S f \,d\mu_n = \int_S f \,d \mu,\]
for all continuous and bounded $f : S \to \R$. Equivalently, $\nu_n \to \nu$ if $\nu_n(A) \to \nu(A)$ for all $A\in \cF_S$ with $\nu(\partial A) = 0$. The space $\cP(S)$ is again Polish and thus we may consider weak convergence in $\cP\cP(S)$.
Given random measures $\nu_n, \nu : \Om \to \cP(S), n\in \N$, we say $\nu_n$ converges to $\nu$ in distribution if $\nu_n\P \to \nu\P$ in $\cP \cP(S)$. Here, $\nu\P(A) = \P(\nu^{-1}(A)),A \in \cF$.

Given $J\in \bar \N$ and $\tau \in \dSym JN$ we define the empirical measure $\hat \mu_\tau \in \cP([0,1]^J)$ by
\[\hat \mu_\tau = \frac 1N \sum_{k=1}^N \idK_{\blnk}((\tau^j(k)/N)_{j<J}).\]
This measure has discrete uniform marginals on the set $\{\frac1N, \dots, \frac NN\}$. As $N\to \infty$ the marginals converge to the uniform distribution on $[0,1]$.
Alternatively, it can be more convenient to have continuous uniform marginals even before taking the limit. We call a probability measure on $[0,1]^J$ with $\cU{[0,1]}$-marginals a \emph{$J$-dimensional permuton} or \emph{$J$-permuton} for short. With $\tau \in \dSym JN$ we associate the following $J$-permuton
\[\mu_\tau = \frac1N \sum_{k=1}^N\bigotimes_{j<J} \cU\left(\frac1N[\tau^j(k)-1,\tau^j(k)]\right),\]
where $\cU{[a,b]}$ denotes the uniform distribution on $[a,b]$. Here, $\bigotimes$ denotes the product measure operation. Effectively, this replaces the mass points from the empirical measure of $\tau$ with hypercubes of side length $1/N$.

For $J<\infty$ and any finite-dimensional measure $\nu \in \cP([0,1]^J)$ we consider the joint distribution function
\[F_\nu(t) = \nu([0,t_0]\times \dots \times [0,t_{J-1}]), \quad t = (t_0,\dots, t_{J-1}) \in [0,1]^J.\]
Given $\nu \in \cP([0,1]^J)$ and a tuple $a \in \N^*$ with length $n = |a| < J$ we consider the projection $\nu^a\in \cP([0,1]^n)$ given by
\[\nu^a(A) := \pr^a\nu(A) = \nu((\pr^a)^{-1}(A)), \quad A\in \cF_{[0,1]^n},\]
where 
\[\pr^a : \R^J \to \R^n, (x_j)_{j\in \N} \mapsto (x_{a_1},\dots, x_{a_n}).\]
We also write $F^a_\nu:= F_{\nu^a}$ and define the (joint) distribution function of an infinite-dimensional measure $\nu\in \cP([0,1]^\N)$ by
\[F_\nu : [0,1]^* \to [0,1], t\mapsto F_{\nu}^{1\dots |t|}(t).\]
Any $\nu \in \cP([0,1]^J)$ is uniquely determined by its distribution function (even for $J = \infty$).
The distribution function of $J$-permutons are known as $J$-\emph{copulas} for finite $J$. Accordingly, we also call the distribution function of an $\infty$-permuton an $\infty$-\emph{copula}.

Write $\tau^{-1} := ((\tau^j)^{-1})_{j < J}$ for any $J$-dimensional permutation $\tau$.
We can relate the bivariate marginals of the empirical measure $\hat \mu_{\si_N^{-1}}$ to the covariance of the process $X^N$.
Specifically, we will show that for $i,j < J$ (see Lemma \ref{lem:empPermApprox} (a) below)
\[\hat F_N^{ij}(s,t) := F_{\hat \mu_{\si^{-1}_N}}^{ij}(s,t) = \frac1N |\si^i_N[Ns] \cap \si^j_N[Nt]|, \quad s,t\in [0,1].\]
The case $i = j$ is included and we have
\[\hat F^{ii}_N(s,t) = \frac1N |[Ns] \cap [Nt]| \to s\wedge t, \text{ as }  N\to \infty.\]
By Lemma \ref{lem:empPermApprox} (b) below, the convergence of the bivariate distribution functions $\hat F_N^{ij}$ is equivalent to the convergence of the bivariate marginals of the $\infty$-permutons $\mu_{\si^{-1}_N}$. Nevertheless, we require convergence of not just the bivariate marginals in the following.
\begin{assum}
\label{assum:limitCopulas}
The sequence of random $J$-permutons $(\mu_{\si_N^{-1}})_{N\in \N}$ converges to a deterministic measure in distribution, i.e. there exists a $\mu \in \cP([0,1]^\N)$, such that 
\[\mu_N := \mu_{\si_N^{-1}} \to \mu,\]
in distribution, as $N\to \infty$.
\end{assum}
In this case $\mu$ is also a $J$-permuton and, since the limit is deterministic, the convergence is also in probability, with respect to the weak convergence topology. 

Recall that for any set $T$ a random field $Y : \Om \times T \to \R$ is called Gaussian if $(Y_{t_1},\dots, Y_{t_m})$ is Gaussian, for all $t_1,\dots, t_m\in T$. We say $Y$ is centered if $\E Y_t = 0$ for all $t\in T$.

Let $(U^j)_{j<J}$ be a family of $\Unif{[0,1]}$-random variables and define 
\[A_{(t,j)} = \set{U^j \leq t}, \quad j < J, t\in [0,1].\]
Then the kernel on the set $[0,1] \times ([0,J) \cap \N_0)$ given by
\[K((s,i), (t,j)) = \P(A_{(s,i)} \cap A_{(t,j)}) = \P(U^i \leq s, U^j\leq t) = \E[\idK_{A(s,i)}\idK_{A(t,j)}], \quad i,j < J, s,t\in [0,1]\]
is positive semi-definite. This is because for every finite family $(t_1,j_1),\dots, (t_m,j_m)$, the matrix 
\[(K((t_k,j_k), (t_l,j_l)))_{k,l \in \set{1,\dots, m}}\]
is the Gram matrix of the indicator functions $\idK_{A(t_1,j_1)},\dots, \idK_{A(t_m,j_m)}$ in the Hilbert space\footnote{The inner product is $(X,Y)\mapsto \E[XY]$.} $\cL^2(\Om, \P)$ and thus positive semi-definite.
By choosing $U^j\sim \mu, j < J$, we conclude that there exists a stochastic process $W : \Om \times [0,1] \to \R^J$, such that
\[\Om \times [0,1]\times ([0,J) \cap \N_0) \to \R, (\om, t, j)\mapsto W^j_t(\om)\]
is centered Gaussian, and
\[\Cov(W^i_s, W^j_t) = \P(U^i \leq s, U^j \leq t) = F^{ij}_{\mu}(s,t), \quad s,t\in [0,1], i,j < J.\]
Our goal is first to show that $W$ is indeed a limit of the sequence of processes $(X^N)_{N\in \N}$ in distribution.
We want also show that conversely, if we are given a $J$-permuton $\mu$, then we can find a sequence of random $J$-dimensional permutations $(\si_N)_{N\in \N}$, such that Assumption \assref{assum:limitCopulas} is satisfied.

\begin{satz}
\label{thm:shuffledDonsker}
Suppose we are given a sequence $(\si_N)_{N\in \N}$ of random $J$-dimensional permutations, such that \assref{assum:limitCopulas} holds true.
Then $(X^N_t)_{t\in [0,1]}$ converges in distribution to a centered Gaussian process $W$, as $N\to \infty$, with
\begin{equation}
\label{eq:covShuffledDonsker}
\Cov(W^i_s, W^j_t) = F^{ij}_{\mu}(s,t), \quad s,t\in [0,1], i, j < J.
\end{equation}
Conversely, given a centered Gaussian process $W$ and a $J$-permuton $\mu$, such that \eqref{eq:covShuffledDonsker} holds true, there exists a sequence $(\si_N)_{N\in \N}$ of random $J$-dimensional permutations, such that \assref{assum:limitCopulas} holds true and so $(X^N)_{t\in [0,1]}$ converges in distribution to $W$.
\end{satz}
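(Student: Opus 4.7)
The plan is to establish the forward direction via the classical two-step scheme for weak convergence in $\cD([0,1], \R^J)$ — tightness and finite-dimensional distribution (f.d.d.) convergence — and then obtain the converse by an explicit construction based on the i.i.d.\ sampling representation of a permuton. Throughout I assume (as implicit in the covariance computation) that $(\si_N)_N$ is independent of $(Z_k)_k$.

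For tightness, observe that for each fixed $j < J$, the process $X^{N,j}$ has the same law as the classical Donsker scaled random walk: since $\si_N$ is independent of $(Z_k)_k$ and $(Z_k)_k$ is i.i.d., $(Z_{\si^j_N(1)},\dots, Z_{\si^j_N(N)})$ is equidistributed with $(Z_1,\dots,Z_N)$. Hence each coordinate is tight in $\cD([0,1],\R)$ by Donsker's theorem. For finite $J$ this already gives joint tightness in $\cD([0,1],\R^J)$; for $J = \infty$ the product topology on $\R^\N$ reduces tightness in $\cD([0,1],\R^\N)$ to tightness of finitely many coordinate projections, which reduces again to the finite-$J$ case. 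For f.d.d.\ convergence, fix a finite family $(t_k,j_k)_{k=1}^m$ and set $Y^N := (X^{N,j_k}_{t_k})_{k\leq m}$ and $a^{N,k}_l := \idK_{\si^{j_k}_N[Nt_k]}(l)\in\{0,1\}$. Conditioning on $\si_N$ and using independence,
\[\E\bigl[e^{i\innp{\xi}{Y^N}}\mid \si_N\bigr] = \prod_{l=1}^N \phi_Z\!\left(\frac{1}{\sqrt N}\sum_{k}\xi_k a^{N,k}_l\right),\]
where $\phi_Z$ is the characteristic function of $Z_1$. Since $\E Z_1=0,\Var Z_1=1,\E|Z_1|^4<\infty$, the expansion $\log\phi_Z(u)=-u^2/2+O(u^3)$ gives
\[\log \E[e^{i\innp{\xi}{Y^N}}\mid \si_N] = -\tfrac12\sum_{k,k'}\xi_k\xi_{k'}\cdot\tfrac1N|\si^{j_k}_N[Nt_k]\cap\si^{j_{k'}}_N[Nt_{k'}]| + O(N^{-1/2}),\]
and by Lemma \ref{lem:empPermApprox}(a) the bracketed quantity equals $F^{j_k j_{k'}}_{\hat\mu_{\si_N^{-1}}}(t_k,t_{k'})$, which by Lemma \ref{lem:empPermApprox}(b) together with Assumption \assref{assum:limitCopulas} converges in probability to $F^{j_k j_{k'}}_\mu(t_k,t_{k'})$. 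Bounded convergence yields $\E[e^{i\innp{\xi}{Y^N}}]\to \exp(-\tfrac12\xi^\transp\Si\xi)$ with $\Si_{k,k'}=F^{j_k j_{k'}}_\mu(t_k,t_{k'})$, which is the characteristic function of the corresponding finite-dimensional distribution of $W$. Combined with tightness, this gives $X^N\to W$ in distribution.

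For the converse, given $\mu$ and $W$ with $\Cov(W^i_s,W^j_t)=F^{ij}_\mu(s,t)$, we construct the desired sequence: take an i.i.d.\ sequence $(U_n)_{n\in\N}$ with $U_1\sim\mu$, independent of $(Z_k)_k$, and set $\si^j_N := \Perm(U^j_1,\dots,U^j_N)$ for each $j<J, N\in\N$. Then $(\si^j_N)^{-1}(n)/N = \hat F^j_N(U^j_n)$, where $\hat F^j_N$ is the empirical CDF of $U^j_1,\dots,U^j_N$. Since the $j$-th marginal of $\mu$ is $\Unif{[0,1]}$, Glivenko–Cantelli gives $\hat F^j_N\to \id{[0,1]}$ uniformly a.s., hence $\sup_{n\leq N}|(\si^j_N)^{-1}(n)/N - U^j_n|\to 0$ a.s. for each $j$. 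Combined with the a.s.\ weak convergence $\tfrac1N\sum_n\delta_{U_n}\to\mu$ (Varadarajan's theorem for empirical measures of i.i.d.\ samples), this yields $\hat\mu_{\si_N^{-1}}\to\mu$ a.s.\ weakly, and since $|F_{\mu_{\si_N^{-1}}}-F_{\hat\mu_{\si_N^{-1}}}|\leq 2/N$ uniformly, also $\mu_{\si_N^{-1}}\to\mu$ a.s. The forward part then delivers $X^N\to W$ in distribution.

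The main obstacle is the joint tightness step in the case $J=\infty$: one has to reconcile the Skorokhod topology on $\cD([0,1],\R^\N)$ with the product topology on $\R^\N$, and justify carefully that tightness reduces to coordinate-wise tightness (this uses that a set $K\subseteq \cD([0,1],\R^\N)$ is relatively compact iff all its coordinate projections are relatively compact in $\cD([0,1],\R)$, together with Prokhorov). A secondary — routine but attention-requiring — point is uniform control of the $O(N^{-1/2})$ Taylor remainder in the characteristic-function computation, for which the assumption $\E|Z_1|^4<\infty$ (and bounded $|\xi|$) is more than enough.
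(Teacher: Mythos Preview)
Your overall strategy (tightness plus f.d.d.\ convergence, then the sampling construction for the converse) matches the paper's. The notable difference is your f.d.d.\ argument: conditioning on $\si_N$ and Taylor-expanding the characteristic function of $Z_1$ is genuinely simpler than the paper's route, which applies a CLT for dependent arrays (Proposition~\ref{prop:CLTchandra}) and has to verify four moment/covariance conditions, including a fourth-order cross-covariance estimate. Your approach exploits directly that, conditional on $\si_N$, the summands $Z_l$ are still i.i.d., so the conditional characteristic function factorises; the quadratic term recovers exactly $\hat F^{j_kj_{k'}}_N(t_k,t_{k'})$, and bounded convergence finishes. This is a cleaner path and in fact would go through with only $\E Z_1^2<\infty$ (using $\log\phi_Z(u)=-u^2/2+o(u^2)$ and $\sup_l|u_l|\to 0$), whereas the paper's argument genuinely uses $\E Z_1^4<\infty$.

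Your tightness step for $J=\infty$ has a gap, however. The characterisation you propose --- that $K\subseteq\cD([0,1],\R^\N)$ is relatively compact iff all coordinate projections are relatively compact in $\cD([0,1],\R)$ --- is not correct for the Skorokhod topology: convergence in $\cD([0,1],\R^\N)$ demands a \emph{single} sequence of time changes that works for all coordinates simultaneously, so a diagonal extraction from coordinate-wise subsequences does not suffice. The paper handles this by invoking Mitoma's criterion (tightness of all finite linear projections $\Pi_a X^N$ in $\cD([0,1],\R)$ implies tightness in $\cD([0,1],E')$ for $E'$ the dual of a countable nuclear space), which is what you should cite here. A minor slip in your converse: with $\si^j_N=\Perm(U^j_1,\dots,U^j_N)$ it is $\si^j_N(n)/N$, not $(\si^j_N)^{-1}(n)/N$, that equals $\hat F^j_N(U^j_n)$; your construction thus gives $\mu_{\si_N}\to\mu$ a.s., and one should then pass to $\si_N^{-1}$ to meet Assumption~\assref{assum:limitCopulas} verbatim. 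The paper simply cites its Proposition~\ref{prop:convRandSortPerm} (based on a quantitative estimate of Borga) for this step.
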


\begin{example}
\label{ex:basiccopulas}
We give several examples for $(\si_N)_{N\in \N}$ such that \assref{assum:limitCopulas} is fulfilled and thus the convergence in Theorem \ref{thm:shuffledDonsker} holds true.
\begin{enumerate}[(a)]
\item Let $J = \infty$ and $\si_N = (\id{N})_{j\in \N}$, so that no shuffling occurs at all. Then
$\mu_N$ converges to the (deterministic) singular measure $\mu$ given by the \emph{comonotonicity copula}
\[F_\mu(t) = t_0 \wedge \dots \wedge t_{m-1}, \quad t\in [0,1]^m, m < J.\]
The process $W$ is equal in distribution to $(\tilde W)_{j\in \N}$ where $\tilde W$ is a one-dimensional Brownian motion on $[0,1]$.
\item Let $ J = \infty$. Suppose $(\si^j_N)_{j\in \N}$ is \tiid\ with $\si^1_N \sim \Unif{\Sym N}$ for all $N\in \N$. Then one can show that $\mu_N$ converges to the infinite product of the Lebesgue measure on $[0,1]$ with itself, i.e.
\[\mu = \Unif{[0,1]} \otimes \Unif{[0,1]} \otimes \dots\]
That is, its joint distribution function is the \emph{independence copula}
\[F_\mu(t) = \prod_{k=0}^{m-1} t_k, \quad t\in [0,1]^m, m < J.\]
\item Set $J=2$.
Define $\tau \in \Sym N$ by $\tau_N(n) = N - n + 1, n \in \N$, that is $\tau$ puts the elements of $\set{1,\dots, N}$ into reverse order. Note that $\tau_N^{-1} = \tau_N$. The distribution function of the sequence of $2$-permutons $\mu_{(\id{N}, \tau_N)}$ converges to the \emph{countermonotonicity copula} 
\[\cW(s,t) = \max(s + t - 1, 0)\]
There is no direct higher-dimensional analogue to this because reversing the order twice leaves us with the original order. 

\end{enumerate}
\end{example}
We justify these claims (see in particular Example \ref{ex:basiccopulas2}) and give further examples in Subsection \ref{sec:constructingexamples}.

\section{Permutons and copulas}
In this section we give additional background on (higher-dimensional) permutons and copulas. None of this material is really new, except for the simple extension to $J = \infty$.
For permutons we follow \citet{borga_high-dimensional_2025} and for copulas \citet{durante2015principles}.

\subsection{Approximation by random permutations}

The following lemma explains that while our definition of $J$-dimensional permutations and their associated permutons differs slightly from the typical definitions, in particular the ones by \citet{borga_high-dimensional_2025}, the difference is not crucial. This is also the reason why we said we \emph{can}    set $\si^N_0 = \id{N}$, but do not \emph{need} to do that.
\begin{lem}
\label{lem:permtnStab}
Let $N\in \N, J\in \bar \N, \tau \in \Sym N^J$ and $\pi \in \Sym N$. Then $\mu_{\tau\circ \pi} = \mu_\tau$, where $\tau\circ \pi = (\tau^j\circ \pi)_{j<J}$. In particular, the $J$-permuton $\mu_\tau$ is completely determined by a $J-1$-dimensional permutation via
\[\mu_\tau = \mu_{(\id{N}, \tau^1\circ (\tau^0)^{-1}, \tau^2\circ (\tau^0)^{-1}, \dots)}.\]
\end{lem}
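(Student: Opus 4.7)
The plan is to prove the identity $\mu_{\tau\circ \pi} = \mu_\tau$ by a straightforward reindexing of the defining sum, and then deduce the ``In particular'' part by specializing $\pi$.

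First, I would write out the definition
\[\mu_{\tau\circ \pi} = \frac1N \sum_{k=1}^N\bigotimes_{j<J} \cU\left(\frac1N[\tau^j(\pi(k))-1,\tau^j(\pi(k))]\right).\]
Since $\pi$ is a bijection of $\{1,\dots,N\}$, the map $k\mapsto \pi(k)$ is a permutation of the summation index set, so the reindexing $k' := \pi(k)$ leaves the (finite) sum unchanged:
\[\mu_{\tau\circ \pi} = \frac1N \sum_{k'=1}^N\bigotimes_{j<J} \cU\left(\frac1N[\tau^j(k')-1,\tau^j(k')]\right) = \mu_\tau.\]
Note that summation and the countable tensor product interact fine here because $\bigotimes_{j<J}$ is applied term-by-term inside the finite sum, so no measure-theoretic subtleties arise beyond the definition already given in the excerpt.

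For the ``In particular'' part, I would simply apply the first assertion with $\pi := (\tau^0)^{-1}\in \Sym N$. Then
\[\tau \circ \pi = (\tau^j\circ (\tau^0)^{-1})_{j<J} = (\id{N},\, \tau^1\circ (\tau^0)^{-1},\, \tau^2\circ (\tau^0)^{-1},\, \dots),\]
and the first part yields $\mu_\tau = \mu_{\tau\circ \pi}$, which is exactly the claimed formula.

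There is no real obstacle here: the whole content of the lemma is that $\mu_\tau$ depends on $\tau$ only through the \emph{set} of labeled rows $\{(\tau^0(k),\tau^1(k),\dots) : k\in [N]\}$ (as a multiset), and right-composition with $\pi$ merely relabels this multiset. The only small care needed is that the $\cU(\frac1N[\tau^j(k)-1,\tau^j(k)])$ factors are grouped per $k$ before the tensor product, so permuting $k$ genuinely permutes whole summands rather than mixing their components.
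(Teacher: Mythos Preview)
Your proof is correct and takes essentially the same approach as the paper: write out the definition of $\mu_{\tau\circ\pi}$, reindex the finite sum via the bijection $\pi$, and then specialize $\pi = (\tau^0)^{-1}$ for the ``In particular'' part. The paper's proof is slightly terser (it does not spell out the second part explicitly), but the content is identical.
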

\begin{proof}
We have
\begin{align*}
\mu_{\tau\circ \pi} 	= & \frac1N \sum_{k=1}^N\bigotimes_{j<J} \cU\left(\frac1N[(\tau^j(\pi(k)))-1,\tau^j(\pi(k))]\right) \\
						= & \frac1N \sum_{k=1}^N\bigotimes_{j<J} \cU\left(\frac1N[(\tau^j(k))-1,\tau^j(k)]\right),
\end{align*}
where in the last step we permuted the summands by $\pi$ (which leaves the sum unchanged).
\end{proof}

\begin{lem}
	\label{lem:empPermApprox}
	Let $N, J \in \N$ and $\si\in \dSym JN$.
	Then the following hold true.
	\begin{enumerate}[(a)]
		\item $F_{\hat \mu(\si^{-1})}(t) = \frac1N |\bigcap_{j < J} \si^j[Nt_j]|, \quad t\in [0,1]^J$,
		\item $\nrm{F_{\mu_\si} - F_{\hat \mu_\si}}{\infty} \leq \frac J N$.
	\end{enumerate}
\end{lem}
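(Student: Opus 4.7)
For part (a), the plan is to expand $\hat\mu_{\sigma^{-1}} = \frac{1}{N}\sum_{k=1}^N \idK_{\blnk}(((\sigma^j)^{-1}(k)/N)_{j<J})$ and evaluate it at the box $\prod_{j<J}[0,t_j]$. A point mass at $((\sigma^j)^{-1}(k)/N)_{j<J}$ lies in this box iff $(\sigma^j)^{-1}(k)\leq Nt_j$ for every $j<J$, and since $(\sigma^j)^{-1}(k)$ is a positive integer this is equivalent to $(\sigma^j)^{-1}(k)\in [Nt_j]$, i.e.\ to $k\in \sigma^j[Nt_j]$. Counting the indices $k\in[N]$ for which this holds simultaneously in every coordinate and dividing by $N$ gives the claimed formula.

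For part (b), the key observation I would establish first is that the two distribution functions agree on the grid $G := \{0,1/N,\dots,N/N\}^J$. Indeed, the uniform distribution on $\frac{1}{N}[\sigma^j(k)-1,\sigma^j(k)]$ assigns measure $1$ to $[0,r_j]$ if $\sigma^j(k)\leq Nr_j$ and measure $0$ otherwise, whenever $Nr_j$ is an integer; summing over $k$ shows $F_{\mu_\sigma}(r) = F_{\hat\mu_\sigma}(r)$ for all $r\in G$. For a general $t\in[0,1]^J$, set $t^\downarrow_j := \lfloor Nt_j\rfloor/N$. Since $\hat\mu_\sigma$ is supported on $G$, we have $F_{\hat\mu_\sigma}(t) = F_{\hat\mu_\sigma}(t^\downarrow)$, which in turn equals $F_{\mu_\sigma}(t^\downarrow)$ by the grid identity. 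Monotonicity of $F_{\mu_\sigma}$ in each coordinate then gives $F_{\hat\mu_\sigma}(t) = F_{\mu_\sigma}(t^\downarrow) \leq F_{\mu_\sigma}(t)$, so the absolute difference equals $F_{\mu_\sigma}(t) - F_{\mu_\sigma}(t^\downarrow)$.

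It remains to bound
\[F_{\mu_\sigma}(t) - F_{\mu_\sigma}(t^\downarrow) = \mu_\sigma\!\left(\prod_{j<J}[0,t_j]\setminus\prod_{j<J}[0,t^\downarrow_j]\right).\]
The difference set is contained in the union $\bigcup_{j<J}\{x\in[0,1]^J : t^\downarrow_j < x_j \leq t_j\}$, whose $\mu_\sigma$-measure is at most $\sum_{j<J}\mu^j_\sigma((t^\downarrow_j,t_j])$. Since the intervals $\{\frac{1}{N}[\sigma^j(k)-1,\sigma^j(k)] : k\in[N]\}$ tile $[0,1]$ for each fixed $j$, the marginal $\mu^j_\sigma$ is $\Unif{[0,1]}$, so each slab has measure $t_j - t^\downarrow_j < 1/N$. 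Summing the $J$ terms yields the bound $J/N$. The argument is essentially bookkeeping; the only point worth getting right is the grid-point identity, which rests on the elementary uniform-distribution calculation at the start of the second paragraph.
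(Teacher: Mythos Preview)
Your proof is correct. Part (a) matches the paper's argument essentially verbatim. For part (b) you take a different route: the paper computes $F_{\mu_\si}(t)$ directly via the product formula
\[
F_{\mu_\si}(t)=\frac1N\sum_{k=1}^N\prod_{j<J}\bigl(\si^j(k)\wedge Nt_j-(\si^j(k)-1)\wedge Nt_j\bigr),
\]
observes that each factor is $\{0,1\}$-valued unless $\si^j(k)=\lfloor Nt_j\rfloor+1$, and then notes that the set $K=\{k:\exists j,\ \si^j(k)=\lfloor Nt_j\rfloor+1\}$ has at most $J$ elements (it is the image of $j\mapsto(\si^j)^{-1}(\lfloor Nt_j\rfloor+1)$), so the ``partial-cube'' remainder is at most $J/N$. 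Your argument instead establishes agreement on the grid $\{0,1/N,\dots,1\}^J$, reduces to the grid via $t^\downarrow$, and controls the discrepancy $\mu_\si\bigl(\prod_j[0,t_j]\setminus\prod_j[0,t^\downarrow_j]\bigr)$ using only that $\mu_\si$ has uniform marginals. The paper's approach is a direct combinatorial count tied to the specific cube structure of $\mu_\si$; yours is slightly more abstract and would work verbatim for any measure with uniform marginals that agrees with $\hat\mu_\si$ on the grid. Both are equally short.
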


\begin{proof}
	\begin{enumerate}[(a)]
		\item 
		Let $t = (t_0,\dots, t_{J-1})\in [0,1]^J$. Then 
		\begin{align*}
			N F_{\hat \mu_{\si^{-1}}}(t) = & \sum_{k=1}^N \prod_{j=1}^J \idK_{[0,t_j]}\left(\frac{(\si^j)^{-1}(k)}{N}\right)\\
			= & |\set{k \leq N : (\si^j)^{-1}(k) \leq Nt_j, j < J}|\\
			= & |\set{k \leq N : k = \si^j(l_j), l_j \leq Nt_j, j < J\text{ for some } l_1,\dots, l_J  \leq N}|\\
			= & |\bigcap_{j \leq J} \si^j[Nt_j]|.
		\end{align*}
		\item 
		Let $t\in [0,1]^J$. Then\footnote{In the second to last equality, the first colon is for the set builder notation $\set{\nu : \phi(\nu)}$ and the other one is part of the $\forall$-quantifier.}
		\begin{align*}
			F_{\mu_\si}(t) = & \mu_{\si}([0,t_0]\times \dots \times [0,t_{J-1}])\\
			= & \frac1N \sum_{k=1}^N\prod_{j=0}^{J-1} \cU\left(\frac1N[\si^j(k)-1,\si^j(k)]\right)(0,t_j) \\
			= & \frac1N \sum_{k=1}^N\prod_{j=0}^{J-1} (\si^j(k) \wedge N t_j - (\si^j(k) - 1) \wedge N t_j) \\
			= & \frac 1N |\set{k \leq N : \forall j< J : \si^j(k)\leq \floor{Nt_j}}| + R_N\\
			= & F_{\hat \mu_\si}(t) + R_N,
		\end{align*}
		where the first summand accounts for all the $1/N\times \dots \times 1/N$ hypercubes fully contained in $[0,t_1]\times \dots \times [0,t_J]$ and $R_N$ accounts for the smaller rectangular hypercuboids still left over in $[0,t_1]\times \dots \times [0,t_J]$. More precisely,
		consider $K = \set{k \leq N : \exists j \leq J : \si^j(k) = \floor{Nt_j} + 1}$. This is the image of the map $[J] \to K, j \mapsto (\si^j)^{-1}(\floor{Nt_j} + 1)$ and so $|K| \leq J$. Thus,
		\[R_N =  \frac 1N \sum_{k \in K}\prod_{j=1}^J (\si^j(k) \wedge N t_j - (\si^j(k) - 1) \wedge N t_j) \leq \frac{|K|}{N} \leq \frac{J}{N}. \]
	\end{enumerate} 
\end{proof}

Lemma \ref{lem:empPermApprox} (b) says that the Kolmogorov-Smirnov distance of $\mu_\si$ and $\hat \mu_\si$ is bounded by $J/N$. This cannot be improved to a bound on, say, the total variation distance, since $\mu_\si(\supp \hat \mu_\si) = 0$.

%

\begin{lem}
\label{lem:inftypermtonMetric}
Let $J\in \bar \N$. Suppose we are given a sequence of $J$-permutons $(\nu_n)_{n\in \N}$ and a $J$-permuton $\nu$. Then $\nu_n \to \nu$ weakly if and only if
\begin{equation}
\label{eq:inftypermtonMetric}
\lim_{n\to \infty}\sum_{j=1}^J 2^{-j}\nrm{F^{1\dots j}_{\nu_n} - F^{1\dots j}_\nu}{\infty} = 0.
\end{equation}
\end{lem}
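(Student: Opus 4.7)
The plan is to establish both implications separately, using the key observation that for any $J$-permuton $\nu$, every marginal distribution function $F_\nu^{1\dots j}$ is continuous on $[0,1]^j$: since the $k$-th marginal of $\nu$ is $\Unif{[0,1]}$, no mass lies on any axis-aligned hyperplane $\set{t_k = a}$, which is the only obstruction to continuity of a joint distribution function.

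For the forward direction, assume $\nu_n \to \nu$ weakly in $\cP([0,1]^J)$. The continuous mapping theorem (applied to the projection $\pr^{1\dots j}$) gives $\pr^{1\dots j}\nu_n \to \pr^{1\dots j}\nu$ weakly in $\cP([0,1]^j)$ for each $j$ with $j < J$. By the Portmanteau theorem, this yields pointwise convergence $F_{\nu_n}^{1\dots j}(t) \to F_\nu^{1\dots j}(t)$ at every continuity point of the limit, which, by the observation above, is all of $[0,1]^j$. A standard Pólya-type argument---using coordinate-wise monotonicity of distribution functions together with uniform continuity of the continuous limit $F_\nu^{1\dots j}$ on the compact set $[0,1]^j$ to choose a finite $\varepsilon$-grid that sandwiches any point---upgrades pointwise to uniform convergence $\nrm{F_{\nu_n}^{1\dots j} - F_\nu^{1\dots j}}{\infty} \to 0$. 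For finite $J$ this already proves \eqref{eq:inftypermtonMetric}; for $J = \infty$ each summand is bounded by $2^{-j}$, so dominated convergence for series closes the argument.

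For the reverse direction, assume \eqref{eq:inftypermtonMetric} holds. Nonnegativity of the summands forces $\nrm{F_{\nu_n}^{1\dots j} - F_\nu^{1\dots j}}{\infty} \to 0$ for every $j$, and uniform convergence of distribution functions to a continuous limit on $[0,1]^j$ implies weak convergence $\pr^{1\dots j}\nu_n \to \pr^{1\dots j}\nu$ in $\cP([0,1]^j)$. For finite $J$, taking $j = J$ completes the argument. For $J = \infty$, I would use that $[0,1]^\N$ is a compact metric space, so by the Stone--Weierstrass theorem the algebra of cylinder functions (continuous functions depending on finitely many coordinates) is dense in $C([0,1]^\N)$; consequently, weak convergence on $[0,1]^\N$ is equivalent to weak convergence of all finite-dimensional projections. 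Alternatively, compactness of $\cP([0,1]^\N)$ combined with the fact that finite-dimensional marginals determine any probability measure on $[0,1]^\N$ (Kolmogorov's extension theorem) identifies every subsequential weak limit as $\nu$.

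The main technical points are the Pólya-type uniform upgrade in the forward direction (which requires continuity of the limit and thus the permuton hypothesis) and the finite-to-infinite-dimensional passage in the reverse direction when $J = \infty$; both are standard but warrant explicit mention because the $J$-permuton setup includes $J = \infty$, where weak convergence is not directly characterized by a single distribution function.
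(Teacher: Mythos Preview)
Your proposal is correct and follows essentially the same route as the paper's proof: both reduce to the equivalence, for each finite $j$, between weak convergence of the $j$-dimensional marginal permutons and uniform convergence of their distribution functions, and then handle $J=\infty$ by reducing weak convergence on $[0,1]^\N$ to convergence of all finite-dimensional projections. The one notable difference is that the paper simply cites \citet[Proposition 2.1]{borga_high-dimensional_2025} for the finite-$J$ equivalence and then packages the $J=\infty$ case as convergence in a product metric on $\prod_{j\ge 1} \cC([0,1]^j,[0,1])$, whereas you prove the finite-$J$ case from scratch via the Portmanteau theorem plus a P\'olya-type uniform upgrade (valid here precisely because the limiting copula is continuous) and close the $J=\infty$ case with dominated convergence for series and Stone--Weierstrass/compactness. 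Your version is thus more self-contained, at the cost of some length; the paper's is shorter but relies on the external reference for the key finite-dimensional step.
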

\begin{proof}
For $J < \infty$, we have $\nu_n\to \nu$ weakly if and only if $F_{\nu_n} \to F_{\nu}$ in $\cC([0,1]^J, [0,1], \nrm{\blnk}{\infty})$, by \citet{borga_high-dimensional_2025} Proposition 2.1, proving the finite dimensional case.

Now, suppose $J = \infty$.
Observe that as a subspace of $\R^\N$, weak convergence of probability measures on $[0,1]^\N$ already follows from the convergence of their finite-dimensional distributions.
The following defines a metric $d$ on the product space $\prod_{j=1}^\infty \cC([0,1]^j, [0,1])$
\[d((f_j)_{J\in \N}, (g_j)_{J\in \N})= \sum_{j=1}^\infty 2^{-J}\nrm{f_j - g_j}{\infty},\]
which induces the product topology. Thus, as $n\to \infty$,
\begin{align*}
\nu_n \to \nu & \Ioif \fa j\in J : \nu_n^{1\dots j}\to \nu^{1\dots j} \\
			& \Ioif \fa j \in J : F_{\nu_n}^{1\dots j}\to F_{\nu}^{1\dots j}\text{ in } \cC,\\
			&\Ioif d((F_{\nu_n}^{1\dots j})_{j\in \N}, (F_{\nu}^{1\dots j})_{j\in \N}) \to 0.
\end{align*}
\end{proof}

So far we considered the convergence of a given sequence of random permutations to some permuton. Next, we want to reverse this process. 
Specifically, Let $J\in \bar \N$ and $\mu \in \cP([0,1]^J)$ be a $J$-permuton. 
Our aim is to construct a sequence of random $J$-dimensional permutations $\si = (\si_N : \Om \to \Sym N^J)_{N\in \N}$ such that the associated sequence of $J$-permutons converges to $\mu$.

To this end, consider an \tiid\ sequence $(U_N : \Om \to [0,1]^J)_{N\in \N}$ with $U_N \sim \mu$. In other words, we consider a (possibly infinite) matrix of random variables uniformly distributed on $[0,1]$:

\begin{equation}
	\label{term:matrixU}
	\mat{U^1_1 & U_1^2 & \dots & U_1^j & \dots \\ 
		\vdots & \vdots & \ddots & \vdots & \dots \\
		U_N^1 & U_N^2 & \dots & U_N^j & \dots
	}
\end{equation}
Here, the rows are independent realizations of a discrete-time permuton process, that is a process with uniform marginals\footnote{Here, the marginals are uniform on $[0,1]$. In some other works permuton processes have uniform marginals on $[-1,1]$. The difference is mostly cosmetic.}, with distribution $\mu$.

\newcommand{\perm}{\operatorname{Perm}}
Given any $N\in \R^N$ and $v\in \R^N$ define $\Perm(v) \in \Sym N$ by 
\begin{equation}
	\label{eq:approxPrmttns}
	\Perm(v)(k) = 1 + |\set{l\leq N : v_l < v_k}| + |\set{l < k : v_l = v_k}|, \quad k \leq N.
\end{equation}
In other words, $\Perm(v)(k)$ is the (stable) rank of $v_k$ in the vector $v$. Thus, $\Perm(v)$ sorts the vector $v$ from lowest to highest. Entries that are tied (equal) are ranked according to their original position. This ensures that $\Perm(v)$ is indeed a permutation.

Now, set $\si^j_N := \Perm(U^j_1,\dots U^j_N), j < J$.
Because $\P(U^j_l = U^j_k) = 0$ for all $l,k< N$, we have
\[\si^j_N(k) = 1 + |\set{l\leq N : U_l^j < U_k^j}|,\quad a.s., j < J, k \leq N.\]

Given $v\in \R^N$ we write $v_{(k)} = v_{\Perm(v)(k)}$. Then $v_{(1)} \leq \dots \leq v_{(N)}$. Consider independent $U_1,\dots, U_N \sim \Unif{[0,1]}$. Then, because the sequence is exchangeable, we have $\P(U_1 = U_{(n)}) = \dots = \P(U_N = U_{(n)})$ for all $n \leq N$. Thus, $\P(U_m = U_{(n)}) = \frac1N$ for all $m,n\leq N$. In other words, $\Perm(U_1,\dots, U_N) \sim \Unif{\Sym N}$. 

Consequently, $\si^j_N \sim \Unif{\Sym N}$ for all $j<J$. Aside from the marginals, the joint distribution of $(\si^j_N)_{j < J}$ is determined by $\mu$. In fact, the random $J$-dimensional permutations $\si_N$ approximate $\mu$ in the following sense.
\begin{prop}
	Let $J\in \N$, $\mu$ be a $J$-dimensional permuton,  $(U_N : \Om \to [0,1]^J)_{N\in \N}$ an \tiid\ sequence with $U_N \sim \mu$ and let $\si = (\si_N : \Om \to \Sym N^J)_{N\in \N}$ be defined by 
	\[\si^j_N := \Perm(U^j_1,\dots U^j_N), \quad j < J, N\in \N.\] Then
	\[\P(\nrm{F_{\mu(\si_N)} - F_\mu}{\infty} > 4JN^{-1/4}) \leq e^{-\sqrt N},\]
	for large $N$.
\end{prop}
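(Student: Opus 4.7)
The plan is to reduce the estimation of $\|F_{\mu(\si_N)} - F_\mu\|_\infty$ to a combination of a univariate DKW-type bound on the marginal order statistics and a multivariate Glivenko--Cantelli-type bound on the joint empirical CDF of $U_1, \dots, U_N$, exploiting that $\mu$ has uniform marginals (so that $F_\mu$ is $1$-Lipschitz in each coordinate).

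First, by Lemma~\ref{lem:empPermApprox}(b), $\|F_{\mu_{\si_N}} - F_{\hat\mu_{\si_N}}\|_\infty \leq J/N$, so I may replace $\mu_{\si_N}$ by its empirical version. Applying Lemma~\ref{lem:empPermApprox}(a) to $\si_N^{-1}$, together with the fact that (almost surely) $\si^j_N(k) \leq m$ is equivalent to $U^j_k \leq U^j_{(m)}$, where $U^j_{(m)}$ denotes the $m$-th order statistic of the $j$-th column of the matrix~\eqref{term:matrixU}, yields the key identity
\[F_{\hat\mu_{\si_N}}(t) = \tilde F^N\bigl(U^1_{(\lfloor Nt_1\rfloor)}, \dots, U^J_{(\lfloor Nt_J\rfloor)}\bigr),\]
where $\tilde F^N(s) := \frac{1}{N}\sum_{k=1}^N \idK_{\set{U^j_k \leq s_j,\,\forall j}}$ is the joint empirical CDF of the i.i.d.\ sample $U_1, \dots, U_N \sim \mu$. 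The proof then proceeds by controlling the two sources of error: (i) how far $\tilde F^N$ is from $F_\mu$, and (ii) how far the evaluation points $U^j_{(\lfloor Nt_j\rfloor)}$ are from $t_j$.

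For (i), I plan a standard Hoeffding-plus-grid argument: cover $[0,1]^J$ by a grid of mesh $\delta$ (giving $O(\delta^{-J})$ points), apply Hoeffding's inequality at each grid point, and extend to the uniform bound via monotonicity of $\tilde F^N, F_\mu$ together with the $1$-Lipschitz property of $F_\mu$ in each coordinate. This yields $\P(\|\tilde F^N - F_\mu\|_\infty > \epsilon + J\delta) \leq 2\delta^{-J} e^{-2N\epsilon^2}$. For (ii), the univariate DKW inequality applied to each of the $J$ marginal empirical distribution functions $F^N_j$, combined with a union bound, gives $\max_{j<J}\sup_m |U^j_{(m)} - m/N| \leq \epsilon$ with probability at least $1 - 2Je^{-2N\epsilon^2}$. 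Combining both via $|F_\mu(a) - F_\mu(t)| \leq \sum_j |a_j - t_j|$ and $|\lfloor Nt_j\rfloor/N - t_j| \leq 1/N$, I get
\[\|F_{\mu_{\si_N}} - F_\mu\|_\infty \leq (2J+1)\epsilon + 2J/N\]
on the intersection of the two good events, whose complement has probability at most $(2\delta^{-J} + 2J)e^{-2N\epsilon^2}$. Setting $\delta = \epsilon = N^{-1/4}$, the error bound becomes at most $4JN^{-1/4}$ for large $N$, and the failure probability is bounded by $e^{-\sqrt N}$ once $N$ is large enough, since the polynomial factor $N^{J/4}$ is dominated by $e^{-2\sqrt N}$.

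The main technical hurdle is the uniform control of $\|\tilde F^N - F_\mu\|_\infty$: for $J > 1$, a sharp multivariate DKW inequality is substantially more delicate than the univariate one and its constants depend on $J$. The plan is to sidestep this by using the elementary Hoeffding-plus-grid bound, since the resulting polynomial-in-$N$ loss is cleanly absorbed by the ``for large $N$'' caveat in the statement. A secondary subtlety is that the tight choice of constant ``$4J$'' in front of $N^{-1/4}$ forces one to account carefully for each contribution ($J/N$ from passing to $\hat\mu_{\si_N}$, $J\epsilon$ from the order-statistic shift in $F_\mu$, and $(J+1)\epsilon$ from the grid argument), but with the chosen $\epsilon = N^{-1/4}$ these add up to $(2J+1)N^{-1/4} + 2J/N \leq 4JN^{-1/4}$ for $N$ large.
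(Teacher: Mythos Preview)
Your argument is correct. The paper itself does not give a proof but simply cites \citet[Proposition~2.6]{borga_high-dimensional_2025}, noting that this exact estimate appears at the end of their proof; you have effectively reconstructed that argument. Your decomposition via the key identity $F_{\hat\mu_{\si_N}}(t)=\tilde F^N\bigl(U^1_{(\lfloor Nt_1\rfloor)},\dots,U^J_{(\lfloor Nt_J\rfloor)}\bigr)$, followed by a Hoeffding-plus-grid bound on $\|\tilde F^N-F_\mu\|_\infty$ and marginal DKW bounds on the order statistics, is the natural route and matches the structure one would expect from the cited source.
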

\begin{proof}
	This was proven by \citet[Proposition 2.6.]{borga_high-dimensional_2025}. More precisely, this exact estimate is found at the end of their proof of that proposition.
\end{proof}

\begin{prop}
	\label{prop:convRandSortPerm}
	Let $J\in \bar\N$, $\mu$ be a $J$-dimensional permuton,  $(U_N : \Om \to [0,1]^J)_{N\in \N}$ an \tiid\ sequence with $U_N \sim \mu$ and let $\si = (\si_N : \Om \to \Sym N^J)_{N\in \N}$ be defined by 
\[\si^j_N := \Perm(U^j_1,\dots U^j_N), \quad j < J, N\in \N.\] Then
	\[\mu_{\si_N} \to \mu, \quad a.s.,\]
	as $N\to \infty$.
\end{prop}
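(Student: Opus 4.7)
The plan is to reduce the statement to the quantitative finite-dimensional bound furnished by the preceding proposition. For $J<\infty$ that bound combined with $\sum_N e^{-\sqrt N}<\infty$ and the Borel--Cantelli lemma gives $\nrm{F_{\mu_{\si_N}}-F_\mu}{\infty}\to 0$ almost surely, and Lemma \ref{lem:inftypermtonMetric} (finite-dimensional direction) immediately converts this uniform convergence of copulas into $\mu_{\si_N}\to \mu$ weakly, almost surely.

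For $J=\infty$ the approach is to handle each finite-dimensional marginal separately and then invoke the infinite-dimensional half of Lemma \ref{lem:inftypermtonMetric}. The key observation is that projection to the first $j$ coordinates commutes with the sorting construction: fixing $j\in\N$ and writing $\tau_N:=(\si_N^0,\dots,\si_N^{j-1})$, each $\si_N^i$ depends only on the $i$-th column of the matrix \eqref{term:matrixU}, and the rows $(U_N^0,\dots,U_N^{j-1})_{N\in\N}$ form an \tiid\ sequence with law $\mu^{0\dots j-1}$, which is itself a $j$-dimensional permuton. Hence the finite-dimensional case applies to $\tau_N$ with target $\mu^{0\dots j-1}$, giving $\mu_{\tau_N}\to \mu^{0\dots j-1}$ weakly on a full-probability event $\Om_j$. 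A direct inspection of the defining product-measure sum for $\mu_{\si_N}$ shows that $(\mu_{\si_N})^{0\dots j-1}=\mu_{\tau_N}$, so on $\Om_j$ we have $(\mu_{\si_N})^{0\dots j-1}\to \mu^{0\dots j-1}$ weakly.

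Setting $\Om_0:=\bigcap_{j\in\N}\Om_j$, a countable intersection of full-probability events is again of full probability, and on $\Om_0$ all finite-dimensional marginals of $\mu_{\si_N}$ converge weakly to those of $\mu$. Equivalently, each term $2^{-j}\nrm{F^{0\dots j-1}_{\mu_{\si_N}}-F^{0\dots j-1}_\mu}{\infty}$ tends to zero on $\Om_0$ while being dominated by $2^{-j}$, so dominated convergence for series yields $\sum_{j\in\N} 2^{-j}\nrm{F^{0\dots j-1}_{\mu_{\si_N}}-F^{0\dots j-1}_\mu}{\infty}\to 0$. By the infinite-dimensional direction of Lemma \ref{lem:inftypermtonMetric}, this is precisely $\mu_{\si_N}\to\mu$ weakly, almost surely.

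The only step requiring some care is the marginalisation identity $(\mu_{\si_N})^{0\dots j-1}=\mu_{\tau_N}$ when $J=\infty$, since it involves projecting out an infinite tail of uniform factors. The verification is formally immediate because each projected factor has total mass one, but deserves to be recorded explicitly; apart from that, the argument is a routine combination of Borel--Cantelli, the commutation of sorting with coordinate projection, and Lemma \ref{lem:inftypermtonMetric}.
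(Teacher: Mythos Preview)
Your proposal is correct and follows essentially the same approach as the paper: for finite $J$ you use the preceding quantitative bound together with Borel--Cantelli, and for $J=\infty$ you reduce to the finite-dimensional marginals and take a countable intersection of almost-sure events. The paper's version is considerably terser (the $J=\infty$ case is dispatched in a single sentence), whereas you spell out explicitly the marginalisation identity $(\mu_{\si_N})^{0\dots j-1}=\mu_{\tau_N}$ and the dominated-convergence step for the series metric, which are the points the paper leaves implicit.
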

\begin{proof}
	Consider the case $J < \infty$ first. For some sufficiently large $N_0$ we have
	\[\sum_{N=N_0}^\infty \P(\nrm{F_{\mu(\si_N)} - F_\mu}{\infty} > 4JN^{-1/4}) \leq \sum_{N=N_0}^\infty e^{-\sqrt N} < \infty.\]
	Thus, by Borel-Cantelli
	\[\nrm{F_{\mu(\si_N)} - F_\mu}{\infty} \leq 4JN^{-1/4} \text{ for large } N, \quad a.s.\]
	In particular, $\mu_{\si_N} \to \mu, a.s.$, as $N\to \infty$.
	
	In the case $J = \infty$ the convergence $\mu_{\si_N} \to \mu, a.s.$ follows from the convergence of the finite-dimensional distributions $\mu_{\si_N}^{1\dots J} \to \mu^{1\dots J}, a.s., J \in \N$, as $N\to \infty$.
\end{proof}

\subsection{Constructing examples}
\label{sec:constructingexamples}
\begin{lem}
\label{lem:incrsngUnif}
Let $Z : \Om \to \R$ be a random variable and $\ph$ be an increasing function. Then $\ph(Z) \sim \Unif{[0,1]}$ if and only if 
\[\ph(t) = F_Z(t) := \P(Z\leq t), \quad \P_Z\text{-}a.s.\]
Similarly, if $\psi$ is decreasing, then $\psi(Z) \sim \Unif{[0,1]}$ if and only if 
\[\psi(t) = 1 - F_Z(t) = \P(Z <t), \quad \P_Z\text{-}a.s.\]
\end{lem}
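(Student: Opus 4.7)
The plan is to exploit the monotonicity of $\ph$ (resp. $\psi$) to sandwich the event $\{Z\le t\}$ between two events of the form $\{\ph(Z)\lessgtr\ph(t)\}$, whose probabilities are explicit by the uniform hypothesis.

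For the forward implication in the increasing case, monotonicity of $\ph$ gives the two-sided inclusion
\[\{\ph(Z)<\ph(t)\}\subseteq\{Z\le t\}\subseteq\{\ph(Z)\le\ph(t)\},\]
since $Z\le t\Rightarrow\ph(Z)\le\ph(t)$ and, contrapositively, $Z>t\Rightarrow\ph(Z)\ge\ph(t)$. Taking probabilities and invoking $\ph(Z)\sim\Unif[0,1]$, for every $t$ with $\ph(t)\in[0,1]$ we obtain
\[\ph(t)=\P(\ph(Z)<\ph(t))\le F_Z(t)\le\P(\ph(Z)\le\ph(t))=\ph(t),\]
hence $F_Z(t)=\ph(t)$ at every such $t$. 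The exceptional set $A=\{t:\ph(t)\notin[0,1]\}$ satisfies $\{Z\in A\}=\{\ph(Z)\notin[0,1]\}$, which is a null event by uniformity, so $\P_Z(A)=0$ and therefore $\ph=F_Z$ $\P_Z$-a.s.

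For the converse, the hypothesis $\ph=F_Z$ $\P_Z$-a.s.\ gives $\ph(Z)=F_Z(Z)$ almost surely, so it suffices to verify the classical probability integral transform $F_Z(Z)\sim\Unif[0,1]$. Introducing the generalized inverse $F_Z^{\leftarrow}(u)=\inf\{t:F_Z(t)\ge u\}$, continuity of $F_Z$ yields $F_Z(F_Z^{\leftarrow}(u))=u$ and hence $\P(F_Z(Z)\le u)=\P(Z\le F_Z^{\leftarrow}(u))=u$ for $u\in(0,1)$. The decreasing case then reduces to the increasing one by applying the result to the increasing function $1-\psi$ (using $1-\Unif[0,1]=\Unif[0,1]$); alternatively one adapts the inclusions directly, obtaining $\{\psi(Z)>\psi(t)\}\subseteq\{Z\le t\}\subseteq\{\psi(Z)\ge\psi(t)\}$ and concluding by the same sandwich argument.

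The main technical subtlety, and arguably the only real obstacle, is hidden in the converse direction: the identity $F_Z(Z)\sim\Unif[0,1]$ requires $F_Z$ to be continuous, since any atom of $Z$ would produce an atom of $F_Z(Z)$. This is not an issue in the intended applications, where $Z$ is itself uniformly (and hence continuously) distributed, so the hypothesis $\ph=F_Z$ $\P_Z$-a.s.\ together with $\ph$ increasing is consistent with $F_Z$ being continuous; one may therefore either restrict to the continuous case or note that the forward direction already forces $F_Z$ to be continuous, so that the equivalence is asserted within the class of $Z$ with continuous law.
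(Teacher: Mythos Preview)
Your proof is correct and takes a genuinely different route for the forward direction. The paper argues via the generalized inverse: from $t=\P(\ph(Z)\le t)=\P(Z\le\ph^{-1}(t))=F_Z(\ph^{-1}(t))$ it concludes $\ph^{-1}=F_Z^{-1}$ Lebesgue-a.e., and then infers $\ph=F_Z$ $\P_Z$-a.s.\ (the last step is left implicit). Your sandwich argument bypasses the inverse entirely: the inclusions $\{\ph(Z)<\ph(t)\}\subseteq\{Z\le t\}\subseteq\{\ph(Z)\le\ph(t)\}$ together with the continuity of $\Unif[0,1]$ pin down $F_Z(t)=\ph(t)$ directly for all $t$ with $\ph(t)\in[0,1]$, and the exceptional set is handled cleanly. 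This is more elementary and also more self-contained than the paper's argument, which leaves the passage from equality of generalized inverses to $\P_Z$-a.s.\ equality of the functions unjustified.

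For the converse and the decreasing case the two treatments agree: the paper simply writes ``well known'' and ``by duality'', while you spell out the probability integral transform and the reduction via $1-\psi$. Your explicit discussion of the continuity requirement on $F_Z$ is a point the paper glosses over; your observation that the forward hypothesis $\ph(Z)\sim\Unif[0,1]$ already rules out atoms of $Z$ (since an atom of $Z$ would produce an atom of $\ph(Z)$) is the right way to reconcile the two directions. One small slip: you write $\{Z\in A\}=\{\ph(Z)\notin[0,1]\}$, which is indeed an equality by the very definition of $A$, so no correction is needed there.
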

\begin{proof}
Assume $\ph(Z) \sim \Unif{[0,1]}$. Since $\ph$ is increasing, the generalized inverse exists $\ph^{-1}(t) = \inf\set{z\in \R : \ph(z) \geq t}$ and is increasing. We have
\[t = \P(\ph(Z)\leq t) = \P(Z \leq \ph^{-1}(t)) = F_Z(\ph^{-1}(t)), \quad t\geq 0.\]
This implies $\ph^{-1} = F_Z^{-1}$, Lebesgue almost everywhere, and so $\ph = F_Z$, $\P_Z$-almost surely.
The converse is well known. The statement for decreasing $\psi$ is proven similarly (by duality).
\end{proof}

\begin{lem}
\label{lem:indpnceCopula}
Let $U_1,\dots, U_m$ be random variables with $U_1,\dots, U_m\sim \Unif{[0,1]}$. Then the following are equivalent:
\begin{enumerate}[(i)]
\item $U_1,\dots, U_m$ are independent,
\item $\P_{(U_1,\dots, U_m)} = \Unif{[0,1]}^{\otimes m}$,
\item $\P(U_1\leq t_1,\dots, U_m \leq t_m) = \prod_{j=1}^m t_j,\quad t\in [0,1]^m$,
\end{enumerate}
\end{lem}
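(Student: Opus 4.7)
The plan is to prove the chain of equivalences (i) $\Leftrightarrow$ (ii) $\Leftrightarrow$ (iii) by standard measure-theoretic arguments, with (ii) serving as the natural hub. First I would observe that (i) $\Leftrightarrow$ (ii) is immediate from the very definition of independence: the random variables $U_1,\dots, U_m$ are independent precisely when their joint distribution factorizes as the product of marginal distributions, and since each $U_j \sim \Unif{[0,1]}$ by assumption, this product is exactly $\Unif{[0,1]}^{\otimes m}$.

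Next, for (ii) $\Rightarrow$ (iii), I would simply evaluate the product measure on a rectangle $[0,t_1]\times \dots \times [0,t_m]$, which yields $\prod_{j=1}^m t_j$ by Fubini's theorem (each factor $\Unif{[0,1]}([0,t_j]) = t_j$). Conversely, for (iii) $\Rightarrow$ (ii), I would invoke the uniqueness theorem for measures: the collection
\[\cC := \set{[0,t_1]\times \dots \times [0,t_m] : t\in [0,1]^m}\]
is a $\pi$-system generating the Borel $\sigma$-algebra on $[0,1]^m$, and two probability measures that agree on such a generating $\pi$-system coincide (Dynkin's $\pi$-$\lambda$ theorem). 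Since $\P_{(U_1,\dots, U_m)}$ and $\Unif{[0,1]}^{\otimes m}$ agree on $\cC$ by (iii) together with the previous paragraph, they must agree on all Borel sets.

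Since all steps rely only on well-established measure-theoretic machinery, there is no genuine obstacle here; the lemma functions primarily as a convenient bookkeeping statement to be combined with Lemma \ref{lem:incrsngUnif} when justifying the independence copula in Example \ref{ex:basiccopulas} (b).
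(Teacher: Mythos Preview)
Your proposal is correct and is precisely the standard argument one would give; the paper itself simply writes ``Straightforward'' without further detail. Your elaboration via the definition of independence for (i)$\Leftrightarrow$(ii) and the $\pi$-$\lambda$ uniqueness argument for (ii)$\Leftrightarrow$(iii) is exactly what is meant.
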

\begin{proof}
Straightforward.
\end{proof}

\begin{lem}
\label{lem:comonCopula}
Let $U_1,\dots, U_m$ be random variables with $U_1,\dots, U_m\sim \Unif{[0,1]}$. Then the following are equivalent:
\begin{enumerate}[(i)]
\item $U_1 = \dots = U_m, a.s.$,
\item $\P(U_1 \leq t_1,\dots, U_m \leq t_m) = t_1 \wedge \dots \wedge t_m,\quad t\in [0,1]^m$,,
\end{enumerate}
\end{lem}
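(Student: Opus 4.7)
The implication (i) $\Rightarrow$ (ii) will be immediate. I would observe that if $U_1 = \cdots = U_m$ almost surely, the event $\{U_1 \leq t_1, \dots, U_m \leq t_m\}$ differs from $\{U_1 \leq t_1 \wedge \cdots \wedge t_m\}$ only by a null set, and the probability of the latter equals $t_1 \wedge \cdots \wedge t_m$ by the uniform marginal hypothesis.

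For (ii) $\Rightarrow$ (i), my strategy is to establish $\P(U_i = U_j) = 1$ for every pair $i \neq j$; intersecting the finitely many (at most $\binom{m}{2}$) full-measure events $\{U_i = U_j\}$ then yields $U_1 = \cdots = U_m$ almost surely. To that end, I would fix $i \neq j$ and project (ii) onto its $(i,j)$-marginal by setting $t_k = 1$ for $k \notin \{i,j\}$, which yields $\P(U_i \leq s, U_j \leq t) = s \wedge t$. Specializing to $s = t$ and subtracting $\P(U_i \leq t) = t$ then gives $\P(U_i \leq t, U_j > t) = 0$ for every $t \in [0,1]$.

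The key step will be to upgrade this pointwise-in-$t$ vanishing to $\P(U_i < U_j) = 0$ via the countable decomposition
\[
\{U_i < U_j\} = \bigcup_{r \in \Q \cap [0,1]} \{U_i \leq r < U_j\},
\]
which realizes $\{U_i < U_j\}$ as a countable union of null sets and is therefore itself null. Swapping the roles of $i$ and $j$ in the symmetric argument gives $\P(U_j < U_i) = 0$, and consequently $\P(U_i = U_j) = 1$.

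There is no serious obstacle. The only slightly nontrivial ingredient is the countable decomposition above, a standard device for passing from $\P(A_t) = 0$ for each rational $t$ to $\P(\bigcup_t A_t) = 0$ when the family $(A_t)$ is monotone in $t$; the rest of the argument is just bookkeeping with the bivariate marginals of the hypothesized joint law.
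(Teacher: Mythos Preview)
Your proof is correct and takes a genuinely different, more elementary route than the paper. The paper invokes a structural characterization from the copula literature (Theorem 2.5.7 in \cite{durante2015principles}), which says that (ii) holds if and only if $(U_1,\dots,U_m)$ has the same law as $(\ph_1(Z),\dots,\ph_m(Z))$ for some random variable $Z$ and increasing functions $\ph_j$; it then applies the preparatory Lemma~\ref{lem:incrsngUnif} to force $\ph_j = F_Z$ $\P_Z$-a.s.\ for every $j$, whence all $U_j$ coincide a.s. Your argument bypasses both the external reference and Lemma~\ref{lem:incrsngUnif} entirely: you reduce to bivariate marginals, read off $\P(U_i\le t,\,U_j>t)=0$ directly from the copula identity, and close with the standard rational-separation trick. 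Your approach is fully self-contained and arguably cleaner for this specific lemma; the paper's route has the advantage of fitting into a uniform pattern (the same cited theorem, in its countermonotone variant, is reused verbatim for Lemma~\ref{lem:cntrmonCopula}).
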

\begin{proof}
We apply Theorem 2.5.7 by \cite{durante2015principles}. It implies that (ii) is equivalent to the following statement: there exists a probability space $(\Om', \cF_{\Om'}, \P')$, a random variable $Z : \Om' \to \R$ and increasing functions $\ph_j : \R \to \E, j = 1,\dots, m$, such that 
\[\P_{(U_1,\dots, U_m)} = \P'_{(\ph_1(Z),\dots, \ph_m(Z))}.\]
Since $\ph_j(Z)$ is uniform, we must have $\ph_j(t) = \P(Z\leq t), \P'_Z\text{-}a.s$ by Lemma \ref{lem:incrsngUnif}. Hence, (ii) is equivalent to $U_1 = \dots = U_m, \P\text{-}a.s.$
\end{proof}

\begin{lem}
\label{lem:cntrmonCopula}
Let $U,V$ be random variables with $U,V\sim \Unif{[0,1]}$. Then the following are equivalent:
\begin{enumerate}[(i)]
\item $U = 1-V, a.s.$,
\item $\P(U\leq s, V\leq t) = \cW(s,t) := (s+t-1)\vee 0$.
\end{enumerate}
\end{lem}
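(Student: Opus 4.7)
The plan is to verify (i) $\Rightarrow$ (ii) by a direct computation and to derive (ii) $\Rightarrow$ (i) by reducing to the comonotonicity case handled in Lemma~\ref{lem:comonCopula}.

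For the forward direction I would substitute $U = 1-V$ and compute
\[\P(U \le s, V \le t) = \P(1-s \le V \le t),\]
using that $V \sim \Unif[0,1]$. Splitting into the cases $s+t \ge 1$ and $s+t < 1$ immediately yields $\cW(s,t) = (s+t-1)\vee 0$.

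For the converse I would introduce the decreasing transformation $\tilde V := 1-V$. Since $V \sim \Unif[0,1]$ we have $\tilde V \sim \Unif[0,1]$, and using continuity of the distribution of $V$ together with assumption (ii), I compute
\[\P(U \le s, \tilde V \le t) = \P(U \le s) - \P(U \le s, V \le 1-t) = s - \cW(s, 1-t).\]
A short case distinction ($s \le t$ versus $s > t$) then gives $\P(U\le s,\tilde V \le t) = s \wedge t$, so the pair $(U, \tilde V)$ has the comonotonicity copula. Applying Lemma~\ref{lem:comonCopula} to $(U,\tilde V)$ yields $U = \tilde V = 1-V$ almost surely, which is (i).

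There is no real obstacle here: the only thing to be careful about is the case distinctions, and the use of continuity of $V$ to pass between $\le 1-t$ and $< 1-t$ in the complement computation. No macros beyond those defined in the excerpt (in particular $\cW$, $\Unif$, $\P$) are needed.
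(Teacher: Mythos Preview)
Your proof is correct and takes a genuinely different route from the paper. The paper invokes an external characterization of the countermonotonicity copula (Theorem 2.5.13 in \cite{durante2015principles}) which says that (ii) holds iff $(U,V)$ has the same law as $(\ph(Z),\psi(Z))$ for some random variable $Z$, some increasing $\ph$ and some decreasing $\psi$; it then applies Lemma~\ref{lem:incrsngUnif} to identify $\ph = F_Z$ and $\psi = 1-F_Z$ and conclude $U = 1-V$ a.s. Your argument is more self-contained: by passing to $\tilde V = 1-V$ you reduce the countermonotone case to the comonotone case already handled in Lemma~\ref{lem:comonCopula}, avoiding the external reference entirely. The paper's approach has the virtue of being structurally parallel to its proof of Lemma~\ref{lem:comonCopula} (both cite a Durante--Sempi theorem), while yours is arguably more elementary and makes the relation between the two copulas explicit.
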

\begin{proof}
We apply Theorem 2.5.13 by \cite{durante2015principles}. It implies that (ii) is equivalent to the following statement: there exists a probability space $(\Om', \cF_{\Om'}, \P')$, a random variable $Z : \Om' \to \R$ an increasing function $\ph : \R \to \R$ and a decreasing function $\psi : \R\to \R$, such that 
\[\P_{(U,V)} = \P'_{(\ph(Z),\psi(Z))}.\]
Since $\ph(Z), \psi(Z)$ are uniform, we must have 
\[\ph(t) = \P(Z\leq t) \text{ and } \psi(t) = \P(Z > t) = 1 - \P(Z \leq t), \quad \P'_Z\text{-}a.s,\]
by Lemma \ref{lem:incrsngUnif}. Hence, (ii) is equivalent to $U = 1-V, \P\text{-}a.s.$
\end{proof}

\begin{example}
\label{ex:basiccopulas2}
\begin{enumerate}[(a)]
\item Consider $\mu$ given by the comonotonicity copula
\[F_\mu(t) = t_1 \wedge \dots \wedge t_m, \quad t\in [0,1]^m, m < J\]
Given a random variable $U\in \Unif{[0,1]}$ we have $(U,\dots, U) \sim \mu^{1\dots m}$. Conversely, if $(U_1,\dots, U_m) \sim \mu^{1\dots m}$, then $U_1 = \dots = U_m$ almost surely by Lemma \ref{lem:comonCopula}. Thus, up to almost sure equality, the matrix \eqref{term:matrixU} is given by
\[\mat{U_1 & U_1 & \dots & U_1 & \dots \\ 
	\vdots & \vdots & \ddots & \vdots & \dots \\
	U_N & U_N & \dots & U_N & \dots
}\]
Then $\si_N^1 = \si_N^2 = \dots = \Perm(U_1,\dots, U_N)\sim \Unif{\Sym N}$, and so
\[\mu_{\si_N} = \mu_{(\si_N^1)_{j\in \N}} = \mu_{(\id{N})_{j\in \N}}\]
by Lemma \ref{lem:permtnStab}.
Thus, $\mu_{(\id{N})_{j\in \N}}\to \mu, a.s.,$ as $N\to \infty$, by Proposition \ref{prop:convRandSortPerm}.
\item Consider $\mu = {\Unif{[0,1]}}^{\otimes J}$, i.e. the $J$-fold product of the Lebesgue measure on $[0,1$] with itself. Its distribution function is the independence copula
\[F_\mu(t) = \prod_{j=1}^m t_j, \quad t\in [0,1]^m, m < J.\]
Suppose we are given $U_1,\dots, U_m\in \Unif{[0,1]}$. Then $(U_1,\dots, U_m) \sim \Unif{[0,1]}^{\otimes m}$ if and only if $U_1,\dots, U_m$ are independent by Lemma \ref{lem:indpnceCopula}.
Thus, the matrix \eqref{term:matrixU} consists entirely of \tiid\ random variables. Hence, $(\si^j_N)_{j\in \N}$ is \tiid\ with $\si^j_N \sim \Unif{\Sym N}$ and $\mu_{(\si^j_N)_{j\in \N}} \to \mu, a.s.,$ as $N\to \infty$, by Proposition \ref{prop:convRandSortPerm}.
\item Consider random variables $U, V\sim \Unif{[0,1]}$ and the countermonotonicity copula
\[\cW(s,t) = (s + t - 1) \vee 0, \quad s,t\in [0,1].\]
We have $(U,V) \sim \cW$ if and only if $U = 1 - V, a.s.$ by Lemma \ref{lem:cntrmonCopula}. Thus, for $J = 2$ the matrix \eqref{term:matrixU} is almost surely given by
\[\mat{U_1 & 1 - U_1 \\ 
	\vdots & \vdots \\
	U_N & 1- U_N
}\]
Since $(1-U_1,\dots, 1- U_N)$ is almost surely in reverse order compared to $(U_1,\dots, U_N)$, we have $\si^1_N = \tau_N \circ \si^0_N, a.s.$, where $\tau_N(k) = N-k+1$ is the \emph{reversal permutation} of size $N$. By Proposition \ref{prop:convRandSortPerm} we have 
\[\mu_{(\id{N}, \tau_N)} = \mu_{(\si^0_N, \tau_N\circ \si^0_N)} \to \P_{(U,1-U)}, a.s.,\]
as $N\to \infty$, where $U\sim \Unif{[0,1]}$.

Reversing the order twice leaves you with the original order. So there is no direct higher-dimensional analogue to $\cW$. Next, we look at two simple generalizations of $\cW$ to infinite dimensions. 

\item Given $U\in \Unif{[0,1]}$, the sequence $(U,1-U,U,1-U,\dots)$ has the distribution $\mu$ with
\[F_\mu(t_1,\dots, t_{2m}) = \cW(t_1,t_2)\wedge \dots \wedge \cW(t_{2m-1},t_{2m}), \quad t\in [0,1]^{2m}, m\in \N,\]
the matrix \eqref{term:matrixU} is almost surely given by
\[\mat{U_1 & 1- U_1 & U_1 & 1-U_1 & \dots \\ 
	\vdots & \vdots & \ddots & \vdots & \dots \\
	U_N & 1 - U_N & U_N & 1-U_N & \dots
},\]
$\si^{2j}_N = \si^0_N, a.s.$ and $\si^{2j+1}_N = \tau_N\circ \si^0_N, a.s.$ for all $j\in \N$. Hence,
\[\mu_{(\id{N}, \tau_N, \id{N}, \tau_N, \dots)} = \mu_{(\si^0_N, \tau_N\circ \si^0_N, \si^0_N, \tau_N\circ \si^0_N, \dots)} \to \mu, a.s.,\]
as $N\to \infty$.
\item Given an \tiid\ sequence $(U^j)_{j\in \N}$ with $U^j \sim \Unif{[0,1]}$, the sequence $(U^1,1-U^1,U^2,1-U^2,\dots)$ has the distribution $\mu$ with
\[F_\mu(t_1,\dots, t_{2m}) = \prod_{j=1}^m\cW(t_{2j-1},t_{2j}), \quad t\in [0,1]^{2m}, m\in \N,\]
the matrix \eqref{term:matrixU} is almost surely given by
\[\mat{U_1^1 & 1- U_1^1 & U_1^2 & 1-U_1^2 & \dots \\ 
	\vdots & \vdots & \ddots & \vdots & \dots \\
	U_N^1 & 1 - U_N^1 & U_N^2 & 1-U_N^2 & \dots
},\]
and $\si^{2j+1}_N = \tau_N\circ \si^{2j}_N, a.s.$ for all $j\in \N$. Hence,
\[\mu_{(\si^0_N, \tau_N\circ \si^0_N, \si^2_N, \tau_N\circ \si^2_N, \dots)} = \mu_{(\si^j_N)_{j\in \N}} \to \mu, a.s.,\]
as $N\to \infty$.
\end{enumerate}
\end{example}

The set of $\infty$-permutons is much richer than the examples so far indicate. To illustrate this point, we discuss \emph{Archimedean copulas} next.

A function $\ph : [0,\infty) \to [0,1]$ is called an \emph{(additive) generator} if it is continuous, decreasing with $\ph(0) = 1, \lim_{t\to \infty} \ph(t) = 0$, and strictly decreasing on $[0,t_0]$ with $t_0 = \inf\set{t > 0 : \ph(t) = 0}$. The \emph{pseudo-inverse} $\ph^{(-1)} : [0,1]\to [0,\infty)$ of a generator $\ph$ is defined by
\[\ph^{(-1)}(t) =  \begin{cases}
\ph^{-1}(t), & t \in (0,1],\\
t_0, & t = 0.
\end{cases}\]
Note that $\ph^{(-1)}(\ph(t)) = t \wedge t_0, t\geq 0$.

A function $\ph : [0,\infty) \to [0,1]$ is \emph{completely monotone} if it is continuous with $\ph \in \dC{\infty}((0,\infty))$, and $(-1)^k f^{(k)}(x) \geq 0$ for all $x > 0$.

\begin{prop}
\label{prop:archmdnInftyCopula}
Let $\ph : [0,\infty) \to [0,1]$ be an additive generator. Then $\ph$ is completely monotone if and only if 
\[C(t) = \ph\left(\sum_{k=1}^{|t|} \ph^{(-1)}(t_k)\right), \quad t\in [0,1]^*\]
defines an $\infty$-copula.
\end{prop}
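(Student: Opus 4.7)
The plan is to reduce the statement to its finite-dimensional counterpart and then appeal to the classical theory of Archimedean copulas. The restriction of $C$ to length-$d$ tuples is the $d$-variate Archimedean function
\[
C_d(t_1,\dots,t_d) := \varphi\Bigl(\sum_{k=1}^d \varphi^{(-1)}(t_k)\Bigr).
\]
Because $\varphi(0)=1$ forces $\varphi^{(-1)}(1)=0$, we obtain $C_d(t_1,\dots,t_{d-1},1)=C_{d-1}(t_1,\dots,t_{d-1})$ and $C_1(t)=\varphi(\varphi^{(-1)}(t))=t$, so the family $(C_d)_{d\in\N}$ is consistent with uniform univariate marginals. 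Hence $C$ is the distribution function of an $\infty$-permuton iff each $C_d$ is a $d$-copula; in that case Kolmogorov's extension theorem produces $\mu$.

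For the ``if'' direction I would give an explicit probabilistic construction via Bernstein's theorem. Since $\varphi$ is completely monotone with $\varphi(0)=1$ and $\lim_{t\to\infty}\varphi(t)=0$, it is the Laplace transform of some probability measure $\nu$ on $(0,\infty)$, i.e.\ $\varphi(s)=\int_0^\infty e^{-sv}\,d\nu(v)$; in particular $\varphi$ is strict, so $\varphi^{(-1)}=\varphi^{-1}$ on $(0,1]$. On a sufficiently rich space take $V\sim\nu$ together with an independent \tiid\ sequence $(E_k)_{k\in\N}$ of $\mathrm{Exp}(1)$ random variables, and set $U_k:=\varphi(E_k/V)$. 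Because $\varphi$ is strictly decreasing, conditioning on $V$ and using $\P(E_k\geq v\varphi^{(-1)}(t_k)\mid V=v)=e^{-v\varphi^{(-1)}(t_k)}$ together with independence of the $E_k$'s gives
\[
\P(U_1\leq t_1,\dots,U_d\leq t_d)=\E\Bigl[\exp\Bigl(-V\sum_{k=1}^d\varphi^{(-1)}(t_k)\Bigr)\Bigr]=C_d(t_1,\dots,t_d).
\]
Each $U_k$ is thus uniform on $[0,1]$, and the joint law of $(U_k)_{k\in\N}$ is an $\infty$-permuton with distribution function $C$, simultaneously verifying that each $C_d$ is a copula and bypassing Kolmogorov's extension theorem.

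For the ``only if'' direction I would invoke Kimberling's classical theorem from the theory of Archimedean copulas (see, e.g., \citet[Ch.~3]{durante2015principles}): if $C_d$ is a proper $d$-copula for every $d\geq 2$, then $\varphi$ is completely monotone. The argument rests on Williamson's integral representation, which identifies the $d$-copula property of $C_d$ with the $d$-monotonicity of $\varphi$, i.e.\ $(-1)^k\varphi^{(k)}\geq 0$ for $k=0,\dots,d-2$. A compactness/continuity argument as $d\to\infty$ then upgrades the whole family of $d$-monotonicity conditions to genuine complete monotonicity.

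The main obstacle is this ``only if'' direction: $d$-monotonicity is strictly weaker than complete monotonicity at every finite level, and bridging this gap requires the analytic machinery behind Williamson's representation together with a non-trivial limit argument to extract a Laplace transform representation of $\varphi$. The ``if'' direction, in contrast, is essentially computational once Bernstein's theorem is in hand, and the reduction from $\infty$-copulas to consistent finite-dimensional copulas is immediate from the defining formula.
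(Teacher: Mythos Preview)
Your proposal is correct and in fact considerably more detailed than the paper's own proof, which consists of a single sentence citing Corollary 6.5.14 in \citet{durante2015principles}. What you have written is essentially an outline of the argument behind that cited result: the reduction to the finite-dimensional family $(C_d)_{d\in\N}$ via consistency, the Bernstein--Marshall--Olkin construction for the ``if'' direction, and Kimberling's theorem for the ``only if'' direction. So there is no genuine methodological difference---you have simply unpacked what the paper defers to the literature. The one small imprecision is your description of $d$-monotonicity as $(-1)^k\varphi^{(k)}\geq 0$ for $k\leq d-2$; the full condition also requires $(-1)^{d-2}\varphi^{(d-2)}$ to be nonincreasing and convex (McNeil--Ne\v{s}lehov\'a), but this does not affect the correctness of your sketch since you ultimately appeal to Kimberling's theorem rather than reproving it.
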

\begin{proof}
This is immediate by Corollary 6.5.14 by \cite{durante2015principles}.
\end{proof}
In the setting of Proposition \ref{prop:archmdnInftyCopula}, the $\infty$-copula $C$ is called \emph{Archimedean} with generator $\ph$.
\begin{example}
The following are well-known examples of (families of) completely monotone generators and their associated Archimedean $\infty$-copulas.
\begin{center}
\begin{tabular}{c|c|c|c|c}
	Family & $\ph^{(-1)}(u)$ & $\ph(v)$ & $C(t)$ & $\thet$ \\
	\hline
	Clayton & $ \frac{u^{-\thet} - 1}{\thet}$ & $(1 + \thet v)^{-1/\thet}$ & $\left(\sum_{k=1}^{|t|}(t_k^{-\thet} - 1) + 1\right)^{-1/\thet}$ &  $ > 0$\\
	Gumbel & $(-\log u)^\thet$ & $\exp(-v^{1/\thet})$ & $\exp\left(-\left(\sum_{k=1}^{|t|}(-\log t_k)^\thet\right)^{1/\thet}\right)$  &  $\geq 1$\\
	Frank & $-\log\left(\frac{e^{-\thet u} - 1}{e^{-\thet}-1}\right)$ &  $-\frac1\thet \log(1 - (1 - e^{-\thet})e^{-v})$ & $-\frac1\thet \log\left(1 + \frac{\prod_{k=1}^{|t|} (e^{-\thet t_k} - 1)}{(e^{-\thet}-1)^{|t|-1}}\right)$ & $> 0$ \\
\end{tabular}
\end{center}
To reiterate, any of these examples define an $\infty$-permuton $\mu$ and Proposition \ref{prop:convRandSortPerm} defines a sequence of random infinite-dimensional permutations $(\si_N)_{N\in \N}$ with $\mu_{\si_N} \to \mu$ almost surely. After picking such a family one can conceivable tune the parameter $\thet$, such that the limiting process in main result Theorem \ref{thm:shuffledDonsker} has whatever properties one desires more.
\end{example}

There are many other types of examples, including extreme-value copulas and elliptic copulas. Moreover, Sklar's theorem lets us turn \emph{any} joint distribution on $\R^J$ with continuous marginals into a unique $J$-copula. Many of these examples and Sklar's theorem can be potentially generalized directly to the case $J =\infty$. Alternatively, one can combine finite-dimensional examples by independence (just putting independent vectors together) or using the Markov product of copulas (essentially the gluing operation from optimal transport theory). For more information we refer to \citet{durante2015principles}.

There are also numerous examples that come from studying large permutations, such as the limits of sequences of square permutations. Other examples, such as the limit of the Baxter permutations, cannot be used directly because the limit $\mu$ is \emph{random}. However, in principle one can always take the intensity measure 
\[\E\mu : \cF \to [0,1], A\mapsto \E[\mu(A)]\]
as an example instead. Although note that the sequence of permutations converging to $\E\mu$ is in general different from the one for $\mu$. See \citet{borga2021randompermutationsgeometric} for more on these topics.

Finally, note that even $\mu_\tau$ for some $J$-dimensional permutation $\tau \in \Sym N^J$ is a legitimate example for the limit $\mu$ in \assref{assum:limitCopulas}. The so called \emph{shuffles of Min} provide another \enquote{finite-permutation} example \citep[see again][for more information]{durante2015principles}.

\begin{rem}
Even though the limiting process $W$ in Theorem \ref{thm:shuffledDonsker} depends only on the bivariate marginals $F_\mu^{ij}$, we insisted on discussing $J$-copulas also for $J>2$. This is because not all families of $2$-copulas $(C^{ij})_{i<j<J}$ are \emph{compatible}. We call a family $(C^{ij})_{i<j<J}$ of $2$-copulas compatible if there exists a $J$-copula permuton $\mu$ such that $F_\mu^{ij} = C^{ij}, i<j<J$, that is if they are genuinely the bivariate marginals of some $J$-permuton or $J$-copula. For, the family
\[C^{12}(s,t) = \cW, C^{23} = \cW, C^{13} = \cW\]
consisting of only the countermonotonicity copula is \emph{not} compatible, that is there exists no $3$-permuton with these bivariate marginals. The reason is if $U_1,U_2,U_3$ are uniform, such that $(U_1,U_2) \sim \cW$ and $(U_2,U_3)\sim \cW$, then $U_1 = 1 - U_2 = U_3,a.s.$, so $(U_1,U_3)\nsim \cW$.
From the viewpoint of permutations the idea is: if you reverse the order twice, you end up with the original order (the reversal permutation satisfies $\tau_N\circ \tau_N = \id{N}$).
\begin{docu}
Let us consider the convergence of the bivariate marginals $F_{\mu_N}^{ij}$ again. If $(\mu_N)_{N\in \N}$ is a sequence of \emph{deterministic} $J$-permutons, such that $F_{\mu_N}^{ij}$ converges for all $i,j < J$, then their limits must be compatible. In other words, while $(\mu_N)_{N\in \N}$ may not converge, there must be another sequence $(\tilde \mu_N)_{N\in \N}$ with the same bivariate marginals which converges to some $J$-permuton $\mu$.
This is because the space of $J$-permutons is compact. Thus, the sequence has a convergent subsequence $(\mu_{N_k})_{k\in \N}$ with limit $\mu$. Then 
\[\mu^{ij} = \lim_{k\to \infty} \mu_{N_k}^{ij} = \lim_{N\to \infty} \mu_{N}^{ij}, \quad i,j < J.\]
However, if the sequence $(\mu_N)_{N\in \N}$ is \emph{random}, then it cannot be guaranteed that there is a deterministic limit. Just consider two different $J$-permutons that have all the same bivariate marginals (say the independence copula). Now, take a mixture at every level $N$. Then all bivariate marginals converge to a deterministic limit (because they are constant), but the limit of the sequence is still a mixture and thus random.
\end{docu}
\end{rem}

\section{Proof of the main theorem}
First, let us note some consequences of Assumption \assref{assum:limitCopulas}.
\begin{lem} 
	\label{lem:permtonCDFconv}
	Assume \assref{assum:limitCopulas} holds true. Then
	\[\nrm{F_{\mu_N}^a - F_\mu^a}{\infty} \to 0, \text{in probability},\]
	as $N\to \infty$, for all $a\in \set{j\in \N : j < J}^*$.
\end{lem}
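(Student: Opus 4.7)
The plan is first to pass from the full random permuton $\mu_N$ on $[0,1]^J$ to its finite-dimensional projection $\mu_N^a$ on $[0,1]^{|a|}$. For any fixed $a$, the pushforward map $\pi^a : \cP([0,1]^J) \to \cP([0,1]^{|a|})$, $\nu \mapsto \nu \circ (\pr^a)^{-1}$, is continuous with respect to the topology of weak convergence (a standard fact: pushforwards by continuous maps are continuous on spaces of measures). Assumption \assref{assum:limitCopulas} together with the determinism of the limit $\mu$ upgrades convergence in distribution to convergence in probability, and the continuous mapping theorem then yields $\mu_N^a \to \mu^a$ in probability in $\cP([0,1]^{|a|})$.

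The next step is to promote this weak convergence to uniform convergence of distribution functions. Since $\mu$ is a $J$-permuton, the projection $\mu^a$ is an $|a|$-permuton, so its distribution function $F_\mu^a$ is an $|a|$-copula and in particular continuous on the compact cube $[0,1]^{|a|}$. For deterministic $|a|$-permutons, the equivalence already invoked in the proof of Lemma \ref{lem:inftypermtonMetric} (via \citet[Proposition 2.1]{borga_high-dimensional_2025}) says that weak convergence $\nu_n \to \nu$ is equivalent to $\nrm{F_{\nu_n} - F_\nu}{\infty} \to 0$; in other words, on the set of $|a|$-permutons the Prokhorov and Kolmogorov metrics induce the same topology.

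To transfer this equivalence from deterministic to random permutons, I would invoke the standard subsequence characterization of convergence in probability. Given any subsequence $(N_k)_{k \in \N}$, extract a further subsequence $(N_{k_\ell})_{\ell \in \N}$ along which $\mu_{N_{k_\ell}}^a \to \mu^a$ weakly almost surely. On the event of full probability where this holds, the deterministic equivalence above gives $\nrm{F_{\mu_{N_{k_\ell}}}^a - F_\mu^a}{\infty} \to 0$, hence a.s., hence in probability. Since every subsequence admits such a further subsequence along which convergence in probability holds, the whole sequence converges in probability, completing the proof.

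The argument is essentially routine and the only point that genuinely needs care is the transfer from deterministic to random measures; I do not expect any real obstacle beyond spelling out the subsequence argument cleanly. No new tools beyond the continuous mapping theorem and the already-cited Borga equivalence are required.
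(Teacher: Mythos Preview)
Your proof is correct and follows essentially the same idea as the paper: upgrade convergence in distribution to convergence in probability (using the deterministic limit), then invoke the Borga equivalence between weak convergence of permutons and uniform convergence of their distribution functions. The paper's version is slightly more streamlined because it appeals directly to Lemma~\ref{lem:inftypermtonMetric}, which says the weak topology on $J$-permutons is metrized by $d(\nu,\nu') = \sum_j 2^{-j}\|F^{1\dots j}_\nu - F^{1\dots j}_{\nu'}\|_\infty$; hence $\mu_N \to \mu$ in probability is literally the statement $d(\mu_N,\mu)\to 0$ in probability, and no subsequence argument is needed. Your route via projection to $\mu_N^a$ and the subsequence characterization is equally valid and has the minor advantage of handling general $a$ directly rather than reducing to $a = (1,\dots,j)$.
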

\begin{proof}
	Because $\mu_N \to \mu$ in probability we also have
	\[\sum_{j=1}^J 2^{-j}\nrm{F^{1\dots j}_{\mu_N} - F^{1\dots j}_\mu}{\infty} \to 0, \text{in probability},\]
	as $N\to \infty$, using Lemma \ref{lem:inftypermtonMetric}.
\end{proof}

\begin{cor}
	\label{lem:empPermtonCDFconv}
	Assume \assref{assum:limitCopulas} holds true. Then
	\[\nrm{\hat F_{\mu_N}^a - F_\mu^a}{\infty} \to 0, \text{in probability},\]
	as $N\to \infty$, for all $a\in \set{j\in \N : j < J}^*$.
\end{cor}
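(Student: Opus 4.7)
The plan is to deduce the statement by combining Lemma \ref{lem:empPermApprox} (b) with Lemma \ref{lem:permtonCDFconv} via a triangle inequality. Writing $\hat\mu_N := \hat\mu_{\si_N^{-1}}$ (so that $\hat F^a_{\mu_N}$ is shorthand for $F^a_{\hat\mu_N}$), I would first estimate
\[
\nrm{\hat F_{\mu_N}^a - F_\mu^a}{\infty} \leq \nrm{F_{\hat\mu_N}^a - F_{\mu_N}^a}{\infty} + \nrm{F_{\mu_N}^a - F_\mu^a}{\infty}.
\]
The second summand converges to $0$ in probability by Lemma \ref{lem:permtonCDFconv}, so it suffices to handle the first.

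For the first summand, the key observation is that projecting both $\mu_N$ and $\hat\mu_N$ onto the coordinates indexed by $a = (a_1, \ldots, a_m) \in \{j \in \N : j < J\}^*$ yields the permuton and empirical permuton associated with the $m$-dimensional permutation $\si^a_N := (\si^{a_1}_N, \ldots, \si^{a_m}_N)$. Unrolling the definitions of $\mu_\tau$ and $\hat\mu_\tau$ in Section~2 one checks directly that
\[
\mu_N^a = \mu_{(\si^a_N)^{-1}}, \qquad \hat\mu_N^a = \hat\mu_{(\si^a_N)^{-1}},
\]
i.e.\ the marginals of the (empirical) permuton of a $J$-dimensional permutation are the (empirical) permutons of its coordinate subfamilies. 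Applying Lemma \ref{lem:empPermApprox} (b) to the $m$-dimensional permutation $(\si^a_N)^{-1}$ then gives the deterministic bound
\[
\nrm{F_{\hat\mu_N}^a - F_{\mu_N}^a}{\infty} = \nrm{F_{\hat\mu_{(\si^a_N)^{-1}}} - F_{\mu_{(\si^a_N)^{-1}}}}{\infty} \leq \frac{|a|}{N},
\]
which tends to $0$ as $N \to \infty$ since $|a|$ is finite and fixed.

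Combining the two bounds yields $\nrm{\hat F_{\mu_N}^a - F_\mu^a}{\infty} \to 0$ in probability. The only mildly delicate point in this argument is the bookkeeping verification that marginals of the permuton associated with a $J$-dimensional permutation really are the permutons associated with the projected permutation; once this is in hand, everything is a direct application of the two cited lemmas.
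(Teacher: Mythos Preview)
Your proposal is correct and follows exactly the approach the paper intends: the paper's proof consists of the single sentence ``Combine Lemma \ref{lem:empPermApprox} (b) with Lemma \ref{lem:permtonCDFconv},'' and you have simply unpacked this into the triangle inequality and the projection observation that makes Lemma \ref{lem:empPermApprox} (b) applicable to the $|a|$-dimensional marginal.
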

\begin{proof}
	Combine Lemma \ref{lem:empPermApprox} (b) with Lemma \ref{lem:permtonCDFconv}.
\end{proof}

In the following we also denote distributions with the more traditional notation $\P_Z := Z \P$ and we write $\P^N_X := \P_{X_N}$.
To prove the convergence result in Theorem \ref{thm:shuffledDonsker} it is sufficient to show two things:
\begin{enumerate}[(a)]
\item The family $\left(\P^{N}_X\right)_{N \in \mathbb{N}}$ is tight.
\item The finite-dimensional distributions of $\P^{N}_X$ converge to those of $\mathbb{P}_{W}$, that is, given $t_{1}, \ldots, t_{M} \in[0,1]$ and $j < J$ we have
\[\mat{X_{t_{1}}^{N,0} & \cdots & X_{t_{1}}^{N,j} \\
	\vdots & \ddots & \vdots \\
	X_{t_{M}}^{N,0} & \cdots & X_{t_{M}}^{N,j}} \Rightarrow
\mat{W_{t_{1}}^{0} & \cdots & W_{t_{1}}^{j} \\
	\vdots & \ddots & \vdots \\
	W_{t_{M}}^{0} & \cdots & W_{t_{M}}^{j}}\]
as $N \rightarrow \infty$.
Here and in the following, $W$ is the stochastic process introduced in Theorem \ref{thm:shuffledDonsker}.
\end{enumerate}

Our approach for showing tightness is similar to the proof of Theorem 14.1. in \cite{billingsley1999convergence}.

\begin{lem}
\label{lem:tightness}
The sequence $\left(\P^{N}_X\right)_{N \in \mathbb{N}}$ is tight.
\end{lem}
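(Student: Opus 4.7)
The plan proceeds in three steps: (i) reduce each coordinate $X^{N,j}$ to a standard rescaled random walk, (ii) establish tightness of each coordinate in $\cD([0,1], \R)$, and (iii) lift coordinate-wise tightness to joint tightness in $\cD([0,1], \R^J)$.

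For (i), since $\si^j_N$ is independent of the \tiid\ sequence $(Z_n)_{n\in\N}$, the random vector $(Z_{\si^j_N(k)})_{k=1}^N$ has the same joint distribution as $(Z_k)_{k=1}^N$; hence $X^{N,j}$ has the same law in $\cD([0,1], \R)$ as the standard rescaled random walk $\tilde X^N_t := \frac{1}{\sqrt N} S_{\floor{Nt}}$, where $S_n = Z_1 + \dots + Z_n$. This equality in law holds coordinate by coordinate only; the joint law of the vector $(X^{N,j})_{j<J}$ is of course not reduced, but tightness is a marginal-by-marginal property once we have the right lifting principle (see step (iii) below).

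For (ii), one may invoke Donsker's invariance principle directly to get weak convergence of $\tilde X^N$ to Brownian motion, and hence tightness. Alternatively, following Billingsley's Theorem 14.1, for $0 \le s \le t \le 1$ and $n_u := \floor{Nu}$ the increment $\tilde X^N_t - \tilde X^N_s$ equals $\frac{1}{\sqrt N}\sum_{k=1}^{n_t - n_s} Z_k$ in distribution; using $\E[Z_1^4] < \infty$, a direct expansion (or Rosenthal's inequality) yields
\[\E|\tilde X^N_t - \tilde X^N_s|^4 \leq C\bigl((t-s)^2 + N^{-2}\bigr),\]
which controls Billingsley's Skorokhod modulus $w'$ uniformly in $N$ and thus gives tightness (cf.\ \cite[Theorem 13.5]{billingsley1999convergence}). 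Either way, every subsequential weak limit is a Brownian motion, in particular supported on $\cC([0,1], \R)$.

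For (iii), in the case $J < \infty$ every coordinate is $\cC$-tight by (ii); since compact subsets of $\R^J$ are products of compact subsets of $\R$ and the Skorokhod modulus can be bounded componentwise as soon as the subsequential limits are continuous, joint $\cC$-tightness of $(X^N)$ in $\cD([0,1], \R^J)$ follows. When $J = \infty$, the space $\R^\infty$ with the product topology is Polish and metrised by $d(x,y) = \sum_{j\in\N} 2^{-j}(|x_j - y_j|\wedge 1)$; a family in $\cD([0,1], \R^\infty)$ is tight iff every finite-dimensional projection is tight in $\cD([0,1], \R^m)$, which has just been established. The main obstacle is precisely this last step: coordinate-wise tightness of càdlàg processes does \emph{not} in general imply joint tightness, because different coordinates may jump at slightly different instants that no single Skorokhod reparametrisation can simultaneously absorb. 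What rescues the argument is the $\cC$-tightness of the coordinate limits obtained in (ii): once all subsequential limits are continuous, the Skorokhod pathology disappears and coordinate-wise tightness upgrades to joint tightness.
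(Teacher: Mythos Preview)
Your argument is correct and close in spirit to the paper's, with two differences worth flagging.

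For coordinate tightness you observe that $(Z_{\si^j_N(k)})_{k\le N} \stackrel{d}{=} (Z_k)_{k\le N}$, reducing each $X^{N,j}$ to the classical Donsker walk. The paper instead conditions on $\si_N$ and checks Billingsley's Theorem~13.5 directly via the bound $\E\bigl[|X^{N,j}_u-X^{N,j}_s|^2|X^{N,j}_t-X^{N,j}_u|^2\,\big|\,\si_N\bigr]\le 4(t-s)^2$, which factors by conditional independence of increments and hence needs only second moments; your fourth-moment alternative is unnecessary but harmless.

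For the lift to joint tightness, the paper handles finite $J$ by a product-of-compacts (Tychonoff) argument and $J=\infty$ by Mitoma's criterion: tightness in $\cD([0,1],\R^\N)$ follows once every scalar process $t\mapsto\sum_{j} a_j X^{N,j}_t$ (with $a$ a finite sequence) is tight in $\cD([0,1],\R)$. Your route via $\cC$-tightness is the right mechanism and arguably the cleanest way to see why coordinate-wise tightness upgrades here, but the criterion you assert for $J=\infty$---tightness in $\cD([0,1],\R^\infty)$ iff all finite-dimensional projections to $\cD([0,1],\R^m)$ are tight---is not a standard citable statement on its own. It does follow from Mitoma, so either cite that, or carry your $\cC$-tightness argument through explicitly: products of compacts in $\cC([0,1],\R)$ are compact in $\cC([0,1],\R^\infty)\cong\prod_j\cC([0,1],\R)$, and $\cC\hookrightarrow\cD$ is a topological embedding.
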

\begin{proof}
Given $j<J$ the tightness of $(\P_{X_N^j})_{N\in \N}$ follows from the usual argument for Donsker's theorem. Here it is for completeness:
For $0 \leq s \leq t \leq 1$ and $N \in \N$ we have
\begin{align*}
	\mathbb{E}\left[\left|X_{t}^{N, j}-X_{s}^{N, j}\right|^{2} \mid \si_N\right] & =\frac{1}{N} \mathbb{E}\left[\left|\sum_{k=\lfloor N s\rfloor+1}^{\lfloor N t\rfloor} Z_{\si^j_N(k)}\right|^{2} \mid \si_N\right] \\
	& =\frac{1}{N} \mathbb{E}\left[\sum_{k=\lfloor N s\rfloor+1}^{\lfloor N t\rfloor} Z_{\si^j_N(k)}^{2} \mid \si_N\right] \\
	& =\frac{1}{N}(\lfloor N t\rfloor-\lfloor N s\rfloor) .
\end{align*}
Fix $0 \leq s \leq u \leq t \leq 1$. Note that $X^{N,j}$ has independent increments conditional on $\si_N$. Thus,
\begin{align*}
\mathbb{E}\left[|X_{u}^{N, j}-X_{s}^{N, j}|^{2}|X_{t}^{N, j}-X_{u}^{N, j}|^{2}|\si_N\right] & =\frac{1}{N^{2}}(\lfloor N u\rfloor-\lfloor N s\rfloor)(\lfloor N t\rfloor-\lfloor N u\rfloor) \\
& \leq\left(\frac{\lfloor N t\rfloor-\lfloor N s\rfloor}{N}\right)^{2}.	
\end{align*}
For $t-s \geq \frac{1}{N}$ the RHS is bounded by $4(t-s)^{2}$. Otherwise $\lfloor N u\rfloor=\lfloor N s\rfloor$ or $\lfloor N t\rfloor=\lfloor N u\rfloor$ and the LHS of the inequality vanishes. By Theorem 13.5. in \cite{billingsley1999convergence} (with $\alpha=\beta=1$ and $F(t)=2 t$) the sequence of projected measures $(\P_{X_N^j})_{N\in \N}$ on $\cD([0,1],\R)$ is tight, for all $j<J$.

Suppose $J < \infty$ and let $\ep > 0$. Then the tightness of $(\P_{X_N^j})_{N\in \N}$ for all $j < J$ implies there exist compacts $K_{1}, \ldots K_{J} \subseteq \cD([0,1], \mathbb{R})$, such that $\mu_{1}^{N}\left(K_{1}^{c}\right) \ldots \mu_{J}^{N}\left(K_{J}^{c}\right) \leq \ep / J$ for all $N \in \N$. By Tychonoff's theorem, the product $K_1\times \dots K_J$ is compact in $\cD([0,1], \R)^J$, and
\[\P_X^N((K_{1} \times \cdots \times K_{J})^c) \leq \sum_{j=1}^{J} \P_X^N(\mathbb{R} \times \cdots \times K_{j}^{c} \times \cdots \times \mathbb{R}) \leq \varepsilon\]
for all $N \in \mathbb{N}$. Since $\cD([0,1],\R)^J$ is homeomorphic to $\cD([0,1],\R^J)$, the sequence $\left(\P^{N}_X\right)_{N \in \mathbb{N}}$ is tight as well.

Now, suppose $J = \infty$. For every finite sequence $a \in \R^j$ we define the continuous function
\[\Pi_a : \cD([0,1],\R^\N) \to \cD([0,1], \R)\]
by
\[(\Pi_a x)_t = \sum_{k=1}^j a_k x^k_t, \quad t\geq 0.\]
and by abuse of notation we denote the map $\Pi_a : \cD([0,1],\R^j) \to \cD([0,1], \R)$ given by the same formula by the same symbol.
Given $a\in \R^j$ and $j\in \N$, we have $\Pi_a\P_X^N = \Pi_a \P_{X^{1\dots j}}^N$. Hence,
the family $(\Pi_a\P_X^N)_{N\in \N}$ consists of pushfowards of a tight family of measures along a continuous map. Therefore, it is tight as well. By Mitoma's criterion \citep[see][Theorem 4.1]{mitoma_tightness_1983} we conclude\footnote{For the application of their theorem, we let $E$ be the countable nuclear Hilbert space of finite sequences. Then its strong dual $E'$ is the space $\R^\N$ (up to linear homeomorphism)} that the family $(\P_X^N)_{N\in \N}$ is tight.
\end{proof}

To prove the convergence of the finite-dimensional distributions we use the following central limit theorem.
\begin{prop}
	\label{prop:CLTchandra}
	Let $(\xi_k^N)_{N\in \N, k\leq N}$ be a triangular array of centered $\R$-valued random variables. For every $N \in \N$ define
	\[V_N = \sum_{k=1}^N \Var(\xi_k^N).\]
	Assume 
	\begin{enumerate}[(a)]
		\item $|V_N| \to \infty$, as $N\to \infty$,
		\item $\E[\xi_{k}^N \sum_{l\neq k}^N \xi_{l}^N | \sum_{l\neq k}^N \xi_{l}^N]\geq 0, a.s.$, for all $k\leq N\in \N$,
		\item $\sum_{k,l} \Cov((\xi^N_{k})^2, (\xi^N_{l})^2) = o(|V_N|^2)$,
		\item $\sum_{k,l} \Cov((\xi^N_{k}), (\xi^N_{l})) = o(|V_N|)$.
	\end{enumerate}
	Then $V_N^{-1/2} \sum_{k=1}^N \xi_k^N \to \cN(0,1)$ in distribution, as $N\to \infty$.
\end{prop}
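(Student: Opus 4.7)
The plan is to prove the claim via the classical characteristic-function method, using conditions (b)--(d) to control the dependence structure. Write $S_N := \sum_{k=1}^N \xi_k^N$ and $T_N := V_N^{-1/2} S_N$; by (a), the normalization is well-defined for large $N$. The first preparatory step is to verify $\Var(T_N) \to 1$. Since each $\xi_k^N$ is centered, we have
\[
\Var(S_N) \;=\; V_N \,+\, \sum_{k\neq l}\Cov(\xi_k^N,\xi_l^N),
\]
and condition (d), read so as to exclude (or absorb) the diagonal, forces the off-diagonal sum to be $o(V_N)$; dividing by $V_N$ gives the claim.

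Second, I would establish a ``concentration of the quadratic variation'' estimate, namely $Q_N := V_N^{-1}\sum_{k=1}^N (\xi_k^N)^2 \to 1$ in probability. Its mean is exactly $1$ (again because $\xi_k^N$ is centered, so $\E(\xi_k^N)^2 = \Var(\xi_k^N)$), while condition (c) yields
\[
\Var(Q_N) \;=\; V_N^{-2}\sum_{k,l}\Cov\!\bigl((\xi_k^N)^2,(\xi_l^N)^2\bigr) \;=\; o(1),
\]
so Chebyshev gives the convergence in probability. This is the ``random energy'' estimate that replaces the Lindeberg/Lyapunov input in the independent case.

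Third, and most importantly, I would pass to the Gaussian limit by computing $\ph_N(t) := \E e^{it T_N}$. The standard strategy is to expand $e^{it T_N} = \prod_k e^{it V_N^{-1/2}\xi_k^N}$ one factor at a time via Taylor's theorem to second order, producing a principal term $1 - \tfrac12 t^2 V_N^{-1}\sum_k (\xi_k^N)^2$ (which by step two is $\approx 1 - t^2/2$) plus cross-terms involving products $\xi_k^N Y_k$ with $Y_k := S_N - \xi_k^N$. Here condition (b) is crucial: it asserts that $\E[\xi_k^N Y_k \mid Y_k]\geq 0$, so these cross-terms have a \emph{definite sign}, which both eliminates cancellation worries and lets us upper-bound their total contribution by $\sum_{k\neq l}\E[\xi_k^N \xi_l^N] = \sum_{k\neq l}\Cov(\xi_k^N,\xi_l^N) = o(V_N)$ via (d). Passing to the limit yields $\ph_N(t)\to e^{-t^2/2}$, i.e.\ $T_N\Rightarrow \cN(0,1)$.

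The main obstacle I expect is closing the Taylor expansion without uniform higher-moment bounds: the third-order remainder is of the form $t^3 V_N^{-3/2}\sum_k |\xi_k^N|^3$, which is not directly controlled by (a)--(d). The standard fix is to first truncate $\xi_k^N$ at a level $A\sqrt{V_N}$, show via condition (c) that the truncated variables have essentially the same second and fourth moment behaviour, and verify that the tail contribution vanishes uniformly in $N$ as $A\to\infty$ (using Chebyshev on $\sum (\xi_k^N)^2$). One must then check that truncation preserves conditions (b) and (d) up to $o(V_N)$ errors --- the $L^2$-type stability coming from (c) is what makes this work. Ironing out this truncation step, and carefully separating the non-negative cross-terms from (b) from the negligible ones, is the delicate technical core of the argument.
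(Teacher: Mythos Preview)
The paper does not prove this proposition at all: its entire proof is the one-line citation ``See \cite{chandrasekhar_general_2023} Corollary 1.'' So you are attempting something much more ambitious than the paper does, namely reconstructing the cited result from scratch.

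Your preparatory steps 1 and 2 are fine and would indeed follow from (c) and (d). The genuine gap is in step 3. Writing $e^{itT_N}=\prod_k e^{itV_N^{-1/2}\xi_k^N}$ is a tautology, and ``expanding one factor at a time to second order'' does not produce $1-\tfrac12 t^2 V_N^{-1}\sum_k(\xi_k^N)^2$ plus manageable cross-terms: without independence you cannot push the expectation through the product, and if you instead Taylor-expand $e^{itT_N}$ globally you obtain a power series in $T_N$ whose truncation at second order does not recover $e^{-t^2/2}$. Your use of (b) is also not right: the condition $\E[\xi_k^N Y_k\mid Y_k]\ge 0$ gives the \emph{sign} of $\E[\xi_k^N Y_k]$, but in a characteristic-function expansion the relevant objects are $\E[\xi_k^N Y_k\, e^{isY_k}]$ or similar oscillatory quantities, which have no definite sign. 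Finally, your truncation fix is problematic precisely because condition (b) is a conditional-expectation constraint that is not obviously stable under truncating $\xi_k^N$.

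The conditions (b)--(d) are in fact the hypotheses of a dependent CLT proved by other means (the cited reference uses a Stein-type/leave-one-out argument rather than a direct Fourier computation), and condition (b) enters there in a structurally different way from what you sketch. If you want a self-contained proof, the characteristic-function route is not the natural one here; the honest alternative is simply to cite the external result, as the paper does.
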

\begin{proof}
See \cite{chandrasekhar_general_2023} Corollary 1.
\end{proof}
In the proof of the next lemma we work with random arrays. Given $d,e\in \N^*, X : \Om \to \R^{\Pi d}$ and $Y: \Om \to \R^{\Pi e}$ we define the (cross-) covariance $\Cov(X,Y) \in \R^{(\Pi e)\times(\Pi d)}$ by
\[\Cov(X,Y)_{ij} = \Cov(X_i, Y_j), \quad i \leq d, j\leq e\]
provided $\E[|X_i Y_j|^2|]<\infty$ for all $i\leq |d|, j\leq |e|$.
In similar vein one defines conditional (cross-) covariance. Note that given $A \in \R^{\Pi(df)}$ and $B\in \R^{\Pi(eg)}$, we have
\begin{equation}
\label{eq:covOfFrobenius}
\Cov(\innp{X}{A}, \innp{Y}{B}) = \innp{\Cov(X,Y)}{A\otimes B} \in \R,
\end{equation}
provided all relevant terms are defined. In particular, if $X,Y$ are real-valued we have
\[\Cov(XA, YB) = \Cov(X,Y)(A\otimes B).\]
Analogous properties hold true for conditional covariance.
\begin{lem}
\label{lem:finitedimdist}
	For all $t_{1}, \ldots, t_{M} \in[0,1]$ and $j < J$ we have convergence
	\[\mat{X_{t_{1}}^{0} & \cdots & X_{t_{1}}^{j} \\
		\vdots & \ddots & \vdots \\
		X_{t_{M}}^{0} & \cdots & X_{t_{M}}^{j}} \Rightarrow
	\mat{W_{t_{1}}^{0} & \cdots & W_{t_{1}}^{j} \\
		\vdots & \ddots & \vdots \\
		W_{t_{M}}^{0} & \cdots & W_{t_{M}}^{j}}\]
	as $N \rightarrow \infty$.
\end{lem}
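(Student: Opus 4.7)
The plan is to reduce the joint convergence to a one-dimensional central limit theorem via the Cramér--Wold device, and then handle the dependence coming from the random permutations $\sigma_N$ by conditioning. Concretely, for an arbitrary choice of real coefficients $(a_{i,m})_{i\le j,\,m\le M}$ I would consider the linear combination
\[
Y_N \;=\; \sum_{m=1}^M\sum_{i=0}^j a_{i,m}\,X^{N,i}_{t_m},
\]
and try to show $Y_N \Rightarrow \cN(0,\sigma^2)$ with
\[
\sigma^2 \;=\; \sum_{i,m,i',m'} a_{i,m}\,a_{i',m'}\,F^{ii'}_\mu(t_m,t_{m'}),
\]
where $F^{ii}_\mu(s,t):=s\wedge t$. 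This is exactly the variance of the corresponding linear combination of the Gaussian limit $W$, so combining with Lévy's continuity theorem and the Cramér--Wold device gives the stated multivariate convergence.

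The first step is to rewrite $Y_N$ as a weighted sum of the i.i.d.\ increments. Grouping terms by the index $l$ of $Z_l$ yields
\[
Y_N \;=\; \frac{1}{\sqrt N}\sum_{l=1}^N Z_l\,c^N_l,\qquad c^N_l \;:=\; \sum_{i,m} a_{i,m}\,\idK_{[Nt_m]}\bigl((\sigma^i_N)^{-1}(l)\bigr).
\]
The key features are that $(c^N_l)_{l\le N}$ depends only on $\sigma_N$ and is uniformly bounded by $C:=\sum_{i,m}|a_{i,m}|$, while $(Z_l)$ is i.i.d.\ with finite fourth moment and independent of $\sigma_N$. A direct computation using the representation in Lemma \ref{lem:empPermApprox}(a) gives
\[
V^\sigma_N \;:=\; \frac{1}{N}\sum_{l=1}^N (c^N_l)^2 \;=\; \sum_{i,m,i',m'} a_{i,m}\,a_{i',m'}\,\hat F^{ii'}_N(t_m,t_{m'}),
\]
and Corollary \ref{lem:empPermtonCDFconv} (together with the obvious identity $\hat F^{ii}_N(s,t)=\frac{1}{N}(\lfloor Ns\rfloor\wedge\lfloor Nt\rfloor)\to s\wedge t$) implies $V^\sigma_N\to\sigma^2$ in probability.

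The second step is a conditional CLT, most cleanly carried out via characteristic functions. Conditional on $\sigma_N$, the summands are independent, so for each $t\in\R$ a Taylor expansion of the characteristic function of $Z_1$ together with the uniform bound on $|c^N_l|$ gives, uniformly in $\omega$,
\[
\log \E\bigl[e^{itY_N}\,\big|\,\sigma_N\bigr] \;=\; -\frac{t^2}{2}\,V^\sigma_N + O(N^{-1/2}).
\]
Taking unconditional expectations and using the boundedness $0\le V^\sigma_N\le C^2$ to invoke dominated convergence, together with $V^\sigma_N\to\sigma^2$ in probability, yields $\E[e^{itY_N}]\to e^{-t^2\sigma^2/2}$, which is the characteristic function of $\cN(0,\sigma^2)$. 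Since the coefficients $(a_{i,m})$ were arbitrary, Cramér--Wold produces the desired joint convergence.

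The main obstacle is that one cannot apply a plain central limit theorem (such as Proposition \ref{prop:CLTchandra}) to $\sum_l Z_l c^N_l/\sqrt N$ directly: the summands are not unconditionally independent because the weights $c^N_l$ couple through $\sigma_N$, and moreover the total variance $V_N$ tends to the \emph{finite} value $\sigma^2$ rather than to $\infty$. The conditioning step resolves this neatly by separating the two sources of randomness, after which the only genuinely nontrivial ingredient is the convergence of the bivariate empirical copulas $\hat F^{ii'}_N$, which is precisely what \assref{assum:limitCopulas} delivers through Corollary \ref{lem:empPermtonCDFconv}.
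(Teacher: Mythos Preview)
Your approach is correct and in several respects cleaner than the paper's. Both arguments begin with Cram\'er--Wold and the same rewriting of the linear combination as $\frac{1}{\sqrt N}\sum_l c^N_l Z_l$ with $\sigma_N$-measurable bounded weights, and both ultimately rest on the convergence $\hat F^{ii'}_N\to F^{ii'}_\mu$ in probability. The difference lies in how the CLT is obtained: the paper treats the summands as an unconditionally dependent array and invokes Proposition~\ref{prop:CLTchandra}, which forces it to verify the fourth-moment condition $\sum_{k,l}\Cov((\xi_k^N)^2,(\xi_l^N)^2)=o(N^2)$; this in turn requires showing that $\Cov(\hat F_N^{j_1j_2}(t_{m_1},t_{m_2}),\hat F_N^{j_3j_4}(t_{m_3},t_{m_4}))\to 0$, i.e.\ that the random permuton CDFs converge to a \emph{deterministic} limit. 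Your route bypasses the external CLT entirely: conditioning on $\sigma_N$ makes the summands independent, and a third-order Taylor expansion of the characteristic function of $Z_1$ plus dominated convergence (using $0\le V_N^\sigma\le C^2$) finishes. This is more elementary and also handles the degenerate case $\sigma^2=0$ without fuss.

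One small correction to your closing paragraph: the paper \emph{does} apply Proposition~\ref{prop:CLTchandra}, just to the unnormalised summands $\xi_k^N=c_k^N Z_k$ rather than $c_k^N Z_k/\sqrt N$. Then $V_N=\sum_k\Var(\xi_k^N)\sim N\sigma^2\to\infty$, so condition~(a) is met. Your observation that the summands are not unconditionally independent is spot on and is precisely why the paper needs the dependent-array CLT, but the obstruction you name regarding $V_N$ staying bounded is an artefact of premature normalisation, not a genuine barrier.
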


\begin{proof} Fix $t_{1}, \ldots, t_{M} \in[0,1]$. We fix $J\in \N$ and write $\si^1, \si^2,\dots, \si^J$ instead of $\si^0, \si^1,\dots, \si^{J-1}$ (and similarly for $X$ and $W$) in this proof to simplify the notation. 
By a variation of the Cramér-Wold theorem (cf.\ \cite{kallenberg2021foundations} Corollary 6.5) it suffices to prove
\[\frac{1}{\sqrt{N}} \sum_{m=1}^{M} \sum_{j=1}^{J} u_{m} v_{j} S_{\left\lfloor N t_{m}\right\rfloor}^{N, \si^{j}} \Rightarrow \sum_{m=1}^{M} \sum_{j=1}^{J} u_{m} v_{j} W_{t_{m}}^{j}\]
as $N \rightarrow \infty$, for all $u\in \R^M$ and $v \in \R^{J}$. To this end we want to apply Proposition \ref{prop:CLTchandra}.
 Fix $u, v \in \R^{J}$ and assume wlog $|u|=|v|=1$. We have
\begin{align*}
	\frac{1}{\sqrt{N}} \sum_{m=1}^{M} \sum_{j=1}^{J} u_{m} v_{j} S_{\left\lfloor N t_{m}\right\rfloor}^{N, \si^{j}} & =\frac{1}{\sqrt{N}} \sum_{k=1}^{N}\left(\sum_{m=1}^{M} \sum_{j=1}^{J} u_{m} v_{j} \idK_{\si^{j}_N[N t_{m}]}(k)\right) Z_{k} .
\end{align*}
Given $k\leq N\in \N$ define the matrix $A^N(k) \in \R^{M\times J}$ by
\[A_{m,j}^N(k) = \idK_{\si^{j}_N[N t_{m}]}(k), \quad m \leq M, j\leq J\]
 and set
\[\xi_k^N = \innp{A^N(k)}{u\otimes v} Z_k, \quad k\leq N \in \N,\]
so that we may write
\[\frac{1}{\sqrt{N}} \sum_{m=1}^{M} \sum_{j=1}^{J} u_{m} v_{j} S_{\left\lfloor N t_{m}\right\rfloor}^{N, \si^{j}} = \frac{1}{\sqrt N}\sum_{k=1}^N \xi_k^N.\]
By Equation \eqref{eq:covOfFrobenius}
\[\Cov(\xi_k^N, \xi_l^N) = \innp{\Cov(A^N(k)Z_k, A^N(l)Z_l)}{u\otimes v\otimes u \otimes v}\]
and by the law of total (cross-) covariance
\begin{align*}
\Cov(A^N(k)Z_k, A^N(l)Z_l) = & \E[\Cov(A^N(k)Z_k, A^N(l)Z_l|\si_N)] \\
& + \Cov(\E[A^N(k)Z_k|\si_N], \E[A^N(l)Z_l|\si_N]) \\
= & \E[A^N(k)^{\otimes 2}] \delt_{k,l}.
\end{align*}
Hence, the sequence $(\xi_k^N)_{k\leq N}$ is pairwise uncorrelated for all $N \in \N$. Moreover,
\begin{align*}
\E[A^N(k)^{\otimes 2}]_{m,j,m',j'} = &\E[\idK_{\si^j_N[N t_{m}]}(k) \idK_{\si^{j'}_{N}[N t_{m'}]}(k)] \\
= & \E[\idK_{\si^j_N[N t_{m}] \cap \si^{j'}_{N}[N t_{m'}]}(k)],
\end{align*}
and so
\begin{align*}
\Var\left(\frac{1}{\sqrt N}\sum_{k=1}^N \xi_k^N\right) = & \frac1N\sum_{k=1}^N \sum_{m,m',j,j'}  u_m u_{m'} v_j v_{j'} \E[\idK_{\si^j_N[N t_{m}] \cap \si^{j'}_{N}[N t_{m'}]}(k)]  \\
= & \sum_{m,m',j,j'} u_m u_{m'} v_j v_{j'} \E \hat F^{jj'}_N(t_m, t_m').
\end{align*}
By Corollary \ref{lem:empPermtonCDFconv} and since any family of joint CDFs is bounded by $1$
\begin{equation}
\label{eq:varConv}
\lim_{N\to \infty} \Var\left(\frac{1}{\sqrt N}\sum_{k=1}^N \xi_k^N\right) = \sum_{m,m',j,j'} u_m u_{m'} v_j v_{j'} F^{j,j'}_{\mu}(t_m, t_m').
\end{equation}
Using the identity
\[\innp{A}{B}\innp{C}{D} = \innp{A\otimes C}{B\otimes D}, \quad A,B,C,D\in \R^{\Pi d}\]
we further have
\begin{align*}
\Cov((\xi_k^N)^2, (\xi_l^N)^2) = & \Cov(\innp{A^N(k)^{\otimes 2} Z_k^2}{(u\otimes v)^{\otimes 2}},\innp{A^N(l)^{\otimes 2} Z_l^2}{(u\otimes v)^{\otimes 2}}) \\
= &\innp{\Cov(A^N(k)^{\otimes 2} Z_k^2, A^N(l)^{\otimes 2} Z_l^2)}{(u \otimes v)^{\otimes 4})},
\end{align*}
where
\begin{align*}
\Cov(A^N(k)^{\otimes 2} Z_k^2, A^N(l)^{\otimes 2} Z_l^2) = & \E[\Cov(A^N(k)^{\otimes 2} Z_k^2, A^N(l)^{\otimes 2} Z_l^2|\si_N)] \\
& + \Cov(\E[A^N(k)^{\otimes 2} Z_k^2|\si_N], \E[A^N(l)^{\otimes 2} Z_l^2|\si_N])\\
= & \E[\Cov(Z_k^2, Z_l^2) A^N(k)^{\otimes 2}\otimes A^N(l)^{\otimes 2}] \\
& + \Cov(A^N(k)^{\otimes 2}, A^N(l)^{\otimes 2})\\
= & \E[A^N(k)^{\otimes 4}] \E[Z_1^4] \delt_{k,l} + \Cov(A^N(k)^{\otimes 2}, A^N(l)^{\otimes 2}).
\end{align*}
Thus,
\begin{align}
\label{eq:estSumOfCovOfSq}
\sum_{k,l=1}^N \Cov((\xi_k^N)^2, (\xi_l^N)^2)=& \E[Z_1^4] \innp{\sum_{k=1}^N \E[ A^N(k)^{\otimes 4}]}{(u\otimes v)^{\otimes 4}} \nonumber \\
& + \innp{\Cov\left(\sum_{k=1}^N A^N(k)^{\otimes 2}, \sum_{l=1}^N A^N(l)^{\otimes 2}\right)}{(u\otimes v)^{\otimes 4}}.
\end{align}
Using the Cauchy-Schwarz inequality and the estimates
\begin{align*}
|A^N(k)| = & \left(\sum_{m,j=1}^{M,J} \idK_{\si^j_N[N t_{m}]}(k)\right)^{1/2} \leq \sqrt{MJ}, \\
|\E[A^N(k)^{\otimes 4}]|\leq & \E[|A^N(k)|^4] \leq M^2 J^2, \\
\end{align*}
we see that the first summand in \eqref{eq:estSumOfCovOfSq} is bounded by $\E[Z_1^4] N M^2J^2$. Regarding the second summand, observe that
\[\frac1N\sum_{k=1}^N A^N(k)^{\otimes 2}_{m,j+1,m',j'+1} = \frac1N|\si^j_N[Nt_m] \cap \si^{j'}_N[Nt_{m'}]| = \hat F_N^{jj'}(t_m, t_{m'}).\]
Therefore,
\begin{align*}
&\Cov\left(\sum_{k=1}^N A^N(k)^{\otimes 2}, \sum_{l=1}^N A^N(l)^{\otimes 2}\right)_{m_1,j_1,m_2,j_2,m_3,j_3,m_4,j_4} \\
= & N^2\Cov(\hat F_N^{j_1j_2}(t_{m_1}, t_{m_2}), \hat F_N^{j_3j_4}(t_{m_3}, t_{m_4})).
\end{align*}
Note that $\hat F_N^{ij}$ converges to a deterministic function in probability by Corollary \ref{lem:empPermtonCDFconv}, for all $i,j\in \N$. Thus, the covariance $\Cov(\hat F_N^{j_1j_2}(t_{m_1}, t_{m_2}), \hat F_N^{j_3j_4}(t_{m_3}, t_{m_4}))$ vanishes as $N\to \infty$ and we have
\[\lim_{N\to \infty} \frac{1}{N^2}\Cov\left(\sum_{k=1}^N A^N(k)^{\otimes 2}, \sum_{l=1}^N A^N(l)^{\otimes 2}\right) = 0,\]
Hence, we can conclude
\[\sum_{k,l} \Cov((\xi_k^N)^2, (\xi_l^N)^2) = o(N^2), \quad N\to \infty.\]
Finally, recall that if $X$ and $Y$ are random variables and $\cF$ a $\si$-algebra, such that $X$ is independent of $\si(Y,\cF)$, then
\[\E[XY|\cF] = \E[X]\E[Y|\cF], \text{a.s.}\]
Hence,
\[\E\left[\xi_k^N|\sum_{k\neq l}^N \xi_l^N\right] = \E\left[Z_k\innp{A^N(k)}{u\otimes v}|\sum_{k\neq l}^N \xi_l^N\right] = 0, \text{a.s.}\]
Thus, by Proposition \ref{prop:CLTchandra} we have 
\[\frac{\sum_{k=1}^N \xi_k^N}{\left(\Var\left(\sum_{k=1}^N \xi_k^N\right)\right)^{1/2}} \Rightarrow \cN(0, 1), \quad N\to \infty.\]
With \eqref{eq:varConv} we conclude further
\begin{align*}
\frac{1}{\sqrt N}\sum_{k=1}^N \xi_k^N = &  \left(\Var\left(\frac{1}{\sqrt N}\sum_{k=1}^N \xi_k^N\right)\right)^{1/2}\frac{\sum_{k=1}^N \xi_k^N}{\left(\Var\left(\sum_{k=1}^N \xi_k^N\right)\right)^{1/2}}\\
\to &\cN\left(0, \sum_{m,m',j,j'} u_m u_{m'} v_j v_{j'} F^{jj'}_\mu(t_m, t_m')\right)
\end{align*}
weakly, as $N\to \infty$.
On the other hand, we also have
\begin{align*}
\Var\left(\sum_{m=1}^M \sum_{j=1}^J u_m v_j W_{t_m}^j\right) = & \sum_{m,m',j,j'} u_m u_{m'} v_j v_{j'} \Cov(W_{t_m}^j, W_{t_{m'}}^{j'})\\
= & \sum_{m,m',j,j'} u_m u_{m'} v_j v_{j'} F^{jj'}_\mu(t_m, t_m').
\end{align*}
Consequently, the result follows.
\end{proof}

\begin{proof}[Proof of Theorem \ref{thm:shuffledDonsker}]
Lemmas \ref{lem:tightness} and \ref{lem:finitedimdist} imply the weak convergence of $X$ to $W$. The converse statement follows from Proposition \ref{prop:convRandSortPerm}.
\end{proof}

\section{Epoched Brownian processes as scaling limits}
Our main motivation for the theory in this chapter is to construct an analogue to Brownian motion in the approximation of SGD by an SDE for the finite-data without replacement case.
Specifically, SDEs driven by Brownian motion are weak approximation to one-pass SGD, where the data is an infinite \tiid\ sequence drawn from the population. If we are instead given a finite \tiid\ sequence, then the corresponding SDE driver is an \emph{epoched Brownian motion}. The significance of the limiting process $W$ in Theorem \ref{thm:shuffledDonsker} is that its components are essentially the epochs of an epoched Brownian motion. Thus, epoched Brownian motions arise as limits of random walks with finitely many increments. After being used up in the first epoch, the are used again, perhaps (randomly) permuted.

In this section we want to show how epoched Brownian bridges arise as scaling limits as an application of Theorem \ref{thm:shuffledDonsker}. More precisely, we want to prove the following statement.
\begin{satz}
\label{thm:epochedBBdonsker}
Suppose $(Z_n)_{n\in \N_0}$ is a sequence of \tiid\ random variables with $\E Z_n = 0, \Var Z_n = 1$ and $\E[Z_n^4] < \infty$. Further, suppose we are given a sequence $(\si_N)_{N\in \N}$ of random infinite-dimensional permutations, independent of $(Z_n)_{n\in \N_0}$, (where $\si_N^j$ is defined on the set $\set{0,\dots, N-1}$ for all $j\in \N$) such that \assref{assum:limitCopulas} holds true.
Define\footnote{We set $\sum_{k=a}^b \dots := 0$ for $a,b\in \IZ$ with $a > b$.}
\[\tilde X_t^N = \frac{1}{\sqrt N}\sum_{k=0}^{\floor{Nt}-1} Z_{\si_N^{\floor{k/N}}(k \modu N)} - \frac{t}{\sqrt N}\sum_{k=0}^{N-1} Z_k, \quad t\geq 0, N\in \N.\]
	Then there exists a jointly Gaussian family of Brownian bridges $(B^j : \Om \times [0,1]\to \R)_{j\in \N_0}$ from $0$ to $0$, with
	\[\Cov(B^i_s,B^j_t) = F_\mu^{ij}(s,t) - st,\quad i\neq j, s,t\in [0,1],\]
	such that $\tilde X$ converges in distribution to the centered Gaussian process $\tilde B : \Om \times [0,\infty) \to \R$ given by
	\[\tilde B_t = B_{\frk t}^{\floor t},\quad t\geq 0.\]
\end{satz}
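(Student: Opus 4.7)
}

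The plan is to realize $\tilde X^N$ as the image of the process $X^N$ from Theorem \ref{thm:shuffledDonsker} under a continuous bridge-type map, and then read off the covariance structure of the limit. First I would rewrite the defining sum epoch by epoch. For $t\geq 0$, write $t=j+s$ with $j=\lfloor t\rfloor$ and $s=\{t\}\in[0,1)$. Since each $\sigma_N^i$ is a permutation of $\{0,\dots,N-1\}$, the first $j$ completed epochs contribute
\[\frac{1}{\sqrt N}\sum_{i=0}^{j-1}\sum_{r=0}^{N-1}Z_{\sigma_N^i(r)}=\frac{j}{\sqrt N}\sum_{k=0}^{N-1}Z_k,\]
so the defining formula collapses, for $t=j+s$, to
\[\tilde X^N_{j+s}=X^{N,j}_s-sX^{N,j}_1,\qquad X^{N,j}_s:=\frac{1}{\sqrt N}\sum_{r=0}^{\lfloor Ns\rfloor-1}Z_{\sigma_N^j(r)},\]
which is precisely (up to the harmless shift in the starting index of the summation) the process from Theorem \ref{thm:shuffledDonsker}. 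The subtraction yields a genuine bridge-transformation because $X^{N,j}_1=\frac{1}{\sqrt N}\sum_{k=0}^{N-1}Z_k$ does not depend on~$j$.

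Next I would apply Theorem \ref{thm:shuffledDonsker} in the case $J=\infty$, which, under \assref{assum:limitCopulas}, gives $X^N\Rightarrow W$ in $\cD([0,1],\R^{\N_0})$, where $W$ is the centered Gaussian process with $\Cov(W^i_s,W^j_t)=F_\mu^{ij}(s,t)$. By construction $W$ has a.s.\ continuous paths, with $W^j_0=0$, and moreover $\Var(W^i_1-W^j_1)=2-2F_\mu^{ij}(1,1)=0$, so all the $W^j_1$'s coincide a.s. I would then introduce the map
\[\Phi:\cD([0,1],\R^{\N_0})\to\cD([0,\infty),\R),\qquad (\Phi x)(t)=x^{\lfloor t\rfloor}_{\{t\}}-\{t\}\,x^{\lfloor t\rfloor}_1,\]
so that $\tilde X^N=\Phi(X^N)$, and verify that $\Phi$ is continuous at every $x=(x^j)_{j\in\N_0}$ whose components are continuous with $x^j_0=0$ for all~$j$: on any compact $[0,T]$ only finitely many components $x^0,\dots,x^{\lceil T\rceil}$ enter, the bridge map is continuous in the uniform topology, and the boundary values match up as $(\Phi x)(j^-)=x^{j-1}_1-x^{j-1}_1=0=x^j_0=(\Phi x)(j)$, so the glued path has continuous limit and Skorokhod convergence on $[0,1]$ componentwise upgrades to uniform convergence on compacts of $[0,\infty)$ for the glued process. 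Since $W$ a.s.\ lies in this continuity set of $\Phi$, the continuous mapping theorem delivers $\tilde X^N\Rightarrow\Phi(W)$.

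Finally, defining $B^j_s:=W^j_s-sW^j_1$ for $j\in\N_0$ and $s\in[0,1]$, the limit is exactly $\Phi(W)(t)=B^{\lfloor t\rfloor}_{\{t\}}$. Joint Gaussianity of $(B^j)_{j\in\N_0}$ is immediate because each $B^j_s$ is a linear functional of $W$. A direct calculation, using $F_\mu^{ii}(s,t)=s\wedge t$ and the uniform-marginal identities $F_\mu^{ij}(s,1)=s$, $F_\mu^{ij}(1,t)=t$, $F_\mu^{ij}(1,1)=1$, yields
\[\Cov(B^i_s,B^j_t)=F_\mu^{ij}(s,t)-st\quad(i\neq j),\qquad \Cov(B^j_s,B^j_t)=s\wedge t-st,\]
so each $B^j$ is a Brownian bridge from $0$ to $0$ and the cross-covariance is exactly the one claimed. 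This identifies $\Phi(W)$ with $\tilde B$ and completes the proof.

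The main obstacle I anticipate is the precise verification of continuity of the gluing map $\Phi$ between Skorokhod spaces on $[0,1]$ and $[0,\infty)$, together with the fact that the infinite-dimensional Skorokhod convergence from Theorem \ref{thm:shuffledDonsker} must be translated into uniform convergence on compacts of only finitely many coordinates — routine, but the $J=\infty$ product topology requires some care. Everything else is either a direct algebraic rewrite or a covariance computation from the uniform-marginal property of the permuton $\mu$.
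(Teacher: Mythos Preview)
Your proposal is correct and follows essentially the same route as the paper: apply Theorem~\ref{thm:shuffledDonsker}, push through a bridge-then-glue map, invoke the continuous mapping theorem, and read off the covariances using the uniform-marginal identities of the copula. Your upfront epoch-by-epoch rewrite of $\tilde X^N$ is exactly the calculation the paper does at the end when verifying $(\Psi\circ\Phi)(X^N)=\tilde X^N$.

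The one organizational difference worth noting, since you flagged continuity as the main obstacle: the paper factors your map $\Phi$ as a composition $\Psi\circ\Phi$, where $\Phi:\cD([0,1],\R^{\N})\to \cD_0([0,1])^{\N}$ is the componentwise bridge map $f\mapsto f-f(0)-(\cdot)f(1)$ (shown to be Lipschitz) and $\Psi:\cD_0([0,1])^{\N}\to\cD([0,\infty),\R)$ is the gluing of c\`adl\`ag loops (shown to be continuous everywhere via separate lemmas on time-shifts and concatenation of loops in the $J_1$ metric). This yields continuity of the full map on all of $\cD([0,1],\R^{\N})$, not just at continuous paths. Your argument instead establishes continuity only at the continuity set $\{x:\text{each }x^j\text{ continuous},\,x^j_0=0\}$ and appeals to the a.s.\ version of the continuous mapping theorem (since $W$ lives there a.s.); this is perfectly valid and a bit shorter, but the paper's decomposition has the advantage of making the $J_1$ bookkeeping explicit rather than sketched.
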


Let $b > a > 0$. The $J_1$-metric, which induces the topology of the Skorokhod space $\cD([a,b], \R)$, is given by
\[d_{J_1}(f,g) = \inf_{\la} (\nrm{\la - \id{[a,b]}}{\infty} \vee \nrm{f - g\circ \la}\infty),\quad f,g\in \cD([a,b], \R),\]
where the infimum is taken over all homeomorphisms $\la : [a,b]\to [a,b]$ with $\la(a) = a$ and $\la(b) = b$. We call these \emph{time changes} for simplicity.
Thus, we can induce the topology on $\cD([0,1], \R^\N)$ by equipping this space with the metric
\[d_{\cD([0,1], \R^\N)}(f,g) = \sum_{j=0}^\infty 2^{-j}(1\wedge d_{J_1}(f^j,g^j)), \quad f,g\in \cD([0,1], \R^\N).\]
Denote the subspace of $\cD([a,b], \R)$ of all functions $f$ with $f(a) = f(b) = 0$ by $\cD_0([a,b])$. We call these functions \emph{\tCadl{} loops}.
We can equip $\cD_0([0,1])^\N$ with the metric sharing the same formula as before
\[d_{\cD_0([0,1])^\N}(f,g) = \sum_{j=0}^\infty 2^{-j}(1\wedge d_{J_1}(f^j,g^j)), \quad f,g\in D_0^\N.\]
For $M\in \bar \N$ we define $\Ph_M : \cD([0,1], \R^M) \to \cD_0([0,1])^M$ by
\begin{equation}
\label{eq:PhiM}
(\Ph_M f)(t) := f(t) - f(0)- tf(1), \quad f\in \cD([0,1], \R^M), t\geq 0,
\end{equation}
and write $\Ph := \Ph_\infty$.
\begin{lem}
\label{lem:toBridge}
The function $\Ph : \cD([0,1], \R^\N) \to \cD_0([0,1])^\N$ is Lipschitz.
\end{lem}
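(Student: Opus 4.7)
The map $\Phi$ acts componentwise as $(\Phi f)^j = \Phi_1(f^j)$ where $(\Phi_1 h)(t) := h(t) - h(0) - t\,h(1)$, and both the domain and codomain metrics are weighted sums $\sum_{j \in \N_0} 2^{-j}(1 \wedge d_{J_1})$. The plan is therefore to reduce to showing that the scalar map $\Phi_1 : \cD([0,1], \R) \to \cD_0([0,1])$ is Lipschitz with respect to the truncated $J_1$-metric with a constant independent of the coordinate, and then to sum over coordinates with the same $2^{-j}$ weights to obtain the Lipschitz constant for $\Phi$ with the same value.

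For the scalar estimate, I plan to fix $h_1, h_2 \in \cD([0,1], \R)$ and, given $\varepsilon > 0$, choose a time change $\lambda$ realising the infimum in $d_{J_1}(h_1, h_2)$ up to $\varepsilon$. Using $\lambda$ itself as a candidate time change on the image side and setting $u := h_1 - h_2 \circ \lambda$, a direct expansion of the definition of $\Phi_1$ gives
\[
(\Phi_1 h_1)(t) - (\Phi_1 h_2)(\lambda(t)) = u(t) - u(0) - t\,u(1) - (t - \lambda(t))\,h_2(1).
\]
Since $\lambda(0) = 0$ and $\lambda(1) = 1$, the values $|u(0)| = |h_1(0) - h_2(0)|$ and $|u(1)| = |h_1(1) - h_2(1)|$ are both bounded by $\|u\|_\infty$, so taking the supremum in $t$ yields
\[
\|\Phi_1 h_1 - (\Phi_1 h_2) \circ \lambda\|_\infty \leq 3\,\|h_1 - h_2 \circ \lambda\|_\infty + \|\lambda - \id{[0,1]}\|_\infty\,|h_2(1)|.
\]
Combined with $\|\lambda - \id{[0,1]}\|_\infty \leq d_{J_1}(h_1, h_2) + \varepsilon$ and sending $\varepsilon \downarrow 0$, this will yield the candidate one-dimensional estimate, which summed componentwise produces the Lipschitz bound for $\Phi$.

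The main technical obstacle is the factor $|h_2(1)|$ appearing in the second summand above, which is not itself controlled by $d_{J_1}(h_1, h_2)$. The plan to handle it relies on the truncation $1 \wedge d_{J_1}$ built into the product metric: when $d_{J_1}(h_1^j, h_2^j) \geq 1$ the coordinate-wise target inequality holds trivially after truncation (with any constant $\geq 1$), and when $d_{J_1}(h_1^j, h_2^j) < 1$ the identity $h_2^j(1) = h_2^j(\lambda(1))$ together with $\|h_1^j - h_2^j \circ \lambda\|_\infty < 1$ gives $|h_2^j(1)| \leq |h_1^j(1)| + 1$, reducing the endpoint factor to a quantity that stays finite. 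A case-split on each coordinate, combined with the exponentially decaying weights $2^{-j}$, will then absorb the remaining endpoint dependence into a universal constant and close the argument.
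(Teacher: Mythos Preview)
You have correctly isolated the term $(t-\lambda(t))\,h_2(1)$ that the paper's argument silently drops (it writes $t\,g(1)$ where $\lambda(t)\,g(1)$ is meant, and this is not innocuous). But your proposed rescue via the truncation does not close the gap. In the case $d_{J_1}(h_1^j,h_2^j)<1$ you obtain $|h_2^j(1)|\le |h_1^j(1)|+1$, yet $|h_1^j(1)|$ is itself not controlled by the metric, and the decaying weights $2^{-j}$ cannot help because the obstruction already occurs in a single coordinate.

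In fact the lemma as stated is false: $\Phi$ is not Lipschitz. Take $f^0(t)=M\,\idK_{[1/2,1]}(t)$ and $g^0(t)=M\,\idK_{[1/2+\delta,1]}(t)$ with $M\ge1$, and $f^j=g^j=0$ for $j\ge1$. Aligning the jumps with a piecewise-linear time change gives $d_{J_1}(f^0,g^0)=\delta$, so the product distance is $\delta$ for $\delta<1$. On the image side, $\Phi_1 f^0(t)=M(\idK_{[1/2,1]}(t)-t)$ and $\Phi_1 g^0(t)=M(\idK_{[1/2+\delta,1]}(t)-t)$; any time change $\mu$ with $\mu(1/2)=1/2+\delta$ yields $\|\Phi_1 f^0-\Phi_1 g^0\circ\mu\|_\infty=M\|\mu-\id{[0,1]}\|_\infty\ge M\delta$, while failing to align the jumps forces an error of order $M/2$. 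Hence $d(\Phi f,\Phi g)=1\wedge(M\delta)=M\delta$ for $M\delta<1$, and the ratio $M$ is unbounded.

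What does hold, and is all that is used downstream via the continuous mapping theorem, is that $\Phi$ is \emph{continuous}. Your own estimate already proves this: if $h_n\to h$ in $J_1$ via time changes $\lambda_n$, then
\[
\|\Phi_1 h_n-\Phi_1 h\circ\lambda_n\|_\infty\le 3\|h_n-h\circ\lambda_n\|_\infty+\|\lambda_n-\id{[0,1]}\|_\infty\,|h(1)|\longrightarrow 0,
\]
because $|h(1)|$ is a fixed finite number once the limit $h$ is fixed. The Lipschitz claim should be downgraded to continuity.
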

\begin{proof}
First, note that $\Ph f = (\Ph_1 f^j)_{j\in \N_0}$. Certainly, $\Ph_1 f^j$ is \tCadl{} if $f$ is, for all $j\in \N$. For any time change $\la : [0,1]\to [0,1]$ we have
\begin{align*}
|\Ph_1 f(t) -\Ph_1 g(\la(t))| 	= & |f(t) - f(0) - tf(t) - (g(\la(t)) - g(0) - tg(1))|\\
										\leq &|f(t) - g(\la(t))| + |f(0) - g(0)| + |f(1)-g(1)|\\ 
										\leq &3 d_{J_1}(f,g), \quad t\in [0,1].
\end{align*}
Hence, $\Ph_1$ is Lipschitz with constant $3$ with respect to the $d_{J_1}$ metric. Now, consider $\Ph$. We have
\begin{align*}
d_{\cD([0,1], \R^\N)}(\Ph f,\Ph g) = & \sum_{j=0}^\infty 2^{-j}(1\wedge d_{J_1}(\Ph_1 f^j,\Ph_1 g^j)) \\
\leq & 3 \sum_{j=0}^\infty 2^{-j}(1\wedge d_{J_1}(f^j,g^j)) \\
= & 3 d_{\cD_0([0,1])^\N}(f,g),
\end{align*}
for all $f,g\in \cD([0,1], \R^\N)$.
\end{proof}

\begin{lem}
\label{lem:skorokShift}
Let $a,b,c > 0$ with $a < b$, and define the shift operator 
\[T_c : \cD([a,b],\R) \to \cD([a+c,b+c], \R),\]
by
\[T_c f(t) = f(t-c), \quad t\in [a+c,b+c].\]
Then $T_c$ is an isometry, that is
\[d_{J_1}(f,g) = d_{J_1}(T_c f, T_c g), \quad f,g\in \cD([a,b], \R).\]
\end{lem}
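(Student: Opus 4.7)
The plan is to show that shifting paths induces a bijection on the space of admissible time changes that preserves both quantities inside the infimum defining $d_{J_1}$. Concretely, I would set up a correspondence $\la \mapsto \tilde\la$ between reparametrisations of $[a,b]$ and of $[a+c,b+c]$ and then observe that the two suprema appearing in the definition of $d_{J_1}$ transform into one another identically under this correspondence. Taking infima on both sides then gives the isometry.

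First I would define, for each homeomorphism $\la : [a,b]\to [a,b]$ fixing the endpoints, the map $\tilde\la : [a+c,b+c]\to [a+c,b+c]$ by $\tilde\la(t) = \la(t-c)+c$, and check that $\tilde\la$ is a homeomorphism fixing the endpoints of $[a+c,b+c]$. The inverse of this assignment is obviously $\tilde\la \mapsto (s\mapsto \tilde\la(s+c)-c)$, so $\la \leftrightarrow \tilde\la$ is a bijection between the admissible time changes for the two Skorokhod spaces.

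Next I would perform the two routine substitutions $s=t-c$ to get
\[\nrm{\tilde\la - \id{[a+c,b+c]}}{\infty} = \sup_{t\in [a+c,b+c]} |\la(t-c)-(t-c)| = \nrm{\la - \id{[a,b]}}{\infty},\]
and
\[\nrm{T_c f - (T_c g)\circ \tilde\la}{\infty} = \sup_{t\in[a+c,b+c]} |f(t-c)-g(\la(t-c))| = \nrm{f - g\circ \la}{\infty}.\]
Hence, for every admissible $\la$,
\[\nrm{\tilde\la - \id{}}{\infty} \vee \nrm{T_c f - (T_c g)\circ \tilde\la}{\infty} = \nrm{\la - \id{}}{\infty} \vee \nrm{f - g\circ \la}{\infty}.\]

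Finally, taking infima over $\la$ on the right and (using the bijection) over $\tilde\la$ on the left yields $d_{J_1}(T_c f, T_c g) = d_{J_1}(f,g)$, completing the argument. The only real subtlety is keeping the domains/codomains straight in the definition of $\tilde\la$ and in the two substitutions — there is no analytic obstacle beyond that; the result is essentially by construction of the $J_1$ metric, which is manifestly translation-invariant in the time variable.
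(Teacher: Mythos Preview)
Your proposal is correct and follows essentially the same approach as the paper: define the shifted time change $\tilde\la(t) = \la(t-c)+c$, verify it gives a bijection on admissible reparametrisations, check via the substitution $s=t-c$ that both suprema in the $J_1$ metric are preserved, and conclude by taking infima. The paper's proof is identical in structure and detail.
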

\begin{proof}
Let $\la : [a,b]\to [a,b]$ be a time change. Define
\[\la_c : [a+c,b+c]\to [a+c,b+c], t\mapsto c + \la(t-c).\]
Then $\la_c$ well-defined, a homeomorphism, $\la_c(a+c) = a+c$ and $\la_c(b+c) = b+c$.
Then
\[\sup_{t\in [a+c,b+c]} |\la_c(t) - t| = \sup_{t\in [a,b]} |c + \la(t) - (t+c)| = \sup_{t\in [a,b]} |\la(t) - t|,\]
and for all $f,g\in \cD([a,b], \R)$,
\begin{align*}
\sup_{t\in [a+c,b+c]} |T_c f(t) - T_c g(\la_c(t))| = & \sup_{t\in [a+c,b+c]} |f(t-c) - g(c+ \la(t-c) - c)| \\
= & \sup_{t\in [a,b]} |f(t) - g(\la(t))|.
\end{align*}
Since $\la \mapsto \la_c$ is a bijection between the sets of time changes, taking infima over $\la$ and $\la_c$ yields the isometry property.
\end{proof}

Given $0 \leq a \leq b \leq c$ and functions $f : [a,b]\to \R, g : [b,c]\to \R$ with $f(b) = g(b)$, we define their \emph{concatenation} $f\ast g : [a,c]\to \R$ by
\[(f\ast g)(t) = \begin{cases}
	f(t), & t \in [a,b], \\
	g(t), & t\in (b,c].
\end{cases}\]
The condition $f(b) = g(b)$ ensures that $f\ast g$ is continuous at $b$.
\begin{lem}
\label{lem:skorokConcat}
Let $0 \leq a \leq b \leq c$. Then the concatenation operator for \tCadl{} loops
\[\ast : \cD_0([a,b]) \times \cD_0([b,c]) \to \cD_0([a,c]),\]
is continuous.
\end{lem}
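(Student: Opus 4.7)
The plan is to produce explicit time changes on $[a,c]$ by concatenating approximating time changes on the two sub-intervals, exploiting the fact that every element of $\cD_0([a,b])$ and $\cD_0([b,c])$ vanishes at the gluing point $b$.

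First I would fix sequences with $(f_n,g_n)\to (f,g)$ in the product topology and invoke the definition of $d_{J_1}$ to select, for each $n$, time changes $\la_n$ of $[a,b]$ with $\la_n(a)=a,\la_n(b)=b$ and $\mu_n$ of $[b,c]$ with $\mu_n(b)=b,\mu_n(c)=c$ such that
\[\nrm{\la_n-\id{[a,b]}}{\infty}\vee \nrm{f_n-f\circ \la_n}{\infty}\to 0,\qquad \nrm{\mu_n-\id{[b,c]}}{\infty}\vee \nrm{g_n-g\circ \mu_n}{\infty}\to 0.\]
Next I would define $\nu_n:[a,c]\to[a,c]$ by setting $\nu_n|_{[a,b]}=\la_n$ and $\nu_n|_{[b,c]}=\mu_n$. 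Since $\la_n(b)=b=\mu_n(b)$, this is a well-defined, continuous, strictly increasing bijection fixing $a,b,c$, hence a time change of $[a,c]$, with $\nrm{\nu_n-\id{[a,c]}}{\infty}=\max(\nrm{\la_n-\id{}}{\infty},\nrm{\mu_n-\id{}}{\infty})\to 0$.

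The sup-norm comparison then splits into separate estimates on the two pieces. For $t\in[a,b]$, $\nu_n(t)=\la_n(t)\in[a,b]$ and $(f\ast g)(\nu_n(t))=f(\la_n(t))$; for $t\in(b,c]$, $\nu_n(t)=\mu_n(t)\in(b,c]$ and $(f\ast g)(\nu_n(t))=g(\mu_n(t))$. Matching these against $(f_n\ast g_n)$ piecewise yields
\[\nrm{(f_n\ast g_n)-(f\ast g)\circ \nu_n}{\infty}=\max\bigl(\nrm{f_n-f\circ \la_n}{\infty},\nrm{g_n-g\circ \mu_n}{\infty}\bigr)\to 0,\]
so $d_{J_1}(f_n\ast g_n,f\ast g)\to 0$ and continuity of $\ast$ follows.

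The only point that requires care is the role of the loop condition $f(b)=g(b)=0$: because $\nu_n$ is forced to fix $b$, it cannot reconcile any mismatch that $(f\ast g)$ has at $b$; but the vanishing of both $f$ and $g$ at $b$ makes the two branches of the piecewise definition of $f\ast g$ agree at $b$, so no reconciliation is necessary. Everything else is routine Skorokhod bookkeeping, and I do not expect any essential obstacle.
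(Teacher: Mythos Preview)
Your proposal is correct and follows essentially the same route as the paper: concatenate time changes on the two sub-intervals (which is possible because both fix $b$), then observe that both the identity-deviation and the sup-norm error of the combined time change are controlled by the maximum of the corresponding quantities on $[a,b]$ and $[b,c]$. The paper states the key inequality for arbitrary time changes and takes infima, while you pick near-optimal time changes for each $n$; these are equivalent ways of packaging the same estimate.
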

\begin{proof}
Let $f\in \cD_0([a,b])$ and $g\in \cD_0([b,c])$.
Consider time changes $\la_f : [a,b]\to [a,b]$ and $\la_g : [b,c] \to [b,c]$. Then $\la_f \ast \la_g : [a,c]\to [a,c]$ is a time change as well (continuity is preserved since the endpoints are fixed). Suppose $(f_n)_{n\in \N}, (g_n)_{n\in \N}$ are sequences in $\cD_0([a,b])$ and $g\in \cD_0([b,c])$ respectively, such that $f_n \to f$ and $g_n \to g$, $n\to \infty$. Then
\begin{align*}
& \nrm{\la_f \ast \la_g - \id{[a,c]}}{\infty} \vee \nrm{f_n\ast g_n - (f\ast g)\circ (\la_f \ast \la_g)}{\infty} \\
\leq & \nrm{\la_f - \id{[a,b]}}{\infty} \vee \nrm{f_n - f\circ \la_f}{\infty} \vee \nrm{\la_g - \id{[b,c]}}{\infty} \vee \nrm{g_n - g\circ \la_g}{\infty}.
\end{align*}
Hence,
\[\lim_{n\to \infty} d_{J_1}(f_n\ast g_n, f\ast g) = 0,\]
as desired.
\end{proof}

\begin{lem}
\label{lem:toEpoched}
The function $\Psi : \cD_0([0,1])^\N \to \cD([0,\infty), \R)$ given by 
\[(\Psi f)(t) = f_{\frk t}^{\floor t}, \quad t\geq 0, f\in \cD_0([0,1])^\N\]
is continuous.
\end{lem}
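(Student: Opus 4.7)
The plan is to reduce continuity of $\Psi$ to continuity on each compact time interval $[0,K]$ for $K\in\N$, by exhibiting $(\Psi f)|_{[0,K]}$ as a finite concatenation of shifted loops. Concretely, for each $K\in\N$ define $T_j : \cD_0([0,1])\to\cD_0([j,j+1])$ by $(T_j g)(t)=g(t-j)$, which is well-defined because $g(0)=g(1)=0$, and let
\[
\Psi_K : \cD_0([0,1])^\N \to \cD([0,K],\R), \qquad
\Psi_K(f) := (T_0 f^0)\ast(T_1 f^1)\ast\cdots\ast(T_{K-1}f^{K-1}).
\]
The concatenations are admissible because $(T_j f^j)(j+1)=f^j(1)=0=f^{j+1}(0)=(T_{j+1}f^{j+1})(j+1)$, and by inspection $\Psi_K(f)(t)=(\Psi f)(t)=f^{\floor t}_{\frk t}$ for all $t\in[0,K]$, where we use the convention that at an integer $t=j$ the value equals $f^j(0)=0=f^{j-1}(1)$. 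So $(\Psi f)|_{[0,K]}=\Psi_K(f)$ and, importantly, $\Psi f$ is continuous at every integer $K\in\N$.

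The first step is to show that each $\Psi_K$ is continuous. Projection to the first $K$ coordinates $\cD_0([0,1])^\N \to \cD_0([0,1])^K$ is continuous by the definition of the product topology. Each shift $T_j$ is an isometry by Lemma \ref{lem:skorokShift}. Finally, the binary concatenation $\cD_0([a,b])\times\cD_0([b,c])\to\cD_0([a,c])$ is continuous by Lemma \ref{lem:skorokConcat}; iterating $K-1$ times gives continuity of the $K$-fold concatenation $\prod_{j=0}^{K-1}\cD_0([j,j+1])\to\cD_0([0,K])$. Composing, $\Psi_K$ is continuous.

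The second step is to translate convergence in each $\cD([0,K],\R)$ into convergence in $\cD([0,\infty),\R)$. The Skorokhod topology on $\cD([0,\infty),\R)$ is characterized by: $g_n\to g$ if and only if for every $T>0$ at which $g$ is continuous, $g_n|_{[0,T]}\to g|_{[0,T]}$ in $\cD([0,T],\R)$. Since $\Psi f$ is continuous at every integer, the countable family of restrictions to $[0,K]$, $K\in\N$, is enough: if $f_n\to f$ in $\cD_0([0,1])^\N$, then for each $K\in\N$, $\Psi_K(f_n)\to\Psi_K(f)$ in $\cD([0,K],\R)$ by Step 1, hence $\Psi f_n\to\Psi f$ in $\cD([0,\infty),\R)$. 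Equivalently, one may work with the product-type metric $d(g,h)=\sum_{K\geq 1}2^{-K}(1\wedge d^{J_1}_{[0,K]}(g|_{[0,K]},h|_{[0,K]}))$ and invoke dominated convergence on the series.

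The main obstacle is a bookkeeping one: verifying that the loop condition $f^j(0)=f^j(1)=0$ is exactly what makes the iterated concatenation both well-defined and jointly continuous, and that the same condition makes the limit $\Psi f$ continuous at integer times so that one can safely pass between the compact-interval convergence furnished by $\Psi_K$ and the unbounded-interval Skorokhod convergence. Once this is in place, everything reduces to an iterated application of Lemmas \ref{lem:skorokShift} and \ref{lem:skorokConcat}.
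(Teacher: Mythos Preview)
Your proposal is correct and follows essentially the same route as the paper: define the finite-interval restrictions $\Psi_K$ as an iterated concatenation of shifted loops, deduce their continuity from Lemmas \ref{lem:skorokShift} and \ref{lem:skorokConcat}, and then pass to $\cD([0,\infty),\R)$ using that $\Psi f$ is continuous at every integer (the paper does this via the product-type metric you also mention). Your write-up is, if anything, slightly more explicit about why the loop condition is exactly what is needed for both the concatenation and the passage to the half-line.
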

\begin{proof}
Firstly, given $j\in \N_0$, define $\Psi_j : \cD_0([0,1])^j \to \cD([0,j],\R)$ by
\[(\Psi_j f)(t) = f_{\frk t}^{\floor t}, \quad t\in [0,j], f\in \cD_0([0,1])^j.\]
Then
\[\Psi_j = f^0 \ast T_1 f^1 \ast \dots \ast T_{j-1} f^{j-1},\]
and so $\Psi_j$ is continuous by Lemmas \ref{lem:skorokShift} and \ref{lem:skorokConcat}.

Consider $f\in \Om^\N$. Since $f^j$ is \tCadl{} for all $j\in \N_0$, so is $\Psi f$. No jumps occur at integer points $t\in \N$, since $f^j(0) = f^j(1) = 0$ for all $j\in \N$. Note that the topology on $\cD([0,\infty), \R)$ is induced by the metric
\[d_{\cD([0,\infty), \R)}(f,g) = \sum_{j=1}^\infty 2^{-j}(1\wedge d_{J_1}(f|_{[0,j]},g|_{[0,j]})), \quad f,g\in \cD([0,\infty), \R).\]
Note that $(\Psi f)|_{[0,j]} = \Psi_j f|_{[0,j]}, j \in \N_0$.
Let $\ep > 0$. There exists an $M\in \N$, such that $\sum_{j=M+1}^\infty 2^{-j} < \ep/2$. Recall the definition of $\Ph_M$ (Equation \eqref{eq:PhiM}). Since $\Ph_M$ is continuous, there exists a neighborhood $V \subseteq \cD_0([0,1])^M$ of $f|_{[0,M]}$, such that
\[d_{J_1}(f|_{[0,M]}, g) < \frac \ep 2, \quad g\in V.\]
Further, $V$ is the projection of a neighborhood $U\subseteq \cD_0([0,1])^\N$. Then, for all $g\in U$ we have
\[d(\Psi f, \Psi g) = \sum_{j=1}^\infty 2^{-j}(1\wedge d_{J_1}(\Psi_j f|_{[0,j]},\Psi_j g|_{[0,j]})) \leq \sum_{j=1}^M 2^{-j} \frac \ep 2 + \sum_{j=M+1}^\infty 2^{-j} < \ep.\]
Hence, $\Psi$ is continuous.
\end{proof}

\begin{proof}[Proof of Theorem \ref{thm:epochedBBdonsker}]
First, apply Theorem \ref{thm:shuffledDonsker} to get the convergence $X^N \to W, N\to \infty$ in distribution. Consider Lemma \ref{lem:toBridge}. We have
\begin{align*}
\Ph(X^N)_j(t) 	= & X^{N,j}_t - X_0^{N,j} - tX_1^{N,j} \\
				= & \frac1{\sqrt N} \sum_{k=0}^{\floor{Nt}-1} Z_{\si_N^j(k)} - \frac{t}{\sqrt N} \sum_{k=0}^{N-1} Z_{\si^j_N(k)} \\
				= & \frac1{\sqrt N} \sum_{k=0}^{\floor{Nt}-1} Z_{\si_N^j(k)} - \frac{t}{\sqrt N} \sum_{k=0}^{N-1} Z_k, \quad j\in \N_0, t\in [0,1],
\end{align*}
Further,
\[\Ph(W)_j(t) = W_t^j - t W_1^j, \quad j\in \N_0, t\in [0,1].\]
defines a Brownian bridge from $0$ to $0$.
The continuous mapping theorem applied with the continuous function from Lemma \ref{lem:toBridge} implies
\[\Ph(X^N) \to \Ph(W) =: B,\]
in distribution, as $N\to \infty$, where $B = (B^j : \Om \times [0,1]\to \R)$ is a jointly Gaussian family of Brownian bridges.
Next, we calculate
\begin{align*}
\sqrt N(\Psi \circ \Ph)(X^N)(t) = & \sqrt N \Ph(X^N)_{\floor t}(\frk t)\\
 = &  \sum_{k=0}^{\floor{N\frk t}-1} Z_{\si_N^{\floor t}(k)} - \frk t\sum_{k=0}^{N-1} Z_k \\
= &  \sum_{k=N\floor t}^{\floor{N\frk t}-1 + N\floor t} Z_{\si_N^{\floor t}(k - N\floor t)} - \frk t\sum_{k=0}^{N-1} Z_k \quad t\in [0,1], N\in \N, 
\end{align*}
Using the identity $\floor{Nt} = \floor{N\frk t} + N\floor t$ for all $N\in \N$ and $t\geq 0$, we further obtain
\begin{align*}
\sqrt N(\Psi \circ \Ph)(X^N)(t) = &  \sum_{k=N\floor t}^{\floor{N t}-1 } Z_{\si_N^{\floor{k/N}}(k\modu N)} + \sum_{k=0}^{N\floor t-1 } Z_{\si_N^{\floor{k/N}}(k\modu N)} \\
& -  \sum_{k=0}^{N\floor t-1 } Z_{\si_N^{\floor{k/N}}(k\modu N)} - \frk t \sum_{k=0}^{N-1} Z_k \\
= & \sum_{k=0}^{\floor{Nt}-1} Z_{\si_N^{\floor{k/N}}(k \modu N)} - \floor t \sum_{k=0}^{N-1} Z_k - \frk t\sum_{k=0}^{N-1} Z_k\\
= & \sum_{k=0}^{\floor{Nt}-1} Z_{\si_N^{\floor{k/N}}(k \modu N)} - t\sum_{k=0}^{N-1} Z_k, \quad t\in [0,1], N\in \N.
\end{align*}
On the other hand,
\[(\Psi \circ \Ph)(W)(t) = \Psi(B)(t) = B_{\frk t}^{\floor t}, \quad t\geq 0.\]
Applying the continuous mapping theorem for the continuous function from Lemma \ref{lem:toEpoched} yields
\[(\Psi \circ \Ph)(X^N) \to \Psi(B),\]
in distribution, as $N\to \infty$.
\end{proof}

\begin{docu}
\paragraph{An inequality for copulas}
This is likely not new considering $C$-Brownian bridges are a known concept.
\begin{prop}
Let $d\in \N$, $C$ be a $d$-copula and $t_1\leq \dots \leq t_{d+1}$ be elements of $[0,1]$. Then
\[\det\mat{t_1 & C^{1,2}(t_1, t_2) & \dots & C^{1,d+1}(t_1,t_{d+1})\\
C^{1,2}(t_1,t_2) & t_2 & \ddots & \vdots\\
\vdots & \ddots & \ddots & \vdots \\
C^{d+1,1}(t_1,t_{d+1}) & \dots & \dots & t_{d+1}
}
\geq 0,\]
that is 
\[\sum_{\si \in \cA_{d+1}} \prod_{k=1}^{d+1}C^{k, \si(k)}(t_k, t_{\si(k)}) \geq \sum_{\si \in \Sym{d+1} \setminus \cA_{d+1}} \prod_{k=1}^{d+1} C^{k, \si(k)}(t_k, t_{\si(k)}),\]
where $\cA_{d+1}$ is the alternating subgroup of $\cS_{d+1}$ containing the permutations with sign $1$.
\end{prop}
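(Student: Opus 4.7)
The plan is to recognise the displayed matrix as a principal submatrix of the positive semi-definite kernel already introduced in the paragraph preceding Theorem \ref{thm:shuffledDonsker}, so that the non-negativity of the determinant becomes immediate, and then to expand by the Leibniz formula to arrive at the stated form. Concretely, I would first note that the hypothesis that $C$ is a copula means $C$ is the distribution function of a permuton $\mu$ of dimension at least $d+1$ (I read the statement with this mild re-indexing), so there exists a family of uniform random variables $(U^j)_{1\leq j\leq d+1}$ whose bivariate marginals satisfy $\P(U^i\leq s,\,U^j\leq t)=C^{i,j}(s,t)$ for $i\neq j$ and $\P(U^i\leq s,\,U^i\leq t)=s\wedge t$.

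Second, I would invoke the Gram-matrix argument given in the excerpt: for the indicator functions $f_{(t,j)}:=\idK_{\{U^j\leq t\}}\in L^2(\Om,\P)$, the kernel
\[
K((s,i),(t,j))=\E[f_{(s,i)}f_{(t,j)}]=\P(U^i\leq s,\,U^j\leq t)
\]
is positive semi-definite. Evaluating at the $d+1$ points $(t_1,1),(t_2,2),\dots,(t_{d+1},d+1)$ produces a Gram matrix whose off-diagonal entries are $C^{k,l}(t_k,t_l)$ and whose diagonal entries are $t_k\wedge t_k=t_k$. This is exactly the matrix appearing in the proposition. Since a Gram matrix is positive semi-definite, its determinant is non-negative; no ordering of the $t_k$ is needed for this step, although the hypothesis $t_1\leq\cdots\leq t_{d+1}$ is natural for the subsequent bookkeeping.

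Third, I would expand the non-negative determinant by the Leibniz formula,
\[
\det M=\sum_{\sigma\in\Sym{d+1}}\sgn(\sigma)\prod_{k=1}^{d+1}M_{k,\sigma(k)},
\]
noting that for $k=\sigma(k)$ the factor $M_{k,k}=t_k$ coincides with $C^{k,k}(t_k,t_k)$ under the convention $C^{k,k}(s,t)=s\wedge t$, so every factor can uniformly be written as $C^{k,\sigma(k)}(t_k,t_{\sigma(k)})$. Splitting $\Sym{d+1}$ into the alternating subgroup $\cA_{d+1}$ (signed $+1$) and its complement (signed $-1$) and moving the negative-sign terms to the right-hand side yields exactly the displayed inequality.

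The only mildly subtle point, and the main place where one has to be careful, is the compatibility / index bookkeeping: one has to verify that a permuton of the appropriate dimension with the prescribed bivariate marginals actually exists, so that the Gram-matrix construction can be applied. Under the stated hypothesis that $C$ is itself a copula of the right dimension this is immediate (take $\mu=\mu_C$); otherwise one must either re-interpret the statement as living in dimension $d$ or augment $\mu_C$ by an independent uniform coordinate, which does not affect any of the bivariate marginals that actually appear in the matrix. Once this identification is in place the rest of the argument is just a finite-dimensional determinant expansion.
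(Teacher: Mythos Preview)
The paper does not actually supply a proof of this proposition; it appears only as a remark-style note, with the comment that the inequality ``is likely not new considering $C$-Brownian bridges are a known concept.'' Your argument is correct and is precisely the one the paper itself deploys in the paragraph preceding Theorem \ref{thm:shuffledDonsker}: the matrix in question is the Gram matrix of the indicators $\idK_{\{U^j\leq t_j\}}$ in $L^2(\Om,\P)$, hence positive semi-definite, and the Leibniz expansion then yields the displayed inequality. Your handling of the indexing discrepancy (a $d$-copula but a $(d+1)\times(d+1)$ matrix) is also appropriate---this is almost certainly a slip in the draft statement, and either of your proposed readings resolves it.
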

\end{docu}

\chapter{Appendix}
\section{A Remark on Kurtosis}
\label{sec:kurtosis}
The \emph{kurtosis} of a distribution is its standardized fourth central moment. That is, given a random variable $Z$ with $\E Z^4 < \infty$ it is defined by
\[\Kurt Z = \frac{\E[(Z - \E[Z])^4]}{(\Var Z)^2}.\]
Note that $\Kurt Z \geq 1$ by Jensen's inequality. Further, kurtosis is invariant under affine transformations, \tIe
\[\Kurt(aZ + b) = \Kurt(Z).\]
This property is of great importance in regards to machine learning, because this means that the typical pre-processing steps of centering and dividing by the standard deviation do not affect the kurtosis of the features (or labels).
In other words, the presence of $\Kurt \bx$ in the expression for $\Sigma(\theta)$ cannot be explained away by a standardization of $\bx$.

For convenience, here is a list of common distributions and their kurtosises.
\begin{center}
	\begin{tabular}{c|c|c|c|c|c|c}
		Dist.& $\operatorname{Exp}(\la)$ & $\operatorname{Poi}(\la)$ & $\chi^2_n$ & $\cN(\mu,\si^2)$  & $\cU[a,b]$ & $\operatorname{Lognormal}(\mu,\si^2)$\\
		\hline
		Kurt. & $9$  & $3+\frac1\la$  & $3+ \frac{12}{n}$  & $3$  & $\frac95$  & $e^{4\si^2} + 3e^{3\si^2} + 3e^{2\si^2} - 3$ \\
	\end{tabular}
\end{center}
Further, if $p\in [0,1]$ and $Z\sim \Bin(1,p)$, then
\[\Kurt Z = \frac{3p^2 - 3p + 1}{p(1-p)}\]
which has minimum $1$ at $\frac12$. That is, a symmetric Bernoulli attains the smallest possible Kurtosis of $1$. 

If $\Kurt Z = 3$, then we say $Z$ (or its distribution) is \emph{mesokurtic}. If $\Kurt Z > 3$, then $Z$ is called \emph{platykurtic} and we call $Z$ \emph{leptokurtic} for $\Kurt Z < 3$. These terms also delineate the settings for the error expansions in Section \ref{sec:batchSize1}.

Finally, we remark that the common interpretation of kurtosis as heaviness of the tails of a distribution is somewhat misleading. Let us suppose the distribution of $Z$ is unimodal, for simplicity. Then, according to \citet{Balanda1988}, kurtosis is \enquote{vaguely [...] the location- and scale-free movement of probability mass from the shoulders of a distribution into its center and tails [...]}, \tIe{} higher kurtosis implies \emph{both} higher peakedness as well as heavier tails. The term \emph{shoulders} refers roughly to the area between the tails and the center.
For multimodal distributions, the interpretation of kurtosis is a lot more involved or perhaps not even well understood. We will restrict our attention to unimodal distributions only (which includes all previous examples).

\section{Extension of $\dC l$ maps}
\label{sec:clext}
Consider bounded intervals $I_1,\dots, I_m, \Thet = I_1\times \dots I_m \times \R^{d-m}$ and a \tFre{} space $F$.
In this Section we demonstrate why functions $f\in \dC l(\Thet, F)$ space can be smoothly extended to an open set containing $\Thet$.

\begin{lem}
\label{lem:FrechetSmoothness}
Let $F$ be a \tFre{} space, $d\in \N, U \subseteq \R^d$ be open, and $f : U\to F$ a function. Then $f\in \dC l(U, F)$ if and only if $\ell \circ f \in \dC l(U,\R)$ for all continuous linear functionals $\ell \in F'$.
\end{lem}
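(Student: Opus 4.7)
The forward implication is an application of the chain rule to a very simple outer function. Any $\ell \in F'$ is continuous and linear, hence smooth with $D\ell(y)(w) = \ell(w)$ and $D^k \ell = 0$ for $k \geq 2$. The composition rule (Proposition \ref{prop:diffRules}(c), applied entrywise once one identifies $\ell$ as a scalar-valued $\dC \infty$ map) then gives, for $f \in \dC l(U,F)$ and $k \leq l$,
\[D^k(\ell \circ f)(x)(v_1,\dots,v_k) = \ell\bigl(D^k f(x)(v_1,\dots,v_k)\bigr),\]
which is continuous on $U \times (\R^d)^k$ as a composition of continuous maps. Hence $\ell \circ f \in \dC l(U,\R)$.

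For the converse I would induct on $l$, with the essential work at $l = 1$. Fix $x \in U$ and $v \in \R^d$ and study the difference quotient $q_h := h^{-1}(f(x+hv) - f(x))$ in $F$. For each $\ell \in F'$ the scalar fundamental theorem of calculus applied to the $\dC 1$ function $t \mapsto (\ell \circ f)(x+tv)$ gives
\[\ell(q_h) = \int_0^1 D(\ell \circ f)(x + shv)(v)\, ds,\]
and in particular $\ell(q_h) \to D(\ell \circ f)(x)(v)$. To upgrade this scalar convergence to convergence in $F$, I would fix a continuous seminorm $p$ on $F$ and use the Hahn–Banach representation $p(\cdot) = \sup_{\ell \in E_p} |\ell(\cdot)|$ for an equicontinuous polar set $E_p \subseteq F'$. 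Bounding $p(q_h - q_{h'})$ by the supremum of $|\ell(q_h - q_{h'})|$ over $\ell \in E_p$, and combining the integral identity above with the joint continuity of $(x,\ell) \mapsto D(\ell \circ f)(x)(v)$ on $U \times E_p$ (which follows from the hypothesis plus equicontinuity of $E_p$), would show that $(q_h)$ is $p$-Cauchy for every $p$. Sequential completeness of $F$ then produces a limit $Df(x)(v) \in F$. The joint continuity of $(x, v) \mapsto Df(x)(v)$ as a map $U \times \R^d \to F$ is then obtained by the same seminorm-to-scalar reduction.

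The induction step is straightforward once $l=1$ is in hand. Viewing $\nabla f : U \to F^d$ as a map into another Fréchet space, the identity $\ell(Df(x)(v)) = D(\ell \circ f)(x)(v)$ established above, combined with linearity of $D$ in $v$ and the hypothesis on $\ell \circ f$, yields $\tilde \ell \circ \nabla f \in \dC{l-1}(U,\R)$ for every $\tilde \ell$ in the continuous dual of $F^d$. Applying the inductive hypothesis to $\nabla f$ gives $\nabla f \in \dC{l-1}$, i.e.\ $f \in \dC l$.

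The main obstacle is the Cauchy estimate in the $l=1$ step: turning the pointwise scalar convergence $\ell(q_h) \to D(\ell \circ f)(x)(v)$ into uniformity over an equicontinuous family $E_p$. This is where the Fréchet hypothesis is genuinely used — metrizability gives a countable basis of seminorms so that Cauchy in each $p$ suffices, completeness yields the vector limit from the scalar Cauchy data, and barrelledness (equivalently, the uniform boundedness principle available in Fréchet spaces) is what allows the equicontinuous polar sets $E_p$ to absorb the pointwise estimates into uniform ones. Without these structural features the argument breaks down; with them, every other step is bookkeeping.
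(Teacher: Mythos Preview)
The paper omits its own proof, so there is no argument to compare against; your forward direction is correct. For the converse, the gap is precisely where you flag ``the main obstacle'': you assert that $(x,\ell) \mapsto D(\ell\circ f)(x)(v)$ is jointly continuous on $U \times E_p$, but equicontinuity of $E_p$ is a statement about how the functionals act on elements of $F$, and at this stage you do not know that $D(\ell\circ f)(x)(v)$ has the form $\ell(w_x)$ for any $w_x \in F$. What you actually need is equicontinuity of the scalar family $\{x \mapsto D(\ell\circ f)(x)(v) : \ell \in E_p\}$, and nothing in the hypotheses delivers this.

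The gap cannot be filled, because the converse is false as stated. Take $F = c_0$ and smooth bumps $\phi_n$ supported in $(\tfrac{1}{n+1}, \tfrac{1}{n})$ with $\max|\phi_n'| = 1$ and $\max|\phi_n| \asymp n^{-2}$; set $f:\R\to c_0$ by $f(t)_n = \phi_n(t)$ for $t>0$ and $f(t)=0$ otherwise. Then $Df(0)(1) = 0$ exists (since $\|f(h)/h\|_{c_0} \lesssim 1/n$ for $h\in\supp\phi_n$), yet $\|Df(t_n)(1)\|_{c_0} = 1$ along a sequence $t_n \downarrow 0$, so $f \notin \dC 1$. But for every $\ell = (c_k) \in \ell^1 = (c_0)'$ one has $|(\ell\circ f)'(t)| \le |c_n|$ on $\supp\phi_n$, and $c_n \to 0$, so $\ell\circ f \in \dC 1$. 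The statement does hold for Fr\'echet--Montel $F$, and for $\dC\infty$ in place of $\dC l$ (via iterated difference quotients as in Kriegl--Michor); for the paper's only use of this lemma in Lemma~\ref{lem:FrechetExtension}, a direct verification that the reflection formula preserves $\dC l$-regularity into $F$ bypasses the issue.
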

\begin{proof}
	Omitted.
\end{proof}

\begin{lem}
	\label{lem:FrechetExtension}
	Let $(F, (\nrm{\blnk}{p})_{p\in \N})$ be graded \tFre{} space, $l,m,d\in \N, I_1,\dots, I_m$ be bounded intervals, and define
	\[\Theta = I_1\times\cdots\times I_m\times \R^{d-m} \subset \R^d.\]
	Let $f: \Theta \to F \in \dC l(\Thet, F)$.
	Then there exist an open set $U\subset\R^d$ with $\Theta\subset U$ and a map $\tilde f\in \dC l(U,F)$ such that $\tilde f|_{\Thet} = f$.
\end{lem}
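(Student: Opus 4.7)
Without loss of generality I may assume each $I_j = [a_j,b_j]$ is closed: by definition of $\dC l$ on a non-open set, $f$ together with all its partial derivatives up to order $l$ extends continuously and uniquely to $\overline\Theta = [a_1,b_1]\times\cdots\times[a_m,b_m]\times\R^{d-m}$. The strategy is a Hestenes-type reflection applied one bounded coordinate at a time: at each step I extend $f$ across the two flat faces $\{x_j = a_j\}$ and $\{x_j = b_j\}$ into a small neighborhood $(a_j - \ep_j,\, b_j + \ep_j)$, producing a $\dC l$ function on a slightly enlarged product domain; after $m$ such steps the domain is open.

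\textbf{Extension across one face.} Fix $j \in \{1,\ldots,m\}$, set $\ep_j \in (0,(b_j-a_j)/(l+1))$, and choose coefficients $\lambda_1,\ldots,\lambda_{l+1}\in \R$ by solving the Vandermonde-type linear system
\[\sum_{k=1}^{l+1} \lambda_k (-k)^n = 1, \qquad n = 0,1,\ldots,l.\]
The coefficient matrix $((-k)^n)_{n,k}$ has nonzero determinant, so the system has a unique solution. For $x$ with $x_j \in (a_j - \ep_j, a_j)$ and $(x_i)_{i\neq j}$ in the appropriate slice of $\overline\Theta$, define
\[\tilde f(x) := \sum_{k=1}^{l+1} \lambda_k\, f\bigl(x_1,\ldots,x_{j-1},\,a_j + k(a_j - x_j),\,x_{j+1},\ldots,x_d\bigr),\]
and set $\tilde f(x) = f(x)$ where $x_j \geq a_j$. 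Since $k(a_j-x_j) < (l+1)\ep_j < b_j - a_j$, the argument $a_j + k(a_j - x_j)$ lies in $[a_j,b_j]$, so each term is well-defined. Apply the mirror construction across $\{x_j = b_j\}$ in the analogous way.

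\textbf{Verification of $\dC l$ regularity.} Each summand in the reflection formula is the composition of $f$ with an affine map, hence lies in $\dC l$ on $\{x_j < a_j\}$; and $f$ itself is $\dC l$ on $\{x_j \geq a_j\}$. It remains to check that one-sided partial derivatives match at $x_j = a_j$. For any multi-index $\alpha$ with $|\alpha|\leq l$, differentiating the reflection formula term by term gives, in the limit $x_j \uparrow a_j$,
\[\partial^\alpha \tilde f(x)\bigr|_{x_j = a_j^-} = \Bigl(\sum_{k=1}^{l+1} \lambda_k (-k)^{\alpha_j}\Bigr)\, \partial^\alpha f(x)\bigr|_{x_j = a_j} = \partial^\alpha f(x)\bigr|_{x_j = a_j},\]
by the defining property of the $\lambda_k$ (only the $\alpha_j$-th power of $-k$ appears, because differentiating the $j$-th coordinate of the argument $a_j + k(a_j - x_j)$ in $x_j$ produces a factor $-k$, while all other coordinates are passed through unchanged). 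The reflection formula is linear in $f$, so the identity is valid in the Fréchet-space $F$ (no scalar reduction is strictly necessary, but Lemma \ref{lem:FrechetSmoothness} gives a clean alternative route by reducing to $\ell \circ f$ for $\ell \in F'$). A standard one-dimensional lemma — a function on $\R$ that is $\dC l$ on each side of a point and whose left and right derivatives up to order $l$ coincide at that point is globally $\dC l$ — applied in the variable $x_j$ (with the other variables as parameters) yields $\tilde f \in \dC l$ across the face.

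\textbf{Iteration.} Iterating this construction for each of the $m$ bounded coordinates produces an extension on $U = \prod_{j=1}^m (a_j - \ep_j, b_j + \ep_j) \times \R^{d-m}$, which is open in $\R^d$ and contains $\Theta$. At each step the new extension continues to satisfy the uniform boundary-matching identity for all multi-indices up to order $l$, so the $\dC l$ property is preserved throughout. By construction $\tilde f|_\Theta = f$, completing the proof. The main technical obstacle is the boundary-matching verification at the first step; once that is done, the rest is a straightforward iteration. The key algebraic input, namely the solvability of the Vandermonde system for the $\lambda_k$, is what makes the whole procedure go through, and it is exactly here that the restriction to finite order $l$ is used (for $l = \infty$ one would need a Seeley-type series with a convergent choice of $(\lambda_k, \mu_k)_{k\in\N}$ instead).
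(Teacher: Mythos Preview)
Your proof is correct and follows essentially the same Hestenes-reflection strategy as the paper's proof sketch: solve a Vandermonde system for the reflection coefficients, verify derivative matching at the flat faces, and iterate over the bounded coordinates. The paper's version is slightly less self-contained---it cites Adams--Fournier for the real-valued half-space case and then transfers to the Fr\'echet-space setting via the duality Lemma~\ref{lem:FrechetSmoothness}, and it handles the two-sided interval $[a,b]$ with a partition of unity rather than your direct two-face extension---but these are cosmetic differences on the same core argument.
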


\begin{proof}[Proof sketch]
We first treat the case
\[\Theta = [0,\infty) \times \R^{d-1},\]
Given $a\in \N_0$, \citet[Theorem 5.19]{adams2003sobolev} construct a linear extension operator
\[\tilde \cE_a : W^{l,p}(\Thet, \R) \to W^{l,p}(\R^d)\]
using the reflection formula
\[(\tilde \cE_a g)(x):=
\begin{cases}
	g(x), & x> 0,\\
\sum_{j=0}^{l+1} (-1)^a \la_j g(-j, x_1, x_2,\dots, x_n), & x<0,
\end{cases}\]
for suitably chosen constants $(\la_j)_{j=0,\dots,{l+1}}$ so that the derivatives up to order $l$ match at $0$. Write $\tilde \cE = \tilde \cE_0$.
They show that given $g\in \dC l(\Thet)$ we have
\begin{equation}
\label{eq:adamsprop}
\cE g \in \dC l(\R^d) \text{ and } \der^\al \cE g = \cE_{\al(1)} \der^\al g, \quad |\al|\leq l.
\end{equation}


Now, let $f:\Theta\to F\in \dC l(\Thet, F)$. Note that by definition $\der^\al f$ extends continuously to $\Theta$ for $|\al|\leq k$.
Define $\cE f$ be the same reflection formula as above, i.e.\
\[(\cE f)(x) :=
\begin{cases}
	f(x), & x> 0,\\
	\sum_{j=0}^{l+1} \la_j f(-j, x_1, x_2,\dots, x_n), & x<0,
\end{cases}\]
for suitably chosen constants $(\la_j)_{j=0,\dots,{l+1}}$.
For $\ell \in F'$, we have
\[(\ell\circ \cE f)(x) =
\begin{cases}
	(\ell \circ f)(x), & x> 0,\\
	\sum_{j=0}^{l+1} \la_j (\ell \circ f)(-j, x_1, x_2,\dots, x_n), & x<0,
\end{cases}\]
that is $\ell \circ \cE f = \tilde \cE(\ell \circ f)$, where $\tilde \cE$ is the extension operator for $\R$-valued functions.

Note that $\ell \circ f\in \dC l (\Theta,\R)$ by Lemma \ref{lem:FrechetSmoothness}. Property \eqref{eq:adamsprop} implies $\tilde \cE(\ell \circ f)\in \dC l(\R^d, \R)$. Since $\ell \in F'$ was arbitrary, we conclude $\cE f\in \dC l(\R^d, F)$ by Lemma \ref{lem:FrechetSmoothness}.

The case $\Thet = [a,b] \times \R^{d-1}$ whith $a\leq b$ can be reduced to the case $\Thet = [0,\infty) \times \R^{d-1}$ using a smooth partition of unity of $\R$. By iterating this construction we can treat the case $\Theta = I_1\times\cdots\times I_m\times \R^{d-m}$ for closed intervals $I_1,\dots, I_m$. 
	
	Finally, consider
$\Theta = I_1\times\cdots\times I_m\times \R^{d-m} \subset \R^d$ where $I_1,\dots, I_m$ are arbitrary bounded intervals. We can enlarge $\Thet$ to the closure
\[\overline \Thet = \overline{I_1}\times\cdots\times \overline{I_m}\times \R^{d-m}.\]
Since $f$ and its derivatives can be continuously and uniquely extended to $\overline \Thet$, we can use the extension property for closed intervals we have deduced.
Thus, there exists an open set $U\subseteq \R^d$ with $\overline \Thet \subseteq U$ and a function $\tilde f\in \dC l(U,F)$ which restricts to this extension of $f$ on $\overline{\Thet}$. In particular, $\tilde f|_{\Thet} = f$.
\end{proof}

\bibliographystyle{abbrvnat}
\bibliography{thesisbib}
\addcontentsline{toc}{chapter}{References}

\cleardoublepage


\end{document}